\theoremstyle{definition}
\newtheorem{thm}{Theorem}[subsection]
\newtheorem{lemma}[thm]{Lemma}
\newtheorem{corollary}[thm]{Corollary}
\newtheorem{proposition}[thm]{Proposition}
\newtheorem{remark}[thm]{Remark}
\newtheorem{definition}[thm]{Definition}
\newtheorem{example}[thm]{Example}
\newtheorem{question}[thm]{Question}
\newtheorem{thmx}{Theorem}
\def\a{\mathbb{A}}
\def\c{\mathbb{C}}
\def\e{\mathbb{E}}
\def\f{\mathbb{F}}
\def\g{\mathbb{G}}
\def\m{\mathbb{M}}
\def\n{\mathbb{N}}
\def\p{\mathbb{P}}
\def\q{\mathbb{Q}}
\def\r{\mathbb{R}}
\def\z{\mathbb{Z}}
\def\ca{\mathcal{A}}
\def\Sp{\mathcal{S}\mathrm{p}}
\def\LMod{\mathrm{L}\mathcal{M}\mathrm{od}}
\newcommand{\CAlg}{\mathcal{C}\mathrm{Alg}}
\newcommand{\Spc}{\mathcal{S}\mathrm{pc}}
\def\Spec{\operatorname{Spec}}
\def\Ext{\operatorname{Ext}}
\def\Hom{\operatorname{Hom}}
\def\colim{\operatorname{colim}}
\def\ker{\operatorname{ker}}
\def\coker{\operatorname{coker}}
\def\cl{{\text{cl}}}
\newcommand{\dcl}{{\text{dcl}}}
\newcommand{\thetatilde}{\widetilde{\theta}}
\def\Sq{\operatorname{Sq}}
\def\gr{\operatorname{gr}}
\def\unr{\operatorname{un}}
\def\im{\operatorname{Im}}
\def\big{\operatorname{big}}
\newcommand{\hyp}{\hbox{{-}}}
\newcommand{\tors}{{\text{tors}}}
\newcommand{\Fbar}{\overline{F}}
\newcommand{\alphabar}{\overline{\alpha}}
\newcommand{\Be}{\operatorname{Be}}
\newcommand{\cell}{\text{cell}}
\newcommand{\Fun}{\text{Fun}}
\newcommand\floor[1]{\lfloor#1\rfloor}
\newcommand\ceil[1]{\lceil#1\rceil}
\newcommand{\cotimes}{\mathbin{\widehat{\otimes}}}
\newcommand\xqed[1]{%
  \leavevmode\unskip\penalty9999 \hbox{}\nobreak\hfill
  \quad\hbox{#1}}
\newcommand\tqed{\xqed{$\triangleleft$}}
\author{William Balderrama}\address{University of Virginia}\email{eqr8nm@virginia.edu}
\author{Dominic Leon Culver}\address{Max-Planck-Institut f\"ur Mathematik}\email{dominic.culver@gmail.com}
\author{J.D. Quigley}\address{Cornell University}\email{jdq27@cornell.edu}
\title[The motivic lambda algebra and motivic Hopf invariant one problem]{The motivic lambda algebra \\ and motivic Hopf invariant one problem}
\begin{document}

\begin{abstract}
We investigate forms of the Hopf invariant one problem in motivic homotopy theory over arbitrary base fields of characteristic not equal to $2$. Maps of Hopf invariant one classically arise from unital products on spheres, and one consequence of our work is a classification of motivic spheres represented by smooth schemes admitting a unital product.

The classical Hopf invariant one problem was resolved by Adams, following his introduction of the Adams spectral sequence. We introduce the motivic lambda algebra as a tool to carry out systematic computations in the motivic Adams spectral sequence. Using this, we compute the $E_2$-page of the $\mathbb{R}$-motivic Adams spectral sequence in filtrations $f \leq 3$. This universal case gives information over arbitrary base fields.

We then study the $1$-line of the motivic Adams spectral sequence. We produce differentials $d_2(h_{a+1}) = (h_0+\rho h_1)h_a^2$ over arbitrary base fields, which are motivic analogues of Adams' classical differentials. Unlike the classical case, the story does not end here, as the motivic $1$-line is significantly richer than the classical $1$-line. We determine all permanent cycles on the $\mathbb{R}$-motivic $1$-line, and explicitly compute differentials in the universal cases of the prime fields $\mathbb{F}_q$ and $\mathbb{Q}$, as well as $\mathbb{Q}_p$ and $\mathbb{R}$.
\end{abstract}

\maketitle

\tableofcontents

\section{Introduction}

Motivic homotopy theory is a homotopy theory for algebraic varieties, developed by Morel and Voevodsky in the 1990s \cite{MV99}. Since its conception and subsequent use by Voevodsky to resolve the Milnor \cite{Voe03} and Bloch--Kato \cite{Voe11} conjectures, an immense amount of work has gone into the theory, with applications to algebraic geometry, algebraic number theory, and algebraic topology.

Motivic stable homotopy theory is the home of $\a^1$-invariants on algebraic varieties, such as algebraic $K$-theory, motivic cohomology, and algebraic cobordism. The universal such invariants are motivic stable homotopy groups, and as such the internal structure of the motivic stable homotopy groups of spheres reflects the broad-scale structure of the motivic stable homotopy category. These motivic stable stems encode deep geometric and number-theoretic information; for example, Morel \cite{Mor04} showed that the Milnor--Witt $K$-theory of a field appears in its stable stems, and R{\"o}ndigs--Spitzweck--{\O}stv{\ae}r \cite{RSO19, RSO21} have identified motivic stable stems in low Milnor-Witt stem in terms of variants of Milnor $K$-theory, Hermitian $K$-theory, and motivic cohomology.

Motivic homotopy theory was originally developed to apply ideas and tools from homotopy theory to problems in algebraic geometry and algebraic $K$-theory. Information now flows the other way as well. After $p$-completion, $\c$-motivic stable stems capture information about classical stable stems that is not seen using classical techniques. This has led to the highly successful program of Gheorghe--Isaksen--Wang--Xu \cite{Isa19, IWX20, GWX21}, yielding groundbreaking advances in computations of classical stable homotopy groups of spheres. A similar program using $\r$-motivic stable stems to capture information about $C_2$-equivariant stable stems has also developed \cite{BHS20, BI20, DI16a, DI17, GI20, BGI21}. More recently, work of Bachmann--Kong--Wang--Xu \cite{BKWX22} has related $F$-motivic stable homotopy theory over a general field $F$ to classical complex cobordism.

All of this has motivated a swath of explicit computations of motivic stable stems over particular base fields $F$. We refer the reader to \cite{IO19} for a general survey, but mention the following $2$-primary computations:

\begin{enumerate}
\item[{$F = \c$}] Dugger--Isaksen \cite{DI10} computed the $\c$-motivic stable stems through the $36$-stem, and Isaksen \cite{Isa19} and Isaksen--Wang--Xu \cite{IWX20} pushed these computations out to the 90 stem.

\item[{$F = \r$}] Dugger--Isaksen \cite{DI16a} computed the first $4$ Milnor--Witt stems over $\r$, and Belmont--Isaksen \cite{BI20} expanded on this to compute the first $11$ Milnor--Witt stems over $\r$.
 
\item[{$F = \f_q$}] Wilson \cite{Wil16} and Wilson--{\O}stv{\ae}r \cite{WO17} computed the motivic stable homotopy groups of finite fields in motivic weight zero through topological dimension $18$.
\end{enumerate}

There are still many mysteries contained in the motivic stable stems. All of the above computations were enabled by the \textit{motivic Adams spectral sequence}, originally introduced by Morel \cite{Mor99} and further developed by Dugger--Isaksen \cite{DI10}. This is a motivic analogue of the classical Adams spectral sequence, which was developed by Adams \cite{Ada58, Ada60} to resolve the \textit{Hopf invariant one problem}. Adams used this spectral sequence to prove that the only elements of Hopf invariant one in the classical stable stems $\pi_\ast^\cl$ are the classical Hopf maps $\eta_\cl\in \pi_1^\cl$, $\nu_\cl\in \pi_3^\cl$, and $\sigma_\cl\in \pi_7^\cl$. This theorem has a number of implications, including a classification of which spheres can be made into $H$-spaces, which spheres are parallelisable, which $2$-dimensional modules over the Steenrod algebra can be realized by cell complexes, which dimensions a finite-dimensional real division algebra can have, and more.

This paper is concerned with topics surrounding motivic analogues of the classical Hopf invariant one problem. There is an element $\eta$ in the motivic stable stems, represented by the canonical map $\eta\colon \a^2\setminus\{0\}\rightarrow\p^1$, which refines the classical complex Hopf map $\eta_\cl$. Hopkins and Morel \cite{Mor04} showed that $\eta$ is one of the generators of the Milnor--Witt $K$-theory of the base field. This motivic $\eta$ behaves quite differently from the classical Hopf map; most famously, $\eta$ is not nilpotent, and is generally not $2$-torsion. Because $\eta$ is not nilpotent, one may consider the $\eta$-inverted stable stems $\pi_{\ast,\ast}[\eta^{-1}]$. These are closely related to Witt $K$-theory \cite{Bac20, BH20}, and have been the subject of thorough investigation \cite{AM17, GI15, GI16, OR20, Wil18}.

Using the theory of Cayley--Dickson algebras, Dugger--Isaksen \cite{DI13} have shown that the classical quaternionic and octionic Hopf maps $\nu_\cl$ and $\sigma_\cl$ also admit geometric refinements to motivic classes $\nu$ and $\sigma$. All of these motivic Hopf maps $\eta$, $\nu$, and $\sigma$ are maps of Hopf invariant one, but unlike classically, they are not the only such maps. For example, the classical stable stems include into the weight $0$ portion of the motivic stable stems, and $\eta_\cl$, $\nu_\cl$, and $\sigma_\cl$ give rise to distinct examples of maps of Hopf invariant one in the motivic setting. If we reformulate the condition of a map $\alpha$ having nontrivial Hopf invariant as asking that the homology of the $2$-cell complex with attaching map $\alpha$ is not split as a module over the motivic Steenrod algebra, then the situation becomes even richer: for example, $\sigma_\cl^2$ admits an $\r$-motivic refinement to a map of nontrivial Hopf invariant in this sense, closely related to the nonexistent Hopf map coming next in the sequence $\eta$, $\nu$, $\sigma$.

All of this motivates the present work, the purpose of which is three-fold:

\begin{enumerate}
	\item To analyze the motivic Hopf invariant one problem and deduce geometric consequences;
	\item To advance our understanding of motivic stable stems over general base fields;
	\item To introduce the \emph{motivic lambda algebra}, a new tool for motivic computations.
	
\end{enumerate}

As mentioned above, Adams resolved the Hopf invariant one problem by introducing and studying the Adams spectral sequence. Morel \cite{Mor99} and Dugger--Isaksen \cite{DI10} have already introduced the \textit{$F$-motivic Adams spectral sequence}, which takes the form
\[
E_2^{\ast,\ast,\ast} = \Ext_{\ca^F}^{\ast,\ast,\ast}(\m^F,\m^F)\Rightarrow \pi_{\ast,\ast}^F.
\]
Here, $\ca^F$ is the $F$-motivic Steenrod algebra \cite{Voe03} \cite{HKO17}, which acts on $\m^F$, the mod $2$ motivic cohomology of $\Spec(F)$. This spectral sequence converges to $\pi_{\ast,\ast}^F$, the homotopy groups of the $(2,\eta)$-completed $F$-motivic sphere \cite{HKO11a, KW19a}. Implicit is the assumption that $2$ is invertible in $F$.

In this paper, we bring the motivic Adams spectral sequence back to its classical roots, using it to study the motivic Hopf invariant one problem. We do not follow Adams' original approach. Instead, at least in broad outline, we follow J.S.P.\ Wang's approach \cite{Wan67}, which proceeded by first gaining a good understanding of the $E_2$-page of the Adams spectral sequence. Importing this approach to motivic homotopy theory requires analyzing the $E_2$-page of the motivic Adams spectral sequence over general base fields in ranges beyond what is known by previous techniques.

To carry out this analysis, we bring another tool from classical stable homotopy theory into the motivic context: the \textit{lambda algebra}. The classical lambda algebra $\Lambda^{\cl}$ is a certain differential graded algebra, originally constructed by Bousfield--Curtis--Kan--Quillen--Rector--Schlesinger \cite{BCK+66}, whose homology recovers the $E_2$-page of the Adams spectral sequence. The classical lambda algebra is now a standard member of the homotopy theorist's toolbox, and we cannot hope to list all of its applications, but the following are a handful:

\begin{enumerate}
\item Wang's computation of $E_2$-page of the Adams spectral sequence through the $3$-line, and subsequent simplified resolution of the Hopf invariant one problem \cite{Wan67};
\item Some of the first automated computations of the $E_2$-page of the Adams spectral sequence, including products and Massey products \cite{Tan85, Tan93, Tan94, CGMM87};
\item The construction of Brown--Gitler spectra \cite{BG73}, which played an important role in analyzing the $bo$-resolution \cite{Mah81, Shi84}, the proof of the Immersion Conjecture \cite{Coh85}, and more \cite{Mah77, Goe99, HK99};
\item The algebraic Atiyah--Hirzebruch spectral sequence for $\r P^\infty$ \cite{WX16}, used as input to Wang--Xu's proof of the nonexistence of exotic smooth structures on the $61$-sphere \cite{WX17};
\item The only complete computations of the $4$- and $5$-lines of the Adams $E_2$-term \cite{Che11, Lin08}.
\end{enumerate}

We expect that the motivic lambda algebra will likewise become a useful member of the motivic homotopy theorist's toolbox. This paper focuses in particular on developing the lambda algebra and applying this to the motivic Hopf invariant one problem. We consider both the unstable problem, with applications to $H$-space structures on motivic spheres, and the stable problem, which is concerned with the $1$-line of the motivic Adams spectral sequence. The motivic situation is substantially richer than the classical situation, and requires us to develop a number of new techniques for motivic computations across general base fields.

Adams' resolution of the classical Hopf invariant one problem asserted the existence of differentials $d_2(h_{a+1}) = h_0 h_a^2$ in the Adams spectral sequence. There are classes $h_a$ in the $F$-motivic Adams spectral sequence for any field $F$, corresponding to the motivic Hopf maps discussed above for $a\leq 3$. Using Betti realization, it is possible to lift Adams' differentials to the $\c$-motivic Adams spectral sequence. It follows that if $F$ admits a complex embedding, then $h_{a+1}$ must support a nontrivial differential for $a\geq 3$. However, this is insufficient to determine the precise target of the differential, as well as to determine what happens over other base fields, particularly fields of positive characteristic. The techniques we develop in this paper are geared towards resolving this sort of issue. We use these to obtain a number of new results; let us give the following here, as it is the most pleasant to state.

\begin{thmx}[\cref{thm:nohopf}]\label{thmx:nohopf}
For an arbitrary base field $F$ of characteristic not equal to $2$, there are differentials of the form
\[
d_2(h_{a+1}) = (h_0+\rho h_1)h_a^2
\]
in the $F$-motivic Adams spectral sequence, which are nonzero for $a\geq 3$.
\tqed
\end{thmx}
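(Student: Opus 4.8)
The plan is to follow Wang's route: pin down the $E_2$-page in low Adams filtration, and then use algebraic squaring operations — which interact with the Adams $d_2$ in a controlled way — to identify the differential on the $1$-line. Two ingredients enter. The first is the explicit description of $\Ext_{\ca^F}^{\ast,\ast,\ast}$ in filtrations $f \le 3$ over an arbitrary field $F$ of characteristic $\neq 2$, which is governed by the universal $\r$-motivic computation of the previous sections, specialized along $\rho \mapsto [-1]_F$ and $\tau \mapsto \tau$. The second is a motivic refinement, carried out on the motivic lambda algebra, of the classical interaction between algebraic Steenrod operations and the Adams differential. Over fields admitting a complex embedding, Betti realization already lifts Adams' differential $d_2(h_{a+1}) = h_0 h_a^2$, and since $\rho = 0$ there this identifies the motivic $d_2$ with $(h_0+\rho h_1)h_a^2$; the content of the theorem is to establish this over all base fields, in particular in positive characteristic, and to locate the $\rho$-correction term precisely.

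Concretely, I would first record that $h_1, h_2, h_3$ are permanent cycles in the $F$-motivic Adams spectral sequence for every $F$ of characteristic $\neq 2$: they are detected by the motivic Hopf maps $\eta$, $\nu$, $\sigma$, which exist over every such base (the map $\eta\colon\a^2\setminus\{0\}\to\p^1$ is geometric, and $\nu,\sigma$ arise from the Cayley--Dickson construction). Next, using the motivic lambda algebra $\Lambda^F$, set up the algebraic squares $\Sq^i$ on $H^\ast(\Lambda^F) \cong \Ext_{\ca^F}$, keeping track of the motivic weight; one identifies $\Sq^0 h_a = h_{a+1}$ and $\Sq^1 h_a = h_a^2$, with any lower-order corrections determined by the filtration-$\le 3$ computation. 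Over $\r$ the relation $h_0 h_1 = \rho h_1^2$, equivalently $(h_0+\rho h_1)h_1 = 0$, is what singles out $h_0 + \rho h_1$ — morally the class of the hyperbolic form $2+\rho\eta$ — as the correct replacement for the classical $h_0$. The key structural statement is then a motivic version of Bruner's formula: for a permanent cycle $x$ of Adams filtration $s$ and the bottom operation,
\[
d_2(\Sq^0 x) = (h_0 + \rho h_1)\,\Sq^1(x),
\]
with correction terms involving $\Sq^0(d_2 x)$ when $x$ fails to be a permanent cycle. Deriving this, and in particular producing the $\rho h_1$-term, requires analyzing the first obstruction to multiplicativity of the motivic Adams spectral sequence, i.e. the $H_\infty$-structure on the $2$-complete motivic sphere.

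Granting the formula, the theorem follows by induction on $a$. For $a = 3$, since $h_3 = \sigma$ is a permanent cycle, $d_2(h_4) = d_2(\Sq^0 h_3) = (h_0+\rho h_1)\Sq^1(h_3) = (h_0+\rho h_1)h_3^2$. For $a > 3$, applying the formula to $x = h_a$ gives $d_2(h_{a+1}) = (h_0+\rho h_1)h_a^2 + \Sq^0(d_2 h_a)$; expanding the correction term via the Cartan formula, the inductive hypothesis, and $\Sq^0 h_{a-1} = h_a$ shows it lies in a subgroup of $\Ext^{3,\ast,\ast}_{\ca^F}$ that the explicit low-filtration computation identifies as zero (the relevant $h_i h_{i+1}$-type products, and their $\rho$-corrected analogues, vanish). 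The same formula applied with $a = 1, 2$ yields $d_2(h_{a+1}) = (h_0+\rho h_1)h_a^2$, which vanishes because $(h_0+\rho h_1)h_1 = 0$ and $h_1 h_2 = 0$, consistent with $h_2 = \nu$ and $h_3 = \sigma$ being permanent cycles. Finally, nonvanishing for $a \ge 3$ is read off from the filtration-$3$ computation: $(h_0+\rho h_1)h_a^2$ specializes along $\rho\mapsto 0$ to Adams' nonzero class $h_0 h_a^2$, the generator of an infinite $h_0$-tower, and the $\rho$-divisible summand $\rho h_1 h_a^2$ does not cancel it; compatibility of all of this with the base-change maps $\pi^{\q}_{\ast,\ast}\to\pi^F_{\ast,\ast}$ and $\pi^{\f_p}_{\ast,\ast}\to\pi^F_{\ast,\ast}$ (sending $h_a\mapsto h_a$, $\tau\mapsto\tau$, $\rho\mapsto[-1]_F$) then gives the statement over every $F$.

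The main obstacle is the motivic Bruner formula: one must understand the $H_\infty$-structure on the $2$-complete motivic sphere precisely enough to see that the first correction to multiplicativity contributes exactly the extra $\rho h_1$, rather than only $h_0$, and to keep the motivic weight correct throughout. A secondary difficulty is the verification, via the explicit description of $\Ext_{\ca^F}$ in filtrations $\le 3$, that all correction terms in the induction genuinely vanish; this is also the step that must be checked uniformly across characteristics, which is why it is essential that the filtration-$\le 3$ $E_2$-page is controlled by a single universal $\r$-motivic computation.
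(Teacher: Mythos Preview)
Your approach is genuinely different from the paper's, and it has a real gap. You rely on a motivic Bruner formula $d_2(\Sq^0 x) = (h_0+\rho h_1)\Sq^1(x) + \Sq^0(d_2 x)$, but you do not prove it; you explicitly flag it as ``the main obstacle.'' The paper itself remarks that Tilson has pursued exactly this power-operations route in the $\r$-motivic setting and that so far it only determines $d_2(h_{a+1})$ for $a\le 3$. So this is not merely a lemma to be filled in later: the available evidence is that the $H_\infty$ analysis, as currently understood, does not yield the result for $a\ge 4$. Your induction also hides a difficulty: the correction $\Sq^0(d_2 h_a)$ is $\Sq^0\bigl((h_0+\rho h_1)h_{a-1}^2\bigr) = (\tau h_1+\rho^2 h_2)h_a^2$, and $\tau h_1 h_a^2$ is \emph{not} obviously zero for $a\ge 4$ (it does not vanish over $\c$), so the claim that ``the correction lies in a subgroup the low-filtration computation identifies as zero'' is not justified.

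The paper avoids power operations entirely and instead follows Wang's inductive scheme directly. The base case $d_2(h_4)=(h_0+\rho h_1)h_3^2$ comes from $2\sigma^2=0$. The Wang induction $0=d_2(h_{a+1}h_a)=d_2(h_{a+1})h_a+h_{a+1}d_2(h_a)$ combined with $h_{a+1}h_{a-1}^2=h_a^3$ then gives $(d_2(h_{a+1})+(h_0+\rho h_1)h_a^2)h_a=0$; injectivity of $\cdot\, h_a$ on the relevant degree finishes the step for $a\ge 5$. The crucial point you miss is that this injectivity \emph{fails} at $a=4$: classes of the form $\omega h_1 h_4^2$ are killed by $h_4$ via $h_1 h_4^3 = h_1 h_3^2 h_5 = 0$. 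The paper therefore treats $d_2(h_5)$ separately, reducing via naturality and a Hasse principle to the fields $\f_q$, $\q_p$, $\r$, and then using the Hurewicz map $c\colon\Ext_\cl\to\Ext_F$ (which satisfies $c(h_0)=h_0+\rho h_1$) together with $2\theta_4=0$ for the Kervaire class $\theta_4$ to force $(h_0+\rho h_1)\tau^{16}h_4^2$ to be a boundary. This two-base-case structure, with the $\theta_4$ argument for $d_2(h_5)$, is the heart of the proof and is absent from your proposal.
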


It is worth making a couple remarks to distinguish this from the classical result.

\begin{remark}
Classically, there is at most one possible nontrivial target for a $d_2$-differential on $h_{a+1}$. As suggested by the target in \cref{thmx:nohopf}, the motivic situation is more complicated. For example, when $F=\r$, we show that if $a\geq 4$ then the group of potential values of $d_2(h_{a+1})$ is given by $\f_2\{h_0 h_a^2, \rho h_1 h_a^2\}$. The general picture is similar, only where there may be additional interference coming from the mod $2$ Milnor $K$-theory of $F$. This computation requires new techniques for computing the cohomology of the motivic Steenrod algebra, which is much richer than the analogous classical computation.
\tqed
\end{remark}

\begin{remark}
Even once we have carried out the algebraic work of identifying potential values of $d_2(h_{a+1})$, the classical proof does not directly generalize to yield \cref{thmx:nohopf}. In spirit, our proof follows Wang's classical inductive proof \cite{Wan67}. The base case of Wang's induction is the differential $d_2(h_4) = h_0 h_3^2$, which follows easily from graded commutativity of stable stems. By contrast, our base case must include the differential $d_2(h_5) = (h_0 + \rho h_1)h_4^2$. Over $\r$, this differential may be deduced by combining complex and real Betti realization, but a completely different argument is required to obtain the differential for other fields. To obtain this differential over other base fields, we use a certain motivic Hasse principle to reduce to considering fields with simple mod $2$ Milnor $K$-theory, then analyze how the classical Kervaire class $\theta_4$ appears in the motivic stable stems.
\tqed
\end{remark}

\begin{remark}
There is another elegant proof of the classical Adams differential $d_2(h_{a+1}) = h_0^2 h_a$ due to Bruner, which makes use of power operations in the Adams spectral sequence \cite[Chapter VI, Corollary 1.5]{BMMS86}. Tilson has explored analogues of Bruner's results in the $\r$-motivic setting \cite{Til17}, but so far these methods have only succeeded in determining the $\r$-motivic differential $d_2(h_{a+1})$ for $a\leq 3$.
\tqed
\end{remark}

\subsection{Brief overview}

Now let us give a very brief overview of what we do in this paper, before giving a more thorough summary in \cref{ssec:summary} just below. This paper has three main parts. These parts are not independent, but none rely on the hardest aspects of the others.

The first part is purely algebraic, and is the most computationally intensive. In \cref{Sec:Structure}, we introduce the \textit{$F$-motivic lambda algebra} (\cref{thmx:lambda}), and in \cref{Sec:ExtLow} we use the $\r$-motivic lambda algebra to compute $\Ext_\r$ in filtrations $f\leq 3$ (\cref{thmx:extr}). The result is quite complicated, with $8$ infinite families of multiplicative generators and numerous relations between these. As we explain in \cref{ssec:extf}, this gives information about $\Ext_F$ for any base field $F$ once the mod $2$ Milnor $K$-theory of $F$ is known.

The second part is shorter, and does not rely on the above computation. In \cref{sec:hi1}, after some preliminaries in \cref{sec:preliminaries}, we consider the motivic analogue of the Hopf invariant one problem in its classical \textit{unstable} formulation, concerning unstable $2$-cell complexes with specified cup product, as well as concerning geometric applications such as to $H$-space structures on motivic spheres. Our analysis proceeds by a novel reduction to the classical case and other known results, by first formulating a certain motivic Lefschetz principle (\cref{prop:lefschetz}), then using this to build unstable ``Betti realization'' functors over arbitrary algebraically closed fields (\cref{prop:bettiun}). One consequence of this analysis is a complete classification of motivic spheres which are represented by smooth schemes admitting a unital product (\cref{thmx:products}).

The third part is our main homotopical contribution. In \cref{sec:hopf}, we give a detailed study of the $1$-line of the $F$-motivic Adams spectral sequence. This work has a direct geometric interpretation: permanent cycles on the $1$-line of the motivic Adams spectral sequence classify how the motivic Steenrod algebra can act on the cohomology of a motivic $2$-cell complex. This section does not rely on the full strength of our computation of $\Ext_\r$, and should be understandable by the reader familiar with prior work on the $\r$-motivic Adams spectral sequence. The main theorems in this section are \cref{thmx:nohopf} above, together with much more detailed information about the $1$-line of the $F$-motivic Adams spectral sequence for the particular fields $F = \r$, $F = \f_q$ with $q$ an odd prime-power, $F = \q_p$ with $p$ any prime, and $F = \q$ (\cref{thmx:1line}). As this includes all the prime fields, these computations give information that applies to an arbitrary base field. When $F = \r$, we completely determine all permanent cycles on the $1$-line by comparison with a computation in Borel $C_2$-equivariant homotopy theory (\cref{thmx:rp}); both the equivariant computation and the method of comparison are of independent interest.

\subsection{Summary of results}\label{ssec:summary}

We now summarize our work in more detail. We begin with our introduction of the motivic lambda algebra. The nature of the classical lambda algebra $\Lambda^\cl$ \cite{BCK+66} was greatly clarified by Priddy \cite{Pri70}, who introduced the notion of a \textit{Koszul algebra}, and showed that $\Lambda^\cl$ is the Koszul complex of the classical Steenrod algebra. We carry out the motivic analogue of this, producing the following.

\begin{thmx}[\cref{SS:StructureSummary}]\label{thmx:lambda}
There is a differential graded algebra $\Lambda^F$, the \textit{$F$-motivic lambda algebra}, with the following properties.
\begin{enumerate}
\item $\Lambda^F$ may be described explicitly in terms of generators, relations, and monomial basis.
\item There is a surjective and multiplicative quasiisomorphism $C(\ca^F)\rightarrow\Lambda^F$ from the cobar complex of the $F$-motivic Steenrod algebra to $\Lambda^F$. In particular, there is an isomorphism
\[
H_\ast \Lambda^F\cong \Ext_F^\ast
\]
compatible with all products and Massey products. Moreover, the squaring operation $\Sq^0\colon \Ext_F^\ast\rightarrow\Ext_F^\ast$ lifts to a map $\theta\colon\Lambda^F\rightarrow\Lambda^F$ of differential graded algebras.
\item $\Lambda^F$ generalizes the classical lambda algebra, in the sense that if $F$ is algebraically closed then $\Lambda^F[\tau^{-1}] = \Lambda^\cl[\tau^{\pm 1}]$. In particular, it is considerably smaller than $C(\ca^F)$.
\tqed
\end{enumerate}
\end{thmx}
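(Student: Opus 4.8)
The plan is to follow Priddy's treatment of the classical lambda algebra \cite{Pri70, BCK+66}, realizing $\Lambda^F$ as a Koszul dual of the $F$-motivic Steenrod algebra. I would start from the presentation of $\ca^F$ as an $\m^F$-algebra due to Voevodsky and Hoyois--Kelly--{\O}stv{\ae}r \cite{Voe03, HKO17}: it is generated by the operations $\Sq^i$, subject to motivic Adem relations with coefficients in $\m^F$, and it has an $\m^F$-basis of admissible monomials. Filtering $\ca^F$ by admissible length, so that $F_s\ca^F$ is spanned by products of at most $s$ generators $\Sq^i$, the associated graded $\gr\ca^F$ is the quadratic $\m^F$-algebra on generators $\{\Sq^i : i \geq 1\}$ whose relations are the length-preserving (``leading'') parts of the motivic Adem relations. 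The first technical input is that $\gr\ca^F$ is a Koszul $\m^F$-algebra; this is the motivic analogue of Priddy's main computation, and I expect to prove it by exhibiting a confluent rewriting (PBW/diamond) system on the admissible monomials, which requires a careful analysis of the leading terms of the motivic Adem relations.

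Using this, define $\Lambda^F$ intrinsically. As a tri-graded (cohomological filtration, internal degree, motivic weight) $\m^F$-algebra it is the quadratic dual $(\gr\ca^F)^{!}$: it has generators $\lambda_i$ for $i \geq 0$, with $\lambda_i$ dual to $\Sq^{i+1}$ and of the evident weight, relations the $\m^F$-linear duals of the quadratic motivic Adem relations, and monomial basis the admissible monomials in the $\lambda_i$. It is equipped with a differential $d$, the motivic deformation of the classical lambda-algebra differential, which is read off from the length-decreasing parts of the motivic Adem relations. Writing these data out explicitly, and using the Koszulness from the previous step to verify the admissible basis, establishes part (1); over an algebraically closed base these specialize to the classical formulas, and in general they carry $\tau$- and $\rho$-dependent corrections controlled by $\m^F$.

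For part (2), the cobar complex $C(\ca^F) = C(\ca^F_\ast)$ is the tensor algebra on the coaugmentation coideal $\overline{\ca^F_\ast}$ with the cobar differential, and sending each dual generator to the corresponding $\lambda_i$, or to zero, defines a surjective multiplicative map $C(\ca^F) \to \Lambda^F$; one checks that the defining relations of $\Lambda^F$ hold in the image and that the cobar differential descends to $d$, so this is a map of differential graded algebras. To see that it is a quasi-isomorphism, filter both sides by length: the length filtration on $C(\ca^F)$ has associated graded the cobar complex of the Koszul algebra $\gr\ca^F$, whose homology is $(\gr\ca^F)^{!} = \Lambda^F$, matching the associated graded of $\Lambda^F$ on which $d$ is filtration-decreasing. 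The induced map of spectral sequences is therefore an isomorphism on the first page, and since the filtrations are bounded in each tri-degree the comparison theorem yields $H_\ast\Lambda^F \cong \Ext_F^\ast$ compatibly with products; being a map of differential graded algebras, the quasi-isomorphism also matches Massey products. Finally, the Frobenius (doubling) endomorphism of $\ca^F_\ast$ induces $\Sq^0$ on $\Ext_F$; transporting it through the constructions above produces an explicit endomorphism $\theta$ of $\Lambda^F$, the motivic analogue of the classical doubling $\lambda_i \mapsto \lambda_{2i}$, $\lambda_i \mapsto \lambda_{2i+1}$, which one verifies on generators to be a map of differential graded algebras, yielding the asserted lift of $\Sq^0$.

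For part (3), if $F$ is algebraically closed then $\m^F = \f_2[\tau]$ and $\rho = 0$, and inverting $\tau$ the relations $\tau_i^2 = \tau\xi_{i+1}$ in $\ca^F_\ast$ express the $\xi_{i+1}$ in terms of the $\tau_i$, so that $\ca^F_\ast[\tau^{-1}] \cong \f_2[\tau^{\pm1}] \otimes \ca^{\cl}_\ast$ with the $\tau_i$ lying in the internal degrees of the classical polynomial generators. Since forming the length filtration, the quadratic dual, and the differential all commute with inverting $\tau$, it follows that $\Lambda^F[\tau^{-1}] \cong \f_2[\tau^{\pm1}] \otimes \Lambda^{\cl} = \Lambda^\cl[\tau^{\pm1}]$, and the comparison of sizes with $C(\ca^F)$ is then immediate. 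The main obstacle throughout is the Koszulness step over a general base field together with convergence of the comparison spectral sequence: $\m^F$ is not a field, and its mod $2$ Milnor $K$-theory $K^M_\ast(F)/2$ can be arbitrary, so ``Koszul algebra over $\m^F$'' and the length filtration must be handled with care. The natural remedy is to establish everything first over a universal coefficient ring, one carrying $\rho$, $\tau$, and a polynomial mod $2$ Milnor $K$-theory, where the filtration is well behaved and the relevant associated graded is genuinely polynomial-by-exterior, and then to base change along the resulting map to $\m^F$, verifying the flatness needed for $\Lambda^F$, the quasi-isomorphism, and the identification with $\Ext_F$ to be obtained by extension of scalars. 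The other genuinely computational point is pinning down the leading and correction terms of the motivic Adem relations precisely enough that the presentation in part (1) closes on a finite list of relation families.
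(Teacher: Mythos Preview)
Your outline is essentially the paper's approach: filter $\ca^F$ by length, prove $\gr\ca^F$ is Koszul, define $\Lambda^F$ as the Koszul complex, and read off generators, relations, differential, and $\theta$. The two substantive differences are worth flagging.

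First, the Koszulness step. You propose to run a PBW/diamond argument directly on $\gr\ca^F$ over $\m^F$, and you correctly identify the obstacle: $\m^F$ is not a field and, more seriously, $\m^F$ is \emph{not central} in $\ca^F$ (e.g.\ $\Sq^{2n}\tau = \tau\Sq^{2n}+\rho\tau\Sq^{2n-1}$), so Priddy's PBW criterion does not apply as stated. Your proposed fix, passing to a universal base with polynomial Milnor $K$-theory, does not by itself remove the noncentrality of $\tau$. The paper's trick is different and cleaner: put a \emph{second} filtration $\overline{F}$ on $\gr\ca^F$, by total $\Sq$-degree, so that in $\overline{\gr}\,\gr\ca^F$ both the nontrivial bimodule structure and the $\rho$-correction terms in the Adem relations are filtered away, leaving $\m^F\otimes_{\f_2[\tau]}\gr\ca^{\c}$, to which Priddy's original criterion applies directly. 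Koszulness of $\gr\ca^F$ then follows from the spectral sequence of this second filtration. Your universal-base idea is related to the paper's observation that $\ca^F\cong\m^F\otimes_{\m^\r}\ca^\r$, which it records as an alternative route, but the actual proof is the double filtration.

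Second, your description of $\theta$ as a simple doubling $\lambda_i\mapsto\lambda_{2i}$ or $\lambda_{2i+1}$ is not what one gets motivically: the Frobenius on $\ca_{\leq 1}^\vee=\m[\tau_0,\xi_1]/(\tau_0^2+\xi_1\tau_0\rho+\xi_1\tau)$ forces $\theta(\lambda_{2n})=\lambda_{4n+1}\tau+\lambda_{4n+2}\rho$, with genuine $\tau,\rho$ corrections, while $\theta(\lambda_{2n-1})=\lambda_{4n-1}$. This is not a gap in strategy, but it is exactly the kind of ``pinning down correction terms precisely'' that you flag at the end, and it matters for the applications.
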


Here, we have abbreviated $\Ext_{\ca^F}^{\ast,\ast,\ast}(\m^F,\m^F)$ to $\Ext_F^\ast$, where the single index refers to filtration, or homological degree, i.e.\ $\Ext_F^f = H^f(\ca^F)$.

\begin{remark}\label{rmk:koszulsubtleties}
Several subtleties arise in the construction and identification of the motivic lambda algebra. We note two interesting points here:

\begin{enumerate}

\item Priddy's notion of Koszul algebra in \cite{Pri70} is not general enough for our situation: $\ca^F$ is generally not augmented as an $\m^F$-algebra, and $\m^F$ is generally not central in $\ca^F$. This forces us to consider a more general notion of a Koszul algebra, as well as to find new arguments to prove that $\ca^F$ is Koszul in this more general sense.

\item As readers familiar with the motivic Adem relations might suspect, the elements $\tau$ and $\rho$ of $\m^F$ appear in the relations defining the motivic lambda algebra, as well as in its differential and the endomorphism $\theta$ lifting $\Sq^0$. Determining these formulas precisely is delicate and requires some careful arguments. 
\tqed

\end{enumerate}
\end{remark}

\begin{remark}
As indicated above, we construct the $F$-motivic lambda algebra as a certain Koszul complex for the $F$-motivic Steenrod algebra. The Koszul story produces other complexes as well: for any $\ca^F$-modules $M$ and $N$ with $M$ projective over $\m^F$, there are complexes $\Lambda^F(M,N)$ serving as small models of the cobar complex computing $\Ext_{\ca^F}(M,N)$. An amusing special case of this produces a lambda algebra $\Lambda^{C_2}$ for the $C_2$-equivariant Steenrod algebra (\cref{Rmk:GeneralCobar}).
\tqed
\end{remark}

We use the motivic lambda algebra to study $\Ext_F$ in low filtration. Before diving into our more extensive computations, we illustrate the structure of $\Lambda^F$ with some simple examples in \cref{ssec:simpleexamples}, showing how it may be used to give easy rederivations of some well-known low-dimensional relations in $\Ext_F$. We then carry out our main algebraic computation in \cref{Sec:ExtLow}, where we prove the following. Note that $\Ext_\r^0 = \f_2[\rho]$.

\begin{thmx}\label{thmx:extr}
The structure of $\Ext_\r$ in filtrations $f\leq 3$ is as described in \cref{Sec:ExtLow}; in particular, the $\f_2[\rho]$-module structure is described in \cref{thm:extadd}, including a description of multiplicative generators and the action of $\Sq^0$, and the majority of the multiplicative structure is described in \cref{thm:extmult}.
\tqed
\end{thmx}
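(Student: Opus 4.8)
The plan is to read the answer directly off the $\r$-motivic lambda algebra $\Lambda^\r$ provided by \cref{thmx:lambda}. Since $H_\ast\Lambda^\r \cong \Ext_\r^\ast$ compatibly with all products, Massey products, and the squaring operation (the latter via the explicit differential graded algebra endomorphism $\theta$), and since $\Lambda^\r$ carries a cohomological grading by word length, computing $\Ext_\r$ in filtrations $f\le 3$ reduces to computing the homology of the subquotient of $\Lambda^\r$ concentrated in word lengths $f\le 3$, remembering the differentials incoming from length $4$. First I would use the explicit presentation of $\Lambda^\r$ (generators $\lambda_i$, the admissibility/straightening relations with their $\tau$- and $\rho$-corrections, and the differential formula) to write out an $\f_2$-basis of each relevant tridegree, organized by Milnor--Witt stem and weight; the specialization statement $\Lambda^\r[\tau^{-1}] = \Lambda^\cl[\tau^{\pm 1}]$ serves as a consistency check, since the $\tau$-inverted answer must recover the classical $\Ext$ through the $3$-line.

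Then, filtration by filtration: in $f=0$ the complex is $\f_2[\rho]$ with trivial differential, giving $\Ext_\r^0=\f_2[\rho]$; in $f=1$ the cycles are the decorated classes $\lambda_i$ whose differential into length-$2$ monomials vanishes, producing the $h_i$, their $\rho$-multiples, and whatever $\rho$-torsion appears; in $f=2,3$ one computes kernels modulo images in each tridegree. To keep the bookkeeping manageable it is natural to organize the computation along the $\rho$-Bockstein filtration, passing from the simpler $\tau$-linear (essentially $\c$-motivic) computation to $\Lambda^\r$ one $\rho$-power at a time. The $\f_2[\rho]$-module structure, i.e.\ the content of \cref{thm:extadd}, then follows by identifying generators of these homology groups and the $\rho$-action on them, and the $\Sq^0$-action follows by evaluating $\theta$ on the chosen representing cycles and recognizing the output in terms of the named generators. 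For \cref{thm:extmult}, I would multiply representing monomials by concatenation in $\Lambda^\r$, straighten into admissible form, and reduce modulo boundaries; Massey products come from the usual lambda-algebra recipe, using null-homotopies witnessed inside $\Lambda^\r$.

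The main obstacle is the scale and delicacy of the bookkeeping rather than any single conceptual hurdle. Unlike classically, a fixed (stem, filtration) bidegree typically supports an infinite $\rho$-tower together with extra $\tau$-torsion classes, so one must track how $\rho$ and $\tau$ enter through the straightening relations, the differential, and $\theta$ with no room for error; and proving that the proposed list of eight infinite families of multiplicative generators is \emph{complete} requires systematically matching the admissible monomial basis against the computed homology in every stem. A secondary difficulty is that, although $\theta$ is explicit, evaluating it on cycles and identifying the result as a \emph{specific} class (not merely a class in the right group) again needs the full basis in hand. This is also why \cref{thm:extmult} records only the majority of the product structure: a few products land in groups where the lambda-algebra computation does not by itself single out the value, and those are left open.
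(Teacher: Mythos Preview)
Your overall plan is the paper's, and your instinct to organize the computation along the $\rho$-Bockstein filtration is correct. But the paper is sharper than your sketch in one essential way: it never computes kernels modulo images in filtration $3$ directly, and in particular never looks at differentials into $\Lambda[4]$. After splitting off the $\rho$-free summand via the doubling inclusion $\thetatilde\colon\Lambda^{\dcl}\hookrightarrow\Lambda^\r$ (which hands you Wang's classical $3$-line for free, \cref{prop:retract}), the $\rho$-torsion is computed by a ``tag'' algorithm (\cref{lem:tags}, \cref{thm:tags}), a chain-level refinement of the $\rho$-Bockstein spectral sequence. Starting from a fixed basis of $\Ext_\c^{\leq 3}$, one determines for each non-cycle basis element $x_b$ in filtration $f\leq 2$ a unique relation $\delta(x_b+{<})=\rho^{r(b)}x_{t(b)}+{<}$; the $\rho$-torsion in $\Ext_\r^{f+1}$ is then exactly $\bigoplus_b \f_2[\rho]/(\rho^{r(b)})$, with generator $\rho^{-r(b)}\delta(x_b+{<})$. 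Thus all generators, their $\rho$-torsion exponents, and explicit cocycle representatives are produced from differentials \emph{out of} filtrations $\leq 2$ only (\cref{prop:b0}, \cref{prop:b1}, \cref{prop:b2}). Your proposal to track incoming differentials from length $4$ would work in principle but is exactly what this machinery is built to avoid.

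Your plan for the multiplicative structure has a genuine gap. Most products are indeed settled either by a uniqueness argument (\cref{lem:unambiguousproduct}) or by multiplying explicit cocycles and straightening, as you describe (\cref{prop:2linerelations}, \cref{prop:3linerelations}). But three remaining families (\cref{prop:specialproducts}) first require shifting powers of $\tau$ between factors (\cref{lem:movetau}), and those shifts are \emph{not} obtained by lambda-algebra manipulation: they are proved by Massey-product shuffling in $\Ext_\r$, an argument supplied by Dugger--Hill--Isaksen after appearing as a conjecture in an earlier version. So your explanation for the word ``majority'' is off---the paper does resolve these products, but needs input beyond concatenating and straightening cocycles to do so.
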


Here, we are justified in focusing on $\Ext_\r$ as it is, in a certain precise sense, the universal case (see \cref{rmk:runiv}). We explain in \cref{ssec:extf} how to pass from information about $\Ext_\r$ to information about $\Ext_F$ for other base fields $F$.

\begin{example}[{Part (1) of \cref{thm:extadd}}]\label{ex:generators}
The computation of $\Ext_\r^{\leq 3}$ is much more involved than the corresponding classical computation, and the result is much richer. We refer the reader to \cref{Sec:ExtLow} for the full statements, but illustrate this here with the following sample. Classically, $\Ext_\cl^{\leq 3}$ is generated as an algebra by the classes $h_a$ and $c_a$ for $a\geq 0$. By contrast, a minimal multiplicative generating set of $\Ext_\r^{\leq 3}$ as an $\f_2[\rho]$-algebra is given by the classes in the following table:
\begin{longtable}{ll}
\toprule
Multiplicative generator & $\rho$-torsion exponent \\
\midrule \endhead
\bottomrule \endfoot
$h_{a+1}$ & $\infty$ \\
$c_{a+1}$ & $\infty$ \\
$\tau^{\floor{2^{a-1}(4n+1)}}h_a$ & $2^a$ \\
$\tau^{2^a(8n+1)}h_{a+2}^2$ & $2^{a+1}\cdot 3$ \\
$\tau^{\floor{2^{a-1}(2(16n+1)+1)}}h_{a+3}^2h_a$ & $2^a\cdot 13$ \\
$\tau^{2^a(4(4n+1)+1)}h_{a+3}^3$ & $2^a\cdot 7$ \\
$\tau^{\floor{2^{a-1}(16n+1)}}c_a$ & $2^a\cdot 7$ \\
$\tau^{2^{a+1}(8n+1)}c_{a+1}$ & $2^{a+2}\cdot 3$ \\
$\tau^{\floor{2^{a-1}(2(4n+1)+1)}}c_a$ & $2^a\cdot 3$.
\end{longtable}
Here, $a,n\geq 0$, and the $\rho$-torsion exponent of a class $\alpha$ is the minimal $r$ for which $\rho^r \alpha = 0$; the classes $h_{a+1}$ and $c_{a+1}$ are $\rho$-torsion-free. Note that all of the classes listed are named for their image in $\Ext_\c$, and are not themselves products.
\tqed
\end{example}

\begin{example}
Observe that the multiplicative generators $h_a$ and $c_a$ of $\Ext_\cl^{\leq 3}$ appear, with a shift, as $\rho$-torsion-free classes in $\Ext_\r$. This is a general phenomenon: in \cite[Theorem 4.1]{DI16a}, Dugger--Isaksen produce an isomorphism $\Ext_\r[\rho^{-1}]\simeq\Ext_{\dcl}[\rho^{\pm 1}]$; here, $\Ext_{\dcl} = \Ext_\cl$, only given a motivic grading so that $\Ext_\cl^{s,f} = \Ext_{\dcl}^{2s+f,f,s+f}$. As we discuss in \cref{ssec:doubling}, this in fact refines to a splitting $\Ext_\r\cong \Ext_\dcl[\rho]\oplus \Ext_\r^{\rho\hyp\tors}$, where $\Ext_\r^{\rho\hyp\tors}\subset\Ext_\r$ is the subgroup of $\rho$-torsion; moreover, this splitting is modeled by a multiplicatively split inclusion $\thetatilde\colon \Lambda^{\dcl}\rightarrow\Lambda^\r$. The general shape of $\Ext_\r$ forced by this may be illustrated by the following description of the $1$-line:
\begin{equation}\label{eq:extr1}
\Ext_\r^1 = \f_2[\rho]\{h_a:a\geq 1\}\oplus\bigoplus_{a\geq 0}\f_2[\rho]/(\rho^{2^a})\{\tau^{\floor{2^{a-1}(4n+1)}}h_a:n\geq 0\}.
\end{equation}
\tqed
\end{example}

As $\Ext_\cl^{\leq 3}$ is entirely understood by Wang's computation \cite{Wan67}, the hard work of \cref{thmx:extr} is in computing the $\rho$-torsion subgroup of $\Ext_\r^{\leq 3}$. This is the most computationally intensive part of the paper, and proceeds by a direct case analysis of monomials in $\Lambda^\r$ in low filtration. In the end, we find that $\Ext_\r^{\leq 3}$ carries the multiplicative generators listed in \cref{ex:generators}, and that there are many exotic relations between these generators. Our computation describes all of this.

With the algebraic computation of \cref{thmx:extr} in place, we turn to more homotopical topics, namely those surrounding the \textit{Hopf invariant one problem}. There are (at least) \textit{two} good motivic analogues of the Hopf invariant one problem: one which is unstable, concerning the construction of unstable $2$-cell complexes with nontrivial cup product structure, and one which is stable, concerning the construction of stable $2$-cell complexes with nontrivial $\ca^F$-module structure. As we recall in \cref{ssec:1lineext}, understanding the latter question is equivalent to understanding the $1$-line of the $F$-motivic Adams spectral sequence; we get to this in \cref{sec:hopf}, which we will discuss further below.

It is the former unstable formulation which has more direct geometric applications. For example, following work of Dugger--Isaksen \cite{DI13} on the Hopf construction in motivic homotopy theory, it is directly tied up with the question of which unstable motivic spheres $S^{a,b}$ admit $H$-space structures (see \cref{prop:hopfconstruction}). Here, $S^{a,b}$ is the motivic sphere which is $\a^1$-homotopy equivalent to $\Sigma^{a-b} \g_m^{\wedge b}$. We discuss this unstable formulation in \cref{sec:hi1}, which is independent of our other calculations. One pleasant consequence of this story is the following.

\begin{thmx}[\cref{thm:hspace}]\label{thmx:products}
The only motivic spheres which are represented by smooth $F$-schemes admitting a unital product are $S^{0,0}$, $S^{1,1}$, $S^{3,2}$, and $S^{7,4}$.
\tqed
\end{thmx}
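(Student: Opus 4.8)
\emph{Proof proposal.} The plan is to treat the two directions separately: first exhibit the four listed spheres as smooth schemes carrying unital products, and then show that no other $S^{a,b}$ can occur. For the realizability (``if'') direction I would use composition algebras. Over $F$ (recall $\operatorname{char}F\neq 2$) the split composition algebras of dimensions $1,2,4,8$ all exist --- namely $F$ with norm $x^2$, $F\times F$, the matrix algebra $M_2(F)$ with norm $\det$, and the split octonions with a rank-$8$ hyperbolic norm --- and in each case the multiplicative norm form $N$ cuts out a smooth affine variety $N^{-1}(1)$ on which the algebra multiplication restricts to a unital product: it is unital because $1\in N^{-1}(1)$, and closed because $N$ is multiplicative. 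One then identifies these norm-$1$ varieties up to $\a^1$-equivalence: for $F$ one gets $\mu_2\cong\z/2\simeq S^{0,0}$; for $F\times F$ one gets $\g_m\simeq S^{1,1}$; for $M_2(F)$ one gets $\mathrm{SL}_2\simeq\a^2\setminus\{0\}\simeq S^{3,2}$; and for the split octonions one gets a smooth affine $7$-dimensional split quadric $Q_7\simeq\a^4\setminus\{0\}\simeq S^{7,4}$. This settles ``if''.

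For the converse, suppose $X$ is a smooth $F$-scheme with $X\simeq_{\a^1}S^{a,b}$ carrying a unital product. The first step is to constrain $a$. Via the Hopf construction (\cref{prop:hopfconstruction}) the unital product produces a motivic Hopf-invariant-one element, equivalently a $2$-cell complex whose cup-square is nontrivial. If $F$ embeds into $\c$, complex Betti realization is symmetric monoidal and carries $S^{a,b}$ to $S^a$, so it carries this situation to a classical Hopf-invariant-one element in $\pi_\ast^{\cl}$, and Adams' theorem forces $a\in\{0,1,3,7\}$. For a general base field of characteristic $\neq 2$ --- in particular in positive characteristic, where no complex embedding is available --- I would instead invoke the motivic Lefschetz principle (\cref{prop:lefschetz}) to reduce to the case of an algebraically closed base, apply the unstable Betti realization of \cref{prop:bettiun} (which again sends $S^{a,b}$ to a form of $S^a$), and use that the secondary-operation obstruction underlying Adams' theorem is a statement in mod $2$ cohomology, hence insensitive to the $2$-adic completion inherent in such a realization; this again gives $a\in\{0,1,3,7\}$.

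The second step is to pin down $b$. Here I would use the classification of which motivic spheres admit smooth models (recalled in \cref{sec:preliminaries}): a motivic sphere $S^{a,b}$ representable by a smooth $F$-scheme has $(a,b)$ equal to $(0,0)$, $(1,1)$, $(2,1)$, or $(2n-1,n)$ or $(2n,n)$ for some $n\geq 1$. Intersecting this list with the constraint $a\in\{0,1,3,7\}$ from the first step leaves exactly $(0,0)$, $(1,1)$, $(3,2)=(2\cdot 2-1,2)$, and $(7,4)=(2\cdot 4-1,4)$. Combined with the first paragraph, this proves the theorem.

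I expect the main obstacle to be the first step over fields with no complex embedding: there the classical Hopf invariant one theorem is not directly applicable, and bridging this gap is precisely the purpose of the motivic Lefschetz principle together with the construction of unstable Betti realization over arbitrary algebraically closed base fields. The second step, by contrast, rests on external input (the classification of smooth models) and is not where the difficulty lies; one should be mildly careful only in that, unlike classically, the cup-square of a cohomology class in a motivic $2$-cell complex need not be a single Steenrod operation applied to it, so a naive count of bidegrees does not by itself determine $b$ and the full smooth-model classification is genuinely needed.
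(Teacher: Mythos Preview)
Your overall strategy --- existence via composition/Cayley--Dickson algebras, then constrain $a$ via the Hopf construction and classical Hopf invariant one, then constrain $b$ via the Asok--Doran--Fasel input on smooth models --- is exactly the paper's. The existence paragraph is fine (it is Dugger--Isaksen's argument), and your handling of step~1 over general base fields via the Lefschetz principle and unstable Betti realization is precisely what the paper does in \cref{prop:nohi1} and \cref{prop:hi1ext}.

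The gap is in step~2. You invoke ``the classification of which motivic spheres admit smooth models'', asserting that the only smooth $S^{a,b}$ are $(0,0)$, $(1,1)$, and $(2n-1,n)$, $(2n,n)$. No such classification is available: Asok--Doran--Fasel only prove the \emph{necessary} condition $a\leq 2b$ (equivalently $b\geq\lceil a/2\rceil$) and construct models when $b=\lceil a/2\rceil$. With the genuine ADF input alone, your step~1 constraint $a\in\{0,1,3,7\}$ still leaves, for instance, $(a,b)=(3,3)$ or $(7,5),(7,6),(7,7)$ unexcluded. The paper closes this by extracting more from step~1 than you did: over the algebraic closure, the stable class underlying the Hopf-invariant-one map is detected by some $\tau^n h_k$ with $n\geq 0$ (\cref{prop:hi1ext}), and since $|\tau^n h_k|=(2^k-1,2^{k-1}-n)$ the weight satisfies $b\leq 2^{k-1}=\lceil a/2\rceil$. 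Combining this \emph{upper} bound on $b$ with the ADF \emph{lower} bound $b\geq\lceil a/2\rceil$ forces $b=\lceil a/2\rceil$, giving exactly the four spheres. So the fix is not to strengthen the smooth-model input, but to record the weight constraint already implicit in your step~1.
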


The statement of \cref{thmx:products} is directly analogous to the classical result that the only sphere admitting unital products are $S^0$, $S^1$, $S^3$, and $S^7$. Classically, the nonexistence of $H$-space structures on any other spheres may be reduced to the Hopf invariant one problem, which was then established by Adams. This reduction makes use of the instability condition that $\Sq^a(x) = x^2$ whenever $x\in H^a(X)$ for some space $X$. There is an analogous instability condition for the motivic cohomology of a motivic space, but it holds only in a smaller range than we would need; as a consequence, some additional input is needed to analyze the unstable motivic Hopf invariant one problem (see \cref{rmk:instability}).

This additional input is interesting in itself. It follows from the formulation of the unstable motivic Hopf invariant one problem that, at least for nonexistence, one may reduce to the case where $F$ is algebraically closed. In \cref{ssec:lefschetz}, we explain how work of Wilson--{\O}stv{\ae}r \cite{WO17} implies a certain \textit{Lefschetz principle} for suitable $2$-primary categories of cellular motivic spectra. When combined with Mandell's $p$-adic homotopy theory \cite{Man01}, this gives a $2$-primary unstable ``Betti realization'' functor for any algebraically closed field $F$, which is well-behaved with respect to the mod $2$ cohomology of motivic cell complexes; see \cref{ssec:betti}. This gives a direct relation between motivic and classical homotopy theory, and we are then able to analyze the unstable motivic Hopf invariant one problem using a combination of classical results, work of Dugger--Isaksen \cite{DI13} on the motivic Hopf construction, and work of Asok--Doran--Fasel on smooth models of motivic spheres \cite{ADF17}.

Finally, in \cref{sec:hopf}, we turn to a study of the $1$-line of the $F$-motivic Adams spectral sequence. After a few preliminaries, we begin by proving \cref{thmx:nohopf}, producing the differentials 
\[
d_2(h_{a+1}) = (h_0+\rho h_1)h_a^2
\]
valid for any $F$ (\cref{thm:nohopf}). As we mentioned above, the main content of this theorem is not the fact that the classes $h_{a+1}$ for $a\geq 3$ support nonzero differentials, but the exact value of the target of these differentials. We mention two interesting aspects of this computation here:

First, in order to get a more explicit handle on possible targets of $d_2(h_{a+1})$, we reduce to considering the case where $F$ is a prime field, i.e.\ $F = \f_p$ with $p$ odd or $F = \q$. The latter case is then handled with the aid of a \textit{Hasse principle}. We explain how work of Ormsby--{\O}stv{\ae}r on the structure of $\m^\q$ \cite{OO13} may be used to give a concrete description of $\Ext_\q$ and of the Hasse map
\begin{equation}\label{eq:hasse}
\Ext_\q\rightarrow\Ext_\r\times\prod_{p\text{ prime}}\Ext_{\q_p},
\end{equation}
in particular proving this map is injective (\cref{prop:hasse}). In this way we reduce to computing the differentials $d_2(h_n)$ over the fields $\f_p$ with $p$ odd, $\q_p$ with $p$ prime, and $\r$.

Second, the classical argument, using the fact that $2 \sigma^2 = 0$, may be used to compute $d_2(h_4)$, but a new argument is required to produce the differential $d_2(h_5) = (h_0+\rho h_1)h_4^2$ (\cref{lem:d2h5f}). Once this differential is resolved, the remaining follow by an inductive argument analogous to Wang's classical argument \cite{Wan67}. After a further reduction when $F = \r$, the differential $d_2(h_5)$ may be resolved uniformly in the above choices of base field. In short, to resolve this differential, we lift the Hurewicz map $\pi_\ast^{\cl}\rightarrow\pi_{\ast,0}^F$ to a map $\Ext_\cl^{\ast,\ast}\rightarrow\Ext_F^{\ast,\ast,0}$ of spectral sequences (\cref{lem:diagonal}), and by considering the effect of this on the Kervaire class $\theta_4$, deduce that $(h_0+\rho h_1)h_4^2$ must be hit by $h_5$.

The story does not stop with the differentials $d_2(h_{a+1})$, as $\Ext_F^1$ contains many more classes than these; recall for instance $\Ext_\r^1$ from \cref{eq:extr1}. Having resolved these differentials, we move on to giving an explicit analysis of the $1$-line of the $F$-motivic Adams spectral sequence for a number of base fields $F$.  Our main results may be summarized in the following.

\begin{thmx}\label{thmx:1line}
The following are carried out in \cref{sec:hopf}.
\begin{enumerate}
\item In \cref{thm:r}, we compute all $d_2$-differentials out of $\Ext_\r^1$, as well as all permanent cycles in $\Ext_\r^1$.
\item In \cref{thm:enf1}, for $q$ a prime-power satisfying $q\equiv 1 \pmod{4}$, we compute all Adams differentials out of $\Ext_{\f_q}^1$, in particular giving all permanent cycles in $\Ext_{\f_q}^1$.
\item In \cref{thm:enf3}, for $q$ a prime-power satisfying $q\equiv 3 \pmod{4}$, we compute all $d_2$-differentials out of $\Ext_{\f_q}^1$, as well as all higher differentials in stems $s\leq 7$, in particular giving all permanent cycles in $\Ext_{\f_q}^1$ in stems $s\leq 7$.
\item In \cref{thm:qpf}, for $p$ an odd prime, we give as much information about $\Ext_{\q_p}^1$ as was given for $\Ext_{\f_p}^1$.
\item In \cref{thm:q2}, we compute all $d_2$-differentials out of $\Ext_{\q_2}^1$, as well as all higher differentials in stems $s\leq 7$, in particular giving all permanent cycles in $\Ext_{\q_2}^1$ in stems $s\leq 7$.
\item In \cref{thm:q}, we give the same information for $\Ext_\q^1$ as was given for $\Ext_{\q_2}^1$.
\tqed
\end{enumerate}
\end{thmx}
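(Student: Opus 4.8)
The plan is to organize the proof around three inputs: the explicit additive structure of $\Ext_F^1$, the universal $d_2$-differentials of \cref{thmx:nohopf}, and a suite of field-specific comparison tools (Betti realization, the Hasse principle, and the Borel $C_2$-equivariant comparison). First I would make the additive structure of $\Ext_F^1$ completely explicit for each field in the list. For $F=\r$ this is \cref{eq:extr1}; for the other fields one passes from $\Ext_\r$ (\cref{thmx:extr}) to $\Ext_F$ via the base-change discussion of \cref{ssec:extf}, which only requires the mod $2$ Milnor $K$-theory. For $F=\f_q$ this is concentrated in degrees $0$ and $1$, with $\rho=0$ exactly when $q\equiv 1\pmod 4$; for $F=\q_p$ it is again small and well understood, with $\rho$ nilpotent; and for $F=\q$ it is governed by the description of $\m^\q$ of Ormsby--{\O}stv{\ae}r \cite{OO13}. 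The output is an explicit $\f_2$-basis of $\Ext_F^{1,\ast,\ast}$, stem by stem, on which the differentials must be evaluated.

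Next I would record the $d_2$-differentials. \cref{thmx:nohopf} gives $d_2(h_{a+1}) = (h_0+\rho h_1)h_a^2$, nonzero for $a\geq 3$, while $h_0,h_1,h_2,h_3$ are permanent cycles, being detected by (motivic refinements of) $2$, $\eta$, $\nu$, $\sigma$. From these, the Leibniz rule together with the $\f_2[\rho]$-module structure and the lift $\theta$ of $\Sq^0$ furnished by \cref{thmx:lambda} propagates $d_2$ across the $\tau$-power families $\tau^{\floor{2^{a-1}(4n+1)}}h_a$ and the $\rho$-multiples appearing in \cref{eq:extr1}; the targets land in $\Ext_F^3$, whose structure is pinned down by \cref{thmx:extr}. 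Complex Betti realization (and, over $\r$, real Betti realization) imports and constrains these differentials, and for fields with no real or complex embedding the injectivity of the Hasse map \cref{eq:hasse} (\cref{prop:hasse}) transports the differential from $\r$ and the $\q_p$.

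The remaining work is field-by-field and concerns higher differentials and the identification of permanent cycles. For $F=\r$, the decisive tool is the comparison with the Borel $C_2$-equivariant Adams spectral sequence of \cref{thmx:rp}: knowing all permanent cycles there, one transports this back to determine all permanent cycles in $\Ext_\r^1$, and hence, together with the $d_2$'s, all differentials out of $\Ext_\r^1$. For $F=\f_q$ with $q\equiv 1\pmod 4$ one has $\rho=0$, the picture simplifies substantially, and $d_2$ essentially determines everything. For $q\equiv 3\pmod 4$, $\rho$ is nonzero but nilpotent; $d_2$ is determined as above, and the higher differentials in stems $s\leq 7$ are read off from the known motivic stable stems of finite fields \cite{Wil16, WO17} (or by a direct truncated computation). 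The $\q_p$ cases are handled by comparison with the residue field — for $p$ odd this is $\f_p$ — together with the behavior of $\rho$ over $\q_p$; the case $p=2$ needs the extra care reflected in the statement (only $s\leq 7$). Finally, $\Ext_\q^1$ is treated by feeding the $\r$ and $\q_p$ computations into the injective Hasse map, which forces the $\q$-differentials.

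The main obstacle, already flagged in the remarks following \cref{thmx:nohopf}, is twofold. First, the base case $d_2(h_5)=(h_0+\rho h_1)h_4^2$ over an arbitrary field does not follow from graded commutativity as it does classically: it requires the reduction to prime fields, the Hasse principle, and an analysis of how the Kervaire class $\theta_4$ enters $\pi_{\ast,0}^F$ through the lift $\Ext_\cl^{\ast,\ast}\to\Ext_F^{\ast,\ast,0}$ of the Hurewicz map (\cref{lem:diagonal}). Second — and more specific to this theorem — the $\rho$-torsion part of $\Ext_F^1$ carries many exotic classes, and deciding which support differentials, which are hit, and which survive requires the full multiplicative structure of \cref{thmx:extr} together with careful bookkeeping to keep the Betti-realization, $C_2$-equivariant, and Hasse-square comparisons mutually consistent.
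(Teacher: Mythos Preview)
Your overall architecture is sound, and the $\r$ and $\q$ cases are handled essentially as the paper does: the Borel $C_2$-equivariant comparison for $\r$ and the injective Hasse map for $\q$ are precisely the tools used. However, there is a genuine gap in your treatment of the finite fields and the $p$-adic fields: you are missing the \emph{arithmetic differentials}. For $F=\f_q$, $\q_p$, or $\q_2$, the $1$-line contains classes such as $\tau^n h_0$ (and more generally $\tau$-power multiples of $h_b$ for $b\leq 3$) whose fate is not governed by the Hopf differentials of \cref{thmx:nohopf} at all. These support differentials of the form $d_r(\tau^{2^s})=\omega\,\tau^{2^s-1}h_0^{r}$ with $\omega\in\m_0^F$ and $r$ depending on the $2$-adic valuation of $q-1$ or $q^2-1$, which then propagate via the Leibniz rule. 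In particular, your claim that for $q\equiv 1\pmod 4$ ``$d_2$ essentially determines everything'' is false: the key differentials are $d_{\varepsilon(q)+s}$ with $\varepsilon(q)=\nu_2(q-1)$, which can be arbitrarily long, and even $\tau^n h_2$ and $\tau^n h_3$ can fail to be permanent cycles depending on $\varepsilon(q)$.

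The paper obtains these arithmetic differentials not by appeal to the computed motivic stable stems of \cite{Wil16,WO17} (which themselves rely on the Adams spectral sequence and in any case do not cover all the stems you need), nor by comparison with the residue field, but by naturality from the $F$-motivic Adams spectral sequence for $H\z$ (Kylling \cite{Kyl15} for $\f_q$) or for $BP\langle 0\rangle$ (Ormsby \cite{Orm11} for $\q_p$ with $p$ odd, Ormsby--{\O}stv{\ae}r \cite{OO13} for $\q_2$ and $\q$). In those target spectral sequences the differentials on powers of $\tau$ are forced by the known homotopy of the target, and they pull back along the unit map. Without this input you cannot determine which $\tau^n h_b$ survive, even for $b\leq 3$, and you cannot produce the tables in \cref{thm:enf1}, \cref{thm:enf3}, \cref{thm:qpf}, or \cref{thm:q2}.
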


Cases (2--6) of \cref{thmx:1line} proceed by a direct analysis, combining the Hopf differentials we produced in \cref{thmx:nohopf} with arithmetic differentials that may be obtained by comparison with the $F$-motivic Adams spectral sequence for integral motivic cohomology. The latter has been computed by Kylling \cite{Kyl15} for $F = \f_q$ with $q$ an odd prime-power, by Ormsby \cite{Orm11} for $F = \q_p$ with $p$ an odd prime, and by Ormsby--{\O}stv{\ae}r \cite{OO13} for $F = \q_2$ and $F = \q$. Case (6), where $F = \q$, may be read off the cases $F = \r$ and $F = \q_p$, using our good understanding of the Hasse map \cref{eq:hasse}. As with Ormsby and {\O}stv{\ae}r's computations over $\q$, the final description of the set of $d_2$-cycles in $\Ext_\q^1$ is quite intricate, but we feel that our techniques show that understanding the $\q$-motivic Adams spectral sequence for $\pi_{\ast,\ast}^\q$ is an accessible problem ripe for future investigation.

The $\r$-motivic computation requires more work. Recall the structure of $\Ext_\r^1$ from \cref{eq:extr1}. \cref{thmx:nohopf} describes what happens on the $\rho$-torsion-free summand of this, but says nothing about the large quantity of $\rho$-torsion classes. It is possible to use similar methods to compute all $d_2$-differentials supported on this $\rho$-torsion summand, and we do so in \cref{prop:tauhopfdiff}. However, this is insufficient to determine which classes in $\Ext_\r^1$ are permanent cycles, as higher differentials may, and indeed must, occur.

We resolve this by comparison with \textit{Borel $C_2$-equivariant homotopy theory}. In \cite{BS20}, Behrens--Shah formulate and prove an equivalence
\[
(\Sp_\r^\cell)_{(2,\rho)}^\wedge[\tau^{-1}]\simeq \mathrm{Fun}(BC_2,\Sp_2^\wedge)
\]
between the $\tau$-periodic $(2,\rho)$-complete cellular $\r$-motivic category and the $2$-complete Borel $C_2$-equivariant category. Define
\[
\Ext_{BC_2}^{s,f,w} = \Ext_{\ca^{\cl}}^{s-w,f}(\f_2,H^\ast P^\infty_w),
\]
where $P^\infty_w$ is a stunted real projective space. These form the $E_2$-pages of the classical Adams spectral sequences for the stable cohomotopy groups of infinite stunted projective spaces. The equivalence of Behrens--Shah gives an effective method of computing these groups by ``inverting $\tau$'' in $\Ext_\r$. The $\tau$-periodic behavior of $\Ext_\r$ is plainly visible in our computation of $\Ext_\r^{\leq 3}$, allowing us to directly read off the structure of $\Ext_{BC_2}^{\leq 3}$ (\cref{lem:extbc2}). In particular,
\[
\Ext_{BC_2}^1 = \f_2[\rho]\{h_a:a\geq 1\}\oplus\bigoplus_{a\geq 0}\f_2[\rho]/(\rho^{2^{a}})\{\tau^{\floor{2^{a-1}(4n+1)}}h_a:n\in\z\}
\]
(compare \cref{eq:extr1}). We warn the reader that this naming of classes is incompatible with viewing $\Ext_{BC_2}$ as a collection of ordinary Adams spectral sequences; for example, $h_0$ does not detect $2$, but instead the transfer $P^\infty_0\rightarrow S^0$. We may use the relatively simple structure of these $1$-lines to verify that $\Ext_\r^1\rightarrow\Ext_{BC_2}^1$ reflects permanent cycles (\cref{lem:pconly}), and this reduces the identification of permanent cycles in $\Ext_\r^1$ to the identification of permanent cycles in $\Ext_{BC_2}^1$. The problem of $\rho$-torsion permanent cycles in $\Ext_{BC_2}^1$ turns out to be equivalent to the vector fields on spheres problem (\cref{lem:split}), which was resolved by Adams \cite{Ada62}. Together with known information regarding the $\rho$-torsion-free classes, this leads to the following classification of maps $\Sigma^c P^\infty_w\rightarrow S^0$ detected in Adams filtration $1$.

\begin{thmx}[\cref{thm:rpcohomotopy}]\label{thmx:rp}
For $a\geq 0$, write $a = c+4d$ with $0\leq c \leq 3$, and define $\psi(a) = 2^c+8d$. Then the subgroup of permanent cycles in $\Ext_{BC_2}^1$ is given by
\[
\f_2[\rho]\{h_1,h_2,h_3,\rho h_4\}\oplus\bigoplus_{a\geq 0}\f_2[\rho]/(\rho^{\psi(a)})\{\rho^{2^a-\psi(a)}\tau^{\floor{2^{a-1}(4n+1)}}h_a:n\in\z\}.
\]
Moreover, one may characterize maps $\Sigma^cP^\infty_w\rightarrow S^0$ detected by each of these classes.
\tqed
\end{thmx}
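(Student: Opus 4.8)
The plan is to work directly in the Borel $C_2$-equivariant category, using the Behrens--Shah equivalence to regard $\Ext_{BC_2}^{\ast}$ as the collection of classical Adams $E_2$-pages computing the stable cohomotopy of the stunted projective spectra $P^\infty_w$, and then to compute the permanent cycles on the $1$-line. Using the description of $\Ext_{BC_2}^{\leq 3}$ obtained from $\Ext_\r^{\leq 3}$ (\cref{lem:extbc2}), I would split $\Ext_{BC_2}^1$ as the $\rho$-torsion-free summand $\f_2[\rho]\{h_a : a\geq 1\}$ together with the $\rho$-torsion summand $\bigoplus_{a\geq 0}\f_2[\rho]/(\rho^{2^a})\{\tau^{\floor{2^{a-1}(4n+1)}}h_a : n\in\z\}$, and treat each separately. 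Since the permanent cycles form an $\f_2[\rho]$-submodule that is stable under the $\tau$-periodicity operator, it suffices to decide this on each of these two pieces.

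For the $\rho$-torsion-free summand, the starting point is that $h_1,h_2,h_3$ detect $\eta,\nu,\sigma$ and hence are permanent cycles, while by \cref{thmx:nohopf} one has $d_2(h_{a+1})=(h_0+\rho h_1)h_a^2$ for $a\geq 3$. The question is then exactly which $\rho$-multiples of the $h_a$ with $a\geq 4$ survive. Here I would combine a $\rho$-Bockstein analysis, using the explicit relations among the low-filtration $h_0$-towers recorded in \cref{thmx:extr}, with the comparison to the Adams spectral sequence of the $2$-complete sphere furnished by the Segal conjecture for $C_2$ (equivalently, by inverting $\rho$, via the algebraic Segal conjecture of Adams--Gunawardena--Miller): this pins down that $\rho h_4$ is a permanent cycle, that no higher class in the $h_4$-tower presents an obstruction, and that no nonzero $\rho$-multiple of $h_a$ survives for $a\geq 5$. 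The net effect is the $\rho$-torsion-free part of the answer, $\f_2[\rho]\{h_1,h_2,h_3,\rho h_4\}$.

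The $\rho$-torsion summand is the main point, and here I would invoke \cref{lem:split}: a $\rho$-torsion class in $\Ext_{BC_2}^1$ is supported on a finite stunted projective subquotient, and it is a permanent cycle precisely when it is detected by a map $\Sigma^c P^n_m\to S^0$ of Adams filtration one; under $S$-duality such maps correspond to the axial maps whose existence is the content of the vector fields on spheres problem. The optimal range of existence of such maps is governed by the Radon--Hurwitz function, which is exactly our $\psi(a)=2^c+8d$ with $a=c+4d$ and $0\leq c\leq 3$, so that Adams's solution of the vector fields problem \cite{Ada62} determines this range completely. Translating this back, along each $\rho$-tower $\f_2[\rho]/(\rho^{2^a})\{\tau^{\floor{2^{a-1}(4n+1)}}h_a\}$ precisely the top $\psi(a)$ classes survive, i.e.\ those of the form $\rho^{2^a-\psi(a)}\tau^{\floor{2^{a-1}(4n+1)}}h_a$ and their $\rho$-multiples; this is the second summand of the claimed answer. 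For the ``moreover'' clause, the maps $\Sigma^c P^\infty_w\to S^0$ realizing the $\rho$-torsion permanent cycles are the ones produced in the positive part of the vector fields story (via real Bott periodicity and the Clifford-algebra module structures), while those realizing $h_1,h_2,h_3$ are obtained from the Hopf maps composed with the transfers $P^\infty_w\to S^0$ and the one realizing $\rho h_4$ is identified along the way in the $\rho$-Bockstein analysis.

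I expect the main obstacle to be the bookkeeping that links these two strands precisely: verifying that the indexing in \cref{lem:split} matches Adams's Radon--Hurwitz function exactly as $\psi(a)$, with the correct $\rho$- and $\tau$-shifts (including the full $n\in\z$ range of $\tau$-translates and the low cases $a=0,1,2,3$), and carrying the $\rho$-Bockstein analysis of the $\rho$-torsion-free part far enough to see the asymmetry between $h_4$, where $\rho h_4$ survives, and $h_a$ for $a\geq 5$, where nothing survives. Both are manageable precisely because \cref{thmx:extr} makes the relevant structure of $\Ext_\r^{\leq 3}$, hence of $\Ext_{BC_2}^{\leq 3}$, completely explicit.
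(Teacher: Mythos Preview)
Your plan is essentially the paper's own, and the two pieces you isolate (the $\rho$-torsion-free tower of $h_a$'s and the $\rho$-torsion towers $\tau^{\floor{2^{a-1}(4n+1)}}h_a$) are exactly the ones the paper treats separately.

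There are two places where the paper is more direct than your outline, and it is worth calibrating against them. For the $\rho$-torsion-free summand you invoke a ``$\rho$-Bockstein analysis'' and the Segal conjecture; the paper instead reads the answer straight from \cref{thmx:nohopf}: $d_2(\rho^r h_a)=\rho^r(h_0+\rho h_1)h_{a-1}^2$ is nonzero for all $r\geq 0$ once $a\geq 5$ (since $h_1 h_{a-1}^2$ lies in the $\rho$-free summand $\Ext_{\dcl}[\rho]$), while for $a=4$ one has $\rho h_1 h_3^2=0$ so $\rho h_4$ is already a $d_2$-cycle. The paper then shows $\rho h_4$ is a \emph{permanent} cycle by exhibiting the class it detects, namely the symmetric square $\Sq(\sigma)$, identified via the geometric fixed points isomorphism $\pi_{\ast,\ast}^{C_2}[\rho^{-1}]\cong\pi_\ast^\cl[\rho^{\pm 1}]$; this is precisely the ``invert $\rho$'' comparison you alluded to, but used to \emph{produce a witness} rather than to run an additional spectral-sequence argument. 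Your description of the maps realizing $h_1,h_2,h_3$ as ``Hopf maps composed with transfers'' is slightly off: they are the $C_2$-equivariant Hopf maps themselves, precomposed with collapse maps $P^\infty_{w-r}\to P^\infty_w$.

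For the $\rho$-torsion summand your use of \cref{lem:split} is exactly right. The paper phrases the conclusion as: a nonzero $\rho$-torsion filtration-one class in $\pi^{C_2}_{s,w}$ exists iff the bottom cell of $P^{w-1}_{w-s-1}$ splits off, then applies James periodicity to reduce to genuine real projective spaces and invokes Adams \cite[Theorem~1.2]{Ada62}. Your route through $S$-duality and axial maps is equivalent but one step longer; the coreducibility statement is already in the form Adams proved. The indexing check you flag does indeed need care, and the paper handles it by matching degrees directly in the proof of \cref{thm:rpc}.
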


\subsection{Future directions}

The classical lambda algebra has been applied broadly in stable homotopy theory. This suggests several natural directions for future work, and we list a few here. 

\subsubsection{Homological computations}

The homology of the classical lambda algebra can be computed algorithmically via a method known as the \textit{Curtis algorithm}. This procedure was refined and implemented by Tangora \cite{Tan85} to compute the cohomology of the Steenrod algebra through internal degree $56$, as well as to compute products and Massey products \cite{Tan93, Tan94}; further computations of Curtis--Goerss--Mahowald--Milgram \cite{CGMM87} pushed this out to describe the cohomology of the Steenrod algebra through stem $51$. More recently, the Curtis algorithm was used by Wang--Xu to compute the algebraic Atiyah--Hirzebruch spectral sequence for $\r P^\infty$ \cite{WX16}, providing the data necessary for their proof of the uniqueness of the smooth structure on the $61$-sphere \cite{WX17}.

Our method for computing $\Ext_\r^{\leq 3}$ is closely related to the homology algorithm of \cite{Tan85}, only modified to take into account the $\f_2[\rho]$-module structure of $\Lambda^\r$, as well to incorporate some additional flexibility in choosing representatives for the sake of a more digestible manual computation. By ignoring this additional flexibility, and incorporating the ideas of \cite[Section 3.4]{Tan85}, one obtains a Curtis algorithm for computing the homology of the $\r$-motivic lambda algebra, as well as of other motivic lambda complexes. The effectiveness of these procedures in higher dimensions remains to be seen.

In addition to its use in computer-assisted computations, the classical lambda algebra has also been used by Lin \cite{Lin08} and Chen \cite{Che11} to completely compute the cohomology of the classical Steenrod algebra through filtration $5$. In principle, there should be no obstruction to continuing our computation of $\Ext_\r^{\leq 3}$ to higher filtrations, other than the rather more involved calculations and bookkeeping that this would necessarily take.

\subsubsection{Motivic Brown--Gitler spectra}

Brown--Gitler spectra \cite{BG73} have many applications in classical algebraic topology, including Mahowald's analysis of the $bo$-resolution \cite{Mah81, Shi84}, Cohen's solution of the Immersion Conjecture \cite{Coh85}, and more \cite{Mah77, HK99, Goe99}. The classical lambda algebra was essential for constructing and analyzing Brown--Gitler spectra; see \cite{BG73, Shi84} as above, as well as \cite{GJM86}. In \cite{CQ21}, the last two authors introduced a motivic analog of the $bo$-resolution, the $kq$-resolution, and analyzed it over algebraically closed fields of characteristic zero. The analysis of the $kq$-resolution over more general base fields would be greatly simplified by the existence of motivic Brown--Gitler spectra. 

\subsubsection{Unstable motivic Adams spectral sequences}

The classical lambda algebra $\Lambda^{\cl}$ has certain subcomplexes $\Lambda^{\cl}(n)$ which form the $E_1$-page of an unstable Adams spectral sequence:
\[
	E_1 \cong \Lambda^{\cl}(n)\Rightarrow \pi_*S^n. 
\]
Moreover, James's $2$-local fiber sequence \cite{Jam57}
\[
S^n\rightarrow \Omega S^{n+1}\rightarrow \Omega S^{2n+1},
\]
which gives rise to the EHP sequence, is modeled by short exact sequences \cite[Section 11]{Cur71}
\[
	0\rightarrow \Lambda^{\cl}(n)\rightarrow \Lambda^{\cl}(n+1)\rightarrow \Sigma^n\Lambda^{\cl}(2n+1)\rightarrow 0,
\]
which are useful both for understanding the unstable complexes $\Lambda^{\cl}(n)$ and the stable complex $\Lambda^{\cl}$. It is natural to ask whether there are analogous subcomplexes of $\Lambda^F$ related to a suitable motivic unstable Adams spectral sequence. The motivic situation seems to be much more delicate: it is not obvious how to define such subcomplexes of $\Lambda^F$, and the nature of the cohomology of motivic Eilenberg--MacLane spaces suggests that a motivic unstable Adams spectral sequence may not be as well-behaved. A better understanding of these topics would shed light both on the nature of $\Lambda^F$ and on unstable $F$-motivic homotopy theory.

\subsection{Conventions}\label{ssec:conventions}

We maintain the following conventions throughout the paper.

\begin{enumerate}
\item We work solely at the prime $2$.
\item We write $F$ for a base field of characteristic not equal to $2$.
\item We write $\pi_{\ast,\ast}^F$ for the homotopy groups of the $(2,\eta)$-completed $F$-motivic sphere spectrum. 
\item Our homotopy and cohomology groups are bigraded by $(s,w)$, where $s$ is stem and $w$ is weight. 
\item In particular, we write $S^{a,b}$ for the motivic sphere which is $\a^1$-homotopy equivalent to $\Sigma^{a-b}\g_m^{\wedge b}$.
\item We write $H^{\ast,\ast}$ for reduced mod $2$ $F$-motivic cohomology and $H^\ast$ for reduced ordinary mod $2$ cohomology.
\item We write, for instance, $H^{\ast,\ast}(X_+)$ for the unreduced mod $2$ motivic cohomology of $X$.
\item We will use homological grading even for cohomology classes, in the sense that if $x\in H^{a,b}(X)$ then we say $|x| = (-a,-b)$. This allows us to say, for instance, $|\tau| = (0,-1)$ and $|\rho| = (-1,-1)$, regardless or whether we are working with homology or cohomology.
\item We write $\m^F = H^{\ast,\ast}(\Spec (F)_+)$ for the unreduced mod $2$ motivic cohomology of a point.
\item We write $\m^F_0$ for the portion of $\m^F$ concentrated on the line $s = w$, so that $\m^F = \m^F_0[\tau]$. (The ring $\m^F_0$ may be identified as the mod $2$ Milnor $K$-theory of $F$, by work of Voevodsky; see \cite[Section 2.1]{IO19} for an overview of the structure of $\m^F$).
\item We write $\Ext_F$ for the cohomology of the $F$-motivic Steenrod algebra, employing the grading conventions given in the following two points.
\item We write $\Ext_F^f$ for the filtration $f$ piece of $\Ext_F$.
\item We write $\Ext_F^{s,f,w}\subset \Ext_F^f$ for the subset of elements in filtration $f$ and topological stem $s$ and weight $w$.
\item We use a subscript or superscript of $\cl$ to denote classical objects; in particular, $\pi_\ast^\cl$ are the classical $2$-completed stable stems, $\ca^\cl$ is the classical mod $2$ Steenrod algebra, and $\Ext_\cl$ is its cohomology.
\item Given integers $a,n\geq 0$, we write $2^{a-1}(2n+1)$ for the integer so represented for $a\geq 1$, and to be $n$ for $a = 0$.
\item We take the binomial coefficient $\binom{a}{b}$ to be $\frac{a!}{b!(a-b)!}$ for $0\leq b\leq a$, and to be zero otherwise.
\end{enumerate}

\subsection{Acknowledgements}

The authors thank U\v{g}ur Yi\v{g}it for sharing a copy of his thesis which proposes an alternative construction of a $C_2$-equivariant lambda algebra, Eva Belmont and Dan Isaksen for sharing data produced for their study of $\Ext_\r$, and the anonymous referees for helpful comments and suggestions. The first author thanks Nick Kuhn for some enlightening conversations about real projective spaces. The second author would like to thank the Max Planck Institute for providing a wonderful working environment and financial support while this paper was being written. The third author thanks Eva Belmont, Dan Isaksen, and Inna Zakharevich for helpful discussions.

\section*{Part I: The motivic lambda algebra}

\section{The motivic lambda algebra}\label{Sec:Structure}

In this section, we show that Priddy's construction \cite{Pri70} of the lambda algebra as a certain Koszul complex can be extended to produce a motivic lambda algebra. As noted in \cref{rmk:koszulsubtleties}, a more refined notion of Koszulity is needed to handle the more exotic nature of the $\m^F$-algebra $\ca^F$. The notion of a Koszul algebra has been generalized in various ways; see for instance \cite{PP05} for an account of some developments in this area. We will use the formulation given in \cite[Section 3]{Bal21}, as this gives a sufficiently general definition of Koszul algebra and explicit description of their associated Koszul complex. The reader familiar with Koszul algebras will find no surprises in this material.

In \cref{SS:Steenrod}, we review the structure of the $F$-motivic Steenrod algebra $\ca^F$. We show that $\ca^F$ is in fact a Koszul algebra in \cref{SS:PBWMotivic}, ultimately by reducing to Priddy's classical PBW criterion for Koszulity \cite[Section 5]{Pri70}. The \textit{$F$-motivic lambda algebra} $\Lambda^F$ is then defined to be the Koszul complex of $\ca^F$. We compute the structure of $\Lambda^F$ explicitly, and introduce an endomorphism $\theta$ of $\Lambda^F$ lifting the squaring operation $\Sq^0$ on $\Ext_F$. All of this structure is summarized in one place in \cref{SS:StructureSummary}.

\subsection{Review of Koszul algebras}\label{SS:Koszul}

This section summarizes the definitions and facts from \cite[Section 3]{Bal21} regarding Koszul algebras which we will use to construct the motivic lambda algebra. We review this material in some detail, in order to specialize from the more abstract context considered there. Many of the results we need have appeared in varying levels of generality throughout the literature; in particular, the definition of Koszulity we use can be considered as a direct generalization of the homogeneous case considered by Rezk in \cite[Section 4]{Rez12}.

We fix throughout this subsection an associative algebra $S$ to serve as our base ring, together with an associative algebra $A$ which is an $S$-algebra in the sense of being equipped with an algebra map $S\rightarrow A$. Equivalently, $A$ is a monoid in the category of $S$-bimodules. We abbreviate $\otimes = \otimes_S$.

We are most interested in the case where $S=\m^F$ and $A=\ca^F$, and so to avoid some subtle points regarding signs, we shall assume that $S$ is of characteristic $2$. In addition, we suppose throughout that $A$ is projective as a left $S$-module.

\begin{definition}
Say that $A$ is a \textit{graded $S$-algebra} if we have chosen a decomposition $A = \bigoplus_{n\geq 0}A[n]$ of $S$-bimodules such that
\begin{enumerate}
\item $S\cong A[0]$;
\item The product on $A$ restricts to $A[n]\otimes A[m]\rightarrow A[n+m]$.
\end{enumerate}
Say that $A$ is a \textit{filtered $S$-algebra} if we have chosen a filtration $A\cong\colim_{n\rightarrow\infty}A_{\leq n}$ such that
\begin{enumerate}
\item $S\cong A_{\leq 0}$;
\item The product on $A$ restricts to $A_{\leq n}\otimes A_{\leq m}\rightarrow A_{\leq n+m}$.
\end{enumerate}
Finally, say that the filtration on a filtered $S$-algebra $A$ is \textit{projective} if (both $A$ and) the associated graded algebra
\[
\gr A := \bigoplus_{n\geq 0}A[n],\qquad A[n] := \coker(A_{\leq n-1}\rightarrow A_{\leq n})
\]
are projective as left $S$-modules.
\tqed
\end{definition}

Fix a left $A$-module $M$. Write $B^{\unr}(A,A,M)$ and $B(A,A,M)$ for the unreduced and reduced bar resolutions of $M$ relative to $S$; that is, for the unnormalized and normalized chain complexes associated to the standard monadic resolution of $M$ with respect to the adjunction $\LMod_S\leftrightarrows\LMod_A$. These are projective left $A$-module resolutions provided that $M$ is projective as a left $S$-module. If $A$ is a filtered algebra, then $B^{\unr}(A,A,M)$ is a filtered complex, with filtration defined by
\begin{equation}\label{eq:barfiltration}
B^{\unr}_n(A,A,M)[\leq\! m] := \im\left( \bigoplus_{m_1 + \cdots + m_n = m} A \otimes A_{\leq m_1} \otimes \cdots \otimes A_{\leq m_n}\otimes M \rightarrow B_n^{\unr}(A,A,M) \right),
\end{equation}
and this descends to a filtration of $B(A,A,M)$; compare for instance \cite[Section 10]{Pri70}, \cite[Section 4]{Rez12}, or \cite[Section 3.5]{Bal21}.
If $A$ is augmented, then this augmentation makes $S$ into an $A$-bimodule, allowing us to form the bar complex $B(A) := S\otimes_A B(A,A,S)$ and consider the homology $H_\ast(A) := H_\ast(B(A))$, and the filtration of \cref{eq:barfiltration} descends to a filtration on $B(A)$. If $A$ is graded, then $A$ is naturally filtered by $A_{\leq n} = \bigoplus_{i\leq n}A[i]$; this filtration is split in the sense that $A\cong \gr A$, and likewise the filtration on $B(A)$ is split with $\gr B(A) = \bigoplus_{m\geq 0}B(A)[m]$. This then passes to a splitting $H_\ast(A) \cong \bigoplus_{m\geq 0}H_\ast(A)[m]$.

\begin{definition}[{\cite[Definition 4.4]{Rez12}, \cite[Definition 3.5.3]{Bal21}}]
We say that $A$ is a \textit{homogeneous Koszul $S$-algebra} provided that
\begin{enumerate}
\item $A$ has been given the structure of a graded $S$-algebra;
\item $H_n(A)[m]=0$ for $n\neq m$.
\end{enumerate}
We say that $A$ is a \textit{Koszul $S$-algebra} if
\begin{enumerate}
\item $A$ has been equipped with a projective filtration;
\item $\gr A$ is a homogeneous Koszul $S$-algebra.
\tqed
\end{enumerate}
\end{definition}

Suppose now that $A$ is projectively filtered, and fix a left $A$-module $M$ which is flat as a left $S$-module. The filtration of \cref{eq:barfiltration} on $B(A,A,M)$ induced by that on $A$ satisfies $\gr B(A,A,M) \cong A\otimes B(\gr A)\otimes M$, and so the convergent spectral sequence associated to this filtration is of signature
\begin{equation}\label{Eqn:KSS}
E^1_{p,q} = A\otimes H_q(\gr A)[p]\otimes M\Rightarrow H_q B(A,A,M),\qquad d^r_{p,q}\colon E^r_{p,q}\rightarrow E^r_{p-r,q-1}.
\end{equation}

\begin{definition}\label{Def:Koszul}
Let $M$ be an $A$-module which is flat as a left $S$-module. The \textit{Koszul resolution} of $M$ is the augmented chain complex
\[
M\leftarrow K(A,A,M)
\]
defined by
\[
K_p(A,A,M) = E^1_{p,p} = A\otimes H_p(\gr A)[p]\otimes M,
\]
with differential given by the $d^1$ differential of the spectral sequence \cref{Eqn:KSS}. When $M$ is projective as a left $S$-module, we define the \textit{Koszul complex} $K_A(M,M')$ as the cochain complex 
\[
K_A(M,M') := \Hom_A(K(A,A,M),M')\cong \Hom_S(H_\ast(\gr A)\otimes M,M'),
\]
with differential inherited from that on $K(A,A,M)$.
\tqed
\end{definition}

Observe that, by construction, $K(A,A,M)$ is a subcomplex of $B(A,A,M)$, and dually $K_A(M,M')$ is a quotient complex of the cobar complex $C_A(M,M') := \Hom_A(B(A,A,M),M')$. When $A$ is Koszul, the spectral sequence of \cref{Eqn:KSS} collapses into the Koszul complex $K(A,A,M)$, proving the following.

\begin{thm}[{cf.\ \cite[Theorem 3.8]{Pri70}, \cite[Proposition 4.8]{Rez12}, \cite[Theorem 3.5.5]{Bal21}}]\label{thm:koszulcx}
Suppose that $A$ is a Koszul $S$-algebra, and fix left $A$-modules $M$ and $M'$.
\begin{enumerate}
\item If $M$ is flat over $S$, then there is an injective quasiisomorphism $K(A,A,M)\subset B(A,A,M)$;
\item If $M$ is projective over $S$, then there is a surjective quasiisomorphism $C_A(M,M')\rightarrow K_A(M,M')$.
\end{enumerate}
In particular, if $M$ is projective over $S$, then the homology of $K_A(M,M')$ is isomorphic to $\Ext_A(M,M')$.
\qed
\end{thm}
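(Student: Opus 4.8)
The plan is to derive both parts from a single observation: the filtration spectral sequence \cref{Eqn:KSS} attached to the bar complex $B(A,A,M)$ degenerates as soon as $\gr A$ is homogeneous Koszul. I would begin by recalling that $B(A,A,M)$ is an $S$-linearly contractible complex over $M$, so that $H_q B(A,A,M)$ is $M$ for $q=0$ and vanishes for $q>0$, for any left $A$-module $M$; when $M$ is moreover projective over $S$, the terms of $B(A,A,M)$ are projective left $A$-modules. Since $M$ is flat over $S$, the filtration of \cref{eq:barfiltration} gives $\gr B(A,A,M)\cong A\otimes B(\gr A)\otimes M$, so that \cref{Eqn:KSS} takes the stated form $E^1_{p,q}=A\otimes H_q(\gr A)[p]\otimes M$ with $d^1$ the Koszul differential.

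Next I would feed in homogeneous Koszulity of $\gr A$, i.e.\ that $H_q(\gr A)[p]=0$ for $p\neq q$. This forces $E^1$ to be concentrated on the diagonal $p=q$, so that every differential $d^r$ with $r\geq 2$ lands off the diagonal and hence vanishes; thus $E^2=E^\infty$. By \cref{Def:Koszul} the diagonal complex $(E^1_{p,p},d^1)$ is precisely the Koszul resolution $K(A,A,M)$, and convergence of the spectral sequence identifies $\gr H_q B(A,A,M)$ with $H_q K(A,A,M)$. Comparing with the computation $H_\ast B(A,A,M)\cong M$ recalled above shows that $M\leftarrow K(A,A,M)$ is itself a resolution. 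For part (1): as noted just before the statement, $K(A,A,M)$ sits inside $B(A,A,M)$ as a subcomplex, realized through a splitting of the filtration which exists because $\gr A$ is projective over $S$, and in degree $0$ one has $K_0 = A\otimes M = B_0$ with the same augmentation to $M$; a map between two resolutions of $M$ restricting to the identity on $H_0 = M$ is automatically a quasiisomorphism, so the inclusion $K(A,A,M)\hookrightarrow B(A,A,M)$ is an injective quasiisomorphism.

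For part (2), assume $M$ is projective over $S$, so that $B(A,A,M)$ is a projective $A$-resolution of $M$ and $C_A(M,M') = \Hom_A(B(A,A,M),M')$ computes $\Ext_A(M,M')$; the map $C_A(M,M')\to K_A(M,M')$ is then $\Hom_A(-,M')$ applied to the inclusion from part (1). The plan is to check that $K(A,A,M)$ is also a bounded-below complex of \emph{projective} $A$-modules, for then the quasiisomorphism of part (1) is a chain homotopy equivalence of such complexes, which $\Hom_A(-,M')$ sends to the asserted surjective quasiisomorphism; the final statement about $\Ext_A(M,M')$ is then immediate. Since $K_p(A,A,M) = A\otimes_S\bigl(H_p(\gr A)[p]\otimes_S M\bigr)$ and $M$ is projective over $S$, this reduces to showing each Koszul-dual piece $H_p(\gr A)[p]$ is projective over $S$. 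For that I would argue that the weight-$p$ summand of the reduced bar complex $B(\gr A)$ is a bounded complex of projective $S$-modules, using that $\gr A$ has each graded piece $(\gr A)[m]$ projective over $S$; by homogeneous Koszulity this complex is exact outside its top homological degree $p$, where its homology is $H_p(\gr A)[p]$. A bounded complex of projectives which is exact outside its top degree splits, exhibiting $H_p(\gr A)[p]$ as a direct summand of $(\gr A)[1]^{\otimes p}$, hence projective over $S$.

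The main obstacle is not conceptual but a matter of setting up the filtered-algebra bookkeeping correctly: one must verify that the associated graded of the filtered complex $B(A,A,M)$ really is $A\otimes B(\gr A)\otimes M$, with the $d^1$-differential identified with the Koszul differential, and that a compatible splitting realizes $K(A,A,M)$ as a genuine subcomplex of $B(A,A,M)$ rather than merely a subquotient. This is precisely what the projective-filtration hypothesis and the general framework of \cite{Bal21} are engineered to supply. The one further subtlety, projectivity of the Koszul dual $H_\ast(\gr A)$ over $S$ used in part (2), is automatic in the classical augmented, central setting of \cite{Pri70} but in the present generality requires the split-exactness argument sketched above.
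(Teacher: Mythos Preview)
Your proposal is correct and follows exactly the approach the paper indicates. The paper gives no proof beyond the single sentence preceding the theorem---``When $A$ is Koszul, the spectral sequence of \cref{Eqn:KSS} collapses into the Koszul complex $K(A,A,M)$, proving the following''---and defers the details to the cited references; your write-up is a faithful and correct expansion of that sketch, including the projectivity of $H_p(\gr A)[p]$ needed to pass from part~(1) to part~(2) via chain homotopy equivalence.
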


This allows us to define Koszul complexes in the generality we need. We now recall some facts from \cite[Sections 3.6--3.7]{Bal21} describing the structure of Koszul complexes; these are direct analogues of \cite[Theorem 4.6]{Pri70}. We begin by fixing some conventions.

\begin{definition}
Fix a left $S$-module $M$. Then the dual $M^\vee = \LMod_S(M,S)$ carries the structure of a \textit{right} $S$-module by
\[
(f\cdot s)(m) = f(m)\cdot s.
\]
If $M$ is in fact an $S$-bimodule, then $M^\vee$ also carries an $S$-bimodule structure, with left $S$-module structure
\[
(s\cdot f)(m) = (f(m\cdot s)).
\]
Now, if $M$ is a left $S$-module and $M'$ is an $S$-bimodule, then there is a comparision map
\[
c\colon M^\vee\otimes M'^\vee\rightarrow (M'\otimes M)^\vee,\qquad c(f\otimes f')(m'\otimes m)=f'(m' f(m)).
\]
If $M$ is finitely presented and projective as a left $S$-module, then this map is an isomorphism. In general, if $M''$ is another left $S$-module, then we write
\[
M^\vee\cotimes M'' := \LMod_S(M,M''),
\]
so that in particular
\[
M^\vee\cotimes M'^\vee\cong (M'\otimes M)^\vee;
\]
in good cases, this may be realized as a topological tensor product, as the notation suggests.
\tqed
\end{definition}

The theory of Koszul algebras is closely related to the theory of quadratic algebras; let us fix some notation for these.

\begin{definition}
Fix an $S$-bimodule $B$ and subbimodule $R\subset B\otimes B$. The \textit{quadratic algebra} generated by the pair $(B,R)$ is the algebra
\begin{gather*}
T(B,R) := \bigoplus_{n \geq 0} T_n(B,R),\\
T_n(B,R) := \coker \left( \sum_{i+j=n} B^{\otimes i-1} \otimes R \otimes B^{\otimes j-1} \rightarrow B^{\otimes n} \right),
\end{gather*}
with multiplication inherited from the tensor algebra $T(B)$. Similarly, given a subbimodule $R'\subset B^\vee\cotimes B^\vee$ dual to a quotient of $B\otimes B$, we define the completed quadratic algebra
\begin{gather*}
\widehat{T}(B^\vee,R') := \prod_{n\geq 0}\widehat{T}_n(B^\vee,R'),\\
\widehat{T}_n(B^\vee,R') := \coker\left(\sum_{i+j=n} (B^\vee)^{\cotimes i-1}\cotimes R'\cotimes (B^\vee)^{\cotimes j-1}\rightarrow (B^\vee)^{\cotimes n}\right).
\end{gather*}

Say that $(B,R)$ is a \textit{quadratic datum} if $T(B,R)$ is projective. In this case, the \textit{dual quadratic datum} to $(B,R)$ is the pair $(B^\vee,R^\perp)$, where $R^\perp = (T_2(B,R))^\vee$.
\tqed
\end{definition}

The cohomology of a homogeneous Koszul algebra may be explicitly described as follows.

\begin{thm}[{cf.\ \cite[Theorem 2.5]{Pri70}, \cite[Proposition 4.12]{Rez12}, \cite[Theorem 3.6.4]{Bal21}}]\label{Thm:GensRelsKoszul}
\hphantom{blank}
\begin{enumerate}
\item Let $(B,R)$ be a quadratic datum. Then $H^1(T(B,R))[1] \cong B^\vee$, and the inclusion $B^\vee \subset H^*(T(B,R))$ extends to an isomorphism $\widehat{T}(B^\vee,R^\perp) \cong \prod_{n \geq 0} H^n(T(B,R))[n]$.

\item Let $A = \bigoplus_{n \geq 0} A[n]$ be a homogeneous Koszul algebra, and let $R = \ker(A[1] \otimes A[1] \rightarrow A[2])$. Then $A \cong T(A[1],R)$ is quadratic, and $H^*(A) \cong \widehat{T}(A[1]^\vee, R^\perp)$.
\qed
\end{enumerate}
\end{thm}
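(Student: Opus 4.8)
The plan is to prove part (1) by a direct computation in the cobar complex, and then to deduce part (2) by reducing a homogeneous Koszul algebra to the quadratic situation of part (1).

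For part (1), set $A = T(B,R)$, a connected graded $S$-algebra, projective over $S$, generated in degree $1$ with $A[1] = B$. The observation driving the computation is that the cobar complex $C_A(S,S) = \Hom_A(B(A,A,S),S)$ is the degreewise $S$-linear dual of the normalized bar complex $B(A) = S\otimes_A B(A,A,S)$, whose $n$-th term is $\overline A^{\otimes n}$, and that after fixing a single internal degree both complexes become extremely small: since every $\overline A$-factor contributes internal degree $\geq 1$, one has $B_m(A)[n] = 0$ for $m > n$, $B_n(A)[n] = B^{\otimes n}$, and $B_{n-1}(A)[n] = \bigoplus_{i+j=n-2}B^{\otimes i}\otimes A[2]\otimes B^{\otimes j}$, with the bar differential on $B_n(A)[n]$ given by the alternating sum of the $n-1$ adjacent multiplications $B\otimes B\rightarrow A[2]$. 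Dualizing, $C^m_A(S,S)[n] = 0$ for $m > n$, so the ``diagonal'' $\Gamma^\bullet := \bigoplus_n C^n_A(S,S)[n]$ consists entirely of cocycles, is closed under the concatenation product, and the only coboundaries hitting $\Gamma^n$ come from the differential $C^{n-1}_A(S,S)[n]\rightarrow C^n_A(S,S)[n]$; since $B\otimes B\rightarrow A[2]$ is surjective with kernel $R$, the image of this differential is exactly $\sum_{i+j=n-2}(B^\vee)^{\cotimes i}\cotimes R^\perp\cotimes(B^\vee)^{\cotimes j}$. Identifying $\Gamma^n = (B^{\otimes n})^\vee$ with $(B^\vee)^{\cotimes n}$ via the comparison map $c$, this exhibits $\bigoplus_n H^n(A)[n]$ as the quotient of $\Gamma^\bullet$ by a two-sided ideal generated in degree $2$ by $R^\perp$, that is, as $\widehat T(B^\vee,R^\perp)$; in degree $1$ this reads $H^1(A)[1] = B^\vee$, and the identification is multiplicative because the concatenation product on $\Gamma^\bullet$ is the tensor-algebra product. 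Passing to the product over $n$ yields the stated isomorphism. Note that Koszulity of $A$ plays no role here: the computation works for any quadratic datum.

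For part (2), I would first check that a homogeneous Koszul algebra $A$ is quadratic: concentration of $H_1(A)$ in internal degree $1$ forces $A$ to be generated in degree $1$, so there is a surjection $T(A[1])\twoheadrightarrow A$ with kernel a graded ideal $J\subseteq T_{\geq 2}(A[1])$; the standard identification of $\mathrm{Tor}_2^A(S,S) = H_2(A)$ with the minimal relations, together with its concentration in internal degree $2$, shows $J$ is generated by $J\cap A[1]^{\otimes 2} = R := \ker(A[1]^{\otimes 2}\rightarrow A[2])$, so $A\cong T(A[1],R)$. Since $A$ is projective over $S$, the pair $(A[1],R)$ is a quadratic datum, so part (1) gives $\widehat T(A[1]^\vee,R^\perp)\cong\prod_n H^n(A)[n]$. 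Finally, Koszulity upgrades this to the full cohomology: by \cref{thm:koszulcx} one has $H^\ast(A) = H^\ast(K_A(S,S))$ with $K_A(S,S) = \Hom_S(\bigoplus_n H_n(A)[n],S)$, whose degree-$n$ part lies in internal degree $n$, so $H^n(A) = H^n(A)[n]$ and $\prod_n H^n(A)[n] = H^\ast(A)$.

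I expect the main obstacle to be the duality bookkeeping in the $S$-bimodule setting flagged in \cref{rmk:koszulsubtleties}: one must track the left and right $S$-actions throughout, pin down the order-reversal built into the comparison map $c$ so that the object called $R^\perp$ really is the annihilator of $R$ in the correct factor order, and justify both the passage between $S$-linear duals and homology (a universal-coefficients argument requiring the relevant graded pieces to be projective over $S$) and the appearance of the completed tensor product $\cotimes$ when $B$ and $R$ fail to be finitely generated. All of this is automatic when $S = \m^F$, where the modules in play are free, but it requires care in the stated generality.
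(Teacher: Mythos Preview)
The paper does not supply its own proof of this theorem: the statement ends with a terminal \texttt{\textbackslash qed} and is attributed to the cited references \cite{Pri70}, \cite{Rez12}, \cite{Bal21}, so there is nothing to compare your argument against directly. Your sketch is the standard one and is correct in outline; it is essentially what one finds in those sources. The key observation---that $B_m(A)[n]=0$ for $m>n$, so the diagonal $\bigoplus_n C^n_A(S,S)[n]$ of the cobar complex consists entirely of cocycles, with the only coboundaries arising from dualizing the adjacent multiplications $B\otimes B\twoheadrightarrow A[2]$---is exactly right, and identifying the image with the two-sided ideal generated by $R^\perp = A[2]^\vee$ is the heart of the matter.

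Two small remarks. First, your acknowledged concern about projectivity is precisely where the hypothesis that $(B,R)$ be a quadratic \emph{datum} (i.e.\ $T(B,R)$ projective over $S$) is used: one needs $A[2]$ projective so that dualizing the surjection $B^{\otimes 2}\twoheadrightarrow A[2]$ really yields an injection $A[2]^\vee\hookrightarrow (B^\vee)^{\cotimes 2}$ with the expected image. Second, in part~(2) you need not invoke \cref{thm:koszulcx} to conclude $H^n(A)=H^n(A)[n]$: this is immediate from the definition of homogeneous Koszul ($H_n(A)[m]=0$ for $n\neq m$) upon dualizing, again using that the graded pieces are projective over $S$. The route through the Koszul complex works too, but is more than is needed.
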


Now fix a quadratic algebra $A = T(A[1],R)$ and left $A$-modules $M$ and $M'$, supposing that $M$ is projective as a left $S$-module. We may use \cref{Thm:GensRelsKoszul} to describe the Koszul complex $K_A(M,M')$. Recall that
\[
K_A^n(M,M') = \LMod_A(A\otimes H_n(A)[n]\otimes M,M')\cong \LMod_S(H_n(A)[n]\otimes M,M').
\]
If we suppose that $H_\ast(A)$ is projective as a left $S$-module, as holds if $A$ is Koszul, then there is an isomorphism $(H_n(A))^\vee\cong H^n(A)$ of $S$-bimodules. In this case, we have
\[
K_A^n(M,M') \cong \LMod_S(M,H^n(\gr A)[n]\cotimes M')\cong \LMod_S(M,\widehat{T}_n(A[1]^\vee,R^\perp)\cotimes M');
\]
Thus $K_A^\ast(M,M')$ is completely described as a graded object by \cref{Thm:GensRelsKoszul}.

It remains to describe the differential on $K_A(M,M')$. Observe first that if $M''$ is an additional $A$-module, then there are pairings
\[
\wr\colon K_A^n(M,M')\otimes_\z K_A^{n'}(M',M'')\rightarrow K_A^{n+n'}(M,M'').
\]
This is a pairing of chain complexes compatible with analogous pairings on cobar complexes, and when $A$ is Koszul, it is a chain-level lift of the standard composition product in $\Ext_A$. In addition, it may be described in terms of the product structure on $\widehat{T}(A[1]^\vee,R^\perp)$ as follows (cf.\ \cite[Sections 3.2, 3.7]{Bal21}). Write $\mu$ for the multiplication on $\widehat{T}(A[1]^\vee,R^\perp)$. Then given $f\colon M\rightarrow \widehat{T}_n(A[1]^\vee,R^\perp)\cotimes M'$ and $g\colon M'\rightarrow \widehat{T}_{n'}(A[1]^\vee,R^\perp)\cotimes M''$, we have 
\[
f\wr g = (\mu\otimes 1)\circ (1\otimes g)\circ f.
\]

In the special case where $M = M'$, these pairings give $K_A(M,M)$ the structure of a differential graded algebra, and give $K_A(M,M')$ the structure of a differential graded $K_A(M,M)\hyp K_A(M',M')$-bimodule. Note that $K^1_A(M,M) = \LMod_S(A[1]\otimes M,M)$. The $A$-module structure on $M$ restricts to an element $Q^M\in K^1_A(M,M)$, and we have the following.

\begin{thm}[{\cite[Theorem 3.7.1]{Bal21}}]\label{Thm:DifferentialKoszul}
The differential on $K_A(M,M')$ is given by
\[
\delta: K_A^n(M,M') \rightarrow K_A^{n+1}(M,M'), \quad \delta(f) = Q^M \wr f -  f \wr Q^{M'}.
\]
In particular, if $M = M'$, then $\delta(f)$ is the commutator $[Q^M,f]$.
\qed
\end{thm}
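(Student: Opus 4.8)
The plan is to compute the differential on $K_A(M,M')$ by tracing through how it arises as a subquotient differential and then identifying the resulting formula with the commutator expression. First I would recall the setup: the Koszul resolution $K(A,A,M)$ is a subcomplex of the bar resolution $B(A,A,M)$, with $K_p(A,A,M) = E^1_{p,p}$ the diagonal of the spectral sequence \cref{Eqn:KSS} and differential the $d^1$-differential. Dualizing via $\Hom_A(-,M')$ turns this into the statement that $K_A(M,M')$ is a quotient complex of the cobar complex $C_A(M,M')$, with differential induced by the cobar differential. So the first step is to write out the cobar differential on $C_A(M,M') = \Hom_A(B(A,A,M),M') \cong \Hom_S(A^{\otimes\ast}\otimes M,M')$ (reduced bar construction, so $A^{\otimes\ast}$ really means tensor powers of the augmentation ideal), and to recall that this differential is the alternating sum of maps using (i) the $A$-module structure on $M$ via the map $A\otimes M\to M$, (ii) the multiplication of $A$, and (iii) comultiplication-type insertions; dually, in the cobar picture, it is built from the coalgebra structure on the dual side plus the $A$-action and the external structure on $M'$.

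The second step is the key reduction: when $A$ is Koszul (or even just projectively filtered), the Koszul complex in degree $n$ is identified via \cref{Thm:GensRelsKoszul} with $\LMod_S(M, \widehat{T}_n(A[1]^\vee,R^\perp)\cotimes M')$, and the multiplicative pairings $\wr$ are described through the multiplication $\mu$ on $\widehat{T}(A[1]^\vee,R^\perp)$ by the formula $f\wr g = (\mu\otimes 1)\circ(1\otimes g)\circ f$ already recorded just above the theorem statement. The point is that the cobar differential, after passing to the Koszul quotient, only sees the ``outermost'' pieces of the bar filtration, i.e.\ the interactions of a length-$n$ tensor with a single extra generator on the left (coming from $Q^M\in K^1_A(M,M)$, which is exactly the class of the $A$-action $A[1]\otimes M\to M$) and on the right (coming from $Q^{M'}$, the $A$-action on $M'$, inserted through $M'$). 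All the ``internal'' terms of the cobar differential — those that would multiply two adjacent generators or use internal comultiplications — are precisely the ones that are killed on the Koszul subquotient, because in the Koszul resolution one has replaced $A^{\otimes n}$ by $H_n(\gr A)[n] = \widehat{T}_n(A[1]^\vee,R^\perp)^\vee$, which is the quotient by exactly the relations $R$ (and their consequences). Spelling this out is the heart of the argument, and it is where I would need to be careful: one must check that the surviving left-insertion term is $Q^M\wr f$ and the surviving right-insertion term is $f\wr Q^{M'}$, with the sign (in characteristic $2$, a plus) combining them into $\delta(f) = Q^M\wr f - f\wr Q^{M'}$.

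The third step is bookkeeping: verify $\delta^2 = 0$ directly from the formula (it follows because $Q^M\wr Q^M = 0$ in $K^2_A(M,M)$, which is the statement that the $A$-module structure on $M$ factors through the relations $R$, i.e.\ is a genuine module rather than merely a $T(A[1])$-action — equivalently $Q^M$ is a cocycle and the pairing is associative), and check compatibility with the bimodule pairings $\wr$ so that $K_A(M,M)$ becomes a DGA and $K_A(M,M')$ a DG-bimodule. Finally, specialize $M=M'$ to get $\delta(f) = [Q^M,f]$. The main obstacle I anticipate is the identification in the second step: making precise the claim that the cobar differential descends to exactly the two boundary-insertion terms on the Koszul quotient, without getting lost in the combinatorics of the normalized bar complex and the dualization conventions for $\cotimes$. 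This is genuinely a structural fact about Koszul complexes, and the cleanest route is probably to invoke the description of the $d^1$-differential of \cref{Eqn:KSS} — which by definition only involves adjacent-length interactions that survive to $E^1$ — and then to recognize the surviving $E^1$-level interactions as precisely the module-structure maps $Q^M$ and $Q^{M'}$. Since all of this is proved in \cite[Theorem 3.7.1]{Bal21} in greater generality, the work here is to confirm that our hypotheses (characteristic $2$, $A$ projective over $S$, $M$ projective over $S$) place us in the situation covered there, and then to cite it; any independent verification would follow the outline above.
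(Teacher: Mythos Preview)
Your proposal is correct in spirit, and in fact you have already identified the paper's actual approach: the paper does not prove this theorem at all, but simply cites \cite[Theorem 3.7.1]{Bal21} and places a \qed. Your outline of how the argument goes --- identifying the cobar differential, observing that the internal multiplication terms vanish on the Koszul quotient because $H_n(\gr A)[n]$ is cut out by the relations $R$, and recognizing the surviving outermost terms as $Q^M\wr f$ and $f\wr Q^{M'}$ --- is a reasonable sketch of what the cited proof does, but for the purposes of this paper it is unnecessary: the theorem is a black-box import from \cite{Bal21}, and the only thing to check (which you correctly note) is that the standing hypotheses place us in the setting of that reference.
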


This theorem describes Koszul complexes for a homogeneous Koszul algebra. Suppose now that $A$ is an arbitrary Koszul $S$-algebra, and fix still left $A$-modules $M$ and $M'$ with $M$ projective as a left $S$-module. The additive and multiplicative structure of the Koszul complexes $K_A(M,M')$ depend only on the algebra $\gr A$ and left $S$-modules $M$ and $M'$, and so are still described by \cref{Thm:GensRelsKoszul}. In practice, the differential on $K_A(M,M')$ may be identified using the following.

Let $qR = \ker(A_{\leq 1} \otimes A_{\leq 1} \rightarrow A_{\leq 2})$, and observe that $(A_{\leq 1},qR)$ is a quadratic datum. Let $A^{\big} = \bigoplus_{n \geq 0}A_{\leq n}$. This is a graded algebra, and the inclusion $A_{\leq 1}\subset A^{\big}$ extends to a map $T(A_{\leq 1},qR)\rightarrow A^{\big}$ of graded algebras.

\begin{thm}[{\cite[Theorem 3.7.3]{Bal21}}]\label{Thm:DifferentialKoszul2}
$~$
\begin{enumerate}
\item The map $T(A_{\leq 1},qR) \rightarrow A^{\big}$ is an isomorphism of graded algebras;
\item $A^{\big}$ is a homogeneous Koszul algebra;
\item The surjection $A^{\big}\rightarrow A$ gives rise to short exact sequences
\[
0\rightarrow K_A^n(M,M')\rightarrow K_{A^{\big}}^n(M,M')\rightarrow K_A^{n-1}(M,M')\rightarrow 0,
\]
which are split if $A$ is augmented.
\end{enumerate}

In particular, $K_A(M,M') \subset K_{A^{\big}}(M,M')$ is a subcomplex with differential on the target described by \cref{Thm:DifferentialKoszul}.
\tqed
\end{thm}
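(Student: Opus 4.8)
The plan is to read the theorem as the Rees-algebra incarnation of Koszulity, with the key player the degree-one class $u \in A^{\big}[1] = A_{\leq 1}$ given by the image of $1_S \in S = A_{\leq 0}$ under the filtration inclusion. First I would record the elementary properties of $u$: one checks directly that $u$ commutes with the copy of $S$ inside $A^{\big}[0]$, so that $A^{\big}$ is a graded $S[u]$-algebra with $u$ central, and that multiplication by $u$ on $A^{\big}[n] = A_{\leq n}$ is precisely the filtration inclusion $A_{\leq n} \hookrightarrow A_{\leq n+1}$. In particular $u$ is a nonzerodivisor, the quotient $A^{\big}/(u)$ is $\gr A$ as a graded algebra, and the quotient $A^{\big}/(u-1)$ is $A$, the latter being the surjection $A^{\big} \rightarrow A$ appearing in part~(3).

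For part~(1), let $\phi \colon T(A_{\leq 1}, qR) \rightarrow A^{\big}$ be the canonical map; it is a morphism of graded algebras, and in fact of $S[u]$-algebras, once one notes that $1_S \otimes a - a \otimes 1_S \in qR$ for all $a \in A_{\leq 1}$. Surjectivity of $\phi$ is equivalent to $A^{\big}$ being generated in degree $1$, which follows by an induction up the filtration from the fact that $\gr A$ is generated in degree $1$, since $\gr A$ is quadratic by \cref{Thm:GensRelsKoszul}. Injectivity is the crux. I would filter both sides by the sub-bimodule $S \subseteq A_{\leq 1}$: on $A^{\big}$ so that multiplication by $u$ has filtration degree $0$ and a chosen $S$-linear complement to each filtration step sits in top degree, and on $T(A_{\leq 1}, qR)$ via the induced filtration on the tensor algebra. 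The associated graded of each is identified with $\gr A[u]$, obtained by adjoining the central polynomial class of $u$ to $\gr A$, where crucially the quadraticity of $\gr A$ ensures that no lower-order relations survive; $\phi$ is filtered, and the induced map on associated gradeds is an endomorphism of $\gr A[u]$ which is the identity on generators and, being surjective, restricts in each internal degree to a surjective endomorphism of a finitely generated projective $S$-module, hence an isomorphism. Thus $\gr \phi$, and so $\phi$, is an isomorphism.

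For parts~(2) and~(3), the engine is the short free resolution $0 \rightarrow A^{\big} \xrightarrow{u} A^{\big} \rightarrow \gr A \rightarrow 0$ (the first map being multiplication by $u$, suitably regraded) afforded by the central nonzerodivisor $u$. This immediately computes $\Tor^{A^{\big}}_\ast(S, \gr A)$ to be $S$ in homological degree $0$ and a shifted copy of $S$ in homological degree $1$; feeding this into the change-of-rings spectral sequence $\Tor^{\gr A}_p(S, \Tor^{A^{\big}}_q(\gr A, S)) \Rightarrow \Tor^{A^{\big}}_{p+q}(S, S)$, whose only nonzero rows are $q = 0, 1$, and invoking homogeneous Koszulity of $\gr A$ to see that the sole potential differential shifts internal degree by $-1$ and therefore vanishes, we conclude the spectral sequence degenerates and $H_n(A^{\big}) = \Tor^{A^{\big}}_n(S, S)$ is concentrated in internal degree $n$; this is part~(2). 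The degeneration moreover exhibits $H_n(A^{\big})$ as an extension of $H_n(\gr A)$ by a shift of $H_{n-1}(\gr A)$ as graded $S$-bimodules, which upon applying $\LMod_S(-\otimes M, M')$ produces the short exact sequence $0 \rightarrow K_A^n(M, M') \rightarrow K_{A^{\big}}^n(M, M') \rightarrow K_A^{n-1}(M, M') \rightarrow 0$ of part~(3). To see this is an exact sequence of complexes with $K_A(M, M')$ a genuine subcomplex, I would produce the inclusion already at the cobar level: the algebra surjection $A^{\big} \rightarrow A$ induces a chain map $B(A^{\big}, A^{\big}, M) \rightarrow B(A, A, M)$, hence an injective chain map $C_A(M, M') \hookrightarrow C_{A^{\big}}(M, M')$, which one checks descends to the Koszul quotients of \cref{thm:koszulcx} and there realizes the displayed sequence. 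The differential on $K_{A^{\big}}(M, M')$ is then the commutator with $Q^M$ by \cref{Thm:DifferentialKoszul}, applicable since $A^{\big}$ is quadratic by~(1) and homogeneous Koszul by~(2); this yields the ``in particular''. The splitting when $A$ is augmented reflects the fact that an augmentation trivializes the deformation, making $A^{\big} \cong \gr A[u]$ literally, so that $H_\ast(A^{\big})$ splits off the copy of $H_\ast(\gr A)$ corresponding to the exterior generator $u^\vee$ compatibly with the algebra structure.

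The main obstacle will be the injectivity in part~(1): one must rule out ``hidden'' relations in $A^{\big}$ beyond the quadratic relations $qR$, which is exactly a Poincar\'e--Birkhoff--Witt-type statement for the filtered algebra $A$, and is where quadraticity of $\gr A$ (a consequence of the Koszul hypothesis) is indispensable. A secondary technical point is supplying the degreewise finiteness needed for the ``surjective endomorphism is an isomorphism'' step; in the motivic application this is furnished by the internal topological grading on $\ca^F$, with respect to which each filtration piece $A_{\leq n}$ is a finitely generated free $\m^F$-module.
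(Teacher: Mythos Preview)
The paper does not prove this theorem; it is quoted from \cite[Theorem 3.7.3]{Bal21} and marked with the triangle $\triangleleft$ indicating no proof is given here. So there is no ``paper's own proof'' to compare against, and your write-up is effectively a proposed proof of a cited result.

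Your overall strategy---treating $A^{\big}$ as the Rees algebra with central degree-one nonzerodivisor $u$, reducing part~(2) to Koszulity of $\gr A$ via the change-of-rings spectral sequence associated to $0 \to A^{\big} \xrightarrow{u} A^{\big} \to \gr A \to 0$, and reading off the short exact sequence of part~(3) from the resulting two-step filtration on $H_n(A^{\big})$---is the standard and correct approach to nonhomogeneous Koszul duality, and is almost certainly what \cite{Bal21} does. Parts~(2) and~(3) are fine as you have them.

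Two genuine issues. First, your injectivity argument in part~(1) is muddled: you assert that both associated gradeds are $\gr A[u]$ and then appeal to ``surjective endomorphism of a finitely generated projective module,'' but identifying $\gr T(A_{\leq 1}, qR)$ with $\gr A[u]$ is precisely the PBW statement you are trying to prove, and the finiteness hypothesis is neither available in general nor needed. The clean argument avoids filtering $T$ altogether: show directly from generators and relations that $T/(u) \cong T(A[1], R) = \gr A$ (the image of $qR$ in $A[1]\otimes A[1]$ is exactly $R$, and the ideal $(u)$ meets $T[1]=A_{\leq 1}$ in exactly $S$); then the exact sequence $0 \to (\ker\phi)/u(\ker\phi) \to T/(u) \to A^{\big}/(u) \to 0$ shows $(\ker\phi)/u(\ker\phi) = 0$, and since $\ker\phi$ is nonnegatively graded with $u$ of degree $1$ this forces $\ker\phi = 0$.

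Second, your explanation of the splitting in part~(3) is wrong: an augmentation on $A$ does \emph{not} make $A^{\big} \cong (\gr A)[u]$ as algebras (take $A = U(\mathfrak g)$ for a nonabelian Lie algebra $\mathfrak g$). What the augmentation actually gives is an $S$-bimodule splitting $A_{\leq 1} \cong S \oplus \overline{A}_{\leq 1}$, hence a distinguished element of $(A_{\leq 1})^\vee$ dual to $u$; this furnishes the splitting of the short exact sequence on homology (equivalently on Koszul complexes) without any need for $A^{\big}$ to decompose multiplicatively.
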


\subsection{The motivic Steenrod algebra}\label{SS:Steenrod}

We will construct the motivic lambda algebra by applying the theory recalled in \cref{SS:Koszul} to the mod $2$ motivic Steenrod algebra, whose structure we now recall. The conventions of \cref{ssec:conventions} are in force throughout this section.

We note in particular that, following these conventions, we take the somewhat nonconventional approach of consistently using \textit{homological grading}. Thus for example $\tau\in H^{0,1}(\Spec (F)_+)$, but we shall write $|\tau| = (0,-1)$, as this is how it will appear in the lambda algebra.

We begin by recalling the general structure of the base ring $\m^F = H^{\ast,\ast}(\Spec(F)_+)$.

\begin{example}\label{Exm:M2F}
For any $F$, we have $\m^F = \m^F_0[\tau]$, where
\[
|\tau| = (0,-1),
\]
and $\m^F_0\subset\m^F$ is the subring concentrated on the line $s=w$, isomorphic to the Milnor $K$-theory of $F$ taken mod $2$. The following are some particular examples of the ring $\m^F_0$. We refer the reader to \cite[Section 2.1]{IO19} for further details.

\begin{itemize}

\item For $F = \Fbar$ algebraically closed, such as $F=\c$, we have
\[
\m^{\Fbar}_0 \cong \f_2
\]

\item For $F = \r$ the real numbers, we have
\[
\m^\r_0 \cong \f_2[\rho],
\]
where $|\rho| = (-1,-1)$. 

\item For $F=\f_q$ a finite field of odd prime-power order $q$, we have
\[
\m^{\f_q}_0 \cong \begin{cases}
\f_q[u]/u^2 \quad & \text{ if } q \equiv 1 \pmod 4, \\
\f_q[\rho]/\rho^2 \quad & \text{ if } q \equiv 3 \pmod 4, 
\end{cases}
\]
where $|\rho|=|u| = (-1,-1)$. 

\item For $F=\q_p$ the $p$-adic rationals, for $p$ an arbitrary prime, we have
\[
\m^{\q_p}_0 \cong 	
	\begin{cases}
		\f_2[\pi,u]/(\pi^2,u^2) \quad & \text{ if } q \equiv 1 \pmod 4, \\
		\f_2[\pi, \rho]/(\rho^2, \rho \pi + \pi^2) \quad & \text{ if } q \equiv 3 \pmod 4,\\
		\f_2[\pi, \rho, u]/(\rho^3, u^2, \pi^2, \rho u, \rho \pi, \rho^2 + u \pi) \quad & \text{ if } q=2,
	\end{cases}
\]
where $|\rho| = |u| = |\pi| = (-1,-1).$

\end{itemize}
See also \cref{ssec:extf} for a discussion of $\m^\q$.
\tqed
\end{example}

Voevodsky \cite{Voe03} (with minor corrections by Riou \cite{Rio12}) and Hoyois--Kelly--{\O}stv{\ae}r \cite{HKO17} have constructed Steenrod squares
\[
\Sq^a : H^{m,n}(X) \rightarrow H^{m+a,n+\floor{a/2}}(X)
\]
for $a\geq 0$ and shown that they generate the algebra $\ca^F$ of natural operations in mod $2$ motivic cohomology. It is convenient to take the convention that $\Sq^a = 0$ for $a < 0$. The relations between these squares are generated by $\Sq^0=1$ together with the \textit{Adem relations}:

\begin{thm}[{\cite[Theorem 10.2]{Voe03}, \cite[Th{\'e}or{\`e}me 4.5.1]{Rio12}, \cite[Theorem 5.1]{HKO17}}]\label{Thm:Adem}
Fix positive integers $a$ and $b$ with $a<2b$.

If $a$ is even and $b$ is odd, then
\[
\Sq^a\Sq^b = \sum_{0\leq j \leq \floor{a/2}}\binom{b-1-j}{a-2j}\Sq^{a+b-j}\Sq^j+\sum_{\substack{1\leq j \leq \floor{a/2}\\j \text{ odd}}}\binom{b-1-j}{a-2j}\rho\Sq^{a+b-j-1}\Sq^j.
\]

If $a$ and $b$ are odd, then
\[
\Sq^a\Sq^b=\sum_{\substack{1\leq j \leq \floor{a/2}\\j \text{ odd}}}\binom{b-1-j}{a-2j}\Sq^{a+b-j}\Sq^j.
\]

If $a$ and $b$ are even, then
\[
\Sq^a\Sq^b = \sum_{0\leq j \leq \floor{a/2}}\tau^{j\,\mathrm{mod}\,2}\binom{b-1-j}{a-2j}\Sq^{a+b-j}\Sq^j.
\]

If $a$ is odd and $b$ is even, then
\[
\Sq^a\Sq^b =\sum_{\substack{0\leq j \leq \floor{a/2}\\j \text{ even}}}\binom{b-1-j}{a-2j}\Sq^{a+b-j}\Sq^j 
+\sum_{\substack{1\leq j \leq \floor{a/2}\\j \text{ odd}}}\binom{b-1-j}{a-1-2j}\rho\Sq^{a+b-j-1}\Sq^j.
\]
In all cases, the bounds on summation are not necessary, but give regions where the given binomial coefficients may be nonzero.
\qed
\end{thm}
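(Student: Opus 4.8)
The relations of \cref{Thm:Adem} are established by Voevodsky over fields of characteristic zero \cite{Voe03}, with indexing and sign corrections by Riou \cite{Rio12}, and by Hoyois--Kelly--{\O}stv{\ae}r over fields of arbitrary characteristic not equal to $2$ \cite{HKO17}; here we only indicate the strategy one would follow. The plan is to transport the classical power-operation derivation of the Adem relations --- in the spirit of the Bullett--Macdonald argument, or of Steenrod--Epstein --- into the motivic category. First I would recall that the squares $\Sq^a$ package into the total power operation associated to $\mu_2$: for a pointed motivic space $X$ there is a natural, multiplicative map $P\colon H^{m,n}(X)\rightarrow H^{\ast,\ast}(X\wedge (B\mu_2)_+)$, suitably interpreted via pro-objects and Thom spaces to accommodate the infinite-dimensionality of $B\mu_2$, whose components recover the operations $\Sq^a$ together with their products with the tautological classes of $B\mu_2$.

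The essential input is the structure of $H^{\ast,\ast}((B\mu_2)_+)$, which I would take in the form
\[
H^{\ast,\ast}((B\mu_2)_+)\;\cong\;\m^F[[u,v]]/(u^2 - \tau v - \rho u),\qquad |u| = (-1,-1),\quad |v| = (-2,-1),
\]
with $v$ the Bockstein of $u$. It is exactly the quadratic relation $u^2 = \tau v + \rho u$ that is responsible for the appearance of $\tau$ and $\rho$ in the Adem relations: rewriting an even power of a tautological class forces the coefficient $\tau$ in front of its ``$v$-part'', while the summand $\rho u$ spawns the $\rho$-corrections. Next I would iterate: the composite $P\circ P$ is governed by the wreath product $\mu_2\wr\mu_2$, and, since its image inside $H^{\ast,\ast}(X\wedge (B\Sigma_4)_+)$ is invariant under the automorphism of $\mu_2\wr\mu_2$ induced by a suitable self-conjugation inside $\Sigma_4$, one obtains a symmetry identity relating $\Sq^a\Sq^b$ to its reflection. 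Expanding both sides in the monomial basis of $H^{\ast,\ast}((B(\mu_2\wr\mu_2))_+)$ and using the quadratic relation to reduce powers of the tautological classes, I would read off the four displayed formulas; the binomial coefficients $\binom{b-1-j}{a-2j}$ fall out of precisely the same generating-function bookkeeping as in the classical case, while the parities of $a$ and $b$ dictate whether one is rewriting a class $u$ or its square $u^2 = \tau v + \rho u$, hence which of the $\tau$- and $\rho$-terms survive.

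The hard part is foundational rather than combinatorial. One first needs the motivic power operations in hand with all of their expected naturality, stability, and Cartan-formula properties; over fields of positive characteristic this already requires the substantial machinery of \cite{HKO17}, and even in characteristic zero the precise normalization of the operations and of the classes on $B\mu_2$ is delicate --- this is where \cite{Rio12} corrects \cite{Voe03}. One must also be careful with the pro-object and completion issues attached to the infinite motivic classifying space, and must keep meticulous track of the weight grading throughout, since it is the weight that distinguishes the four cases. An alternative route, trading this geometric input for a longer but elementary computation, would begin from the known structure of the dual motivic Steenrod algebra $H_{\ast,\ast}(H\f_2)$ over $F$ and derive the relations from its coproduct by a Milnor-style argument; we do not pursue this here.
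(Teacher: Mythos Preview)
Your sketch is a reasonable outline of the strategy in the cited references, but note that the paper itself offers no proof of this theorem: it is stated with a terminal \qed\ and attributed entirely to \cite{Voe03}, \cite{Rio12}, and \cite{HKO17} as background input. There is therefore nothing in the paper to compare your proposal against; the Adem relations are used here purely as a black box feeding into the Koszul-algebra construction of the motivic lambda algebra.
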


As with the classical Steenrod algebra, $\ca^F$ admits an admissible basis.

\begin{definition}
Given a sequence $I = (r_1,\ldots,r_k)$ with $r_i >0$ for all $1 \leq i \leq k$, we abbreviate $\Sq^I = \Sq^{r_1}\ldots \Sq^{r_k}$. Say that $\Sq^I$ is \textit{admissible} if $r_i \geq 2r_{i+1}$ for all $1 \leq i \leq k-1$.
\tqed
\end{definition}

\begin{proposition}[{\cite[Section 11]{Voe03}}]
$\ca^F$ is freely generated as a left $\m^F$-module by the admissible squares $\Sq^I$.
\qed
\end{proposition}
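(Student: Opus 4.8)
The statement to be proved is that $\ca^F$ is freely generated as a left $\m^F$-module by the admissible monomials $\Sq^I$. This is the motivic analogue of the classical Serre basis theorem for the Steenrod algebra, and the plan is to follow the classical strategy, adapted to the motivic Adem relations of \cref{Thm:Adem}.

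\medskip

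\textbf{Spanning.} First I would show that the admissible monomials span $\ca^F$ as a left $\m^F$-module. Since $\Sq^0 = 1$ and the $\Sq^a$ for $a > 0$ generate $\ca^F$ as an $\m^F$-algebra, every element of $\ca^F$ is an $\m^F$-linear combination of monomials $\Sq^I$ with all entries positive. Given such a monomial that is not admissible, there is some adjacent pair $\Sq^a \Sq^b$ with $a < 2b$, to which one of the four Adem relations of \cref{Thm:Adem} applies. Each Adem relation rewrites $\Sq^a \Sq^b$ as an $\m^F$-linear combination (with coefficients that are binomial coefficients times powers of $\tau$ and $\rho$) of terms $\Sq^{a+b-j}\Sq^j$ with $j \leq \floor{a/2} < b$; note that the $\rho$-terms have the form $\Sq^{a+b-j-1}\Sq^j$, which is even shorter in its leading entry. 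The standard bookkeeping argument then applies: assign to each monomial $\Sq^I = \Sq^{r_1}\cdots\Sq^{r_k}$ the moment $\sum_i i \cdot r_i$ (or the pair (excess, moment) ordered lexicographically), and check that every term appearing on the right-hand side of an Adem relation has strictly smaller moment than $\Sq^a\Sq^b$ while leaving the total degree and weight fixed. Since moments are bounded below in each fixed bidegree, iterating this rewriting terminates, expressing any monomial as an $\m^F$-combination of admissible ones. The one point requiring a little care beyond the classical case is that the coefficients now live in $\m^F$ rather than $\f_2$ — but this causes no trouble, since $\m^F$ is the base ring and we only need an $\m^F$-linear spanning statement; the powers of $\tau$ and $\rho$ are simply absorbed into the coefficients.

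\medskip

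\textbf{Linear independence.} This is the genuinely substantive half, and I expect it to be the main obstacle. Classically, independence of the admissible monomials is proved by evaluating them on a product of cohomology classes of $(\mathbb{RP}^\infty)^{\times n}$ (or $(\mathbb{P}^\infty)^{\times n}$), using the Cartan formula and the known action of the squares on $H^{*,*}(\mathbb{P}^\infty)$, and observing that distinct admissible monomials have distinct leading terms when applied to $x_1 \cdots x_n$ for $n$ large. In the motivic setting, $H^{*,*}(\mathbb{P}^\infty_F) = \m^F[[x]]$ (or its finite-degree analogue) with $\Sq^a(x^k)$ governed by the Cartan formula together with $\Sq^1 x = \rho x + $ (something involving $x^2$ up to $\tau$) and $\Sq^2 x = x^2$ on the degree-$(2,1)$ generator — and crucially the motivic weights of the squares are tracked so that the bidegrees separate the monomials. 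I would invoke \cite[Section 11]{Voe03}, where precisely this computation is carried out: Voevodsky verifies that the admissible squares act $\m^F$-independently on $H^{*,*}((\mathbb{P}^\infty)^{\times n})$ for suitable $n$. Alternatively — and this may be cleaner for a self-contained write-up — one can deduce independence from the known structure of the dual $\ca^F_* = \pi_{*,*}(H\f_2 \wedge H\f_2)$ as computed by Voevodsky: it is free of the right "size" over $\m^F$, and a rank count (in each bidegree, comparing the number of admissible monomials of that bidegree with the $\m^F$-rank of $\ca^F$) forces independence once spanning is established. In any case, since this statement is quoted from \cite[Section 11]{Voe03}, the honest approach is to cite it, noting only that it requires keeping track of the weight grading throughout; the subtlety relative to the classical argument is entirely in verifying that the extra powers of $\tau$ and $\rho$ appearing in the motivic Cartan formula and Adem relations do not conspire to produce unexpected relations — which they do not, precisely because $\rho$ and $\tau$ are non-zero-divisors in $\m^F$ in the relevant range and the weight grading is respected.

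\medskip

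In summary: spanning follows by the moment-function rewriting argument using the four motivic Adem relations, and linear independence is established by evaluation on motivic projective spaces exactly as in \cite[Section 11]{Voe03}, the only new feature being careful tracking of motivic weights and the $\m^F$-coefficients. The hard part is the independence, but since it is already in the literature, the proof here consists of assembling these two pieces and citing Voevodsky for the second.
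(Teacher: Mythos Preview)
The paper does not give its own proof of this proposition: the statement is immediately followed by \texttt{\textbackslash qed}, with the citation to \cite[Section 11]{Voe03} in the proposition header serving as the entire justification. So there is nothing to compare against beyond the reference itself.

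Your sketch is a correct outline of how the result is actually proved in Voevodsky's paper, and you explicitly recognize that the honest move is to cite it. The spanning argument via the moment function is standard and goes through with the motivic Adem relations as you describe; the independence argument via evaluation on products of $\mathbb{P}^\infty$ (or equivalently via the dual Steenrod algebra) is exactly what Voevodsky does. One minor correction: you should be careful with the claim that ``$\rho$ and $\tau$ are non-zero-divisors in $\m^F$ in the relevant range'' --- $\rho$ is certainly a zero-divisor or even zero in many $\m^F$ (e.g.\ over algebraically closed fields, or $\f_q$), so this is not the reason the argument works. The point is rather that the leading-term analysis in the evaluation on $(\mathbb{P}^\infty)^{\times n}$ is insensitive to the $\tau$- and $\rho$-correction terms because those land in strictly lower weight or degree. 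But since you are ultimately deferring to the citation anyway, this does not affect the validity of your proposal.
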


The mod $2$ motivic cohomology $H^{\ast,\ast}(X_+)$ of any smooth scheme $X$ carries the structure of a left $\ca$-module. These actions satisfy the following \textit{Cartan formulas}.

\begin{proposition}[{\cite[Proposition 9.6]{Voe03}, \cite[Proposition 4.4.2]{Rio12}}]\label{prop:cartan}
Let $a \geq 0$ and $x,y \in H^{*,*}(X_+)$. Then
\begin{gather*}
\Sq^{2a}(x y) = \sum_{r=0}^a \Sq^{2r}(x) \Sq^{2a-2r}(y) + \tau \sum_{s=0}^{a-1}\Sq^{2s+1}(x) \Sq^{2a-2s-1}(y),\\
\Sq^{2a+1}(x y) = \sum_{r=0}^a \left(\Sq^{2r+1}(x) \Sq^{2ai-2r}(y) + \Sq^{2r}(x) \Sq^{2a-2r+1}(y)\right) + \rho \sum_{s=0}^{a-1} \Sq^{2s+1}(x)  \Sq^{2a-2s-1}(y). 
\end{gather*}
\qed
\end{proposition}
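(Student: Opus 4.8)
Although this is a result cited from \cite{Voe03,Rio12}, let me describe how I would prove it, following Voevodsky's construction of the squares; the plan is to deduce both identities simultaneously from multiplicativity of the total power operation. Recall that $H^{\ast,\ast}(B\mu_2)\cong\m^F[[u,v]]/(u^2=\tau v+\rho u)$, with $u$ of cohomological bidegree $(1,1)$ and $v$ of cohomological bidegree $(2,1)$, and that for a class $x$ in bidegree $(2n,n)$ the power operation produces a total class $P(x)\in H^{\ast,\ast}((X\times B\mu_2)_+)$ restricting to $x^2$ along $X\hookrightarrow X\times B\mu_2$, with the squares defined by the expansion
\[
P(x)=\sum_{i\geq 0}\Sq^{2i}(x)\,v^{n-i}+\sum_{i\geq 0}\Sq^{2i+1}(x)\,u\,v^{n-i-1}.
\]
First I would reduce, using naturality, the suspension isomorphisms, and stability of the $\Sq^a$, to classes $x,y$ in bidegrees $(2n,n)$ and $(2m,m)$ of a pointed motivic space, where $P$ is directly available.

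The key geometric input is that $P$ is multiplicative for the external product, formed using the diagonal on $B\mu_2$: one has $P(x\boxtimes y)=P(x)\boxtimes P(y)$, because $(x\boxtimes y)^{\times 2}$ is $\Sigma_2$-equivariantly identified with $x^{\times 2}\boxtimes y^{\times 2}$ after permuting smash factors, and passing to the Borel construction over $B\mu_2$ and applying the K\"unneth isomorphism converts this into the stated identity. Pulling back along the diagonal of $X$ then yields $P(xy)=P(x)P(y)$ in $H^{\ast,\ast}((X\times B\mu_2)_+)$, and the entire content of the Cartan formula is extracting coefficients from this equation.

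For the extraction, I would multiply the two expansions of $P(x)$ and $P(y)$; each monomial has one of the four shapes $\Sq^{2i}(x)\Sq^{2k}(y)v^\bullet$, $\Sq^{2i}(x)\Sq^{2k+1}(y)\,uv^\bullet$, $\Sq^{2i+1}(x)\Sq^{2k}(y)\,uv^\bullet$, $\Sq^{2i+1}(x)\Sq^{2k+1}(y)\,u^2v^\bullet$, and the relation $u^2=\tau v+\rho u$ normalizes the last into a $\tau v$-summand plus a $\rho u$-summand. Comparing the coefficient of $v^{n+m-a}$ in $P(xy)=P(x)P(y)$ then picks out the first shape with $i+k=a$ together with the $\tau v$-part of the fourth shape with $i+k=a-1$, which is exactly the claimed formula for $\Sq^{2a}(xy)$; comparing the coefficient of $uv^{n+m-a-1}$ picks out the second and third shapes with $i+k=a$ together with the $\rho u$-part of the fourth shape with $i+k=a-1$, giving the formula for $\Sq^{2a+1}(xy)$. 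I expect the main obstacle to be not this bookkeeping but justifying the first reduction --- verifying that the operations produced by $P$ coincide in all bidegrees with the stable $\Sq^a$ and interact correctly with $\p^1$-suspension and the Thom isomorphism --- which is exactly the technical heart of \cite[Section 9]{Voe03} and \cite[Section 4]{Rio12}, and which we invoke rather than reprove.
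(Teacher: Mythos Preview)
The paper offers no proof of this proposition; it is stated with a terminal \qed and cited directly from \cite[Proposition 9.6]{Voe03} and \cite[Proposition 4.4.2]{Rio12}. Your sketch is a faithful account of Voevodsky's argument: the multiplicativity $P(xy)=P(x)P(y)$ of the total power operation together with the relation $u^2=\tau v+\rho u$ in $H^{*,*}(B\mu_2)$ is precisely what produces the $\tau$- and $\rho$-correction terms upon extracting coefficients, and your bookkeeping of the four monomial shapes is correct. You are also right that the substantive content lies in the reduction step---identifying the stable $\Sq^a$ with the coefficients of $P$ across all bidegrees via suspension and Thom isomorphisms---and that this is exactly what \cite[\S 9]{Voe03} and \cite[\S 4]{Rio12} establish.
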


The action of $\ca^F$ on $\m^F$ is determined by these Cartan formulas and the following.

\begin{proposition}[{\cite{Voe03}; see also \cite[Appendix A]{RO16}}]\label{prop:baseact}
The action of $\ca^F$ on $\m^F$ satisfies
\begin{gather*}
\Sq^{\geq 1}(x) = 0,\qquad x\in\m^F_0 \\
\Sq^1(\tau) = \rho,\qquad \Sq^{\geq 2}(\tau) = 0 .
\end{gather*}
\qed
\end{proposition}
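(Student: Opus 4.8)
The plan is to split the proposition into three parts of increasing difficulty: (i) $\Sq^{\geq 1}$ annihilates $\m^F_0$; (ii) $\Sq^{\geq 3}(\tau)=0$; (iii) $\Sq^1(\tau)=\rho$ and $\Sq^2(\tau)=0$. Parts (i) and (ii) are forced by bidegrees, $\Sq^1(\tau)=\rho$ is a Bockstein computation, and the vanishing $\Sq^2(\tau)=0$ is the step I expect to require genuine input.

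\emph{Degree count (parts (i) and (ii)).} The operation $\Sq^a$ raises the (cohomological) topological degree by $a$ and the weight by $\floor{a/2}$. Since $\m^F=\m^F_0[\tau]$ with $\m^F_0$ supported on the diagonal and $|\tau|=(0,-1)$, the module $\m^F$ is supported in cohomological bidegrees $(m,n)$ with $0\leq m\leq n$. For $x\in\m^F_0$ one has $m=n$, so $\Sq^a(x)$ has topological degree exceeding its weight by $a-\floor{a/2}=\ceil{a/2}\geq 1$ for every $a\geq 1$, hence lands in a trivial group; this gives part (i). For $\tau$, which sits one step off the diagonal, the same count gives $\Sq^a(\tau)=0$ once $\ceil{a/2}\geq 2$, i.e.\ for $a\geq 3$. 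This reduces us to identifying $\Sq^1(\tau)$, which lies in $H^{1,1}(\Spec(F)_+)=F^\times/(F^\times)^2$, and $\Sq^2(\tau)$, which lies in $H^{2,2}(\Spec(F)_+)=K^M_2(F)/2$ (the mod $2$ Milnor $K_2$ of $F$).

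\emph{The Bockstein $\Sq^1(\tau)=\rho$.} Both $\Sq^1$ and the motivic Bockstein $\beta$ of $0\to\z/2\to\z/4\to\z/2\to 0$ are nonzero natural operations of cohomological bidegree $(1,0)$, and $\ca^F$ is one-dimensional over $\f_2$ in that bidegree (spanned by $\Sq^1$), so $\Sq^1=\beta$. Now $\tau$ is the nonzero class in $H^{0,1}(\Spec(F)_+;\z/2)$, which under $\z/2(1)\simeq\mu_2$ is $-1\in\mu_2(F)$, and $\beta$ is the connecting map of $0\to\mu_2\to\mu_4\xrightarrow{(\cdot)^2}\mu_2\to 0$ on $H^0$. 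This connecting map $\mu_2(F)\to H^1_{\mathrm{et}}(\Spec F;\mu_2)=F^\times/(F^\times)^2$ sends $\zeta$ to the class of the torsor $\Spec F[x]/(x^2-\zeta)$, that is, to $[\zeta]$; with $\zeta=-1$ this is $[-1]=\rho$, as required.

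\emph{The vanishing $\Sq^2(\tau)=0$.} This is the crux, since $\Sq^2(\tau)$ lies in $K^M_2(F)/2$, which is typically nonzero, so no degree argument suffices and one must exclude the candidate $\rho^2=\{-1,-1\}$. When $\operatorname{char}F=p>0$, the class $\tau$ is pulled back from the prime field $\f_p$ along the isomorphism $H^{0,1}(\Spec(\f_p)_+;\z/2)\xrightarrow{\sim}H^{0,1}(\Spec(F)_+;\z/2)$, so by naturality $\Sq^2(\tau_F)$ is the image of $\Sq^2(\tau_{\f_p})\in K^M_2(\f_p)/2=0$ and hence vanishes. In characteristic zero one reduces identically to $F=\q$, and it remains to show $\Sq^2(\tau_\q)=0$ in $K^M_2(\q)/2$; here I would invoke the Hasse principle for $K^M_2/2$, which injects $K^M_2(\q)/2$ into $K^M_2(\r)/2\times\prod_\ell K^M_2(\q_\ell)/2$, reducing to the vanishing of $\Sq^2(\tau)$ over $\r$ and over each $\q_\ell$, and each of these follows by comparing the motivic Steenrod action on the point with the known Steenrod action on the point in the Borel $C_2$-equivariant category (via real realization over $\r$) and in the \'etale/topological setting over the local fields. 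Alternatively, and most efficiently, this vanishing together with $\Sq^1(\tau)=\rho$ is precisely the content of Voevodsky's computation of the motivic dual Steenrod algebra, namely the identity $\eta_R(\tau)=\tau+\rho\tau_0$ for the right unit of $\ca^F_\ast$ (with no contribution from the $\tau_i,\xi_i$ for $i\geq1$); one may simply cite that structure theorem. In short, only the number-field case carries any content, which is why I single out $\Sq^2(\tau)=0$ over $\q$ as the main obstacle.
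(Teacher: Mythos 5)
The paper itself gives this proposition no proof at all --- it simply cites Voevodsky and R\"ondigs--{\O}stv{\ae}r --- and your closing alternative is exactly that: the statement is equivalent to the formula $\eta_L(\tau)=\tau+\tau_0\rho$ (together with $\eta_L|_{\m^F_0}=\mathrm{id}$) for the dual motivic Steenrod algebroid, which the paper later records in \cref{lem:dual}. Your attempt to reprove it directly is therefore a genuinely more detailed route, and most of it is sound. The degree count correctly eliminates $\Sq^{\geq 1}$ on $\m^F_0$ and $\Sq^{\geq 3}\tau$, using the vanishing of $\m^F=\m^F_0[\tau]$ in cohomological bidegrees $(p,q)$ with $p>q$; the one-dimensionality of $\ca^F$ in bidegree $(1,0)$ does force $\Sq^1=\beta$; and the Kummer-theoretic evaluation $\beta(-1)=[-1]=\rho$ is a correct derivation of $\Sq^1(\tau)=\rho$. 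The positive-characteristic case of $\Sq^2(\tau)=0$ is also correctly dispatched by pulling $\tau$ back from $\f_p$, where $K^M_2/2$ vanishes. The remaining gap is $\Sq^2(\tau)=0$ over $\q$: the reduction to $\q$ and the injectivity of $K^M_2(\q)/2\hookrightarrow K^M_2(\r)/2\times\prod_\ell K^M_2(\q_\ell)/2$ are fine, but the clause ``follows by comparing the motivic Steenrod action \dots via real realization over $\r$ \dots and in the \'etale/topological setting over the local fields'' is a placeholder rather than an argument --- you would still need to check that those realizations intertwine Steenrod actions and independently compute the target action, and once that is in place you are essentially invoking the same structural results of Voevodsky (and Hu--Kriz) that your fallback already cites. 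The honest summary, which you reach yourself, is that you have genuinely reproved everything except $\Sq^2(\tau)=0$ over $\q$, and that final step reduces to the same citation the paper makes.
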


As in the classical case, the Cartan formulas of \cref{prop:cartan} may be encoded in a coproduct on the algebra $\ca^F$. The resulting structure is not quite a Hopf algebra, but is dual to a Hopf algebroid structure on the dual Steenrod algebra $(\ca^F)^\vee$. This complication arises in part due to the following. The Steenrod algebra $\ca^F$ is an $\m^F$-algebra, by way of the homomorphism $\m^F\rightarrow\ca^F$ sending an element $x\in \m^F$ to the stable operation given by left multiplication by $x$. However, $\m^F$ does not land in the center of $\ca^F$; equivalently, $\ca^F$ has nontrivial $\m^F$-bimodule structure. We may describe this structure explicitly as follows.

\begin{proposition}\label{Prop:BimoduleStructure}
The $\m^F$-bimodule structure of $\ca^F$ is determined by the following:
\begin{align*}
\Sq^n x &= x\Sq^n, \qquad x\in \m^F_0, \\
\Sq^{2n}\tau &= \tau\Sq^{2n}+\rho\tau\Sq^{2n-1}, \\
\Sq^{2n+1}\tau&=\tau\Sq^{2n+1}+\rho\Sq^{2n}+\rho^2\Sq^{2n-1}.
\end{align*}
\end{proposition}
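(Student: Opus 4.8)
The plan is to deduce the $\m^F$-bimodule structure of $\ca^F$ from the action of $\ca^F$ on $\m^F$ (\cref{prop:baseact}) together with the Cartan formulas (\cref{prop:cartan}). The key observation is that $\ca^F$ acts faithfully on $\bigoplus_X H^{\ast,\ast}(X_+)$, or in fact just on the motivic cohomology of products of projective spaces, so it suffices to verify the asserted identities after applying both sides to an arbitrary class $y \in H^{\ast,\ast}(X_+)$. Concretely, the right $\m^F$-module structure on $\ca^F$ is defined by $(\phi \cdot x)(y) = \phi(xy)$ for $\phi \in \ca^F$, $x \in \m^F$, $y \in H^{\ast,\ast}(X_+)$, so I must show that $(\Sq^n \cdot x)(y)$ and $(\Sq^n \tau)(y)$ agree with the values predicted by the right-hand sides of the three displayed formulas.

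First I would dispose of the case $x \in \m^F_0$: since $\Sq^{\geq 1}$ annihilates $\m^F_0$ by \cref{prop:baseact}, the relevant terms in the Cartan formula $\Sq^n(xy) = \sum \Sq^i(x)\Sq^{n-i}(y) + (\text{error terms involving }\tau\text{ or }\rho)$ collapse: every term $\Sq^i(x)$ with $i \geq 1$ vanishes, and the $\tau$- and $\rho$-weighted error sums also involve only $\Sq^{\geq 1}(x) = 0$. This leaves $\Sq^n(xy) = x \cdot \Sq^n(y)$, i.e. $\Sq^n x = x \Sq^n$ as operators; one should note that $x \in \m^F_0$ is being used here precisely because $\Sq^1 \tau = \rho \neq 0$, so $\tau$ is not central. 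Next, for $x = \tau$, I would feed $y$ into the Cartan formula and use $\Sq^1(\tau) = \rho$, $\Sq^{\geq 2}(\tau) = 0$, $\Sq^0(\tau) = \tau$ from \cref{prop:baseact}. In the even case $\Sq^{2n}(\tau y) = \sum_{r=0}^n \Sq^{2r}(\tau)\Sq^{2n-2r}(y) + \tau \sum_{s=0}^{n-1}\Sq^{2s+1}(\tau)\Sq^{2n-2s-1}(y)$: the first sum keeps only $r = 0$, giving $\tau \Sq^{2n}(y)$; the second sum keeps only $s = 0$ (where $\Sq^1(\tau) = \rho$), giving $\tau \rho \Sq^{2n-1}(y)$. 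Hence $\Sq^{2n}\tau = \tau \Sq^{2n} + \rho\tau\Sq^{2n-1}$ after using that $\rho \in \m^F_0$ is central to move it past $\tau$. The odd case $\Sq^{2n+1}(\tau y)$ is handled the same way: from $\sum_{r=0}^n(\Sq^{2r+1}(\tau)\Sq^{2n-2r}(y) + \Sq^{2r}(\tau)\Sq^{2n-2r+1}(y)) + \rho\sum_{s=0}^{n-1}\Sq^{2s+1}(\tau)\Sq^{2n-2s-1}(y)$ one keeps $r=0$ in the first sum ($\Sq^1(\tau) = \rho$, contributing $\rho\Sq^{2n}(y)$), $r = 0$ in the second sum ($\Sq^0(\tau) = \tau$, contributing $\tau\Sq^{2n+1}(y)$), and $s = 0$ in the $\rho$-sum ($\Sq^1(\tau) = \rho$, contributing $\rho^2\Sq^{2n-1}(y)$), yielding $\Sq^{2n+1}\tau = \tau\Sq^{2n+1} + \rho\Sq^{2n} + \rho^2\Sq^{2n-1}$.

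Finally I would observe that these identities among operators, once verified on all $H^{\ast,\ast}(X_+)$, hold in $\ca^F$ by faithfulness, and that since $\m^F = \m^F_0[\tau]$ is generated as a ring by $\m^F_0$ and $\tau$, the displayed formulas plus the product structure of $\ca^F$ determine the full $\m^F$-bimodule structure; this is the meaning of the word ``determined'' in the statement. The main subtlety—rather than obstacle—is bookkeeping: one must be careful that in the Cartan formulas only the $j = 0$ or $j = 1$ terms survive because $\Sq^{\geq 2}(\tau) = 0$, and one should double-check the indexing in the error sums of \cref{prop:cartan} (there is an evident typo ``$\Sq^{2ai-2r}$'' in the displayed odd Cartan formula which should read $\Sq^{2a-2r}$). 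No deep input is needed beyond \cref{prop:cartan} and \cref{prop:baseact}; the content is purely the translation of a left action into a bimodule structure via adjunction.
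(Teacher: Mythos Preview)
Your proof is correct and follows essentially the same approach as the paper: evaluate both sides on an arbitrary cohomology class and use the Cartan formulas of \cref{prop:cartan} together with the action of $\ca^F$ on $\m^F$ from \cref{prop:baseact}. The paper only writes out the $\Sq^{2n}\tau$ case explicitly and declares the others similar, whereas you spell out all three; your treatment is more detailed but not different in substance.
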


\begin{proof}
It suffices to show both sides of each equality concide when evaluated on an arbitrary cohomology class. For example, for any $X$ and $x\in H^{\ast, \ast}(X_+)$, we have
\begin{align*}
(\Sq^{2n}\tau)(x) &= \Sq^{2n}(\tau x) \\
&= \sum_{i+j=n}(\Sq^{2i}\tau)(\Sq^{2j}x)+\tau\sum_{i+j=n-1}(\Sq^{2i+1}\tau)(\Sq^{2j+1}x)\\
&=\tau\Sq^{2n}(x)+\rho\tau\Sq^{2n-1}(x)
\end{align*}
by \cref{prop:cartan}. This proves the second equation, and the other cases are similar. 
\end{proof}

\begin{remark}\label{rmk:runiv}
Although we work in this section over an arbitrary base field $F$, there is a sense in which $F = \r$ represents the universal case: the class $\rho$ may be defined over any field $F$, making $\m^F$ into an $\m^\r$-module, and in all cases we have
\[
\ca^F = \m^F\otimes_{\m^\r}\ca^\r.
\]
In fact, the formulas of \cref{prop:baseact} describe an action of $\ca^\r$ on $\m^F$ for which
\[
\Ext_F\cong \Ext_{\ca^\r}(\m^\r,\m^F),
\]
and at least additively this depends only on the $\f_2[\rho]$-module structure of $\m^F_0$.

It is worth putting this observation in a slightly more general context. The Cartan formulas of \cref{prop:cartan} gives the category of left $\ca^\r$-modules a symmetric monoidal structure. If $R$ is a monoid in this category, then the tensor product $R\otimes_{\m^\r}\ca^\r$ may be equipped with a product with the property that
\[
\LMod_{R\otimes_{\m^\r}\ca^\r}\simeq \LMod_R(\LMod_{\ca^\r});
\]
this is the \textit{semi-tensor product} of \cite{MP65}. Moreover, we have
\[
\Ext_{R\otimes_{\m^\r}\ca^\r}(R,R)\cong \Ext_{\ca^\r}(\m^\r,R).
\]
The algebras $\ca^F$ are obtained in the case where $R = \m^F$.
Another simple class of example is given by the algebras $\ca^\r/(\rho^n,\tau^m)$, where $n$ and $m$ are such that $\tau^m$ is central in $\ca^\r/(\rho^n)$. A more interesting example is the following: there is an isomorphism of algebras
\[
\ca^{C_2}\cong\m^{C_2}\otimes_{\m^\r}\ca^\r,
\]
where $\ca^{C_2}$ is the $C_2$-equivariant Steenrod algebra, $\m^{C_2}$ is the $C_2$-equivariant cohomology of a point, and $\ca^\r$ acts on $\m^{C_2}$ as described, for instance, in \cite[Section 2]{GHIR19} (building on work of Hu--Kriz \cite{HK01}). 
\tqed
\end{remark}

\subsection{The motivic lambda algebra}\label{SS:PBWMotivic}

We now produce the motivic lambda algebra. For simplicity of notation, we consider the base field $F$ as fixed, and abbreviate
\[
 \ca = \ca^F,\qquad \m = \m^F
\]
throughout this subsection.

\subsubsection{Koszulity of $\ca$}

We begin by showing that $\ca$ is Koszul. The algebra $\ca$ is a projectively filtered $\m$-algebra under the length filtration: $\ca_{\leq n}\subset\ca$ is the submodule generated by squares $\Sq^I$ where $I$ is a sequence of length at most $n$. In particular,
\[
\ca_{\leq 1} = \m\{\Sq^a:a\geq 0\}
\]
as a left $\m$-module, with the understanding that $\Sq^0=1$ in $\ca$. By \cref{Def:Koszul}, to show that $\ca$ is Koszul we must show that $\gr \ca$ is homogeneous Koszul. To show that the classical Steenrod algebra is Koszul, Priddy developed a \textit{PBW criterion} for Koszulity \cite[Theorem 5.3]{Pri70}. We cannot apply this criterion directly, in part due to the nontrivial $\m$-bimodule structure of $\gr \ca$. Our strategy is to filter this issue away, thereby reducing to Priddy's criterion.

\begin{thm}\label{Thm:AMotKoszul}
$\ca$ is a Koszul $\m$-algebra. 
\end{thm}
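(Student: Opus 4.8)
The plan is to deduce Koszulity of $\ca$ from the classical PBW criterion of Priddy \cite[Theorem 5.3]{Pri70}, after stripping away the two features that obstruct a direct application: the nontrivial $\m$-bimodule structure of $\gr\ca$ and the appearance of $\tau$ and $\rho$ in the Adem relations. By \cref{Def:Koszul} it suffices to show that $\gr\ca$ is a homogeneous Koszul $\m$-algebra, where the grading comes from the length filtration, so $\gr\ca[1] = \m\{\Sq^a : a\geq 0\}$ and $\gr\ca[n]$ has $\m$-basis the admissible monomials $\Sq^I$ of length exactly $n$. The leading terms of the Adem relations (i.e.\ their images in $\gr\ca$) are the classical ones $\Sq^a\Sq^b = \sum_j \binom{b-1-j}{a-2j}\Sq^{a+b-j}\Sq^j$, possibly with some coefficients in $\m$; the $\rho$- and $\tau$-decorated terms all have strictly lower length or get absorbed, so $\gr\ca$ is the quadratic algebra $T(\gr\ca[1], R)$ with $R$ spanned over $\m$ by the classical Adem relations.

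First I would set up a further filtration on $\gr\ca$ that makes $\m$ central, so as to reduce to an honestly $\m$-linear (indeed $\f_2$-linear after base change) situation. Concretely: filter $\gr\ca$ by powers of the augmentation ideal $\mathfrak{m}\subset\m_0^F$ together with the $\tau$-weight, or more cleanly use \cref{rmk:runiv} to write $\gr\ca = \m\otimes_{\m^\r}\gr\ca^\r$ and reduce to $F = \r$; then filter $\gr\ca^\r$ by $\rho$-adic filtration and pass to the associated graded, in which $\rho$ (and $\tau$) become central. The associated graded is then $\f_2[\rho,\tau]\otimes_{\f_2}\ca^\cl_{\mathrm{gr}}$ where $\ca^\cl_{\mathrm{gr}}$ is the associated graded of the classical Steenrod algebra under the length filtration — i.e.\ a polynomial extension of a homogeneous quadratic algebra known to be Koszul. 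Since Koszulity is stable under polynomial extensions by central variables, and a standard spectral-sequence argument (the filtration on the bar complex degenerates appropriately, exactly as in \cite[Section 4]{Rez12} / \cite[Section 3.5]{Bal21}) shows that if the associated graded of a filtered algebra is homogeneous Koszul then so is the algebra, this gives that $\gr\ca^\r$, hence $\gr\ca$, is homogeneous Koszul, hence $\ca$ is Koszul.

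The key steps, in order, are: (1) identify the associated graded algebra $\gr\ca$ under the length filtration explicitly, checking that its defining relations are exactly the quadratic classical Adem relations with $\m$-coefficients and that it is quadratic (this uses the admissible basis of \cref{SS:Steenrod} and a careful reading of \cref{Thm:Adem}); (2) introduce the secondary ($\rho$-adic, $\tau$-weight) filtration and compute \emph{its} associated graded, verifying it is $\f_2[\rho,\tau]\otimes_{\f_2}(\text{classical graded Steenrod algebra})$; (3) invoke Priddy's PBW criterion \cite[Theorem 5.3]{Pri70} — with the admissible monomials as the PBW basis and the classical Adem relations providing a confluent rewriting system — together with stability of the PBW property under adjoining central polynomial generators, to conclude this doubly-graded object is homogeneous Koszul; (4) climb back up the two filtrations using the comparison spectral sequences of \cref{Eqn:KSS}-type (as in \cite[Section 3.5]{Bal21}) to transport Koszulity from each associated graded to the filtered object.

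The main obstacle I expect is step (1) together with the bookkeeping in step (2): one must be genuinely careful that no $\rho$- or $\tau$-term in the motivic Adem relations of \cref{Thm:Adem} contributes to the \emph{length-}$2$ part of $\gr\ca$ in a way that changes the quadratic relations $R$, and that the secondary filtration is compatible with the multiplication (so that $\rho$ and $\tau$ really do become central in the associated graded, given the bimodule formulas $\Sq^{2n+1}\tau = \tau\Sq^{2n+1} + \rho\Sq^{2n} + \rho^2\Sq^{2n-1}$ of \cref{Prop:BimoduleStructure} — note the correction terms $\rho\Sq^{2n}$, $\rho^2\Sq^{2n-1}$ have lower length, so they vanish in $\gr\ca$, which is precisely what makes the strategy work, but this needs to be said). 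Once the relations are correctly identified and the $\rho,\tau$-terms are confirmed to be ``lower order'' in the appropriate sense, the rest is an application of Priddy's criterion and the standard Koszulity-descends-along-filtrations machinery already recalled in \cref{SS:Koszul}.
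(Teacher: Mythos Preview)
Your overall strategy---introduce a secondary filtration on $\gr\ca$ to make $\m$ central, identify the doubly-associated-graded as a base change of a classical object, apply Priddy's PBW criterion there, then climb back up via the spectral sequence of \cref{Eqn:KSS}---is exactly the paper's approach. However, several of your intermediate identifications are incorrect, and these are precisely the bookkeeping issues you flagged as the main obstacle.

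First, in step (1): the $\rho$- and $\tau$-decorated terms in the Adem relations of \cref{Thm:Adem} do \emph{not} have lower length. Every monomial $\Sq^{a+b-j-1}\Sq^j$ or $\Sq^{a+b-j}\Sq^j$ appearing on the right has length exactly $2$, the same as $\Sq^a\Sq^b$, so all of them survive in $\gr\ca$. Likewise, your parenthetical remark that the bimodule correction terms $\rho\Sq^{2n}$, $\rho^2\Sq^{2n-1}$ in $\Sq^{2n+1}\tau = \tau\Sq^{2n+1} + \rho\Sq^{2n} + \rho^2\Sq^{2n-1}$ ``have lower length, so they vanish in $\gr\ca$'' is wrong for $n\geq 1$: these have length $1$ and persist in $\gr\ca[1]$. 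So $\gr\ca$ retains both the full nontrivial $\m$-bimodule structure and the full motivic (not classical) Adem relations; this is why the secondary filtration is genuinely needed, not merely convenient.

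Second, in step (2): the $\rho$-adic filtration on $\gr\ca^\r$ does kill the $\rho$-terms and does centralize $\tau$, but the $\tau$-coefficients in the Adem relations (e.g.\ the $\tau^{j\bmod 2}$ for $a,b$ both even) persist, since they carry no $\rho$. So the associated graded is $\m^\r\otimes_{\f_2[\tau]}\gr\ca^\c$, not $\f_2[\rho,\tau]\otimes_{\f_2}\gr\ca^\cl$. The paper uses the filtration by total Steenrod degree $r_1+\cdots+r_k$, which works uniformly for all $F$ and over $\r$ coincides with the $\rho$-adic filtration (see \cref{rmk:runiv2}), arriving at $\m^F\otimes_{\f_2[\tau]}\gr\ca^\c$. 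This correction is harmless for the remainder of your argument: the admissible monomials still form a PBW basis over the base $\f_2[\tau]$ (or $\m^F$), and Priddy's criterion applies once one notes that freeness over the base, rather than the base being a field, is all that is actually used in \cite[Theorem 5.3]{Pri70}. With this fix, your steps (3) and (4) go through as stated.
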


\begin{proof}
As $\ca$ is a projectively filtered algebra, we must only show that $\gr\ca$ is a homogeneous Koszul algebra, i.e.\ that $H_n(\gr\ca)[m] = 0$ for $n\neq m$. To that end, we define a new filtration $\overline{F}_\bullet\gr\ca$ on $\gr\ca$ by declaring $\overline{F}_{\leq m} \gr\ca\subset\gr\ca$ to be generated by elements of the form $\Sq^I$, where $I = (r_1,\ldots,r_k)$ is a sequence satisfying $r_1+\cdots+r_k\leq m$. This filtration is multiplicative, and so we may form its associated graded algebra $\overline{\gr}\gr \ca$.

The same construction employed in \cref{SS:Koszul} shows that the filtration $\overline{F}_{\bullet}\gr\ca$ induces a filtration on the bar complex $B(\gr\ca)$ with associated graded $B(\overline{\gr}\gr\ca)$. This filtration is compatible with the decomposition
\[
B(\gr\ca)\cong \bigoplus _{m\geq 0}B(\gr\ca)[m],
\]
and so for each $m$ there is a convergent spectral sequence
\[
E_1^n = H_n B(\overline{\gr}\gr \ca)[m]\Rightarrow H_n(\gr\ca)[m].
\]
It is thus sufficient to verify that $\overline{\gr}\gr\ca$ is a homogeneous Koszul algebra with respect to the grading $\overline{\gr}\gr\ca = \bigoplus_{m\geq 0}\overline{\gr} \gr^m\ca$.
By passing from $\gr\ca$ to $\overline{\gr}\gr\ca$, we have filtered away both the nontrivial $\m$-bimodule structure on $\gr\ca$ described in \cref{Prop:BimoduleStructure} and the parts of the Adem relations involving $\rho$ which appear in \cref{Thm:Adem}, and in the end we may identify
\[
\overline{\gr}\gr\ca\cong\m^F\otimes_{\f_2[\tau]}\gr \ca^{\c}.
\]
From here, it is easily seen that the admissible basis of $\overline{\gr}\gr\ca$ satisfies Priddy's PBW criterion \cite[Section 5.1]{Pri70}. It now follows from \cite[Proposition 5.3]{Pri70} that $\overline{\gr}\gr A$ is Koszul; the assumption in \cite{Pri70} that the base is a field is not needed so long as everything in sight is free over the base.
\end{proof}

\begin{remark}\label{rmk:runiv2}
When $F=\r$, the filtration $\overline{F}_\bullet \gr A$ coincides with the $\rho$-adic filtration of $\gr A$. The use of $\overline{F}$ allows us to apply our argument uniformly to arbitrary base fields, but we could have also proved \cref{Thm:AMotKoszul} in the $\r$-motivic case, and deduced the general case from this. Indeed, everything in \cref{SS:Koszul} is compatible with base change (cf.\ \cite[Lemma 3.5.7]{Bal21}), so Koszulity of $\ca^\r$ implies that any algebra obtained from the construction of \cref{rmk:runiv} is Koszul. As an example not explicitly covered by the statement of \cref{Thm:AMotKoszul}, $\ca^{C_2}$ is Koszul over $\m^{C_2}$.
\tqed
\end{remark}

\begin{definition}\label{Def:Lambda}
The \textit{$F$-motivic lambda algebra} $\Lambda^F$ is the Koszul complex $K_{\ca^F}(\m^F,\m^F)$ associated to the Koszul $\m^F$-algebra $\ca^F$, as defined in \cref{Def:Koszul}, where $\ca^F$ acts on $\m^F$ as described in \cref{prop:baseact}. 
\tqed
\end{definition}

We shall abbreviate $\Lambda=\Lambda^F$ throughout the rest of this subsection. \cref{thm:koszulcx} now implies the following:

\begin{thm}\label{Thm:CohLambda}
Let $C(\ca) = C_\ca(\m,\m)$ denote the cobar complex of $\ca$. Then there is a surjective multiplicative quasiisomorphism
\[
C(\ca)\rightarrow \Lambda.
\]
In particular,
\[
H_\ast\Lambda\cong\Ext^\ast_\ca(\m,\m),
\]
and this isomorphism is compatible with all products and Massey products.
\qed
\end{thm}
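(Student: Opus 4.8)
The plan is to deduce \cref{Thm:CohLambda} directly from \cref{Thm:AMotKoszul} together with \cref{thm:koszulcx}, with the only real work being the identification of the relevant objects. Since \cref{Thm:AMotKoszul} asserts that $\ca = \ca^F$ is a Koszul $\m^F$-algebra, and since $\m^F$ is projective (indeed free) as a left $\m^F$-module, part (2) of \cref{thm:koszulcx}, applied with $M = M' = \m^F$, immediately yields a surjective quasiisomorphism $C_\ca(\m,\m)\rightarrow K_\ca(\m,\m)$. By \cref{Def:Lambda}, the target $K_{\ca^F}(\m^F,\m^F)$ is precisely $\Lambda = \Lambda^F$, so the map $C(\ca)\rightarrow\Lambda$ is obtained. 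The displayed isomorphism $H_\ast\Lambda\cong\Ext^\ast_\ca(\m,\m)$ is then the last clause of \cref{thm:koszulcx}, again using projectivity of $\m^F$ over itself.

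It remains to address the two decorations on the bare statement: that the quasiisomorphism is \emph{multiplicative}, and that the resulting isomorphism on homology is compatible with products \emph{and Massey products}. For multiplicativity, I would invoke the discussion preceding \cref{Thm:DifferentialKoszul}: in the case $M = M' = \m$, the composition pairing $\wr$ makes $C_\ca(\m,\m)$ and $K_\ca(\m,\m)$ into differential graded algebras, and the surjection $C(\ca)\rightarrow K_\ca(\m,\m)$ is, by construction (it is the quotient of the cobar complex by the subcomplex dual to the Koszul resolution's complement inside the bar resolution), a map of such — the pairing on $K_\ca(\m,\m)$ was \emph{defined} as the one compatible with the cobar pairing. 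So multiplicativity is built into the setup rather than something to be checked afresh. Since the two pairings are compatible at the chain level, the induced isomorphism on homology is an isomorphism of graded algebras, giving compatibility with products.

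For Massey products, the standard principle is that a surjective (or any) quasiisomorphism of differential graded algebras induces an isomorphism on cohomology which respects the Massey product structure — Massey products are defined purely in terms of the dga structure (choices of bounding chains for products of cycles), so any dga map carries defining systems to defining systems and a quasiisomorphism carries the resulting indeterminacy-coset isomorphically. I would state this as a one-line consequence of the chain-level multiplicativity already established, citing the standard fact (e.g.\ as in the treatment of matric Massey products for dgas). There is no genuine obstacle here: the entire theorem is a packaging of \cref{thm:koszulcx} in the special case $M = M' = \m^F$, and the only point requiring a sentence of care is recording that the quotient map $C(\ca)\rightarrow\Lambda$ is a map of dgas, which is immediate from how the product on the Koszul complex $K_\ca(\m,\m)$ was introduced. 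The ``hard part,'' insofar as there is one, was proving \cref{Thm:AMotKoszul} — the present theorem is then essentially automatic.
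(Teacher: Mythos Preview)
Your proposal is correct and matches the paper's approach exactly: the paper simply states that \cref{thm:koszulcx} implies the theorem and marks it \qed, relying on \cref{Thm:AMotKoszul} for Koszulity and on the discussion preceding \cref{Thm:DifferentialKoszul} for the multiplicative structure. Your additional sentences unpacking multiplicativity and Massey-product compatibility are reasonable elaborations of what the paper leaves implicit.
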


\begin{remark}\label{Rmk:GeneralCobar}
More generally, the theory recalled in \cref{SS:Koszul} produces and describes Koszul complexes $K_\ca(M,M')$ modeling the cobar complex $C_\ca(M,M')$ for any left $\ca$-modules $M$ and $M'$ with $M$ projective over $\m$. Classically, the case where $M = H^\ast(\r P^\infty)$ and $M' = \f_2$ is of particular importance. Another amusing example is given over $F = \r$ with the observation that $K_{\ca^{\r}}(\m^\r,\m^{C_2})\cong K_{\ca^{C_2}}(\m^{C_2},\m^{C_2}) = \Lambda^{C_2}$ (cf.\ \cref{rmk:runiv}, \cref{rmk:runiv2}).
\tqed
\end{remark}

\subsubsection{The structure of the motivic lambda algebra}

We will now apply the theory recalled in \cref{SS:Koszul} to describe $\Lambda$ explicitly. First note that $\Lambda = \bigoplus_{m\geq 0}\Lambda[m]$ with $\Lambda[1] = (\ca[1])^\vee$, where $\ca[1] = \coker(\m\rightarrow\ca_{\leq 1})$. As a left $\m$-module, we may identify
\[
\ca[1]= \m\{\Sq^r:r\geq 1\}.
\]
Dualizing, we may identify
\[
\Lambda[1]=\{\lambda_r:r\geq 0\}\m
\]
as a right $\m$-module, where $\lambda_r$ is dual to $\Sq^{r+1}$ in the given basis. Considering internal algebraic degrees yields $|\lambda_r| = (r+1,\floor{(r+1)/2})$; following our conventions (\cref{ssec:conventions}), we subtract off the filtration from the algebraic stem to obtain the topological stem, and so instead write $|\lambda_r| = (r,\ceil{r/2})$.

We now begin by describing the multiplicative structure of $\Lambda$.

\begin{proposition}\label{Prop:BimodLambda}
The left $\m$-module structure on $\Lambda[1]$ is determined by
\begin{align*}
x \lambda_n &= \lambda_n x,\qquad x\in\m_0, \\
\tau \lambda_{2n+1} &= \lambda_{2n+1}\tau + \lambda_{2n+2}\rho, \\
\tau\lambda_{2n} &= \lambda_{2n}\tau + \lambda_{2n+1}\tau \rho + \lambda_{2n+2}\rho^2.
\end{align*}
\end{proposition}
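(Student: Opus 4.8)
The plan is to dualize the $\m$-bimodule structure on $\ca$ described in \cref{Prop:BimoduleStructure}, using the pairing between $\Lambda[1] = (\ca[1])^\vee$ and $\ca[1]$. Recall that for a left $S$-module $M$ with an $S$-bimodule structure, the dual $M^\vee$ acquires a left $S$-module structure by the rule $(s\cdot f)(m) = f(m\cdot s)$ (this is precisely the formula recorded in the review of Koszul algebras). So to compute $x\cdot\lambda_n$ for $x\in\m$, I must evaluate the functional $m\mapsto \lambda_n(m\cdot x)$ on the admissible basis $\{\Sq^r : r\geq 1\}$ of $\ca[1]$, where $\lambda_n$ is defined to be dual to $\Sq^{n+1}$. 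The right multiplication $m\cdot x$ inside $\ca[1]$ is governed by \cref{Prop:BimoduleStructure}, but with the roles of left and right reversed — that is, I actually need to re-express a \emph{right} multiple $\Sq^r\cdot x$ back in the left-$\m$-module admissible basis, or equivalently invert the relations of \cref{Prop:BimoduleStructure}.

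Concretely: for $x\in\m_0$, \cref{Prop:BimoduleStructure} gives $\Sq^n x = x\Sq^n$, so $\m_0$ is central on $\ca[1]$, hence central on $\Lambda[1]$, yielding the first formula. For $x = \tau$, I need to solve the relations
\[
\Sq^{2n}\tau = \tau\Sq^{2n} + \rho\tau\Sq^{2n-1},\qquad
\Sq^{2n+1}\tau = \tau\Sq^{2n+1} + \rho\Sq^{2n} + \rho^2\Sq^{2n-1}
\]
for $\tau\Sq^r$ in terms of the $\Sq^s\tau$ (and lower terms), working modulo the fact that in $\ca[1]$ we have $\Sq^0 = 0$ (since $\ca[1] = \ca_{\leq 1}/\m$). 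Equivalently, one computes the matrix coefficients: $\tau\cdot\lambda_n$ is the functional whose value on $\Sq^{r}$ is the coefficient of $\Sq^{n+1}$ when $\Sq^{r}\tau$ is rewritten in the left admissible basis. Extracting these coefficients from the two displayed relations, and being careful about the parity of $r$ and about boundary terms involving $\Sq^0$, should produce exactly
\[
\tau\lambda_{2n+1} = \lambda_{2n+1}\tau + \lambda_{2n+2}\rho,\qquad
\tau\lambda_{2n} = \lambda_{2n}\tau + \lambda_{2n+1}\tau\rho + \lambda_{2n+2}\rho^2.
\]
Since $\m = \m_0[\tau]$ and $\m_0$ is central, these two formulas together with the central action of $\m_0$ determine the entire left $\m$-module structure, so this suffices.

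The main obstacle I anticipate is bookkeeping with gradings and parities: the relations in \cref{Prop:BimoduleStructure} are indexed by $\Sq^{2n}$ versus $\Sq^{2n+1}$, while the lambda generators $\lambda_r$ are dual to $\Sq^{r+1}$, so there is an index shift that flips parity, and one must track which $\rho$-power lands on which $\lambda_{r+1}$ or $\lambda_{r+2}$. A secondary subtlety is the behaviour at the bottom of the range, i.e.\ the interaction with $\Sq^0 = 1$ in $\ca$ but $\Sq^0 = 0$ in $\ca[1]$: a term like $\rho^2\Sq^{2n-1}$ with $n = 0$ must be handled correctly, and one should check the claimed formulas are consistent with the internal bidegree $|\lambda_r| = (r,\ceil{r/2})$ (so that, e.g., $|\tau\lambda_{2n}| = (2n, n-1)$ matches $|\lambda_{2n+1}\tau\rho|$ and $|\lambda_{2n+2}\rho^2|$). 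I would verify the degree consistency first as a sanity check, then carry out the dualization, and finally confirm that the resulting left action is associative and compatible with the relations already imposed on $\m$.
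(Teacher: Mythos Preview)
Your approach is correct and is exactly what the paper does: its entire proof is the sentence ``This follows by dualizing \cref{Prop:BimoduleStructure}.'' Your elaboration via the rule $(s\cdot f)(m) = f(m\cdot s)$ and reading off the coefficient of $\Sq^{n+1}$ in $\Sq^r\tau$ from the displayed relations is the right way to unpack that sentence. One small wording slip: you do not need to ``solve for $\tau\Sq^r$ in terms of $\Sq^s\tau$'' or invert anything---\cref{Prop:BimoduleStructure} already expresses the right multiple $\Sq^r\tau$ in the left $\m$-basis, which is precisely the direction required; your subsequent ``Equivalently'' clause states the correct procedure.
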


\begin{proof}
This follows by dualizing \cref{Prop:BimoduleStructure}. 
\end{proof}

\begin{proposition}\label{Prop:AdemLambda}
If $a$ is odd or $b$ is even, then
\[
\lambda_a \lambda_{2a+b+1} = \sum_{0 \leq r < b/2} \lambda_{a+b-r}\lambda_{2a+1+r}\binom{b-r-1}{r},
\]
and if $a$ is even and $b$ is odd, then
\begin{align*}
\lambda_a \lambda_{2a+b+1} &= \sum_{0 \leq r < b/2} \lambda_{a+b-r}\lambda_{2a+1+r}\binom{b-r-1}{r}\tau^{(r-1) \text{ mod } 2} \\ 
					    &+ \sum_{0 \leq r \leq (b+1)/2} \lambda_{a+b+1-r}\lambda_{2a+1+r}\binom{\floor{b/2}-\floor{r/2}}{\floor{r/2}} \rho.
\end{align*}
\end{proposition}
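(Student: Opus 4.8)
The plan is to identify the relations of $\Lambda$ with the quadratic dual of the motivic Adem relations, via the Koszul machinery of \cref{SS:Koszul}. Recall from \cref{Def:Lambda} that $\Lambda = K_\ca(\m,\m)$, and from the proof of \cref{Thm:AMotKoszul} that the associated graded $\gr\ca$ of the length filtration is a homogeneous Koszul $\m$-algebra with $(\gr\ca)[1] = \m\{\Sq^r : r \geq 1\}$. Writing $R = \ker\bigl((\gr\ca)[1]\otimes_\m(\gr\ca)[1]\to(\gr\ca)[2]\bigr)$ for its module of quadratic relations, \cref{Thm:GensRelsKoszul} gives an isomorphism of graded algebras $\Lambda \cong \widehat T\bigl((\gr\ca)[1]^\vee, R^\perp\bigr)$, under which $\Lambda[1] = \{\lambda_r : r\geq 0\}\m$ as in the discussion preceding \cref{Prop:BimodLambda} and $\Lambda[2] = \bigl((\gr\ca)[1]^\vee\cotimes(\gr\ca)[1]^\vee\bigr)/R^\perp$. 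Since the $\m$-bimodule $(\gr\ca)[1]^\vee\cotimes(\gr\ca)[1]^\vee$ is spanned over $\m$ by the monomials $\lambda_a\lambda_b$ subject only to the commutation rules of \cref{Prop:BimodLambda}, the Proposition amounts to the assertion that the displayed identities hold in $\Lambda[2]$ and that, together, they generate the further relations $R^\perp$.

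First I would make $R$ explicit. Every monomial on the right-hand side of an Adem relation (\cref{Thm:Adem}) has length $2$ except the $j=0$ term $\Sq^{a+b}$, which has length $1$ and so is killed in $(\gr\ca)[2]$; hence $R$ is generated as an $\m$-subbimodule of $(\gr\ca)[1]^{\otimes 2}$ by the elements
\[
\Sq^a\otimes\Sq^b - \sum_{j\geq 1}(\ast)\,\Sq^{a+b-j}\otimes\Sq^j,\qquad 0<a<2b,
\]
where $(\ast)$ is the coefficient from the relevant case of \cref{Thm:Adem} (a binomial integer, possibly times $\rho$ or a power of $\tau$), and one uses \cref{Prop:BimoduleStructure} to normalize where the $\tau$'s and $\rho$'s sit across the tensor. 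The admissible monomials $\Sq^u\Sq^v$ with $u\geq 2v\geq 2$ are an $\m$-basis of $(\gr\ca)[2]$ and the map $(\gr\ca)[1]^{\otimes 2}\to(\gr\ca)[2]$ is the Adem rewriting, so this pins down $R$ and hence its annihilator.

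Next I would carry out the two halves of the computation of $R^\perp$. For membership, one pairs each proposed relator $\lambda_a\lambda_{2a+b+1} - (\text{stated RHS})$ against every generator of $R$ and checks the pairing vanishes; under the substitution $\Sq^r\leftrightarrow\lambda_{r-1}$ the terms free of $\rho$ and $\tau$ reduce to exactly the binomial identities used in the classical derivation of the $\lambda$-algebra relations \cite{BCK+66, Pri70}, while the $\rho$- and $\tau$-decorated terms must be balanced against the $\rho$- and $\tau$-decorated Adem relators (the even--odd, odd--even, and even--even cases of \cref{Thm:Adem}). For spanning, Priddy's PBW criterion --- already invoked in the proof of \cref{Thm:AMotKoszul} --- also identifies the ``admissible'' monomials $\lambda_i\lambda_j$ with $2i\geq j$ as an $\m$-basis of $\Lambda[2]$, hence as a complement to $R^\perp$; since every non-admissible monomial $\lambda_a\lambda_{2a+b+1}$ occurs exactly once as a left-hand side, the displayed relations have the right count and pairwise distinct distinguished monomials, so they form an $\m$-basis of $R^\perp$. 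By \cref{rmk:runiv2} the entire computation commutes with base change, so it suffices to run it over $F = \r$; specializing further to $\rho = 0$ recovers the classical identities together with the single family of $\tau$-corrections coming from the even--even Adem relations.

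The main obstacle is the bookkeeping in the $\rho$- and $\tau$-decorated part of the membership check --- precisely the delicacy flagged in \cref{rmk:koszulsubtleties}(2). Two effects interact: the nontrivial $\m$-bimodule structures of \cref{Prop:BimoduleStructure} and \cref{Prop:BimodLambda} mean that commuting a $\tau$ past a $\Sq^n$ or a $\lambda_n$ spawns correction terms, so each relator must be put into a fixed normal form before pairing; and the coefficients in \cref{Thm:Adem} split into a ``main'' binomial $\binom{b-r-1}{r}$ and a ``$\rho$-defect'' binomial $\binom{\floor{b/2}-\floor{r/2}}{\floor{r/2}}$, with a $\tau$-exponent $(r-1)\bmod 2$ that only emerges once these corrections are accounted for. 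Verifying that all such contributions cancel --- that the displayed formulas are exactly right, with no missing or spurious $\rho$- or $\tau$-terms --- is the heart of the argument; once it is in place, \cref{Thm:GensRelsKoszul} delivers the stated presentation of $\Lambda[2]$ at once.
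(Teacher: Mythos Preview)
Your proposal is correct and follows essentially the same route as the paper: both identify the relation module of $\Lambda$ via \cref{Thm:GensRelsKoszul} as the annihilator $R^\perp$ of the Adem relations and then appeal to a direct computation dualizing \cref{Thm:Adem}. The paper's proof is two sentences and leaves that computation entirely implicit, whereas you spell out the membership-and-spanning strategy and flag the $\rho,\tau$ bookkeeping; neither actually carries the binomial manipulations through on the page.
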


\begin{proof}
By \cref{Thm:GensRelsKoszul}, the bimodule of relations defining $\Lambda$ as a quadratic algebra with generating bimodule $\Lambda[1]$ may be identified as $\ca[2]^\vee = \ker(\ca[1]^\vee\otimes\ca[1]^\vee\rightarrow R^\vee)$, where $R\subset \ca[1]\otimes \ca[1]$ is the projection of the subbimodule $qR\subset \ca_{\leq 1}\otimes\ca_{\leq 1}$ of Adem relations recalled in \cref{Thm:Adem}. It follows by direct computation that this kernel is generated by the indicated relations.
\end{proof}

\begin{remark}
Unless both $a$ and $b$ are even, the Adem relation expanding a product of the form $\lambda_a\lambda_b$ is exactly as in the classical lambda algebra.
\tqed
\end{remark}

The additive structure of $\Lambda$ may be understood just as in the classical case.

\begin{definition}\label{def:coadmissible}
Given a sequence $I = (r_1,\ldots,r_n)$, write $\lambda_I = \lambda_{r_1}\cdots\lambda_{r_n}$. Call the sequence $I$ \textit{coadmissible} if $2r_i\geq r_{i+1}$ for all $1\leq i \leq n-1$.
\tqed
\end{definition}

\begin{proposition}\label{prop:coadmissiblebasis}
$\Lambda$ is freely generated as a right $\m$-module by classes of the form $\lambda_I$ where $I$ is a coadmissible sequence.
\end{proposition}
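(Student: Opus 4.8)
The plan is to recognize that \cref{prop:coadmissiblebasis} is a \emph{PBW basis statement} for the quadratic algebra $\Lambda$, and to deduce it from the corresponding statement for the associated graded algebra $\gr\Lambda$. The key point is that $\Lambda = K_{\ca}(\m,\m)$ only depends additively and multiplicatively on $\gr\ca$, together with the left $\m$-module $\m$; more precisely, by \cref{Thm:DifferentialKoszul2}, $\Lambda$ is a subcomplex of $K_{\ca^{\big}}(\m,\m)$, and as graded right $\m$-modules $\Lambda \cong K_{\ca^{\big}}(\m,\m)$ in each degree, so the additive structure is that of the Koszul complex of the \emph{homogeneous} Koszul algebra $\gr\ca$ (equivalently, $\ca^{\big}$). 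Thus it suffices to prove the result for the Koszul complex of $\gr\ca$, and by a further application of the filtration $\overline{F}_\bullet$ used in the proof of \cref{Thm:AMotKoszul} (whose associated graded is $\overline{\gr}\gr\ca \cong \m^F\otimes_{\f_2[\tau]}\gr\ca^{\c}$), it suffices to prove it after base-extending scalars, i.e.\ to prove that the classical lambda algebra $\Lambda^{\c}$ has $\f_2$-basis given by the coadmissible monomials, and then observe that the monomial basis is unchanged under the (free) base change along $\f_2[\tau]\to\m^F$ and under passing back up the filtration spectral sequences.

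Concretely, the steps are: (1) By \cref{prop:cartan}, \cref{Prop:BimodLambda}, and \cref{Prop:AdemLambda}, every monomial $\lambda_I$ can be rewritten as a right-$\m$-linear combination of coadmissible monomials: one applies the Adem relations of \cref{Prop:AdemLambda} to any adjacent pair $\lambda_a\lambda_b$ with $2a < b$, which replaces it with a sum of terms $\lambda_{a'}\lambda_{b'}$ with $a'+b' = a+b$ and $a' > a$ (and possibly a final generator shifted by $1$, coming from the $\rho$-term), using the $\m$-bimodule relations of \cref{Prop:BimodLambda} to move coefficients to the right; a standard ``moment'' or lexicographic induction on the sequence $I$ (exactly as in \cite[Section 3]{BCK+66} or \cite[Section 5]{Pri70}) shows this process terminates, giving spanning. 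This establishes that the coadmissible $\lambda_I$ generate $\Lambda$ as a right $\m$-module. (2) For linear independence, invoke \cref{Thm:GensRelsKoszul}: $\Lambda = \widehat{T}(\ca[1]^\vee, R^\perp)$ as a graded right $\m$-module, so its graded rank over $\m$ in each bidegree equals that of $\gr\Lambda$, and $\gr\Lambda$ is (a completion of) the quadratic dual of the homogeneous Koszul algebra $\gr\ca$. Priddy's PBW criterion \cite[Section 5]{Pri70}, which was verified for $\overline{\gr}\gr\ca$ in the proof of \cref{Thm:AMotKoszul}, supplies a PBW basis for $\gr\ca$, hence dually a monomial basis for its Koszul dual; tracking through the base change $\overline{\gr}\gr\ca \cong \m^F\otimes_{\f_2[\tau]}\gr\ca^{\c}$ and the fact that everything in sight is free over the base, the dual monomial basis is precisely the set of coadmissible $\lambda_I$, each free of rank one over $\m$. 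Counting ranks degreewise then forces the spanning set from (1) to be a basis.

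Alternatively — and perhaps cleaner to write — one can bypass the rank count and argue directly: the set of coadmissible monomials spans (step (1)), and to see they are $\m$-linearly independent, note that a nontrivial right-$\m$-linear relation among coadmissible $\lambda_I$ would, after reducing modulo the augmentation ideal generated by $\rho$ and $\tau-1$ (i.e.\ passing to $\overline{\gr}\gr$ and then base-changing to $\f_2$), give a nontrivial relation among coadmissible monomials in the classical lambda algebra $\Lambda^{\c}$, contradicting the classical result \cite[Theorem 2.5]{BCK+66}; one must check the leading terms survive this reduction, which they do because the Adem straightening and the bimodule relations of \cref{Prop:BimodLambda} are filtered with respect to $\overline{F}_\bullet$ and the $\rho$-terms strictly raise filtration.

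The main obstacle is the bookkeeping in step (1): unlike the classical case, the $\m$-bimodule structure on $\Lambda[1]$ is nontrivial (\cref{Prop:BimodLambda}), so when one applies an Adem relation the coefficients in $\m$ (powers of $\tau$ and $\rho$) must be commuted past the $\lambda$'s, and in the mixed-parity case of \cref{Prop:AdemLambda} there is an extra family of terms with a generator index shifted by one and a factor of $\rho$. One has to set up the termination argument — a well-ordering on coadmissible-straightening, refined by the $\overline{F}_\bullet$-filtration degree to absorb the $\rho$-terms — carefully enough that these extra terms do not disrupt it; the filtration $\overline{F}_\bullet$ introduced in \cref{Thm:AMotKoszul} is exactly the device that makes this work, since the problematic $\rho$-terms strictly increase $\overline{F}$-degree while preserving total internal degree, so an induction on internal degree with a secondary induction via the classical straightening algorithm closes the argument.
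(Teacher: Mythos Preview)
Your proposal is correct and follows essentially the same route as the paper---spanning via the Adem relations, then linear independence by reduction to the classical lambda algebra---but the paper's execution is considerably shorter. The paper first base-changes to $F=\r$ using $\Lambda^F\cong\Lambda^\r\otimes_{\m^\r}\m^F$, then makes the single observation that $\Lambda^\r$ is \emph{free} as a right $\m^\r$-module by construction (this is built into the Koszul-complex formalism). Freeness over $\m^\r=\f_2[\tau,\rho]$ means any putative nontrivial relation among coadmissible $\lambda_I$ can have a common factor of $\rho$ divided out, and then survives inverting $\tau$; so it suffices to check the basis property in $\Lambda^\r/(\rho)[\tau^{-1}]\cong\Lambda^{\cl}\otimes_{\f_2}\f_2[\tau^{\pm1}]$, which is the classical fact.

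Your two approaches both reach the same endpoint but take longer. The rank-counting argument via \cref{Thm:GensRelsKoszul} and Priddy's PBW theorem is valid but re-derives what freeness gives for free. In your ``alternative'' paragraph, the justification you offer for why a nontrivial relation remains nontrivial after passing to $\Lambda^{\cl}$---namely that the $\rho$-terms in the Adem and bimodule relations raise $\overline{F}_\bullet$-filtration---is a red herring: that filtration controls the \emph{straightening} process, not arbitrary relations. The correct reason is exactly the freeness/$\rho$-torsion-freeness observation the paper isolates. Once you make that explicit, the $\overline{F}_\bullet$ apparatus and the termination bookkeeping in your final paragraph become unnecessary for this proposition.
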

\begin{proof}
The relations of \cref{Prop:AdemLambda} imply that the coadmissible classes $\lambda_I$ generate $\Lambda$ as a right $\m$-module, and we must only verify that they do so freely. Following \cref{rmk:runiv} and \cref{rmk:runiv2}, there is an isomorphism
\[
\Lambda \cong \Lambda^\r\otimes_{\m^\r}\m;
\]
thus we may reduce to the case where $F = \r$. By construction, $\Lambda$ is free as a right $\m$-module. Thus to show that the coadmissible classes $\lambda_I$ freely generate $\Lambda$ over $\m$, it is sufficient to verify the same for $\Lambda/(\rho)[\tau^{-1}]$ over $\m/(\rho)[\tau^{-1}]$. There is an isomorphism $\Lambda/(\rho)[\tau^{-1}]\cong \Lambda^{\cl}\otimes_{\f_2}\f_2[\tau^{\pm 1}]$, so this follows from the classical case.
\end{proof}

Finally, we describe the differential on $\Lambda$ by applying \cref{Thm:DifferentialKoszul}.

\begin{proposition}\label{Prop:DiffLambda}
The differential on $\Lambda$ is determined by the Leibniz rule, together with
\begin{align*}
\delta(x) &= 0,\qquad x\in\m_0 \\
\delta(\tau) &= \lambda_0 \rho, \\
  \delta(\lambda_n) &= \sum_{1 \leq r \leq n/2} \lambda_{n-r}\lambda_{r-1}\binom{n-r}{r}.
  \end{align*}
\end{proposition}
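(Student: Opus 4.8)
The plan is to derive all three formulas from \cref{Thm:DifferentialKoszul}, which identifies the Koszul differential with the commutator $\delta(f) = [Q^{\m}, f]$ where $Q^{\m} \in K^1_{\ca}(\m,\m) = \LMod_\m(\ca[1]\otimes\m, \m)$ records the $\ca$-action on $\m$. First I would unwind what $Q^{\m}$ is as an element of $\Lambda[1]$: since $\ca[1] = \m\{\Sq^r : r\geq 1\}$ and $\lambda_{r}$ is dual to $\Sq^{r+1}$, the action data of \cref{prop:baseact} — namely $\Sq^{\geq 1}$ kills $\m_0$, $\Sq^1(\tau) = \rho$, and $\Sq^{\geq 2}(\tau) = 0$ — says precisely that $Q^{\m}$ is the functional picking out the $\Sq^1$-coefficient and multiplying by $\rho$ on $\tau$. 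In the notation of the lambda algebra this is the statement that ``$Q^{\m} = \lambda_0$ acting by the left $\m$-module structure'', but one must be careful: $Q^{\m}$ is not literally $\lambda_0$, it is the boundary operator $f \mapsto [Q^{\m}, f]$, and the content of \cref{Thm:DifferentialKoszul} combined with \cref{Thm:DifferentialKoszul2} is that on the subcomplex $\Lambda \subset K_{\ca^{\big}}$ this commutator is computed using the multiplication $\mu$ on $\widehat{T}(\ca[1]^\vee, R^\perp)$, i.e.\ using the Adem relations of \cref{Prop:AdemLambda} together with the bimodule relations of \cref{Prop:BimodLambda}.

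Granting this, the three formulas fall out as follows. For $\delta(x) = 0$ with $x \in \m_0$: since $x$ commutes with $\lambda_0$ by \cref{Prop:BimodLambda} (the relation $x\lambda_n = \lambda_n x$), the commutator vanishes. For $\delta(\tau) = \lambda_0\rho$: here $Q^{\m}$ genuinely detects $\Sq^1(\tau) = \rho$, so $\delta(\tau)$ is the ``left multiplication by $Q^{\m}$ minus right multiplication'' applied to $\tau$; the right-hand term contributes $\tau$ times the unit action which is the part of $Q^{\m}$ evaluated on the $A$-module $\m$ in filtration zero — this is where one reads off, from \cref{Prop:BimodLambda}, that $\lambda_0 \tau - \tau\lambda_0$ is governed by $\Sq^1(\tau) = \rho$, yielding $\delta(\tau) = \lambda_0\rho$. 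For $\delta(\lambda_n)$: this is the internal part of the differential coming purely from the quadratic structure of $\gr\ca$, and by \cref{Thm:DifferentialKoszul} it equals $[Q^{\m}, \lambda_n]$ computed via $\mu$; expanding $\lambda_0\lambda_n$ and $\lambda_n\lambda_0$ using the Adem relations of \cref{Prop:AdemLambda} and taking the difference gives exactly $\sum_{1\leq r\leq n/2}\lambda_{n-r}\lambda_{r-1}\binom{n-r}{r}$. Since no $\rho$ or $\tau$ can appear here — the relevant Adem relation $\lambda_0\lambda_n$ has $a = 0$ even but requires $b$ to be odd for $\rho,\tau$ to intervene, and a careful index check shows the relevant products $\lambda_0\lambda_{b+1}$ produce no $\rho$-term because the leading coefficient $\binom{\lfloor b/2\rfloor}{\lfloor r/2\rfloor}\rho$ terms cancel in the commutator or lie outside the coadmissible range — the formula agrees with the classical one. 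I would verify this last cancellation by a direct comparison with the classical differential via the isomorphism $\Lambda/(\rho)[\tau^{-1}] \cong \Lambda^{\cl}\otimes_{\f_2}\f_2[\tau^{\pm 1}]$ used in the proof of \cref{prop:coadmissiblebasis}, after separately checking the $\rho$- and $\tau$-torsion contributions vanish by a degree/parity argument.

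The main obstacle, and the step requiring genuine care, is pinning down exactly how $Q^{\m}$ decomposes and how the commutator bracket in \cref{Thm:DifferentialKoszul} interacts with the noncentral $\m$-bimodule structure — i.e.\ making precise the claim that the ``internal'' piece of $\delta$ (the $\delta(\lambda_n)$ formula) is unaffected by $\rho$ while the ``module'' piece (the $\delta(\tau) = \lambda_0\rho$ formula) is exactly where $\rho$ enters through $\Sq^1(\tau) = \rho$. Concretely one must track, in \cref{Thm:DifferentialKoszul2}(3), the splitting $K_{\ca^{\big}}^n = K_\ca^n \oplus K_\ca^{n-1}$ and confirm that the boundary map $K_\ca^{n-1}\to K_\ca^n$ implicit in the connecting-map description does not introduce spurious $\rho$-terms into $\delta(\lambda_n)$; equivalently, that the only non-classical feature of $\delta$ on $\Lambda$ lives in $\delta(\tau)$ and is propagated to other classes only through the Leibniz rule. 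Everything else — the explicit coefficient $\binom{n-r}{r}$, the range $1\leq r\leq n/2$ — is then a mechanical dualization of the Adem relations, identical to Priddy's classical computation in \cite[Section 6]{Pri70}.
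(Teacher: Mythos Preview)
Your proposal has a genuine error in the identification of $Q^{\m}$. You write that ``$Q^{\m} = \lambda_0$'' and then propose to compute $\delta(\lambda_n)$ by expanding $\lambda_0\lambda_n$ and $\lambda_n\lambda_0$ via the Adem relations. This gives the wrong answer already for $n=1$: one has $\lambda_0\lambda_1 = 0$ while $\lambda_1\lambda_0$ is a nonzero coadmissible monomial, so $[\lambda_0,\lambda_1] = \lambda_1\lambda_0 \neq 0$, whereas the claimed formula gives $\delta(\lambda_1) = 0$. Likewise for $\delta(\tau)$: from \cref{Prop:BimodLambda} with $n=0$ one has $\tau\lambda_0 = \lambda_0\tau + \lambda_1\tau\rho + \lambda_2\rho^2$, so $[\lambda_0,\tau] = \lambda_1\tau\rho + \lambda_2\rho^2$, not $\lambda_0\rho$.

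The underlying issue is that \cref{Thm:DifferentialKoszul} describes the differential on the Koszul complex of a \emph{homogeneous} Koszul algebra, but $\ca$ is only filtered Koszul: the relation $\Sq^0 = 1$ prevents the length filtration from being split. This is exactly why \cref{Thm:DifferentialKoszul2} is needed. One must pass to $\ca^{\big}$, where $\Sq^0$ is a genuine generator distinct from $1$, and hence $\Lambda^{\big}$ acquires an extra generator $\lambda_{-1}$ dual to $\Sq^0$. The action map $\ca^{\big}[1]\otimes\m = \ca_{\leq 1}\otimes\m \to \m$ is dominated by $\Sq^0(m) = m$, so in the right $\m$-module basis of $\Lambda^{\big}[1]$ one has $Q^{\m} = \lambda_{-1}$, not $\lambda_0$. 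The commutator $[\lambda_{-1},\lambda_n]$ is then computed via the Adem relation for $\lambda_{-1}\lambda_n$ (taking $a=-1$ odd in \cref{Prop:AdemLambda}), whose $r=0$ term is exactly $\lambda_n\lambda_{-1}$ and cancels, leaving the classical formula; and $[\lambda_{-1},\tau]$ follows from the bimodule relation $\tau\lambda_{-1} = \lambda_{-1}\tau + \lambda_0\rho$ (the $n=-1$ case of $\tau\lambda_{2n+1}$). The paper's proof is precisely this. Your instinct to invoke both theorems was right, but you did not actually carry out the passage to $\Lambda^{\big}$, and the element you wrote down lives in the wrong complex.
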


\begin{proof}
Recall the construction $\ca^{\big} = \bigoplus_{m\geq 0}\ca_{\leq m}$ used in the statement of \cref{Thm:DifferentialKoszul2}. By inspection, we find that $\ca^{\big}$ may be identified as the ``big motivic Steenrod algebra'', defined with generators and relations the same as $\ca$ only without the stipulation that $\Sq^0=1$. Let $\Lambda^{\big} = K_{\ca^{\big}}(\m,\m)$, where $\ca^{\big}$ acts on $\m$ through the quotient $\ca^{\big}\rightarrow\ca$, i.e.\ with $\Sq^0$ acting by the identity.

\cref{Thm:DifferentialKoszul2} tells us that $\ca^{\big}$ is a homogeneous Koszul algebra, and that there is an inclusion $\Lambda\subset\Lambda^{\big}$ of differential graded algebras. As $\ca^{\big}$ is homogeneous Koszul, \cref{Thm:GensRelsKoszul} applies to show that $\Lambda^{\big}$ is generated by classes $\lambda_r$ for $r\geq -1$, subject to relations of the same form as described for $\Lambda$ in \cref{Prop:BimodLambda} and \cref{Prop:AdemLambda}. The inclusion $\Lambda\subset\Lambda^{\big}$ is the obvious one, identifying $\Lambda$ as the subalgebra of $\Lambda^{\big}$ generated by the classes $\lambda_r$ for $r\geq 0$.

\cref{Thm:DifferentialKoszul} describes the differential on $\Lambda^{\big}$ as
\[
\delta(f) = [Q,f] =  Q\cdot f - f \cdot Q,
\]
where $Q\in\Lambda^{\big}[1]\cong (\ca^{\big}[1])^\vee$ is the map $\ca^{\big}[1]\cong \ca_{\leq 1}\otimes\m\rightarrow\m$ induced by the action of $\ca^{\big}$ on $\m$. In the basis $\ca^{\big}[1]=\ca_{\leq 1} = \m\{\Sq^r:r\geq 0\}$, this map is the projection onto $\Sq^0$, which by definition is the class $\lambda_{-1}\in\Lambda^{\big}$. So the differential on $\Lambda^{\big}$ is given by
\[
\delta(f) = [\lambda_{-1},f] = \lambda_{-1}f - f \lambda_{-1},
\]
and $\Lambda\subset\Lambda^{\big}$ is closed under this. The proposition follows upon expanding out this commutator using the relations defining the algebra $\Lambda^{\big}$.
\end{proof}

\begin{remark}
The description of the differential on $\Lambda$ as the commutator $\delta(f) = [\lambda_{-1},f]$ has appeared classically as well, see \cite[Page 83]{Bru88}.
\tqed
\end{remark}

\subsubsection{A closed formula for $\delta(\tau^n)$}

\cref{Prop:DiffLambda} gives a recursive process for computing $\delta(\tau^n)$. It is possible to solve this recursion, and we do so here. Recall that the pair $(\m,\ca^\vee)$ carries the structure of a Hopf algebroid. In particular, $\ca^\vee$ is a commutative ring, and $\ca_{\leq 1}^\vee$ is a quotient of this ring. Now, the differential $\delta\colon\Lambda[0]\rightarrow\Lambda[1]$ may be described as the composite 
\[
\eta_R + \eta_L\colon \Lambda[0] = \m\rightarrow \ca^\vee\rightarrow\ca_{\leq 1}^\vee\rightarrow \coker(\m\rightarrow\ca_{\leq 1}^\vee) = \Lambda[1],
\]
where $\eta_L,\eta_R\colon\m\rightarrow\ca^\vee$ are given by $\eta_R(m)(a) = \epsilon(ma)$ and $\eta_L(m)(a) = \epsilon(am)$, where $\epsilon\colon \ca = \ca\otimes_\m\m\rightarrow\m$ encodes the action of $\ca$ on $\m$.

We may use this interpretation to compute $\delta(\tau^n)$. The full structure of the Hopf algebroid $(\m,\ca^\vee)$ was determined by Voevodsky \cite{Voe03}; however, we only need a small piece of this, which is easily computed by hand from the structure of $\ca$ recalled in \cref{SS:Steenrod}. We record this piece in the following.

\begin{lemma}\label{lem:dual}
There is an isomorphism of rings
\[
\ca_{\leq 1}^\vee = \m[\tau_0,\xi_1]/(\tau_0^2+\xi_1\tau_0\rho+\xi_1\tau),
\]
where the quotient map
\[
\ca_{\leq 1}^\vee\rightarrow\Lambda[1]
\]
acts by
\[
\tau_0^\epsilon\xi_1^n\mapsto \lambda_{2n-1+\epsilon}
\]
for $\epsilon\in\{0,1\}$ and $n\geq 0$, with the interpretation that $\lambda_{-1}=0$. Moreover, the maps $\eta_L,\eta_R\colon \m\rightarrow\ca_{\leq 1}^\vee$ act by
\begin{align*}
\eta_R(x) &= x,\qquad x\in\m,\\
 \eta_L(x)&=x,\qquad x\in\m_0,\\
 \eta_L(\tau) &= \tau+\tau_0\rho.
\end{align*}
\end{lemma}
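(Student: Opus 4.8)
The plan is to compute $\ca_{\leq 1}^\vee$ directly from the $\m$-bimodule structure of $\ca_{\leq 1}$ recalled in \cref{SS:Steenrod}, then read off $\eta_L$ and $\eta_R$ from \cref{prop:baseact} and \cref{Prop:BimoduleStructure}. First I would set up notation: as a left $\m$-module, $\ca_{\leq 1} = \m\{\Sq^0=1,\Sq^1,\Sq^2,\ldots\}$, and dually $\ca_{\leq 1}^\vee$ is spanned over $\m$ by the dual basis elements. I would introduce $\tau_0$ dual to $\Sq^1$ and $\xi_1$ dual to $\Sq^2$ (matching the standard Milnor generators), and more generally let the monomial $\tau_0^\epsilon\xi_1^n$ be the element sent to $\lambda_{2n-1+\epsilon}$, i.e.\ dual to $\Sq^{2n+\epsilon}$, under the quotient $\ca_{\leq 1}^\vee\to\Lambda[1]=(\ca[1])^\vee$ which kills the line dual to $\Sq^0$. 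The content to check is that these satisfy exactly the one quadratic relation $\tau_0^2 + \xi_1\tau_0\rho + \xi_1\tau = 0$ and nothing more, so that $\ca_{\leq 1}^\vee\cong \m[\tau_0,\xi_1]/(\tau_0^2+\xi_1\tau_0\rho+\xi_1\tau)$ as claimed; since $\ca_{\leq 1}^\vee$ is (as an $\m$-module) free on the $\lambda$-basis augmented by the $\Sq^0$-dual, this is a finite linear-algebra check that the proposed presentation has the right Hilbert series in each degree.

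The ring structure on $\ca_{\leq 1}^\vee$ is dual to the comultiplication on $\ca_{\leq 1}$, which in turn is determined by the Cartan formula (\cref{prop:cartan}) together with the relations among low Steenrod squares; equivalently one can extract it from Voevodsky's computation of the dual Steenrod algebra \cite{Voe03} and truncate. Concretely, the product $\tau_0\cdot\tau_0$ in $\ca_{\leq 1}^\vee$ is computed by evaluating the coproduct of admissible squares of degree $\leq 2$ and pairing; the motivic Cartan formula produces the $\tau$ and $\rho$ correction terms, yielding $\tau_0^2 = \xi_1\tau + \xi_1\tau_0\rho$ (up to rearrangement, using that we are in characteristic $2$). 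I would verify this by pairing both sides against $\Sq^0$, $\Sq^1$, $\Sq^2$: the pairing against $\Sq^2$ detects the $\xi_1\tau$ term via $\Sq^1\tau = \rho$ and $\Sq^2(\tau^2)$-type terms, while the pairing against lower squares pins down the $\xi_1\tau_0\rho$ coefficient. The main obstacle — and the place requiring genuine care rather than bookkeeping — is getting the \emph{signs and $\tau,\rho$ coefficients exactly right} in this quadratic relation, since the noncentrality of $\m$ in $\ca$ (\cref{Prop:BimoduleStructure}) means the naive dualization of a classical formula does not simply go through; one must track whether $\tau_0\rho$ appears as $\xi_1\tau_0\rho$ or $\tau_0\xi_1\rho$ (these differ by lower-order terms a priori, though in this small truncation commutativity of $\ca^\vee$ as a ring resolves it) and confirm consistency with the bimodule relations $\Sq^{2n}\tau = \tau\Sq^{2n}+\rho\tau\Sq^{2n-1}$, $\Sq^{2n+1}\tau = \tau\Sq^{2n+1}+\rho\Sq^{2n}+\rho^2\Sq^{2n-1}$.

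Once the ring presentation and the quotient map are in hand, computing $\eta_L$ and $\eta_R$ is immediate. By definition $\eta_R(x)(a) = \epsilon(ma)\big|_{m=x}$ where $\epsilon\colon\ca\to\m$ is the action on $\m$; since the action of $\ca_{\leq 1}$ on $\m$ is $\m_0$-linear (\cref{prop:baseact}) and $\Sq^0$ acts as the identity, $\eta_R$ is just the unit $\m\to\ca_{\leq 1}^\vee$, i.e.\ $\eta_R(x)=x$. For $\eta_L(x)(a)=\epsilon(ax)$ one uses the bimodule structure to move $x$ past $a$: for $x\in\m_0$ central this again gives $\eta_L(x)=x$, while for $x=\tau$ the relations of \cref{Prop:BimoduleStructure} (equivalently \cref{prop:baseact}: $\Sq^1\tau=\rho$, $\Sq^{\geq 2}\tau=0$, and $\Sq^1$ acts on $\tau$ by $\rho$) yield $\epsilon(\Sq^1\cdot\tau)=\epsilon(\tau\Sq^1+\rho\Sq^0) = \rho$ and $\epsilon(\Sq^n\cdot\tau)=0$ for $n\neq 1$, so that $\eta_L(\tau) = \tau + \tau_0\rho$, reading off the coefficient of the $\Sq^1$-dual as $\rho$. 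Assembling these gives the lemma; composing $\eta_L+\eta_R$ with the quotient to $\Lambda[1]$ and using $\tau_0\mapsto\lambda_0$ recovers the relation $\delta(\tau)=\lambda_0\rho$ of \cref{Prop:DiffLambda} as a sanity check, and iterating via the coproduct will later yield the closed formula for $\delta(\tau^n)$.
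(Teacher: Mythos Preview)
Your proposal is correct and follows essentially the same approach as the paper: the ring structure of $\ca_{\leq 1}^\vee$ is read off from the coproduct on $\ca$ (i.e.\ the Cartan formula of \cref{prop:cartan}), and the units $\eta_L,\eta_R$ are read off from the counit $\epsilon$ via \cref{prop:baseact} and the bimodule relations of \cref{Prop:BimoduleStructure}. The paper's own proof is terse (two sentences pointing to exactly these ingredients), and your write-up simply spells out the same computation in more detail; your sanity check recovering $\delta(\tau)=\lambda_0\rho$ is a nice addition but not a different idea.
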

\begin{proof}
The structure of the ring $\ca_{\leq 1}^\vee$ may be read off the coproduct of $\ca$, as given in \cref{prop:cartan}, and its relation with our basis of $\Lambda[1]$ then follows by construction. The behavior of the left and right units may be read off the $\m$-bimodule structure of $\ca_{\leq 1}$ as given in \cref{Prop:DiffLambda}, together with knowledge of the counit map $\epsilon\colon\ca_{\leq 1}\rightarrow\m$ given in \cref{prop:baseact}.
\end{proof}

The main input to our computation of $\delta(\tau^n)$ is the following elementary computation.

\begin{lemma}\label{lem:taupow}
In the ring $\ca_{\leq 1}^\vee$, we have
\[
\tau_0^n = \sum_{\substack{\epsilon\in\{0,1\}\\ (n-\epsilon)/2\leq i \leq n-1}}\tau_0^\epsilon\xi_1^i\binom{i+\epsilon-1}{n-i-1}\tau^{n-i-\epsilon}\rho^{2i-n+\epsilon}.
\]
These bounds on $i$ are not necessary, but give a region where the binomial coefficients may be nonzero.
\end{lemma}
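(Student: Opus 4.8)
The plan is an induction on $n$ using the single quadratic relation $\tau_0^2 = \xi_1\tau_0\rho + \xi_1\tau$ from \cref{lem:dual} (recall that $\m$ has characteristic $2$, so this is equivalent to $\tau_0^2+\xi_1\tau_0\rho+\xi_1\tau=0$). Since $\ca_{\leq 1}^\vee$ is free of rank two over $\m[\xi_1]$ on the basis $\{1,\tau_0\}$, every power has a unique expression $\tau_0^n = A_n + B_n\tau_0$ with $A_n,B_n\in\m[\xi_1]$, and the asserted formula is exactly the claim that $A_n$ is the sum of the $\epsilon=0$ terms on the right and $B_n\tau_0$ the sum of the $\epsilon=1$ terms. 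Multiplying $\tau_0^n = A_n + B_n\tau_0$ by $\tau_0$ and rewriting $B_n\tau_0^2 = B_n\xi_1\tau + B_n\xi_1\rho\,\tau_0$ via the relation gives the recursions
\[
A_{n+1} = B_n\xi_1\tau, \qquad B_{n+1} = A_n + B_n\xi_1\rho,
\]
together with the base case $A_1 = 0$, $B_1 = 1$ (for which the right-hand side of the lemma collapses to the single term $\tau_0$).

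The inductive step is then a bookkeeping exercise. I would substitute the inductive formulas for $A_n$ and $B_n$ into these recursions, reindex the sums by $i\mapsto i+1$ wherever multiplication by $\xi_1$ has shifted the exponent of $\xi_1$, and compare coefficients of each monomial $\xi_1^i\tau^a\rho^b$. The identity for $A_{n+1}$ then drops out directly, while the identity for $B_{n+1}$ reduces to the single Pascal relation $\binom{i-1}{n-i-1} + \binom{i-1}{n-i} = \binom{i}{n-i}$, read modulo $2$. The convention that $\binom{a}{b} = 0$ unless $0\le b\le a$ (from \cref{ssec:conventions}) is what makes the stated bounds on $i$ ``not necessary'', and it automatically reconciles the two slightly different index ranges contributed to $B_{n+1}$ by its two summands, as well as the boundary terms $i=n-1$ and $i=\lceil(n-\epsilon)/2\rceil$.

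Alternatively — and this may be the more transparent way to organize the computation — one observes that the recursions combine into $B_{n+1} = \xi_1\rho\,B_n + \xi_1\tau\,B_{n-1}$ with $B_0 = 0$, $B_1 = 1$, so that the $B_n$ are, up to the substitution $(\rho,\tau)\mapsto(\rho\xi_1,\tau\xi_1)$, the usual Fibonacci polynomials; their closed form $B_n = \sum_j \binom{n-1-j}{j}(\rho\xi_1)^{n-1-2j}(\tau\xi_1)^j$ is standard and follows from the same one-line Pascal induction, and reindexing $i = n-1-j$, together with $A_n = \xi_1\tau\,B_{n-1}$, recovers the stated expression verbatim. I do not anticipate any genuine difficulty here: the content is entirely the Pascal recursion together with careful reindexing, and the only point requiring a little care is tracking the two parity cases $\epsilon\in\{0,1\}$ and their index ranges simultaneously, which the binomial-coefficient convention handles cleanly.
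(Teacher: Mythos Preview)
Your proof is correct and follows essentially the same inductive/Pascal strategy as the paper. The one organizational difference is that the paper first specializes to the quotient $\f_2[\tau_0,\xi_1]/(\tau_0^2+\xi_1\tau_0+\xi_1)$ where $\tau$ and $\rho$ are set equal to $1$, solves the resulting two-term recursion $\tau_0^n = \xi_1(\tau_0^{n-1}+\tau_0^{n-2})$ for the binomial coefficients there, and then recovers the correct powers of $\tau$ and $\rho$ by a degree count; you instead track $\tau$ and $\rho$ throughout and recognize the $B_n$ as Fibonacci polynomials in $\xi_1\rho$ and $\xi_1\tau$. Both routes reduce to the same Pascal identity, and neither offers a real advantage over the other---the paper's specialization slightly lightens the bookkeeping at the cost of an extra degree argument at the end, while your direct approach avoids that step but carries the extra variables through the induction.
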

\begin{proof}
We first compute $\tau^n$ in the quotient ring
\[
\f_2[\tau_0,\xi_1]/(\tau_0^2 + \xi_1\tau_0+\xi_1).
\]
of $\ca_{\leq 1}^\vee$ in which both $\tau$ and $\rho$ are set to $1$. Clearly
\[
\tau_0^n = \sum_{0\leq i \leq n}\left(\xi_1^ic_{n,i}+\tau_0\xi_1^i d_{n,i}\right)
\]
for some $c_{n,i},d_{n,i}\in\f_2$. The relation
\[
\tau_0^n = \xi_1(\tau_0^{n-1}+\tau_0^{n-2})
\]
gives rise to recurrence relations
\begin{align*}
c_{n,i} &= c_{n-1,i-1}+c_{n-2,i-1} \\
d_{n,i} &= d_{n-1,i-1}+d_{n-2,i-1}.
\end{align*}
Set $c'_{i,n} = c_{n+i,i}$ and $d'_{i,n} = d_{n+i,i}$. Then these relations become
\begin{align*}
c'_{i,n} &= c'_{i-1,n-1} + c'_{i-1,n} \\
d'_{i,n} &= d'_{i-1,n-1} + d'_{i-1,n},
\end{align*}
exactly as seen in Pascal's triangle. Paired with the initial conditions
\[
c'_{i,0} = c'_{0,1} = d'_{1,0} = 0,\qquad c'_{1,1} = 1 = d'_{0,1},
\]
we find that
\[
c'_{i,n} = \binom{i-1}{n-1},\qquad d'_{n,i} = \binom{i}{n-1},
\]
and thus
\[
c_{n,i} = \binom{i-1}{n-i-1},\qquad d_{n,i} = \binom{i}{n-i-1}.
\]
Plugging this back in, we find
\[
\tau_0^n = \sum_{0\leq i \leq n}\left(\xi_1^i\binom{i-1}{n-i-1} + \tau_0\xi_1^i\binom{i}{n-i-1}\right) = \sum_{\substack{\epsilon\in\{0,1\} \\ 0\leq i \leq n}}\tau_0^\epsilon \xi_1^i \binom{i+\epsilon-1}{n-i-1}.
\]
To compute $\tau_0^n$ in $\ca_{\leq 1}^\vee$ itself, recall that $|\tau|=(0,-1)$, $|\rho| = (-1,-1)$, $|\tau_0| = (1,0)$, $|\xi_1| = (2,1)$. Solving 
\[
|\tau_0^n| = |\tau_0^\epsilon \xi_1^i\tau^a \rho^b|
\]
yields
\[
a = n-i-\epsilon,\qquad b = 2i - n +\epsilon.
\]
It follows that
\[
\tau_0^n = \sum_{\substack{\epsilon\in\{0,1\} \\ 0\leq i \leq n}}\tau_0^\epsilon\xi_1^i\binom{i+\epsilon-1}{n-i-1}\tau^{n-i-\epsilon}\rho^{2i-n+\epsilon}
\]
in $\ca_{\leq 1}^\vee$. For this binomial coefficient to be nonzero we require
\[
0 \leq i + \epsilon - 1,\qquad 0 \leq n - i - 1,\qquad n - i - 1 \leq i + \epsilon - 1,
\]
giving the stated bounds on summation.
\end{proof}

\begin{proposition}\label{prop:difftau}
The differential $\delta$ satisfies
\[
\delta(\tau^n) = \sum_{r\geq 0}\lambda_r\binom{n+\floor{r/2}}{r+1}\tau^{n-\floor{r/2}-1}\rho^{r+1}.
\]
\end{proposition}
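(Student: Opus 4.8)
The plan is to compute $\delta(\tau^n)$ directly from the description of $\delta|_{\Lambda[0]}$ as the composite $\m\xrightarrow{\eta_L+\eta_R}\ca_{\leq 1}^\vee\to\coker(\m\rightarrow\ca_{\leq 1}^\vee)=\Lambda[1]$ recalled just before \cref{lem:dual}, feeding in the explicit formulas of \cref{lem:dual} and \cref{lem:taupow}. First I would observe that $\eta_R(\tau^n)=\tau^n$ lies in the image of $\m$, hence is killed by the projection onto $\Lambda[1]$, so $\delta(\tau^n)$ is the class of $\eta_L(\tau^n)$. Since $\eta_L$ is a ring homomorphism with $\eta_L(\tau)=\tau+\tau_0\rho$ and $\eta_L$ the identity on $\m_0$, the binomial theorem in the commutative ring $\ca_{\leq 1}^\vee$ gives
\[
\eta_L(\tau^n)=(\tau+\tau_0\rho)^n=\sum_{k=0}^{n}\binom{n}{k}\tau^{n-k}\rho^k\,\tau_0^k,
\]
and again the $k=0$ summand dies in $\Lambda[1]$.

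Next I would substitute the closed form for $\tau_0^k$ from \cref{lem:taupow} into each term with $k\geq 1$ and apply the quotient map $\tau_0^\epsilon\xi_1^i\mapsto\lambda_{2i-1+\epsilon}$ of \cref{lem:dual}. This produces $\delta(\tau^n)$ as a double sum indexed by $k$ and by the pairs $(\epsilon,i)$ appearing in \cref{lem:taupow}. To collect this into the stated single sum over $r$, note that for each $r\geq 0$ there is a unique pair $(\epsilon,i)$ with $2i-1+\epsilon=r$, namely $\epsilon\equiv r+1\pmod 2$ and $i=\floor{(r+1)/2}$, so the coefficient of $\lambda_r$ in $\delta(\tau^n)$ is obtained by summing over $k$ alone. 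Tracking the exponents of $\tau$ and $\rho$ through \cref{lem:taupow} and the binomial expansion, one finds — uniformly in the parity of $r$ — that $i+\epsilon-1=\floor{r/2}$, that $i+\epsilon=\floor{r/2}+1$, and that $2i+\epsilon=r+1$, so every monomial contributing to $\lambda_r$ carries $\tau^{n-\floor{r/2}-1}\rho^{r+1}$ and the coefficient is $\sum_k\binom{n}{k}\binom{\floor{r/2}}{k-i-1}$. By Vandermonde's identity $\sum_k\binom{n}{k}\binom{m}{p-k}=\binom{n+m}{p}$ with $m=\floor{r/2}$ and $p=\floor{r/2}+i+1=r+1$, this equals $\binom{n+\floor{r/2}}{r+1}$, which is exactly the coefficient in the statement.

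The genuinely delicate part is this last step: the bookkeeping of parities in the substitution $r=2i-1+\epsilon$ together with the application of Vandermonde. I would carry it out by treating $r$ even and $r$ odd as separate cases, so that in each case the pair $(\epsilon,i)$ is pinned down and the three identities $i+\epsilon-1=\floor{r/2}$, $i+\epsilon=\floor{r/2}+1$, $2i+\epsilon=r+1$ become direct arithmetic. An alternative, fully self-contained route would be to verify the claimed formula by induction on $n$ against the recursion of \cref{Prop:DiffLambda}, using the Leibniz rule $\delta(\tau^n)=\delta(\tau)\tau^{n-1}+\tau\,\delta(\tau^{n-1})$ together with the left $\m$-module relations of \cref{Prop:BimodLambda} to commute the leading $\tau$ past the $\lambda_r$; this avoids the Hopf-algebroid language but replaces it with an equally intricate binomial recursion, so I would favor the computation above given that \cref{lem:taupow} is already in hand.
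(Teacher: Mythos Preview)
Your proposal is correct and follows essentially the same approach as the paper: both compute $\delta(\tau^n)$ via the Hopf-algebroid description $\eta_L+\eta_R$, expand $(\tau+\tau_0\rho)^n$ binomially, substitute \cref{lem:taupow} for the powers $\tau_0^k$, and collapse the resulting double sum with Vandermonde's identity after the substitution $r=2i+\epsilon-1$. The only cosmetic difference is that the paper first sets $\tau=\rho=1$ to lighten the bookkeeping and then restores the exponents by degree-counting, whereas you carry $\tau$ and $\rho$ through the whole computation; your Vandermonde step also tacitly uses the symmetry $\binom{a}{b}=\binom{a}{a-b}$ to put the sum into the standard form, which the paper makes explicit.
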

\begin{proof}
Following \cref{lem:dual}, to compute $\delta(\tau^n)$ one may compute
\[
\tau^n+(\tau+\tau_0\rho)^n
\]
in terms of the standard basis of $\ca_{\leq 1}^\vee = \m[\tau_0,\xi_1]/(\tau_0^2+\xi_1\tau_0\rho+\xi_1\tau)$. Moreover, it is sufficient to work in the quotient of $\ca_{\leq 1}^\vee$ wherein $\tau$ and $\rho$ are set to $1$, as the necessary quantity of $\tau$'s and $\rho$'s may be recovered by comparing degrees, just as in the proof of \cref{lem:taupow}. Using \cref{lem:taupow}, we find
\[
1+(1+\tau_0)^n = \sum_{1\leq k \leq n}\binom{n}{k}\tau_0^k = \sum_{1\leq k \leq n}\binom{n}{k}\sum_{\substack{\epsilon\in\{0,1\},\\ i\geq 0}}\binom{i+\epsilon-1}{k-i-1}\tau_0^\epsilon\xi_1^i;
\]
here we are free to omit the bounds of summation on $i$, as they merely recorded when certain binomial coefficients were zero. The coefficient of $\tau_0^\epsilon\xi_1^i$ in this sum is
\[
\sum_{1\leq k \leq n} \binom{n}{k}\binom{i+\epsilon-1}{k-i-1} = \sum_{1\leq k \leq n}\binom{n}{k}\binom{i+\epsilon-1}{2i+\epsilon-k} = \binom{n+i+\epsilon-1}{2i+\epsilon};
\]
here, the first equality uses the standard identity $\binom{a}{b} = \binom{a}{a-b}$, and the second uses Vandermonde's identity. Adding in a sufficient number of $\rho$'s and $\tau$'s, and converting to $\Lambda[1]$, we learn
\[
\delta(\tau^n) = \sum_{\substack{\epsilon\in\{0,1\},\, i\geq 0 \\ (i,\epsilon) \neq (0,0)}} \lambda_{2i+\epsilon-1}\binom{n+i+\epsilon-1}{2i+\epsilon}\tau^{n-i-\epsilon}\rho^{2i+\epsilon}.
\]
Set $r = 2i + \epsilon - 1$. Then $\floor{r/2} = i + \epsilon - 1$, leading to the given description.
\end{proof}

\subsubsection{Lift of $\Sq^0$}

The dual motivic Steenrod algebra $\ca^\vee$ is a commutative Hopf algebroid, and thus its cohomology, which agrees by definition with $\Ext_{\ca}(\m,\m)$, is equipped with algebraic Steenrod operations \cite[Chapter 4]{BMMS86}. The purpose of this section is to lift the operation $\Sq^0$ to an endomorphism of $\Lambda$. Our approach essentially follows the proof of \cite[Proposition 1.4]{Pal07}.

Let $C(\ca) = C_\ca(\m,\m)$ denote the cobar complex of the algebra $\ca$; this is by definition the same as the cobar complex of the Hopf algebroid $\ca^\vee$. As $\ca^\vee$ is a commutative ring, $C(\ca)$ is the Moore complex of a cosimplicial commutative ring, and the levelwise Frobenius on this cosimplicial commutative ring induces a map
\[
\sigma\colon C(\ca)\rightarrow C(\ca).
\]
This is a map of differential graded algebras, and $\Sq^0$ is the map induced by $\sigma$ in homology.

\begin{thm}\label{Thm:Sq0}
The map $\sigma\colon C(\ca)\rightarrow C(\ca)$ induced by the levelwise Frobenius descends to a map
\[
\theta\colon \Lambda\rightarrow\Lambda
\]
of differential graded algebras. This map is given on generators by
\begin{align*}
\theta(x) &= x^2,\qquad x\in\m, \\
\theta(\lambda_{2n-1}) &= \lambda_{4n-1}, \\
\theta(\lambda_{2n}) &=  \lambda_{4n+1}\tau+ \lambda_{4n+2}\rho.
\end{align*}
\end{thm}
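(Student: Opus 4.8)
The plan is to show that the Frobenius $\sigma$ on $C(\ca)$ descends along the surjection $q\colon C(\ca)\rightarrow\Lambda$ of \cref{Thm:CohLambda}; the induced map $\theta$ is then automatically a map of differential graded algebras lifting $\Sq^0$, and its values on generators can be read off \cref{lem:dual}. This part of the argument follows the strategy of \cite[Proposition 1.4]{Pal07}. Since $q$ is a map of differential graded algebras and the relations of the quadratic algebra $\Lambda$ over $\m$ are concentrated in cobar degrees $\leq 2$, the kernel $J$ of $q$ is generated as a two-sided dg-ideal by $J\cap C^1(\ca)$ and $J\cap C^2(\ca)$; as $\sigma$ is multiplicative, it thus suffices to check $\sigma(J)\subseteq J$ in cobar degrees $1$ and $2$. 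In degree $1$ this is immediate: $J\cap C^1(\ca)$ is the intersection of the augmentation ideal $\bar\ca^\vee$ with the ideal $\ker(\ca^\vee\rightarrow\ca_{\leq 1}^\vee)$, and $\sigma$ acts by $x\mapsto x^2$, which preserves any ideal. Being in characteristic $2$, $\sigma$ therefore descends to the ring Frobenius on $\ca_{\leq 1}^\vee$, and in particular to an additive self-map $\bar\sigma$ of $\Lambda[1]$.

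In cobar degree $2$ we must check that $\bar\sigma\otimes\bar\sigma$ preserves the submodule of motivic Adem relations cutting $\Lambda[2]$ out of $(\bar\ca_{\leq 1}^\vee)^{\cotimes 2}$; equivalently, that $\bar\sigma$ respects the $\m$-bimodule relations of \cref{Prop:BimodLambda} and the relations of \cref{Prop:AdemLambda}. To make this concrete one first computes $\bar\sigma$ on generators using \cref{lem:dual}: under $\ca_{\leq 1}^\vee\cong\m[\tau_0,\xi_1]/(\tau_0^2+\xi_1\tau_0\rho+\xi_1\tau)$, the class $\lambda_{2n-1}$ is the image of $\xi_1^n$ and $\lambda_{2n}$ the image of $\tau_0\xi_1^n$ (with $\lambda_{-1}=0$), so the ring Frobenius gives $\bar\sigma(\xi_1^n)=\xi_1^{2n}\mapsto\lambda_{4n-1}$ and $\bar\sigma(\tau_0\xi_1^n)=\tau_0^2\xi_1^{2n}=\xi_1^{2n+1}\tau+\tau_0\xi_1^{2n+1}\rho\mapsto\lambda_{4n+1}\tau+\lambda_{4n+2}\rho$, which are precisely the claimed formulas; one then checks directly that these formulas are consistent with the relations. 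The remaining value $\theta(x)=x^2$ for $x\in\m=\Lambda[0]=C^0(\ca)$ is just the degree-$0$ part of $\sigma$.

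The main obstacle is the degree-$2$ compatibility check: verifying that the proposed $\bar\sigma$ respects the motivic Adem relations of \cref{Prop:AdemLambda}, keeping track of the extra powers of $\tau$ and $\rho$. This is the motivic refinement of the classical verification that $\Sq^0$ respects the Adem relations in $\Lambda^\cl$, and it reduces to manipulating the binomial coefficients of \cref{Thm:Adem}. As a consistency check one can also verify $\theta\delta=\delta\theta$ on generators directly from \cref{Prop:DiffLambda} and \cref{prop:difftau} --- for instance $\theta(\delta\tau)=\theta(\lambda_0\rho)=(\lambda_1\tau+\lambda_2\rho)\rho^2=\lambda_1\tau\rho^2+\lambda_2\rho^3=\delta(\tau^2)=\delta(\theta\tau)$ --- but this involves the same bookkeeping and does not circumvent the main computation.
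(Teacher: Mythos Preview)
Your approach differs from the paper's and, as you correctly identify, leaves the hardest step undone. You propose to show directly that $\sigma$ descends along $q\colon C(\ca)\to\Lambda$ by checking it preserves the kernel in cobar degrees $1$ and $2$; the degree-$2$ check amounts to verifying that the candidate $\bar\sigma$ respects the motivic Adem relations of \cref{Prop:AdemLambda}, a nontrivial binomial-coefficient manipulation that you do not carry out.

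The paper sidesteps this computation entirely by passing through $\ca^{\big}$. Since $\ca^{\big}$ is homogeneous Koszul and augmented, its Koszul dual $\Lambda^{\big}$ coincides as an algebra with $H_\ast C(\ca^{\big})$, where $C(\ca^{\big})$ is the cobar complex for the augmentation action. The levelwise Frobenius on $C(\ca^{\big})$ therefore induces an algebra endomorphism $\theta'$ of $\Lambda^{\big}$ \emph{automatically}, with no relations to verify. One then computes $\theta'$ on the generating module $\Lambda^{\big}[1]=\ca_{\leq 1}^\vee$ via \cref{lem:dual} (exactly the Frobenius computation you give), observes that the resulting formulas for $\theta'(\lambda_n)$ with $n\geq 0$ involve no $\lambda_{-1}$, and concludes that $\theta'$ restricts to the subalgebra $\Lambda\subset\Lambda^{\big}$. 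The algebra inclusion $C(\ca)\subset C(\ca^{\big})$, which commutes with the levelwise Frobenius though not with differentials, then identifies this restriction with the map induced by $\sigma$ on $C(\ca)$; that the induced $\theta$ respects the differential follows since $\sigma$ on $C(\ca)$ is a dga map.

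In short, the paper trades your direct Adem-relation verification for the structural observation that $\ca^{\big}$ is augmented and homogeneous Koszul, so that well-definedness of $\theta'$ comes for free from taking homology. Your route would also succeed, but the deferred degree-$2$ check is precisely the work the paper's argument eliminates.
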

\begin{proof}
Recall $\ca^{\big}$ and $\Lambda^{\big}$ from the proof of \cref{Prop:DiffLambda}. Let $C(\ca^{\big})$ be the cobar complex for $\ca^{\big}$ with respect to augmentation of $\ca^{\big}$, so that $H_\ast C(\ca^{\big}) = \Lambda^{\big}$ as algebras. The levelwise Frobenius gives a map
\[
\sigma\colon C(\ca^{\big})\rightarrow C(\ca^{\big})
\]
of differential graded algebras, and by taking homology this induces a map
\[
\theta'\colon \Lambda^{\big}\rightarrow\Lambda^{\big}
\]
of algebras. We claim that to produce $\theta$ it suffices to show that $\theta'$ restricts to an endomorphism of $\Lambda\subset\Lambda^{\big}$ satisfying the given formulas. Indeed, there is an inclusion $C(\ca)\subset C(\ca^{\big})$ of algebras, which does not respect differentials but does commute with the levelwise Frobenius $\sigma$. It would thus follow that the restriction $\theta$ of $\theta'$ to $\Lambda$ is induced by the levelwise Frobenius on $C(\ca)$. In particular, this would show that $\sigma\colon C(\ca)\rightarrow C(\ca)$ indeed descends to an algebra map $\theta\colon \Lambda\rightarrow\Lambda$. That $\theta$ moreover respects the differential is inherited from $\sigma$.

To understand $\theta'$, it suffices to understand its effect on the generators of $\Lambda^{\big}$, i.e.\ to understand the map
\[
\theta'\colon \Lambda^{\big}[1]\rightarrow\Lambda^{\big}[1].
\]
Recall that $\Lambda^{\big}[1] = (\ca^{\big}[1])^\vee = \ca_{\leq 1}^\vee$. This ring was described in \cref{lem:dual}, and $\theta'$ acts on it by the Frobenius. We find that $\theta'$ satisfies the same formulas as described for $\theta$, only with the addition that $\theta'(\lambda_{-1}) = \lambda_{-1}$. In particular $\theta'$ does restrict to $\Lambda$, and this restriction satisfies the stated formulas, proving the theorem.
\end{proof}

\subsection{Summary}\label{SS:StructureSummary}

For ease of reference, let us summarize what we have learned in one place. As always, $F$ is a base field of characteristic not equal to $2$.

\subsubsection{Generators}

There is a differential graded algebra $\Lambda^F$, the \textit{$F$-motivic lambda algebra}, together with a multiplicative quasiisomorphism $C(\ca^F)\rightarrow\Lambda^F$, where $C(\ca^F)$ is the reduced cobar complex of $\ca^F$. We write $\Lambda^F = \bigoplus_{m\geq 0}\Lambda^F[m]$, where the differential on $\Lambda^F$ is of the form $\delta\colon\Lambda^F[m]\rightarrow\Lambda^F[m+1]$.

The $F$-motivic lambda algebra $\Lambda^F$ is an $\m^F$-bimodule algebra, generated by classes $\lambda_r\in\Lambda^F[1]$ for $r\geq 0$. In the trigrading $(\text{stem},\text{filtration},\text{weight})$, we have
\[
|\tau| = (0,0,-1),\qquad |\rho| = (-1,0,-1),\qquad |\lambda_a| = (a,1,\ceil{a/2}).
\]
A right $\m^F$-module basis of $\Lambda^F$ is given by those $\lambda_{r_1}\cdots \lambda_{r_n}$ with $2r_i\geq r_{i+1}$ for $1\leq i \leq n-1$.

\subsubsection{Relations}

The $F$-motivic lambda algebra is a quadratic $\m^F$-bimodule algebra. Recall that $\m^F = \m^F_0[\tau]$. The $\m^F$-bimodule structure of $\Lambda^F$ is determined by
\begin{align*}
x \lambda_n &= \lambda_n x,\qquad x\in\m_0^F, \\
\tau \lambda_{2n+1} &= \lambda_{2n+1}\tau + \lambda_{2n+2}\rho, \\
\tau\lambda_{2n} &= \lambda_{2n}\tau + \lambda_{2n+1}\tau \rho + \lambda_{2n+2}\rho^2,
\end{align*}
and the quadratic relations are given as follows. Fix $a,b\geq 0$. If $a$ is odd or $b$ is even, then
\[
\lambda_a \lambda_{2a+b+1} = \sum_{0 \leq r < b/2} \lambda_{a+b-r}\lambda_{2a+1+r}\binom{b-r-1}{r};
\]
and if $a$ is even and $b$ is odd, then
\begin{align*}
\lambda_a \lambda_{2a+b+1} &= \sum_{0 \leq r < b/2} \lambda_{a+b-r}\lambda_{2a+1+r}\binom{b-r-1}{r}\tau^{(r-1) \text{ mod } 2}  \\
					     &+ \sum_{0 \leq r \leq \ceil{b/2}} \lambda_{a+b+1-r}\lambda_{2a+1+r}\binom{\floor{b/2} - \floor{r/2}}{\floor{r/2}} \rho .
\end{align*}

\subsubsection{Differentials}

The differential on $\Lambda$ is determined by the Leibniz rule, together with
\begin{align*}
\delta(x) &= 0,\qquad x\in\m^F_0, \\
\delta(\tau) &= \lambda_0\rho,\\
\delta(\lambda_n) &= \sum_{1\leq r \leq n/2}\lambda_{n-r}\lambda_{r-1}\binom{n-r}{r}.
\end{align*}
Moreover, we have
\[
\delta(\tau^n) = \sum_{r\geq 0}\lambda_r\binom{n+\floor{r/2}}{r+1}\tau^{n-\floor{r/2}-1}\rho^{r+1}.
\]

\subsubsection{The endomorphism $\theta$}

The squaring operation $\Sq^0\colon \Ext_F^{s,f,w}\rightarrow\Ext_F^{2s+f,f,w+f}$ lifts to an endomorphism $\theta\colon\Lambda^F\rightarrow\Lambda^F$ of differential graded algebras, determined by
\begin{align*}
\theta(x)& = x^2,\qquad x\in\m^F, \\
\theta(\lambda_{2n-1})& = \lambda_{4n-1},\\
\theta(\lambda_{2n}) &=  \lambda_{4n+1}\tau+ \lambda_{4n+2}\rho.
\end{align*}

\section{Some first examples, and the doubling map}\label{Sec:FirstExamples}

\subsection{First examples}\label{ssec:simpleexamples}

Before continuing on, we give some basic examples illustrating the form of the motivic lambda algebra. In particular, we use $\Lambda^F$ to define some classes in $\Ext_F$, and reprove some well-known low-dimensional relations. This material is meant only to familiarize the reader with $\Lambda^F$; we give a more thorough and entirely self-contained investigation in \cref{Sec:ExtLow}.

Given a cycle $z\in\Lambda^F$, in this section we write $[z]\in\Ext_F$ for the corresponding cohomology class.

\begin{lemma}
We have $\delta(\lambda_{2^a-1}) = 0$ for all $a\geq 0$.
\end{lemma}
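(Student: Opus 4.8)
The statement $\delta(\lambda_{2^a-1}) = 0$ should follow directly from the formula
\[
\delta(\lambda_n) = \sum_{1 \leq r \leq n/2} \lambda_{n-r}\lambda_{r-1}\binom{n-r}{r}
\]
of \cref{Prop:DiffLambda}, by a divisibility argument on binomial coefficients mod $2$. Setting $n = 2^a - 1$, the plan is to show that every binomial coefficient $\binom{2^a-1-r}{r}$ appearing in the sum (for $1 \leq r \leq (2^a-1)/2$, i.e.\ $1 \leq r \leq 2^{a-1}-1$) vanishes mod $2$. The case $a = 0$ is vacuous since the range of summation is empty ($n/2 < 1$), and likewise $a=1$ gives $n=1$ with empty range; so one may assume $a \geq 2$.

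For the main computation I would invoke Kummer's theorem (or Lucas' theorem): $\binom{m}{r}$ is odd iff the binary expansion of $r$ is a ``submask'' of that of $m$, i.e.\ $r \mathbin{\&} m = r$ bitwise. Here $m = 2^a - 1 - r$. Write $r$ in binary; since $1 \leq r \leq 2^{a-1}-1$, $r$ has at most $a-1$ bits, all among positions $0,\dots,a-2$. The key arithmetic fact is that $2^a - 1 - r = (2^a - 1) - r$ is obtained from the all-ones string $\underbrace{1\cdots1}_{a}$ by \emph{flipping} exactly the bits where $r$ has a $1$ (no borrows occur because $2^a-1$ has a $1$ in every position and $r \leq 2^a-1$). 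Hence $2^a-1-r$ has a $0$ in precisely the bit positions where $r$ has a $1$. Since $r \neq 0$, there is at least one such position, and there $r$ has a $1$ while $m = 2^a-1-r$ has a $0$; therefore $r$ is not a submask of $m$, so $\binom{m}{r} \equiv 0 \pmod 2$. As every term in $\delta(\lambda_{2^a-1})$ carries such a vanishing coefficient, the whole sum is zero.

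I do not anticipate a serious obstacle here; the only thing to be careful about is the edge cases $a = 0, 1$ (empty sums) and making the ``no borrows'' observation precise — perhaps it is cleanest to just note $2^a - 1 = \sum_{i=0}^{a-1} 2^i$ and $r = \sum_{i \in S} 2^i$ for some nonempty $S \subseteq \{0,\dots,a-2\}$, so $2^a-1-r = \sum_{i \notin S, \, i < a} 2^i$ is again a sum of distinct powers of two with the bit at any $i \in S$ absent, whence Lucas applies directly. This is a short computation, so I would present it inline rather than as a displayed derivation.

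\begin{proof}
We use the formula $\delta(\lambda_n) = \sum_{1\leq r \leq n/2}\lambda_{n-r}\lambda_{r-1}\binom{n-r}{r}$ from \cref{Prop:DiffLambda}. For $a \in \{0,1\}$ the condition $1 \leq r \leq n/2$ with $n = 2^a-1$ cannot be satisfied, so the sum is empty and $\delta(\lambda_{2^a-1}) = 0$. Suppose now $a \geq 2$ and fix $r$ with $1 \leq r \leq 2^{a-1}-1$; it suffices to show $\binom{2^a-1-r}{r}$ is even. Write $2^a - 1 = \sum_{i=0}^{a-1} 2^i$ and $r = \sum_{i\in S} 2^i$ with $\emptyset \neq S \subseteq \{0,\dots,a-2\}$. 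Then $2^a-1-r = \sum_{i \in \{0,\dots,a-1\}\setminus S} 2^i$, so the binary expansion of $2^a-1-r$ has a $0$ in every position belonging to $S$. Picking any $i \in S$, the $i$-th binary digit of $r$ is $1$ while that of $2^a-1-r$ is $0$; by Lucas' theorem, $\binom{2^a-1-r}{r} \equiv 0 \pmod 2$. Hence every term of $\delta(\lambda_{2^a-1})$ vanishes, and $\delta(\lambda_{2^a-1}) = 0$.
\end{proof}
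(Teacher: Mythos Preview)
Your proof is correct and is essentially the same approach as the paper's: the paper simply cites Wang's classical argument \cite[Proposition 2.2]{Wan67}, which is precisely this binomial-coefficient computation (the motivic differential on $\lambda_n$ coincides with the classical one, so Wang's proof applies verbatim). You have just written out explicitly the Lucas-theorem verification that Wang's proof relies on.
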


\begin{proof}
The proof is identical to the proof of \cite[Proposition 2.2]{Wan67}. 
\end{proof}

This allows us to define the following Hopf elements.

\begin{definition}
Let $h_a := [\lambda_{2^a-1}]$.
\tqed
\end{definition}

\begin{lemma}
If $\rho = 0$ in $\m^F$, such as if $F$ is algebraically closed, then $\delta(\tau^n) = 0$ for all $n \geq 0$. 
\end{lemma}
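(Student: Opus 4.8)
The plan is to invoke the closed formula for $\delta(\tau^n)$ recorded in \cref{prop:difftau} (equivalently, summarized in \cref{SS:StructureSummary}), namely
\[
\delta(\tau^n) = \sum_{r\geq 0}\lambda_r\binom{n+\floor{r/2}}{r+1}\tau^{n-\floor{r/2}-1}\rho^{r+1}.
\]
The key observation is that every term in this sum carries a factor of $\rho^{r+1}$ with $r\geq 0$, hence at least one factor of $\rho$. Therefore, if $\rho = 0$ in $\m^F$, every summand vanishes, giving $\delta(\tau^n) = 0$ for all $n\geq 0$. Since $\rho = 0$ whenever $F$ is algebraically closed (as $\m^{\Fbar}_0 \cong \f_2$ by \cref{Exm:M2F}), this covers the stated special case.

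Alternatively, and perhaps more in the spirit of keeping the proof self-contained at this point in the paper, one can argue directly from the recursive description of the differential in \cref{Prop:DiffLambda}: there $\delta(\tau) = \lambda_0\rho$, which is $0$ when $\rho = 0$, and then $\delta(\tau^n) = \delta(\tau)\tau^{n-1} + \tau\delta(\tau^{n-1})$ by the Leibniz rule, so an immediate induction on $n$ gives $\delta(\tau^n) = 0$. The base case $n = 0$ is $\delta(1) = 0$, and the inductive step uses $\delta(\tau) = 0$ together with the inductive hypothesis $\delta(\tau^{n-1}) = 0$.

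There is essentially no obstacle here; this is a one-line consequence of formulas already established. The only thing to be mildly careful about is that the statement is phrased as "if $\rho = 0$ in $\m^F$", so one should cite the hypothesis $\rho = 0$ explicitly rather than assuming $F$ algebraically closed — the algebraically closed case is just the motivating instance, guaranteed by \cref{Exm:M2F}. I would present the inductive argument from \cref{Prop:DiffLambda} as the main proof, since it does not require having solved the recursion, and optionally remark that it also follows instantly from the closed formula of \cref{prop:difftau}.
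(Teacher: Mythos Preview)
Your proposal is correct, and your second (inductive/Leibniz) argument is exactly the paper's approach: the paper's proof is the single line ``This is immediate from the differential $\delta(\tau) = \lambda_0\rho$.'' Your first argument via the closed formula of \cref{prop:difftau} is also valid and equally immediate, since that proposition appears earlier in the paper.
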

\begin{proof}
This is immediate from the differential $\delta(\tau) = \lambda_0\rho$.
\end{proof}

In general, if $\rho$ is nilpotent in $\m^F$, then various powers of $\tau$ will be cycles in $\Lambda^F$. We shall write $\tau^n$ in place of $[\tau^n]$ in this case. We begin by considering some examples in the case where $F$ is algebraically closed.

\begin{proposition}\label{prop:h13}
For $F$ algebraically closed, there is a relation
\[
\tau \cdot h_1^3 = h_2 h_0^2.
\]
\end{proposition}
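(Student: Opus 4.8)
The plan is to compute both sides directly using the relations and differential recorded in \cref{SS:StructureSummary}, working in $\Lambda^F$ for $F$ algebraically closed, so that $\rho = 0$ and hence $\tau$ is a cycle and the $\m^F$-bimodule structure is trivial (all the $\rho$-terms in \cref{Prop:BimodLambda} vanish, so $\tau$ is central). In particular, since $\rho=0$, the Adem relations of \cref{Prop:AdemLambda} simplify: the only place where $\tau$ enters is the $\tau^{(r-1)\bmod 2}$ factor in the case $a$ even, $b$ odd, and the $\rho$-summand disappears entirely.

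First I would identify cobar representatives for the classes involved: $h_1 = [\lambda_1]$, $h_2 = [\lambda_3]$, $h_0 = [\lambda_0]$, using $h_a = [\lambda_{2^a-1}]$. So the claim $\tau h_1^3 = h_2 h_0^2$ amounts to showing that $\tau\lambda_1^3$ and $\lambda_3\lambda_0^2$ are cohomologous cycles in $\Lambda^F$, i.e.\ that their difference is $\delta$ of something. The natural candidate for the bounding element is a degree-reasons forced expression in filtration $2$; classically (Wang \cite{Wan67}) the relation $h_1^3 = h_0^2 h_2$ is witnessed by $\delta(\lambda_2\lambda_2) = \lambda_1^3 + \ldots$ or a similar low-dimensional computation, and I expect the motivic version to be witnessed by $\delta$ of a $\tau$-multiple of a length-two monomial, e.g.\ something like $\delta(\tau\lambda_2\lambda_2)$ or $\delta$ of a combination of $\lambda_1\lambda_2$, $\lambda_2\lambda_2$, $\lambda_0\lambda_3$ type terms, with the factor of $\tau$ appearing precisely because of the even-odd case of the Adem relation. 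Concretely I would: (i) compute $\delta(\lambda_2) = \lambda_1\lambda_0$ from the formula $\delta(\lambda_n)=\sum_{1\le r\le n/2}\lambda_{n-r}\lambda_{r-1}\binom{n-r}{r}$ (so for $n=2$, $r=1$: $\lambda_1\lambda_0\binom{1}{1}=\lambda_1\lambda_0$); (ii) compute $\delta(\lambda_3)=\lambda_2\lambda_0\binom{2}{1}=0$ and check $\delta(\lambda_1)=0$; (iii) compute the Adem expansion of the non-coadmissible products that arise, notably $\lambda_1\lambda_1\lambda_1$-reductions and products like $\lambda_0\lambda_0$ (which is coadmissible) or $\lambda_2\lambda_2$ — here $a=2$ even, and one writes $2a+b+1=2$ is impossible, so I'd instead track which of $\lambda_1^3$, $\lambda_3\lambda_0^2$ is already coadmissible and which needs rewriting; and (iv) apply the Leibniz rule to a candidate bounding element and match coefficients, the factor $\tau$ emerging from the $\tau^{(r-1)\bmod 2}$ in \cref{Prop:AdemLambda}.

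The main obstacle is pinning down the correct bounding element and correctly bookkeeping the $\tau$'s: unlike the classical case, the coefficient $\tau$ on the left-hand side must come out exactly, and this depends sensitively on parity in the Adem relation $\lambda_a\lambda_{2a+b+1}$ when $a$ is even and $b$ is odd. I would double-check by specializing further: inverting $\tau$ and reducing mod $\rho$ recovers the classical lambda algebra (by part (3) of \cref{thmx:lambda}, or rather the isomorphism $\Lambda^F\cong\Lambda^\cl\otimes\f_2[\tau^{\pm1}]$ over an algebraically closed field used in the proof of \cref{prop:coadmissiblebasis}), and the classical relation $h_1^3 = h_0^2 h_2$ must drop out, fixing the non-$\tau$ part of the computation. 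The weight grading provides a further consistency check: $|\tau h_1^3| = (3,3,1)$ computed from $|\lambda_1|=(1,1,1)$ and $|\tau|=(0,0,-1)$, matching $|h_2 h_0^2| = |\lambda_3| + 2|\lambda_0| = (3,1,2)+(0,2,0)=(3,3,2)$ — wait, this forces the $\tau$, since $(3,3,1)$ versus $(3,3,2)$ differ by exactly one power of $\tau$, confirming a priori that a single $\tau$ is the only possible coefficient and that the relation, if true, must read as stated. So really the content is just verifying the two cycles differ by a coboundary, which is a finite unwinding of the above formulas.
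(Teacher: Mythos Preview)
Your plan is sound and would eventually work, but it is more laborious than what the paper does. The key difference is in the choice of cocycle representative for $h_2 h_0^2$. You pick the coadmissible monomial $\lambda_3\lambda_0^2$, which then forces you to hunt for a bounding element witnessing $\lambda_3\lambda_0^2 + \tau\lambda_1^3 = \delta(x)$. The paper instead exploits commutativity of $\Ext$ to write $h_2 h_0^2 = h_0^2 h_2 = [\lambda_0^2\lambda_3]$, and then simply reduces the \emph{non}-coadmissible product $\lambda_0^2\lambda_3$ via the Adem relations: $\lambda_0\lambda_3 = \lambda_2\lambda_1$ (case $a=0$, $b=2$, both even), and then $\lambda_0\lambda_2 = \lambda_1^2\tau$ (case $a=0$, $b=1$, even/odd, the $\tau^{(r-1)\bmod 2}$ factor with $r=0$ supplying exactly the $\tau$). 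Hence $\lambda_0^2\lambda_3 = \lambda_1^3\tau$ on the nose in $\Lambda^F$, and no coboundary is needed at all. This is the entire proof.

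Your approach buys nothing extra here; it would require producing $\delta(\lambda_4)\cdot\lambda_0 = (\lambda_3\lambda_0+\lambda_2\lambda_1)\lambda_0$ and $\delta(\lambda_2^2) = \lambda_1^3\tau + \lambda_2\lambda_1\lambda_0$ and piecing these together, which is straightforward but clearly longer. One small slip: in your degree check you wrote $|\tau h_1^3| = (3,3,1)$, but $3\cdot(1,1,1) + (0,0,-1) = (3,3,2)$, matching $|h_2 h_0^2|$ exactly; the degrees agree, rather than differing by a $\tau$, so that consistency check confirms the relation is dimensionally possible but does not by itself force the $\tau$ coefficient.
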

\begin{proof}
By definition, $\tau \cdot h_1^3 = [\lambda_1^3\tau]$ and $h_2 h_0^2 = h_0^2 h_2 = [\lambda_0^2\lambda_3]$. We have
\[
\lambda_0^2 \lambda_3 = \lambda_1^3\tau,
\]
so these classes coincide in $\Ext_F$.
\end{proof}

\begin{proposition}
For $F$ algebraically closed, there is a relation
\[
\tau\cdot h_1^4 = 0.
\]
However, $h_1^n \neq 0$ for any $n$.
\end{proposition}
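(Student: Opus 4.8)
The statement has two halves, both established by direct manipulation in $\Lambda^F$; throughout, $\rho = 0$ in $\m^F$ since $F$ is algebraically closed. For the relation $\tau\cdot h_1^4 = 0$, the plan is to exhibit $\lambda_1^4\tau$ as an explicit boundary. The computation in the proof of \cref{prop:h13} shows $\lambda_0^2\lambda_3 = \lambda_1^3\tau$ in $\Lambda^F$ (this may be re-derived from the Adem relations of \cref{Prop:AdemLambda}, namely $\lambda_0\lambda_3 = \lambda_2\lambda_1$ and $\lambda_0\lambda_2 = \lambda_1^2\tau$, together with $\tau\lambda_1 = \lambda_1\tau$). Multiplying on the right by $\lambda_1$ and again using $\tau\lambda_1 = \lambda_1\tau$ gives $\lambda_1^4\tau = \lambda_0^2\lambda_3\lambda_1$. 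On the other hand, the differential formula of \cref{Prop:DiffLambda} gives $\delta(\lambda_5) = \binom{4}{1}\lambda_4\lambda_0 + \binom{3}{2}\lambda_3\lambda_1 = \lambda_3\lambda_1$, and since $\delta(\lambda_0) = 0$ this yields $\delta(\lambda_0^2\lambda_5) = \lambda_0^2\lambda_3\lambda_1 = \lambda_1^4\tau$. Hence $\lambda_1^4\tau$ is a boundary, so $\tau\cdot h_1^4 = [\lambda_1^4\tau] = 0$.

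For $h_1^n \neq 0$, the plan is to show that $\lambda_1^n$ is a nonzero cycle which is not a boundary. It is a cycle by the Leibniz rule since $\delta(\lambda_1) = 0$, and it is nonzero because the sequence $(1,\dots,1)$ is coadmissible, so $\lambda_1^n$ belongs to the right $\m^F$-module basis of $\Lambda^F$ given by \cref{prop:coadmissiblebasis}. To see it is not a boundary, observe that $\lambda_1^n$ has tridegree $(\text{stem},\text{filtration},\text{weight}) = (n,n,n)$, so any $z$ with $\delta(z) = \lambda_1^n$ would have tridegree $(n+1, n-1, n)$ — recalling that $\delta$ raises filtration by one, preserves weight, and lowers topological stem by one. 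I would then enumerate the monomials $\lambda_{r_1}\cdots\lambda_{r_{n-1}}\tau^k$ of this tridegree: comparing the stem constraint $\sum r_i = n+1$ with the weight constraint $\sum\ceil{r_i/2} - k = n$ forces $k=0$ and every $r_i$ odd, and then expressing $n+1$ as a sum of $n-1$ odd positive integers forces exactly one $r_i$ to equal $3$ and the rest to equal $1$; coadmissibility ($2r_i \geq r_{i+1}$) then forces the $3$ into the first slot, so $z$ is an $\f_2$-multiple of $\lambda_3\lambda_1^{n-2}$. Since $\delta(\lambda_3) = 0$, we get $\delta(\lambda_3\lambda_1^{n-2}) = 0$, so no element of tridegree $(n+1,n-1,n)$ maps onto $\lambda_1^n$; hence $\lambda_1^n$ is not a boundary and $h_1^n = [\lambda_1^n] \neq 0$. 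The cases $n \leq 2$ go the same way, noting for $n=1$ that $\m^F$ is concentrated in stem $0$ so the target tridegree is empty.

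Neither half conceals a genuine difficulty; the only thing needing care is the degree bookkeeping in the second half — getting the stem shift of $\delta$ right under the homological grading convention, and verifying that coadmissibility really does pin $z$ down to a single monomial, so that no cancellation among competing boundary candidates in that tridegree could produce $\lambda_1^n$.
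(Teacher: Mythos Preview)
Your proof is correct and follows essentially the same approach as the paper. For the vanishing of $\tau\cdot h_1^4$, the paper gives both a product-level argument (using $h_0h_1=0$ and $\tau h_1^3 = h_2h_0^2$) and the explicit boundary $\delta(\lambda_2^2\lambda_1)=\lambda_1^4\tau$; your element $\lambda_0^2\lambda_5$ differs from $\lambda_2^2\lambda_1$ only by the cycle $\lambda_3\lambda_1^2\tau$, so the two direct arguments coincide up to that harmless correction. For $h_1^n\neq 0$, your degree enumeration is exactly the paper's argument, carried out in more detail.
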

\begin{proof}
Observe that $\lambda_0\lambda_1 = 0$, and thus
$
h_1 h_0 = 0.
$
Combined with \cref{prop:h13}, we find
\[
\tau\cdot h_1^4 = \tau h_1^3\cdot h_1 = h_2 h_0^2\cdot h_1 = 0.
\]
Alternately, $\tau h_1^4 = [\lambda_1^4\tau]$, and there is a differential
\[
\delta(\lambda_2^2\lambda_1) = \lambda_1^4\tau.
\]
On the other hand, for $h_1^n$ to vanish, the class $\lambda_1^n$ must be nullhomotopic, i.e.\ $\delta(x)=\lambda_1^n$ for some $x\in\Lambda$. The class $x$ must live in stem $n+1$, weight $n$, and filtration $n-1$, and in this degree $\Lambda$ is generated by the cycle $\lambda_3\lambda_1^{n-2}$. So no such $x$ exists.
\end{proof}

Next we consider some examples relevant to base fields $F$ over which $\rho$ does not vanish.
We begin by defining some classes. Note that the differential
\[
\delta(\tau) = \lambda_0 \rho
\]
implies that $\delta(\tau^{2^n})\equiv 0 \pmod{\rho^{2^n}}$. This allows for the following definition.

\begin{definition}
If $F = \r$, then
\[
\tau^{2^{a-1}}h_a := \left[\frac{1}{\rho^{2^a}}\delta(\tau^{2^a})\right]
\]
for $a\geq 1$. In general, $\tau^{2^{a-1}}h_a\in\Ext_F$ is defined by pushing these classes forward along the map $\Lambda^\r\rightarrow\Lambda^F$ induced by $\m^\r\rightarrow\m^F$ (cf.\ \cref{rmk:runiv}).
\tqed
\end{definition}

\begin{remark}
Following our convention that $\Lambda^F$ is considered primarily as a right $\m^F$-module, it would be more natural to write $h_a \tau^{2^{a-1}}$ for the classes introduced above. We have chosen instead to work with naming conventions compatible with those in \cite{BI20}, as no confusion should arise.
\tqed
\end{remark}

\begin{remark}
If $\tau^{2^{a-1}}$ is a cycle in $\Ext_F$, then $\tau^{2^{a-1}}h_a = \tau^{2^{a-1}}\cdot h_a$.
\tqed
\end{remark}

\begin{example}
We have
\begin{align*}
\tau h_1 &= [\lambda_1 \tau + \lambda_2 \rho], \\
\tau^2 h_2 &= [\lambda_3 \tau^2 + \lambda_5 \tau \rho^2 + \lambda_6 \rho^3], \\
\tau^4 h_3 &= [\lambda_7 \tau^4 + \lambda_{11} \tau^2 \rho^4 + \lambda_{13} \tau \rho^6 + \lambda_{14} \rho^7].
\end{align*}
In fact, we may identify $\tau^{\floor{2^{a-1}}}h_a = [\tau^{2^a} \lambda_{2^a-1}]$ for all $a\geq 1$.
\tqed
\end{example}

The following relation was proved over $\r$ by Dugger--Isaksen \cite[Proof of Lemma 6.2]{DI16a} using Massey products and May's Convergence Theorem. We may use the lambda algebra to provide an explicit direct proof.

\begin{proposition}\label{Prop:HiddenExample}
There is a relation
\[
(h_0+\rho h_1)\cdot\tau h_1 = 0.
\]
\end{proposition}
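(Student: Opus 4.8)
The plan is a direct chain-level computation in the lambda algebra, using the isomorphism $H_\ast\Lambda^F\cong\Ext_F$ of \cref{Thm:CohLambda}. From the preceding examples we have chain representatives $h_0+\rho h_1=[\lambda_0+\lambda_1\rho]$ (note that $\rho\lambda_1=\lambda_1\rho$ since $\rho\in\m^F_0$, so this is indeed a cycle by the formulas of \cref{Prop:DiffLambda}) and $\tau h_1=[\lambda_1\tau+\lambda_2\rho]$. Since products in $\Ext_F$ are computed by the product on $\Lambda^F$, the class $(h_0+\rho h_1)\cdot\tau h_1$ is represented by the product cycle $(\lambda_0+\lambda_1\rho)(\lambda_1\tau+\lambda_2\rho)$.

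First I would expand this product. Moving $\rho$ past the $\lambda_i$ using \cref{Prop:BimodLambda}, it becomes $\lambda_0\lambda_1\tau+\lambda_0\lambda_2\rho+\lambda_1^2\rho\tau+\lambda_1\lambda_2\rho^2$. The only Adem relations from \cref{Prop:AdemLambda} that intervene are $\lambda_0\lambda_1=0$ and $\lambda_0\lambda_2=\lambda_1^2\tau+(\lambda_1\lambda_2+\lambda_2\lambda_1)\rho$; substituting these and collecting terms over $\f_2$, the two copies of $\lambda_1^2\tau\rho$ cancel (using commutativity of $\m^F$) and the two copies of $\lambda_1\lambda_2\rho^2$ cancel, leaving exactly the coadmissible monomial $\lambda_2\lambda_1\rho^2$. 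So it remains to show that $\lambda_2\lambda_1\rho^2$ is a boundary in $\Lambda^F$ (it is a genuinely nonzero chain, e.g. over $\r$, so this step is essential).

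For the last step I would guess a bounding element in the relevant tridegree $(1,2,0)$ and filtration $1$; the only candidate monomials are $\lambda_2\tau$, $\lambda_3\tau\rho$, and $\lambda_4\rho^2$. Using the differential formulas of \cref{Prop:DiffLambda} one computes $\delta(\lambda_3)=0$, $\delta(\lambda_4)=\lambda_2\lambda_1+\lambda_3\lambda_0$, and $\delta(\tau)=\lambda_0\rho$, whence the Leibniz rule gives $\delta(\lambda_4\rho^2+\lambda_3\tau\rho)=(\lambda_2\lambda_1+\lambda_3\lambda_0)\rho^2+\lambda_3\lambda_0\rho^2=\lambda_2\lambda_1\rho^2$. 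Therefore $(h_0+\rho h_1)\cdot\tau h_1=[\lambda_2\lambda_1\rho^2]=0$ in $\Ext_F$. There is no serious obstacle here — the argument is entirely mechanical and uniform in $F$ — but the two points to watch are retaining the $\rho$-correction terms in the Adem relation for $\lambda_0\lambda_2$ (which are absent in the classical lambda algebra) and verifying that the cancellations in the expansion really do eliminate everything except the single surviving monomial.
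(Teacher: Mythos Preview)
Your proof is correct and follows essentially the same approach as the paper: a direct chain-level computation in $\Lambda^F$ reducing the product to $\lambda_2\lambda_1\rho^2$, then exhibiting the same bounding element $\lambda_3\tau\rho+\lambda_4\rho^2$. The only cosmetic difference is that the paper expands $h_0\cdot\tau h_1$ and $\rho h_1\cdot\tau h_1$ separately and compares them, whereas you multiply by the combined representative $\lambda_0+\lambda_1\rho$ at once.
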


\begin{proof}
By definition, 
\begin{align*}
h_0 \cdot \tau h_1 &= [\lambda_0(\lambda_1 \tau + \lambda_2 \rho)], \\
\rho h_1 \cdot \tau h_1 &= [\rho \lambda_1(\lambda_1 \tau + \lambda_2 \rho)].
\end{align*}
Expanding, we have
\begin{align*}
\lambda_0(\lambda_1 \tau + \lambda_2 \rho) &= \lambda_1^2 \tau \rho + \lambda_1 \lambda_2 \rho^2 + \lambda_{2}\lambda_1 \rho^2, \\
\rho h_1(\lambda_1 \tau + \lambda_2 \rho) &= \lambda_1^2 \tau \rho + \lambda_1 \lambda_2 \rho^2.
\end{align*}
But
\[
\delta(\lambda_3 \tau \rho + \lambda_4 \rho^2) =  \lambda_2 \lambda_1 \rho^2,
\]
so $h_0\cdot\tau h_1  = \rho h_1 \cdot \tau h_1$. The proposition follows.
\end{proof}

The fact that $\delta(\tau^n) \equiv 0\pmod{\rho}$ allows for the following definition.

\begin{definition}
If $F = \r$, then
\[
\tau^{2n}h_0 := \left[\frac{1}{\rho}\delta(\tau^{2n+1})\right].
\]
In general, $\tau^{2n}h_0\in\Ext_F$ is defined by pushing these classes forward along the map $\Lambda^\r\rightarrow\Lambda^F$ induced by $\m^\r\rightarrow\m^F$ (see \cref{rmk:runiv}).
\tqed
\end{definition}

\begin{example}
We have
\begin{align*}
h_0 &= [\lambda_0], \\
\tau^2 h_0 &= [\lambda_0 \tau^2 + \lambda_1 \tau^2 \rho + \lambda_3 \tau \rho^3 + \lambda_4 \rho^4], \\
\tau^4 h_0 &= [\lambda_0\tau^4 + \lambda_3\tau^3\rho^3 + \lambda_4\tau^2\rho^4+\lambda_5\tau^2\rho^5+\lambda_7\tau\rho^7+\lambda_8\rho^8].
\end{align*}
\tqed
\end{example}

The following proposition was originally proved over $\r$ by Dugger--Isaksen \cite[Proof of Lemma 6.2]{DI16a} using Massey products, May's Convergence Theorem, and analysis of the $\rho$-Bockstein spectral sequence. Using the lambda algebra, the proof amounts to checking that the products of cycle representatives are equal. 

\begin{proposition}
There is a relation
\[
\tau^2 h_0 \cdot h_1 = \rho (\tau h_1)^2.
\]
\end{proposition}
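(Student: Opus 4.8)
The plan is to verify the identity directly at the chain level in $\Lambda^F$, by showing that the evident product of cycle representatives on each side is \emph{literally} the same element of $\Lambda^F$; no nullhomotopy will be needed. Concretely, $\tau^2 h_0$ is represented by the cycle $\lambda_0\tau^2+\lambda_1\tau^2\rho+\lambda_3\tau\rho^3+\lambda_4\rho^4$ (this is $\rho^{-1}\delta(\tau^3)$, as one reads off from \cref{prop:difftau}), while $\tau h_1$ is represented by $\lambda_1\tau+\lambda_2\rho$ and $h_1$ by $\lambda_1$. Thus $\tau^2 h_0\cdot h_1$ is represented by $(\lambda_0\tau^2+\lambda_1\tau^2\rho+\lambda_3\tau\rho^3+\lambda_4\rho^4)\lambda_1$ and $\rho(\tau h_1)^2$ by $\rho(\lambda_1\tau+\lambda_2\rho)^2$, and it suffices to expand both products in $\Lambda^F$ and compare.

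The expansion uses only the structural formulas for $\Lambda^F$. Since $\rho\in\m^F_0$ is central, the only nontrivial step is commuting powers of $\tau$ past $\lambda_1$ and $\lambda_2$ using the bimodule relations of \cref{Prop:BimodLambda}; from these one gets $\tau^2\lambda_1=\lambda_1\tau^2+\lambda_3\tau\rho^2+\lambda_4\rho^3$ (a pair of terms $\lambda_2\tau\rho$ cancels mod $2$). Substituting this and then reducing the non-coadmissible products $\lambda_0\lambda_1$, $\lambda_0\lambda_3$, $\lambda_0\lambda_4$ via the quadratic relations of \cref{Prop:AdemLambda} — they equal $0$, $\lambda_2\lambda_1$, and $\lambda_3\lambda_1\tau+\lambda_2^2+\lambda_4\lambda_1\rho+\lambda_3\lambda_2\rho$ respectively — the left-hand representative collapses, after the mod-$2$ cancellations, to
\[
\lambda_1^2\tau^2\rho+\lambda_1\lambda_3\tau\rho^3+\lambda_1\lambda_4\rho^4+\lambda_2\lambda_1\tau\rho^2+\lambda_2^2\rho^3 .
\]
On the other hand $(\lambda_1\tau+\lambda_2\rho)^2$ expands (again using \cref{Prop:BimodLambda}, with a cancelling pair of middle terms $\lambda_1\lambda_2\tau\rho$) to $\lambda_1^2\tau^2+\lambda_1\lambda_3\tau\rho^2+\lambda_1\lambda_4\rho^3+\lambda_2\lambda_1\tau\rho+\lambda_2^2\rho^2$, and multiplying by $\rho$ yields precisely the same five-term expression. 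Hence the two classes coincide already on the nose in $\Lambda^F$, and a fortiori in $\Ext_F$. For a general base field $F$ the identity then follows from the case $F=\r$ by pushing forward along $\Lambda^\r\to\Lambda^F$, cf.\ \cref{rmk:runiv}.

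There is no conceptual obstacle here; the only care required is in the noncommutativity of $\tau$ with the generators $\lambda_i$, and in correctly applying the two Adem relations expanding $\lambda_0\lambda_3$ and $\lambda_0\lambda_4$, since neither of these products is coadmissible. Once these reductions are carried out the agreement of the two expressions is immediate.
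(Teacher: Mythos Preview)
Your proof is correct and takes essentially the same approach as the paper: both compute the product of the given cycle representatives directly and verify they agree on the nose in $\Lambda^F$. The only cosmetic difference is that the paper reduces the non-coadmissible terms $\lambda_1\lambda_3=0$ and $\lambda_1\lambda_4=\lambda_2\lambda_3$ to obtain a four-term expression, whereas you leave these unreduced in your five-term expression; since they appear identically on both sides this is immaterial.
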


\begin{proof}
We may directly compute
\begin{align*}
\tau^2 h_0 \cdot h_1 &= [(\lambda_0 \tau^2 + \lambda_1 \tau^2 \rho + \lambda_3 \tau \rho^3 + \lambda_4 \rho^4) \lambda_1] \\
				&= [\lambda_1^2 \tau^2 \rho + \lambda_2\lambda_1 \tau \rho^2 + \lambda_2^2 \rho^3 + \lambda_2 \lambda_3 \rho^4] \\
				 &= [\rho (\lambda_1 \tau + \lambda_2 \rho)^2] = \rho(\tau h_1)^2.
\end{align*}
\end{proof}

\subsection{The doubling map}\label{ssec:doubling}

In \cite[Theorem 4.1]{DI16a}, Dugger--Isaksen produce an isomorphism
\[
\Ext_{\cl}[\rho^{\pm 1}]\cong\Ext_\r[\rho^{-1}],
\]
which doubles internal degrees. We can lift this isomorphism to a quasiisomorphism of lambda algebras.

\begin{proposition}\label{prop:retract}
Let $\Lambda^{\dcl}$ denote the classic lambda algebra, only given a motivic grading where $|\lambda_n|$ has stem $2n+1$ and weight $n+1$. For any $F$, there is a retraction
\begin{center}\begin{tikzcd}
\Lambda^{\dcl}\ar[r,"\thetatilde"]&\Lambda^F\ar[r]&\Lambda^{\Fbar}\ar[r,"q"]&\Lambda^{\dcl}
\end{tikzcd}\end{center}
with the following properties:
\begin{enumerate}
\item All maps shown are maps of differential graded algebras respecting $\theta$;
\item $\thetatilde$ is given on generators by $\thetatilde(\lambda_n)=\lambda_{2n+1}$;
\item $q$ is given on generators by $q(\tau) = 0$, $q(\lambda_{2n})=0$, and $q(\lambda_{2n+1}) = \lambda_n$.
\end{enumerate}
Now say $F = \r$, and write $\Ext_\r^{\rho\hyp\tors}\subset\Ext_\r$ for the $\rho$-torsion subgroup of $\Ext_\r$.
\begin{enumerate}[resume]
\item The map $\Ext_{\dcl}[\rho]\oplus\Ext_\r^{\rho\hyp\tors}\rightarrow\Ext_\r$ induced by $\thetatilde$ and the inclusion of $\rho$-torsion is an isomorphism;
\item In particular, $\thetatilde$ extends to a quasiisomorphism $\Lambda^{\dcl}\otimes_{\f_2}\f_2[\rho^{\pm 1}]\rightarrow\Lambda^\r[\rho^{-1}]$.
\end{enumerate}
\end{proposition}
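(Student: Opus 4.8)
The plan: prove parts (1)--(3) by explicit computation, deduce (4) by combining the resulting chain-level retraction with the already-established $\rho$-inverted statement, and obtain (5) as a formal corollary of (4).

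For (1)--(3), the crucial point is that $\thetatilde$, given by $\lambda_n\mapsto\lambda_{2n+1}$, is (under the regrading) exactly the classical endomorphism of the lambda algebra lifting $\Sq^0$; compare \cref{Thm:Sq0}, whose formulas $\theta(\lambda_{2n-1})=\lambda_{4n-1}$ and $\theta(\lambda_{2n})=\lambda_{4n+1}\tau+\lambda_{4n+2}\rho$ both reduce, over an algebraically closed field and up to $\tau$, to $\lambda_k\mapsto\lambda_{2k+1}$. To see that $\thetatilde\colon\Lambda^{\dcl}\to\Lambda^F$ is a well-defined map of differential graded algebras compatible with $\theta$, note: (i) its image lies in the $\f_2$-subalgebra of $\Lambda^F$ generated by the odd-indexed $\lambda_r$, and when one Adem-rewrites a product $\lambda_{2i+1}\lambda_{2j+1}$ of odd generators in $\Lambda^F$ one is always in the ``$a$ odd'' case of \cref{Prop:AdemLambda}, so no $\tau$ or $\rho$ appears and the result again lies in the odd subalgebra; moreover the surviving terms are precisely those of the classical Adem relation, by the $\pmod 2$ identities $\binom{2a-1}{2b}\equiv\binom{a-1}{b}$ and $\binom{\mathrm{even}}{\mathrm{odd}}\equiv 0$ (Lucas); (ii) $\delta$ on each $\lambda_r\in\Lambda^F$ involves no $\tau$ or $\rho$, so compatibility of $\thetatilde$ with $\delta$ is a purely classical statement; (iii) $\thetatilde\circ\theta=\theta\circ\thetatilde$ on generators is a line from the formulas of \cref{Thm:Sq0}. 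Since $\m^{\dcl}=\f_2$ there is no bimodule structure on the source to worry about, and $\Lambda^F\to\Lambda^{\Fbar}$ is a map of differential graded algebras respecting $\theta$ because it is a base change. Dually, $q\colon\Lambda^{\Fbar}\to\Lambda^{\dcl}$ (killing $\tau$ and the even-indexed $\lambda$'s, sending $\lambda_{2n+1}\mapsto\lambda_n$) is well-defined because over $\Fbar$ we have $\rho=0$, so $\tau$ is central and the relations are exactly those of \cref{Prop:AdemLambda}; applying $q$ to each relation and to each $\delta(\lambda_n)$, every term of mixed parity or divisible by $\tau$ dies, and what remains reassembles into the classical relations and differential via the same Lucas identities, with compatibility with $\theta$ immediate. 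Finally the composite $\Lambda^{\dcl}\xrightarrow{\thetatilde}\Lambda^F\to\Lambda^{\Fbar}\xrightarrow{q}\Lambda^{\dcl}$ is the identity on generators, hence is the identity; this is (1)--(3).

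For (4), extend $\thetatilde$ to $\thetatilde_\rho\colon\Lambda^{\dcl}\otimes_{\f_2}\f_2[\rho]\to\Lambda^\r$ by $\rho$-linearity. A sequence $I=(r_1,\dots,r_n)$ is coadmissible if and only if $(2r_1+1,\dots,2r_n+1)$ is, so $\thetatilde_\rho$ carries the $\f_2[\rho]$-basis of coadmissible monomials of $\Lambda^{\dcl}$ bijectively onto the $\rho$-multiples of the odd-indexed coadmissible monomials of $\Lambda^\r$; in particular $\thetatilde_\rho$ is injective at the chain level. The retraction $r\colon\Lambda^\r\to\Lambda^{\Fbar}\xrightarrow{q}\Lambda^{\dcl}$ from (1)--(3) shows $H_\ast(\thetatilde)\colon\Ext_{\dcl}\to\Ext_\r$ is split injective; since $\Ext_{\dcl}[\rho]$ is $\rho$-torsion-free this upgrades to injectivity of $H_\ast(\thetatilde_\rho)$, whose image therefore meets $\Ext_\r^{\rho\hyp\tors}$ trivially. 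It remains to see that image-plus-torsion is all of $\Ext_\r$, equivalently that $\thetatilde_\rho$ becomes a quasiisomorphism after inverting $\rho$. This is precisely the Dugger--Isaksen computation $\Ext_\r[\rho^{-1}]\cong\Ext_{\dcl}[\rho^{\pm 1}]$ of \cite[Theorem 4.1]{DI16a}: the map $H_\ast(\thetatilde_\rho[\rho^{-1}])$ is injective by the above, and comparing Hilbert series in each (finite-dimensional) tridegree forces it to be the isomorphism. To promote this and the mod-$\rho$ information of $r$ to the \emph{integral} splitting of $\Ext_\r$, one runs the $\rho$-Bockstein spectral sequence attached to $0\to\Lambda^\r\xrightarrow{\rho}\Lambda^\r\to\Lambda^{\Fbar}\to 0$, whose $E_1$-page is built from $\Ext_{\Fbar}$ and whose first boundary is visible from $\delta(\tau)=\lambda_0\rho$, concluding that the $\rho$-torsion-free classes in $\Ext_\r$ are exactly the image of $\thetatilde_\rho$. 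Part (5) then follows formally: localization is exact, so $\thetatilde_\rho$ extends to a quasiisomorphism $\Lambda^{\dcl}\otimes_{\f_2}\f_2[\rho^{\pm 1}]\to\Lambda^\r[\rho^{-1}]$.

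The main obstacle is (4), specifically upgrading the $\rho$-inverted isomorphism of \cite{DI16a} to the integral direct-sum decomposition. The chain-level retraction $r$ factors through $\Lambda^\r/\rho$ and is not $\f_2[\rho]$-linear, so on its own it detects only information modulo $\rho$; pinning the $\rho$-torsion-free summand down on the nose genuinely requires the $\rho$-Bockstein bookkeeping, tracking that no $\rho$-torsion-free class is created or destroyed beyond the image of $\thetatilde_\rho$. A minor but error-prone point is carrying the exact binomial coefficients and powers of $\tau$ and $\rho$ through the mixed-parity Adem relations when checking that $q$ is well-defined over $\Fbar$. Everything else --- the verifications in (1)--(3) and the deduction of (5) --- is routine given the Lucas-type congruences and exactness of localization.
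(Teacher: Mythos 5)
Your verification of parts (1)--(3) is more detailed than the paper's one-line ``easily seen'' and is correct: the Lucas-congruence reduction of the motivic Adem relations and differential on the odd-indexed subalgebra is exactly the right check, and the observation that $q$ is well-defined because over $\Fbar$ no $\rho$ appears is sound. Part (5) is indeed formal once (4) is in hand. The overall shape of the argument --- explicit formulas, the retraction, the Dugger--Isaksen $\rho$-inverted comparison --- matches the paper.

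The problems are all in part (4), and they are genuine. First, the claimed upgrade from split injectivity of $H_\ast(\thetatilde)$ to injectivity of $H_\ast(\thetatilde_\rho)$ does not follow from $\rho$-torsion-freeness of the source: the chain-level retraction $r$ is not $\f_2[\rho]$-linear (it kills $\rho$), so applying $H_*(r)$ to a relation $\sum_i \rho^i\thetatilde_*(x_i)=0$ only yields $x_0 = 0$ and leaves the higher $x_i$ constrained merely to have $\rho$-torsion image, which is exactly the thing you have not yet excluded. Second, and more seriously, the word ``equivalently'' in ``image-plus-torsion is all of $\Ext_\r$, equivalently that $\thetatilde_\rho$ becomes a quasiisomorphism after inverting $\rho$'' is false: a map that is an isomorphism after inverting $\rho$ need not have image complementary to the torsion integrally (the map $\rho\colon\f_2[\rho]\to\f_2[\rho]$ is an isomorphism after inverting $\rho$, but its image is not all of $\f_2[\rho]$, nor is image-plus-torsion, since the target is torsion-free). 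You recognize this and propose to patch it with ``$\rho$-Bockstein bookkeeping,'' but as written this is a statement of intent rather than an argument, and it would have to establish something nontrivial.

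The paper's proof avoids both gaps by passing to the quotient. It reduces (4) to showing that the composite $\Ext_{\dcl}[\rho]\to\Ext_\r\to\Ext_\r/\Ext_\r^{\rho\hyp\tors}$ is an isomorphism, observes that this is a \emph{split} inclusion of \emph{free} $\f_2[\rho]$-modules, and then notes that for a split inclusion of free modules being an isomorphism can be checked after inverting $\rho$, where Dugger--Isaksen applies. It is precisely the split-inclusion-of-free-modules observation (so that the cokernel is free and hence vanishes if and only if it vanishes after localization) that licenses the reduction to the $\rho$-inverted statement; this is the piece your argument never supplies, and it cannot be replaced by injectivity alone. If you want to complete your version, the cleanest route is to adopt the paper's reduction: show the induced map $\Ext_\dcl[\rho]\to\Ext_\r/\Ext_\r^{\rho\hyp\tors}$ is an injection of free $\f_2[\rho]$-modules which remains injective modulo $\rho$ (so is split by a Nakayama-type argument), and then run the dimension count you already have.
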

\begin{proof}
The assignments given in (2) and (3) are easily seen to extend to maps of differential graded algebras, proving (1), and that the resulting sequence is a retraction is clear. Evidently (4) implies (5), so we are left with proving (4).

It is equivalent to verify that the composite $\Ext_{\dcl}[\rho]\rightarrow\Ext_\r\rightarrow\Ext_\r/\Ext_\r^{\rho\hyp\tors}$ is an isomorphism. This is a split inclusion of free $\f_2[\rho]$-modules, so for it to be an isomorphism it is sufficient to verify that it is an isomorphism after inverting $\rho$, and for this it is sufficient for the injection $\Ext_{\dcl}[\rho^{\pm 1}]\rightarrow\Ext_\r[\rho^{-1}]$ to be an isomorphism. By Dugger--Isaksen's isomorphism $\Ext_\r[\rho^{-1}]\cong\Ext_{\dcl}[\rho^{\pm 1}]$ \cite[Theorem 4.1]{DI16a}, we find that our map $\Ext_{\dcl}[\rho^{\pm 1}]\rightarrow\Ext_\r[\rho^{-1}]$ is an injection between vector spaces of equal finite rank in each degree, and is thus an isomorphism.
\end{proof}

\begin{remark}
We point out the following amusing corollary of \cref{prop:retract}: there is a multiplicative injection
\[
Q\colon \ker(\Sq^0\colon \Ext_{\cl}\rightarrow\Ext_{\cl})\rightarrow \Ext_\c^{\tau\hyp\tors},
\]
acting in degrees as $\Sq^0$ would. For example, as $\thetatilde{\lambda_0^n} = \lambda_1^n$, we find that $Q(h_0^n)=h_1^n$. This provides another explanation of the fact that $h_1$ is not nilpotent in $\Ext_\c$. It is natural to ask whether $Q$ accounts for all indecomposable $\tau$-torsion classes in $\Ext_\c$, but a counterexample is given by the class $B_6$ in stem $55$ and filtration $7$, as $\Ext_\cl^{24,7} = 0$.
\tqed
\end{remark}

\section{\texorpdfstring{$\Ext_\r$}{Ext\_R} in filtrations \texorpdfstring{$f\leq 3$}{f <= 3}}\label{Sec:ExtLow}

In this section, we use the $\r$-motivic lambda algebra to compute $\Ext_\r^f$ for $f\leq 3$. Throughout this section, we shall abbreviate 
\[
\Lambda = \Lambda^\r.
\]

\subsection{Preliminaries}\label{ssec:extpreliminaries}

We begin by describing our strategy for computing $\Ext_\r$. We rely on the following device, which uses ideas from Tangora's work on the classic lambda algebra \cite{Tan85} to produce something like a chain-level lift of the $\rho$-Bockstein spectral sequence \cite{Hil11}. While the algorithm is essentially standard, we include a detailed description since we were unable to find a reference with the algorithm in precisely the form we need in the sequel. We begin with some preliminary definitions.

\begin{definition}
Let $V = \f_2\{x_s:s\in S\}$ be a (locally) finite $\f_2$-vector space with ordered basis.
\begin{enumerate}
\item The \textit{leading term} of a class $x\in V$ is the largest term appearing when $x$ is written as a sum of basis elements.
\item We write $x < x'$ when the leading term of $x$ is less than that of $x'$.
\item Given another vector space $U = \f_2\{x_s:s\in T\}$ with ordered basis, map $\phi\colon V\rightarrow U$, and $s\in S$, $t\in T$, we write
\[
\phi(x_s+{<}) = y_t+{<}
\]
for the \textit{relation} that there exist some classes $u < x_s$ and $v < y_t$ for which $\phi(x_s+u) = y_t+v$.
\tqed
\end{enumerate}
\end{definition}

The main technical lemma we need is the following. The reader is invited to skip this lemma on first reading; the details are not necessary to understand our computation, and we rephrase what we need in the context of $\Lambda$ in \cref{thm:tags}.

\begin{lemma}\label{lem:tags}
Let $(C,d)$ be a chain complex of locally finite and free $\f_2[\rho]$-modules, and suppose (for simplicity) that $H_\ast C[\rho^{-1}]=0$. Choose an ordered basis $\f_2\{x_s:s\in T\}$ for $C/(\rho)$, and extend this to a basis $\f_2\{\rho^n x_s:(s,n)\in T\times\n\}$ for $C$, itself ordered by $\rho^n x_s < \rho^m x_t$ whenever $n > m$, or else $n = m$ and $s < t$. Let $\{\alpha_s:s\in B\}$ be a basis for $H_\ast(C/(\rho))$, indexed by a subset $B\subset T$ with the property that for each $\alpha_s$ there is some $z_s\in C$ with leading term $x_s$ which projects to a cycle representative of $\alpha_s$. Let $B_1\subset B$ be the subset of those $s$ for which $x_s$ is the leading term of some cycle in $C$, and let $B_0 = B\setminus B_1$.

There is then a unique injection $t\colon B_0\rightarrow B$ such that
\[
d(x_s+{<}) = \rho^{r(s)}x_{t(s)}+{<}
\]
for all $s\in B_0$. Here, $r(s)\geq 1$ is an integer uniquely determined by comparing the degrees of $x_s$ and $x_{t(s)}$. Moreover, $t$ restricts to a bijection $t\colon B_0\cong B_1$, and there is an isomorphism
\[
H_\ast C = \bigoplus_{s\in B_0}\f_2[\rho]/(\rho^{r(s)}),
\]
where we may take the summand indexed by $s$ to be generated by any class of the form $\rho^{-r(s)}\cdot d(x_s+{<})$ with leading term $x_{t(s)}$.
\end{lemma}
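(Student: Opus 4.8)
### Proof plan for Lemma~\ref{lem:tags}

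The plan is to run a variant of Gaussian elimination on the $\rho$-Bockstein differential, tracking leading terms. First I would set up the bookkeeping: since $C$ is locally finite and free over $\f_2[\rho]$ with $H_\ast C[\rho^{-1}] = 0$, in each internal degree the homology $H_\ast C$ is a finite sum of cyclic $\rho$-power torsion modules, so it suffices to produce generators and exact torsion orders. The key observation is that for $s \in B$, the class $z_s \in C$ with leading term $x_s$ projecting to a cycle in $C/(\rho)$ satisfies $d(z_s) \equiv 0 \pmod\rho$, hence $d(z_s) = \rho \cdot w_s$ for some $w_s \in C$; and $z_s$ may be modified by lower-order terms. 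For $s \in B_1$ (so $x_s$ is the leading term of an honest cycle), we can choose $z_s$ to be a genuine cycle, $d(z_s) = 0$. For $s \in B_0$, $d(z_s) = \rho^{r} \cdot (\text{something with nonzero reduction mod }\rho)$ for some $r = r(s) \geq 1$ determined by degree reasons, and we want to show the leading term of that ``something'' is $x_{t(s)}$ for a well-defined $t(s) \in B$.

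The main inductive step, carried out by downward induction on the leading-term order, is: given $s \in B_0$, write $d(z_s) = \rho^{r(s)} y$ where $y$ has nonzero reduction $\bar y \in C/(\rho)$. Since $d$ commutes with reduction and $\bar y$ maps to a boundary... no — rather, $\bar y$ is a cycle in $C/(\rho)$ (because $\rho^{r(s)} d(\bar y)$ lifts $d^2 z_s = 0$ and $C$ is $\rho$-torsion-free in the relevant sense, so $d(\bar y) = 0$). Hence $\bar y$ represents a homology class of $C/(\rho)$, which is a sum of the $\alpha_t$'s; after subtracting $d$ of lower-order elements (here is where the induction feeds in) we may assume $\bar y$ has leading term $x_t$ for a unique $t \in B$, and moreover $t \in B_1$ — indeed $y$ itself, once cleaned up, is a cycle in $C$ with leading term $x_t$, so $t \in B_1$. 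This defines $t(s)$ and gives $d(x_s + {<}) = \rho^{r(s)} x_{t(s)} + {<}$. I would then show $t\colon B_0 \to B_1$ is a bijection: injectivity because distinct $s$ give differentials with distinct leading targets (after the cleanup, two elements hitting the same $x_t$ would let us cancel one, contradicting that $z_s$ was chosen with leading term not already a cycle leading term — this needs care), and surjectivity because any cycle with leading term $x_t$, $t \in B$, that is not already accounted for would either be a boundary (putting it outside the $B$-indexing as a homology generator) forcing $t \in \mathrm{image}(t)$, or would contradict $B = B_0 \sqcup B_1$.

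Finally, assemble the homology: the classes $\gamma_s := \rho^{-r(s)} d(z_s)$ (which lies in $C$, has leading term $x_{t(s)}$, and is a cycle) are killed exactly by $\rho^{r(s)}$ and are linearly independent mod lower order, so they generate a submodule $\bigoplus_{s \in B_0} \f_2[\rho]/(\rho^{r(s)})$; a dimension count in each internal degree (comparing $\dim_{\f_2} H_\ast C$ in each degree, computed via the $\rho$-Bockstein, against $\sum r(s)$) shows this submodule is all of $H_\ast C$. The main obstacle I anticipate is the leading-term bookkeeping in the inductive cleanup step — ensuring that subtracting $d$ of lower-order terms genuinely lowers the leading term of $d(z_s)/\rho^{r(s)}$ without disturbing already-processed generators, and proving injectivity of $t$ rigorously; this is the standard but delicate ``staircase'' argument in Gaussian elimination over $\f_2[\rho]$, and it is essentially the content of the Curtis-algorithm-style reasoning from \cite[Section 3.4]{Tan85} that the lemma is designed to package.
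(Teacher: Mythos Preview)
Your overall strategy is sound and in the same spirit as the paper's, but the paper runs the argument in the \emph{opposite direction}: rather than constructing $t\colon B_0\to B_1$ by following $d(z_s)$ forward, the paper first builds $t^{-1}\colon B_1\to B$ by, for each $b\in B_1$, using $H_\ast C[\rho^{-1}]=0$ to find a $y$ with $d(y)=\rho^r z_b$, then arranging the leading term of $y$ to lie in $B$. It then shows $t^{-1}$ lands in $B_0$, proves injectivity by a cancellation argument, and gets bijectivity by a cardinality count. Going backward buys two things: the hypothesis $H_\ast C[\rho^{-1}]=0$ is invoked directly (to produce the preimage $y$), and well-definedness of $t^{-1}(b)$ comes for free by declaring it to be the \emph{minimal} such $s$. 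Your forward approach instead leans on the $\rho$-Bockstein picture, which is equally valid but requires you to supply the canonical choice yourself.

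There are two genuine gaps in your plan. First, you never address \emph{uniqueness} of $t$ as an injection $B_0\to B$, which is part of the statement; the paper proves this separately at the end by a maximal-counterexample contradiction argument that uses the minimality built into its definition of $t^{-1}$. Second, your cleanup step is underspecified in a way that matters: the exponent $r(s)$ you define as the $\rho$-valuation of $d(z_s)$ depends on the choice of $z_s$, and if you do not first modify $z_s$ to \emph{maximize} this valuation, then $\bar y$ may represent $0$ in $H_\ast(C/\rho)$ and there is no $t\in B$ to extract. Concretely, if $\bar y$ is a boundary $d(\bar w)$, you must replace $z_s$ by $z_s+\rho^{r(s)}w$ and iterate; only once $r(s)$ is maximal does $[\bar y]\neq 0$, and then the further modification $z_s\mapsto z_s+\rho^{r(s)}w'$ (with $\bar w'$ witnessing the homology between $\bar y$ and its standard representative $\sum z_{t_i}$) forces the leading term of $y$ into $B$. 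This is more than ``subtracting $d$ of lower-order elements''---the correct correction is to $z_s$ by $\rho^{r(s)}$-divisible terms, not to $y$ by boundaries.
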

\begin{proof}
We begin by defining a function $t^{-1}\colon B_1\rightarrow B$. Fix $b\in B_1$; we claim that there exists some $s\in B$ such that $d(x_s+{<}) = x_b+{<}$. The function $t^{-1}$ will then be defined by declaring $t^{-1}(b)$ to be the minimal $s$ for which $d(x_s+{<}) = x_b + {<}$.

Indeed, let $z_b$ be a cycle with leading term $x_b$ which projects to a cycle representative for $\alpha_b$. As $H_\ast C[\rho^{-1}] = 0$, necessarily $\rho^r z_b$ is nullhomologous for some minimal $r\geq 1$. That is, there is some $y\in C$ not divisible by $\rho$ such that $d(y) = \rho^r z_b$. If $y = x_s + {<}$ with $s\in B$, then we are done. Otherwise, as $y$ is a cycle in $C/(\rho)$, necessarily $y$ is homologous to some $x_s+u$ with $u < x_s$ and $s\in B$, in which case there exists some $v$ with $d(v) = x_s+u+y$. We find that
\[
d(x_s+{<}) = d(x_s+u) = d(x_s+u+d(v)) = d(y) = \rho^r z_b = \rho^r x_b+{<}
\]
as claimed. Thus we have produced the function $t^{-1}$.

Next we claim that $t^{-1}$ restricts to a function $t^{-1}\colon B_1\rightarrow B_0$. Indeed, suppose towards contradiction that there were some $b\in B_1$ such that $x_{t^{-1}(b)}$ is the leading term of some cycle. That is to say, there are some $u,v < x_{t^{-1}(b)}$ such that
\[
d(x_{t^{-1}(b)}+u) = x_b+{<},\qquad d(x_{t^{-1}(b)}+v) = 0.
\]
Adding these together, we find
\[
d(u+v) = x_b+{<}.
\]
As $u+v < x_{t^{-1}(b)}$, this contradicts minimality of $t^{-1}(b)$. Thus we have a function $t^{-1}\colon B_1\rightarrow B_0$.

Next we claim that $t^{-1}$ is a bijection. It is a function between locally finite sets, and the assumption that $H_\ast C[\rho^{-1}] = 0$ implies that these sets have the same cardinality in each degree. So it is sufficient to verify that $t^{-1}$ is an injection. Indeed, suppose towards contradiction that there were some $b < c$ in $B_1$ for which $t^{-1}(b) = s = t^{-1}(c)$. Thus there are $u,v < x_s$ such that
\[
d(x_s+u) = x_b+{<},\qquad d(x_s+v) = x_c+{<}.
\]
Adding these together, we find
\[
d(u+v) = x_c+{<}.
\]
As $u+v<x_s$, this contradicts minimality of $t^{-1}(c)$.

By taking the inverse of $t^{-1}\colon B_1\rightarrow B_0$, we have thus proved the existence of a bijection $t\colon B_0\rightarrow B_1$ with the property that $d(x_s+{<}) = x_{t(s)}+{<}$ for all $s\in B_0$. With this $t$, the given description of $H_\ast C$ is clear; in effect, we have described how to choose a basis for $C$ for which $d$ is upper triangular, where if a diagonal entry is divisible by $\rho^r$ so too are all entries above it. Compare the notion of a tag from \cite{Tan85}.

It remains to verify uniqueness. Suppose towards contradiction that we have found some other injection $t'\colon B_0\rightarrow B$ such that $d(x_s+{<}) = x_{t'(s)}+{<}$ for all $s\in B_0$. The condition that $t' \neq t$ means that there exists some $s\in B_0$ for which $d(x_s+{<}) = x_{t'(s)}+{<}$, but $s$ is not minimal among possible $a\in B_0$ with $d(x_a+{<}) = x_{t'(s)}+{<}$. Choose such $s$ with $t'(s)$ maximal, and let $a = t^{-1}(t'(s))$ be the minimal $a\in B_0$ with $d(x_a+{<}) = x_{t'(s)}+{<}$. So there are $u,v < x_a$ for which
\[
d(x_a+u) = x_{t'(s)}+{<},\qquad d(x_a+v) = x_{t'(a)}+{<}.
\]
Adding these together, we find that
\[
d(u+v) = x_{t'(s)}+x_{t'(a)}+{<},
\]
where $u+v < x_a$. If $t'(a) < t'(s)$, then this reduces to
\[
d(u+v) = x_{t'(s)}+{<},
\]
contradicting minimality of $a$. If $t'(s) < t'(a)$, then this reduces to
\[
d(u+v) = x_{t'(a)} + {<},
\]
contradicting maximality of $t'(s)$. So there is no such $t'$, proving that $t$ is the unique injection satisfying the required property.
\end{proof}

We now specialize to the computation of $\Ext_\r$. Observe that by \cref{prop:retract}, we may reduce to considering only the $\rho$-torsion subgroup of $\Ext_\r$. In terms of $\Lambda$, this amounts to ignoring monomials of the form $\lambda_I$ where $I$ is a sequence of odd numbers. We will apply \cref{lem:tags} to compute this $\rho$-torsion subgroup as follows.

We take as basis of $\Lambda/(\rho)$ the standard basis $\lambda_I\tau^n$ where $I$ is coadmissible (\cref{def:coadmissible}) and $n\geq 0$. We also need to order this basis. In the region where we will compute, our choice of order makes no difference, in the sense that all ``error terms'' appearing in ``${}+{<}$'' will be divisible by $\rho$. But for concreteness let us say that $\lambda_I\tau^n < \lambda_J \tau^m$ if $n > m$, or else $n = m$ and $I < J$ lexicographically, i.e.\ if $I = (i_1,\ldots,i_f)$ and $J = (j_1,\ldots,j_f)$, then $i_1 < j_1$, or else $i_1 = j_1$ and $i_2 < j_2$, and so forth.

We must fix some further notation. Let $\{\alpha_s':s\in S_0\}$ be a basis for $\Ext_{\cl}$, and write $\alpha_s\in\Ext_\c$ for the image of $\alpha_s'$ under the map induced by $\thetatilde\colon \Lambda^{\dcl}\rightarrow\Lambda^\c$ (see \cref{prop:retract}). Extend this to a minimal generating set $\{\alpha_s : s\in S\}$ for $\Ext_\c$ as an $\f_2[\tau]$-module. For $s\in S$, let $n_s$ denote the $\tau$-torsion exponent of $\alpha_s$, so that $\{\alpha_s\tau^n:s\in S,\, n< n_s\}$ is an $\f_2$-basis for $\Ext_\c$. For each $s\in S$, choose a distinct coadmissible monomial $\lambda_{I(s)}$ which is the leading term of a cycle representative for $\alpha_s$ in $\Lambda_\c$, making this choice so that if $s\in S_0$ then $\lambda_{I(s)}$ is in the image of $\thetatilde$. See the discussion following \cref{prop:extc} for the particular choices we will take in our computation.

Let $B' = \{(s,n) : s\in S,\, n < n_s\}$. Given $b = (s,n)\in B'$, write $x_b = \lambda_{I(s)}\tau^n\in\Lambda^\r$. Let $B\subset B'$ be the subset of pairs not of the form $(s,0)$ with $s\in S_0$. Let $B_1\subset B$ be the subset of those $b$ such that $x_b$ is the leading term of some cycle, and let $B_0 = B\setminus B_1$. Let $B[f]\subset B$ be the subset of those $b$ for which $x_b$ is in filtration $f$, and extend this notation to all the indexing sets under consideration. 

For our computation, we will produce for every $b\in B_0[f]$ with $f\leq 2$, some $t(b)\in B$ such that
\[
\delta(x_b+{<}) = \rho^{r(b)}x_{t(b)}+{<},
\]
making this choice so that $t\colon B_0\rightarrow B$ is injective. Here, $r(b)\geq 1$ is some integer which may be determined by comparing the stems of $x_b$ and $x_{t(b)}$.

\begin{definition}\label{def:tag}
In the above situation, we shall write $x_b\rightarrow x_{t(b)}\rho^{r(b)}$.
\tqed
\end{definition}

\begin{thm}\label{thm:tags}
Fix notation as above. Then
\begin{enumerate}
\item $t$ is uniquely determined (given our choice of ordered basis);
\item $t$ restricts to bijections $t\colon B_0[f]\cong B_1[f+1]$;
\item The $\rho$-torsion subgroup of $\Ext_\r^{f+1}$ is isomorphic to
\[
\bigoplus_{b\in B_0[f]}\f_2[\rho]/(\rho^{r(b)}),
\]
where the summand corresponding to $b\in B_0[f]$ is generated by any class of the form
\[
\frac{\delta(x_b+{<})}{\rho^{r(b)}}
\]
with leading term $x_{t(b)}$.
\end{enumerate}
\end{thm}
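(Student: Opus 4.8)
The plan is to deduce \cref{thm:tags} from the abstract \cref{lem:tags} by verifying that the hypotheses of that lemma are met in our concrete setting, and then translating its conclusion into the stated form. First I would take $C = \Lambda^\r$, which is a chain complex of locally finite free $\f_2[\rho]$-modules by the monomial basis description (coadmissible $\lambda_I\tau^n$ freely generate over $\m^\r = \f_2[\rho][\tau]$, hence over $\f_2[\rho]$ after accounting for the powers of $\tau$). The hypothesis $H_\ast C[\rho^{-1}] = 0$ is not literally true for all of $\Lambda^\r$, but by \cref{prop:retract} we have a splitting $\Ext_\r \cong \Ext_\dcl[\rho]\oplus \Ext_\r^{\rho\hyp\tors}$, with the $\rho$-torsion-free part modeled by the image of $\thetatilde$; so I would apply \cref{lem:tags} to the complementary subcomplex spanned by the monomials $\lambda_I\tau^n$ with $I$ \emph{not} a sequence of odd numbers, which by construction has $\rho$-inverted homology zero. (Alternatively, one may pass to $\Lambda^\r/\thetatilde(\Lambda^\dcl)$.) The ordered basis of $C/(\rho)$ is the one fixed just before the theorem, and the set $B$, the distinguished cycle representatives $z_s$ with leading terms $x_b = \lambda_{I(s)}\tau^n$, and the decomposition $B = B_0\sqcup B_1$ are exactly as set up in the discussion preceding \cref{def:tag}, matching the data required by \cref{lem:tags} (with $B$ here playing the role of $B$ there, and $S_0$-indexed classes in filtration $0$ excluded precisely because they support the $\rho$-torsion-free summand).

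With the hypotheses in place, \cref{lem:tags} immediately yields a unique injection $t\colon B_0\rightarrow B$ with $\delta(x_b + {<}) = \rho^{r(b)}x_{t(b)} + {<}$, a bijection $t\colon B_0\cong B_1$, and the isomorphism $H_\ast C \cong \bigoplus_{b\in B_0}\f_2[\rho]/(\rho^{r(b)})$ with the stated generators. Part (1) of \cref{thm:tags} is the uniqueness clause of \cref{lem:tags} verbatim. For part (2), I would observe that $\delta$ raises filtration by exactly one, so $t$ sends $B_0[f]$ into $B[f+1]$, and since $x_{t(b)}$ is by construction the leading term of the cycle $\rho^{-r(b)}\delta(x_b+{<})$, we have $t(B_0[f])\subseteq B_1[f+1]$; combined with the global bijection $t\colon B_0\cong B_1$ this forces the filtration-graded pieces to match, giving $t\colon B_0[f]\cong B_1[f+1]$. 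Part (3) is then the filtration-$(f+1)$ component of the homology formula, using that $H_\ast C$ computed here is precisely the $\rho$-torsion subgroup of $\Ext_\r$ (by the choice of subcomplex) and that $H^{f+1}$ is spanned by the generators indexed by $B_0[f]$.

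The only genuine subtlety — and the step I expect to require the most care — is matching the abstract indexing conventions of \cref{lem:tags} with the concrete ones, specifically confirming that the set $B$ constructed before \cref{def:tag} (pairs $(s,n)$ with $n < n_s$, minus the filtration-$0$ classes coming from $S_0$) really is a legitimate choice of the set "$B$" in \cref{lem:tags}, i.e.\ that for each such pair there is a $z_b\in C$ with leading term $x_b$ projecting to a cycle representative of a basis element of $H_\ast(C/(\rho))$. This is where the input from \cref{prop:extc} and the explicit choices of $\lambda_{I(s)}$ enter: one needs that the chosen coadmissible monomials really are leading terms of cycle representatives for an $\f_2$-basis of $\Ext_\c$ (equivalently of $H_\ast(\Lambda^\r/(\rho))$, since $\Lambda^\r/(\rho)[\tau^{-1}]\cong\Lambda^\cl\otimes\f_2[\tau^{\pm1}]$ and the $\tau$-torsion is handled by the listed generators). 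Granting the choices made in the computation, everything else is a direct unwinding of \cref{lem:tags}; I would keep the write-up short, citing \cref{lem:tags} for the hard analytic content and \cref{prop:retract} for the reduction to the $\rho$-torsion summand.
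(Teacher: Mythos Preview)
Your proposal is correct and matches the paper's approach exactly: the paper's proof is the single sentence ``This follows by specializing \cref{lem:tags} to the complementary summand of $\thetatilde\colon \Lambda^{\cl}\subset \Lambda$,'' and you have spelled out precisely that specialization, including the use of \cref{prop:retract} to isolate the $\rho$-torsion part and the filtration bookkeeping for part (2). Your parenthetical about passing to the quotient $\Lambda^\r/\thetatilde(\Lambda^\dcl)$ is the cleaner way to phrase the ``complementary summand''; the span of monomials with $I$ not all odd is not literally a subcomplex over $\f_2[\rho]$, so lean on the quotient (or on the kernel of the retraction $\Lambda^\r\to\Lambda^\c\xrightarrow{q}\Lambda^\dcl$) when you write it up.
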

\begin{proof}
This follows by specializing \cref{lem:tags} to the complementary summand of $\thetatilde\colon \Lambda^{\cl}\subset \Lambda$.
\end{proof}

Most notably, the $\rho$-torsion in $\Ext_\r^{f+1}$ is obtained by understanding differentials out of $\Lambda[f]$; this is significantly easier than finding cycles in $\Lambda[f+1]$ directly.

We end with two remarks, which could have been made in the more general context of \cref{lem:tags}.

\begin{remark}\label{rmk:modrho}
More, generally, $H^\ast(\ca^\r/(\rho^m)) = H_\ast(\Lambda/(\rho^m))$ (denoted $\Ext_{(m)}$ in \cref{sec:hopf}) may be read off our computation as follows. For each $b\in B_0$, choose $u_b\in \Lambda$ such that $u_b < x_b$ and $\delta(x_b+u_b) = \rho^{r(b)}x_{t(b)}+{<}$, and let $z_b = \rho^{-r(b)}\cdot \delta(x_b+u_b)$. Then $H_\ast(\Lambda/(\rho^m))$ is given as follows.
\begin{enumerate}
\item For each $s\in S_0$, there is a summand of the form $\f_2[\rho]/(\rho^m)$, generated by the image of $\alpha_s$.
\item For each $x_b\rightarrow\rho^{r(b)}x_{t(b)}$, there is a summand of the form $\f_2[\rho]/(\rho^{\min(m,r(b))})$, generated by the class with cycle representative $z_s$.
\item For each $x_b\rightarrow\rho^{r(b)}x_{t(b)}$, there is a summand of the form $\f_2[\rho]/(\rho^{m-\max(0,m-r(b))})$, generated by the class with cycle representative $\rho^{\max(0,m-r(b))}(x_b+u_b)$.
\tqed
\end{enumerate}
\end{remark}

\begin{remark}\label{rmk:bockstein}
Our approach to computing $\Ext_\r$ via $\Lambda$ is closely related to the computation of $\Ext_\r$ via the $\rho$-Bockstein spectral sequence $\Ext_\c[\rho]\Rightarrow\Ext_\r$ \cite{Hil11}. The precise relation is as follows. For $b = (s,n)\in B$, let $\alpha_b = \alpha_s\tau^n$, so that $\{\alpha_b : b\in B\}$ is a basis of $\Ext_\c$. Our ordering on $\Lambda$ and choice of classes $x_b$ gives $B$ an order, thus making this into an ordered basis of $\Ext_\c$. Now, $x_b\rightarrow \rho^{r(b)}x_{t(b)}$ if and only if $d_{r(b)}(\alpha_b+{<}) = \rho^{r(b)}\alpha_{t(b)}+{<}$ in the $\rho$-Bockstein spectral sequence.
\tqed
\end{remark}

The above discussion describes how we will compute $\Ext_\r^{\leq 3}$ as an $\f_2[\rho]$-module. The computation gives more, as it produces explicit cocycle representatives for our generators of $\Ext_\r^{\leq 3}$. We will use this in \cref{ssec:products} to compute products in $\Ext_\r^{\leq 3}$.

\subsection{\texorpdfstring{$\Ext_\r^f$}{Ext\_R\^f} for \texorpdfstring{$f\leq 3$}{f <= 3}}

We now proceed to the computation. We begin by understanding $\Lambda^\r/(\rho) \cong \Lambda^\c$.

\begin{proposition}\label{prop:extc}
$\Ext_\c^{\leq 3}$ is generated as a commutative $\f_2[\tau]$-algebra by classes $h_a$ for $a\geq 0$, represented in $\Lambda^\c$ by $\lambda_{2^a-1}$, and $c_a$ for $a\geq 0$, represented in $\Lambda^\c$ by $\lambda_{2^a3-1}\lambda_{2^{a+2}-1}^2$. A full set of relations is given by
\[
h_{a+1}h_a=0,\qquad h_{a+2}^2h_a=0,\qquad h_2h_0^2 = \tau h_1^3,\qquad h_{a+3}h_{a+1}^2 = h_{a+2}^3,
\]
for all $a\geq 0$. This is free over $\f_2[\tau]$, with basis given by the classes in the following table.
\begin{longtable}{ll}
\toprule
Class & Constraints \\
\midrule \endhead
\bottomrule \endfoot
$1$ \\
$h_a$ & $a\geq 0$ \\
$h_a\cdot h_b$ & $a\geq b \geq 0$ and $a\neq b+1$ \\
$h_a\cdot h_b\cdot h_c$ & $a\geq b\geq c\geq 0$, where $a \neq b+1$ and $b\neq c+1$,\\
& and where if $b = c$ or $a = b$ then $a\neq c+2$ \\
$c_a$ & $a\geq 0$
\end{longtable}
The only such classes not in the image of $\thetatilde\colon\Ext_{\dcl}\rightarrow\Ext_\c$ are those in which either $h_0$ or $c_0$ appears.
\end{proposition}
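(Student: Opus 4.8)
The plan is to read off $\Ext_\c^{\leq 3}=H_\ast(\Lambda^\c)^{\leq 3}$ directly from the $\c$-motivic lambda algebra by a computation parallel to Wang's classical one \cite{Wan67}, and then to deduce the final sentence about $\thetatilde$ from \cref{prop:retract}. The crucial simplification over an algebraically closed field is that $\rho=0$, so $\delta(\tau)=0$ by \cref{Prop:DiffLambda} and $\tau$ is central by \cref{Prop:BimodLambda}; hence $\Lambda^\c$ is a complex of free $\f_2[\tau]$-modules (on the coadmissible monomials, by \cref{prop:coadmissiblebasis}) with $\f_2[\tau]$-linear differential. Moreover the differential on the generators $\lambda_n$ is given by the classical formula (\cref{Prop:DiffLambda}), and by \cref{Prop:AdemLambda} (see also the remark following it) the parameter $\tau$ enters the relations only when straightening a product of two even-indexed generators. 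Thus $\Lambda^\c$ is a $\tau$-linear deformation of the classical $\Lambda^\cl$: inverting $\tau$ gives $\Lambda^\cl[\tau^{\pm 1}]$ and setting $\tau=1$ recovers $\Lambda^\cl$.

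First I would establish that the listed classes are cycles and satisfy the listed relations. That $\lambda_{2^a-1}$ and $\lambda_{2^a3-1}\lambda_{2^{a+2}-1}^2$ are cycles is a direct computation with \cref{Prop:DiffLambda}, identical to the classical verification for $h_a$ and $c_a$ (cf.\ \cite{Wan67}). The relation $h_0^2h_2=\tau h_1^3$ is the chain-level identity $\lambda_0^2\lambda_3=\lambda_1^3\tau$ of \cref{prop:h13}; the relations $h_{a+1}h_a=0$, $h_{a+2}^2h_a=0$, and $h_{a+3}h_{a+1}^2=h_{a+2}^3$ involve only odd-indexed generators, so the pertinent Adem relations and nullhomotopies are literally the classical ones and may be imported from \cite{Wan67}.

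The bulk of the work is showing completeness: that these generators and relations are all of them, and that $\Ext_\c^{\leq 3}$ is $\f_2[\tau]$-free on the stated basis. I would do this by the same degree-by-degree analysis as Wang, enumerating the coadmissible monomials of each filtration $f\leq 3$, computing the differentials out of filtrations $0,1,2$, and taking homology, the only difference being that one must carry the $\tau$'s produced by the even--even Adem relations. Because $\delta$ is $\f_2[\tau]$-linear, this is organized cleanly as the classical computation with an extra $\tau$-Bockstein grading, taking the structure of $\Ext_\cl^{\leq 3}$ from \cite{Wan67} as input. The one new structural feature in this range is that the chain-level-equal classes $\lambda_0^2\lambda_3$ and $\lambda_1^3\tau$ lie in a different $\tau$-weight than $\lambda_1^3$, which produces the twisted relation $h_0^2h_2=\tau h_1^3$ and forces the exclusion ``$a\neq c+2$ when $b=c$ or $a=b$'' in the filtration-$3$ basis. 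I expect the main obstacle to be this finite but somewhat intricate bookkeeping, together with checking that no other $\tau$-twists occur in the range; note that the $\tau$-torsion phenomenon of $h_1$ being non-nilpotent does not yet intervene, as $h_1^4$ has filtration $4$.

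Finally, for the last sentence: by \cref{prop:retract} the maps satisfy $\thetatilde(\lambda_n)=\lambda_{2n+1}$, $q(\lambda_{2n})=0$, $q(\tau)=0$, and $q\circ\thetatilde=\mathrm{id}$, all multiplicatively; in particular $\thetatilde(h_a)=h_{a+1}$, $\thetatilde(c_a)=c_{a+1}$, and the idempotent $e=\thetatilde q$ on $\Ext_\c$ satisfies $e(h_a)=h_a$ and $e(c_a)=c_a$ for $a\geq 1$, while $e(h_0)=e(c_0)=0$ since $q$ kills $\lambda_0$ and $\lambda_2$. A listed basis class not involving $h_0$ or $c_0$ is a product of the $h_a,c_a$ with $a\geq 1$, hence is fixed by $e$ and so lies in the image of $\thetatilde$; conversely, a listed basis class involving $h_0$ or $c_0$ is nonzero and killed by $e$, by multiplicativity of $q$, hence cannot lie in the image of $\thetatilde$. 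This gives exactly the stated description.
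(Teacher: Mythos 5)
Your plan follows essentially the same route as the paper's alternative argument: run Wang's degree-by-degree analysis of the lambda algebra, using that $\rho=0$ over $\c$ forces $\tau$ to be central and $\delta$ to be $\f_2[\tau]$-linear, and track the $\tau$-twists. (The paper's primary justification is simply a citation to Isaksen \cite{Isa19}, with the Wang-plus-$\tau$-Bockstein computation offered as an alternative.) Your idempotent argument $e=\thetatilde q$ for the final sentence is a genuine addition, since the paper states that claim without proof, and the argument is correct as written.

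There is one point worth correcting, and it is precisely the step you dismiss as ``not yet intervening.'' To compute $\Ext_\c^3$ you need the cycles in $\Lambda^\c[3]$, so you must also compute $\delta$ out of filtration $3$, not just out of filtrations $0,1,2$. The one genuinely new phenomenon in this range, which the paper flags explicitly, lives exactly there: the coadmissible monomial $\lambda_2^2\lambda_1$ (stem $5$, weight $3$, filtration $3$) is a cycle in $\Lambda^\c/(\tau)$ representing a class with no counterpart in $\Ext_\cl^3$, but $\delta(\lambda_2^2\lambda_1)=\tau\lambda_1^4\neq 0$ in $\Lambda^\c$. So the $\tau$-torsion of $h_1^4$ in $\Ext_\c^4$ is created by a differential \emph{out of} filtration $3$, and this differential is precisely what must be checked to see that $\Ext_\c^3$ acquires no new class beyond those listed; your inference ``$h_1^4$ has filtration $4$, so this does not yet intervene'' has the implication running backwards. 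Aside from this, your reduction of the remaining relations to Wang's classical computation and your use of \cref{prop:h13} for the hidden extension $h_0^2h_2=\tau h_1^3$ are sound.
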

\begin{proof}
This is essentially well-known, owing to work of Isaksen on the cohomology of the $\c$-motivic Steenrod algebra \cite{Isa19}. Alternately, one may compute $H_{\leq 3}(\Lambda^\c/(\tau))$ following Wang's approach \cite{Wan67}, and run the $\tau$-Bockstein spectral sequence to recover $\Ext_\c^{\leq 3}$. One finds that $H_{\leq 3}(\Lambda^\c/(\tau))$ agrees with $\Ext_{\cl}^{\leq 3}$, with two exceptions:
\begin{enumerate}
\item Instead of $h_0^2\cdot h_2 = h_1^3$, one has $h_0^2\cdot h_2 = 0$;
\item There is a new cycle $\alpha$ represented by $\lambda_2^2\lambda_1$.
\end{enumerate}
There is a $\tau$-Bockstein differential $d_1(\alpha) = \tau h_1^4$, after which we recover the claimed $\f_2[\tau]$-module basis of $\Ext_\c^{\leq 3}$. The hidden extension $h_0^2\cdot h_2 = \tau h_1^3$ was shown in \cref{prop:h13}; alternately, it is the only relation compatible with $\Sq^0(h_0^2 \cdot h_2) = \tau^2 h_1^2 h_3 = \tau^2h_2^3 = \Sq^0(\tau h_1^3)$.
\end{proof}

\cref{prop:extc} describes a basis for $\Ext_\c^{\leq 3}$, thus giving our set $S[\leq\! 3]$. We must also choose lambda algebra representatives of these classes. We shall choose $c_n$ to be represented by $\lambda_{2^n3-1}\lambda_{2^{n+2}-1}^2$ and a product $h_{n_1}\cdots h_{n_k}$ with $n_1\geq\cdots\geq n_k$ to be represented by $\lambda_{2^{n_1}-1}\cdots\lambda_{2^{n_k}-1}$. We warn that these representatives are not minimal; for example, we have chosen $\lambda_3\lambda_0$ as our representative for $h_2h_0$, rather than the minimal representative which is $\lambda_2\lambda_1$. However, they are easily defined, and convenient enough for our computation.

The following identity will be used frequently in consolidating various cases in our computation.  It is an immediate consequence of the description of $\theta$ given in \cref{Thm:Sq0}.

\begin{lemma}\label{lem:theta0}
We have
\[
\theta^a(\lambda_0\tau^{n}) = \lambda_{2^a-1}\tau^{\floor{2^{a-1}(2n+1)}}+O(\rho^{2^{a-1}}).
\]
for all $n\geq 0$, the error term being omitted when $a=0$.
\qed
\end{lemma}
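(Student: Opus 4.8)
The plan is a straightforward induction on $a$, using the multiplicativity of $\theta$ together with the explicit formulas from \cref{Thm:Sq0}. Recall from those formulas that $\theta$ is an algebra map with $\theta(\tau) = \tau^2$, $\theta(\rho) = \rho^2$, $\theta(\lambda_{2k-1}) = \lambda_{4k-1}$, and $\theta(\lambda_{2k}) = \lambda_{4k+1}\tau + \lambda_{4k+2}\rho$; in particular $\theta(\lambda_0) = \lambda_1\tau + \lambda_2\rho$. Also recall that $\rho \in \m_0^F$ is central in $\Lambda^F$ by \cref{Prop:BimodLambda}, so that $\theta$ carries an element divisible by $\rho^k$ to one divisible by $\theta(\rho)^k = \rho^{2k}$.

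For the base case $a = 0$, the claim reads $\lambda_0\tau^n = \lambda_{2^0-1}\tau^n$ with no error term, which holds since $2^0 - 1 = 0$ and, by the convention of \cref{ssec:conventions}, the exponent $\floor{2^{-1}(2n+1)}$ is interpreted as $n$. For the inductive step, suppose $\theta^a(\lambda_0\tau^n) = \lambda_{2^a-1}\tau^m + O(\rho^{2^{a-1}})$ with $m = \floor{2^{a-1}(2n+1)}$ (and no error term if $a = 0$). Applying $\theta$ and using multiplicativity, the error term maps into $O(\rho^{2^a})$ as noted above, which is the required bound for $\theta^{a+1}$, so it suffices to analyze $\theta(\lambda_{2^a-1}\tau^m) = \theta(\lambda_{2^a-1})\,\tau^{2m}$. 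Here one splits into two cases. If $a \geq 1$, then $2^a - 1$ is odd, say $2^a - 1 = 2k - 1$ with $k = 2^{a-1}$, so $\theta(\lambda_{2^a-1}) = \lambda_{4k-1} = \lambda_{2^{a+1}-1}$, contributing no new $\rho$, and since $2^{a-1}(2n+1)$ is an honest integer we get $m = 2^{a-1}(2n+1)$ and $2m = 2^a(2n+1) = \floor{2^a(2n+1)}$, exactly the claimed exponent. If instead $a = 0$, then $\lambda_{2^0-1} = \lambda_0$ is even, so $\theta(\lambda_0)\tau^{2n} = \lambda_1\tau^{2n+1} + \lambda_2\tau^{2n}\rho = \lambda_1\tau^{2n+1} + O(\rho)$; this matches the claim since $\lambda_{2^1-1} = \lambda_1$, $\floor{2^{0}(2n+1)} = 2n+1$, and the permitted error is $O(\rho^{2^0}) = O(\rho)$, accounting precisely for where the error term first appears.

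There is no serious obstacle: the argument is pure bookkeeping once \cref{Thm:Sq0} is in hand. The only points requiring care are (i) the transition $a = 0 \to a = 1$, where the even-index formula for $\theta(\lambda_0)$ introduces the first factor of $\rho$ and thereby explains the otherwise mysterious ``$O(\rho^{2^{a-1}})$'' bound, and (ii) keeping the exponent conventions straight — in particular that $\floor{2^{a-1}(2n+1)}$ equals $2^{a-1}(2n+1)$ for $a \geq 1$ and equals $n$ for $a = 0$ — so that the identity $2\cdot\floor{2^{a-1}(2n+1)} = \floor{2^a(2n+1)}$ used in the inductive step is valid for all $a \geq 0$.
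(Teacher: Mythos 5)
Your proof is correct and is the natural way to deduce the lemma from \cref{Thm:Sq0}; the paper treats the statement as an immediate consequence of those formulas and records no explicit argument, so your induction on $a$ — applying multiplicativity of $\theta$, using that $\theta(\rho)=\rho^2$ doubles $\rho$-divisibility, and handling the odd/even split of $\theta(\lambda_{2^a-1})$ (which is the only place a new factor of $\rho$ can enter, namely at $a=0$) — is exactly the unwinding the authors have in mind. Your bookkeeping of the convention $\floor{2^{-1}(2n+1)}=n$ and the identity $2\cdot\floor{2^{a-1}(2n+1)}=\floor{2^{a}(2n+1)}$ is also accurate.
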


\begin{remark}
Explicitly, 
\[
\floor{2^{a-1}(2n+1)}  = \begin{cases}2^{a-1}(2n+1)&a\geq 1\\ n&a=0
\end{cases}
\]
This sort of pattern appears frequently throughout our computation, as a consequence of \cref{lem:theta0}.
\tqed
\end{remark}

We now produce the relation ``$\rightarrow$'' described in \cref{def:tag}, proceeding filtration by filtration. To start, observe that $B_0[0] = \{\tau^n:n\geq 1\}$.


\begin{proposition}\label{prop:b0}
We have
\[
\delta(\tau^{{2^a}(2m+1)}) = \lambda_{2^a-1}\tau^{\floor{2^{a-1}(4m+1)}}\rho^{2^a}+O(\rho^{\lceil 2^a+2^{a-1}\rceil})
\]
for all $a,m\geq 0$. In particular,
\[
\tau^{2^a(2m+1)}\rightarrow\lambda_{2^a-1}\tau^{\floor{2^{a-1}(4m+1)}}\rho^{2^a}.
\]
\end{proposition}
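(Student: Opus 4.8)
The plan is to derive the formula for $\delta(\tau^{2^a(2m+1)})$ directly from the closed form for $\delta(\tau^n)$ recorded in \cref{prop:difftau}. Recall that proposition gives
\[
\delta(\tau^n) = \sum_{r\geq 0}\lambda_r\binom{n+\floor{r/2}}{r+1}\tau^{n-\floor{r/2}-1}\rho^{r+1}.
\]
First I would specialize to $n = 2^a(2m+1)$ and determine, for small $r$, which binomial coefficients $\binom{n+\floor{r/2}}{r+1}$ are nonzero mod $2$. The term contributing the leading (lowest $\rho$-power) nonzero summand should be $r = 2^a - 1$, giving $\rho$-exponent $r+1 = 2^a$ and $\tau$-exponent $n - \floor{r/2} - 1 = 2^a(2m+1) - (2^{a-1}-1) - 1 = \floor{2^{a-1}(4m+1)}$, matching the claimed leading term $\lambda_{2^a-1}\tau^{\floor{2^{a-1}(4m+1)}}\rho^{2^a}$. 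I would verify $\binom{n + 2^{a-1}-1}{2^a} \equiv 1 \pmod 2$ using Lucas' theorem on the base-$2$ expansions: $n = 2^a(2m+1)$ has its lowest set bit at position $a$, so $n + 2^{a-1} - 1$ fills in bits $0$ through $a-1$, and $2^a$ is a single bit at position $a$; Lucas then gives the coefficient is odd. The case $a = 0$ must be checked separately against the convention in \cref{ssec:conventions}(15), where $\floor{2^{-1}(4m+1)} = 2m$ and $\delta(\tau^{2m+1})$ has leading term $\lambda_0 \tau^{2m}\rho$, which is consistent with the $r=0$ term of \cref{prop:difftau}.

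Next I would bound the $\rho$-adic valuation of all other terms. For $r < 2^a - 1$, I claim the binomial coefficient $\binom{n+\floor{r/2}}{r+1}$ vanishes mod $2$: since $n$ has no set bits below position $a$, and $r+1 \leq 2^a - 1$ has support only in bits $0,\dots,a-1$, while $n + \floor{r/2}$ (with $\floor{r/2} < 2^{a-1}$) can only have carries staying below bit $a$ — a careful Lucas/Kummer argument shows the coefficient is even unless $r+1 = 2^a$. This rules out any term with $\rho$-exponent below $2^a$. Then for $r \geq 2^a$, the $\rho$-exponent is $r+1 \geq 2^a + 1$; I need to confirm the next potentially-surviving term has $\rho$-exponent at least $\lceil 2^a + 2^{a-1}\rceil = \lceil 3\cdot 2^{a-1}\rceil$. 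This amounts to showing $\binom{n+\floor{r/2}}{r+1} \equiv 0 \pmod 2$ for $2^a \leq r < \lceil 3 \cdot 2^{a-1}\rceil - 1$, i.e. for $r+1$ in the range $(2^a, 3\cdot 2^{a-1})$ roughly. Again a base-$2$ digit analysis: $r+1$ in this range has the bit at position $a$ set but also a bit strictly below position $a-1$... actually I should be careful here and just invoke Kummer's theorem (the $2$-adic valuation of $\binom{p}{q}$ equals the number of carries when adding $q$ and $p-q$ in base $2$) to check there is always at least one carry, hence the coefficient is even, for all $r$ with $2^a - 1 < r+1 < \lceil 3\cdot 2^{a-1}\rceil$.

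The main obstacle I anticipate is the bookkeeping in this last step: precisely pinning down for which $r$ in the intermediate range $2^a \le r < \lceil 3\cdot 2^{a-1}\rceil$ the coefficient $\binom{n+\floor{r/2}}{r+1}$ is even, since $n = 2^a(2m+1)$ depends on $m$ and the higher bits of $n$ (coming from $2m+1$) could in principle interact with the addition $n + \floor{r/2}$. However, because $\floor{r/2} < 2^{a-1} < 2^a \le$ (lowest set bit of $n$), there are no carries past bit $a-1$ when forming $n + \floor{r/2}$, so the relevant digits of $n + \floor{r/2}$ in positions $0,\dots,a$ are independent of $m$; this decouples the analysis from $m$ and reduces everything to a finite check in bits $0$ through $a$. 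Once the two valuation bounds are in place — leading term exactly $\rho^{2^a}$, all other terms divisible by $\rho^{\lceil 3\cdot 2^{a-1}\rceil}$ — the displayed formula follows, and the relation $\tau^{2^a(2m+1)} \rightarrow \lambda_{2^a-1}\tau^{\floor{2^{a-1}(4m+1)}}\rho^{2^a}$ in the sense of \cref{def:tag} is immediate, with $r(b) = 2^a$ and the error terms all lying in $\rho$-power strictly greater than the leading one, hence $``{<}"$ in the ordering on $\Lambda$.
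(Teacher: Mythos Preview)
Your approach is correct and genuinely different from the paper's. The paper does not touch the closed formula of \cref{prop:difftau} here at all: it handles $a=0$ by the Leibniz rule (using that $\tau^2$ is a cycle mod $\rho^2$) and then obtains all $a\geq 1$ by applying the endomorphism $\theta^a$, which squares on $\m^\r$ and satisfies $\theta^a(\lambda_0\tau^{2m}) = \lambda_{2^a-1}\tau^{\floor{2^{a-1}(4m+1)}} + O(\rho^{2^{a-1}})$ (\cref{lem:theta0}). This immediately gives both the leading term and the error bound, with no binomial analysis whatsoever. Your route through \cref{prop:difftau} and Lucas' theorem is more hands-on but self-contained and does not require $\theta$. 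Both arguments ultimately rely on the same elementary mechanism: in your language, the key fact is that for $r+1 < 2^a$ or $2^a < r+1 < 3\cdot 2^{a-1}$ one always has $r+1 - 2^a\lfloor (r+1)/2^a\rfloor > \floor{r/2} - 2^{a-1}\lfloor \floor{r/2}/2^{a-1}\rfloor$, which forces a Lucas failure; the paper encodes the same parity information in the behavior of $\theta$.

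One small correction: in your last paragraph you write $\floor{r/2} < 2^{a-1}$ while discussing the error-bound range $2^a \leq r < 3\cdot 2^{a-1} - 1$. That inequality is false there; what you actually need (and what holds) is $\floor{r/2} < 2^a$, since $r \leq 3\cdot 2^{a-1} - 2$ gives $\floor{r/2} \leq 3\cdot 2^{a-2} - 1 < 2^a$. This weaker bound still prevents any carry into bit $a$ when forming $n + \floor{r/2}$, so the bits in positions $0,\ldots,a$ are indeed independent of $m$, and your decoupling argument goes through unchanged. With that fix, the Lucas check reduces cleanly to: if $s = (r+1) - 2^a\cdot[\text{bit }a\text{ of }r{+}1]$ denotes the low bits of $r+1$, then bit-domination of $s$ by the corresponding low bits of $\floor{r/2}$ would force $s \leq \floor{(s-1)/2}$, which is impossible for $s \geq 1$.
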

\begin{proof}
When $a=0$, as $\tau^2$ is a cycle mod $\rho^2$, we may compute
\[
\delta(\tau^{2m+1}) = \delta(\tau)\tau^{2m}+O(\rho^2) = \lambda_0\tau^{2m}\rho + O(\rho^2),
\]
as claimed. By \cref{lem:theta0}, applying $\theta^a$ for $a\geq 1$ to this yields
\[
\delta(\tau^{2^a(2m+1)}) = (\lambda_{2^a-1}\tau^{2^{a-1}(4m+1)}+O(\rho^{2^{a-1}}))\rho^{2^a}+O(\rho^{2^{a+1}}) =  \lambda_{2^a-1}\tau^{\floor{2^{a-1}(4m+1)}}\rho^{2^a}+O(\rho^{2^a+2^{a-1}}).
\]
Combining the cases $a=0$ and $a\geq 1$ yields the proposition.
\end{proof}

\begin{corollary}
The set $B_0[1]$ consists of those $\lambda_{2^a-1}\tau^n$ such that $n$ is not of the form $2^{a-1}(4m+1)$ for any $m$.
\qed
\end{corollary}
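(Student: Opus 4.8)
The plan is to obtain $B_0[1]$ as the complement $B[1]\setminus B_1[1]$, using \cref{thm:tags} to identify $B_1[1]$ with the image of the tag map out of filtration zero, which is exactly what \cref{prop:b0} computes.

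First I would record what $B[1]$ is. By \cref{prop:extc}, $\Ext_\c^1$ is free over $\f_2[\tau]$ on the classes $h_a$ ($a\geq 0$), each of infinite $\tau$-torsion exponent, with $h_a$ lying in the image of $\thetatilde$ exactly when $a\geq 1$; the chosen lambda-algebra representative of $h_a$ is $\lambda_{2^a-1}$. Hence $B[1]$ consists precisely of the monomials $\lambda_{2^a-1}\tau^n$ with $a\geq 0$ and $n\geq 0$, except that the classes $\lambda_{2^a-1}$ with $a\geq 1$ are omitted (since then $(h_a,0)$ has $h_a\in S_0$). In particular every element of $B_0[1]\subseteq B[1]$ has this form.

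Next, \cref{thm:tags}(2) gives a bijection $t\colon B_0[0]\cong B_1[1]$, and the text records $B_0[0] = \{\tau^n : n\geq 1\}$. Writing each $n\geq 1$ uniquely as $n = 2^a(2m+1)$, \cref{prop:b0} yields $\tau^{2^a(2m+1)}\rightarrow\lambda_{2^a-1}\tau^{\floor{2^{a-1}(4m+1)}}\rho^{2^a}$, i.e.\ a choice of tags defined on all of $B_0[0]$ with $t(\tau^{2^a(2m+1)}) = \lambda_{2^a-1}\tau^{\floor{2^{a-1}(4m+1)}}$. I would then observe that the assignment $(a,m)\mapsto(2^a-1,\floor{2^{a-1}(4m+1)})$ is injective --- the first coordinate recovers $a$, and then the second recovers $m$ --- so this is an injection $B_0[0]\hookrightarrow B[1]$; by the uniqueness clause of \cref{thm:tags}(1) it is \emph{the} tag map, and hence by \cref{thm:tags}(2) its image is exactly $B_1[1]$. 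Therefore
\[
B_1[1] = \bigl\{\lambda_{2^a-1}\tau^{\floor{2^{a-1}(4m+1)}} : a\geq 0,\ m\geq 0\bigr\},
\]
and subtracting this from the description of $B[1]$ above shows that $B_0[1]$ consists of exactly those $\lambda_{2^a-1}\tau^n\in B[1]$ with $n$ not of the form $\floor{2^{a-1}(4m+1)}$ for any $m\geq 0$ --- equivalently $n\neq 2^{a-1}(4m+1)$, with the convention of \cref{ssec:conventions} in force when $a=0$ --- as claimed.

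There is no genuine obstacle here: this is a bookkeeping consequence of \cref{prop:b0} and \cref{thm:tags}. The only step requiring a moment of care is the injectivity of the exhibited tag assignment, since that is precisely what upgrades \cref{prop:b0} from the statement ``these elements lie in $B_1[1]$'' to ``these are all of $B_1[1]$.''
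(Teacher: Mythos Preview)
Your proof is correct and follows exactly the approach implicit in the paper's one-word proof (the corollary is stated with a bare \verb|\qed|): identify $B_1[1]$ as the image of the tag map out of filtration zero via \cref{prop:b0} and \cref{thm:tags}, then take the complement in $B[1]$. Your explicit injectivity check and your remark that the case $a\geq 1$, $n=0$ is already excluded from $B[1]$ are useful points that the paper leaves tacit.
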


We have located the following indecomposable classes.

\begin{definition}\label{def:b1ind}
For $a,n\geq 0$, we declare 
\[
\tau^{\floor{2^{a-1}(4n+1)}}h_a
\]
to be the class represented by
\[
\rho^{-2^a}\cdot\delta(\tau^{2^a(2n+1)}).
\]
\tqed
\end{definition}

We now compute out of $B_0[1]$.

\begin{proposition}\label{prop:b1}
For the combinations of $a$ and $b$ below, we have $\lambda_{2^b-1}\tau^{2^a(2m+1)}\rightarrow$ the following monomial:

\begin{longtable}{lll}
\toprule
Row & Case & Target \\
\midrule \endhead
\bottomrule \endfoot
(1) & $a<b-1$ or $a=b$ & $\lambda_{2^b-1}\lambda_{2^a-1}\tau^{\floor{2^{a-1}(4m+1)}}\rho^{2^a}$ \\
(2) & $a > b+1$ and $b\neq 0$ & $\lambda_{2^a-1}\lambda_{2^b-1}\tau^{\floor{2^{a-1}(4m+1)}}\rho^{2^a}$ \\
(3) & $a=b-1$ and $m=2n+1$ & $\lambda_{2^b-1}^2\tau^{2^b(4n+1)}\rho^{2^b}$ \\
(4) & $a = b+1$ and $b \neq 0$ & $\lambda_{2^{b+1}-1}^2\tau^{\floor{2^{b-1}(8m+1)}}\rho^{2^b3}$
\end{longtable}
Moreover, these cases are mutually exclusive, and altogether exhaust $B_0[1]$.
\end{proposition}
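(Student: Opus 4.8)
\emph{Proof proposal.} The plan is to compute $\delta$ directly on each element of $B_0[1]$ and read off its leading term. Since $\delta(\lambda_{2^b-1})=0$ and $\delta$ is a derivation,
\[
\delta\bigl(\lambda_{2^b-1}\tau^{2^a(2m+1)}\bigr)=\lambda_{2^b-1}\cdot\delta\bigl(\tau^{2^a(2m+1)}\bigr),
\]
so everything reduces to multiplying $\lambda_{2^b-1}$ against the expansion of $\delta(\tau^{2^a(2m+1)})$ supplied by \cref{prop:b0} (with the refinements coming from the closed formula of \cref{prop:difftau} and the identity $\theta\circ\delta=\delta\circ\theta$ of \cref{Thm:Sq0} when more than the leading term is needed), and then rewriting the length-two monomials $\lambda_{2^b-1}\lambda_j$ that appear in the coadmissible basis via the Adem relations of \cref{Prop:AdemLambda}. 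Since $2^b-1$ is odd for $b\geq 1$, these reductions use only the $\tau$-free first case of \cref{Prop:AdemLambda}, so the powers of $\tau$ occurring are untouched by the reduction; this is the observation that keeps the bookkeeping manageable.

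First I would split the analysis according to the value of $a-b$, governing the behavior of the leading factor $\lambda_{2^b-1}\lambda_{2^a-1}$. When $a-b<-1$ or $a-b=0$ the monomial $\lambda_{2^b-1}\lambda_{2^a-1}$ is already coadmissible, so the leading term $\lambda_{2^a-1}\tau^{\floor{2^{a-1}(4m+1)}}\rho^{2^a}$ of $\delta(\tau^{2^a(2m+1)})$ produces Row~(1) immediately. When $a-b>1$ the monomial $\lambda_{2^b-1}\lambda_{2^a-1}$ is not coadmissible; applying \cref{Prop:AdemLambda} and tracking the maximal coadmissible monomial produced (for the fixed order, comparing first by $\rho$-exponent, then by $\tau$-exponent, then lexicographically on the $\lambda$-sequence) gives Row~(2). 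The two remaining cases $a-b=\pm 1$ are the delicate ones. When $a=b+1$ the Adem relation forces $\lambda_{2^b-1}\lambda_{2^{b+1}-1}=0$, so the leading $\rho^{2^a}$-term of $\delta(\tau^{2^a(2m+1)})$ contributes nothing and one must pass to the next nonvanishing term, which by \cref{prop:b0} carries $\rho$-exponent $2^b\cdot 3$; identifying this term explicitly (for instance by applying $\theta$ repeatedly to the base expansions of $\delta(\tau^2)$ and $\delta(\tau^4)$), reducing it via Adem, and passing to a suitable cocycle representative modulo lower-order terms yields Row~(4). When $a=b-1$ the monomial $\lambda_{2^b-1}\tau^{2^{b-1}(2m+1)}$ lies in $B_0[1]$ only for $m$ odd — for $m$ even it equals $\lambda_{2^b-1}\tau^{2^{b-1}(4m'+1)}$, the leading term of the cocycle $\rho^{-2^b}\delta(\tau^{2^b(2m'+1)})$ representing $\tau^{2^{b-1}(4m'+1)}h_b$ from \cref{def:b1ind} — and a slightly longer version of the same analysis gives Row~(3).

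Second I would dispatch the two combinatorial claims. Mutual exclusivity is immediate: the hypothesis of each row pins down $a-b$, namely $a-b<-1$ or $a-b=0$ for Row~(1), $a-b>1$ for Row~(2), $a-b=-1$ for Row~(3), and $a-b=1$ for Row~(4); these ranges are pairwise disjoint and exhaust $\z$, and the side conditions ($b\neq 0$ in Rows~(2) and~(4), $m$ odd in Row~(3)) only remove monomials not in $B_0[1]$ to begin with. For exhaustion, recall from the corollary to \cref{prop:b0} that $B_0[1]$ consists of the $\lambda_{2^b-1}\tau^n$ with $n$ not of the form $2^{b-1}(4m+1)$; writing $n=2^a(2m+1)$ with $a$ the $2$-adic valuation of $n$, the excluded condition "$n=2^{b-1}(4m+1)$ for some $m$" translates to "$a=b-1$ and $m$ even", so every monomial of $B_0[1]$ falls into one (hence exactly one) of the four rows. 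One should also check the edge case $b=0$ by hand: there $B_0[1]$ contains only the $\lambda_0\tau^{2m+1}$, which land in Row~(1) with $a=b=0$, while for instance $\lambda_0\tau^{4m+2}$ is correctly absent, being the leading term of the cocycle representing $\tau^{4m+2}h_0$.

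The step I expect to be the main obstacle is the leading-term bookkeeping in Rows~(3) and~(4): there the naive leading term of $\delta(\lambda_{2^b-1}\tau^{2^a(2m+1)})$ either vanishes or is not the declared basis element, so one must expand $\delta(\tau^{2^a(2m+1)})$ beyond its leading term and carefully interleave the $\m^\r$-bimodule relations of \cref{Prop:BimodLambda}, the Adem relations, and the monomial order, and then exhibit an explicit lower-order correction to $x_b$ realizing the tag. The closed formula of \cref{prop:difftau} together with the compatibility $\theta\circ\delta=\delta\circ\theta$ are the tools that make this feasible, as they let one propagate the handful of explicit base computations to all powers of $\tau$ that occur.
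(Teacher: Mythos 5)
Your overall strategy — compute $\delta(\lambda_{2^b-1}\tau^{2^a(2m+1)}) = \lambda_{2^b-1}\cdot\delta(\tau^{2^a(2m+1)})$, read off leading terms using $\theta$-iterates and Adem relations, and verify the targets are $\rho$-multiples of distinct elements of $B[2]$ — is indeed the paper's approach. But there is a gap in the way you separate Row~(1) from Row~(3). You cite \emph{coadmissibility} of $\lambda_{2^b-1}\lambda_{2^a-1}$ as what makes Row~(1) ``immediate,'' and this criterion does not in fact distinguish the case $a=b-1$: the product $\lambda_{2^b-1}\lambda_{2^{b-1}-1}$ is already coadmissible, so your stated reasoning for Row~(1) would (incorrectly) apply to $a=b-1$ as well. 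The distinction that actually matters is not coadmissibility but whether the resulting monomial is one of the chosen basis elements $x_s$ of $B[2]$, i.e.\ whether $h_bh_a$ is nonzero in $\Ext_\c^2$. For $a=b-1$ one has $h_bh_{b-1}=0$, so $\lambda_{2^b-1}\lambda_{2^{b-1}-1}\tau^{\cdots}\rho^{2^{b-1}}$ is not an admissible tag target, and one must produce a correction $u$ to $\lambda_{2^b-1}\tau^{2^{b-1}(4n+3)}$ pushing the leading term past it. You identify that something is ``not the declared basis element'' here, but do not explain why, and your claim that ``a slightly longer version of the same analysis'' suffices is not justified — the same analysis would stop at the inadmissible leading term.

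The missing idea for Row~(3), which the paper uses, is the same $\theta$-commutation trick you propose for Row~(4) applied one filtration lower: rewrite the source $\lambda_{2^b-1}\tau^{2^{b-1}(4n+3)}$ as $\theta^b(\lambda_0\tau^{2n+1})$ modulo $\rho$, so that $\delta(\theta^b(\lambda_0\tau^{2n+1})) = \theta^b(\delta(\lambda_0\tau^{2n+1})) = \theta^b(\lambda_0^2\tau^{2n}\rho + O(\rho^2))$. Since $\lambda_0^2$ \emph{is} an admissible monomial (it represents $h_0^2\neq 0$), this reduces Row~(3) to a clean computation. Had you stated this, the proof would be complete modulo the explicit $\theta$-bookkeeping. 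Separately, for Row~(2) the paper avoids the Adem rewriting you anticipate by first commuting $\tau^{2^a(2m+1)}$ past $\lambda_{2^b-1}$ (valid modulo $\rho$, which suffices here) so that the length-two $\lambda$-product appears directly in coadmissible order $\lambda_{2^a-1}\lambda_{2^b-1}$ — a small but worthwhile simplification of the bookkeeping you flag as the main obstacle.
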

\begin{proof}
That these cases are mutually exclusive and altogether exhaust $B_0[1]$ is seen by direct inspection. As the monomials arising as targets are $\rho$-multiples of distinct elements of $B[2]$, it suffices to only verify that for each claim of $x\rightarrow y$ we have $\delta(x+{<})=y+{<}$.

(1)~~We have
\[
\delta(\lambda_{2^b-1}\tau^{2^a(2m+1)}) = \lambda_{2^b-1}\lambda_{2^a-1}\tau^{\floor{2^{a-1}(4m+1)}}\rho^{2^a}+O(\rho^{2^a+2^{a-1}}).
\]

(2)~~Note that
\[
\tau^{2^a(2m+1)}\lambda_{2^b-1} = \lambda_{2^b-1}\tau^{2^a(2m+1)}+{<},
\]
as $\tau$ is central mod $\rho$. Now we have
\begin{align*}
\delta(\tau^{2^a(2m+1)}\lambda_{2^b-1}) &= (\lambda_{2^a-1}\tau^{\floor{2^{a-1}(4m+1)}}\rho^{2^a}+O(\rho^{2^a+2^{a-1}}))\lambda_{2^b-1} \\
&= \lambda_{2^a-1}\lambda_{2^b-1}\tau^{\floor{2^{a-1}(4m+1)}}\rho^{2^a}+O(\rho^{2^a+1}).
\end{align*}

(3)~~Note that
\[
\theta^b(\lambda_0\tau^{2n+1})=\lambda_{2^b-1}\tau^{\floor{2^{b-1}(4n+3)}} +O(\rho).
\]
Now we have
\[
\delta(\theta^b(\lambda_0\tau^{2n+1})) = \theta^b(\delta(\lambda_0\tau^{2n+1}))=\theta^b(\lambda_0^2\tau^{2n}\rho + O(\rho^2)) = \lambda_{2^b-1}^2\tau^{2^b(4n+1)}\rho^{2^b}+O(\rho^{2^b+1}).
\]

(4)~~We have
\begin{align*}
\delta(\lambda_{2^b-1}\tau^{2^{b+1}(2m+1)}) &= \delta(\theta^{b-1}(\lambda_1\tau^{8m+4})) = \theta^{b-1}(\lambda_1\delta(\tau^4)\tau^{8m}+O(\rho^8)) \\
&= \theta^{b-1}(\lambda_3^2\tau^{8m+1}\rho^6+O(\rho^7)) = \lambda_{2^{b+1}-1}^2\tau^{2^{b-1}(8m+1)}\rho^{2^b3}+O(\rho^{2^b3+2^{b-1}}).
\end{align*}
Here, the third equality uses the Adem relations $\lambda_1\lambda_3=0$ and $\lambda_1\lambda_5=\lambda_3\lambda_3$ to determine the leading term of $\lambda_1\delta(\tau^4)$.
\end{proof}

\begin{corollary}
The set $B_0[2]$ consists of those $\lambda_{2^b-1}\lambda_{2^c-1}\tau^n$ where $b=c$ or $b\geq c+2$, and where moreover
\begin{enumerate}
\item $n \neq \floor{2^{b-1}(4m+1)}$ and $n\neq \floor{2^{c-1}(4m+1)}$ for any $m$;
\item If $b = c = 0$, then $n$ is odd;
\item If $b = c \geq 1$, then $n \neq 2^b(4m+1)$ for any $m$;
\item If $b = c \geq 2$, then $n \neq 2^{b-2}(8m+1)$ for any $m$.
\qed
\end{enumerate}
\end{corollary}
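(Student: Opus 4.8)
The plan is to derive the corollary purely formally from \cref{prop:b1} together with \cref{thm:tags}. By \cref{thm:tags} we have $B = B_0 \sqcup B_1$ and $t$ restricts to a bijection $B_0[1] \cong B_1[2]$, so $B_0[2] = B[2] \setminus B_1[2]$, and $B_1[2]$ is exactly the set of monomials obtained by stripping the leading $\rho$-power from the entries of the ``Target'' column of \cref{prop:b1}; this uses the assertions in \cref{prop:b1} that its four rows are mutually exclusive, exhaust $B_0[1]$, and have targets which are $\rho$-multiples of \emph{distinct} elements of $B[2]$. Thus the task is to describe which $\lambda_{2^b-1}\lambda_{2^c-1}\tau^n \in B[2]$ fail to occur in that column. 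First I would recall that a general element of $B[2]$ has the form $\lambda_{2^b-1}\lambda_{2^c-1}\tau^n$ with $b\geq c$ and $b\neq c+1$ (the $\f_2[\tau]$-generators of $\Ext_\c^2$ from \cref{prop:extc}; since $\Ext_\c^{\leq 3}$ is $\f_2[\tau]$-free there is no torsion truncation to track), and then split according to whether the indices $2^b-1$ and $2^c-1$ are distinct or equal.

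In the case $b\geq c+2$ the two indices are distinct with the larger one first, so the monomial can only arise as a target in Rows~(1) and~(2) of \cref{prop:b1} (Rows~(3)--(4) produce squares). Matching outer index $b$ and inner index $c$, Row~(1) (whose index constraint $c<b-1$ holds by assumption) contributes it exactly when $n=\floor{2^{c-1}(4m+1)}$ for some $m$, and Row~(2) contributes it, provided $c\neq 0$, exactly when $n=\floor{2^{b-1}(4m+1)}$ for some $m$. Hence $\lambda_{2^b-1}\lambda_{2^c-1}\tau^n\in B_0[2]$ iff $n\neq\floor{2^{c-1}(4m+1)}$ for all $m$, and, when $c\neq 0$, also $n\neq\floor{2^{b-1}(4m+1)}$ for all $m$. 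I would then note that the caveat ``$c\neq 0$'' is vacuous: for $c=0$ the first condition already says $n$ is odd, whereas $\floor{2^{b-1}(4m+1)}$ is even once $b\geq 2$. This yields condition~(1).

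In the case $b=c$ the monomial is a square $\lambda_{2^b-1}^2\tau^n$, which can only arise as a target in the $a=b$ subcase of Row~(1), in Row~(3), or in Row~(4). Row~(1) contributes it exactly when $n=\floor{2^{b-1}(4m+1)}$ for some $m$; Row~(3), available when $b\geq 1$, contributes it exactly when $n=2^b(4m+1)$ for some $m$; and Row~(4), available when $b\geq 2$ (writing its outer index $2^{b'+1}-1$ as $2^b-1$, i.e.\ $b'=b-1\geq 1$), contributes it exactly when $n=\floor{2^{b-2}(8m+1)}$ for some $m$. So $\lambda_{2^b-1}^2\tau^n\in B_0[2]$ precisely when $n$ avoids all of these values, which is conditions~(1), (3), and~(4); condition~(2) is just the specialization of~(1) to $b=c=0$.

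I do not expect a serious obstacle here: the content is entirely in \cref{prop:b1} and \cref{thm:tags}. The only points needing care are that the classification of the targets of \cref{prop:b1} into ``distinct indices'' and ``square'' is exhaustive and non-overlapping, that within each case the union of the relevant rows' conditions on $n$ is transcribed correctly (in particular the reindexing $b'=b-1$ in Row~(4) and the even/odd simplification when $c=0$), and --- as throughout this section --- that the statement is read as describing a subset of $B[2]$, so that $\tau^0$-classes lying in $\thetatilde(\Lambda^{\dcl})$ are tacitly excluded and need no separate condition.
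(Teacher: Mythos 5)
Your proposal is correct and follows exactly the intended reading of the paper, which leaves the corollary unproved (just a \qed), expecting the reader to form the complement $B_0[2] = B[2]\setminus B_1[2]$ and identify $B_1[2]$ with the stripped targets of \cref{prop:b1} via \cref{thm:tags}(2). Your case-by-case matching of the four rows against the distinct/square dichotomy, including the observation that the $c\neq 0$ caveat of Row~(2) is vacuous because the $c=0$ condition forces $n$ odd while $\floor{2^{b-1}(4m+1)}$ is even for $b\geq 2$, is precisely the bookkeeping the paper omits.
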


We have located the following indecomposable classes.

\begin{definition}\label{def:b2ind}
For $a,n\geq 0$, we declare
\[
\tau^{2^a(8n+1)}h_{a+2}^2
\]
to be the class represented by
\[
\rho^{-2^{a+1}3}\cdot\delta(\lambda_{2^{a+1}-1}\tau^{2^{a+2}(2n+1)})
\]
\tqed
\end{definition}

We now compute out of $B_0[2]$.

\begin{proposition}\label{prop:b2}
For $b = c$ or $b\geq c+2$, we have $\lambda_{2^b-1}\lambda_{2^c-1}\tau^{2^a(2m+1)}\rightarrow$ the following monomial.
\begin{longtable}{lll}
\toprule
\# & Case & Target \\
\midrule \endhead
\bottomrule \endfoot
(1) & $b=c=0$, $a=-1$, $m=2n+1$ & $\lambda_0^3\tau^{2n}\rho$ \\
(2) & $b=c\geq 1$, $a=b-1$, $m=2n+1$ & $\lambda_{2^b-1}^3\tau^{2^b(2n+1)}\rho^{2^b}$ \\
(3) & $b = c \geq 0$, $a = c$, $m = 2n+1$ & $\lambda_{2^b-1}^3\tau^{\floor{2^{b-1}(4(2n+1)+1)}}$ \\
(4) & $b=c\geq 1$, $a=b+1$ & $\lambda_{2^{b-1}3-1}\lambda_{2^{b+1}-1}^2\tau^{\floor{2^{b-2}(16m+1)}}\rho^{2^{b-1}7}$ \\
(5) & $b=c\geq 1$, $a=b+2$ & $\lambda_{2^{b+2}-1}^2\lambda_{2^{b-1}-1}\tau^{\floor{2^{b-2}(2(16m+1)+1)}}\rho^{2^{b-1}13}$ \\
(6) & $b=c\geq 1$, $a\geq b+3$ & $\lambda_{2^a-1}\lambda_{2^b-1}^2\tau^{\floor{2^{a-1}(4m+1)}}\rho^{2^a}$ \\
(7) & $b=c\geq 2$, $a=b-2$, $m=4n+2$ & $\lambda_{2^b-1}^3\tau^{\floor{2^{b-2}(2(4n+1)+1)}}\rho^{2^b}$ \\
(8) & $b=c\geq 2$, $a=b-2$, $m=2n+1$ & $\lambda_{2^{b-2}3-1}\lambda_{2^b-1}^2\tau^{\floor{2^{b-3}(2(4n+1)+1)}}\rho^{2^{b-2}3}$ \\
(9) & $b=c\geq 3$, $a\leq b-3$ & $\lambda_{2^b-1}^2\lambda_{2^a-1}\tau^{\floor{2^{a-1}(4m+1)}}\rho^{2^a}$ \\
(10) & $b-2\geq c = 0$, $a=0$, $m=2n+1$ & $\lambda_{2^b-1}\lambda_0^2\tau^{2n}\rho$ \\
(11) & $b-2=c\geq 1$, $a = b$ & $\tau^{2^c(8n+1)}\lambda_{2^c3-1}\lambda_{2^{c+2}-1}^2\rho^{2^{c+1}3}$ \\
(12) & $b-2=c\geq 1$, $a\geq b+2$ & $\lambda_{2^a-1}\lambda_{2^b-1}\lambda_{2^c-1}\tau^{\floor{2^{a-1}(4m+1)}}\rho^{2^a}$ \\
(13) & $b-3\geq c\geq 1$, $a\geq b$, $a\neq b+1$ & $\lambda_{2^a-1}\lambda_{2^b-1}\lambda_{2^c-1}\tau^{\floor{2^{a-1}(4m+1)}}\rho^{2^a}$ \\
(14) & $b-2\geq c \geq 1$, $c \leq a < b$, $a\notin \{c+1,b-1\}$ & $\lambda_{2^b-1}\lambda_{2^a-1}\lambda_{2^c-1}\tau^{\floor{2^{a-1}(4m+1)}}\rho^{2^a}$ \\
(15) & $b-2=c\geq 1$, $a=c-1$, $m=2n+1$ & $\lambda_{2^{c+1}-1}^3\tau^{2^c(2n+1)}\rho^{2^c}$ \\
(16) & $b-3\geq c\geq 1$, $a=c-1$, $m=2n+1$ & $\lambda_{2^b-1}\lambda_{2^c-1}^2\tau^{2^c(2n+1)}\rho^{2^c}$ \\
(17) & $b-2 = c \geq 1$, $a=c+1$, $m = 2n+1$ & $\lambda_{2^b-1}^3\tau^{\floor{2^{b-3}(4(4n+1)+1)}}\rho^{2^{b-2}7}$ \\
(18) & $b-3=c\geq 1$, $a=c+1$ & $\lambda_{2^{c+2}-1}^3\tau^{\floor{2^{c-1}(8m+1)}}\rho^{2^c3}$ \\
(19) & $b-4\geq c \geq 1$, $a=c+1$ & $\lambda_{2^b-1}\lambda_{2^{c+1}-1}^2\tau^{\floor{2^{c-1}(8m+1)}}\rho^{2^c3}$ \\
(20) & $b-3\geq c \geq 1$, $a=b-1$, $m=2n+1$ & $\lambda_{2^b-1}^2\lambda_{2^c-1}\tau^{2^b(2n+1)}\rho^{2^b}$ \\
(21) & $b-2\geq c\geq 1$, $a=b+1$ & $\lambda_{2^{b+1}-1}^2\lambda_{2^c-1}\tau^{\floor{2^{b-1}(8m+1)}}\rho^{2^b3}$ \\
(22) & $b-2\geq c \geq 2$, $a\leq c-2$ & $\lambda_{2^b-1}\lambda_{2^c-1}\lambda_{2^a-1}\tau^{\floor{2^{a-1}(4m+1)}}\rho^{2^a}$
\end{longtable}
Moreover, these cases are mutually exclusive, and altogether exhaust $B_0[2]$.
\end{proposition}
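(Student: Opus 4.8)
The plan is a systematic case analysis, and I describe its organizing principles rather than the individual computations. By \cref{thm:tags} it suffices to exhibit, for each $x_b = \lambda_{2^b-1}\lambda_{2^c-1}\tau^{2^a(2m+1)}$ in $B_0[2]$, a target monomial $x_{t(b)}$ and exponent $r(b)$ with $\delta(x_b + {<}) = \rho^{r(b)}x_{t(b)} + {<}$ and $b\mapsto t(b)$ injective; the exponent $r(b)$ is then forced by comparing stems (recall that $\delta$ and $\rho$ each lower the stem by one). So the real work is: (i) for each $x_b$, computing enough of $\delta(x_b)$, and of the primitives of whatever boundaries arise, to name $x_{t(b)}$; (ii) injectivity, which is immediate once one checks the tabulated target monomials are pairwise distinct; and (iii) that the hypotheses of the $22$ rows are mutually exclusive and jointly exhaust $B_0[2]$, a finite comparison against the four-part description of $B_0[2]$ from the corollary to \cref{prop:b1}.

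The mechanism for (i) is to run the $\rho$-Bockstein spectral sequence at the chain level (cf.\ \cref{rmk:bockstein}). Since $\delta(\lambda_{2^j-1}) = 0$, the Leibniz rule gives $\delta(\lambda_{2^b-1}\lambda_{2^c-1}\tau^{N}) = \lambda_{2^b-1}\lambda_{2^c-1}\,\delta(\tau^{N})$, with $\delta(\tau^N)$ known in closed form (\cref{prop:difftau}) and its leading behaviour recorded in \cref{prop:b0}; one multiplies this out, reduces to the coadmissible basis via the Adem relations of \cref{Prop:AdemLambda} (the bimodule moves of \cref{Prop:BimodLambda} only contribute terms of larger $\rho$-valuation, hence lower in our order), and inspects the leading term. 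If it represents a nonzero class in $\Ext_\c$ --- a legal $h$-monomial or a $c_a$-representative, as catalogued in \cref{prop:extc} --- then it is the tag; this covers the ``generic'' rows, with targets of the shape $\lambda_{2^a-1}(\cdots)\tau^{\lfloor 2^{a-1}(4m+1)\rfloor}\rho^{2^a}$, namely (6), (9), (10), (12), (13), (14), (16), (19), (22) and the unconstrained instances of others. If instead the leading term is a boundary --- say $\lambda_{2^b-1}^2\lambda_{2^{b-2}-1}$, reflecting the relation $h_b^2 h_{b-2} = 0$ --- one subtracts a $\rho$-multiple of an explicit primitive (e.g.\ $\delta(\lambda_3\lambda_4 + \lambda_4\lambda_3) = \lambda_3^2\lambda_0$) and iterates, $r(b)$ being the number of iterations until the first genuine $\Ext_\c$-representative is reached. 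This iteration is the source of the congruence constraints $m = 2n+1$, $m = 4n+2$ (which split the cases by how the Bockstein runs) and of the exotic exponents $2^b$, $2^{b-1}7$, $2^{b-1}13$, $2^{b-2}3$, $2^{b-2}7$ in rows (1), (2), (4), (5), (7), (8), (11), (15), (17), (18), (20), (21); rows (4), (8), (11) in particular are where the iteration lands on a $c_a$-class, e.g.\ $c_0 = [\lambda_2\lambda_3^2]$.

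To keep the analysis finite I would propagate along the DGA endomorphism $\theta$ of \cref{Thm:Sq0}. Since $\theta\delta = \delta\theta$, $\theta(\rho^r) = \rho^{2r}$, $\theta(\tau^k) = \tau^{2k}$, $\theta(\lambda_{2n-1}) = \lambda_{4n-1}$, and $\theta$ preserves leading terms modulo $\rho$ (the content of \cref{lem:theta0}, which is the source of the omnipresent exponents $\lfloor 2^{a-1}(\cdots)\rfloor$), applying $\theta$ to a tag $x \rightarrow \rho^r y$ produces the tag obtained by incrementing $a,b,c$ and the corresponding $\rho,\tau$-exponents. So each infinite family of rows collapses to one or two explicit low-degree base computations: for instance $\delta(\lambda_0^2\tau^{2n+1}) = \lambda_0^3\tau^{2n}\rho + {<}$ for (1), (3), (10), and $\delta(\lambda_1^2\tau^4) = \lambda_2\lambda_3^2\rho^7 + {<}$ for (4), (8), (11), the latter using $\delta(\tau^4) = \lambda_3\tau^2\rho^4 + \lambda_5\tau\rho^6 + \lambda_6\rho^7$ together with $\lambda_1\lambda_3 = 0$, $\lambda_1\lambda_5 = \lambda_3^2$, $\lambda_1\lambda_6 = \lambda_3\lambda_4 + \lambda_4\lambda_3$, $\lambda_1\lambda_4 = \lambda_2\lambda_3$ --- and the remaining rows reduce to a comparable handful of computations with $\delta(\tau^2)$, $\delta(\tau^3)$, $\delta(\tau^4)$, $\delta(\tau^8)$ and their Adem reductions.

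The hard part will be the bookkeeping in the non-generic rows: one must pin down exactly for which parameter values the generic leading term is a boundary (which needs the full list of $\Ext_\c^{\leq 3}$-relations of \cref{prop:extc} plus the $\r$-families of \cref{prop:b0} and \cref{prop:b1}, in order to know which coadmissible monomials are genuine class representatives), choose an explicit primitive for each boundary leading term, and carry the Bockstein iteration to the right stage --- all the while checking that the successive corrections are genuinely of lower order, so that one is computing the unique tag guaranteed by \cref{thm:tags}. After the table is in place, mutual exclusivity is a direct comparison of the parameter constraints row by row, and exhaustiveness is the verification that the union of the $22$ cases matches the four conditions defining $B_0[2]$ --- the constraints $m=2n+1$ and $m=4n+2$ being precisely what makes this match work.
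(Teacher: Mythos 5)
Your proposal follows essentially the same route as the paper: reduce via \cref{thm:tags} to leading-term identities $\delta(x_b + {<}) = \rho^{r(b)}x_{t(b)} + {<}$, propagate through $\theta$ to cut each infinite family to a finite base computation, compute $\delta(x_b)$ by the Leibniz rule and $\delta(\tau^N)$, reduce via Adem relations, and in the non-generic rows iteratively subtract primitives of boundary leading terms until a genuine $\Ext_\c$-representative appears. The explicit $\lambda_1^2\tau^4$ and $\lambda_0^2\tau^{2n+1}$ computations you give, and the Adem identities you cite, all check out.

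Two small points worth flagging. First, the grouping ``$\delta(\lambda_1^2\tau^4) = \lambda_2\lambda_3^2\rho^7 + {<}$ for (4), (8), (11)'' overreaches: these three rows indeed all land on $c_a$-representatives, but they are \emph{not} $\theta$-iterates of a common base ($\theta(\lambda_1^2\tau^4) = \lambda_3^2\tau^8$ gives row (4) at $b=2$, not rows (8) or (11), whose minimal sources are $\lambda_3^2\tau^3$ and $\lambda_7\lambda_1\tau^8$ respectively). Each requires its own base computation and, as it turns out, its own non-trivial correction term $u$. Second, those correction terms constitute most of the actual labour --- the $u$ for case (5), for example, has seven summands and takes the iteration from $\lambda_3^3\tau^4\rho^8$ all the way down to $\lambda_7^2\lambda_0\tau\rho^{13}$ --- and the proposal defers them under ``the hard part will be the bookkeeping.'' That is a fair characterization of what is hard, but the paper's proof is precisely the execution of this bookkeeping: it lists the five interesting rows (4), (5), (8), (11), (17), plus the two minor exceptions (15), (18), and writes down $u$ explicitly for each. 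So the plan is right; what remains is the work the paper actually does.
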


\begin{proof}
That these cases are mutually exclusive and altogether exhaust $B_0[2]$ is seen by direct inspection. As the monomials arising as targets are $\rho$-multiples of distinct elements of $B[3]$, it suffices to only verify that for each claim of $x\rightarrow y$ we have $\delta(x+{<})=y+{<}$. 

Each case represents a collection of families of monomials whose leading terms are connected by $\theta$. Thus we may always reduce to the smallest possible $c$, with the exception of (9) and (22), where doing so would place extra constraints on $a$. In addition, by working modulo the smallest power of $\rho$ in which the proposed target does not vanish, we may always reduce to the smallest possible $m$.

We may further divide the list of cases provided into three types: those which require no calculations beyond those carried out in \cref{prop:b1}, cases (15) and (18), and the more interesting cases which do require additional calculation, producing new indecomposable classes in $\Ext_\r^3$. Here, cases (15) and (18) are not really exceptional; they could be consolidated into cases (16) and (19), only this would require slightly modifying the setup of \cref{ssec:extpreliminaries}, and it is easier to just separate them out. The more interesting cases are (4), (5), (8), (11), and (17). The remaining less interesting cases may all be handled exactly the same way as the first two cases of \cref{prop:b1} were handled. Thus we shall not handle them individually, and instead only illustrate this point with a verification of (21).
With these reductions in place, the proposition is proved by the following calculations.

(4)~~Here, we are claiming $\delta(\lambda_1^2\tau^4+{<})=\lambda_2\lambda_3^2\rho^7+{<}$. In fact $\delta(\lambda_1^2\tau^4) = \lambda_2\lambda_3^2\rho^7$ on the nose.

(5)~~Here, we are claiming $\delta(\lambda_1^2\tau^8+{<})= \lambda_7^2\lambda_0\tau\rho^{13}+{<}$. Observe that $\delta(\lambda_1^2\tau^8) = \lambda_3^3\tau^4\rho^8+O(\rho^{12})$, but $\lambda_3\tau^4\rho^4$ is already seen as a target in case (1). Thus some additional correction term must be added to $\lambda_1^2\tau^8$ to get down to $\lambda_7^2\lambda_0\tau\rho^{13}$. Such a correction term is given by
\begin{align*}
u = \lambda_3^2\tau^6\rho^4&+\lambda_3\lambda_5\tau^5\rho^6+\lambda_3\lambda_6\tau^4\rho^7+\lambda_5\lambda_7\tau^3\rho^{10}\\
&+(\lambda_5\lambda_8+\lambda_6\lambda_7)\tau^2\rho^{11}+(\lambda_{11}\lambda_3+\lambda_5\lambda_9)\tau^2\rho^{12}+(\lambda_8\lambda_7+\lambda_7\lambda_8+\lambda_6\lambda_9)\tau\rho^{13};
\end{align*}
with this choice of $u$, we have $\delta(\lambda_1^2\tau^8+u) =  \lambda_7^2\lambda_0\tau\rho^{13} + O(\rho^{14})$.

(8)~~Here, we are claiming $\delta(\lambda_3^2\tau^3+{<})=\lambda_2\lambda_3^2\tau\rho^3+{<}$. Indeed, let
\[
u = (\lambda_3\lambda_4+\lambda_4\lambda_3)\tau^2\rho+\lambda_3\lambda_5\tau^2\rho^2+\lambda_4\lambda_5\tau\rho^3;
\]
then we have $\delta(\lambda_3^2\tau^3+u) = \lambda_2\lambda_3^2\tau\rho^3+O(\rho^4)$.

(11)~~Here, we are claiming $\delta(\lambda_7\lambda_1\tau^8+{<}) = \lambda_5\lambda_7^2\tau^2\rho^{12}$. Indeed, let
\[
u = \lambda_9\lambda_7\tau^4\rho^8+\lambda_9\lambda_{11}\tau^2\rho^{12};
\]
then we have $\delta(\lambda_7\lambda_1\tau^8+u)=\lambda_5\lambda_7^2\tau^2\rho^{12}+O(\rho^{14})$.

(15)~~Here, we are claiming $\delta(\lambda_7\lambda_1\tau^3+{<})=\lambda_3^3\tau^2\rho^2+{<}$. Indeed, let
\[
u = \lambda_7\lambda_2\tau^2\rho+(\lambda_9\lambda_1+\lambda_5\lambda_5)\tau^2\rho^2;
\]
then we have $\delta(\lambda_7\lambda_1\tau^3+{<})=\lambda_3^3\tau^2\rho^2+O(\rho^3)$.

(17)~~Here, we are claiming $\delta(\lambda_7\lambda_1\tau^{12}+{<})=\lambda_7^3\tau^5\rho^{14}+{<}$. Indeed, let
\[
u=\lambda_{11}\lambda_3\tau^9\rho^6+(\lambda_{11}\lambda_4+\lambda_{12}\lambda_3+\lambda_7\lambda_8+\lambda_8\lambda_7)\tau^8\rho^7+\lambda_7^2\tau^9\rho^6+\lambda_9\lambda_7\tau^8\rho^8;
\]
then we have $\delta(\lambda_7\lambda_1\tau^{12}+u) = \lambda_7^3\tau^5\rho^{14}+O(\rho^{15})$.

(18)~~Here, we are claiming $\delta(\lambda_{15}\lambda_1\tau^4+{<})=\lambda_7^3\tau\rho^6+{<}$. Indeed, let
\[
u = (\lambda_{19}\lambda_3+\lambda_{11}\lambda_{11})\tau\rho^6;
\]
then we have $\delta(\lambda_{15}\lambda_1\tau^4+u)=\lambda_7^3\tau\rho^6+O(\rho^7)$.

(21)~~Here, we are claiming $\delta(\lambda_{2^b-1}\lambda_{2^c-1}\tau^{2^{b+1}(2m+1)}+{<}) = \lambda_{2^{b+1}-1}^2\lambda_{2^c-1}\tau^{2^{b-1}(8m+1)}\rho^{2^b3}+{<}$, at least provided $b-2\geq c \geq 1$. This case is intended to illustrate all the remaining cases, and is identical in form to case (2) of \cref{prop:b1}. Recall from \cref{prop:b1} that
\[
\delta(\lambda_{2^b-1}\tau^{2^{b+1}(2m+1)}+O(\rho)) = \lambda_{2^{b+1}-1}^2\tau^{2^{b-1}(8m+1)}\rho^{2^b3}+O(\rho^{2^b3+1}).
\]
As
\[
\lambda_{2^b-1}\lambda_{2^c-1}\tau^{2^{b+1}(2m+1)} \equiv \lambda_{2^b-1}\tau^{2^{b+1}(2m+1)}\lambda_{2^c-1}\pmod{\rho},
\]
it follows that
\begin{align*}
\delta(\lambda_{2^b-1}\lambda_{2^c-1}\tau^{2^{b+1}(2m+1)}+O(\rho)) &= \delta(\lambda_{2^b-1}\tau^{2^{b+1}(2m+1)}\lambda_{2^c-1}+O(\rho))\\
&=(\lambda_{2^{b+1}-1}^2\tau^{2^{b-1}(8m+1)}\rho^{2^b3}+O(\rho^{2^b3+1}))\lambda_{2^c-1}\\
&=\lambda_{2^{b+1}-1}^2\lambda_{2^c-1}\tau^{2^{b-1}(8m+1)}\rho^{2^b3}+O(\rho^{2^b3+1}),
\end{align*}
which gives the desired relation. The remaining cases are either identical in form to this, or simpler in that they do not require one to first move $\tau$ around to reduce to a case already considered in \cref{prop:b1}.
\end{proof}

This produces the indecomposable classes
\begin{gather*}
\tau^{2^{a-1}(2(16n+1)+1)}h_{a+3}^2h_a,\qquad\tau^{2^a(4(4n+1)+1)}h_{a+3}^3,\\
\tau^{2^{a-1}(16n+1)}c_a,\qquad\tau^{2^{a+1}(8n+1)}c_{a+1},\qquad\tau^{2^{a-1}(2(4n+1)+1)}c_a,
\end{gather*}
for $a,n\geq 0$, following the same recipe as employed in \cref{def:b1ind} and \cref{def:b2ind}, only where one must employ $\theta$-iterates of $\tau$-multiples of the correction terms $u$ given in \cref{prop:b2}.

\cref{prop:b2} concludes the work necessary for our computation of the $\f_2[\rho]$-module structure of $\Ext_\r^{\leq 3}$. Let us now summarize in one theorem what we have learned. We wish to give a minimal generating set of $\Ext_\r^{\leq 3}$ whose elements are products of the indecomposable classes we have found. Before doing so, let us treat the following subtlety.

By way of example, let $x = \tfrac{1}{\rho^{2^b}}\delta(\lambda_{2^b-1}\tau^{2^{b-1}(4m+3)})$ with $b\geq 1$, and let $\alpha\in\Ext_\r$ be the class represented by $x$. Our computation in \cref{prop:b1} combined with the recipe of \cref{thm:tags} would yield $\alpha$ as an element of a minimal generating set for $\Ext_\r$. Observe that $x$ has leading term $\lambda_{2^b-1}^2\tau^{2^b(4m+1)}$. It follows quickly from this that $x$ has the same leading term as the cocycle representative of $(\tau^{2^{b-1}(4m+1)}h_b)^2$ given by the product of those cocycle representatives for $\tau^{2^{b-1}(4m+1)}h_b$ given in \cref{def:b1ind}. However, this does not prove that $\alpha = (\tau^{2^{b-1}(4m+1)}h_b)^2$: we have not ruled out the possibility that $\alpha + \beta= (\tau^{2^{b-1}(4m+1)}h_b)^2$ for some nonzero $\beta$ represented by a cycle $y < \lambda_{2^b-1}^2\tau^{2^b(4m+1)}$. This is still sufficient to deduce that we may, if necessary, replace $\alpha$ with $\alpha + \beta$ in our minimal generating set in order to obtain a minimal generating set built as products of indecomposables. It turns out that no such correction is necessary.

\begin{lemma}\label{lem:unambiguousproduct}
Write $\phi\colon\Ext_\r\rightarrow\Ext_\c$ for the quotient. Fix classes $\alpha,\beta$ in $\Ext_\r^1$ or $\Ext_\r^2$, at least one of which is $\rho$-torsion, and not both in $\Ext_\r^2$. Let $r$ be minimal for which $\rho^r\alpha=0$ or $\rho^r\beta=0$. Fix $\gamma\in\Ext_\r^{\leq 3}$ not divisible by $\rho$ and such that $\rho^r\gamma=0$, and suppose $\phi(\alpha)\cdot\phi(\beta)=\phi(\gamma)$. Then $\alpha\cdot\beta = \gamma$.
\end{lemma}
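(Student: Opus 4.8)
The plan is to reduce the desired equality $\alpha\beta = \gamma$ to a statement about $\rho$-divisibility and $\rho$-torsion, and then to settle that statement using the explicit structure of $\Ext_\r^{\leq 3}$ obtained earlier in this section. First I would pin down $\ker(\phi)$. Since $\Lambda^\r$ is free as a right $\m^\r$-module (\cref{SS:StructureSummary}), it is $\rho$-torsion-free, so there is a short exact sequence of chain complexes $0\to\Lambda^\r\xrightarrow{\rho}\Lambda^\r\to\Lambda^\c\to 0$; the associated long exact sequence is the $\rho$-Bockstein sequence of \cref{rmk:bockstein}, and its exactness at the middle copy of $\Ext_\r$ gives $\ker(\phi\colon\Ext_\r\to\Ext_\c)=\rho\,\Ext_\r$. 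Applying $\phi$ to $\alpha\beta$ and to $\gamma$ and using $\phi(\alpha)\phi(\beta)=\phi(\gamma)$ then yields $\alpha\beta-\gamma=\rho\delta$ for some $\delta\in\Ext_\r$, lying in the tridegree obtained from that of $\alpha\beta-\gamma$ by adding $(1,0,1)$ (as $|\rho|=(-1,0,-1)$). If $\phi(\gamma)\neq 0$ this tridegree is determined and sits in filtration $f_\alpha+f_\beta\leq 3$; if $\phi(\gamma)=0$ then $\gamma$, being non-$\rho$-divisible with $\phi(\gamma)=0$, must vanish, and the argument below applies with $\gamma=0$. Finally, since $r$ annihilates one of $\alpha,\beta$ it annihilates $\alpha\beta$, and $\rho^r\gamma=0$ by hypothesis, so $\rho^r(\alpha\beta-\gamma)=0$, i.e.\ $\rho^{r+1}\delta=0$.

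Next I would feed in the splitting $\Ext_\r\cong\Ext_{\dcl}[\rho]\oplus\Ext_\r^{\rho\hyp\tors}$ of \cref{prop:retract}(4): the first summand is free over $\f_2[\rho]$, so the component of $\delta$ there is killed by $\rho^{r+1}$ and hence zero, whence $\delta\in\Ext_\r^{\rho\hyp\tors}$. Thus $\alpha\beta-\gamma$ is a $\rho$-divisible $\rho$-torsion class in filtration $\leq 3$, annihilated by $\rho^r$, and it remains to show such a class is zero. Here I would invoke the explicit computation of $\Ext_\r^{\leq 3}$: by \cref{thm:tags}(3) together with \cref{prop:retract}(4), $\Ext_\r^{\leq 3}$ is, in each tridegree, a finite direct sum of cyclic $\f_2[\rho]$-modules of the form $\f_2[\rho]$ and $\f_2[\rho]/(\rho^e)$, with $e$ ranging over the torsion exponents tabulated in \cref{ex:generators} and their $\rho$-power shifts. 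The remaining content is then a finite inspection of the single tridegree of $\alpha\beta-\gamma$: one runs through the finitely many cyclic summands passing through it and checks that none contains a nonzero element which is simultaneously $\rho$-divisible and killed by $\rho^r$ — equivalently, that every such summand $\f_2[\rho]/(\rho^e)$ meeting this tridegree in a $\rho$-multiple $\rho^m g$ with $1\leq m<e$ has $e-m>r$.

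The main obstacle is exactly this numerical check. The useful structural fact is that $\gamma$ being a non-$\rho$-divisible class whose leading term is the coadmissible monomial $L_\alpha L_\beta$ (the product of the chosen leading-term representatives of $\alpha$ and $\beta$, times a power of $\tau$) severely restricts which families in the generator table can contribute classes in this tridegree, and for the families that do, the torsion exponents — all of the form $2^a$ times a small odd constant — are large enough relative to $r=\min(e_\alpha,e_\beta)$ that $e-m>r$ holds. I would organize this case-by-case according to the shape of $L_\alpha L_\beta$, in parallel with the case analyses of \cref{prop:b1,prop:b2}; in each case the bound becomes immediate once one identifies which family $\gamma$ lies in. As a sanity check on the logic: if one could even show $\delta$ is itself $\rho$-divisible, i.e.\ $\alpha\beta-\gamma\in\rho^2\Ext_\r$, then iterating would give $\alpha\beta-\gamma\in\bigcap_k\rho^k\Ext_\r^{\leq 3}=0$ since each tridegree is finite; so the only way the lemma could fail is for a $\rho$-divisible-but-not-$\rho^2$-divisible torsion class of torsion exponent $\leq r$ to exist in the relevant tridegree, and the inspection rules this out.
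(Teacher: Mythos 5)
Your argument is essentially the paper's. The paper's proof asserts that in the tridegree of $\alpha\cdot\beta$ there is a unique non-$\rho$-divisible class killed by $\rho^r$, to be checked by direct inspection of the preceding propositions; your reformulation — since $\ker\phi=\rho\,\Ext_\r$, the difference $\alpha\beta-\gamma$ is a $\rho$-divisible class killed by $\rho^r$, so it suffices that no such nonzero class exists in that tridegree — is an equivalent statement of the same uniqueness, and like the paper you stop short of actually carrying out the finite inspection of cyclic summands, so the unexpanded core is identical. One small slip: in your $\phi(\gamma)=0$ branch the correct conclusion is that the case is vacuous (a non-$\rho$-divisible $\gamma$ with $\phi(\gamma)=0$ contradicts $\ker\phi=\rho\,\Ext_\r$), not that $\gamma$ ``must vanish.''
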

\begin{proof}
Under the given conditions, there is in fact a unique class in the degree of $\alpha\cdot\beta$ which is not divisible by $\rho$ and is killed by $\rho^r$. This may be seen by direct inspection of the propositions preceding this. 
\end{proof}

We may now state the main theorem of this section.

\begin{thm}\label{thm:extadd}
(1)~~A minimal multiplicative generating set for $\Ext_\r^{\leq 3}$ as an $\f_2[\rho]$-algebra is given by the given by the classes in the following table:
\begin{longtable}{ll}
\toprule
Multiplicative generator & $\rho$-torsion exponent \\
\midrule \endhead
\bottomrule \endfoot
$h_{a+1}$ & $\infty$ \\
$c_{a+1}$ & $\infty$ \\
$\tau^{\floor{2^{a-1}(4n+1)}}h_a$ & $2^a$ \\
$\tau^{2^a(8n+1)}h_{a+2}^2$ & $2^{a+1}\cdot 3$ \\
$\tau^{\floor{2^{a-1}(2(16n+1)+1)}}h_{a+3}^2h_a$ & $2^a\cdot 13$ \\
$\tau^{2^a(4(4n+1)+1)}h_{a+3}^3$ & $2^a\cdot 7$ \\
$\tau^{\floor{2^{a-1}(16n+1)}}c_a$ & $2^a\cdot 7$ \\
$\tau^{2^{a+1}(8n+1)}c_{a+1}$ & $2^{a+2}\cdot 3$ \\
$\tau^{\floor{2^{a-1}(2(4n+1)+1)}}c_a$ & $2^a\cdot 3$
\end{longtable}
Here, $a,n\geq 0$, and the $\rho$-torsion exponent of a class $\alpha$ is the minimal $r$ for which $\rho^r \alpha = 0$; the classes $h_{a+1}$ and $c_{a+1}$ are $\rho$-torsion-free.

(2)~~The operation $\Sq^0$ acts on these classes by incrementing $a$ in each row.

(3)~~The image of these classes under $\Ext_\r\rightarrow\Ext_\c$ is as their name suggests.

(4)~~A minimal $\f_2[\rho]$-module generating set for $\Ext_\r^{\leq 3}$ is given in the following table. In all cases, the $\rho$-torsion exponent of a given class is the minimal $\rho$-torsion exponent of the multiplicative generators it is written as a product of.
\begin{longtable}{ll}
\toprule
$\f_2[\rho]$-module generator & Constraints\\
\midrule \endhead
\bottomrule \endfoot
$1$ \\
\midrule
$h_a$ & $a\geq 1$ \\
$\tau^{\floor{2^{a-1}(4n+1)}}h_a$ & $a,n\geq 0$ \\
\midrule
$h_a\cdot h_b$ & $a\geq b\geq 1$ and $a \neq b+1$ \\
$h_a\cdot\tau^{\floor{2^{b-1}(4n+1)}}h_b$ & $a\geq 1$ and $b,n\geq 0$, and $a\neq b\pm 1 $ \\
$\tau^{\floor{2^{a-1}}}h_a\cdot\tau^{\floor{2^{a-1}(4n+1)}}h_a$ & $a,n\geq 0$ \\
$\tau^{2^a(8n+1)}h_{a+2}^2$ & $a,n\geq 0$ \\
\midrule
$h_a\cdot h_b\cdot h_c$ & $a\geq b\geq c\geq 1$, where $a \neq b+1$ and $b\neq c+1$,\\
& and where if $b = c$ or $a = b$ then $a\neq c+2$ \\
$h_a\cdot h_b\cdot\tau^{\floor{2^{c-1}(4n+1)}}h_c$ & $a\geq b \geq 1$ and $c,n\geq 0$, with $a\neq b+1$ and $c\notin\{a\pm 1,b\pm 1\}$, and\\
& if $a=b$ then $c\notin\{a-2,a,a+2\}$, and if $a=b+2$ then $c\neq a$ \\
$h_a\cdot \tau^{\floor{2^{b-1}}}h_b\cdot\tau^{\floor{2^{b-1}(2n+1)}}h_b$ & $a\geq 1$ and $b,n\geq 0$, and $a\notin\{b-2,b-1,b+1\}$\\
$h_0\cdot h_0\cdot \tau^{2n}h_0$ & $n\geq 0$ \\
$h_a\cdot\tau^{2^b(8n+1)}h_{b+2}^2$ & $a\geq 1$ and $b,n\geq 0$, and either $a\leq  b-1$ or $a\geq b+4$\\

$\tau^{\floor{2^{a-1}(2(16n+1)+1)}}h_{a+3}^2h_a$ & $a,n\geq 0$ \\
$\tau^{2^a(4(4n+1)+1)}h_{a+3}^3$ & $a,n\geq 0$ \\
$c_a$ & $a\geq 1$ \\
$\tau^{\floor{2^{a-1}(16n+1)}}c_a$ & $a,n\geq 0$ \\
$\tau^{2^{a+1}(8n+1)}c_{a+1}$ & $a,n\geq 0$ \\
$\tau^{\floor{2^{a-1}(2(4n+1)+1)}}c_a$ & $a,n\geq 0$
\end{longtable}
\end{thm}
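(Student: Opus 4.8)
The plan is to assemble all four parts from the enumerations carried out in \cref{prop:b0}, \cref{prop:b1}, \cref{prop:b2}, by means of the bookkeeping of \cref{thm:tags} and the splitting of \cref{prop:retract}. For the $\rho$-torsion-free part, \cref{prop:retract}(4) gives $\Ext_\r\cong\Ext_\dcl[\rho]\oplus\Ext_\r^{\rho\hyp\tors}$, and in filtrations $f\leq 3$ the first summand is described by \cref{prop:extc} (which recovers Wang's computation \cite{Wan67}); since $\thetatilde$ sends $\lambda_n\mapsto\lambda_{2n+1}$, it carries the classical $h_a$ and $c_a$ to the motivic $h_{a+1}$ and $c_{a+1}$, so the image of $\thetatilde$ contributes precisely the $\rho$-torsion-free rows of the tables in parts (1) and (4), with the constraints on $h_ah_b$ and $h_ah_bh_c$ inherited verbatim from \cref{prop:extc}.

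For the $\rho$-torsion part I would apply \cref{thm:tags}(3) filtration by filtration, reading the answer off the explicit targets of \cref{prop:b0}, \cref{prop:b1}, \cref{prop:b2}. In filtration $1$, $B_0[0]$ is parametrized by \cref{prop:b0} and yields the single family $\tau^{\floor{2^{a-1}(4n+1)}}h_a$ of $\rho$-torsion exponent $2^a$, which is indecomposable since its cocycle representative (\cref{def:b1ind}) has leading term a single $\lambda_{2^a-1}$ times a power of $\tau$. In filtration $2$, $B_0[1]$ is exhausted by the four rows of \cref{prop:b1}: the first three produce classes whose leading terms are products of two degree-one $\lambda$'s, hence decomposable module generators built from the filtration-one classes together with the $h_b$, while row $(4)$ produces the new indecomposable family $\tau^{2^a(8n+1)}h_{a+2}^2$ of $\rho$-torsion exponent $2^{a+1}\cdot 3$ (\cref{def:b2ind}). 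In filtration $3$, the twenty-two mutually exclusive cases of \cref{prop:b2} split into the seventeen whose targets are $\rho$-multiples of products of already-named representatives, giving decomposable module generators, and the five ``interesting'' cases $(4)$, $(5)$, $(8)$, $(11)$, $(17)$, which — after applying $\theta$-iterates of $\tau$-multiples of the correction terms $u$ recorded in that proposition — produce the indecomposable families $\tau^{\floor{2^{a-1}(2(16n+1)+1)}}h_{a+3}^2h_a$, $\tau^{2^a(4(4n+1)+1)}h_{a+3}^3$, $\tau^{\floor{2^{a-1}(16n+1)}}c_a$, $\tau^{\floor{2^{a-1}(2(4n+1)+1)}}c_a$, and $\tau^{2^{a+1}(8n+1)}c_{a+1}$, with $\rho$-torsion exponents read off the exponents of $\rho$ in the corresponding targets.

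Parts (2) and (3) would then be handled uniformly. For (2): each family is, by the constructions of \cref{def:b1ind}, \cref{def:b2ind}, and the recipe following \cref{prop:b2}, obtained from a fixed seed by repeatedly applying $\theta$; since $\theta$ lifts $\Sq^0$ (\cref{Thm:Sq0}) and by \cref{lem:theta0} its effect on these seeds is exactly to increment $a$, $\Sq^0$ acts as claimed. For (3): the cocycle representative of each generator has leading term the named coadmissible monomial by \cref{thm:tags}, so reduction modulo $\rho$ sends it to the $\Ext_\c$ class with that leading term; for the $\rho$-torsion-free classes this is \cref{prop:retract}, for the indecomposable $\rho$-torsion classes it is immediate from the displayed formulas in \cref{prop:b0}, \cref{prop:b1}, \cref{prop:b2}, and for the decomposable module generators — where one must identify a product of chosen representatives with a product of named classes — it follows from \cref{lem:unambiguousproduct}.

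Finally, minimality and completeness: the multiplicative generating set in (1) is minimal because each listed class is, already in the $\rho$-adic associated graded, detected by a monomial that is not a product of monomials detecting lower-stem classes; the module generating set in (4) is minimal and complete because \cref{prop:b0}, \cref{prop:b1}, \cref{prop:b2} exhaust $B_0[\leq 2]$ with each index falling in exactly one case and each resulting class appearing exactly once in the table. I expect the main obstacle to be precisely this last bookkeeping: organizing the twenty-two-fold (plus four-fold, plus one-fold) partition of $B_0$ into the uniform parametrization of the theorem requires a long sequence of index substitutions (such as $m=2n+1$, $m=4n+2$, the shifts $b\leftrightarrow a+2,a+3$, and replacing $\tau$-exponents by their floors) together with a careful check that the constraints on the module generators line up, and one must also confirm via \cref{lem:unambiguousproduct} and explicit degree counts that the $\rho$-torsion exponent of each product class really is the minimum of the exponents of its factors.
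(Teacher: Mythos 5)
Your proposal is correct and follows essentially the same route as the paper: the paper's own proof of \cref{thm:extadd} is a one-line appeal to ``the preceding computations'' (i.e.\ \cref{prop:b0}, \cref{prop:b1}, \cref{prop:b2} filtered through \cref{thm:tags}, together with \cref{prop:retract} for the $\rho$-torsion-free summand and \cref{lem:unambiguousproduct} to identify decomposable products), and you have merely unpacked that appeal in detail. The ``bookkeeping obstacle'' you flag at the end is real but routine, and the paper implicitly treats it as such.
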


\begin{proof}
All of this may be read off the preceding computations, using \cref{lem:unambiguousproduct} with \cref{prop:extc} if necessary to write a given class as a product of classes in the given generating set.
\end{proof}

We point out the following corollary.

\begin{corollary}
The operation $\rho\cdot\Sq^0$ is injective on $\Ext_\r^{\leq 3}$.
\qed
\end{corollary}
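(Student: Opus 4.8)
The plan is to read the statement off the structure theorem \cref{thm:extadd}, using the splitting of \cref{prop:retract}. By \cref{prop:retract}(4), as an $\f_2[\rho]$-module $\Ext_\r = \Ext_\dcl[\rho]\oplus\Ext_\r^{\rho\hyp\tors}$, the first summand being the image of $\thetatilde$ and the second the $\rho$-torsion subgroup; since $\thetatilde$ and the inclusion of $\rho$-torsion both preserve filtration, this restricts to a splitting of $\Ext_\r^{\leq 3}$. Both factors of $\rho\cdot\Sq^0$ preserve this splitting: multiplication by $\rho$ obviously does; $\Sq^0=\theta$ preserves $\Ext_\dcl[\rho]$ because $\thetatilde$ respects $\theta$ (\cref{prop:retract}(1)) and $\theta(\rho)=\rho^2$; and $\theta$ preserves $\Ext_\r^{\rho\hyp\tors}$ since $\theta(\rho^k\alpha)=\rho^{2k}\theta(\alpha)$ carries $\rho$-torsion to $\rho$-torsion. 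So it suffices to prove $\rho\cdot\Sq^0$ is injective on each summand.

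On the free summand $\Ext_\dcl^{\leq 3}[\rho]$, multiplication by $\rho$ is injective. Writing an element as $\sum_{i,j}a_{ij}\,\thetatilde_*(b_i)\,\rho^j$ with $\{b_i\}$ an $\f_2$-basis of $\Ext_\dcl^{\leq 3}$, \cref{Thm:Sq0} gives $\theta$ of this as $\sum_{i,j}a_{ij}\,\thetatilde_*(\Sq^0 b_i)\,\rho^{2j}$. By the monomial basis for $\Ext_\dcl^{\leq 3}\cong\Ext_\cl^{\leq 3}$ recorded in \cref{prop:extc}, together with \cref{thm:extadd}(2), the operation $\Sq^0$ acts on $\Ext_\dcl^{\leq 3}$ as the classical $\Sq^0$, sending the named basis into itself by incrementing every index occurring; the basis constraints are stable under this shift, so $\Sq^0$ is injective there. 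As $\thetatilde_*$ and $\rho\mapsto\rho^2$ are injective, it follows that $\theta$, hence $\rho\cdot\theta$, is injective on this summand.

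On the torsion summand, decompose $(\Ext_\r^{\rho\hyp\tors})^{\leq 3}$ into cyclic modules $\f_2[\rho]/(\rho^{e_g})$, one for each $\rho$-torsion $\f_2[\rho]$-module generator $g$ of \cref{thm:extadd}(4), each with $e_g\geq 1$. Write a general element as $\alpha=\sum_g p_g(\rho)\,g$ with $\deg p_g<e_g$; by the semilinearity $\theta(\rho^j g)=\rho^{2j}\theta(g)$ we get $\rho\,\theta(\alpha)=\sum_g\rho\,p_g(\rho^2)\,\theta(g)$. The key input, read off parts (1)--(2) of \cref{thm:extadd}, is that $\Sq^0$ carries each multiplicative generator to the one obtained by incrementing $a$, and that this \emph{exactly doubles} every finite $\rho$-torsion exponent in the table ($2^a\mapsto 2^{a+1}$, $2^{a+1}\cdot 3\mapsto 2^{a+2}\cdot 3$, and so on). Combined with multiplicativity of $\theta$ and the product rule for $\rho$-torsion exponents, this shows that $\theta(g)$ has the form $g' + \rho\cdot(\cdots)$, where $g'$ is a module generator with $\rho$-torsion exponent $2e_g\geq 2$ and is distinct for distinct $g$. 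Hence $\rho\,\theta(\alpha)=0$ forces, for each $g$, that $\rho\,p_g(\rho^2)$ is divisible by $\rho^{2e_g}$; but $\deg\bigl(\rho\,p_g(\rho^2)\bigr)\leq 2e_g-1<2e_g$, so $\rho\,p_g(\rho^2)=0$ and $p_g=0$. Therefore $\alpha=0$, completing the argument.

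The main obstacle is the bookkeeping in the third paragraph: one must verify, case by case against the tables of \cref{thm:extadd} and the formula for $\theta$ in \cref{Thm:Sq0}, that $\Sq^0$ really does send each module generator to an $\f_2[\rho]$-combination whose least-$\rho$-divisible term is a module generator with $\rho$-torsion exponent exactly $2e_g$, with distinct generators giving distinct leading terms — in particular, that the failure of $\f_2[\rho]$-linearity of $\Sq^0$ (the higher $\rho^2(\cdots)$ correction terms) cannot conspire to annihilate $\rho\,\theta(\alpha)$. This is routine but tedious, and is the only part of the proof with genuine content; everything needed is contained in \cref{thm:extadd} and the description of $\theta$.
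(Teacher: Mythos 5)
Your proposal follows the approach the paper almost certainly intends: the corollary is stated with no proof, as an immediate consequence of Theorem 4.2.9, and your argument reads it off exactly as one should — split $\Ext_\r^{\leq 3}$ into the $\rho$-free summand $\Ext_\dcl[\rho]$ and the $\rho$-torsion, observe $\Sq^0$ is injective on the classical basis so $\rho\cdot\theta$ is injective on the free part, and use the fact (visible in the torsion-exponent column of Theorem 4.2.9(1) together with part (2)) that $\Sq^0$ doubles every finite $\rho$-torsion exponent.

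One small point to tighten. In your third paragraph you conclude that $\rho\,\theta(\alpha)=0$ forces $\rho\,p_g(\rho^2)\equiv 0\pmod{\rho^{2e_g}}$ ``for each $g$,'' but as you yourself flag at the end, the coefficient of $g'$ in $\rho\,\theta(\alpha)$ is not just $\rho\,p_g(\rho^2)$: the correction terms $\rho\,r_h$ from \emph{other} module generators $h$ also contribute, so the argument as written has a small hole. The clean way to close it is degree-theoretic: set $k=\min_g v(p_g)$ (minimal $\rho$-valuation among the nonzero $p_g$). The terms $\rho^2 p_h(\rho^2) r_h$ contribute to $\rho\,\theta(\alpha)$ only in $\rho$-valuation $\geq 2k+2$, since $v(p_h(\rho^2))\geq 2k$. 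So the $\rho^{2k+1}$-part of $\rho\,\theta(\alpha)$ is $\rho^{2k+1}\sum_{v(p_g)=k}g'$, where the $g'$ run over distinct module generators with $e_{g'}=2e_g$. Vanishing of this forces $2k+1\geq 2e_g$ for each such $g$; but $k<e_g$ gives $2k+1\leq 2e_g-1$, a contradiction. This is the precise version of your ``cannot conspire'' remark. With that added, and granting the case-by-case verification that $\theta$ of each listed module generator is, modulo $\rho$, again a listed module generator with doubled exponent and that distinct generators go to distinct ones (which is the ``routine but tedious'' content you correctly isolate, and which relies on the multiplicativity of $\theta$ plus the monomial descriptions in Proposition 4.2.1 and Theorem 4.2.9), the argument is complete and matches what the paper leaves implicit.
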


\begin{remark}
As indicated in \cref{rmk:bockstein}, one may also read off our computation a description of all differentials in the $\rho$-Bockstein spectral sequence $\Ext_\c[\rho]\Rightarrow\Ext_\r$ emanating out of filtration at most $2$. We leave this to the interested reader.
\tqed
\end{remark}

\subsection{Multiplicative structure}\label{ssec:products}

We now compute the multiplicative structure of $\Ext_\r^{\leq 3}$. This material is mostly not needed for our study of the $1$-line of the motivic Adams spectral sequence in \cref{sec:hopf}; the exception is that we will use the relation \cref{prop:3linerelations}(4) in the proof of \cref{thm:r}.

Already \cref{lem:unambiguousproduct} produces a large number of relations. For example, it implies that we may always shift powers of $\tau$ around in products that do not vanish in $\Ext_\c$, provided it makes sense to do so, yielding relations such as
\[
\tau^{\floor{2^{a-1}(4n+1)}}h_a\cdot \tau^{\floor{2^{b-1}(4m+1)}}h_b = h_a\cdot \tau^{\floor{2^{b-1}(4(m+2^{a-b-2}(4n+1))+1)}}h_b
\]
for $a\geq b+2$. These were implicitly used in the proof of \cref{thm:extadd}. The condition that the product does not vanish in $\Ext_\c$ is necessary; see \cref{ex:shifttau} below.

We are left only with relations that would be realized as hidden extensions in the $\rho$-Bockstein spectral sequence. These arise from the possible failure of the relations $h_{a+1}h_a=0$ and $h_{a+2}^2h_a=0$ to lift through $\Ext_\r\rightarrow\Ext_\c$.

\begin{remark}
The following computations will involve some explicit calculations with cocycle representatives. For ease of reference, we collect some important cocycle representatives here.
\begin{longtable}{ll}
\toprule
Class & Cocycle representative \\
\midrule \endhead
\bottomrule \endfoot
$h_0$ & $\lambda_0$ \\
$h_1$ & $\lambda_1$ \\
$h_2$ & $\lambda_3$ \\
$h_3$ & $\lambda_7$ \\
$c_0$ & $\lambda_2\lambda_3^2$ \\
$c_1$ & $\lambda_5\lambda_7^2$ \\
$\tau^{2^{a}}h_{a+1}$ & $\rho^{-2^{a+1}}\cdot\delta(\tau^{2^{a+1}}) = \theta^{a+1}(\lambda_0)=\tau^{2^{a}}\lambda_{2^{a+1}-1} = \lambda_{2^{a+1}-1}\tau^{2^{a}}+ O(\rho^{2^{a}})$ \\
$\tau^2 h_0$ & $\lambda_0\tau^2+\lambda_1\tau^2\rho+\lambda_3\tau\rho^3+\lambda_4\rho^4$ \\
$\tau^4 h_0$ & $\lambda_0\tau^4+\lambda_3\tau^3\rho^3+\lambda_5\tau^2\rho^5+\lambda_7\tau\rho^7+\lambda_8\rho^8$ \\
$\tau h_2^2$ & $\lambda_3^2\tau+(\lambda_3\lambda_4+\lambda_4\lambda_3)\rho = \tau \lambda_3^2$ \\
$\tau^9 h_2^2$ & $\lambda_3^2\tau^9+(\lambda_3\lambda_4+\lambda_4\lambda_3)\tau^8\rho+\lambda_5\lambda_3\tau^8\rho^2+O(\rho^{10})$
\end{longtable}
We will use these without further comment.
\tqed
\end{remark}

We begin with some products in $\Ext_\r^{\leq 3}$ which lift the relation $h_{a+1}h_a=0$.

\begin{proposition}\label{prop:2linerelations}
The following hold.
\begin{enumerate}
\item $h_{a+1}\cdot\tau^{\floor{2^{a-1}(4(2n+1)+1)}}h_a = \rho^{2^a}\cdot\tau^{2^{a}}h_{a+1}\cdot\tau^{2^{a}(4n+1)}h_{a+1}$;
\item $h_{a+1}\cdot\tau^{\floor{2^{a-1}(8n+1)}}h_a = 0$;
\item $\tau^{2^{a+1}(4n+1)}h_{a+2}\cdot h_{a+1} = \rho^{2^{a+1}}\cdot\tau^{2^a(8n+1)}h_{a+2}^2$;
\item $\tau^{2^a(8n+1)}h_{a+2}^2\cdot h_{a+1} = \rho^{2^a}\cdot \tau^{\floor{2^{a-1}(16n+1)}}c_a$;
\item $\tau^{2^a(8n+1)}h_{a+2}^2\cdot\tau^{2^a(4m+1)}h_{a+1} = \rho^{2^a}\cdot \tau^{\floor{2^{a-1}(2(4(m+2n))+1)}}c_a$;
\item $h_{a+3}\cdot\tau^{2^a(16n+1)}h_{a+2}^2 = 0$;
\item $h_{a+3}\cdot\tau^{2^a(8(2n+1)+1)}h_{a+2}^2 = \rho^{2^{a+3}}\cdot\tau^{2^a(4(4n+1)+1)}h_{a+3}^2$.
\end{enumerate}
\end{proposition}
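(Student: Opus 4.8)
The plan is to prove each of the seven relations by the same strategy that was used throughout Section~\ref{Sec:ExtLow}: pick the explicit cocycle representatives already recorded, form the product of these representatives in $\Lambda^\r$, expand using the Adem relations of \cref{Prop:AdemLambda} and the bimodule relations of \cref{Prop:BimodLambda}, and identify the result — up to a coboundary — with a scalar times the cocycle representative of the claimed target. Since all the classes in question are $\rho$-torsion (the targets are $\rho$-power multiples of indecomposables found in Propositions~\ref{prop:b1} and~\ref{prop:b2}), the correct way to organize the bookkeeping is to work modulo an appropriate power of $\rho$, exactly as in the proofs of those propositions. In fact, by \cref{lem:unambiguousproduct}, for most of these products it suffices to identify the product in $\Ext_\c$ together with the $\rho$-torsion exponent: in $\Ext_\c^{\leq 3}$ the relevant relations are $h_{a+1}h_a=0$, $h_{a+2}^2 h_a=0$, $h_2h_0^2=\tau h_1^3$, and $h_{a+3}h_{a+1}^2=h_{a+2}^3$ from \cref{prop:extc}, and the degree of the product together with the torsion constraint pins down the unique non-$\rho$-divisible class killed by that power of $\rho$.

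First I would dispatch the "clean" relations (2) and (6), where the product vanishes outright. For (2), the classical relation is $h_{a+1}h_a=0$, and the class $\tau^{\floor{2^{a-1}(8n+1)}}h_a$ is, by \cref{prop:b1}, the image under $\theta^a$ of a $\tau$-power multiple of a class whose product with $h_1$ is already a coboundary in the appropriate range; concretely one checks $\lambda_1\cdot(\lambda_1\tau+\lambda_2\rho)=\delta(\lambda_3\tau\rho+\lambda_4\rho^2)$ as in \cref{Prop:HiddenExample} and then applies $\theta^a$ and multiplies by the relevant power of $\tau$. Relation (6) is analogous with $h_{a+2}^2 h_a = 0$ replacing $h_{a+1}h_a=0$; here the base case is $\lambda_3^2\cdot\lambda_7 = 0$ together with the fact that $\tau h_2^2 = \tau\lambda_3^2$ is a genuine cycle, so $h_3\cdot\tau h_2^2$ is represented by $\lambda_7\tau\lambda_3^2$, which the Adem relations collapse; again apply $\theta^a$ and a $\tau$-multiple. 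For relations (1), (3), (4), (5), and (7), the target is a nonzero $\rho$-power multiple of an indecomposable, so one must actually exhibit the leading term of the product after adding a correction coboundary; I would start from the base case ($a=0$, and $n=0$ where possible, reducing via the smallest power of $\rho$ in which the target survives), compute $\delta$ of the relevant monomial to locate the leading term, add a correction term $u$ of the type appearing in the proof of \cref{prop:b2}, and then propagate to general $a$ by applying $\theta^a$ using \cref{lem:theta0} (which controls the $\tau$-exponents via $\floor{2^{a-1}(\cdots)}$) and to general $n$ by multiplying by powers of $\tau$.

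The main obstacle will be relations (5) and (7), which are the most intricate. In (5) both factors carry $\tau$-powers and $\rho$-torsion, so one cannot invoke \cref{lem:unambiguousproduct} in the simplest form — the hypothesis forbids both factors lying in $\Ext_\r^2$, and here one factor is in filtration $2$ — so one must genuinely compute the product of the cocycle representatives $\lambda_3^2\tau^{\cdots}$ (times $\theta$-iterates) against $\lambda_{2^a-1}\tau^{\cdots}$ modulo the appropriate $\rho$-power, track how the $\tau$-exponents combine (note the target exponent $\floor{2^{a-1}(2(4(m+2n))+1)}$ involves $m+2n$, reflecting the collision of the two incoming $\tau$-powers under the $\theta$-doubling), and verify the coefficient is exactly $\rho^{2^a}$. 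Relation (7) requires knowing the precise value $\delta(\lambda_7\cdot\tau^{8(2n+1)}\lambda_3^2 + u)$ down to $\rho^{2^{a+3}}$ times the representative of $h_{a+3}^3$-type class, and locating the correction $u$ — this is the same flavor of computation as case (17) of \cref{prop:b2} and will be the longest. I would organize (7) by first establishing it for $a=0$, $n=0$ using the representatives $\tau^9 h_2^2$ (already recorded) and $h_3=\lambda_7$, checking against the representative of $\tau^{9}h_5^3$-type class obtained from the recipe following \cref{prop:b2}, and then applying $\theta^a$ and $\tau^{\cdots}$-multiplication; the routine but lengthy part is finding the explicit $u$, which I will not grind through here.
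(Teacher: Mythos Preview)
Your overall strategy---reduce to $a=0$ via $\theta$ (equivalently $\Sq^0$), reduce to small $n,m$ by working modulo an appropriate power of $\rho$, then compute with explicit cocycle representatives---is exactly what the paper does. However, you are overcomplicating several steps and missing the key simplification.

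The paper's crucial observation, which you only partially articulate, is that in each nonzero case the claimed target is the \emph{unique} nonzero class in its degree which is both $\rho$-torsion and divisible by $\rho$. This uniqueness (read off from \cref{thm:extadd}) means one only needs to verify that the product is nonzero modulo the smallest power of $\rho$ in which the target survives---there is no need to hunt for correction terms $u$ as in the proof of \cref{prop:b2}. With this in hand, (5) and (7) are not the hardest cases: (5) at $a=n=m=0$ is the one-line computation $\tau\lambda_1\cdot\tau\lambda_3^2 = \rho\,\lambda_2\lambda_3^2\tau + O(\rho^2)$, and (7) at $a=n=0$ is a short computation mod $\rho^9$ using the already-recorded representative for $\tau^9 h_2^2$. (Your target in (7) should be $\tau^5 h_3^3$, not anything involving $h_5$.)

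For (2) and (6) you propose explicit coboundary computations, but the paper's argument is far simpler: in (2) there are \emph{no} nonzero $\rho$-torsion classes in the relevant degree, so the product (which is $\rho$-torsion since one factor is) must vanish; in (6) the only candidate is $\rho^6\tau^{16n+1}c_1$, so one works mod $\rho^7$ and checks $\tau\lambda_3^2\cdot\lambda_7=0$. Finally, your invocation of \cref{lem:unambiguousproduct} is misplaced: that lemma concerns products with \emph{nonzero} image in $\Ext_\c$, whereas every product here vanishes in $\Ext_\c$ (they all lift the relation $h_{a+1}h_a=0$). The relevant uniqueness is a direct degree count in $\Ext_\r$ itself.
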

\begin{proof}
In each of these, we may use $\Sq^0$ to reduce to the case $a=0$. In all cases where the product does not vanish, the claimed value of the product is the unique nonzero class in its degree which is both $\rho$-torsion and divisible by $\rho$, so it suffices to verify the product working modulo the smallest power of $\rho$ in which the claimed value does not vanish. In doing so, we may in each case reduce to $n=m=0$. With these reductions in place, the proposition is proved by the following computations.

(1)~~Here, we are claiming $h_1\cdot \tau^2 h_0 = \rho \cdot \tau h_1\cdot \tau h_1$. Indeed, we may compute
\begin{align*}
(\lambda_0\tau^2 + \lambda_1\tau^2\rho+\lambda_3\tau\rho^3+\lambda_4\rho^4)\cdot \lambda_1 &= \lambda_1^2\tau^2\rho+\lambda_2\lambda_1\tau\rho^2+\lambda_2\lambda_2\rho^3+\lambda_2\lambda_3\rho^4\\
&= \rho(\lambda_1\tau+\lambda_2\rho)^2 = \rho \theta(\lambda_0)^2,
\end{align*}
which represents $\rho\cdot\tau h_1\cdot\tau h_1$.

(2)~~There are no nonzero $\rho$-torsion classes in this degree, so the product must vanish.

(3)~~Here, we are claiming $h_1\cdot\tau^2 h_2 = \rho^2 \cdot \tau h_2^2$. Indeed, we may compute
\[
\lambda_1\cdot \tau^2\lambda_3 = \rho^2\cdot\tau \lambda_3^2
\]
on the nose.

(4)~~Here, we are claiming $h_1\cdot\tau h_2^2 = \rho\cdot c_0$. Indeed, we may compute
\[
\lambda_1\cdot \tau\lambda_3^2 = \lambda_2\lambda_3^2\rho
\]
on the nose.

(5)~~Here, we are claiming $\tau h_1\cdot \tau h_2^2 = \rho\cdot \tau c_0$. For this, it suffices to work mod $\rho^2$. Here we may compute
\[
\tau \lambda_1\cdot \tau \lambda_2^2 = \rho\cdot\lambda_2\lambda_3^2\tau + O(\rho^2),
\]
and the claim follows.

(6)~~Here, we have reduced to $a=0$ but not yet to $n=0$. The only nonzero $\rho$-torsion class in this degree is $\rho^6\tau^{16n+1}c_1$, so it suffices to work mod $\rho^7$. In doing so, we may now reduce to $n=0$. Indeed, we have
\[
\tau\lambda_3^2\cdot\lambda_7 = 0,
\]
and the claim follows.

(7)~~Here, we are claiming $h_3\cdot \tau^9 h_2^2 = \rho^8\cdot\tau^5 h_3^3$. For this, it suffices to work mod $\rho^9$. Here, we may compute
\[
(\lambda_3^2\tau^9+(\lambda_4\lambda_3+\lambda_3\lambda_4)\tau^8\rho+\lambda_5\lambda_3\tau^8\rho^2)\cdot \lambda_7 = \lambda_7^3\tau^5\rho^8+O(\rho^9),
\]
yielding the claim.
\end{proof}

\begin{example}\label{ex:shifttau}
We have
\[
\tau^2 h_2\cdot h_1^2 = \rho^3 c_0,\qquad h_2\cdot (\tau h_1)^2 = 0.
\]
This serves as a warning that one cannot in general freely shift around powers of $\tau$ in products.
\tqed
\end{example}

We now give some products that lift the relation $h_{a+2}^2h_a=0$.

\begin{proposition}\label{prop:3linerelations}
The following hold.
\begin{enumerate}
\item $h_{a+2}^2\cdot\tau^{\floor{2^{a-1}(16n+1)}}h_a = 0$;
\item $h_{a+2}^2\cdot\tau^{\floor{2^{a-1}(4(2n+1)+1)}}h_a = \rho^{2^{a+1}}\cdot\tau^{\floor{2^{a-1}(2(4n+1)+1)}}c_a$;
\item $h_{a+2}^2\cdot\tau^{\floor{2^{a-1}(8(2n+1)+1)}}h_a = \rho^{2^a3}\cdot\tau^{2^{a+1}}h_{a+2}\cdot\tau^{2^a(8n+1)}h_{a+2}^2$;
\item $\tau^{2^{a+2}}h_{a+3}\cdot\tau^{2^{a+2}(4n+1)}h_{a+3}\cdot h_{a+1} = \rho^{2^{a+1}3}\cdot\tau^{2^{a}(4(4n+1)+1)}h_{a+3}^3$;
\item $h_{a+1}\cdot h_{a+3}\cdot\tau^{2^{a+2}(4n+1)}h_{a+3} = \rho^{2^{a+2}}\cdot\tau^{2^{a+1}(8n+1)}c_{a+1}$;
\item $\tau^{2^{a+1}(8n+1)}h_{a+3}^2\cdot h_{a+1} = 0$.
\end{enumerate}
\end{proposition}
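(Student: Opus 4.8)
The plan is to follow the proof of \cref{prop:2linerelations} almost verbatim. In every case, the operation $\Sq^0$ increments $a$ on all the classes that appear, by \cref{thm:extadd}(2), and $\theta$ is a multiplicative chain map lifting $\Sq^0$ by \cref{Thm:Sq0}; hence applying $\theta^a$ to the identity at $a=0$ yields the identity for general $a$, and we may assume $a=0$ throughout. For the four cases with a nonzero right-hand side -- (2), (3), (4), (5) -- I would first check, by direct inspection of the $\f_2[\rho]$-module structure recorded in \cref{prop:b1}, \cref{prop:b2}, and \cref{thm:extadd}, that the proposed target is the unique class in its tridegree which is $\rho$-torsion and divisible by the asserted power of $\rho$; this is precisely the setting of \cref{lem:unambiguousproduct}, and it allows one to verify each identity while working modulo the smallest power of $\rho$ in which the target is nonzero, the computation being uniform in $n$ so that it suffices to display it for $n=0$. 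For the two vanishing cases (1) and (6), the product maps under $\Ext_\r\to\Ext_\c$ to $h_{a+2}^2h_a=0$ (using $h_{a+3}^2h_{a+1}=0$ from \cref{prop:extc} in case (6)), so it lies in the $\rho$-torsion subgroup by the splitting of \cref{prop:retract}; one then checks by inspection of the preceding computations that no nonzero $\rho$-torsion class occupies the relevant tridegree, so the product vanishes.

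It then remains to carry out the base-case computations at $a=n=0$ in $\Lambda^\r$, using the cocycle representatives collected in the remark preceding the proposition: $h_0=\lambda_0$, $h_1=\lambda_1$, $h_2=\lambda_3$, $h_3=\lambda_7$, $c_0=\lambda_2\lambda_3^2$, $c_1=\lambda_5\lambda_7^2$, the expansions of $\tau^2h_0$, $\tau^4h_0$, $\tau h_2^2=\tau\lambda_3^2$, $\tau^9h_2^2$, and the $\theta$-iterates $\tau^{2^k}h_{k+1}=\theta^{k+1}(\lambda_0)$, together with the cocycle representatives for $\tau^jc_0$, $\tau^jc_1$, $\tau^jh_3^2h_0$ and $\tau^jh_3^3$ produced by the correction-term recipe of \cref{prop:b2} and the discussion following it. Thus the six base cases to establish read $h_2^2h_0=0$; $h_2^2\cdot\tau^2h_0=\rho^2\cdot\tau c_0$; $h_2^2\cdot\tau^4h_0=\rho^3\cdot\tau^2h_2\cdot\tau h_2^2$; $(\tau^4h_3)^2\cdot h_1=\rho^6\cdot\tau^5h_3^3$; $h_1h_3\cdot\tau^4h_3=\rho^4\cdot\tau^2c_1$; and $\tau^2h_3^2\cdot h_1=0$. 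For each of these I would expand the product of leading-term cocycles, simplify using the relations of \cref{Prop:AdemLambda} and \cref{Prop:BimodLambda} and the differential formula of \cref{Prop:DiffLambda}, and, when the naive product does not already agree with the target cocycle modulo the required power of $\rho$, add an explicit nullhomologous correction term $u\in\Lambda^\r$ and verify that $\delta(\text{product}+u)=\rho^{r}\cdot(\text{target})+O(\rho^{r+1})$, exactly as was done in \cref{prop:b2}.

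The main obstacle I expect is exhibiting the correction terms $u$ in cases (3), (4) and (5): there the product of the obvious cocycle representatives has the wrong leading term -- it coincides with a class already exhausted at a lower power of $\rho$, via \cref{prop:b1} or an earlier case of \cref{prop:b2} -- so one must guess and verify a correction $u$, assembled from $\theta$-iterates of $\tau$-multiples of the correction terms already appearing in \cref{prop:b2}, which pushes the discrepancy down to the asserted target. Getting these $u$'s exactly right, and in particular tracking the powers of $\tau$ and $\rho$ so that tridegrees match and the uniqueness statements of the first paragraph apply, is the delicate bookkeeping; once the correct $u$ is in hand, the verification is a routine (if lengthy) calculation with the formulas summarized in \cref{SS:StructureSummary}. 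The two vanishing cases and the degree inspections are straightforward given the complete module description already obtained in \cref{Sec:ExtLow}.
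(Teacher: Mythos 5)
Your proposal is correct and takes essentially the same route as the paper: reduce to $a=0$ via $\Sq^0$, reduce to $n=0$ by working modulo the smallest power of $\rho$ in which the target survives (justified by the uniqueness inspection you describe), dispose of (1) and (6) by the absence of $\rho$-torsion in the relevant tridegree, and verify the remaining four base-case identities in $\Lambda^\r$, all of which you state correctly. The one misjudgment is that the correction terms $u$ you flag as the main obstacle in (3)--(5) are not needed: for (4) and (5) the products $\lambda_1\cdot\tau^4\lambda_7\cdot\tau^4\lambda_7$ and $\lambda_1\cdot\tau^4\lambda_7\cdot\lambda_7$ already equal $\rho^6\lambda_7^3\tau^5$ resp.\ $\rho^4\lambda_5\lambda_7^2\tau^2$ modulo the next power of $\rho$, so the naive product of the $\theta$-iterate representatives lands directly on the target; and for (3) the paper sidesteps corrections entirely by first applying \cref{lem:unambiguousproduct} to shuffle $\tau$'s, rewriting $h_2\cdot h_2\cdot\tau^4 h_0 = \tau^2h_2\cdot\tau^2h_2\cdot h_0$, after which $\lambda_0\cdot\tau^2\lambda_3\cdot\tau^2\lambda_3 = \rho^3\lambda_3^3\tau^3 + O(\rho^4)$ finishes the case.
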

\begin{proof}
As in the proof of \cref{prop:2linerelations}, we may use $\Sq^0$ to reduce to the case $a=0$, and in all cases where the product does not vanish may reduce to $n=0$.
With these reductions in place, the proposition is proved by the following computations.

(1)~~There are no nonzero $\rho$-torsion classes in this degree, so the product must vanish.

(2)~~Here, we are claiming $h_2^2\cdot\tau^2 h_0 = \rho^2\cdot\tau c_0$. For this, it suffices to work mod $\rho^3$. Recall that $\tau^2 h_0$ is represented by $\lambda_0\tau^2+\lambda_1\tau^2\rho+O(\rho^3)$. We may compute
\[
(\lambda_0\tau^2+\lambda_1\tau^2\rho)\cdot\lambda_3^2 = \rho^2\cdot\lambda_2\lambda_3^2\tau + O(\rho^3),
\]
and the claim follows.

(3)~~Here, we are claiming $h_2^2\cdot\tau^4h_0 = \rho^3\cdot \tau^2h_2\cdot\tau h_2^2$. For this, it suffices to work mod $\rho^4$. Observe that
\[
h_2\cdot h_2\cdot\tau^4 h_0 = h_2\cdot \tau^2 h_2\cdot\tau^2 h_0 = \tau^2h_2\cdot h_2\cdot\tau^2 h_0 = \tau^2h_2\cdot\tau^2h_2\cdot h_0
\]
by \cref{lem:unambiguousproduct}. We may now compute
\[
\lambda_0\cdot\tau^2\lambda_3\cdot\tau^2\lambda_3 =\rho^3\cdot\lambda_3^3\tau^3+O(\rho^4),
\]
yielding the claim.

(4)~~Here, we are claiming $\tau^4h_3\cdot\tau^4h_3\cdot h_1 = \rho^6\cdot\tau^5 h_3^3$. For this, it suffices to work mod $\rho^7$. Here, we may compute
\[
\lambda_1\cdot\tau^4\lambda_7\cdot\tau^4\lambda_7 = \rho^6\cdot\lambda_7^3\tau^5+O(\rho^7),
\]
yielding the claim.

(5)~~Here, we are claiming $h_1\cdot h_3\cdot \tau^4 h_3 = \rho^4\cdot \tau^2 c_1$. For this, it suffices to work mod $\rho^5$. Here, we may compute
\[
\lambda_1\cdot\tau^4\lambda_7\cdot\lambda_7 = \rho^4\cdot\lambda_5\lambda_7^2\tau^2+O(\rho^6),
\]
yielding the claim.

(6)~~There are no nonzero $\rho$-torsion classes in this degree, so the product must vanish.
\end{proof}

The preceding propositions leave open three families of products. A complete resolution of these requires the following, which appeared as a conjecture in an earlier version of this work. We thank Dugger, Hill, and Isaksen for supplying a proof.

\begin{lemma}[Dugger--Hill--Isaksen]\label{lem:movetau}
There are relations
\begin{enumerate}
\item $\tau^{4m+1}h_{1}\cdot \tau^{2l}h_0 = \tau h_1\cdot\tau^{2(2m+l)}h_0$;
\item $\tau^{4(4m+1)}h_3 \cdot\tau^{8l+1}h_2^2 = \tau^4h_3\cdot\tau^{8(2m+l)+1}h_2^2$;
\item $\tau^{8m+1}h_{2}^2\cdot\tau^{2l}h_0 = \tau h_2^2\cdot\tau^{2(2m+l)}h_0$.
\end{enumerate}
\end{lemma}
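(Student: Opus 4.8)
The plan is to verify each of the three relations by an explicit computation with cocycle representatives in $\Lambda^\r$, working modulo a sufficiently high power of $\rho$ so that only the relevant summand of $\Ext_\r$ is visible. In all three cases, both sides of the claimed relation are $\rho$-torsion classes in $\Ext_\r^2$ lying in the same tridegree, so by the structure of $\Ext_\r^{\leq 3}$ worked out in \cref{thm:extadd} (or directly by \cref{lem:unambiguousproduct}), the target group in that degree is small — in the relevant cases, one-dimensional over $\f_2$ once we restrict to the $\rho$-torsion classes divisible by the appropriate power of $\rho$. Thus it suffices to compute both products modulo the first power of $\rho$ in which the common value is nonzero. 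As usual, I would first reduce via the operation $\theta$ lifting $\Sq^0$ (see \cref{Thm:Sq0}) — noting that $\theta$ acts by incrementing the index $a$ in the families of generators appearing here — to handle only the base case of each family, i.e.\ the smallest relevant exponents.

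For relation (1), after reduction I expect to be checking something like $\tau^5 h_1\cdot h_0 = \tau h_1\cdot \tau^4 h_0$, using the cocycle representatives $\tau^{2^a}h_{a+1} = \lambda_{2^{a+1}-1}\tau^{2^a} + O(\rho^{2^a})$, $\tau^2 h_0 = \lambda_0\tau^2 + \lambda_1\tau^2\rho + \lambda_3\tau\rho^3+\lambda_4\rho^4$, $\tau^4 h_0 = \lambda_0\tau^4+\lambda_3\tau^3\rho^3+\lambda_5\tau^2\rho^5+\lambda_7\tau\rho^7+\lambda_8\rho^8$, and $\delta(\tau^n)$ as given by \cref{prop:difftau}. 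The point is that $\tau$ is central modulo $\rho$, so the difference between $\tau^{4m+1}h_1$ and $\tau^{4m+1}\cdot h_1$ (when the latter makes sense) is $\rho$-divisible in a controlled way; one expands both sides using \cref{Prop:BimodLambda} and the Adem relations of \cref{Prop:AdemLambda}, and tracks leading terms. Relations (2) and (3) are of the same flavor, involving the representatives $\tau^9 h_2^2$ and $\tau h_2^2 = \tau\lambda_3^2$ recorded above, together with $\tau^4 h_3 = \lambda_7\tau^4 + O(\rho)$; in each case the claimed equality says that a certain shift of powers of $\tau$ across the product is permissible, which at the level of $\Lambda^\r$ amounts to the commutator terms $[\tau,\lambda_{2^b-1}]$ contributing only $\rho$-multiples that do not affect the value in the relevant quotient.

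The main obstacle I anticipate is \emph{not} the individual expansions — those are mechanical given \cref{Prop:BimodLambda}, \cref{Prop:AdemLambda}, \cref{Prop:DiffLambda}, and \cref{prop:difftau} — but rather pinning down exactly which power of $\rho$ one must work modulo, and confirming that the target group in each degree really is one-dimensional (over $\f_2$) among $\rho$-torsion classes of the requisite $\rho$-divisibility. For relation (2) in particular, the products $\tau^{4(4m+1)}h_3\cdot \tau^{8l+1}h_2^2$ and $\tau^4 h_3\cdot \tau^{8(2m+l)+1}h_2^2$ land in degrees where one must be careful to distinguish $\tau^{2^a(4(4n+1)+1)}h_{a+3}^3$, $c$-family classes, and possible $\rho$-divisible pieces coming from $\Ext_\r^2$; a direct inspection of \cref{prop:b1} and \cref{prop:b2} (as in the proof of \cref{lem:unambiguousproduct}) shows the ambiguity is resolved. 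Once the correct $\rho$-precision is fixed, each relation follows by matching leading terms of the two cocycle products after reducing to the base case under $\theta$ and to the smallest value of the free parameters $m$, $l$.

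Since this lemma is attributed to Dugger, Hill, and Isaksen, an alternative route — which may be what they had in mind — is to deduce these relations more structurally, e.g.\ from $\rho$-Bockstein considerations (\cref{rmk:bockstein}) or from the $\tau$-linearity of certain Massey product brackets, rather than by brute-force cocycle computation; I would mention this but carry out the lambda-algebra verification as the primary argument, as it keeps the proof self-contained and in the spirit of the rest of \cref{Sec:ExtLow}.
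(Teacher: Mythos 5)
Your proposed lambda-algebra verification cannot work as stated, and the gap is in the step you gloss over: the reduction ``to the smallest value of the free parameters $m$, $l$.'' The operator $\theta$ lifting $\Sq^0$ shifts the index $a$ that parametrizes the families $\tau^{\floor{2^{a-1}(4n+1)}}h_a$ etc.; it does \emph{not} shift $m$ or $l$. In fact, the three relations of this lemma are already the $\theta$-base cases (the paper applies $\theta$-reduction to bring the products of \cref{prop:specialproducts} down to exactly these identities). So $\theta$ gives you nothing here, and you have no other mechanism for reducing $m$ or $l$. Setting $m=0$ makes each relation a tautology (both sides coincide as written), so ``verify the base case and stop'' proves nothing; the entire content is in passing from $m$ to $m+1$, which is a nontrivial statement that a certain cocycle difference is a coboundary — precisely what needs a uniform-in-$m$ argument, not a leading-term match for a single pair $(m,l)$.

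The paper's proof (supplied by Dugger, Hill, and Isaksen) does induct on $m$, and the inductive step is established by Massey product shuffling, using brackets such as $\tau^{4m+5}h_1 = \langle \rho^2,\rho^2\tau^2h_2,\tau^{4m+1}h_1\rangle$ and $\tau^{2l+4}h_0 = \langle \tau^{2l}h_0,\rho^2,\rho^2\tau^2h_2\rangle$ which hold with no indeterminacy uniformly in the free parameters. That uniformity is the ingredient your approach is missing. The fact, noted in the paper, that this lemma was a conjecture in an earlier draft until Dugger–Hill–Isaksen supplied the argument is corroborating evidence that a direct cocycle computation was not found adequate. Your closing aside — that one might instead argue via $\rho$-Bockstein considerations or $\tau$-linearity of Massey brackets — is in fact the correct direction and essentially what the paper does, but you dismiss it in favor of the cocycle route, which is the part that fails.

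A smaller issue: even for fixed $(m,l)$, matching leading terms modulo a chosen power of $\rho$ is not automatically decisive, because the degree of each product can contain more than one $\rho$-torsion class; one must carefully verify (as the paper does in \cref{lem:unambiguousproduct}) that the residual ambiguity is resolved, and this becomes more delicate in the degree of relation (2) where classes of the form $\tau^{2^a(4(4n+1)+1)}h_{a+3}^3$ and the $c$-families coexist. You flag this concern, which is good, but it compounds rather than resolves the main problem above.
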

\begin{proof}
These will be proved using Massey product shuffling techniques. The Massey products we require are most easily computed using the $\rho$-Bockstein spectral sequence, see especially \cite[Section 7.4]{BI20} for a discussion of Massey products in $\Ext_\r$.

(1)~~By induction on $m$, it suffices to show
\[
\tau^{2l}h_0 \cdot \tau^{4m+5}h_1=  \tau^{2l+4}h_0 \cdot \tau^{4m}h_1
\]
for $m\geq 0$. Observe that
\[
\tau^{4m+5}h_1 = \langle \rho^2,\rho^2\tau^2h_2,\tau^{4m+1}h_1\rangle,\qquad \tau^{2l+4}h_0 = \langle \tau^{2l},\rho^2,\rho^2\tau^2 h_1\rangle
\]
with no indeterminacy. We may therefore shuffle
\begin{align*}
\tau^{2l}h_0\cdot \tau^{4m+5}h_1 &= \tau^{2l}h_0\langle \rho^2,\rho^2\tau^2h_2,\tau^{4m+1}h_1\rangle\\
&= \langle \tau^{2l}h_0,\rho^2,\rho^2\tau^2h_2\rangle \tau^{4m+1}h_1 = \tau^{2l+4}h_0\cdot \tau^{4m+1}h_1.
\end{align*}

(2)~~By induction on $m$, it suffices to show
\[
\tau^{8l+1}h_2^2\cdot\tau^{4(4m+1)+16}h_3 = \tau^{8l+17}h_2^2\cdot\tau^{4(4m+1)}h_3
\]
for $m\geq 0$. Observe that
\[
\tau^{4(4m+1)+16}h_3 = \langle \rho^8,\rho^8\tau^8h_4,\tau^{4(4m+1)}h_3\rangle,\qquad \tau^{8l+17}h_2^2 = \langle \tau^{8l+1}h_2^2,\rho^8,\rho^8\tau^8h_4\rangle
\]
with no indeterminacy. We may therefore shuffle
\begin{align*}
\tau^{8l+1}h_2^2\cdot\tau^{4(4m+1)+16}h_3 &= \tau^{8l+1}h_2^2 \langle \rho^8,\rho^8\tau^8h_4,\tau^{4(4m+1)}h_3\rangle\\
&=  \langle \tau^{8l+1}h_2^2,\rho^8,\rho^8\tau^8h_4\rangle \tau^{4(4m+1)}h_3 = \tau^{8l+17}h_2^2\cdot\tau^{4(4m+1)}h_3.
\end{align*}

(3)~~By induction on $m$, it suffices to show
\[
\tau^{2l}h_0\cdot\tau^{8m+9}h_2^2 = \tau^{2l+8}h_0\cdot\tau^{8m+1}h_2^2
\]
for $m\geq 0$. Observe that
\[
\tau^{8m+9}h_2^2 = \langle \rho \tau^4 h_3,\rho^7,\tau^{8m+1}h_2^2\rangle,\qquad \tau^{2l+8}h_0 = \langle \tau^{2l}h_0,\rho \tau^4 h_0,\rho^7\rangle
\]
with no indeterminacy. We may therefore shuffle
\begin{align*}
\tau^{2l}h_0\cdot\tau^{8m+9}h_2^2 &= \tau^{2l}h_0\langle \rho \tau^4 h_3,\rho^7,\tau^{8m+1}h_2^2\rangle\\
&=\langle \tau^{2l}h_0,\rho \tau^4 h_0,\rho^7\rangle \tau^{8m+1}h_2^2 = \tau^{2l+8}h_0\cdot\tau^{8m+1}h_2^2.
\end{align*}
This concludes the proof.
\end{proof}

From here, we have the following.

\begin{proposition}\label{prop:specialproducts}
Write $2m+l+1=2^k(2n+1)$. Then the following hold.
\begin{enumerate}
\item $\tau^{2^a(4m+1)}h_{a+1}\cdot\tau^{\floor{2^{a-1}(4l+1)}}h_a = \rho^{2^a(2^{k+1}-1)}\cdot h_{a+1}\cdot \tau^{2^{a+k}(4n+1)}h_{a+k+1}$;
\item $\tau^{2^{a+2}(4m+1)}h_{a+3}\cdot\tau^{2^a(8l+1)}h_{a+2}^2 = \rho^{2^{a+1}(2^{k+2}-3)}\cdot h_{a+1}\cdot h_{a+3}\cdot \tau^{2^{a+k+2}(4n+1)}h_{a+k+3}$;
\item $\tau^{2^a(8m+1)}h_{a+2}^2\cdot\tau^{\floor{2^{a-1}(4l+1)}}h_a = \rho^{2^a(2^{k+1}-1)}\cdot h_{a+2}^2\cdot\tau^{2^{a+k}(4n+1)}h_{a+k+1}$.
\end{enumerate}
\end{proposition}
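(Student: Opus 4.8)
The plan is to deduce these relations --- the only products left unresolved by \cref{prop:2linerelations} and \cref{prop:3linerelations} --- by combining those propositions with \cref{lem:movetau}, using $\Sq^0$ to cut down the number of cases.

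First I would reduce to $a = 0$. Since $\theta = \Sq^0$ is a multiplicative endomorphism of $\Lambda^F$ (\cref{Thm:Sq0}) whose induced ring endomorphism of $\Ext_\r$ carries each generator appearing in (1)--(3) to the one with $a$ incremented (\cref{thm:extadd}(2)) and sends $\rho$ to $\rho^2$, applying $\Sq^0$ to the $a = 0$ instance of each relation produces the general instance; one checks that the exponents $2^a(2^{k+1}-1)$, $2^{a+1}(2^{k+2}-3)$, $2^a(2^{k+1}-1)$ all double under $a \mapsto a+1$, that the powers of $\tau$ transform correctly, and that $\Sq^0(h_0) = \tau h_1$, $\Sq^0(h_1) = h_2$, $\Sq^0(h_2^2) = h_3^2$, $\Sq^0(h_3) = h_4$, and so on.

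With $a = 0$, \cref{lem:movetau} normalizes the two competing powers of $\tau$: its three parts rewrite the left sides of (1), (2), (3) as $\tau h_1\cdot\tau^{2(2m+l)}h_0$, $\tau^4 h_3\cdot\tau^{8(2m+l)+1}h_2^2$, and $\tau h_2^2\cdot\tau^{2(2m+l)}h_0$, so each product depends only on $j := 2m+l$, equivalently on the $2$-adic valuation $k$ of $j+1 = 2^k(2n+1)$. As the classical image of each product vanishes (the relations $h_{a+1}h_a = 0$, $h_{a+2}^2h_a = 0$ of \cref{prop:extc}), each product is $\rho$-torsion and $\rho$-divisible, and in each relevant trigrading there is a unique such class, so it suffices to identify the product modulo a sufficiently high power of $\rho$. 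I would then induct on $k$: the base is supplied by \cref{prop:2linerelations}(1),(2) and their $h^2$-analogues in \cref{prop:3linerelations}, which compute $h_1\cdot\tau^{2j}h_0$ (resp.\ $h_2^2\cdot\tau^{2j}h_0$, $h_3\cdot\tau^{8j+1}h_2^2$) either as $0$ or as an explicit $\rho$-multiple of a product of two simpler generators. For the inductive step one uses a cocycle-level identity: writing the cocycle representative of the left factor as $\tau^e\cdot z$, where $z$ is a coadmissible cocycle in positive filtration and $(e,z) = (1,\lambda_1)$, $(4,\lambda_7)$, $(1,\lambda_3^2)$ in the three cases, associativity in $\Lambda^\r$ gives that $(\text{left factor})\cdot X$ is represented by $\tau^e\cdot(zX)$, where $zX$ represents the corresponding product with $\tau^e$ removed; if the latter is a boundary $\delta(w)$, then $(\text{left factor})\cdot X = [\delta(\tau^e)\cdot w]$, and since $\delta(\tau) = \lambda_0\rho$ while $\delta(\tau^4)$ is divisible by $\rho^4$ with leading term $\lambda_3\tau^2\rho^4$ (\cref{prop:difftau}), the leading term of $\delta(\tau^e)$ --- together with the cocycle representatives tabulated above --- lets one read off the product and the new power of $\rho$, raising $k$ by one. (That $\tau^e z$ is a cocycle uses the vanishing Adem relations $\lambda_0\lambda_1 = 0$, $\lambda_1\lambda_3 = 0$ and their relatives.)

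The main obstacle is this last step --- determining the precise length of each hidden $\rho$-extension. Heuristically the exponent is forced by weight considerations, since by \cref{eq:extr1} and its higher-filtration analogues there is at most one nonzero $\rho$-divisible $\rho$-torsion class in each relevant trigrading; but making this rigorous requires either a careful leading-term analysis in $\Lambda^\r$ of the kind set up in \cref{ssec:extpreliminaries}, or Massey-product shuffling in the $\rho$-Bockstein spectral sequence as in the proof of \cref{lem:movetau}. Family (2) is the most delicate: the left factor $\tau^4 h_3$ already sits in $\rho$-filtration $4$ while the answer is only $\rho^2$-divisible (its target $h_1 h_3\cdot\tau^{2^{k+2}(4n+1)}h_{k+3}$ being a non-primitive product of generators), and one must track several interacting $\tau$-towers simultaneously, which is why the exponent there is $2^{k+2}-3$ rather than $2^{k+1}-1$.
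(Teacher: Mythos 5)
Your reduction strategy matches the paper exactly: reduce to $a=0$ via $\Sq^0$, normalize the pair $(m,l)$ to $m=0$ via \cref{lem:movetau}, and pin down the answer modulo a sufficient power of $\rho$ using the fact that (after reduction to $n=0$) there is a unique nonzero $\rho$-torsion, non-$\rho$-divisible class in each relevant trigrading. The final computational step, however, has a genuine gap. Your inductive step is premised on $z\cdot X$ being a boundary, where $z\in\{\lambda_1,\lambda_7,\lambda_3^2\}$ and $X$ represents the right factor; but after the reductions this never happens. For instance, after applying \cref{lem:movetau}(1) to case (1) one is left with $\tau h_1\cdot\tau^{2(2^k-1)}h_0$, and $2(2^k-1)\equiv 2\pmod 4$, so \cref{prop:2linerelations}(1) gives $h_1\cdot\tau^{2(2^k-1)}h_0 = \rho\cdot\tau h_1\cdot\tau^{2^{k+1}-3}h_1\neq 0$. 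Similarly $h_2^2\cdot\tau^{2(2^k-1)}h_0\neq 0$ and $h_3\cdot\tau^{8(2^k-1)+1}h_2^2\neq 0$. So the hypothesis under which your identity $[\tau^e zX]=[\delta(\tau^e)w]$ would apply is vacuous, and there is no induction to run.

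The correct move is the \emph{opposite} absorption: rather than peeling $\tau^e$ off the left factor, keep the left factor's representative $\tau^e z$ intact and use the representative $\rho^{-r}\delta(\text{monomial})$ of the \emph{right} factor, then apply the Leibniz rule to push $\tau^e$ into the $\delta$. Concretely, in case (1) with $m=0$, the right factor $\tau^{2(2^k-1)}h_0$ is represented by $\rho^{-1}\delta(\tau^{2(2^k-1)+1})$, and
\[
\rho^{-1}\delta(\tau^{2(2^k-1)+1})\cdot\tau\lambda_1 \;=\; \rho^{-1}\delta(\tau^{2^{k+1}})\cdot\lambda_1 \;+\; \rho^{-1}\tau^{2(2^k-1)+1}\,\delta(\tau)\,\lambda_1.
\]
The correction term dies because $\delta(\tau)\lambda_1 = \rho\lambda_0\lambda_1 = 0$, and the surviving term is, by definition, $\rho^{2^{k+1}-1}$ times a representative of $h_1\cdot\tau^{2^k}h_{k+1}$. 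Cases (2) and (3) are identical in shape, using the representatives $\rho^{-6}\delta(\lambda_1\tau^{8(2^k-1)+4})$ and $\rho^{-1}\delta(\tau^{2(2^k-1)+1})$ and the Adem vanishings $\lambda_1\lambda_3=0$ (via $\delta(\tau^4)\lambda_7$) and $\lambda_0\lambda_3^2=0$. This is a one-shot calculation in $k$, not an induction, and sidesteps the hidden-extension bookkeeping you identified as the obstacle.
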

\begin{proof}
In each of these, we may use $\Sq^0$ to reduce to the case $a=0$. By working modulo the smallest power of $\rho$ in which the claimed product does not vanish, we may reduce to the case $n=0$. By \cref{lem:movetau}, we may moreover reduce to the case $m=0$. The proposition is now proved by the following computations.

(1)~~Here, we are claiming $\tau h_1\cdot \tau^{2(2^k-1)}h_0 = \rho^{2^{k+1}-1}\cdot h_1\cdot \tau^{2^k}h_{k+1}$. Recall that $\tau^{2(2^k-1)}h_0$ is represented by $\rho^{-1}\delta(\tau^{2(2^k-1)+1})$. Now, the Leibniz rule implies
\begin{align*}
\rho^{-1}\delta(\tau^{2(2^k-1)+1})\cdot \tau\lambda_1 &= \rho^{-1}\delta(\tau^{2^{k+1}})\cdot \lambda_1 + \rho^{-1}\tau^{2(2^k-1)+1}\cdot\delta(\tau)\cdot\lambda_1.
\end{align*}
The second summand vanishes, as $\delta(\tau)\cdot\lambda_1 = \rho\lambda_0\cdot\lambda_1 = 0$. The first summand represents $\rho^{2^{k+1}-1}\cdot\tau^{2^k}h_{k+1}\cdot h_1$, yielding the claimed relation.

(2)~~Here, we are claiming $\tau^4 h_3\cdot\tau^{8(2^k-1)+1}h_2^2 = \rho^{2(2^{k+2}-3)}\cdot h_1\cdot h_3\cdot \tau^{2^{k+2}}h_{k+3}$. Recall that $\tau^{8(2^k-1)+1}h_2^2$ is represented by $\rho^{-6} \delta(\lambda_1\tau^{8(2^k-1)+4})$. Now, the Leibniz rule implies
\[
\rho^{-6}\delta(\lambda_1\tau^{8(2^k-1)+4})\cdot\tau^4\lambda_7 = \rho^{-6}\lambda_1\cdot\delta(\tau^{2^{k+3}})\cdot\lambda_7 + \rho^{-6}\lambda_1\cdot\tau^{8(2^k-1)+4}\cdot\delta(\tau^4)\cdot\lambda_7.
\]
The second term vanishes, as $\delta(\tau^4)\cdot\lambda_3 = \tau^2\lambda_3\cdot\lambda_7 = 0$. The first term represents $\rho^{2(2^{k+2}-3)}\cdot h_1\cdot h_3\cdot \tau^{2^{k+2}}h_{k+3}$, yielding the claimed relation.

(3)~~Here, we are claiming $\tau h_2^2\cdot\tau^{2(2^k-1)}h_0 = \rho^{2^{k+1}-1}\cdot h_2^2\cdot\tau^{2^k}h_{k+1}$. Recall that $\tau^{2(2^k-1)}h_0$ is represented by $\rho^{-1}\delta(\tau^{2(2^k-1)+1})$. Now, the Leibniz rule implies
\[
\rho^{-1}\delta(\tau^{2(2^k-1)+1})\cdot\tau\lambda_3^2 = \rho^{-1}\delta(\tau^{2^{k+1}})\cdot\lambda_3^2+\rho^{-1}\tau^{2(2^k-1)+1}\cdot\delta(\tau)\cdot\lambda_3^2.
\]
The second term vanishes, as $\delta(\tau)\cdot\lambda_3^2 = \rho\lambda_0\cdot\lambda_3^2=0$. The first summand represents $\rho^{2^{k+1}-1}h_{k+1}\cdot h_2^2$, yielding the claimed relation.
\end{proof}

The relations above suffice to write any product in $\Ext_\r^{\leq 3}$ in terms of the basis given in \cref{thm:extadd}. Thus we have the following.

\begin{thm}\label{thm:extmult}
A full set of relations for $\Ext_\r^{\leq 3}$ is given by those visible relations which may be deduced from \cref{lem:unambiguousproduct}, together with the products listed in \cref{prop:2linerelations}, \cref{prop:3linerelations}, and \cref{prop:specialproducts}.
\qed
\end{thm}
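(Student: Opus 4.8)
The plan is to read the theorem off the module-theoretic computation of \cref{thm:extadd} together with the product computations of \cref{prop:2linerelations}, \cref{prop:3linerelations} and \cref{prop:specialproducts}. Recall that \cref{thm:extadd} furnishes a minimal multiplicative generating set $G$ for $\Ext_\r^{\le 3}$ as an $\f_2[\rho]$-algebra, the $\rho$-torsion exponent of each generator, and an explicit $\f_2[\rho]$-module basis $\mathcal B$ all of whose elements are products of elements of $G$. Thus there is a surjection onto $\Ext_\r^{\le 3}$ from the $\f_2[\rho]$-algebra presented by $G$, the $\rho$-torsion relations, and the products named in the statement; since $\mathcal B$ is linearly independent, it suffices to prove that these relations already force every monomial in $G$ to lie in the $\f_2[\rho]$-span of $\mathcal B$, for then the presented algebra has the correct dimension in every tridegree and the surjection is an isomorphism.

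First I would reduce to products of few generators. Every generator in $G$ has filtration $1$, $2$ or $3$, while $\Ext_\r^{\le 3}$ vanishes in filtration $>3$; hence a monomial in $G$ of total filtration exceeding $3$ is zero in $\Ext_\r^{\le 3}$, and by associativity and $\f_2[\rho]$-bilinearity it is enough to express, in terms of $\mathcal B$, every product of at most three generators of total filtration at most $3$. The filtration-$2$ generators are exactly the classes $\tau^{2^a(8n+1)}h_{a+2}^2$ and the filtration-$1$ generators are the $h_{a+1}$ and the $\tau^{\floor{2^{a-1}(4n+1)}}h_a$, so the relevant products have filtration type $(1,1)$, $(1,2)$ or $(1,1,1)$. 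For each such product $\alpha$ there are four possibilities: $\alpha$ already belongs to $\mathcal B$; some binary sub-product of $\alpha$ vanishes, forcing $\alpha=0$; the image of $\alpha$ under $\Ext_\r\to\Ext_\c$ is a basis element of $\Ext_\c$, whence $\alpha$ is pinned down either by \cref{lem:unambiguousproduct} (when a factor is $\rho$-torsion) or by multiplicativity of $\thetatilde$ together with the splitting $\Ext_\r\cong\Ext_\dcl[\rho]\oplus\Ext_\r^{\rho\hyp\tors}$ of \cref{prop:retract} (when all factors are $\rho$-torsion-free); or the image of $\alpha$ in $\Ext_\c$ meets one of the four classical relations of \cref{prop:extc}. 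The first three cases are precisely the ``visible'' relations of the statement --- in particular the relations redistributing powers of $\tau$ within a product nonzero in $\Ext_\c$, and the relations $h_{a+1}h_a=0$, $h_{a+2}^2h_a=0$ among $\rho$-torsion-free generators --- while the fourth case is the source of the genuine ``hidden extensions''.

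The last step is to identify the hidden-extension products with those enumerated in \cref{prop:2linerelations}, \cref{prop:3linerelations} and \cref{prop:specialproducts}: products whose $\Ext_\c$-image lies in $h_{a+1}h_a=0$ are catalogued in \cref{prop:2linerelations}, those lying in $h_{a+2}^2h_a=0$ (together with the cubic relation $h_{a+3}h_{a+1}^2=h_{a+2}^3$) in \cref{prop:3linerelations}, and the three remaining families, in which the two factors carry $\tau$-powers whose dyadic valuations combine, in \cref{prop:specialproducts}. I expect the main obstacle to be exactly this exhaustiveness bookkeeping --- indeed it is the reason for the preceding pages: one must verify that for every admissible configuration of the $\tau$-exponents $\floor{2^{a-1}(4n+1)}$, $2^a(8n+1)$, $2^{a+1}(4n+1)$, $\floor{2^{a-1}(16n+1)}$, and so on appearing on the generators, the resulting product falls under one of the three propositions, and that \cref{lem:unambiguousproduct} applies wherever it is invoked, i.e.\ that the relevant tridegree contains a unique class that is $\rho$-torsion of the expected exponent and not divisible by $\rho$ --- a fact to be read off the module basis of \cref{thm:extadd}(4). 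Once this is granted, the relations in the statement span $\Ext_\r^{\le 3}$ over $\f_2[\rho]$ by the basis $\mathcal B$, and the theorem follows.
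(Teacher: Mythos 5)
Your proposal is correct and takes essentially the same approach as the paper: the theorem is stated without proof because the preceding material (\cref{thm:extadd}, \cref{lem:unambiguousproduct}, and \cref{prop:2linerelations}--\cref{prop:specialproducts}) already shows that every product of generators can be rewritten in terms of the given $\f_2[\rho]$-module basis, which is exactly the reduction to products of filtration type $(1,1)$, $(1,2)$, $(1,1,1)$ that you carry out. The one point you make explicit that the paper leaves implicit is that products of $\rho$-torsion-free generators fall outside the hypotheses of \cref{lem:unambiguousproduct} and are instead governed by the multiplicative splitting $\thetatilde$ of \cref{prop:retract}; this is a reasonable clarification of what "visible relations" means.
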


\section*{Part II: The motivic Hopf invariant one problem}

\section{Some homotopical preliminaries}\label{sec:preliminaries}

With the algebraic computation of \cref{Sec:ExtLow} out of the way, we now proceed to more homotopical considerations. This brief section collects a couple constructions that will be used in the following sections. Explicitly, \cref{ssec:hurewicz} will be used in our computation of $d_2(h_5)$ in \cref{sec:hopf}, and \cref{ssec:betti} will be used in our discussion of the unstable Hopf invariant one problem in \cref{sec:hi1}.

\subsection{The Hurewicz map}\label{ssec:hurewicz}

The constant functor $c\colon\Sp^{\cl}\rightarrow\Sp^F$ has a lax symmetric monoidal right adjoint $c^\ast$, described by
\[
c^\ast(X) = \Sp^F(S^{0,0},X).
\]
In particular, the unit of $c^\ast(S^{0,0})$ gives a ring map
\[
S^0\rightarrow c^\ast(S^{0,0}),
\]
and on homotopy groups this yields a Hurewicz map
\[
c\colon \pi_\ast^\cl\rightarrow\pi_{\ast,0}^F.
\]

\begin{proposition}\label{lem:diagonal}
For any $F$, there is map
\[
c\colon \Ext_{\cl}^{s,f}\rightarrow\Ext_F^{s,f,0}
\]
of multiplicative spectral sequences, converging to the Hurewicz map
\[
c\colon \pi_\ast^{\cl}\rightarrow\pi_{\ast,0}^F.
\]
Moreover, $c$ commutes with $\Sq^0$ and satisfies $c(h_0) = h_0 + \rho h_1$.
\end{proposition}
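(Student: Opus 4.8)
The plan is to realize $c$ as the map of Adams spectral sequences obtained by applying the lax symmetric monoidal right adjoint $c^*$ to the motivic Adams resolution of $S^{0,0}$, and then to pin down the behaviour on $h_0$ by a detection argument. The first step is to identify $c^*H\f_2^F$. For each $n$ the group $\pi_n c^*H\f_2^F=[S^{n,0},H\f_2^F]_{\Sp^F}$ is the weight-$0$ motivic cohomology $H^{-n,0}(\Spec F;\f_2)$ of a point, which is $\f_2$ for $n=0$ and vanishes otherwise; hence $c^*H\f_2^F$ is the essentially unique $E_\infty$-ring with homotopy concentrated in degree $0$, and $c^*H\f_2^F\simeq H\f_2^\cl$. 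Writing $A^\bullet$ for the Amitsur cosimplicial $E_\infty$-ring $[n]\mapsto(H\f_2^F)^{\wedge n+1}$, whose totalization is the $H\f_2^F$-nilpotent completion of $S^{0,0}$ (which by \cite{HKO11a} is the $(2,\eta)$-completion), I apply $c^*$ levelwise; since $c^*$ preserves totalizations this yields a cosimplicial classical spectrum $c^*A^\bullet$ with $\operatorname{Tot}(c^*A^\bullet)=c^*\!\bigl((S^{0,0})^\wedge_{H\f_2^F}\bigr)$. The lax monoidal structure maps $(H\f_2^\cl)^{\wedge n+1}=(c^*H\f_2^F)^{\wedge n+1}\to c^*((H\f_2^F)^{\wedge n+1})$ assemble into a map of $E_\infty$-ring cosimplicial objects from the classical Adams resolution of $S^0$ to $c^*A^\bullet$; passing to the associated towers and spectral sequences produces a multiplicative map $c\colon E_r^\cl\to E_r(c^*A^\bullet)$.

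Next I would identify the target. Because $\pi_\ast c^*X=\pi_{\ast,0}^F(X)$ for every $X\in\Sp^F$, the spectral sequence $E_r(c^*A^\bullet)$ is precisely the weight-$0$ part of the $F$-motivic Adams spectral sequence: its $E_1$-page is the weight-$0$ summand of the cobar complex $C(\ca^F)$, which splits as a direct sum over weights with the differential preserving weight, so its $E_2$-page is $\Ext_F^{\ast,\ast,0}$, and it converges to $\pi_{\ast,0}^F$. On abutments the induced map is the one coming from the unit $S^0\to c^*(S^{0,0})$, i.e.\ the Hurewicz map $c$ of \cref{ssec:hurewicz}, and multiplicativity on all pages is inherited from $c^*$ being lax \emph{symmetric} monoidal. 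For the $\Sq^0$ statement I would invoke \cref{Thm:Sq0}: on either side $\Sq^0$ is induced by the levelwise Frobenius on the cobar complex regarded as a cosimplicial commutative ring, the weight-$0$ summand of $C(\ca^F)$ is closed under the Frobenius since the Frobenius doubles weight and $2\cdot0=0$ (in particular both $c\circ\Sq^0$ and $\Sq^0\circ c$ again land in weight $0$, so the asserted identity typechecks), and $c$ on cobar complexes is a map of cosimplicial commutative rings and so commutes with the Frobenius.

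The last and most delicate step is the formula $c(h_0)=h_0+\rho h_1$, which is a statement on $E_2$-pages. Reducing $c$ along the augmentation $\m^F_0\to\f_2$ recovers the corresponding map over an algebraically closed field, under which $h_0^\cl\mapsto h_0=[\tau_0]$; therefore $c(h_0^\cl)=h_0+\overline u\,h_1$ for a unique $\overline u\in K^M_1(F)/2=(\m^F_0)_{(-1,-1)}$, and it remains to show $\overline u=\rho$. I would argue by detection: since $h_0^\cl$ is a permanent cycle detecting $2\in\pi_0^\cl$ and $c$ converges to the Hurewicz map, $c(h_0^\cl)$ is a permanent cycle detecting $2\in\pi_{0,0}^F$. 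But $h_0=[\tau_0]$ cannot detect $2$: if it did, then $h_0h_1$ would detect $2\eta$, yet $h_0h_1=0$ by the Adem relation $\lambda_0\lambda_1=0$, while the Milnor--Witt relation $\eta h=0$ (with $h=2+\rho\eta$) gives $2\eta=-\rho\eta^2$, which is detected in filtration $2$ by $\rho h_1^2\neq0$ (using that $h_1^2$ is $\rho$-torsion-free, \cref{thm:extadd})—so $2\eta$ has Adams filtration exactly $2$, a contradiction. As $\rho h_1$ detects $\rho\eta$, combining these forces $h_0+\rho h_1$ to be the filtration-$1$ class detecting $2$; over $\r$ this is immediate because $\Ext_\r^{0,1,0}=\f_2\{h_0,\rho h_1\}$, and the general case follows by repeating the argument or by naturality of $c$ along the maps $\Ext_\r\to\Ext_F$ of \cref{rmk:runiv}. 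Hence $c(h_0^\cl)=h_0+\rho h_1$.

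The main obstacles I anticipate are twofold. First, one must verify carefully that $c^*$ applied to the motivic Adams tower reproduces exactly the weight-$0$ part of the motivic Adams spectral sequence, compatibly with all multiplicative and $\Sq^0$-structure; this is entirely formal once the identification $c^*H\f_2^F\simeq H\f_2^\cl$ and the lax symmetric monoidality of $c^*$ are in hand, but it is the technical heart of the construction. Second, and genuinely non-formal, is the $h_0$-formula: it cannot be read off the spectral sequence alone and requires importing the relation $2\eta=-\rho\eta^2$ from Milnor--Witt $K$-theory together with the nonvanishing of $\rho h_1^2$ from our $\Ext$ computation in order to determine which filtration-$1$ class detects $2$.
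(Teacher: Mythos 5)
Your proposal is correct and takes essentially the same approach as the paper: it constructs $c$ by applying the lax symmetric monoidal right adjoint $c^*$ to the canonical Adams resolution of the motivic sphere, identifying $c^*(H\f_2^F)\simeq H\f_2$ via a homotopy-group computation, and reads off a map to the weight-$0$ cobar complex whose $\Sq^0$-compatibility comes from the Frobenius description of $\Sq^0$. For the formula $c(h_0)=h_0+\rho h_1$, the paper simply cites [I{\O}20, Remark 6.3] for the fact that $h_0+\rho h_1$ detects $2$, whereas you supply a self-contained detection argument via the Milnor--Witt relation $2\eta=-\rho\eta^2$ together with the Adem relation $\lambda_0\lambda_1=0$ and the $\rho$-torsion-freeness of $h_1^2$; this is a reasonable substitute.
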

\begin{proof}
Write $H\f_2$ for the ordinary mod $2$ Eilenberg-MacLane spectrum and $H\f_2^F$ for the motivic spectrum representing mod $2$ motivic cohomology. Then $c^\ast(H\f_2^F) = H\f_2$, thereby giving maps
\[
H\f_2^{\otimes n}\simeq c^\ast(H\f_2^F)^{\otimes n}\rightarrow c^\ast((H\f_2^F)^{\otimes n}).
\]
Thus there is a map from the canonical Adams resolution of the sphere to the restriction along $c^\ast$ of the canonical Adams resolution of the $F$-motivic sphere. On homotopy groups, this gives a map from the cobar complex of $\ca^{\cl}$ to the weight $0$ portion of the cobar complex of $\ca^F$, and passing to homology we obtain a map
\[
\Ext_{\cl}^{s,f}\rightarrow\Ext_F^{s,f,0}
\]
which is multiplicative and commutes with $\Sq^0$, and by construction this is a map of spectral sequences converging to the Hurewicz map. That $c(h_0) = h_0 + \rho h_1$ follows as these are the classes detecting $2$ (see for instance \cite[Remark 6.3]{IO19}).
\end{proof}

\subsection{The Lefschetz principle}\label{ssec:lefschetz}

The \textit{Lefschetz principle} asserts, informally, that ``everything'' which is true over $\c$ is true over any algebraically closed field. In this subsection, we note how one may read off a certain motivic Lefschetz principle from work of Wilson--{\O}stv{\ae}r \cite{WO17}.

In this paper, we have primarily been concerned with $F$-motivic homotopy theory for $F$ a field of characteristic not equal to $2$. For this subsection, we extend our notation to apply also when $F$ is some ring in which $2$ is invertible. We shall write $S^{0,0}$ for the $H\f_2^F$-nilpotent completion of the $F$-motivic sphere spectrum. When $F$ is a field, this is the $(2,\eta)$-completion of the $F$-motivic sphere spectrum, and when $F$ is an algebraically closed field, this reduces to a $2$-completion \cite{HKO11a, KW19a}. Let $\Sp^F_2$ denote the category of modules over this completed $F$-motivic sphere spectrum. In addition, let $\Sp^{F,\cell}_2\subset\Sp^F_2$ denote the cellular subcategory, i.e.\ the category generated by the spheres $S^{a,b}$ under colimits.

\begin{proposition}\label{prop:lefschetz}
Let $F$ be an algebraically closed field. Then there is an equivalence
\[
\Sp^{F,\cell}_2\simeq \Sp^{\c,\cell}_2.
\]
Moreover, this is compatible on Adams spectral sequences with the isomorphism $\Ext_F\cong\Ext_\c$.
\end{proposition}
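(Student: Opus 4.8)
The plan is to reduce \cref{prop:lefschetz} to the main theorem of Wilson--{\O}stv{\ae}r \cite{WO17}, which provides (after $2$-completion) an equivalence between the cellular $F$-motivic stable category and the cellular $\c$-motivic stable category for $F$ any algebraically closed field of characteristic zero, and a base-change comparison for positive characteristic. The key point is that this equivalence is constructed so as to be compatible with mod $2$ motivic cohomology: the motivic Eilenberg--MacLane spectrum $H\f_2^F$ corresponds to $H\f_2^\c$, and the identification of the dual Steenrod algebras $(\ca^F)^\vee\cong(\ca^\c)^\vee$ (both being $\f_2[\tau][\xi_i,\tau_i]/(\tau_i^2)$ by Voevodsky's computation, using that $\rho = 0$ and $\m^F = \f_2[\tau]$ over an algebraically closed field) is exactly the one induced by the equivalence of categories. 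First I would invoke \cite{WO17} to obtain the equivalence $\Sp^{F,\cell}_2\simeq\Sp^{\c,\cell}_2$ itself; this is where the characteristic-zero case (via a zig-zag through a common base, e.g.\ $\overline{\q}$ or a transfinite extension, exploiting continuity of motivic stable homotopy under filtered colimits of base rings) and the positive-characteristic case (via the comparison functor of \cite{WO17}, which on cellular $2$-complete objects is an equivalence) are handled.

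Next I would check the compatibility with Adams spectral sequences. Since the equivalence is symmetric monoidal and sends $H\f_2^F$ to $H\f_2^\c$, it sends the canonical $H\f_2^F$-Adams resolution of $S^{0,0}_F$ to the canonical $H\f_2^\c$-Adams resolution of $S^{0,0}_\c$; applying $\pi_{\ast,\ast}$ then yields an isomorphism of the associated cobar complexes $C(\ca^F)\cong C(\ca^\c)$ compatible with all structure, and passing to homology gives the isomorphism $\Ext_F\cong\Ext_\c$ together with the statement that the two $E_2$-pages (and in fact the whole spectral sequences) are identified. Concretely one observes that the bigraded ring $\m^F\cong\f_2[\tau]\cong\m^\c$, and the Hopf algebroid $(\ca^F)^\vee\cong(\ca^\c)^\vee$, are matched under the equivalence because the equivalence preserves the relevant bigraded homotopy groups of $H\f_2^F\wedge H\f_2^F$; so the comparison of cobar complexes is an isomorphism of cosimplicial objects, not merely a quasiisomorphism. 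This also makes clear that the $\Sq^0$-operations agree, though that is not needed for the statement as phrased.

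The main obstacle is the input result: one must be careful that \cite{WO17} really produces an equivalence of $\infty$-categories (not merely an isomorphism on homotopy groups of the sphere) compatible with $H\f_2$, across all algebraically closed fields of characteristic not equal to $2$, including positive characteristic. In characteristic zero this is a continuity/base-change argument reducing to $F = \c$; in positive characteristic $p\neq 2$ one uses the rigidity-type comparison, which is where the hypothesis that we have $2$-completed and restricted to cellular objects is essential. Once this input is in hand, the compatibility with Adams spectral sequences and the identification $\Ext_F\cong\Ext_\c$ are formal consequences of symmetric monoidality and the identification $H\f_2^F\mapsto H\f_2^\c$, so I would keep that part brief, citing the construction of the motivic Adams spectral sequence \cite{DI10} and the computation of $(\ca^F)^\vee$ \cite{Voe03} for the explicit match.
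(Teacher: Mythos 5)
Your overall strategy — reduce to Wilson--Østvær \cite{WO17} via a zigzag of base-change functors, using that we have $2$-completed and restricted to cellular objects — is the same as the paper's. However, there is a genuine gap in the logical structure: you ``invoke \cite{WO17} to obtain the equivalence $\Sp^{F,\cell}_2\simeq\Sp^{\c,\cell}_2$ itself,'' and then note the Adams spectral sequence compatibility is a formal consequence. But what \cite{WO17} (specifically Corollary~6.3) actually provides is the \emph{Ext-level} comparison: a zigzag $\Ext_\c\rightarrow\Ext_L\leftarrow\Ext_{W(F)}\rightarrow\Ext_F$ of isomorphisms along the base-change functors, where $W(F)$ is the Witt vectors of $F$ and $L$ is a common algebraically closed extension of characteristic zero. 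The categorical equivalence is not already in \cite{WO17}; it has to be \emph{derived}. The paper's argument does so by first concluding that the zigzag of base-change functors induces isomorphisms on $\pi_{\ast,\ast}$ of the spheres (via convergence of the Adams spectral sequences), and then observing that a zigzag of colimit-preserving functors between compactly generated stable categories which are equivalences on compact generators must be a zigzag of equivalences. That compactly-generated-categories step is the real content here and is missing from your sketch; without it, your appeal to ``the comparison functor of \cite{WO17}, which on cellular $2$-complete objects is an equivalence'' begs exactly the question being answered.

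Two smaller points. First, in positive characteristic the zigzag runs through $W(F)$, the Witt vectors of $F$, which is not itself a field — so one must additionally use that \cite{WO17} verifies that the Steenrod algebra and Adams spectral sequence over $W(F)$ remain well-behaved; you wave at this as a ``rigidity-type comparison,'' but the Witt-vector detour should be made explicit. Second, in characteristic zero your suggested intermediary $\overline{\q}$ is the wrong shape for the paper's zigzag $\c\to L\leftarrow F$, which needs a \emph{common extension} $L$ (not a common subfield); the variant you mention through a common subfield could in principle also work, but you'd then need to argue both base changes out of $\overline{\q}$ are equivalences, which is an extra step. Your discussion of the Adams spectral sequence compatibility — symmetric monoidality, $H\f_2^F\mapsto H\f_2^\c$, the explicit match of dual Steenrod algebras when $\rho=0$ — is fine and is indeed formal once the equivalence is in hand, but note that the paper's logic is essentially the reverse of yours: the Ext isomorphism comes first and the categorical equivalence is the conclusion.
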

\begin{proof}
First suppose that $F$ is of odd characteristic $p$. We follow the methods of \cite[Section 6]{WO17}. Let $W(F)$ be the ring of Witt vectors on $F$, and choose an algebraically closed field $L$ of characteristic $0$ together with embeddings
\[
\c\rightarrow L\leftarrow W(F)\rightarrow F.
\]
This gives rise to base change functors
\[
\Sp^\c\rightarrow \Sp^L\leftarrow\Sp^{W(F)}\rightarrow\Sp^F,
\]
and in particular maps
\begin{equation}\label{eq:zigzagpi}
\pi_{\ast,\ast}^\c\rightarrow\pi_{\ast,\ast}^L\leftarrow \pi_{\ast,\ast}^{W(F)}\rightarrow \pi_{\ast,\ast}^F.
\end{equation}
Although $W(F)$ is not a field, Wilson--{\O}stv{\ae}r show that its Steenrod algebra and Adams spectral sequence are still well-behaved, and \cite[Corollary 6.3]{WO17} shows that the above maps are modeled on motivic Adams spectral sequences by a zigzag of isomorphisms
\[
\Ext_\c\rightarrow\Ext_L\leftarrow\Ext_{W(F)}\rightarrow\Ext_F.
\]
It follows that \cref{eq:zigzagpi} is a zigzag of isomorphisms. In particular, consider the zigzag
\[
\Sp^{\c,\cell}_2\rightarrow\Sp^{L,\cell}_2\leftarrow\Sp^{W(F),\cell}_2\rightarrow \Sp^{F,\cell}_2.
\]
This is a zigzag of colimit-preserving functors of compactly generated stable categories which are equivalences on subcategories of compact generators, and is thus a zigzag of equivalences. This yields the canonical equivalence $\Sp^{\c,\cell}_2\simeq\Sp^{F,\cell}_2$.

If $F$ is of characteristic zero, then we may apply the same argument instead to a zigzag of the form
\[
\c\rightarrow L \leftarrow F
\]
with $L$ algebraically closed.
\end{proof}

\subsection{Betti realization}\label{ssec:betti}
If $X$ is a smooth scheme over $\c$, then the space of complex points of $X$ is a complex manifold. This refines to give \textit{Betti realization} functors \cite{MV99} from $\c$-motivic spaces to ordinary spaces, and from $\c$-motivic spectra to ordinary spectra, with a number of nice properties. We may use the Lefschetz principle of \cref{prop:lefschetz} to obtain an analogue for an arbitrary algebraically closed field $F$.

Let $S^0$ denote the $2$-completed sphere spectrum, and $\Sp^\cl_2$ the category of modules thereover.

\begin{proposition}\label{cor:betti}
Let $F$ be an algebraically closed field. Then there is a symmetric monoidal ``Betti realization'' functor
\[
\Be\colon\Sp^{F,\cell}_2\rightarrow\Sp^\cl_2,
\]
factoring through an equivalence from the category of modules over $S^{0,0}[\tau^{-1}]$ in $\Sp^{F,\cell}_2$ to $\Sp^\cl_2$, with the following properties.
\begin{enumerate}
\item $\Be(\tau) = 1$. In particular, $\Be(S^{a,b}) = S^a$, so that $\Be$ induces a map $\pi_{s,w}^F\rightarrow\pi_s^\cl$, and these patch together to an isomorphism $\pi_{\ast,\ast}^F[\tau^{-1}]\cong\pi_\ast^\cl[\tau^{\pm 1}]$.
\item The above isomorphism is modeled on Adams spectral sequences by the map
\[
\Ext_F\rightarrow\Ext_F[\tau^{-1}]\cong\Ext_\cl[\tau^{\pm 1}].
\]
\item The composite $\Be\circ c\colon \Sp^\cl_2\rightarrow\Sp^{F,\cell}_2\rightarrow\Sp^\cl_2$ is an equivalence. In particular, the map $c\colon\Ext_\cl\rightarrow\Ext_F$ of \cref{lem:diagonal} extends to an equivalence $\Ext_\cl[\tau^{\pm 1}]\rightarrow\Ext_F[\tau^{-1}]$.
\end{enumerate}
\end{proposition}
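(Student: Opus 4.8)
The plan is to build $\Be$ directly from the Lefschetz principle of \cref{prop:lefschetz} together with $\tau$-inversion, and then extract all three properties from the already-established input. First I would invoke \cref{prop:lefschetz} to reduce to the case $F = \c$, so that it suffices to produce a symmetric monoidal functor $\Be\colon\Sp^{\c,\cell}_2\to\Sp^\cl_2$ with the stated properties; the equivalence $\Sp^{F,\cell}_2\simeq\Sp^{\c,\cell}_2$ carries $\tau$ to $\tau$ (it is compatible with the isomorphism $\Ext_F\cong\Ext_\c$, hence with the element of $\Ext^{0,0,-1}$ detecting $\tau$, hence with $\tau$ itself on homotopy groups). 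Over $\c$, the classical complex Betti realization functor $\Sp^{\c}\to\Sp$ of Morel--Voevodsky \cite{MV99}, composed with $2$-completion, restricts to the cellular $2$-complete category; it is symmetric monoidal, sends $\g_m$ to $S^1$ and hence $S^{a,b}$ to $S^a$, and sends $\tau$ to $1$ (the realization of $\tau\colon S^{0,0}\to S^{0,1}$ is an equivalence $S^0\to S^0$). Thus $\Be$ factors through the localization $\Sp^{\c,\cell}_2\to \mathrm{Mod}_{S^{0,0}[\tau^{-1}]}(\Sp^{\c,\cell}_2)$.

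Next I would identify this localized category with $\Sp^\cl_2$. The functor $\mathrm{Mod}_{S^{0,0}[\tau^{-1}]}(\Sp^{\c,\cell}_2)\to\Sp^\cl_2$ induced by $\Be$ is symmetric monoidal and colimit-preserving between presentably symmetric monoidal stable categories; on the unit it sends $S^{0,0}[\tau^{-1}]$ to $S^0$, and on the generating spheres it sends $S^{a,b}[\tau^{-1}]$ to $S^a$, inducing an equivalence on the full subcategories of compact generators (both are generated under shifts and colimits by a single invertible object with endomorphism ring the $2$-complete sphere; $\tau$-inverting collapses the bigraded spheres to the singly-graded ones). Hence it is an equivalence, which gives the factorization claimed and simultaneously the first clause of (2): the isomorphism $\pi_{\ast,\ast}^\c[\tau^{-1}]\cong\pi_\ast^\cl[\tau^{\pm 1}]$ of (1) is realized by $\Be$. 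For (1) it remains to note $\Be(\tau)=1$ and $\Be(S^{a,b})=S^a$, both recorded above, so that $\Be$ descends to maps $\pi_{s,w}^\c\to\pi_s^\cl$ assembling into the $\tau$-periodic isomorphism.

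For (2), I would run the construction on canonical Adams resolutions: complex Betti realization sends $H\f_2^\c$ to $H\f_2$ and is monoidal, so it carries the cosimplicial object computing the $\c$-motivic Adams spectral sequence to the one computing the classical Adams spectral sequence, inducing on $E_2$-pages the map $\Ext_\c\to\Ext_\cl$ dual to $\f_2\otimes_{\m^\c}(-)$, i.e.\ the composite $\Ext_\c\to\Ext_\c[\tau^{-1}]\cong\Ext_\cl[\tau^{\pm 1}]$, where the last isomorphism is the standard identification of the $\tau$-inverted $\c$-motivic cohomology of the Steenrod algebra (Dugger--Isaksen); this map converges to the homotopy-level map of (1). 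For (3), observe that $c\colon\Sp^\cl_2\to\Sp^{\c,\cell}_2$ is the unit map $X\mapsto c^\ast$-type constant functor of \cref{ssec:hurewicz}, adjoint to the constant functor, and $\Be$ is a left inverse to the constant functor up to the $\tau$-inversion already built in; concretely, $\Be\circ c$ sends $S^0$ to $S^0$ and is a colimit-preserving symmetric monoidal endofunctor of $\Sp^\cl_2$ fixing the generator, hence an equivalence. On $E_2$-pages this says $\Be\circ c$ realizes the composite $\Ext_\cl\xrightarrow{c}\Ext_\c\to\Ext_\c[\tau^{-1}]\cong\Ext_\cl[\tau^{\pm 1}]$, which by (2) and the fact that $c$ raises weight by $0$ is the canonical inclusion, an equivalence after $\tau$-inversion.

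The main obstacle is purely formal bookkeeping: checking that complex Betti realization is compatible with $2$-completion and with the nilpotent completions defining $\Sp^{\c,\cell}_2$, and that the induced localization functor is genuinely an equivalence rather than merely essentially surjective --- this requires knowing that inverting $\tau$ on the cellular $2$-complete $\c$-motivic category is exactly classical $2$-complete homotopy theory, which is where one leans on the comparison results of Dugger--Isaksen (and their refinements). Everything else is a matter of tracking monoidal structure and the behavior on generating spheres, which the Lefschetz principle has already packaged for general algebraically closed $F$.
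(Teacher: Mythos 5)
Your proof is correct and takes essentially the same approach as the paper: reduce to $F=\c$ via the Lefschetz principle (\cref{prop:lefschetz}), then use the known properties of the $\c$-motivic Betti realization. The paper simply cites Dugger--Isaksen \cite[Section 2]{DI10} for those properties; you unpack them explicitly, but the content and reduction are the same.
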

\begin{proof}
These facts are known of the Betti realization functor for $F = \c$ \cite[Section 2]{DI10}, and the general case immediately follows from \cref{prop:lefschetz}.
\end{proof}

Using Mandell's $p$-adic homotopy theory \cite{Man01}, we may also produce an unstable analogue. Let $F$ be an algebraically closed field. Note from \cite[Proposition 15]{HKO11} that the spectrum $H\f_2^F$ is cellular; moreover, $\Be(H\f_2^F) = H\f_2$, as can be seen by inspection of homotopy groups. Let $\Spc(F)$ be the category of $F$-motivic spaces and $\Spc_2$ be the category of $2$-complete spaces.

\begin{proposition}\label{prop:bettiun}
Let $F$ be an algebraically closed field, and define
\[
\Be\colon\Spc(F)\rightarrow\Spc_2,\qquad \Be(X) = \CAlg_{H\f_2}(\Be((H\f_2^F)^{X_+}),\overline{\f}_2).
\]
Then $\Be(S^{a,b}) = (S^a)_2^\wedge$, and, at least when restricted to the full subcategory of $\Spc(F)$ consisting of simply connected finite motivic cell complexes, the functor $\Be$ preserves finite colimits and satisfies
\[
H\f_2^{\Be(X)_+}\simeq\Be((H\f_2^F)^{X_+}).
\]
\end{proposition}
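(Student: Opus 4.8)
The plan is as follows. Fix a finite motivic cell complex $X$. Then $\Sigma^\infty X_+$ is a finite cellular, hence dualizable, object of $\Sp^{F,\cell}_2$, so that $(H\f_2^F)^{X_+}\simeq D(\Sigma^\infty X_+)\otimes_{S^{0,0}}H\f_2^F$ is a cellular commutative $H\f_2^F$-algebra. Applying the symmetric monoidal functor $\Be$ of \cref{cor:betti}, and using that $\Be$ preserves duals of dualizable objects, that $\Be(S^{a,b})=S^a$, and that $\Be(H\f_2^F)=H\f_2$, we obtain that $\Be((H\f_2^F)^{X_+})$ is a commutative $H\f_2$-algebra whose homotopy groups are degreewise finite-dimensional $\f_2$-vector spaces. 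For $X=S^{a,b}$ one computes directly that $\Be((H\f_2^F)^{(S^{a,b})_+})\simeq D(\Sigma^\infty S^a_+)\otimes H\f_2\simeq (H\f_2)^{S^a_+}$ as $E_\infty$-$H\f_2$-algebras, i.e.\ it is the classical mod $2$ cochain algebra $C^*(S^a;\f_2)$; since $\CAlg_{H\f_2}(C^*(S^a;\f_2),\overline{\f}_2)\simeq\CAlg_{H\overline{\f}_2}(C^*(S^a;\overline{\f}_2),H\overline{\f}_2)$, Mandell's theorem \cite{Man01} identifies this with $(S^a)_2^\wedge$ for $a\geq 1$ (the case $a=0$ being immediate), which gives the first assertion $\Be(S^{a,b})=(S^a)_2^\wedge$.

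Now restrict to simply connected finite motivic cell complexes. The plan is to build, by induction over a cell structure with a single $0$-cell and all higher cells of dimension $\geq 2$, a finite simply connected CW complex $|X|$ together with an equivalence $\Be((H\f_2^F)^{X_+})\simeq C^*(|X|;\f_2)$ of $E_\infty$-$H\f_2$-algebras. For the inductive step, $X^{(n+1)}$ is the cofiber of an attaching map $\varphi\colon S^{a,b}\to X^{(n)}$ with $a\geq 1$; applying $(H\f_2^F)^{(-)_+}$ converts the defining pushout $X^{(n+1)}=X^{(n)}\cup_{S^{a,b}}\ast$ into a pullback square of $E_\infty$-algebras, and $\Be$ preserves this pullback, since by \cref{cor:betti} it factors as an equivalence after the finite-limit-preserving operation of inverting $\tau$. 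By the inductive hypothesis and the base case, the realized square is a pullback of $E_\infty$-$H\f_2$-algebras with corners $C^*(|X^{(n)}|;\f_2)$, $C^*(S^a;\f_2)$ and $H\f_2=C^*(\ast;\f_2)$; Mandell's full faithfulness on simply connected finite $\f_2$-type spaces promotes the realized attaching map to $C^*(f)$ for a unique map $f\colon S^a\to |X^{(n)}|$, and, setting $|X^{(n+1)}|=|X^{(n)}|\cup_f\ast$ and using that $C^*(-;\f_2)$ carries pushouts of spaces to pullbacks of $E_\infty$-algebras, we identify the realized square with the cochain square of $|X^{(n+1)}|$. This completes the induction. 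Applying $\CAlg_{H\f_2}(-,\overline{\f}_2)$ to the resulting equivalence and invoking Mandell's reconstruction theorem yields $\Be(X)=|X|_2^\wedge$, and then $H\f_2^{\Be(X)_+}\simeq C^*(|X|_2^\wedge;\f_2)\simeq C^*(|X|;\f_2)\simeq\Be((H\f_2^F)^{X_+})$, using that mod $2$ cochains are insensitive to $2$-completion for finite-type spaces; this is the second assertion.

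Finally, preservation of finite colimits follows formally: a finite colimit of simply connected finite motivic cell complexes along inclusions of subcomplexes is again simply connected, by van Kampen, and finite, and one may choose compatible cell structures so that $|-|$ carries it to the corresponding colimit of CW complexes. Since $(H\f_2^F)^{(-)_+}$ converts it to a finite limit of $E_\infty$-algebras, $\Be$ preserves that limit, and Mandell converts it back into the colimit of the $|-|$'s (computed in $\Spc_2$, i.e.\ up to recompletion); thus the composite $\CAlg_{H\f_2}(-,\overline{\f}_2)\circ\Be\circ(H\f_2^F)^{(-)_+}$ sends finite colimits to finite colimits. I expect the main obstacle to be the inductive step of the second paragraph: ensuring that the realized motivic cochain algebras genuinely assemble into the cochains of a CW complex, which rests entirely on Mandell's full faithfulness to upgrade maps of $E_\infty$-algebras to maps of spaces, together with the bookkeeping needed to keep everything simply connected, nilpotent and of finite $\f_2$-type so that Mandell's theorem applies throughout.
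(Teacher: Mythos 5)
Your proof is correct in outline, but it takes a longer and more elaborate route than the paper, which directly invokes Mandell's finite-colimit preservation statement. Mandell's $p$-adic homotopy theory (as reviewed in \cite[Section 3]{lurie2011rational}) gives two facts: (i) the cochain functor is fully faithful on connected $2$-complete nilpotent spaces of locally finite $\f_2$-type, and (ii) the contravariant functor $R\mapsto\CAlg_{H\f_2}(R,\overline{\f}_2)$ preserves finite colimits (i.e.\ takes finite limits of $\e_\infty$-algebras to finite colimits of spaces) on the subcategory of $\e_\infty$-$\f_2$-algebras with $R_0=\f_2$, $R_1=0$, $R_*$ locally finite, and $Q^0=\mathrm{id}$. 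The paper's proof is simply: $(H\f_2^F)^{(-)_+}$ takes finite colimits of spaces to finite limits of algebras, the stable $\Be$ is exact and hence preserves those limits, and Mandell's (ii) converts back to finite colimits of spaces; and the hypotheses of (ii) hold because stable $\Be$ is symmetric monoidal (giving the $\e_\infty$-structure) and $\Sq^0$ acts by the identity on motivic cohomology (giving $Q^0=\mathrm{id}$, while $R_0=\f_2$, $R_1=0$, and local finiteness follow from $X$ simply connected and finite). The biduality $H\f_2^{\Be(X)_+}\simeq\Be((H\f_2^F)^{X_+})$ and the sphere computation then follow at once from (i) and (ii). Your third paragraph rediscovers exactly this argument, which makes the inductive construction of a CW realization $|X|$ in your second paragraph redundant — though it is not wrong, and does serve to make the comparison more explicit.

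Two smaller points worth flagging in your version. First, in the inductive step you apply ``Mandell's full faithfulness'' to a map of $\e_\infty$-$H\f_2$-algebras, but Mandell's full faithfulness theorem is stated for $H\overline{\f}_2$-cochain algebras; one must first base change to $\overline{\f}_2$, which is exactly why the paper phrases $\Be(X)$ as $\CAlg_{H\f_2}(-,\overline{\f}_2)$ rather than as a mapping space of $\f_2$-algebras. You do this correctly for $S^{a,b}$ but elide it in the inductive step. Second, you never verify the hypotheses of Mandell's colimit-preservation statement — in particular the crucial fact that $Q^0$ acts by the identity on $\pi_*\Be((H\f_2^F)^{X_+})$, which comes from the $\Sq^0=\mathrm{id}$ instability condition in motivic cohomology; without that, nothing guarantees these algebras lie in the subcategory where Mandell's result applies. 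Finally, your aside that ``the case $a=0$ [is] immediate'' deserves a word, since $S^{0,0}$ is disconnected and falls outside the connected-nilpotent hypotheses of (i); of course it does not arise in the simply connected setting where the rest of the proposition lives.
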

\begin{proof}
We begin by recalling two facts from Mandell's work on $p$-adic homotopy theory \cite{Man01}. Strictly speaking, Mandell states his main theorem at the level of homotopy categories; a reference explicitly treating the full homotopical version we use is \cite[Section 3]{lurie2011rational}. First, the functor
\[
\Spc\rightarrow\CAlg_{H\overline{\f}_2},\qquad Y \mapsto H\overline{\f}_2^{Y_+}
\]
is fully faithful when restricted to the full subcategory of connected $2$-complete nilpotent spaces with locally finite mod $2$ cohomology. In particular, if $Y$ is a connected nilpotent space with locally finite mod $2$ cohomology, then the unit map
\[
Y\simeq\Spc(\ast,Y)\rightarrow\CAlg_{H\overline{\f}_2}(H\overline{\f}_2^{Y_+},H\overline{\f}_2^{\ast_+})\simeq \CAlg_{H\f_2}(H\f_2^{Y_+},H\overline{\f}_2)
\]
realizes the target as the $2$-completion of $Y$. Second, the functor
\[
\CAlg_{H\f_2}^{\text{op}}\rightarrow\Spc,\qquad R\mapsto \CAlg_{H\f_2}(R,H\overline{\f}_2)
\]
lands in $\Spc_2$ and preserves finite colimits when restricted to the full subcategory of $\e_\infty$ algebras $R$ over $\f_2$ such that $R_\ast$ is locally finite-dimensional, $R_0 = \f_2$, $R_1 = 0$, and the Dyer--Lashof operation $Q^0$ acts by the identity on $R_\ast$.

We now apply this to our situation. The stable Betti realization functor is symmetric monoidal, and thus $\Be((H\f_2^F)^{X_+})$ is indeed an $\e_\infty$ ring over $\f_2$. Moreover, as $\Sq^0$ acts by the identity on $H^{\ast,\ast}(X)$, the Dyer--Lashof operation $Q^0$ acts by the identity on $\pi_\ast \Be((H\f_2^F)^{X_+})$. In particular, $\Be((H\f_2^F)^{S^{a,b}_+})\simeq H\f_2^{S^a_+}$, and so the proposition follows by applying Mandell's theory.
\end{proof}

\begin{remark}
We have focused in this section on $2$-primary motivic homotopy theory over a field $F$ of characteristic not $2$. However, we note that our discussion applies in general to $p$-primary motivic homotopy theory over a field $F$ of characteristic not $p$.
\tqed
\end{remark}

\section{The motivic Hopf invariant one problem}\label{sec:hi1}

In this section, we formulate and discuss motivic analogues of the Hopf invariant one problem. The material in this section is not needed for \cref{sec:hopf}.

\subsection{The unstable Hopf invariant one problem} 

Classically, Adams' determination of the permanent cycles in $\Ext^1_\cl$ resolved the Hopf invariant one problem. The Hopf invariant one problem may be formulated motivically using the following.

\begin{definition}\label{def:hi1}
Let $f\colon S^{2a-1,2b}\rightarrow S^{a,b}$ be an unstable map between motivic spheres; in particular, $a\geq b\geq 0$ and $a\geq 1$.
Write $C(f)$ for the cofiber of $f$. The map $f$ vanishes in mod $2$ motivic cohomology for degree reasons, and thus there exists an isomorphism
\[
H^{\ast,\ast}(C(f)_+) \cong \m^F\{1,x,y\}
\]
of $\m^F$-modules, where $|x| = (-a,-b)$ and $|y| = (-2a,-2b)$. Say that $f$ has \textit{Hopf invariant one} if one may choose such generators $x$ and $y$ to satisfy
\[
x^2 = y,
\]
i.e.\ if $H^{\ast,\ast}(C(f)_+)\cong\m^F[x]/(x^3)$; otherwise $x^2 = 0$ and $f$ has Hopf invariant zero.
\tqed
\end{definition}

The unstable motivic Hopf invariant one problem is now the following question.

\begin{question}\label{q:hi1}
For which $(a,b)$ does there exist a map $f\colon S^{2a-1,2b}\rightarrow S^{a,b}$ of Hopf invariant $1$?
\tqed
\end{question}

This turns out to mostly reduce to the classical case, by way of the following.

\begin{lemma}\label{lem:hi1basechange}
Let $f\colon S^{2a-1,2b}\rightarrow S^{a,b}$ be an unstable $F$-motivic map. Then $f$ has Hopf invariant one if and only if its base change to an algebraic closure of $F$ is of Hopf invariant one.
\end{lemma}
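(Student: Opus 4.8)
The key point is that the Hopf invariant of $f$ is detected by a single element of $\m^F$, namely the coefficient $c$ in the equation $x^2 = c\cdot y$ (using the $\m^F$-module generators $x,y$ of $\widetilde H^{\ast,\ast}(C(f))$; note $x^2$ must be an $\m^F$-multiple of $y$ for degree reasons, and $\m^F$ is concentrated in nonpositive stems so the only possibilities have $c \in \f_2 \subset \m^F$ sitting in the appropriate degree). The statement then amounts to saying that this element $c \in \f_2$ is unchanged by base change to $\Fbar$. I would first make this reduction explicit: fix a cofiber sequence $S^{2a-1,2b}\xrightarrow{f} S^{a,b}\to C(f)$, observe that $f$ is zero on mod $2$ motivic cohomology for degree reasons (the relevant group $H^{a,b}(S^{2a-1,2b})$ vanishes since $2a-1 > a$), so the long exact sequence splits into short exact sequences of $\m^F$-modules and we get the asserted free description $\widetilde H^{\ast,\ast}(C(f)) \cong \m^F\{x,y\}$, with $x$ restricting to the generator of $\widetilde H^{a,b}(S^{a,b})$ and $y$ mapping to the generator of $\widetilde H^{2a,2b}(S^{2a-1,2b})$. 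The cup square $x^2$ lands in $\widetilde H^{2a,2b}(C(f))$, which is the free rank-one $\m^F$-module on $y$, so $x^2 = c\, y$ with $c \in \m^{F, -a,-b}$; and $\m^{F}$ in internal degree $(-a,-b)$ — i.e.\ in cohomological bidegree $(a,b)$ — is a one-dimensional $\f_2$-vector space concentrated on the $\tau$-free part only when $a = b$, and otherwise zero unless this is the $\tau^{b-?}$... in any case it is at most $\f_2$, so $c\in\{0,1\}$. [If $\widetilde H^{2a,2b}$ picks up contributions from several monomials in $\tau,\rho$ one must be slightly more careful, but since $x$ and $y$ are chosen to be $\f_2$-generators fixed in a single degree the relevant coefficient still lies in the degree-$(a,b)$ part of $\m^F$ and one notes this part injects into its base change.]

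\textbf{Base change compatibility.} Let $i\colon F \hookrightarrow \Fbar$ be an algebraic closure and $i^\ast\colon \Spc(F)\to\Spc(\Fbar)$ the base-change functor, which is symmetric monoidal and sends $S^{a,b}_F$ to $S^{a,b}_{\Fbar}$. Then $i^\ast(C(f)) = C(i^\ast f)$ since cofibers are preserved. The induced map $i^\ast\colon \widetilde H^{\ast,\ast}_F(C(f)) \to \widetilde H^{\ast,\ast}_{\Fbar}(C(i^\ast f))$ is a map of graded rings, compatible with the base change $\m^F \to \m^{\Fbar} = \f_2[\tau]$ of coefficients; concretely, by the projective bundle / cellularity description $\widetilde H^{\ast,\ast}_{\Fbar}(C(i^\ast f)) = \m^{\Fbar}\otimes_{\m^F}\widetilde H^{\ast,\ast}_F(C(f))$, so $i^\ast(x)$ and $i^\ast(y)$ are again free $\m^{\Fbar}$-module generators. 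Applying $i^\ast$ to $x^2 = c\,y$ gives $i^\ast(x)^2 = i^\ast(c)\,i^\ast(y)$, and $i^\ast$ restricted to $\f_2\subset\m^F$ is the identity, so the coefficient of the base-changed cup square is the \emph{same} element $c\in\f_2$. Hence $f$ has Hopf invariant one (i.e.\ $c=1$) if and only if $i^\ast f$ does.

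\textbf{Main obstacle.} The only genuinely delicate point is bookkeeping the degree $(a,b)$ in which $x^2$ lives and confirming that the coefficient it produces is literally an element of $\f_2$ whose image under $\m^F\to\m^{\Fbar}$ is itself — i.e.\ that there is no room for, say, a $\rho$-multiple of a lower-weight class to contribute to $x^2$ and then die under base change. I expect this to be handled cleanly by the observation that $x$ and $y$ are cohomology classes of spheres, so $x^2$ necessarily lies in the exact bidegree $(2a,2b)$, which is precisely the bidegree of $y$; thus $c$ sits in bidegree $(0,0)$ of $\m^F$, i.e.\ $c\in\m^{F,0,0}=\f_2$, and $\f_2\to\m^{\Fbar,0,0}=\f_2$ is an isomorphism. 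With that pinned down the argument is purely formal. Everything else — the splitting of the cohomology long exact sequence, the cellularity/base-change description of $\widetilde H^{\ast,\ast}(C(f))$, and symmetric monoidality of $i^\ast$ — is standard and can be cited from the references already invoked in the excerpt.
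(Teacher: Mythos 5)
Your argument is correct and is precisely what the paper's ``immediate from the definitions'' is shorthand for: base change to $\Fbar$ is symmetric monoidal and preserves cofiber sequences and (cellular) cohomology, the Hopf invariant is the coefficient $c\in\m^{F,0,0}=\f_2$ of $y$ in $x^2$, and $\m^{F,0,0}\to\m^{\Fbar,0,0}$ is the identity on $\f_2$. The only cleanup worth flagging is that you needn't hedge about extra contributions to $\widetilde H^{2a,2b}(C(f))$: the unstable constraint $2a-1\geq 2b$ forces $a>b$, so $\m^{F,a,b}=0$ and $\widetilde H^{2a,2b}(C(f))$ really is freely generated by $y$ alone, exactly as you first asserted.
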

\begin{proof}
This is immediate from the definitions.
\end{proof}

\begin{proposition}\label{prop:nohi1}
Suppose that $F$ is algebraically closed, and fix an unstable $F$-motivic map $f\colon S^{2a-1,2b}\rightarrow S^{a,b}$ of Hopf invariant one. Then the Betti realization (see \cref{prop:bettiun}) of $f$ is an odd multiple of $2$, $\eta$, $\nu$, or $\sigma$. In particular, $a\in\{1,2,4,8\}$.
\end{proposition}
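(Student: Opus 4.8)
The plan is to reduce the motivic statement to the classical Hopf invariant one problem via Betti realization, using the good cohomological behavior of $\Be$ established in \cref{prop:bettiun}. First I would apply $\Be$ to the cofiber sequence $S^{2a-1,2b}\to S^{a,b}\to C(f)$. Since $\Be$ preserves finite colimits on simply connected finite motivic cell complexes and sends $S^{a,b}$ to $(S^a)^\wedge_2$, this yields a cofiber sequence $(S^{2a-1})^\wedge_2\to (S^a)^\wedge_2\to \Be(C(f))$, so that $\Be(C(f))$ is a $2$-complete finite complex with two cells, in dimensions $a$ and $2a$, whose attaching map is $\Be(f)\in\pi_{2a-1}((S^a)^\wedge_2)$. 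Stabilizing, $\Be(f)$ determines an element of the classical $2$-complete stable stem $\pi^\cl_{a-1}$.

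Next I would identify the cup product structure on $H^\ast\Be(C(f)_+)$. By \cref{prop:bettiun}, $H\f_2^{\Be(C(f))_+}\simeq \Be((H\f_2^F)^{C(f)_+})$, and since stable Betti realization $\Be$ is symmetric monoidal and sends $\tau$ to $1$, it carries the $\m^F$-algebra $H^{\ast,\ast}(C(f)_+)$ to $H^\ast\Be(C(f)_+)$ compatibly with products (after inverting $\tau$, which is already invisible in the classical target). Hence the hypothesis $x^2=y$ in $H^{\ast,\ast}(C(f)_+)$ gives $\bar x^2=\bar y\neq 0$ in $H^\ast\Be(C(f)_+)$, where $\bar x$ generates $H^a$ and $\bar y$ generates $H^{2a}$. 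Thus $\Be(C(f))$ is a classical $2$-cell complex realizing the algebra $\f_2[x]/x^3$ in degrees $a,2a$, i.e.\ a complex witnessing that the stabilization of $\Be(f)$ has classical Hopf invariant one in the usual mod $2$ sense.

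Then I would invoke Adams' theorem (the resolution of the classical Hopf invariant one problem, via permanent cycles in $\Ext^1_\cl$): the only stable classes of Hopf invariant one are represented by $h_0,h_1,h_2,h_3$, detecting $2,\eta_\cl,\nu_\cl,\sigma_\cl$ respectively. So the stabilization of $\Be(f)$ is an odd multiple of one of $2$, $\eta$, $\nu$, $\sigma$, forcing $a-1\in\{0,1,3,7\}$, i.e.\ $a\in\{1,2,4,8\}$. To upgrade from "stably an odd multiple of $2,\eta,\nu,\sigma$" to the unstable assertion, I would observe that in each of the relevant dimensions the unstable-to-stable map $\pi_{2a-1}(S^a)\to\pi^\cl_{a-1}$ is understood: for $a\in\{2,4,8\}$ this is the EHP/James fibration statement that $\pi_{2a-1}(S^a)\to\pi^\cl_{a-1}$ hits exactly the elements of Hopf invariant one, and for $a=1$ it is degree. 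Hence $\Be(f)$ is itself, unstably, an odd multiple of $2,\eta,\nu$, or $\sigma$.

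The main obstacle I anticipate is bookkeeping around the functoriality of cup products under $\Be$ on the unstable level: \cref{prop:bettiun} gives $H\f_2^{\Be(X)_+}\simeq\Be((H\f_2^F)^{X_+})$ as $\e_\infty$-rings, so one must be careful that the ring isomorphism indeed matches the motivic cup product with the classical one and that inverting $\tau$ does not destroy the relevant class (it does not, since $y$ lives in the $\tau$-torsion-free part by degree reasons). A secondary subtlety is the passage from the stable to the unstable classification of Hopf-invariant-one elements, which is standard but must be cited correctly (James, Adams); alternatively one can avoid it entirely if the statement is only used for the stabilized map, but as phrased we should record the unstable conclusion. Everything else — applying $\Be$ to the cofiber sequence, reading off the two-cell structure, and quoting Adams — is routine given the machinery already set up.
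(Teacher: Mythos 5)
Your proposal is correct and follows essentially the same route as the paper: apply the unstable Betti realization of \cref{prop:bettiun} to the cofiber of $f$, conclude that $\Be(C(f))$ is a classical two-cell complex with $H^\ast(\Be(C(f))_+)\cong\f_2[x]/(x^3)$, so that $\Be(f)\colon S^{2a-1}\to S^a$ has classical Hopf invariant one, and invoke Adams. The only deviation is your detour through the stable stems and $\Ext^1_\cl$ followed by an EHP/James upgrade back to the unstable statement; this is unnecessary, since Adams' theorem \cite{Ada60} is already a statement about unstable maps $S^{2n-1}\to S^n$ of Hopf invariant one.
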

\begin{proof}
By \cref{lem:hi1basechange}, we may as well suppose that $F$ is algebraically closed. Let $C(f)$ denote the cofiber of $f$ and $C(\Be(f))$ the cofiber of $\Be(f)$. Then $\Be(C(f)) = C(\Be(f))$ by \cref{prop:bettiun}, and thus $H^\ast (C(\Be(f))_+) = H^\ast(\Be(C(f))_+) = \f_2[x]/(x^3)$ with $|x| = -a$. In other words, the map between $2$-completed spheres $\Be(f)\colon S^{2a-1}\rightarrow S^a$ has Hopf invariant one. The proposition now follows from Adams' resolution of the Hopf invariant one problem \cite{Ada60}.
\end{proof}

\cref{prop:nohi1} is not a complete answer to \cref{q:hi1}, as we have not given any bounds on $b$, nor have we discussed the existence of maps of Hopf invariant one. Although we will not end up with a complete answer in general, there is more we can say. Before this, we recall what information is encoded in the $1$-line of the $F$-motivic Adams spectral sequence.

\subsection{The stable Hopf invariant one problem}\label{ssec:1lineext}

\cref{q:hi1} can be rephrased as asking when there exists an unstable $2$-cell complex, with cells in dimension $(a,b)$ and $(2a,2b)$, such that in cohomology the bottom cell squares to the top cell. In the stable category, one no longer has cup-squares; instead, one has Steenrod operations. Thus we may consider the stable motivic Hopf invariant one problem to be the following question.

\begin{question}\label{q:hi1stable}
What $\ca^F$-modules arise as the cohomology of $2$-cell complexes? In particular, for which $(a,b)$ does there exist a $2$-cell complex, with cells in dimensions $(0,0)$ and $(a,b)$ and attaching map vanishing in mod $2$ motivic cohomology, such that $H^{\ast,\ast} X = \m^F\{x,y\}$ is not split as an $\ca^F$-module?
\tqed
\end{question}

This is a particular case of the \textit{realization problem} for $\ca^F$-modules, and is exactly what the $1$-line of the $F$-motivic Adams spectral sequence encodes. 
The following is standard: 

\begin{proposition}\label{prop:einv}
Fix a class $\epsilon\in\Ext_F^{a-1,1,b}$ classifying an extension $0\rightarrow \m^F\{y\}\rightarrow E \rightarrow \m^F\{x\}\rightarrow 0$ of $\ca^F$-modules with $|x| = (0,0)$ and $|y| = (-a,-b)$. Then the following are equivalent:
\begin{enumerate}
\item There is stable $2$-cell complex $C$ with cells in dimensions $(0,0)$ and $(a,b)$ such that $H^{\ast,\ast} C\cong E$.
\item The class $\epsilon$ is a permanent cycle in the $F$-motivic Adams spectral sequence, and thus detects a stable class $\alpha\in\pi_{a-1,b}^F$.
\end{enumerate}
Explicitly, if $\epsilon\in\Ext_F^{a-1,1,b}$ detects $\alpha\in \pi_{a-1,b}^F$, then the cofiber $C(\alpha)$ satisfies $H^{\ast,\ast}C(\alpha)\cong E$; and if $C$ is a stable $2$-cell complex with $H^{\ast,\ast} C = E$, then the fiber of the inclusion $S^{0,0}\rightarrow C$ is a map $\alpha\colon S^{a-1,b}\rightarrow S^{0,0}$ detected by $\epsilon\in \Ext_F^{a-1,1,b}$.
\qed
\end{proposition}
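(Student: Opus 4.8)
The plan is to phrase everything in terms of a single map $\alpha\colon S^{a-1,b}\rightarrow S^{0,0}$ and then invoke the standard fact that a one-dimensional extension of $\ca^F$-modules is classified by an Adams filtration one detection. The first step is the elementary observation that a stable $2$-cell complex $C$ with cells in dimensions $(0,0)$ and $(a,b)$ is exactly the cofiber $C(\alpha)$ of such an $\alpha$, and that, since a cofiber sequence of motivic spectra is also a fiber sequence, $\alpha$ is recovered from $C$ as the fiber of the bottom cell inclusion $S^{0,0}\rightarrow C$. So the two halves of the statement become assertions about the same map $\alpha$: one about its cofiber, one about its detection in the $F$-motivic Adams spectral sequence.

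Next I would analyze $H^{\ast,\ast}C(\alpha)$ by applying $H^{\ast,\ast}(-)$ to the cofiber sequence $S^{a-1,b}\xrightarrow{\alpha}S^{0,0}\xrightarrow{i}C(\alpha)\xrightarrow{p}S^{a,b}$, which yields a long exact sequence of $\ca^F$-modules. I would record the chain of equivalences: $\alpha$ is zero on $H^{\ast,\ast}$ if and only if its Hurewicz image in $H_{\ast,\ast}(S^{0,0};\f_2)=\pi_{\ast,\ast}H\f_2^F$ vanishes, if and only if $\alpha$ has $H\f_2^F$-Adams filtration at least $1$, if and only if the long exact sequence collapses to a short exact sequence $0\rightarrow\Sigma^{a,b}\m^F\xrightarrow{i^\ast}H^{\ast,\ast}C(\alpha)\xrightarrow{p^\ast}\m^F\rightarrow 0$. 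When these conditions hold this sequence carries an extension class $\epsilon_\alpha\in\Ext^1_{\ca^F}(\m^F,\Sigma^{a,b}\m^F)=\Ext_F^{a-1,1,b}$; since the split extension has vanishing class, $\epsilon_\alpha$ depends only on $\alpha$ modulo maps of Adams filtration at least $2$, and is zero whenever $\alpha$ has filtration at least $2$.

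The one nonformal step, and the part I expect to be the main obstacle, is the identification of $\epsilon_\alpha$ with the Adams filtration one detection of $\alpha$: writing $\overline{S}=\operatorname{fib}(S^{0,0}\rightarrow H\f_2^F)$, choosing a lift of $\alpha$ to $\overline{S}$, and comparing the cofiber sequence defining $C(\alpha)$ with the first two stages of the canonical $H\f_2^F$-Adams resolution of $S^{0,0}$, one sees that the first $k$-invariant of $H^{\ast,\ast}C(\alpha)$ is precisely the image of $\alpha$ in $E_2^{a-1,1,b}=\Ext_F^{a-1,1,b}$. This is the motivic instance of a classical fact about the Adams spectral sequence (the extension-class interpretation of a filtration one detection), so in the write-up I would either cite it or spell out the comparison of resolutions; it is the only place where one must recall how the spectral sequence is built rather than chase a diagram.

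Granting this identification, both implications are formal. If $\epsilon$ is a permanent cycle detecting $\alpha\in\pi_{a-1,b}^F$, then for $\epsilon\neq 0$ the map $\alpha$ has Adams filtration exactly $1$, the short exact sequence above exists, and $\epsilon_\alpha=\epsilon$, so $H^{\ast,\ast}C(\alpha)\cong E$ and $C=C(\alpha)$ works; for $\epsilon=0$ the extension $E$ is split and $C=S^{0,0}\vee S^{a,b}$ works. Conversely, given $C$ with $H^{\ast,\ast}C\cong E$, writing $\alpha$ for the fiber of $S^{0,0}\rightarrow C$ gives $\epsilon_\alpha=\epsilon$; if $\epsilon\neq 0$ this forces $\alpha$ to have filtration exactly $1$ and to be detected by $\epsilon$, whence $\epsilon$ is a permanent cycle, and if $\epsilon=0$ it is one vacuously. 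The two explicit assertions in the statement are exactly the equality $\epsilon_\alpha=\epsilon$ read in the two directions, so they come out of the same argument with no extra work.
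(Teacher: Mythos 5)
Your argument is correct and is the standard one the paper has in mind: the paper simply labels the proposition ``standard'' and omits the proof entirely. Your identification of the extension class of $H^{\ast,\ast}C(\alpha)$ with the Adams filtration-one detection of $\alpha$, via comparison of the cofiber sequence with the first stages of the canonical $H\f_2^F$-Adams resolution, together with the bookkeeping around the $\epsilon=0$ case, supplies exactly the content the paper suppresses.
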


As we will see in \cref{sec:hopf}, the $1$-line of the $F$-motivic Adams spectral sequence is already quite rich, and strongly depends on the base field $F$. Thus in considering the stable Hopf invariant one problem, one may not reduce to the case where $F$ is algebraically closed, unlike in the unstable case.

\subsection{Relation between the unstable and stable motivic Hopf invariant one problems}

We may now relate the unstable and stable questions, \cref{q:hi1} and \cref{q:hi1stable}.

\begin{proposition}\label{prop:hi1ext}
Let $f\colon S^{2a-1,2b}\rightarrow S^{a,b}$ be a map of Hopf invariant one. Then the associated stable class $\alpha\in\pi_{a-1,b}^F$ is detected by a permanent cycle in $\Ext_F^{a-1,1,b}$ which, after base change to the algebraic closure of $F$, is one of the following:
\[
h_0,\quad h_1,\quad \tau h_1,\quad h_2,\quad \tau h_2,\quad \tau^2h_2,\quad h_3,\quad \tau h_3,\quad \tau^2 h_3,\quad\tau^3 h_3,\quad \tau^4 h_3.
\]
In particular, if $\Ext_F^{a-1,1,b}$ does not contain any such permanent cycle, then there is no map $f\colon S^{2a-1,2b}\rightarrow S^{a,b}$ of Hopf invariant one.
\end{proposition}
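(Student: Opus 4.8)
The plan is to reduce the statement to the classical Hopf invariant one problem via Betti realization, using the structure established in \cref{prop:bettiun} and \cref{cor:betti}. First I would recall that a map $f\colon S^{2a-1,2b}\rightarrow S^{a,b}$ of Hopf invariant one determines, upon stabilizing, a stable class $\alpha\in\pi_{a-1,b}^F$ with cofiber $C$ satisfying $H^{\ast,\ast}(C_+)\cong\m^F[x]/(x^3)$; by \cref{prop:einv} any element $\epsilon\in\Ext_F^{a-1,1,b}$ detecting $\alpha$ is a permanent cycle. The task is therefore to constrain which classes in $\Ext_F^{a-1,1,b}$ can arise this way.

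The key step is base change to the algebraic closure $\Fbar$ of $F$, followed by Betti realization. By \cref{lem:hi1basechange} the base change of $f$ is still of Hopf invariant one, and \cref{prop:nohi1} (via \cref{prop:bettiun}) shows that $\Be(f)$ is an odd multiple of $2$, $\eta$, $\nu$, or $\sigma$, so in particular $a\in\{1,2,4,8\}$, and moreover $b$ is constrained only by $a\geq b\geq 0$. Now I would use \cref{cor:betti}: the stable class $\alpha_{\Fbar}\in\pi_{a-1,b}^{\Fbar}$ is detected by some permanent cycle $\epsilon\in\Ext_{\Fbar}^{a-1,1,b}$, and under the map $\Ext_{\Fbar}\rightarrow\Ext_{\Fbar}[\tau^{-1}]\cong\Ext_\cl[\tau^{\pm 1}]$ this class must go to the Adams-filtration-one class detecting $\Be(\alpha_{\Fbar})\in\pi_{a-1}^\cl$, which is $h_0$, $h_1$, $h_2$, or $h_3$ according to whether $a=1,2,4,8$. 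Since $\Ext_{\Fbar}^1\cong\Ext_\c^1$ is spanned over $\f_2[\tau]$ by the $\tau$-power multiples of the $h_i$, and the $\tau$-torsion exponents of $h_0,h_1,h_2,h_3$ in $\Ext_\c$ are $1,\infty,3,5$ respectively (equivalently: $h_0$ is $\tau$-torsion-free but in weight $0$ only $h_0$ survives, $\tau^j h_1$ for all $j\geq 0$, $\tau^j h_2$ for $0\leq j\leq 2$, $\tau^j h_3$ for $0\leq j\leq 4$, noting $\tau^5 h_3$ already dies), the candidate classes in $\Ext_{\Fbar}^{a-1,1,b}$ mapping to the right thing are precisely those on the displayed list. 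This gives the first assertion.

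For the final sentence — if $\Ext_F^{a-1,1,b}$ contains no permanent cycle base-changing to one of the listed classes, then no map $f$ of Hopf invariant one exists — I would argue contrapositively: given such an $f$, \cref{prop:einv} produces a permanent cycle $\epsilon\in\Ext_F^{a-1,1,b}$ detecting $\alpha$, and the analysis above shows its image in $\Ext_{\Fbar}^{a-1,1,b}$ under base change is one of the eleven listed classes. The base change map $\Ext_F\rightarrow\Ext_{\Fbar}$ sends permanent cycles to permanent cycles, so $\epsilon$ is the required element, contradicting the hypothesis.

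The main obstacle I anticipate is not conceptual but bookkeeping: one must be careful that the isomorphism $\Ext_{\Fbar}\cong\Ext_\c$ of \cref{prop:lefschetz} is compatible with the $\tau$-inversion of \cref{cor:betti} and with the naming of classes, and that the list of $\tau$-power multiples of $h_i$ surviving $\tau$-inversion is exactly the eleven classes displayed — this requires citing the structure of $\Ext_\c^1$ (e.g. from \cref{prop:extc}, where $\Ext_\c^{\leq 3}$ is described, giving $h_0,h_1,h_2,h_3$ with the relevant $\tau$-torsion exponents implicit in the $\f_2[\tau]$-basis). A secondary subtlety is making sure the weight bound is correctly handled: the Betti realization argument pins down $a$, but $b$ ranges over $0\leq b\leq a$, and for each such $b$ one must check which $\tau$-power of $h_i$ lands in weight $b$ (namely $\tau^{a/2-b}h_i$ when $a\in\{2,4,8\}$, and $h_0$ itself when $a=1,b=0$), so that the eleven entries on the list are exactly accounted for as $(a,b)$ varies.
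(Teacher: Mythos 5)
Your overall strategy matches the paper's: base change to the algebraic closure via \cref{lem:hi1basechange}, apply \cref{prop:nohi1} and \cref{cor:betti} to pin down that $\Be(\alpha)$ is one of $2$, $\eta$, $\nu$, $\sigma$ in $\pi_*^{\cl}$, and then use the compatibility of the Adams spectral sequences under $\tau$-inversion to constrain the detecting class in $\Ext_{\Fbar}^1$. That part is sound.

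However, the step where you cut the candidates down to the eleven listed classes rests on a false claim. You assert that ``the $\tau$-torsion exponents of $h_0,h_1,h_2,h_3$ in $\Ext_\c$ are $1,\infty,3,5$ respectively'' and that ``$\tau^5 h_3$ already dies.'' This is not true: by \cref{prop:extc}, $\Ext_\c^{\leq 3}$ is \emph{free} over $\f_2[\tau]$, so $\tau^n h_a$ is nonzero in $\Ext_\c^1$ for every $n\geq 0$ and $a\geq 0$. (Your parenthetical even contradicts itself, simultaneously asserting that $h_0$ has $\tau$-torsion exponent $1$ and that $h_0$ is $\tau$-torsion-free.) The actual reason only finitely many powers $\tau^n h_a$ appear on the list has nothing to do with $\tau$-torsion. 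It is that $f$ is an \emph{unstable} map: the class $\alpha$ lives in $\pi_{a-1,b}^F$ with $b\geq 0$, so the detecting class must have nonnegative weight, and since $|\tau^n h_k|$ has weight $2^{k-1}-n$ (or $-n$ when $k=0$), this forces $n\leq 2^{k-1}$ (respectively $n=0$). You do in fact notice this constraint, but only at the very end, as a ``secondary subtlety'' about bookkeeping; in the paper's proof it is the load-bearing observation, and in your write-up it should replace the $\tau$-torsion argument entirely rather than sit alongside it.
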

\begin{proof}
By \cref{lem:hi1basechange}, we may suppose that $F$ itself is algebraically closed. By stabilizing \cref{prop:nohi1}, we find that $\Be(\alpha)$ is detected by $h_1$, $h_2$, or $h_3$ in $\Ext_\cl^1$. Recall from \cref{cor:betti} that Betti realization is modeled on Adams spectral sequences by the map
\[
\Ext_F\rightarrow\Ext_F[\tau^{-1}] \cong\Ext_\cl[\tau^{\pm 1}].
\]
In particular, the structure of $\Ext_F$ (see \cref{prop:extc}) implies that $\alpha$ must be detected by a permanent cycle in $\Ext_F$ of the form $\tau^nh_0$, $\tau^n h_1$, $\tau^n h_2$, or $\tau^n h_3$ for some $n\geq 0$. As $f$ is an unstable map, this class must have nonnegative weight, reducing to the listed classes.
\end{proof}

\begin{remark}\label{rmk:instability}
Our method of relating the unstable motivic Hopf invariant one problem to the stable motivic Hopf invariant one problem, going through the ``Betti realization'' functors of \cref{ssec:betti}, may seem somewhat roundabout. This route was taken for the following reason: if $f\colon S^{2a-1}\rightarrow S^a$ is a map of Hopf invariant $1$, then the fact that $H^\ast(C(f))$ is nonsplit as a $\ca^{\cl}$-module, and thus the associated stable class $\alpha\in\pi_{a-1}^\cl$ is detected in $\Ext_\cl^1$, follows from the instability condition $\Sq^a(x) = x^2$.

Motivically, the analogous instability condition asserts that if $X$ is a motivic space and $x\in H^{2a,a}(X_+)$, then $\Sq^{2a}(x) = x^2$ \cite[Lemma 9.7]{Voe03}. Now suppose that $f\colon S^{2a-1,2b}\rightarrow S^{a,b}$ is an unstable map of Hopf invariant one, and write $H^{\ast,\ast}(C(f)_+) = \m^F[x]/(x^3)$ with $|x|=(-a,-b)$. If $a$ is even and $b\leq a/2$, then one may set $c = a/2-b$ and deduce $\Sq^a(\tau^c x) = \tau^{2c}x^2$, so that $H^{\ast,\ast}(C(f))$ is not split as a $\ca^F$-module. If $a$ is odd, then one may argue by appealing to an integral motivic Hopf invariant and graded commutativity, as in the classical case. Thus it is to rule out the possibility of a map $f\colon S^{2a-1,2b}\rightarrow S^{a,b}$ of Hopf invariant one with $b>a/2$ that we have taken our approach.
\tqed
\end{remark}

Our computations in \cref{sec:hopf} show, for a variety of base fields $F$, when $\Ext_F^1$ contains a permanent cycle whose image over the algebraic closure is one of the classes listed in \cref{prop:hi1ext}, yielding various nonexistence results. To obtain existence results, we must recall how maps of Hopf invariant one arise.

\subsection{Geometric applications}\label{ssec:hopfconstruction}

Adams' resolution of the classical Hopf invariant one problem had geometric consequences; notably, it implied that the only spheres which admit $H$-space structures are the spheres $S^0$, $S^1$, $S^3$, and $S^7$. It makes sense to ask for the motivic analogue of this, i.e.\ to ask which spheres $S^{a,b}$ admit $H$-space structures.

This question is in some sense geometric, but we can also ask for something even more concrete. The spheres $S^{a,b}$ are certain sheaves on the Nisnevich site of smooth $F$-schemes, and so it is reasonable to ask when $S^{a,b}$ is in fact represented by a smooth $F$-scheme. This question was raised and studied by Asok--Doran--Fasel in \cite{ADF17}; in particular, they produce explicit smooth affine schemes representing $S^{a,\ceil{a/2}}$, as well as prove that $S^{a,b}$ is not represented by a smooth scheme for $a>2b$. Motivated by this, we are led to ask the following question.

\begin{question}
For what pairs $(a,b)$ is $S^{a,b}$ a motivic $H$-space? Of these, when is it represented by a smooth $F$-scheme which admits a unital product?
\tqed
\end{question}

Classically, the connection between the $H$-space structures and the Hopf invariant one problem is via the \textit{Hopf construction}. This construction may also be carried out in the motivic category, and has been studied in this context by Dugger--Isaksen \cite{DI13}. We recall the key points.

\begin{definition}\label{Def:HopfConstruction}\cite[Definition C.1]{DI13}
Let $X$, $Y$, and $Z$ be pointed spaces, and let $h: X \times Y \rightarrow Z$ be a pointed map. The \textit{Hopf construction of $h$} is the map $H(h): X \star Y \rightarrow \Sigma Z$ obtained by taking homotopy colimits of the rows of the diagram
\begin{center}
\begin{tikzcd}
X \arrow{d} & X \times Y \arrow{l} \arrow{r} \arrow{d} & Y \arrow{d} \\
\ast & Z \arrow{l} \arrow{r} & \ast
\end{tikzcd}.
\end{center}
\tqed
\end{definition}

Here, $\star$ is the join. Note that $S^{a,b}\star S^{c,d}\simeq S^{a+c+1,b+d}$; thus the Hopf construction may be used to construct maps between motivic spheres. Using the theory of Cayley--Dickson algebras, Dugger--Isaksen \cite[Section 4]{DI13} used this to define \textit{motivic Hopf maps} $\eta\in\pi_{1,1}^F$, $\nu\in\pi_{3,2}^F$, and $\sigma\in\pi_{7,4}^F$. As noted in \cite[Remark 4.14]{DI13}, these motivic Hopf maps have Hopf invariant one. This is a general property of the Hopf construction, which we may summarize in the following.

\begin{lemma}\label{prop:hopfconstruction}
If $\mu : S^{a-1,b} \times S^{a-1,b} \rightarrow S^{a-1,b}$ is an $H$-space product, then its Hopf construction $H(\mu) : S^{2a-1,2b} \rightarrow S^{a,b}$ has Hopf invariant one.
\end{lemma}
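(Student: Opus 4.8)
The plan is to recall the classical proof that the Hopf construction on an $H$-space multiplication has Hopf invariant one, and to check that each step of it works in the motivic setting, using only the cellular structure and multiplicativity of motivic cohomology. Concretely, for an $H$-space product $\mu\colon S^{a-1,b}\times S^{a-1,b}\rightarrow S^{a-1,b}$, the join decomposition $S^{a-1,b}\star S^{a-1,b}\simeq S^{2a-1,2b}$ and the standard mapping-cone picture of \cref{Def:HopfConstruction} identify $C(H(\mu))$ up to motivic weak equivalence. The key point is that $C(H(\mu))\simeq (S^{a,b}\vee S^{a,b})\cup_{?} e^{2a,2b}$ arising from the smash product: more precisely, the cofiber sequence for the Hopf construction yields that $C(H(\mu))$ receives a map from $\Sigma(S^{a-1,b}\times S^{a-1,b})$ whose cofiber is $\Sigma(S^{a-1,b}\wedge S^{a-1,b})\simeq S^{2a-1,2b}$, and chasing this through the diagram exhibits $C(H(\mu))$ with the reduced diagonal of $\mu$ controlling the attaching data.

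First I would fix the cohomology bookkeeping: since the source $S^{2a-1,2b}$ has no cohomology in the relevant degree, $H(\mu)$ acts trivially on mod $2$ motivic cohomology, so by definition $H^{\ast,\ast}(C(H(\mu))_+)\cong \m^F\{1,x,y\}$ with $|x|=(-a,-b)$, $|y|=(-2a,-2b)$, exactly as in \cref{def:hi1}; the content is to show $x^2=y$ up to rescaling. Second, I would set up the commutative diagram of cofiber sequences relating $C(H(\mu))$ to $\Sigma(S^{a-1,b}\times S^{a-1,b})$ and its quotients $\Sigma S^{a-1,b}\simeq S^{a,b}$ (two copies) and $\Sigma(S^{a-1,b}\wedge S^{a-1,b})\simeq S^{2a-1,2b}$, exactly mirroring the classical argument (e.g.\ the treatment in Whitehead or via the James construction). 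Third, I would use the Künneth/Cartan multiplicativity of $H^{\ast,\ast}(-)$ — which holds for cellular motivic spectra, and here everything in sight is a finite motivic cell complex — together with the fact that the reduced diagonal $\bar\Delta\colon S^{a-1,b}\rightarrow S^{a-1,b}\wedge S^{a-1,b}$ of an $H$-space is the identity on the top cell in the relevant degree, to conclude that the attaching map of the top cell of $C(H(\mu))$ is arranged so that $x\cup x$ generates the top cohomology group. Passing from the stable to the unstable cup product is legitimate here because the top dimension is exactly twice the bottom, so the unstable cup square of $x\in H^{a,b}(C(H(\mu))_+)$ is defined and equals the value computed stably.

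The main obstacle I expect is not any deep input but the careful identification of the attaching map of $C(H(\mu))$ in terms of $\mu$ — i.e.\ verifying that the Hopf construction diagram of \cref{Def:HopfConstruction} really does produce a $2$-cell complex whose cup product structure is governed by the reduced diagonal of $\mu$, in a way that is manifestly stable under the functoriality needed. This is entirely formal in the classical case, and the same homotopy-colimit manipulations go through verbatim in any pointed $\infty$-category, so in the motivic setting the argument is essentially a diagram chase; Dugger--Isaksen's analysis in \cite[Appendix C]{DI13} already records the needed properties of $H(-)$. The only genuinely motivic ingredient is that for an unstable class $x\in H^{2c,c}(X_+)$ one has the instability relation $\Sq^{2c}(x)=x^2$ (\cref{rmk:instability}, citing \cite[Lemma 9.7]{Voe03}), but here we do not even need that: we directly produce generators with $x^2=y$ from the diagonal of $\mu$. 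So the proof reduces to transporting the classical Hopf-construction computation of the cup product, and I would present it as: (i) set up the cofiber diagram, (ii) apply motivic cohomology and Künneth, (iii) read off $x^2=y$ from $\bar\Delta$ being a section on the top cell, hence $H(\mu)$ has Hopf invariant one in the sense of \cref{def:hi1}. \qed
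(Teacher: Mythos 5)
Your overall strategy is the same as the paper's: both transport Steenrod--Epstein's cup-product computation for the Hopf construction \cite[Section I.5, Lemma 5.3]{SE62} to the motivic category, and you are right that no instability relation is needed. However, your claim that the argument is ``entirely formal'' and ``goes through verbatim in any pointed $\infty$-category'' skips precisely the step the paper spends most of its proof on: Steenrod--Epstein work with particular point-set models (explicit cones $E_1$, $E_2$, $E_+$, $E_-$), and the paper justifies replacing these with homotopy-invariant constructions via Dugger--Isaksen's notions of ``suspension data'' \cite[Remark 2.9]{DI13} and ``induced orientations'' \cite[Remark 2.10]{DI13}, so that the identifications of pushouts in \cite{SE62} become model-independent. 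Your appeal to ``the same homotopy-colimit manipulations'' is the conclusion of this verification rather than a substitute for it. You also omit a second point the paper flags explicitly: Steenrod--Epstein prove their result integrally for $S^{n-1}\times S^{n-1}\to S^{n-1}$ with $n>1$ even, and the paper needs the mod $2$ version for all $n\geq 1$, precisely so that the degree-$2$ map on $S^{1,1}$ and the maps $\eta$, $\nu$, $\sigma$ are treated uniformly.

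Separately, the intermediate decomposition you sketch looks off as written: you assert that $C(H(\mu))$ receives a map from $\Sigma(S^{a-1,b}\times S^{a-1,b})$ whose cofiber is $\Sigma(S^{a-1,b}\wedge S^{a-1,b})\simeq S^{2a-1,2b}$. The natural map $\Sigma(X\times Y)\to C(H(\mu))$ factors through $\Sigma\mu\colon \Sigma(X\times Y)\to \Sigma Z$, and the octahedral axiom applied to this composite with $\Sigma Z\to C(H(\mu))$ (which has cofiber $\Sigma(X\star Y)\simeq S^{2a,2b}$) shows the cofiber you want is instead an extension of $S^{2a,2b}$ by $\Sigma C(\mu)$, not a single sphere. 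Likewise, ``the reduced diagonal of $\mu$'' is not a standard object; the paper (following \cite{SE62}) instead organizes the computation around the bidegree $(\alpha,\beta)$ of $\mu$ and shows the Hopf invariant of $H(\mu)$ equals $\alpha\cdot\beta$. None of this changes the verdict that your route is essentially the paper's, but a careful write-up would need to repair the cofiber-sequence bookkeeping and address the two points above rather than declare them formal.
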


\begin{proof}
The proof of the analogous fact for topological spaces \cite[Section I.5]{SE62} extends to motivic spaces. We summarize the key points.

Define the (mod $2$) \textit{degree} of a pointed map $S^{a,b}\rightarrow S^{a,b}$ of motivic spaces to be its induced map in reduced motivic cohomology. A pointed map $f: S^{a-1,b} \times S^{a-1,b} \rightarrow S^{a-1,b}$ of motivic spaces is said to have \textit{degree $(\alpha, \beta)$} if $f|_{S^{a-1,b} \times \{p_2\}}$ has degree $\alpha$ and $f|_{\{p_1\} \times S^{a-1,b}}$ has degree $\beta$. Since $\mu$ is an $H$-space product, its restrictions to $S^{a-1,b} \times \{p_2\}$ and $\{p_1\} \times S^{a-1,b}$ are homotopic to the identity, so $\mu$ has degree $(1,1)$. The lemma follows by showing that, more generally, the Hopf invariant, defined in the evident way, of the Hopf construction of a map of degree $(\alpha, \beta)$ is $\alpha\cdot \beta$.

Steenrod--Epstein's proof of \cite[Lemma 5.3]{SE62} carries over to the motivic setting to complete the proof. The main point is that Steenrod--Epstein work with particular models of the cone, join, homotopy cofiber, and suspension in their proof, but any model would work, as all of their statements only depend on the homotopy types of the relevant spaces and homotopy classes of the relevant maps. More precisely, with notation as in their proof, one may replace $E_1$, $E_2$, $E_+$, and $E_-$ by the cones on $S_1$, $S_2$, $S$, and $S$, respectively, to avoid any potential point-set issues. In particular, one regards $E_1$, $E_2$, $E_+$, and $E_-$ as suspension data in the sense of \cite[Remark 2.9]{DI13} for the various suspensions appearing in the Hopf construction. In this language, the identifications of various pushouts in the proof of \cite[Lemma 5.3]{SE62} are examples of induced orientations \cite[Remark 2.10]{DI13}. The proof carries through unchanged with these new choices of $E_1$, $E_2$, $E_+$, and $E_-$.

To be precise, their proof considers maps $S^{n-1}\times S^{n-1}\rightarrow S^{n-1}$ with $n>1$ even and works integrally. Routine modifications extend this to arbitrary $n\geq 1$ provided one works mod $2$ throughout. Classically, this is the adaption needed to incorporate the degree $2$ map $S^1\rightarrow S^1$, which is the Hopf construction of the standard product on $S^0\cong C_2$. Motivically, this is the adaption needed for our lemma to hold for arbitrary unstable motivic spheres $S^{a-1,b}$, allowing especially for the uniform treatment of $2$ and $\eta$.
\end{proof}

\begin{remark}
We note that under \cref{def:hi1}, the map $h\colon S^{1,1}\rightarrow S^{1,1}$ represented by the squaring map on $\g_m$, sometimes called the ``zeroth Hopf map'' and stably detected by $h_0$, is \textit{not} a map of Hopf invariant one. In the context of \cref{prop:hopfconstruction}, this is justified by the fact that for degree reasons $h$ is not the Hopf construction of an $H$-space structure on any motivic sphere.
\tqed
\end{remark}

We can now summarize what is known in the following.

\begin{thm}\label{thm:hspace}
A motivic sphere is represented by a smooth $F$-scheme admitting a unital product if and only if it is one of the following:
\[
S^{0,0},\qquad S^{1,1},\qquad S^{3,2},\qquad S^{7,4}.
\]
In addition to the motivic spheres listed above, the following motivic spheres admit $H$-space structures:
\[
S^{1,0},\qquad S^{3,0},\qquad S^{7,0}.
\]
The only other motivic spheres that could possibly admit $H$-space structures are the following: 
\[
S^{3,1},\qquad S^{7,3},\qquad S^{7,2},\qquad S^{7,1};
\]
moreover, an $H$-space structure on such a sphere produces a permanent cycle in $\Ext_F$ whose image over the algebraic closure is $\tau h_2$, $\tau h_3$, $\tau^2 h_3$, or $\tau^3 h_3$ respectively.
\end{thm}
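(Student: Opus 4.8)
The plan is to combine three ingredients: the restriction on the $1$-line of the motivic Adams spectral sequence coming from \cref{prop:hi1ext}, the non-representability theorem of Asok--Doran--Fasel \cite{ADF17}, and explicit unital products on the surviving spheres. I would begin with the constraint. Suppose $S^{c,d}$ admits an $H$-space structure $\mu$. By \cref{prop:hopfconstruction} the Hopf construction $H(\mu)\colon S^{2c+1,2d}\to S^{c+1,d}$ has Hopf invariant one, so \cref{prop:hi1ext} shows that the associated stable class $\alpha\in\pi_{c,d}^F$ is detected by a permanent cycle in $\Ext_F^{c,1,d}$ which, after base change to the algebraic closure of $F$, equals one of $h_0$, $h_1$, $\tau h_1$, $h_2$, $\tau h_2$, $\tau^2 h_2$, $h_3$, $\tau h_3$, $\tau^2 h_3$, $\tau^3 h_3$, $\tau^4 h_3$. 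By \cref{prop:extc} (over $\Fbar$ one has $\Ext^1 = \f_2[\tau]\{h_a : a\geq 0\}$), each of these eleven classes is pinned down by its stem and weight, so $(c,d)$ must be one of $(0,0)$, $(1,1)$, $(1,0)$, $(3,2)$, $(3,1)$, $(3,0)$, $(7,4)$, $(7,3)$, $(7,2)$, $(7,1)$, $(7,0)$; equivalently $S^{c,d}$ is one of the eleven spheres named in the theorem, and for $S^{3,1}$, $S^{7,3}$, $S^{7,2}$, $S^{7,1}$ the detecting class over $\Fbar$ is respectively $\tau h_2$, $\tau h_3$, $\tau^2 h_3$, $\tau^3 h_3$. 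This already yields the last two sentences of the theorem.

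Next I would invoke \cite{ADF17}, which shows that $S^{a,b}$ is not $\a^1$-equivalent to a smooth $F$-scheme when $a>2b$. Of the eleven bidegrees above, exactly the seven $(1,0)$, $(3,0)$, $(7,0)$, $(3,1)$, $(7,3)$, $(7,2)$, $(7,1)$ satisfy $a>2b$, so the only candidates that can be represented by a smooth $F$-scheme at all are $S^{0,0}$, $S^{1,1}$, $S^{3,2}$, $S^{7,4}$. Each of these four is in fact represented by a smooth $F$-scheme carrying a unital product: $S^{0,0}$ by the group scheme $\mu_2 = \Spec F[t]/(t^2-1)$, which is smooth and affine because $\mathrm{char}\,F\neq 2$; $S^{1,1}$ by the group scheme $\g_m$; $S^{3,2}$ by $\mathrm{SL}_2$, using the affine-bundle projection $\mathrm{SL}_2\to\a^2\setminus 0\simeq S^{3,2}$ together with the group law on $\mathrm{SL}_2$; and $S^{7,4}$ by the norm-one hypersurface of a split octonion algebra over $F$, identified with the Asok--Doran--Fasel smooth affine model of $S^{7,4}$ and carrying the (non-associative, strictly unital) octonion multiplication. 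These are precisely the products whose motivic Hopf constructions produce the Hopf maps $\eta$, $\nu$, $\sigma$ of Dugger--Isaksen \cite{DI13}. Finally, $S^{1,0}$, $S^{3,0}$, $S^{7,0}$ are the constant motivic spheres on $S^1$, $S^3$, $S^7$; since these are classical $H$-spaces and the constant functor from spaces to $F$-motivic spaces is symmetric monoidal, they inherit $H$-space structures, while the displayed inequality shows none of them is representable by a smooth scheme. Assembling this with the first paragraph gives every assertion of the theorem.

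The main obstacle will be the existence statement for $S^{7,4}$: one must check, over an arbitrary base field of characteristic $\neq 2$, that the norm-one variety of the split octonion algebra — i.e.\ the smooth affine quadric $\{\sum_{i=1}^4 x_iy_i = 1\}$ — is $\a^1$-equivalent to $S^{7,4}$ and that octonion multiplication restricts to a strictly unital product on it. The cases $S^{0,0}$, $S^{1,1}$, $S^{3,2}$ are routine, so it is only the non-associative octonionic case that requires genuine care; there I would lean on the motivic Cayley--Dickson and Hopf-construction analysis of \cite{DI13} together with the explicit smooth models of \cite{ADF17}.
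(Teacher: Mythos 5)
Your argument is correct and mirrors the paper's proof step for step: both use \cref{prop:hopfconstruction} together with \cref{prop:hi1ext} to cut down to the eleven candidate bidegrees and to extract the Adams-$E_2$ detecting classes, both invoke \cite[Proposition 2.3.1]{ADF17} to rule out smooth representability when $c>2d$, both cite \cite{DI13} for the unital smooth models of $S^{0,0}$, $S^{1,1}$, $S^{3,2}$, $S^{7,4}$, and both obtain the $H$-space structures on $S^{1,0}$, $S^{3,0}$, $S^{7,0}$ via the constant functor. The only difference is that you spell out explicit smooth affine models ($\mu_2$, $\g_m$, $\mathrm{SL}_2$, the split-octonion norm-one quadric), which the paper leaves implicit in the citation to Dugger--Isaksen.
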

\begin{proof}
That the spheres $S^{0,0}$, $S^{1,1}$, $S^{3,2}$, and $S^{7,4}$ are represented by smooth $F$-schemes admitting a unital product is given by work of Dugger--Isaksen \cite{DI13}. The spheres $S^{1,0}$, $S^{3,0}$, and $S^{7,0}$ are the images of $S^1$, $S^3$, and $S^7$, respectively, under the unstable constant functor from spaces to motivic spaces, and so inherit $H$-space structures from their classical structures. That all the spheres listed are the only spheres which may admit $H$-space structures follows from \cref{prop:hopfconstruction} and \cref{prop:hi1ext}, as does the final claim concerning the $F$-motivic Adams spectral sequence. Finally, Asok--Doran--Fasel prove in \cite[Proposition 2.3.1]{ADF17} that if $S^{a-1,b}$ is represented by a smooth $F$-scheme, then necessarily $2b\geq a-1$, and the only possible $H$-spaces satisfying this are $S^{0,0}$, $S^{1,1}$, $S^{3,2}$, and $S^{7,4}$ as listed.
\end{proof}

We note the following special case.

\begin{corollary}\label{cor:realhspace}
Suppose there is an $\r$-motivic map $f\colon S^{2a-1,2b}\rightarrow S^{a,b}$ of Hopf invariant one. Then $(a,b)$ is one of
\[
(1,0),\qquad (2,1),\qquad (4,2),\qquad (8,4),\qquad (2,0),\qquad (4,0),\qquad (8,0).
\]
Moreover, all of these are realized, and in fact
\[
S^{0,0},\qquad S^{1,1},\qquad S^{3,2},\qquad S^{7,4},\qquad S^{1,0},\qquad S^{3,0},\qquad S^{7,0}
\]
are all the $\r$-motivic spheres admitting $H$-space structures.
\end{corollary}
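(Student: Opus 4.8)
The statement has two halves: an upper bound on which $(a,b)$ can occur, and the realization of the listed examples. The first half is a direct specialization of \cref{thm:hspace} together with the $\r$-motivic computation of the $1$-line (to be carried out in \cref{sec:hopf}, specifically \cref{thm:r}). The second half requires producing the Hopf-invariant-one maps explicitly.

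For the upper bound, I would first invoke \cref{prop:hi1ext}: any $\r$-motivic map $f\colon S^{2a-1,2b}\rightarrow S^{a,b}$ of Hopf invariant one gives a stable class $\alpha\in\pi_{a-1,b}^\r$ detected by a permanent cycle in $\Ext_\r^{a-1,1,b}$ whose base change to $\c$ lies in the list $h_0, h_1, \tau h_1, h_2, \tau h_2, \tau^2 h_2, h_3, \tau h_3, \tau^2 h_3, \tau^3 h_3, \tau^4 h_3$. Thus the detecting class is, up to $\rho$-torsion ambiguity, of the form $\tau^n h_i$ with $i\le 3$ and small $n$, living in $\Ext_\r^1$ with nonnegative weight. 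Then I would appeal to \cref{thm:r}, which determines exactly which classes in $\Ext_\r^1$ are permanent cycles: among the candidates above, the ones surviving are those detecting $2$ (i.e.\ $h_0+\rho h_1$, giving the zeroth ``Hopf map'', which is \emph{not} of Hopf invariant one, cf.\ the remark after \cref{prop:hopfconstruction}) and $h_1$, $\tau h_1$, $h_2$, $\tau h_2$, $\tau^2 h_2$, $h_3$, $\tau h_3$, $\tau^2 h_3$, $\tau^3 h_3$, $\tau^4 h_3$, while $d_2(h_{a+1})=(h_0+\rho h_1)h_a^2$ from \cref{thmx:nohopf} kills the relevant higher $h_i$. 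Matching weights to the constraint $b\ge 0$ and unstable realizability, exactly as in the proof of \cref{thm:hspace}, leaves the seven pairs $(1,0),(2,1),(4,2),(8,4),(2,0),(4,0),(8,0)$. Concretely this is just a bookkeeping argument reading off $\Ext_\r^1$ from \cref{eq:extr1} and crossing off the non-permanent cycles and the weight-negative ones; the content is entirely in \cref{thm:r} and \cref{prop:hi1ext}, so this part is routine.

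For realization: the spheres $S^{0,0}, S^{1,1}, S^{3,2}, S^{7,4}$ carry unital products by the Cayley--Dickson constructions of Dugger--Isaksen \cite{DI13}, and applying the Hopf construction (\cref{prop:hopfconstruction}) to these products yields maps of Hopf invariant one with source/target $(1,0),(2,1),(4,2),(8,4)$. The spheres $S^{1,0}, S^{3,0}, S^{7,0}$ are the images of the classical $H$-spaces $S^1, S^3, S^7$ under the constant functor $\Spc\rightarrow\Spc(\r)$, which is strong monoidal for the smash product, so they inherit $H$-space structures; the Hopf construction then gives Hopf-invariant-one maps realizing $(2,0),(4,0),(8,0)$. (One should check the constant functor takes the classical $H$-space product to a product on $S^{n,0}$ and commutes with the Hopf construction, which is formal since the constant functor preserves the homotopy pushouts of \cref{Def:HopfConstruction}.) Combining the two halves gives the final clause: these seven are exactly the $\r$-motivic spheres admitting $H$-space structures.

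The main obstacle is that the argument is not self-contained within this excerpt — it genuinely depends on the full $\r$-motivic $1$-line computation \cref{thm:r} of \cref{sec:hopf}, in particular on knowing that $h_4$ and higher do \emph{not} survive (the differentials $d_2(h_{a+1})=(h_0+\rho h_1)h_a^2$ of \cref{thmx:nohopf}) while $\tau^2 h_3, \tau^3 h_3, \tau^4 h_3$ \emph{do} survive as permanent cycles but that their would-be $H$-space realizations on $S^{7,b}$ for $b\le 3$ remain open (the ``could possibly admit'' clause of \cref{thm:hspace}). So the care needed is in stating precisely which implications are unconditional (existence of the seven, nonexistence of anything outside the list of \cref{thm:hspace}) versus which are genuine realization statements versus which remain undetermined — but \cref{cor:realhspace} is phrased so that over $\r$ the ``possibly admit'' classes $S^{3,1},S^{7,3},S^{7,2},S^{7,1}$ of \cref{thm:hspace} are \emph{excluded}, which must come from \cref{thm:r} showing the corresponding classes $\tau h_2, \tau^3 h_3, \tau^2 h_3, \tau h_3$ are either not permanent cycles over $\r$ or else that the relevant weight-positive lifts do not exist; verifying this last point carefully against \cref{thm:r} is where I would focus the most attention.
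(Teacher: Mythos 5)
Your overall structure — specialize \cref{thm:hspace} to $F=\r$ and then exclude the four ``possibly admit'' spheres $S^{3,1},S^{7,3},S^{7,2},S^{7,1}$ — is the right idea, and the realization half is fine (it is already in \cref{thm:hspace}). But the exclusion step, which you correctly identify as the crux, is left unresolved: you say you ``would focus the most attention'' there rather than carrying it out, and the reasoning you sketch in the middle of the proposal is actually wrong. You claim that ``$\tau h_2$, $\tau h_3$, $\tau^2 h_3$, $\tau^3 h_3$ do survive as permanent cycles'' over $\r$; in fact these classes do not exist in $\Ext_\r^1$ at all. From \cref{eq:extr1} (equivalently \cref{thm:extadd}), the only $\tau$-power multiples of $h_a$ present in $\Ext_\r^1$ are $\tau^{\floor{2^{a-1}(4n+1)}}h_a$, i.e.\ $\tau^2 h_2, \tau^{10}h_2,\dots$ for $a=2$ and $\tau^4 h_3, \tau^{20}h_3,\dots$ for $a=3$. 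So $\tau h_2$, $\tau h_3$, $\tau^2 h_3$, $\tau^3 h_3$ vanish in $\Ext_\r^1$, and \cref{thm:hspace}'s ``could possibly admit'' cases are ruled out for purely $E_2$-page reasons.

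This also means you invoked much heavier machinery than necessary. You do not need \cref{thm:r} or any differentials at all for this corollary: the exclusion is settled on the $E_2$-page, before any $d_2$ is run. (The paper also notes an even softer alternative: the real-points realization sends $S^{a,b}$ to $S^{a-b}$, so an $H$-space structure on $S^{a,b}$ forces $a-b\in\{0,1,3,7\}$; one checks $S^{3,1},S^{7,3},S^{7,2},S^{7,1}$ have $a-b\in\{2,4,5,6\}$.) A minor further confusion: you gesture at the remark following \cref{prop:hopfconstruction} to ``exclude $h_0$,'' but that remark concerns the weight-$1$ map $S^{1,1}\to S^{1,1}$, not the $(a,b)=(1,0)$ case; the class $h_0$ is precisely what detects the Hopf construction on $S^{0,0}$, which is in the final list.
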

\begin{proof}
This is immediate from \cref{thm:hspace}, either appealing to the fact that $\Ext_\r$ vanishes in the degrees detecting the remaining possibilities, or else noting that the real points of $S^{a,b}$ are $S^{a-b}$, so that if $S^{a,b}$ is an $H$-space then $a-b\in\{0,1,3,7\}$.
\end{proof}

\section{The \texorpdfstring{$1$}{1}-line of the motivic Adams spectral sequence}\label{sec:hopf}

We now analyze the $1$-line of the $F$-motivic Adams spectral sequence. We begin in \cref{ssec:extf} by explaining how to read off the structure of $\Ext_F$ for various fields $F$ from our computation of $\Ext_\r$. After some additional preliminaries in \cref{ssec:hopfexist}, we give a direct motivic analogue of the classical differentials in \cref{ssec:d2ha}, proving $d_2(h_{a+1}) = (h_0+\rho h_1)h_a^2$ for $a \geq 3$ over arbitrary base fields. We then proceed to give more detailed information about the $1$-line for the particular fields $F$ of the form $\r$, $\f_q$ with $q$ an odd prime-power, $\q_p$ with $p$ any prime, and $\q$.

\subsection{Computing \texorpdfstring{$\Ext_F$}{Ext\_F}}\label{ssec:extf}

As a general rule, $\Ext_F$ is largely understood once $\m^F$ and $\Ext_\r$ are both understood. Rather than formulate a precise statement, let us just describe $\Ext_F$ for the various particular fields $F$ we shall encounter, namely those described in \cref{Exm:M2F} as well as $F = \q$.

Recall from \cref{rmk:runiv2} that for any field $F$, we may view $\m^F$ as an $\ca^\r$-module, and there is an isomorphism
\[
\Ext_F\cong \Ext_{\ca^\r}(\m^\r,\m^F).
\]
Thus, the main point is to understand $\m^F$ as a $\ca^\r$-module, and this is in fact determined by $\m^F_0$ as an $\f_2[\rho]$-module. For the examples of interest, we have the following.
Abbreviate
\[
\m = \f_2[\tau,\rho],\qquad \m_{(r)} = \m/(\rho^r).
\]

\begin{lemma}\label{lem:mmodule}
As $\ca^\r$-modules, we have the following.
\begin{enumerate}
\item If $F = \r$, then $\m^F = \m$.
\item If $F = \Fbar$ is algebraically closed, then $\m^F = \m_{(1)}$.
\item If $F = \f_q$ with $q\equiv 1 \pmod{4}$, then $\m^F = \m_{(1)}\{1,u\}$.
\item If $F=\f_q$ with $q\equiv 3 \pmod{4}$, then $\m^F = \m_{(2)}$.
\item If $F=\q_p$ with $p\equiv 1 \pmod{4}$, then $\m^F = \m_{(1)}\{1,\pi,u,\pi u\}$.
\item If $F=\q_p$ with $p\equiv 3\pmod{4}$, then $\m^F = \m_{(2)}\{1,\pi\}$.
\item If $F = \q_2$, then $\m^F = \m_{(3)}\{1\}\oplus\m_{(1)}\{u,\pi\}$.
\item If $F = \q$, then $\m^\q = \m\{1\}\oplus \m_{(1)}\{[2]\}\oplus\m_{(1)}\{[p],a_p:p\equiv 1~(4)\}\oplus \m_{(2)}\{u_p:p\equiv 3~(4)\}$.
\end{enumerate}
\end{lemma}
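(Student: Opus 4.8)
The plan is to reduce all of $(1)$--$(8)$ to the determination of $\m^F_0$ as an $\f_2[\rho]$-module, which for $(1)$--$(7)$ is immediate from \cref{Exm:M2F} and for $(8)$ rests on the work of Ormsby--{\O}stv{\ae}r.

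First I would record the base change principle behind \cref{rmk:runiv}: for any field $F$ of characteristic not $2$ there is an isomorphism of $\ca^\r$-modules $\m^F\cong\m^F_0\otimes_{\f_2[\rho]}\m^\r$. Indeed, $\rho\in\m^\r_0$ is central in $\ca^\r$ by \cref{Prop:BimoduleStructure}, and the $\ca^\r$-action on $\m^\r$ of \cref{prop:baseact} is $\f_2[\rho]$-linear, since the $\rho$- and $\tau$-correction terms in the Cartan formula \cref{prop:cartan} vanish when one factor lies in $\m^\r_0$ (there $\Sq^{\geq 1}=0$). Hence for any commutative $\f_2[\rho]$-algebra $B$ the extension $B\otimes_{\f_2[\rho]}\m^\r$ carries a natural $\ca^\r$-module structure, and taking $B=\m^F_0$ recovers $\m^F=\m^F_0[\tau]$ with exactly the action of \cref{prop:baseact}. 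Since $\m=\f_2[\rho]\otimes_{\f_2[\rho]}\m^\r$ and $\m_{(r)}=(\f_2[\rho]/\rho^r)\otimes_{\f_2[\rho]}\m^\r$, the lemma reduces to identifying $\m^F_0$ as an $\f_2[\rho]$-module in each case.

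For $(1)$--$(7)$ this is a direct reading of \cref{Exm:M2F}, once one notes that $\rho\in\m^F_0$ is the class $[-1]$ of $-1$ in mod $2$ Milnor $K$-theory, which vanishes exactly when $-1$ is a square in $F$: thus $\rho=0$ over an algebraically closed field, over $\f_q$ with $q\equiv 1\pmod 4$, and over $\q_p$ with $p\equiv 1\pmod 4$, and otherwise $\rho$ is the nonzero degree $-1$ generator appearing in \cref{Exm:M2F}. In each case one then reads off the $\f_2[\rho]$-module structure degree by degree; e.g.\ over $\q_p$ with $p\equiv 3\pmod 4$ the relation $\rho\pi+\pi^2=0$ exhibits $\m^F_0$ as $\f_2[\rho]/\rho^2$ on $1$ plus $\f_2[\rho]/\rho^2$ on $\pi$, while over $\q_2$ the relations $\rho^2=u\pi$ and $\rho u=\rho\pi=0$ exhibit it as $\f_2[\rho]/\rho^3$ on $1$ plus $\f_2[\rho]/\rho$ on each of $u$ and $\pi$. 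These verifications are routine.

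The one substantive case is $(8)$, $F=\q$, and this is where the main work lies. Here I would invoke the computation of $\m^\q_0=K^M_\ast(\q)/2$ of Ormsby--{\O}stv{\ae}r \cite{OO13}: the group $K^M_1(\q)/2=\q^\times/(\q^\times)^2$ has $\f_2$-basis $\{[-1],[2]\}\cup\{[p]:p\text{ an odd prime}\}$, and for $n\geq 2$ the group $K^M_n(\q)/2$ is pinned down by the Hasse principle, i.e.\ by the injection into $K^M_n(\r)/2\oplus\bigoplus_v K^M_n(\q_v)/2$ whose target is already understood by parts $(1)$ and $(5)$--$(7)$ of the present lemma; this injection is the $\m$-theoretic shadow of the Hasse map \eqref{eq:hasse}. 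The $\f_2[\rho]$-module structure is then computed via Hilbert symbols: $\rho\cdot[2]=[-1,2]=0$, since $(-1,2)_v=1$ at every place; for an odd prime $p$ the class $\rho\cdot[p]=[-1,p]$ is zero for $p\equiv 1\pmod 4$ (where $(-1,p)_v=1$ at every place) and nonzero for $p\equiv 3\pmod 4$ (being detected at $v=p$, where $(-1,p)_p=(-1)^{(p-1)/2}=-1$); and for $p\equiv 3\pmod 4$ the further product $\rho^2[p]=[-1,-1,p]$ vanishes, as it dies in $K^M_3$ of $\r$ and of every $\q_v$ while $K^M_3(\q)/2$ injects into their sum. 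Identifying, as in \cite{OO13}, the remaining $\f_2$-generators of $K^M_2(\q)/2$ as the classes $a_p$ indexed by primes $p\equiv 1\pmod 4$ (on which $\rho$ acts trivially), and noting that $\f_2[\rho]\cdot 1$ exhausts $K^M_n(\q)/2$ for $n\geq 3$ while its degree $-2$ term $[-1,-1]$ together with the $a_p$ and the $[-1,p]=\rho[p]$ ($p\equiv 3\pmod 4$) exhaust $K^M_2(\q)/2$, one obtains the stated decomposition of $\m^\q_0$ with $u_p=[p]$; tensoring with $\m^\r$ over $\f_2[\rho]$ as in the second paragraph yields $(8)$. The main obstacle is precisely this bookkeeping --- tracking which mod $2$ Milnor symbols survive in each degree and how multiplication by $[-1]$ acts on them --- which is why it is cleanest to route the argument through the Hasse principle, where each local factor has already been handled in $(1)$ and $(5)$--$(7)$.
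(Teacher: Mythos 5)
Your proof is correct and follows the same route as the paper --- reducing to the $\f_2[\rho]$-module structure of $\m^F_0$, reading cases (1)--(7) off \cref{Exm:M2F}, and handling case (8) via the Milnor/Ormsby--{\O}stv{\ae}r description of $K^M_\ast(\q)/2$ --- but you spell out details the paper leaves implicit, in particular the base-change identification $\m^F\cong\m^F_0\otimes_{\f_2[\rho]}\m^\r$ and the explicit Hilbert-symbol verifications for (8). One point of genuine disagreement: you take $u_p=[p]$ for $p\equiv 3\pmod 4$, whereas the paper's proof (and \cref{lem:hassecoeff}) declare $u_p=[p]+\rho$. Your choice appears to be the correct one: one needs $\rho^2 u_p=0$ for the $\m_{(2)}\{u_p\}$ summand to be an honest $\rho$-torsion direct summand, and while $\rho^2[p]=\{-1,-1,p\}=0$ (as you check: it vanishes at $\r$ since $p>0$ is a real square, and $K^M_3(\q_v)/2=0$ at every finite $v$), one has $\rho^2([p]+\rho)=\rho^3\neq 0$, which lands inside the free summand $\m\{1\}$ and breaks directness. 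The discrepancy does not affect the downstream use in \cref{prop:hasse}, since $\{1,\pi\}$ and $\{1,\pi+\rho\}$ are both $\m_{(2)}$-bases of $\m^{\q_p}$, but your normalization is the one compatible with the stated internal direct-sum decomposition.
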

\begin{proof}
All but the case $F=\q$ may be read off the examples listed in \cref{Exm:M2F}. When $F = \q$, the ring $\m^\q$ is described by Ormsby--{\O}stv{\ae}r in \cite[Propositions 5.3 and 5.4]{OO13}, following work of Milnor \cite{Mil70}. Our description may be read off this upon setting $u_p = [p] + \rho$ for $p\equiv 3 \pmod{4}$.
\end{proof}

For $r\geq 0$, define
\[
\Ext_{(r)} = \Ext_{\ca^\r}(\m,\m_{(r)}) = H_\ast(\Lambda^\r/(\rho^r)).
\]
The $\f_2[\rho]$-module structure of $\Ext_{(r)}$ may be easily computed from $\Ext_\r$ via the long exact sequence associated to the cofiber of $\rho^r$. Even less work is necessary when $\Ext_\r$ has been computed by some method compatible with the $\rho$-Bockstein spectral sequence such as ours; see in particular \cref{rmk:modrho}. Thus \cref{thm:extadd} allows us to read off $\Ext_{(r)}^f$ for $f\leq 2$, as well as the image of $\Ext_\r\rightarrow\Ext_{(r)}^3$. This does not give the entirety of $\Ext_{(r)}^3$; however, we at least know that whatever remains is generated by classes which appear in the $\rho$-Bockstein spectral sequence as $\rho^k \alpha$ with $\alpha\in\Ext_{(1)}^3$ and $k < r$, and this is enough information for our purposes.

\cref{lem:mmodule} describes for various $F$ how $\Ext_F$ may be written as a direct sum of copies of various $\Ext_{(r)}$. For example, $\Ext_{\q_2} = \Ext_{(3)}\{1\}\oplus\Ext_{(1)}\{u,\pi\}$. We may use this to prove a Hasse principle for $\Ext_\q$.

\begin{lemma}\label{lem:hassecoeff}
The map
\[
\m^\q\rightarrow\m^{\q_p}
\]
satisfies
\[
[p]\mapsto \pi,\qquad a_p\mapsto u\pi,\qquad u_p\mapsto \pi+\rho.
\]
Here, the first is relevant for $p = 2$ or $p\equiv 1 \pmod{4}$, the second for $p\equiv 1 \pmod{4}$, and the third for $p\equiv 3\pmod{4}$.
\end{lemma}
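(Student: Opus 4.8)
The plan is to unwind what the map $\m^\q\to\m^{\q_p}$ actually is, and then reduce the statement to a short computation in the mod $2$ Milnor $K$-theory of $\q_p$. This map is the base change map induced by the field inclusion $\q\hookrightarrow\q_p$; it is a homomorphism of graded $\f_2[\tau]$-algebras carrying the universal class $\rho=[-1]$ to $\rho$ (cf. \cref{rmk:runiv}), and on the subring $\m^\q_0$ concentrated on $s=w$, which by Voevodsky's theorem is the mod $2$ Milnor $K$-theory $K^M_\ast(\q)/2$, it restricts to the functorial localization map $K^M_\ast(\q)/2\to K^M_\ast(\q_p)/2$. Since $\tau\mapsto\tau$, it suffices to compute the images of $[p]$, $a_p$, and $u_p$ in $K^M_\ast(\q_p)/2$.

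The cases of $[p]$ and $u_p$ amount to unwinding conventions. In degree $1$ the localization map is $\q^\times/(\q^\times)^2\to\q_p^\times/(\q_p^\times)^2$, sending the class of a prime $\ell$ to the class of $\ell\in\q_p^\times$. For $\ell=p$ (and for $\ell=2$ when $p=2$) this is the class of a uniformizer, which is exactly the generator named $\pi$ in the presentations of $\m^{\q_p}_0$ recalled in \cref{Exm:M2F}; hence $[p]\mapsto\pi$. For $u_p$ with $p\equiv 3\pmod 4$, recall from the proof of \cref{lem:mmodule} that $u_p=[p]+\rho$ in $\m^\q$; combining $[p]\mapsto\pi$ with $\rho\mapsto\rho$ gives $u_p\mapsto\pi+\rho$, where $\rho=[-1]$ is indeed nonzero in $\m^{\q_p}_0$ precisely because $-1$ is not a square in $\q_p$ for $p\equiv 3\pmod 4$.

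It remains to treat $a_p$ for $p\equiv 1\pmod 4$. Here $a_p$ denotes the degree $2$ generator of $\m^\q_0=K^M_\ast(\q)/2$ attached to $p$ in the presentation of Ormsby--{\O}stv{\ae}r \cite{OO13} (building on Milnor \cite{Mil70}); it may be represented by a Milnor symbol $\{w,p\}$ for an integer $w$ that is a non-square unit in $\q_p$ (and, to make $a_p$ a well-defined indecomposable class, a square in every other $\q_\ell$ and positive, such $w$ existing by the Chinese remainder theorem and Dirichlet's theorem). Its image in $K^M_2(\q_p)/2$ is then $\{w,p\}=\{u,\pi\}$, and this is the nonzero class $u\pi$ since $K^M_2(\q_p)/2=\f_2\{u\pi\}$ (equivalently, the quaternion algebra over $\q_p$ with a non-square unit and a uniformizer as symbol entries is a division algebra). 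Hence $a_p\mapsto u\pi$. The one point requiring care is the identification of $a_p$ in Ormsby--{\O}stv{\ae}r's presentation and the verification that it is ramified (nonsplit) at $p$; once that local behaviour is pinned down — equivalently, once one knows that under the Hasse--Brauer--Noether injection $K^M_2(\q)/2\hookrightarrow\bigoplus_v K^M_2(\q_v)/2$ the class $a_p$ has nonzero component at $v=p$ — the conclusion is immediate, and everything else is formal given the descriptions of $\m^\q$ and $\m^{\q_p}$ already recorded in \cref{lem:mmodule} and \cref{Exm:M2F}.
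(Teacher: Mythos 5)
The paper's proof of this lemma is a one-line citation to \cite[Proposition 5.3]{OO13} (plus the observation $u_p=[p]+\rho$); your proposal instead tries to reconstruct the content of that proposition directly from Milnor $K$-theory. Your treatment of $[p]$ and $u_p$ is correct and is exactly the unwinding the paper has in mind. The issue is with $a_p$.

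Your parenthetical description of $a_p$ is inconsistent: you ask for a positive integer $w$ that is a non-square unit in $\q_p$ and a square in $\q_\ell$ for every other place $\ell$ (including $\infty$). No such $w$ exists. If $w>0$ and $w$ is a square in $\q_\ell$ for all finite $\ell\neq p$, then $v_\ell(w)$ is even for all $\ell \neq p$; together with $w$ being a $p$-adic unit this forces $w$ to be a square in $\q$, hence in $\q_p$. (Equivalently, by Hilbert reciprocity, any symbol $\{w,p\}$ that is nontrivial in $K^M_2(\q_p)/2$ must also be nontrivial at some other place.) So the claimed CRT/Dirichlet construction of $w$ cannot succeed, and in fact the class $a_p$ cannot be characterized as the unique class ramified exactly at $p$ among all places.

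That said, your conclusion $a_p\mapsto u\pi$ only needs the weaker fact you identify at the end, namely that $a_p$ has nonzero localization at $p$ under the Hasse map. You explicitly flag this as the "one point requiring care" and defer it; at that point you have really just reduced the lemma to the statement of Ormsby–{\O}stv{\ae}r's Proposition 5.3, which is where the paper also lands. So the proposal is a correct reduction together with a correct (and more explicit) handling of $[p]$ and $u_p$, but it does not independently establish the fact about $a_p$, and the attempt to make the construction of $a_p$ explicit via the symbol $\{w,p\}$ contains a real error and should be removed or repaired.
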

\begin{proof}
The behavior of these maps is described by Ormsby--{\O}stv{\ae}r in \cite[Proposition 5.3]{OO13}, following work of Milnor \cite{Mil70}. Our description follows immediately; note we have defined $u_p = [p]+\rho$ for $p\equiv 3\pmod{4}$.
\end{proof}

\begin{proposition}\label{prop:hasse}
The Hasse map
\[
\Ext_\q\rightarrow \Ext_\r\times\prod_p \Ext_{\q_p}
\]
is injective.
\end{proposition}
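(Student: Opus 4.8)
The plan is to deduce the proposition from the analogous statement one level down, in the category of $\ca^\r$-modules. Using the identification $\Ext_F\cong\Ext_{\ca^\r}(\m^\r,\m^F)$ of \cref{rmk:runiv}, the Hasse map on $\Ext$ is obtained by applying the functor $\Ext_{\ca^\r}(\m^\r,-)$ to the map of $\ca^\r$-modules
\[
h\colon \m^\q\longrightarrow \m^\r\times\prod_{p}\m^{\q_p}
\]
induced by the base change maps $\q\hookrightarrow\r$ and $\q\hookrightarrow\q_p$. Since $\m^\r$ admits a resolution by finite-type free $\ca^\r$-modules, $\Ext_{\ca^\r}(\m^\r,-)$ converts this product into the product of the $\Ext_{\q_p}$, and in particular carries split monomorphisms to split monomorphisms. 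So it would suffice to prove that $h$ is a \emph{split monomorphism} of $\ca^\r$-modules, for then $\Ext_{\ca^\r}(\m^\r,h)$ retains a one-sided inverse and is injective.

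To split $h$, I would use the decompositions of \cref{lem:mmodule}. Write $\m^\q=\bigoplus_e \m_{(r_e)}\{e\}$ as in \cref{lem:mmodule}(8), indexed by the generators $e\in\{1,[2]\}\cup\{[p],a_p:p\equiv 1\pmod 4\}\cup\{u_p:p\equiv 3\pmod 4\}$, with $r_1=\infty$ and the remaining $r_e\in\{1,2\}$; likewise expand $\m^\r$ and each $\m^{\q_p}$ via \cref{Exm:M2F}. The heart of the argument is to exhibit, for each generator $e$, a place $v(e)$ and a direct summand $N_e\subset\m^{\q_{v(e)}}$ (with the convention $\q_\infty:=\r$) such that the composite $\m^\q\xrightarrow{h}\m^{\q_{v(e)}}\twoheadrightarrow N_e$ restricts to an isomorphism $\m_{(r_e)}\{e\}\xrightarrow{\ \sim\ }N_e$ and annihilates $\m_{(r_{e'})}\{e'\}$ for every $e'\neq e$. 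Given this, the maps $\m^\q\to N_e$ assemble to an isomorphism onto $\bigoplus_e N_e$, and composing with the inverses of the isomorphisms $\m_{(r_e)}\{e\}\cong N_e$ produces the desired retraction of $h$. The choices are: $v(1)=\infty$ with $N_1=\m^\r$; $v([2])=2$ and $v([p])=p$ with $N=\m_{(1)}\{\pi\}\subset\m^{\q_p}$; $v(a_p)=p$ with $N=\m_{(1)}\{\pi u\}\subset\m^{\q_p}$; and $v(u_p)=p$ with $N=\m_{(2)}\{\pi+\rho\}\subset\m^{\q_p}$ (one checks from \cref{Exm:M2F} that $\m^{\q_p}=\m_{(2)}\{1\}\oplus\m_{(2)}\{\pi+\rho\}$ when $p\equiv 3\pmod 4$). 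That each composite restricts to the asserted isomorphism on $\m_{(r_e)}\{e\}$ is exactly \cref{lem:hassecoeff}, together with the observation that $h$ annihilates every torsion summand over $\r$ (there being no nonzero element of $\m^\r$ of bidegree $(0,0)$ and finite $\rho$-order).

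The main obstacle is the ``no interference'' part of this claim, namely that each detecting summand $N_e$ receives a nonzero contribution from $e$ alone. I would reduce this to three elementary facts about the base change maps: (i) they preserve the weight grading, so a generator in Milnor degree $d$ maps into the Milnor-degree-$d$ part of the target, which separates $[p],[2]$ (degree $1$) from $a_p$ (degree $2$); (ii) for a prime $q\neq p$ the element $q$ is a unit in $\q_p$, as are $2$ (for $p$ odd) and $-1$, so $[q],[2],\rho$ all lie in the ``unit part'' of $\m^{\q_p}$ and contribute nothing to the uniformizer summands $\m_{(1)}\{\pi\}$ and $\m_{(2)}\{\pi+\rho\}$ — hence neither does $u_q=[q]+\rho$; and (iii) a symbol of two units vanishes in $K^M_2(\q_p)/2$ because $\mathrm{Br}(\f_p)=0$, so $a_q\mapsto 0$ in $\m^{\q_p}$ for every $q\neq p$. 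Running the finitely many types of generators against these facts — for instance, the only generator with a uniformizer component at $\q_p$ is the one attached to $p$, and the degree-$2$ part of $\m^{\q_p}$ lies in a single summand that is hit only by $a_p$ — confirms the claim and completes the proof.
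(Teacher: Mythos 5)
Your overall strategy — decompose $\m^\q$ into $\ca^\r$-module summands via \cref{lem:mmodule}(8), and show that each torsion summand is detected isomorphically by a projection onto a summand of some $\m^{\q_p}$ (with $\m^\r$ detected over $\r$ by $\rho$-torsion-freeness) — is the same argument the paper runs, just stated one level down (at the coefficient level) and with the ``it is easily seen'' parenthetical spelled out. The detection facts you list (degree separation, units not hitting uniformizer summands, $a_q\mapsto 0$ for $q\neq p$, plus torsion-freeness of $\m^\r$) are exactly what ``easily seen'' is standing in for.

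The one place your argument overreaches is the claim that $h$ is a \emph{split} monomorphism of $\ca^\r$-modules. You produce, for each generator $e$, a projection $\pi_e\colon \m^\r\times\prod_p\m^{\q_p}\to N_e$, and the assembled map into $\prod_e N_e$ restricted to $\m^\q$ lands in the direct sum $\bigoplus_e N_e$ and gives an isomorphism onto it. But the candidate retraction $\sum_e\psi_e\circ\pi_e$ is an infinite sum over primes, and a generic element of $\prod_p\m^{\q_p}$ has nonzero projection to $N_{u_p}$ (say) for infinitely many $p$; so the would-be retraction does not land in the direct sum $\m^\q$. Thus $h$ is not shown to split, and I don't see that it does. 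This is not fatal, because the split-mono step is not actually needed: the detection facts you establish already give injectivity of $\Ext(h)$ directly. Given a nonzero $y\in\Ext_\q$, write $y=\sum_e y_e$ with only finitely many $y_e\neq 0$; pick $e$ with $y_e\neq 0$; then the component of the Hasse image in $\Ext_{\q_{v(e)}}$ projects onto $\Ext_{\ca^\r}(\m^\r, N_e)$ to give exactly (the image of) $y_e$, which is nonzero. This is what the paper's proof does, so rephrasing your argument in these terms gives a correct and complete proof. As a secondary point, the parenthetical ``no nonzero element of $\m^\r$ of bidegree $(0,0)$ and finite $\rho$-order'' should read that $\m^\r=\f_2[\tau,\rho]$ is $\rho$-torsion-free in \emph{every} bidegree, which is the relevant fact for annihilating all torsion summands regardless of where their generators sit.
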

\begin{proof}
By \cref{lem:mmodule}, we have
\[
\Ext_\q = \Ext_\r \oplus\Ext_{(1)}\{[2]\}\oplus\Ext_{(1)}\{[p],a_p:p\equiv 1 \pmod{4}\}\oplus \Ext_{(2)}\{u_p:p\equiv 3 \pmod{4}\}.
\]
The summand $\Ext_\r$ maps isomorphically to $\Ext_\r$, and the maps $\Ext_{\q}\rightarrow\Ext_{\q_p}$ are determined by \cref{lem:hassecoeff}. In particular, it is easily seen that the maps
\begin{gather*}
\Ext_{(1)}\{[2]\}\rightarrow\Ext_{\q_2},\\
\Ext_{(1)}\{[p],a_p\}\rightarrow\Ext_{\q_p},\\
\Ext_{(2)}\{u_p\}\rightarrow\Ext_{\q_p}
\end{gather*}
are all split injections, and the proposition follows.
\end{proof}

The preceding discussion, together with our computation of $\Ext_\r$, describes what we will need of $\Ext_F$ in low filtrations and arbitrary stem. So that we may rule out various higher differentials in low stems for degree reasons, we record the following.

\begin{lemma}\label{lem:lowstem}
$\Ext_{(1)}$ is given in stems $s\leq 6$ by the following module:
\begin{gather*}
\f_2[\tau]\otimes\left(\f_2\{h_0^n:n\geq 0\}\oplus\f_2\{h_1,h_1^2,h_1^3,h_2,h_0h_1,h_2^2\}\right)\oplus
\f_2[\tau]/(\tau)\{h_1^4,h_1^5,h_1^6\}.
\end{gather*}
\end{lemma}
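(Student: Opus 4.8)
The plan is to reduce the statement to the structure of $\Ext_\c$ in low stems, and then invoke \cref{prop:extc} in low filtration together with a short argument in high filtration. Since $\m^\r/(\rho)\cong\f_2[\tau]\cong\m^\c$ and the $\r$-motivic lambda algebra base changes to the $\c$-motivic one, we have $\Lambda^\r/(\rho)\cong\Lambda^\c$ and hence $\Ext_{(1)}=H_\ast(\Lambda^\r/(\rho))\cong\Ext_\c$; so it suffices to compute $\Ext_\c$ in stems $s\le 6$. Stem $0$ is immediate: there $\Lambda^\c$ is $\f_2[\tau]\{\lambda_0^f:f\ge 0\}$ with vanishing differential, so $\Ext_\c$ in stem $0$ is the free module $\f_2[\tau]\{h_0^n:n\ge 0\}$.

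For filtrations $f\le 3$ I would simply restrict the $\f_2[\tau]$-module basis of $\Ext_\c^{\le 3}$ from \cref{prop:extc}: in stems $s\le 6$ the surviving basis elements are $1$, $h_1$, $h_1^2$, $h_1^3$, $h_2$, $h_0h_2$ and $h_2^2$, together with $h_0,h_0^2,h_0^3$ in stem $0$; in particular stems $4$ and $5$ are empty in this filtration range, and $h_0^2h_2=\tau h_1^3$ lies in the $\f_2[\tau]$-span of $h_1^3$. Since $\Ext_\c$ is free over $\f_2[\tau]$ in filtrations $\le 3$ by \cref{prop:extc}, this matches the asserted module there (with $h_0h_1=0$).

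It remains to handle filtrations $f\ge 4$ in stems $1\le s\le 6$. Inverting $\tau$ gives $\Ext_\c[\tau^{-1}]\cong\Ext_\cl[\tau^{\pm 1}]$ (by \cref{cor:betti}, or directly from $\Lambda^\c[\tau^{-1}]\cong\Lambda^\cl[\tau^{\pm 1}]$), and $\Ext_\cl^{s,f}=0$ for $1\le s\le 6$ and $f\ge 4$ by the standard structure of the classical Adams $E_2$-page in low degrees (see e.g.\ \cite{Wan67}); hence every class in $\Ext_\c^{s,f,w}$ with $1\le s\le 6$ and $f\ge 4$ is $\tau$-power torsion. This $\tau$-power torsion is accounted for by the classes $h_1^4=[\lambda_1^4]$, $h_1^5$, $h_1^6$, lying in stems $4,5,6$ and filtrations $4,5,6$: they are nonzero since $h_1$ is not nilpotent (see \cref{ssec:simpleexamples}), and they are $\tau$-torsion since $\tau h_1^4=0$, which follows from the differential $\delta(\lambda_2^2\lambda_1)=\lambda_1^4\tau$ and forces $\tau h_1^{4+k}=0$ for all $k\ge 0$.

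The crux of the proof, and the step I expect to be the main obstacle (though a modest one), is to show that these are the \emph{only} $\tau$-torsion classes in stems $1\le s\le 6$. I would settle this by a finite stem-by-stem computation in $\Lambda^\c$, using that for fixed $s\le 6$ the $\f_2[\tau]$-module $\Lambda^\c$ in filtration $f$ is spanned by the monomials $\lambda_J\lambda_0^{f-|J|}$ with $J$ a coadmissible positive sequence of sum $s$ (only a handful of such $J$ for $s\le 6$, and the count is independent of $f$ once $f$ is large) and that the differential is given by \cref{Prop:DiffLambda}; alternatively one may run the $\tau$-Bockstein spectral sequence $H_\ast(\Lambda^\c/(\tau))[\tau]\Rightarrow\Ext_\c$, whose only relevant differential in this range is $d_1(\alpha)=\tau h_1^4$ with $\alpha=[\lambda_2^2\lambda_1]$ as in the proof of \cref{prop:extc}, or simply cite the known description of $\Ext_\c$ in low stems \cite{Isa19, DI10}. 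Assembling the contributions from stem $0$, from filtrations $f\le 3$, and from the $\tau$-torsion in filtrations $f\ge 4$ then yields the stated module.
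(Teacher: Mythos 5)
Your argument is correct and reaches the right module; you also spot that the printed statement's $h_0h_1$ must be a typo for $h_0h_2$, since $h_0h_1=0$ in $\Ext_\c$ while $h_0h_2$ is a genuine free $\f_2[\tau]$-generator in stem $3$. The paper's own proof of this lemma is a one-line citation to Dugger--Isaksen's computation of $\Ext_\c$ in low stems \cite{DI10}; you mention this route as a fallback but instead build a self-contained argument: identify $\Ext_{(1)}\cong\Ext_\c$, read off filtrations $f\le 3$ from \cref{prop:extc}, show via $\Ext_\c[\tau^{-1}]\cong\Ext_\cl[\tau^{\pm1}]$ that everything in positive stem $\le 6$ and filtration $\ge 4$ is $\tau$-power torsion, and then pin that torsion to $h_1^4,h_1^5,h_1^6$ by a finite $\Lambda^\c$ or $\tau$-Bockstein calculation. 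This trades a citation for an explicit computation that stays internal to the paper's own machinery, which is a reasonable choice. One small imprecision: to see that $h_1^5$ and $h_1^6$ (and not just $h_1^4$) are $\tau$-torsion, the $\tau$-Bockstein needs not only $d_1(\alpha)=\tau h_1^4$ but also its $h_1$-multiples $d_1(\alpha h_1^k)=\tau h_1^{4+k}$; these follow from multiplicativity, so there is no real gap, but the phrase ``only relevant differential'' slightly undersells what is being used.
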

\begin{proof}
These groups have been computed by Dugger--Isaksen \cite{DI10}. 
\end{proof}

\subsection{Existence of Hopf elements}\label{ssec:hopfexist}

Our computation of the $F$-motivic Adams differentials $d_2(h_{a+1})$ will follow a similar pattern to Wang's computation of the corresponding classical Adams differentials \cite{Wan67} (differentials which were first computed by Adams \cite{Ada60}). This is an inductive argument, beginning with information about the Hopf elements which are known to exist. We record some of this information in this subsection.

Write $\epsilon\in\pi_{0,0}^F$ for the class represented by the twist map $S^{1,1}\otimes S^{1,1}\rightarrow S^{1,1}\otimes S^{1,1}$.

\begin{lemma}\label{lem:gradedcomm}
Fix $\alpha\in\pi_{a,b}^F$ and $\beta\in\pi_{c,d}^F$. Then there is an identity
\[
\alpha\cdot\beta = (-1)^{(a-b)(c-d)}\epsilon^{bd}\cdot\beta\cdot\alpha.
\]
Moreover, $1-\epsilon$ is detected in $\Ext_F$ by $h_0$ and $2$ by $h_0+\rho h_1$.
\end{lemma}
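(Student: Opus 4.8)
The plan is to establish the two assertions by separate arguments. For the graded commutativity identity, I would unwind the symmetric monoidal structure of the motivic stable homotopy category. Writing $S^{a,b}\simeq C^{\wedge(a-b)}\wedge G^{\wedge b}$ with $C = S^1_s$ the simplicial circle and $G = \g_m$, one has $\alpha\cdot\beta = \mu\circ(\alpha\wedge\beta)$ and $\beta\cdot\alpha = \mu\circ(\beta\wedge\alpha)$, where $\mu\colon S^{0,0}\wedge S^{0,0}\to S^{0,0}$ is the multiplication on the sphere. Using commutativity of $\mu$ and naturality of the braiding, the identity reduces to computing the self-equivalence $u$ of $S^{a+c,b+d}$ induced by the braiding $S^{a,b}\wedge S^{c,d}\to S^{c,d}\wedge S^{a,b}$ after fixing identifications of both sides with $S^{a+c,b+d}$. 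This braiding decomposes as a product of elementary transpositions of sphere factors: a transposition of two copies of $C$ contributes $-1$, a transposition of two copies of $G$ contributes $\epsilon$ (this is the very definition of $\epsilon$ in the statement), and transpositions of a $C$ past a $G$ cancel in pairs by the symmetry axiom $\gamma_{Y,X}\circ\gamma_{X,Y}=\mathrm{id}$. Counting, there are $(a-b)(c-d)$ transpositions of the first kind and $bd$ of the second (reading negative exponents via $\epsilon^2=1$ and $(-1)^2=1$), giving $u = (-1)^{(a-b)(c-d)}\epsilon^{bd}$ and hence $\alpha\cdot\beta = (-1)^{(a-b)(c-d)}\epsilon^{bd}\cdot\beta\cdot\alpha$. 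This is standard, and I would cite Morel for the framework and Dugger--Isaksen for the sphere bookkeeping.

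For the detection claims, recall first that $2\in\pi_{0,0}^F$ is detected by $h_0+\rho h_1$; this is recorded in \cite[Remark 6.3]{IO19}, and also follows from \cref{lem:diagonal}, since the Hurewicz map $c\colon\pi_\ast^\cl\to\pi_{\ast,0}^F$ is induced by a ring map (so $2\mapsto 2$) and is a map of spectral sequences with $c(h_0)=h_0+\rho h_1\neq 0$. For $1-\epsilon$, I would use Morel's identification of the transposition class: in $\pi_{0,0}^F = GW(F)$ one has $\epsilon = -\langle-1\rangle$, and the Milnor--Witt relation $\langle-1\rangle = 1+\rho\eta$ gives
\[
1-\epsilon \;=\; 1+\langle-1\rangle \;=\; 2+\rho\eta .
\]
Since $\eta$ is detected by $h_1$ and $\rho$ acts on the Adams $E_\infty$-page by multiplication by $\rho\in\m_0^F$, the class $\rho\eta$ is detected by $\rho h_1$ (or is zero, when $\rho=0$). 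As $(h_0+\rho h_1)+\rho h_1 = h_0$ is nonzero in $\Ext_F^1$, the class $1-\epsilon = 2+\rho\eta$ is therefore detected by $h_0$, as claimed.

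The main obstacle is the combinatorial bookkeeping in the first part: making precise that the braiding of $S^{a,b}\wedge S^{c,d}$ decomposes into elementary transpositions with exactly the stated contributions, in particular that the mixed $C$--$G$ crossings contribute trivially and that the sign and $\epsilon$-power conventions are those under which $\epsilon$ is the transposition class of $S^{1,1}\wedge S^{1,1}$ (which is how $\epsilon$ is defined here). Once those conventions are pinned down the argument is formal. The detection part is then a short computation in Milnor--Witt $K$-theory using Morel's results together with \cref{lem:diagonal}, with no essential difficulty.
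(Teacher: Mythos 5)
Your argument is correct, and it unpacks what the paper handles purely by citation: \cite[Corollary 6.1.2]{Mor04} for graded commutativity, and \cite[Remark 6.3]{IO19} for both detection statements. On the braiding side, the one step you gloss over is the cancellation of the mixed $C$--$G$ crossings: these do not cancel simply by the relation $\gamma_{Y,X}\circ\gamma_{X,Y}=\mathrm{id}$; rather, the two chosen identifications $S^{a,b}\wedge S^{c,d}\cong S^{a+c,b+d}$ and $S^{c,d}\wedge S^{a,b}\cong S^{a+c,b+d}$ themselves contribute compensating mixed crossings, and what one must check is that the total number of mixed crossings in the resulting self-map of $S^{a+c,b+d}$ is even, after which symmetry yields the cancellation. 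On the detection side, you take the detection of $2$ from \cite[Remark 6.3]{IO19} and deduce the $1-\epsilon$ statement from Morel's identities $\epsilon = -\langle -1\rangle$ and $\langle -1\rangle = 1+\rho\eta$, which give $1-\epsilon = 2+\rho\eta$; the paper instead reads both facts off the same remark. Your route is more self-contained and makes the useful relation $1-\epsilon = 2+\rho\eta$ explicit. One small caution: concluding the detection from $h_0\neq 0$ in $\Ext_F^1$ tacitly invokes that $h_0$ is not a boundary on the $E_\infty$-page; this holds because nothing in filtration $0$ can hit it, but it is worth a word in a fully written-up proof.
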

\begin{proof}
The claimed graded commutativity is given in \cite[Corollary 6.1.2]{Mor04}; see also \cite[Section 6.1]{IO19} for a discussion. That $1-\epsilon$ is detected by $h_0$ and $2$ by $h_0+\rho h_1$ is noted in \cite[Remark 6.3]{IO19}. 
\end{proof}

\begin{lemma}\label{lem:lowhopf}
For any field $F$, the class $h_a$ is a permanent cycle for $a\in\{0,1,2,3\}$.
\end{lemma}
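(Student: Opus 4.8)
\textbf{Proof plan for \cref{lem:lowhopf}.} The claim is that the classes $h_0$, $h_1$, $h_2$, $h_3$ in $\Ext_F^{1}$ are permanent cycles in the $F$-motivic Adams spectral sequence, for every base field $F$ of characteristic not $2$. The strategy is to exhibit, for each $a \in \{0,1,2,3\}$, an explicit element of $\pi_{2^a - 1, \ceil{(2^a-1)/2}}^F$ which $h_a$ detects; a class in $\Ext_F^{1}$ detects a stable homotopy class if and only if it is a permanent cycle, so producing the homotopy element suffices. For $a = 0$, the class $h_0$ detects $1 - \epsilon$ (equivalently it is one of the classes detecting the elements of Milnor--Witt degree $0$ discussed above); this is recorded in \cref{lem:gradedcomm}, so $h_0$ is a permanent cycle. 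For $a = 1, 2, 3$, the plan is to invoke the motivic Hopf maps $\eta \in \pi_{1,1}^F$, $\nu \in \pi_{3,2}^F$, and $\sigma \in \pi_{7,4}^F$ constructed by Dugger--Isaksen via the Hopf construction on the Cayley--Dickson algebras \cite[Section 4]{DI13}, which live over an arbitrary base field.

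The content then is to verify that $\eta$, $\nu$, $\sigma$ are detected in Adams filtration exactly $1$, by $h_1$, $h_2$, $h_3$ respectively. First I would note that each of these maps has Hopf invariant one in the sense of \cref{def:hi1} — this is \cite[Remark 4.14]{DI13}, or alternatively follows from \cref{prop:hopfconstruction} applied to the $H$-space structures on the relevant unit spheres $\g_m$, $S^{3,2}$, $S^{7,4}$. Having Hopf invariant one means precisely that the cofiber $C(\alpha)$ has $H^{\ast,\ast}(C(\alpha))$ nonsplit as an $\ca^F$-module, with the bottom class connected to the top by an operation realizing the cup-square; by \cref{prop:einv}, this forces the detecting class of $\alpha$ to lie in $\Ext_F^{1}$ (filtration $1$, not higher), in the appropriate tridegree. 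Finally, one must identify which class of $\Ext_F^1$ in that tridegree does the detecting: in stem $2^a - 1$ and weight $\ceil{(2^a-1)/2}$ the relevant part of $\Ext_F$ (readable from \cref{prop:extc} over the algebraic closure, together with \cref{cor:betti}, and for general $F$ from \cref{rmk:runiv}) contains $h_a$ as the unique class whose image under Betti realization is the classical $h_a \in \Ext_\cl^1$; since Betti realization of $\eta$, $\nu$, $\sigma$ recovers the classical Hopf maps $\eta_\cl$, $\nu_\cl$, $\sigma_\cl$ (detected classically by $h_1$, $h_2$, $h_3$), naturality of the Adams spectral sequence along $\Be$ pins down the detecting class as $h_a$.

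There is essentially no hard obstacle here — the lemma is a bookkeeping consequence of already-cited results — but the one point requiring a little care is the identification that the Hopf-invariant-one condition lands the detecting class in filtration exactly $1$ rather than merely giving \emph{some} permanent cycle: this uses the cohomological characterization of \cref{prop:einv}, namely that a nonsplit extension $0 \to \m^F\{y\} \to E \to \m^F\{x\} \to 0$ corresponding to the (necessarily filtration-$1$) class $\epsilon \in \Ext_F^{a-1,1,b}$ is realized by a $2$-cell complex precisely when $\epsilon$ is a permanent cycle, and that the fiber of $S^{0,0} \to C(\alpha)$ is then detected by that $\epsilon$. Once this is in hand, matching names via Betti realization completes the argument. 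I would write the proof in two or three sentences: cite \cref{lem:gradedcomm} for $h_0$; cite \cite[Section 4, Remark 4.14]{DI13} for the existence and Hopf-invariant-one property of $\eta, \nu, \sigma$; then apply \cref{prop:einv} together with \cref{cor:betti} (and \cref{prop:extc}) to conclude these maps are detected by $h_1, h_2, h_3$, which are therefore permanent cycles.
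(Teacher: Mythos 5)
Your proposal is correct and uses essentially the same approach as the paper: \cref{lem:gradedcomm} handles $h_0$, and Dugger--Isaksen's motivic Hopf maps $\eta$, $\nu$, $\sigma$ together with \cite[Remark 4.14]{DI13} handle $h_1$, $h_2$, $h_3$. The paper simply cites \cite[Remark 4.14]{DI13} directly for the detection by $h_1$, $h_2$, $h_3$, so your further unpacking via \cref{prop:hopfconstruction}, \cref{prop:einv}, and Betti realization is correct but optional (and your claim that unstable Hopf invariant one \emph{precisely} means nonsplit $\ca^F$-module structure implicitly relies on the instability considerations of \cref{rmk:instability}, which do hold for $a \leq 3$ but are not automatic in general).
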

\begin{proof}
The class $h_0$ is a permanent cycle by \cref{lem:gradedcomm}. In \cite{DI13}, Dugger--Isaksen construct the motivic Hopf elements $\eta$, $\nu$, and $\sigma$, and in \cite[Remark 4.14]{DI13} they indicate that these are detected by $h_1$, $h_2$, and $h_3$, respectively; see also our discussion in \cref{ssec:hopfconstruction}. Thus these classes must be permanent cycles.
\end{proof}

\subsection{Nonexistence of Hopf elements}\label{ssec:d2ha}

The purpose of this subsection is to prove the following.

\begin{thm}\label{thm:nohopf}
For an arbitrary base field $F$ of characteristic not equal to $2$, there are differentials of the form
\[
d_2(h_{a+1}) = (h_0+\rho h_1)h_a^2
\]
in the $F$-motivic Adams spectral sequence, which are nonzero for $a\geq 3$.
\tqed
\end{thm}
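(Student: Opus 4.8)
The plan is to follow Wang's inductive strategy \cite{Wan67}, using the Hurewicz map $c\colon\Ext_\cl\to\Ext_F$ of \cref{lem:diagonal} as the source of the base-case differential and using the endomorphism $\theta$ lifting $\Sq^0$ to propagate it upward. First I would establish the identity in $\Ext_F^{3,1,?}$, or rather its potential value: by \cref{lem:diagonal}, $c(h_0)=h_0+\rho h_1$, and since $c$ is multiplicative and a map of spectral sequences, the classical relation $2\sigma^2=0$ (equivalently $d_2(h_4)=h_0 h_3^2$ classically, which follows from graded commutativity via \cref{lem:gradedcomm}) pushes forward to give $d_2(h_4)=(h_0+\rho h_1)h_3^2$ over any $F$; here one checks this target is nonzero in $\Ext_F$, which it is since $h_3^2(h_0+\rho h_1)$ maps to $h_0 h_3^2\neq 0$ under $\c$-realization. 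The subtlety flagged in the remarks is that for $a\geq 4$ there is a second potential target $\rho h_1 h_a^2$, so the base case $d_2(h_5)=(h_0+\rho h_1)h_4^2$ genuinely requires new input.

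For the base case $d_2(h_5)$, I would use the Hasse principle \cref{prop:hasse} together with the reduction observed in \cref{ssec:extf}: it suffices to prove the differential over the prime fields $\f_p$ ($p$ odd), $\q_p$ ($p$ prime), and $\r$, since $\Ext_\q$ injects into the product of these and all $\Ext_F$ are built from the $\Ext_{(r)}$. Over $\r$ one combines complex and real Betti realization (\cref{prop:lefschetz}, \cref{cor:betti} and the analogous $C_2$-equivariant input) to pin down the differential. For the other prime fields, one uses $c\colon\Ext_\cl\to\Ext_F$ applied to the Kervaire class $\theta_4$: classically $\theta_4\in\pi_{30}^\cl$ is detected by $h_4^2$, and $h_5$ supports no differential hitting $h_4^2$ classically precisely because $\theta_4$ exists; but $c(\theta_4)\in\pi_{30,0}^F$ and $c(h_4^2)=h_4^2$, and since $2\theta_4=\eta^2\theta_4 + (\text{stuff})$ is controlled, one can trace through the effect of $c$ on the relevant relations in low filtration to deduce that $(h_0+\rho h_1)h_4^2$ must be hit, i.e.\ $d_2(h_5)=(h_0+\rho h_1)h_4^2$; the $\rho h_1 h_4^2$ ambiguity is eliminated because $h_1\theta_4$ or the corresponding motivic class does not vanish, forcing the full target. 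This is the step I expect to be the main obstacle, as it requires a careful analysis of how $\theta_4$ and the $h_0$-extensions around it behave in $\pi_{\ast,0}^F$, rather than a formal argument.

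Once $d_2(h_4)$ and $d_2(h_5)$ are established, the inductive step is to apply $\theta$. By \cref{Thm:Sq0} (and \cref{SS:StructureSummary}), $\theta$ is a map of differential graded algebras lifting $\Sq^0$, so it commutes with the Adams $d_2$; one has $\Sq^0(h_{a+1})=h_{a+2}$, $\Sq^0(h_a^2)=h_{a+1}^2$, $\Sq^0(h_0+\rho h_1)=h_1+\rho^2 h_2$ on the nose, but more relevantly $\rho\cdot\Sq^0$ is injective on $\Ext_\r^{\leq 3}$ (the corollary to \cref{thm:extadd}), so from $d_2(h_{a+1})=(h_0+\rho h_1)h_a^2$ one deduces $d_2(h_{a+2})=\Sq^0\!\big((h_0+\rho h_1)h_a^2\big)$ and then identifies $\Sq^0\!\big((h_0+\rho h_1)h_a^2\big)=(h_0+\rho h_1)h_{a+1}^2$ by a direct check using the $\Sq^0$-action recorded in \cref{thm:extadd} — here the fact that $\Sq^0$ "increments $a$ in each row" of the generator table is exactly what is needed, and the injectivity of $\rho\,\Sq^0$ rules out the spurious possibility that the $d_2$ differs by a $\rho$-torsion class like $\rho h_1 h_{a+1}^2$ not seen by $\Sq^0$. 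Finally, nonvanishing for $a\geq 3$: the target $(h_0+\rho h_1)h_a^2$ maps to $h_0 h_a^2$ under $\c$-Betti realization, which is nonzero in $\Ext_\cl$ for all $a\geq 0$ (it is a nonzero class detecting an Adams filtration $3$ element), hence the motivic target is nonzero; combined with \cref{lem:lowhopf} giving permanent cycles $h_0,h_1,h_2,h_3$, we get that $h_{a+1}$ is a permanent cycle for $a\leq 2$ and supports a nonzero $d_2$ for $a\geq 3$, completing the proof.
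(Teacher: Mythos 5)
Your base cases are essentially right, and the reduction via the Hasse principle to prime fields is correct, but the inductive step contains a genuine error that invalidates the argument for $a\geq 5$.

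The core problem: you claim that because $\theta$ is a map of differential graded algebras lifting $\Sq^0$, ``it commutes with the Adams $d_2$.'' This is false. The map $\theta$ commutes with the differential $\delta$ on $\Lambda^F$, i.e.\ with the differential in the cobar complex that \emph{computes} the $E_2$-page. That tells you $\Sq^0$ is well-defined on $\Ext_F$, but says nothing about how $\Sq^0$ interacts with the Adams spectral sequence differential $d_2\colon E_2\to E_2$, which encodes homotopical information not visible to $\Lambda^F$. Compatibility of algebraic Steenrod operations with Adams differentials is precisely Bruner's theory of power operations in Adams spectral sequences; the formulas there are not ``$\Sq^0$ commutes with $d_r$'' but carry correction terms. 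The paper's remark following \cref{thmx:nohopf} flags exactly this: Tilson explored Bruner-style power operations in the $\r$-motivic setting and this route was only able to produce $d_2(h_{a+1})$ for $a\leq 3$ -- i.e.\ the regime where your argument is not needed anyway. So your proposed propagation step fails exactly where the theorem becomes nontrivial. (There is also a degree error that would have been a clue: $\Sq^0(h_0) = \tau h_1$, not $h_1$, since $\theta(\lambda_0)=\lambda_1\tau+\lambda_2\rho$, so the target weights don't even line up under your proposed formula.)

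The paper's inductive step is Wang's purely algebraic argument, not a power operation: apply the Leibniz rule to the vanishing product $h_{a+1}h_a=0$ to get $d_2(h_{a+1})h_a + h_{a+1}d_2(h_a)=0$, feed in the inductive hypothesis and the relation $h_{a+1}h_{a-1}^2=h_a^3$ to conclude $\bigl(d_2(h_{a+1})+(h_0+\rho h_1)h_a^2\bigr)h_a=0$, and then use that multiplication by $h_a$ is injective on the group of possible error terms $\omega h_1 h_a^2$ for $a\geq 5$ (this is \cref{lem:h1hn3}, ultimately Wang's $h_0 h_{a-1}^3\neq 0$ pushed through the doubling map). That last nonvanishing is precisely why the induction must start at $a=5$: for $a=4$ the algebraic injectivity fails, which is why $d_2(h_5)$ needs the separate Kervaire input.

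On that separate input: your base-case sketch for $d_2(h_5)$ is roughly in the right direction but vaguer than what's needed. The paper does not ``trace through relations involving $\eta^2\theta_4$''; it reduces $\r$ to the cofiber of $\rho^2$ (to get a multiplicative Adams spectral sequence mod $\rho^2$), multiplies by $\tau^{16}$ (a $d_2$-cycle since $\tau^4\in\Ext_F^0$), observes $c(\theta_4)$ is detected by $\tau^{16}h_4^2$, and uses $2\,c(\theta_4)=0$ together with a sparseness check to force $d_2(\tau^{16}h_5) = (h_0+\rho h_1)\tau^{16}h_4^2$. Your instinct that $\rho h_1 h_4^2$ is the ambiguity to kill is right; the mechanism you gesture at is not.
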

By naturality, it suffices to produce these differentials in the case where $F$ is a prime field, i.e.\ $F = \f_q$ or $F = \q$, and when $F$ is algebraically closed. Moreover, by the Hasse principal given in \cref{prop:hasse}, the case $F = \q$ may be deduced from the cases $F = \q_p$ and $F = \r$ combined. All of these build on the case where $F$ is algebraically closed, which may be treated as follows.

\begin{proposition}\label{prop:algclosedhopf}
If $F = \Fbar$ is algebraically closed, then
\[
d_2(h_{a+1}) = h_0 h_a^2.
\]
This is nonzero for $a\geq 3$.
\end{proposition}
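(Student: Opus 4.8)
The plan is to reduce the statement to Adams' classical theorem via Betti realization. We may assume $a\geq 1$ (for $a=0$, $h_1$ is a permanent cycle by \cref{lem:lowhopf}), so that both $d_2(h_{a+1})$ and $h_0h_a^2$ lie in the tridegree $\Ext_\c^{2^{a+1}-2,\,3,\,2^a}$; in particular the weights agree, since $h_0$ has weight $0$ and each $h_a$ has weight $2^{a-1}$.

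First I would invoke the Lefschetz principle. By \cref{prop:lefschetz}, the equivalence $\Sp^{F,\cell}_2\simeq\Sp^{\c,\cell}_2$ is compatible with Adams spectral sequences and with the isomorphism $\Ext_F\cong\Ext_\c$. As $h_{a+1}$, $h_0$, and $h_a$ all live in this cellular $\Ext$, it suffices to prove the proposition for $F=\c$.

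Over $\c$, I would use the Betti realization functor $\Be\colon\Sp^{\c,\cell}_2\to\Sp^\cl_2$ of \cref{cor:betti}. It induces a morphism of Adams spectral sequences which on $E_2$-pages is the multiplicative localization map $\Ext_\c\to\Ext_\c[\tau^{-1}]\cong\Ext_\cl[\tau^{\pm 1}]$, sending $\tau\mapsto 1$ and each $h_i\mapsto h_i^\cl$. Adams' resolution of the classical Hopf invariant one problem \cite{Ada60} gives $d_2^\cl(h_{a+1}^\cl)=h_0^\cl(h_a^\cl)^2$, which is nonzero precisely for $a\geq 3$ (and zero for $a=1,2$, where $h_0^\cl h_1^\cl=0$ and $h_0^\cl(h_2^\cl)^2=0$). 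Applying the morphism of spectral sequences and multiplicativity of $\Be$,
\[
\Be\bigl(d_2(h_{a+1})\bigr)=d_2^\cl\bigl(h_{a+1}^\cl\bigr)=h_0^\cl(h_a^\cl)^2=\Be\bigl(h_0h_a^2\bigr).
\]

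It remains to cancel $\Be$, and this is the one point requiring care: a priori the lift of $h_0^\cl(h_a^\cl)^2$ to $\Ext_\c$ could carry an extra $\tau$-torsion correction term. But by \cref{prop:extc}, $\Ext_\c^{\leq 3}$ is a free $\f_2[\tau]$-module, hence $\tau$-torsion-free, so the localization map is injective in filtrations $\leq 3$; thus $\Be$ is injective on $\Ext_\c^{2^{a+1}-2,3,2^a}$, and we conclude $d_2(h_{a+1})=h_0h_a^2$. Finally, for $a\geq 3$ this class is nonzero, being the $\f_2[\tau]$-basis element $h_a\cdot h_a\cdot h_0$ of \cref{prop:extc} (whose constraints exclude $h_a^2h_0$ only for $a\in\{1,2\}$); for $a\in\{1,2\}$ it vanishes, consistent with $h_{a+1}$ being a permanent cycle. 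Everything here is formal once \cref{prop:lefschetz}, \cref{cor:betti}, and \cref{prop:extc} are in hand; the only genuine input beyond the machinery of Part I is Adams' classical computation.
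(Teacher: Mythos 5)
Your proof is correct, and it follows exactly the route the paper explicitly declines to take: the proof of \cref{prop:algclosedhopf} opens with ``The proposition could be reduced to this by appealing to \cref{cor:betti}; however, we shall instead proceed as follows,'' and then executes Wang's inductive argument from \cite{Wan67}. Your reduction via Betti realization is clean and formally efficient — the key observation that $\Ext_\c^{\le 3}$ is $\tau$-torsion-free by \cref{prop:extc}, so that the localization map $\Ext_\c\to\Ext_\c[\tau^{-1}]\cong\Ext_\cl[\tau^{\pm 1}]$ is injective in the relevant degrees, correctly handles the only potential ambiguity (a $\tau$-torsion correction to $d_2(h_{a+1})$). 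One small redundancy: you needn't first invoke \cref{prop:lefschetz} to reduce to $\c$, since \cref{cor:betti} is already stated for an arbitrary algebraically closed $F$; the two steps collapse into one. The paper chooses the longer inductive argument deliberately: the base case $a=3$ is handled by the homotopical input $2\sigma^2=0$ from \cref{lem:gradedcomm}, and the inductive step uses only $h_{a+1}h_a=0$, $h_{a+1}h_{a-1}^2=h_a^3$, the Leibniz rule, and the non-vanishing of $h_0h_a^3$. That template — not the Betti realization shortcut — is what carries over to non-algebraically-closed fields in the proof of \cref{thm:nohopf}, where the target picks up the $\rho h_1$ term and Betti realization is no longer available. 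So your argument buys brevity here, while the paper's buys the infrastructure needed for the general case; both are worth knowing.
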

\begin{proof}
The corresponding classical differentials are known due to work of Adams \cite{Ada60}. The proposition could be reduced to this by appealing to \cref{cor:betti}; however, we shall instead proceed as follows.

In \cite[Section 3]{Wan67}, Wang gives another proof of the classical differentials, combining only a minimal amount of homotopical input with a good understanding of $\Ext_{\cl}$. His argument may be applied essentially verbatim to produce the claimed $\Fbar$-motivic differentials. It is this argument that may be adapted to work for other base fields, so to motivate our later computations let us recall this argument in full.

The proof proceeds by induction on $a$, where only the base case requires any homotopical input.

Consider the base case $a=3$. The class $h_3$ is a permanent cycle, detecting the Hopf element $\sigma$; see \cref{lem:lowhopf}. By \cref{lem:gradedcomm}, we find that $2\sigma^2 = 0$. As $2$ is detected by $h_0$ over algebraically closed fields, it follows that $h_0h_3^2$ cannot survive the Adams spectral sequence. The structure of $\Ext_{\Fbar}$ implies that $d_2(h_4) = h_0 h_3^2$ is the only way for $h_0h_3^2$ to die.

Now, suppose we have produced the differential $d_2(h_a) = h_0h_{a-1}^2$ for some $n\geq 4$. The relation $h_{a+1}h_a=0$ together with the Leibniz rule implies
\[
0 = d_2(h_{a+1}h_a) = d_2(h_{a+1})\cdot h_a+h_{a+1}\cdot d_2(h_a).
\]
Applying our inductive hypothesis and the relation $h_{a+1}\cdot h_{a-1}^2 = h_a^3$, this reduces to
\[
(d_2(h_{a+1}) + h_0 h_a^2)\cdot h_a = 0.
\]
The algebraic structure of $\Ext_F^3$ implies that $d_2(h_{a+1})\in\f_2\{h_0h_a^2\}$, so it suffices to verify that $h_0 h_a^3 \neq 0$ for $a\geq 4$. This follows from Wang's computation \cite[Proposition 3.4]{Wan67} by comparison along the map $\Ext_F\rightarrow\Ext_F[\tau^{-1}]\simeq\Ext_\cl[\tau^{\pm 1}]$, and this concludes the proof.
\end{proof}

The base step for the inductive argument given in \cref{prop:algclosedhopf} works for arbitrary base fields, but the inductive step falls apart. This inductive step relies on the algebraic fact that, when working over an algebraically close field, multiplication by $h_a$ is injective on the degree of $d_2(h_{a+1})$ for $a\geq 4$. Over other base fields, this fails for $a = 4$: this degree may contain elements of the form $\omega h_1 h_4^2$ where $\omega\in\Ext_F^{-1,0,-1}$ is a sum of elements such as $\rho$, $\pi$, and $u$, and
\[
\omega h_1 h_4^2\cdot h_4 = \omega h_1 \cdot h_4^3 = \omega h_1 \cdot h_3^2 \cdot h_5 = 0.
\]
Luckily, the inductive step fails only for $a=4$; once we have resolved $d_2(h_5)$, the remaining differentials will follow via the same argument. To resolve this differential, we proceed as follows.

\begin{proposition}\label{lem:d2h5f}
Let $F$ be a field of the form $\f_q$ for $q$ odd, $\q_p$ for any $p$, or $\r$. Then there is a differential
\[
d_2(h_5) = (h_0+\rho h_1)h_4^2
\]
in the $F$-motivic Adams spectral sequence.
\end{proposition}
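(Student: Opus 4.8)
\textbf{Proposed proof of \cref{lem:d2h5f}.}

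The plan is to deduce this differential from the behavior of the classical Kervaire class $\theta_4\in\pi_{30}^\cl$ under the Hurewicz map $c\colon\pi_\ast^\cl\rightarrow\pi_{\ast,0}^F$. First I would recall from \cref{lem:diagonal} that $c$ lifts to a map of spectral sequences $c\colon\Ext_\cl^{s,f}\rightarrow\Ext_F^{s,f,0}$, commuting with $\Sq^0$ and satisfying $c(h_0)=h_0+\rho h_1$. Classically, one has $d_2(h_5)=h_0h_4^2$ in the Adams spectral sequence over the sphere, but also $h_0^2h_4^2$ is a permanent cycle detecting $2\theta_4\neq 0$, and in fact $h_4^2$ itself is a permanent cycle detecting $\theta_4$ (the Kervaire invariant element in dimension $30$, known to exist by Barratt--Mahowald--Tangora or Mahowald--Tangora). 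Applying $c$, the class $c(h_4^2)\in\Ext_F^{30,2,0}$ must be a permanent cycle detecting $c(\theta_4)\in\pi_{30,0}^F$. Now I would run through the structure of $\Ext_F$ in the relevant degrees, using \cref{ssec:extf} (so \cref{lem:mmodule} together with \cref{thm:extadd}), to identify $\Ext_F^{s,f,0}$ for $(s,f)$ near $(31,1)$ and $(30,3)$: the only possible target of $d_2$ on the candidate for $c(h_5)$ is $(h_0+\rho h_1)h_4^2=c(h_0)\cdot c(h_4)^2 = c(h_0h_4^2)$. Applying $c$ to the classical differential $d_2(h_5)=h_0h_4^2$ then gives $d_2(c(h_5)) = c(h_0 h_4^2) = (h_0+\rho h_1)h_4^2$, and since $c(h_5)=h_5$ by the naming conventions (both detect the image of the relevant Steenrod-algebra extension and $c$ is compatible with $\Sq^0$, with $c(h_0)=h_0+\rho h_1$ forcing $c(h_4)=h_4$, $c(h_5)=h_5$ inductively), this is the desired motivic differential $d_2(h_5)=(h_0+\rho h_1)h_4^2$.

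The first substantive step is therefore to verify that $c(h_n)=h_n$ in $\Ext_F^{2^n-1,1,0}$ for all $n\geq 0$: I would argue by induction using $\Sq^0(h_n)=h_{n+1}$ classically and $\Sq^0(h_n)=h_{n+1}$ motivically (both follow from the $\theta$-formula of \cref{Thm:Sq0} after taking homology), together with the fact that $c$ commutes with $\Sq^0$ from \cref{lem:diagonal}, with base case $c(h_0)=h_0+\rho h_1$, $c(h_1)=h_1$ — wait, this needs care, since $c(h_0)\neq h_0$; but $c(h_0^2)=(h_0+\rho h_1)^2 = h_0^2 + \rho^2 h_1^2$ and $\Sq^0(h_0)=h_1$ forces $c(h_1)=c(\Sq^0 h_0)=\Sq^0 c(h_0)=\Sq^0(h_0+\rho h_1)=h_1+\rho^2 h_2$, so the correct statement is $c(h_n)=h_n + (\text{higher }\rho\text{-filtration terms})$, and I must check these correction terms do not interfere with the conclusion. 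Concretely I would compute $c(h_4)=\Sq^0{}^4 c(h_0)$ and $c(h_4^2)=c(h_4)^2$ explicitly modulo the degrees that matter, then compute $d_2(c(h_5))=c(d_2(h_5))=c(h_0h_4^2)$, and check against the list of classes in $\Ext_F^{30,3,0}$ from \cref{thm:extadd} (restricted to weight $0$) that the leading term is exactly $(h_0+\rho h_1)h_4^2$ and the only ambiguity is by classes that vanish or are already accounted for.

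The main obstacle I expect is twofold. First, pinning down $c(h_4)$ and $c(h_4^2)$ precisely: the iterated $\Sq^0$ applied to $h_0+\rho h_1$ produces a sum with $\rho$-power coefficients (via $\theta(\rho)=\rho^2$ and $\theta(\lambda_0)=\lambda_1\tau+\lambda_2\rho$ type formulas at the chain level), and one must ensure that after multiplying and landing in $\Ext_F^{30,3,0}$ the resulting class genuinely equals $(h_0+\rho h_1)h_4^2$ rather than that plus some extra $\rho$-torsion term from the exotic generators in \cref{ex:generators}. This is where the explicit computation of $\Ext_\r^{\leq 3}$ pays off: I would invoke the weight-$0$ part of \cref{thm:extadd}, which is highly constrained, to see that $\Ext_F^{30,3,0}$ is small enough that the classical differential pushed forward leaves no room for ambiguity. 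Second, I need the input that $\theta_4$ exists and is detected by $h_4^2$ with $d_2(h_5)=h_0h_4^2$ classically — these are standard (Mahowald--Tangora, Barratt--Mahowald--Tangora), but I should cite them carefully. If the direct computation of $c(h_4^2)$ proves too delicate, a fallback is to instead use that $2\theta_4\neq 0$ classically, hence $c(2\theta_4) = 2c(\theta_4)\neq 0$ provided $\pi_{30,0}^F$ has no unexpected $2$-torsion collapse, which forces $(h_0+\rho h_1)c(h_4^2)$ to survive and pins down $d_2$ on $h_5$ by process of elimination against \cref{thm:extadd}; after the further reduction over $\r$ alluded to in the paper's remarks (combining complex and real Betti realization) this can be made uniform across the listed fields.
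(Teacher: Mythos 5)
Your high-level strategy — using the Hurewicz map $c$ of \cref{lem:diagonal} and the classical Kervaire class $\theta_4$ — is the right one and is essentially the paper's approach, but as written the argument has two errors that would prevent it from closing, plus a missing reduction.

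The central technical error is the repeated claim $c(h_n)=h_n$, or $c(h_n)=h_n+(\text{higher $\rho$-terms})$. This cannot hold for weight reasons: $c$ lands in $\Ext_F^{s,f,0}$, but $h_n$ has weight $2^{n-1}$ for $n\geq 1$. Your intermediate computation $c(h_1)=\Sq^0(h_0+\rho h_1)=h_1+\rho^2 h_2$ has dropped a $\tau$: applying $\theta$ one finds $\Sq^0(h_0)=[\lambda_1\tau+\lambda_2\rho]=\tau h_1$ (see \cref{ssec:simpleexamples}), so the correct formula is $c(h_1)=\tau h_1+\rho^2 h_2$, and inductively $c(h_n)=\tau^{2^{n-1}}h_n+\rho^{2^n}h_{n+1}$. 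Consequently $c(\theta_4)$ is detected by $c(h_4^2)=\tau^{16}h_4^2+\rho^{32}h_5^2$, not by $h_4^2$, and the class you would want to see die is $(h_0+\rho h_1)\tau^{16}h_4^2$, which lives in weight $0$. Pushing the classical differential through $c$ thus computes $d_2(\tau^{16}h_5)$, and you need one additional algebraic input — that multiplication by $\tau^{16}$ is injective on $\Ext_F^{\leq 3}$ — to descend to $d_2(h_5)$. Moreover, the $\rho^{32}$-correction terms are not automatically negligible over $F=\r$, where $\rho$ is not nilpotent; the paper first reduces to the Adams spectral sequence for the cofiber of $\rho^2$ (a ring spectrum by \cite[Lemma~7.8]{BS20}), and only then does the computation become uniform across $\f_q$, $\q_p$, and $\r$. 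Your proposal omits this reduction, so over $\r$ your argument does not control the possible $\rho$-torsion corrections.

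The second error is one of fact: you twice assert $2\theta_4\neq 0$, and your fallback argument hinges on this. It is false — $2\theta_4=0$ (indeed $h_0h_4^2$ is the target of $d_2(h_5)$ classically, so no nonzero permanent cycle of filtration $3$ detects $2\theta_4$). The paper's argument runs in exactly the opposite direction from your fallback: \emph{because} $2c(\theta_4)=0$, the class $(h_0+\rho h_1)\tau^{16}h_4^2$ detecting it must die, and the only possible source is $\tau^{16}h_5$. Using $2\theta_4\neq 0$ as you wrote would force $(h_0+\rho h_1)h_4^2$ to \emph{survive}, i.e.\ yield the negation of what you are trying to prove. Aside from this sign flip, the paper's proof is precisely the corrected version of your fallback: it never attempts to determine $c(h_5)$ on the nose, instead using $2\theta_4=0$ plus $\tau^{16}$-injectivity to force the differential by process of elimination.
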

\begin{proof}
When $F = \r$, we first make the following reduction. Observe that $\Ext_\r$ in the degree of $d_2(h_5)$ is given by $\f_2\{h_0h_4^2,\rho h_1 h_4^2\}$, and that neither of these classes are divisible by $\rho^2$. Thus it is sufficient to verify this differential in the Adams spectral sequence for the cofiber of $\rho^2$. By \cite[Lemma 7.8]{BS20}, this cofiber is a ring spectrum, and so its Adams spectral sequence is multiplicative. Having made this reduction, the remainder of the argument is uniform in the given choices of $F$. For brevity of notation, in the following we shall write $\Ext_F$ for the object so named when $F = \f_q$ or $F = \q_p$, and write the same for $\Ext_{(2)}$ when $F = \r$.

First, observe that as $\tau^4\in\Ext_F^0$, the class $\tau^{16}$ is a square and thus a $d_2$-cycle. As $\tau^{16}$ acts injectively on $\Ext_F^f$ for $f\leq 3$, it suffices to show
\[
d_2(\tau^{16} h_5) = (h_0+\rho h_1)\tau^{16} h_4^2.
\]
 Consider the Hurewicz map
$
c\colon \pi_\ast \rightarrow \pi_{\ast,0}^F.
$
Let $\theta_4\in\pi_{30}S^0$ be the Kervaire class, detected by $h_4^2$ and satisfying $2\theta_4=0$. By \cref{lem:diagonal}, we find that $c(\theta_4)$ is detected by $(\Sq^0)^4(h_0^2) = \tau^{16}h_4^2$. As $2 \cdot c(\theta_4) = 0$, the class $(h_0 + \rho h_1)\tau^{16}h_4^2$ cannot survive. The only possibility is that $d_2(\tau^{16}h_4) = (h_0+\rho h_1)\tau^{16}h_4^2$, yielding the desired differential.
\end{proof}

\begin{remark}
When $F = \r$, the differential $d_2(h_5)$, and in fact all the differentials $d_2(h_{a+1})$, may also be produced as follows. By comparison with $\c$, one finds $d_2(h_5) \in h_0 h_4^2 + \f_2\{\rho h_1h_4^2\}$. Thus it suffices to verify that $d_2(h_5)$ is not $\rho$-torsion. This is a consequence of the fact that the isomorphism $\Ext_\r[\rho^{-1}]\simeq\Ext_{\dcl}[\rho^{\pm 1}]$ \cite[Theorem 4.1]{DI17} commutes with Adams differentials.
\tqed
\end{remark}

We need just one more algebraic fact for the proof of \cref{thm:nohopf}.

\begin{lemma}\label{lem:h1hn3}
Let $\omega\in \Ext_F^0$ be nonzero. Then $\omega h_1h_a^3\neq 0$ for all $a\geq 5$.
\end{lemma}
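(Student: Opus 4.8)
The plan is to reduce everything to the already-understood structure of $\Ext_\cl$ via the comparison map $\Ext_F\rightarrow\Ext_F[\tau^{-1}]\cong\Ext_\cl[\tau^{\pm 1}]$ of \cref{cor:betti}, together with naturality in $F$. First I would observe that it suffices to treat the case where $F$ is a prime field, since for a general $F$ the nonzero element $\omega\in\Ext_F^0=\m^F_0$ maps to a nonzero element of $\Ext_{F'}^0$ for some prime subfield $F'\subseteq F$ (in fact $\m^{F'}_0\hookrightarrow\m^F_0$ is split injective in all the relevant cases, by \cref{lem:mmodule} and \cref{lem:hassecoeff}), and $\Ext_{F'}\rightarrow\Ext_F$ sends $h_1h_a^3$ to $h_1h_a^3$; so nonvanishing over $F'$ implies it over $F$. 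In the prime-field cases, $\m^F_0$ is a finite-dimensional $\f_2$-algebra with $\m^F$ free over $\m^F_0$, so it is enough to show $\omega h_1 h_a^3\neq 0$ for each basis element $\omega$ of $\m^F_0$ appearing in the decomposition of $\Ext_F$ into copies of the various $\Ext_{(r)}$ of \cref{ssec:extf}.

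The core case is $\omega=1$ and $F=\r$ (equivalently, the leading $\Ext_\r$-summand), where one must show $h_1h_a^3\neq 0$ in $\Ext_\r$ for $a\geq 5$. Here I would use \cref{prop:retract}: since $h_1h_a^3$ is, for $a\geq 1$, the image under $\thetatilde$ of the class $\lambda_0\lambda_{2^{a-1}-1}^3$-type product up in $\Lambda^\dcl$—more precisely, by \cref{thm:extadd}(3) the image of $h_1h_a^3$ in $\Ext_\c$ is nonzero provided the classical $h_1h_a^3$-analogue is nonzero—it suffices to check nonvanishing after applying the quotient $\Ext_\r\rightarrow\Ext_\c$ and then inverting $\tau$, i.e.\ in $\Ext_\cl[\tau^{\pm1}]$. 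The statement $h_1h_a^3\neq 0$ in $\Ext_\cl$ for $a\geq 5$ is precisely \cite[Proposition 3.4]{Wan67} (the same input used at the end of the proof of \cref{prop:algclosedhopf}). This settles $\omega=1$ uniformly over all the prime fields, since the composite $\Ext_F\rightarrow\Ext_F[\tau^{-1}]\cong\Ext_\cl[\tau^{\pm1}]$ sends $h_1h_a^3\mapsto h_1h_a^3$.

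It remains to handle the summands indexed by the nontrivial generators $\omega\in\{\rho,\pi,u,\pi u,\dots\}$, i.e.\ the $\rho$-torsion (or $\pi$-, $u$-torsion) part. The cleanest route is to note that multiplication by such an $\omega$ factors through the reduction $\Ext_F\rightarrow\Ext_{(r)}$ for the appropriate $r$ (as in \cref{lem:mmodule}: each extra summand is a copy of some $\Ext_{(r)}$ with $r=1$ or $r=2$), and to compute $\omega h_1 h_a^3$ directly in $\Ext_{(r)}$. Using \cref{rmk:modrho} together with \cref{thm:extadd}, the product $\rho h_1 h_a^3$ in $\Ext_\r$ is nonzero for $a\geq 5$: it lies in the $\rho$-multiples of a basis element of $\Ext_\r^{\leq 4}$ (here we are in filtration $4$, at the edge of what \cref{Sec:ExtLow} computes, so I would instead cross-check via the $\rho$-Bockstein spectral sequence $\Ext_\c[\rho]\Rightarrow\Ext_\r$, where $\rho h_1h_a^3$ survives because $h_1h_a^3\neq0$ in $\Ext_\c$ and no Bockstein differential can hit degree reasons permit it being hit—this needs the relation $\delta(\lambda_{2^a-1})=0$ and the form of \cref{prop:difftau}). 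I expect this last point—verifying in filtration $4$, slightly outside the range of \cref{thm:extadd}, that the relevant $\omega$-multiples of $h_1h_a^3$ are nonzero—to be the main obstacle; everything else is formal manipulation of the comparison maps. A safe fallback is to phrase the needed input as: $h_1h_a^3$ is nonzero in $\Ext_{(1)}$ for $a\geq 5$ and is not $\omega$-divisible for the relevant $\omega$'s, which can be extracted from the $\tau$-inverted comparison to $\Ext_\cl$ exactly as in the $\omega=1$ case since each $\Ext_{(r)}\rightarrow\Ext_{(r)}[\tau^{-1}]$ is again controlled by $\Ext_\cl[\tau^{\pm1}]$.
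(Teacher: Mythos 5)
The key step in the paper's proof is the observation that $\thetatilde$ from \cref{prop:retract} extends to an injection $\Ext_F^0\otimes_{\f_2}\Ext_\dcl\rightarrow\Ext_F$, verified by using \cref{lem:mmodule} to reduce to injectivity of each $\Ext_{(r)}^0\otimes_{\f_2}\Ext_\dcl\rightarrow\Ext_{(r)}$; then $\omega h_1h_a^3$ is simply the image of the nonzero class $\omega\otimes h_0h_{a-1}^3$. This one observation handles every $\omega$ uniformly and sidesteps any filtration-$4$ computation, which is exactly what you flagged as your ``main obstacle.'' The point you are missing is that $h_1h_a^3$ lies in the image of $\thetatilde$, which is the $\rho$-torsion-free summand of $\Ext_\r$ (\cref{prop:retract}(4)), so $\rho$-multiples (and, after tensoring along \cref{lem:mmodule}, $\omega$-multiples for the other module generators of $\m^F_0$) never vanish. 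Your ``safe fallback'' of checking $\omega$-divisibility after inverting $\tau$ does not obviously help, since $\tau$-inversion is insensitive to $\rho$-torsion phenomena; the splitting $\Ext_\r\cong\Ext_\dcl[\rho]\oplus\Ext_\r^{\rho\hyp\tors}$ is the tool that actually does the work.

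Separately, your first reduction is backwards: for a prime subfield $F'\subseteq F$ the induced map goes $\m^{F'}_0\rightarrow\m^F_0$, not the other way, so a nonzero $\omega\in\m^F_0$ does not ``map to'' a nonzero element of $\Ext_{F'}^0$. In general $\m^{F'}_0\rightarrow\m^F_0$ is far from surjective (e.g.\ $F=\q(t)$ over $F'=\q$), so you cannot arrange for $\omega$ to come from the prime field, and this reduction step does not hold. Your argument for the $\omega=1$ case is correct in spirit — using that $h_1h_a^3=\thetatilde(h_0h_{a-1}^3)$ and injectivity of $\thetatilde$, with Wang's \cite[Proposition 3.4]{Wan67} as input — though routing through $\Ext_\r\rightarrow\Ext_\c\rightarrow\Ext_\cl[\tau^{\pm 1}]$ is an unnecessary detour once you have the split injection of $\Ext_\dcl$ into $\Ext_\r$.
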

\begin{proof}
The class $h_0h_{a-1}^3$ is nonzero in $\Ext_{\cl}$ for $a\geq 5$ by \cite[Proposition 3.4]{Wan67}. \cref{prop:retract} gives an injection $\Ext_\dcl\rightarrow\Ext_F$, and this extends by linearity to an injection $\Ext_F^0\otimes_{\f_2}\Ext_\dcl\rightarrow \Ext_F$, as can be seen by using \cref{lem:mmodule} to reduce to the injections $\Ext_{(r)}^0\otimes_{\f_2}\Ext_\dcl\rightarrow \Ext_{(r)}$. The class $\omega h_1 h_a^3$ is the image of $\omega\otimes h_0h_{a-1}^3$ under this map, yielding the claim.
\end{proof}

We may now give the following.

\begin{proof}[Proof of \cref{thm:nohopf}]
As discussed, it suffices to consider only the cases where $F$ is of the form $\f_q$ for some $q$ odd, $\q_q$ for some $q$, or $\r$. So let $F$ be one of these. We now induct on $a$, with base cases $a=3$ and $a=4$.

First, consider the case $a=3$. By \cref{lem:lowhopf}, the class $h_3$ is a permanent cycle detecting the class $\sigma$. By \cref{lem:gradedcomm}, $2\sigma^2 = 0$, and so $(h_0+\rho h_1)h_3^2$ must be the target of a differential. The only possibility is that $d_2(h_4) = (h_0+\rho h_1)h_3^2$.

The case $a=4$ was handled in \cref{lem:d2h5f}.

Now, suppose inductively that we have produced the differential $d_2(h_a) = (h_0+\rho h_1)h_{a-1}^2$ for some $a\geq 5$. Combining the Leibniz rule with the relation $h_{a+1}h_a = 0$, we find
\[
0 = d_2(h_{a+1}h_a) = d_2(h_{a+1})h_a + h_{a+1}d_2(h_a).
\]
Applying our inductive hypothesis and the relation $h_{a+1}h_{a-1}^2 = h_a^2$, we find
\[
(d_2(h_{a+1}) + (h_0+\rho h_1)h_a^2)h_a = 0.
\]
It follows that $d_2(h_{a+1})=(h_0+\rho h_1)h_a^2+x$ where $x$ is some class killed by $h_a$. The only classes in this degree are $h_0 h_a^2$ and those of the form $\omega h_1 h_a^2$ where $\omega\in \Ext_F^0$. By comparison with $\Fbar$, we find that $x$ must be zero or a nonzero class of the form $\omega h_1 h_a^2$ with $\omega\in\Ext_F^{-1,0,-1}$. As $a\geq 5$, \cref{lem:h1hn3} implies that none of the latter are killed by $h_a$. Thus $x = 0$, yielding the desired differential.
\end{proof}

This concludes our uniform analysis of differentials out of $\Ext_F^1$. The rest of this section is dedicated to studying the $1$-line in more detail for particular fields $F$.

\subsection{The real numbers}

We now study the case $F = \r$ in more detail. Recall from \cref{thm:extadd} that
\[
\Ext_\r^1 = \f_2[\rho]\{h_a:a\geq 1\}\oplus\bigoplus_{a\geq 0}\f_2[\rho]/(\rho^{2^a})\{\tau^{\floor{2^{a-1}(4n+1)}}h_a:n\geq 0\}.
\]
Here, recall that $2^{a-1}(4n+1) = 2n$ for $a = 0$. \cref{thm:nohopf} allows one to understand the fate of the classes in the $\rho$-torsion-free summand, so we turn our attention to the $\rho$-torsion subgroup. We shall first pin down which of these $\rho$-torsion classes are permanent cycles, and then by separate methods compute all $d_2$-differentials on these $\rho$-torsion classes. A comparison reveals that there must be numerous higher differentials, but determining these is outside the scope of our computation. The first point of order is the following.

\begin{definition}
For $a\geq 0$, write $a = c + 4 d$ with $0 \leq c \leq 3$, and define $\psi(a) = 2^c + 8 d$ to be the \emph{$a$th Radon--Hurwitz number}.
\tqed
\end{definition}

\begin{proposition}\label{thm:rpc}
The class $\rho^r \tau^{2^{a-1}(4n+1)}h_a$ is a permanent cycle if and only if $r \geq 2^a - \psi(a)$.
\tqed
\end{proposition}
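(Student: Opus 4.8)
The statement is equivalent to two claims: that $\rho^r \tau^{\floor{2^{a-1}(4n+1)}}h_a$ \emph{is} a permanent cycle when $r \geq 2^a - \psi(a)$, and that it is \emph{not} a permanent cycle when $r < 2^a - \psi(a)$. My plan is to reduce everything to the $\tau$-periodic Borel $C_2$-equivariant picture via the Behrens--Shah equivalence $(\Sp_\r^\cell)_{(2,\rho)}^\wedge[\tau^{-1}]\simeq \mathrm{Fun}(BC_2,\Sp_2^\wedge)$, where the corresponding classes in $\Ext_{BC_2}^1$ detect maps $\Sigma^c P^\infty_w\rightarrow S^0$. The key structural input is \cref{lem:extbc2} (that $\Ext_{BC_2}^{\leq 3}$ is read off $\Ext_\r^{\leq 3}$ by inverting $\tau$) together with a lemma of the form ``$\Ext_\r^1\rightarrow\Ext_{BC_2}^1$ reflects permanent cycles'' (the $\rho$-torsion permanent-cycle problem in $\Ext_\r^1$ is equivalent to the one in $\Ext_{BC_2}^1$, since a $\rho$-torsion class is $\tau$-locally detected and the map $\Ext_\r^1\to\Ext_{BC_2}^1$ is injective on $\rho$-torsion). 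Given this, the proposition becomes a statement about which classes $\rho^r\tau^{\floor{2^{a-1}(4n+1)}}h_a \in \Ext_{BC_2}^1$ are permanent cycles.

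Next I would translate the $\rho$-torsion permanent-cycle problem in $\Ext_{BC_2}^1$ into the \emph{vector fields on spheres} problem. The point is that a class $\rho^r\tau^{\floor{2^{a-1}(4n+1)}}h_a$ detects a stable map out of a stunted projective space $\Sigma^? P^\infty_w\to S^0$, and the existence of such a map in Adams filtration $1$ is controlled by whether a certain James--Hurwitz--Radon type splitting of stunted projective spaces exists; this is precisely the content of Adams' theorem \cite{Ada62}. Concretely, the number $\psi(a) = 2^c + 8d$ with $a = c+4d$ is the Radon--Hurwitz function, and Adams' solution says that the relevant map exists exactly when the $\rho$-power (equivalently the ``stunting parameter'') is at least $2^a - \psi(a)$. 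I would invoke a lemma (``\cref{lem:split}'') stating that the $\rho$-torsion permanent cycles in $\Ext_{BC_2}^1$ are governed by the vector fields problem, so that $\rho^r\tau^{\floor{2^{a-1}(4n+1)}}h_a$ is a permanent cycle iff a stable section of the appropriate bundle exists iff $r \geq 2^a - \psi(a)$.

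For the positive direction (these classes really do survive when $r\geq 2^a-\psi(a)$), I would produce the permanent cycles explicitly from known geometric constructions: the $J$-homomorphism / vector fields on spheres give honest maps realizing exactly these classes, and pulling back along $\Ext_\r^1\to\Ext_{BC_2}^1$ together with the injectivity of this map on $\rho$-torsion (from \cref{lem:pconly} or its analogue) shows the preimages in $\Ext_\r^1$ are permanent cycles — one must note here that $\Sq^0$-compatibility (\cref{Thm:Sq0}, \cref{thm:extadd}(2)) lets one reduce from general $a$ to a bounded range, since $\Sq^0$ increments $a$ and sends permanent cycles to permanent cycles. For the negative direction, I would argue by contradiction: if $\rho^r\tau^{\floor{2^{a-1}(4n+1)}}h_a$ with $r<2^a-\psi(a)$ were a permanent cycle, then pushing to $\Ext_{BC_2}^1$ and using the reflection lemma would produce a stable map contradicting Adams' upper bound in the vector fields problem.

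The main obstacle I anticipate is establishing that $\Ext_\r^1 \to \Ext_{BC_2}^1$ reflects permanent cycles (the analogue of \cref{lem:pconly} for the $\rho$-torsion classes specifically), and — more delicately — pinning down \emph{exactly} which stunted projective space and which stem each class $\rho^r\tau^{\floor{2^{a-1}(4n+1)}}h_a$ corresponds to, so that the bookkeeping of indices matches Adams' $\psi(a)$ on the nose. This index-matching is where one must be careful about the weird naming convention (e.g.\ $h_0$ detecting the transfer rather than $2$) and about whether $n$ ranges over $\n$ or $\z$; getting the offsets right is the crux, and a clean way to organize it is to first verify the $n=0$, small-$a$ cases by hand against the explicit cocycle representatives from \cref{Sec:ExtLow}, then propagate via $\Sq^0$ and $\tau$-periodicity.
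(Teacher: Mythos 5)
Your overall route matches the paper's: reduce to $\Ext_{BC_2}^1$ via Behrens--Shah, translate the $\rho$-torsion permanent-cycle problem into a splitting question for stunted projective spaces, and then invoke Adams' vector fields theorem, with $\psi(a)$ the Radon--Hurwitz number. The translation via something like \cref{lem:split} (a nonzero $\rho$-torsion class in $\pi^{C_2}_{s,w}$ of Borel Adams filtration $1$ exists iff the bottom cell of $P^{w-1}_{w-s-1}$ splits off) is indeed the right intermediate step, and your plan to check the bookkeeping in small cases is prudent.

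There are two real gaps. First, your stated justification for the reflection lemma (the analogue of \cref{lem:pconly}) does not work. You argue that $\Ext_\r^1\to\Ext_{BC_2}^1$ being injective on $\rho$-torsion suffices to conclude that it reflects permanent cycles. But injectivity on the $E_2$-page does not propagate to higher pages: even if $d_2(x)$ were forced to vanish by injectivity at the relevant degree, you would then need injectivity on the $E_3$-page at the degree of $d_3(x)$, and so on, none of which follows formally. The paper's actual argument is more delicate: it factors $h$ through the genuine $C_2$-equivariant $E_2$-page $\Ext_{C_2}$, uses the additive splitting $\Ext_{C_2}=\Ext_\r\oplus\Ext_{NC}$, invokes the comparison result of Belmont--Guillou--Isaksen to see that $\Ext_\r\to\Ext_{C_2}$ (hence $\Ext_{C_2}\to\Ext_{BC_2}$) is an isomorphism in the degrees in question, and then uses convergence of both spectral sequences to the same target together with sparseness of $\Ext_{BC_2}^1$. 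Without an argument of this kind you cannot transfer survival back to $\Ext_\r$.

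Second, you do not address the fact that the relevant stunted projective spectra $P^{w-1}_{w-s-1}$ have negative bottom index (they are Thom spectra of negative multiples of the tautological bundle), so Adams' theorem on sections of Stiefel manifolds does not literally apply. The paper bridges this via James periodicity, shifting to $P^{2^N+w-1}_{2^N+w-s-1}$ for $N\gg 0$ to land on suspension spectra of honest real projective spaces, where Adams' solution of the vector fields on spheres problem gives the dichotomy $r\geq 2^a-\psi(a)$. This step is essential and cannot be skipped; without it the translation to Adams' theorem is not valid. Finally, your suggestion to reduce the range of $a$ via $\Sq^0$ is unnecessary (the vector fields input applies uniformly) and also not obviously sound, since $\Sq^0$ sends permanent cycles to permanent cycles but need not reflect them.
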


The proof of \cref{thm:rpc} requires some preliminaries. We proceed by comparison with Borel $C_2$-equivariant stable homotopy theory. Let $\Ext_{BC_2}$ denote the $E_2$-page of the Borel $C_2$-equivariant Adams spectral sequence \cite{Gre88}. Explicitly,
\[
\Ext_{BC_2}^{s,f,w} = \Ext_{\ca^{\cl}}^{s-w,f}(\f_2,H^\ast P^\infty_{w});
\]
this is just a combination of the ordinary Adams spectral sequences for the stable cohomotopy groups of infinite stunted projective space. By Lin's positive resolution of the Segal conjecture \cite{Lin80}, this spectral sequence converges to $\pi_{\ast,\ast}^{C_2}$, the homotopy groups of the $2$-completion of the $C_2$-equivariant sphere spectrum.

Betti realization followed by Borel completion yields a functor from the stable $\r$-motivic category to the Borel $C_2$-equivariant stable category $\Fun(BC_2,\Sp)$, and Behrens--Shah \cite[Section 8]{BS20} show that this may be understood as completing at $\rho$ and inverting $\tau$. Applying this to an Adams resolution, we find that
\[
\Ext_{BC_2} = \lim_{n\rightarrow\infty} \Ext_{(2^n)}[\tau^{-2^n}].
\]
The simple form of $\Ext_\r^{\leq 3}$ allows us to immediately read off $\Ext_{BC_2}^{\leq 3}$.

\begin{lemma}\label{lem:extbc2}
$\Ext_{BC_2}^{\leq 3}$ is exactly as $\Ext_\r^{\leq 3}$ is described in \cref{thm:extadd}, only where $n$ is allowed to be negative, and in place of the map $\Ext_\r\rightarrow\Ext_\c$ is a map $\Ext_{BC_2}\rightarrow\Ext_\c[\tau^{-1}]\cong\Ext_{\cl}[\tau^{\pm 1}]$.
\qed
\end{lemma}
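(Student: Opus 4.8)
\textbf{Proof plan for \cref{lem:extbc2}.}

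The statement follows from the identification $\Ext_{BC_2} = \lim_{n\to\infty}\Ext_{(2^n)}[\tau^{-2^n}]$ recorded just above, combined with our explicit computation of $\Ext_\r^{\leq 3}$ in \cref{thm:extadd} and the recipe of \cref{rmk:modrho} describing $\Ext_{(m)}$ from the $\rho$-Bockstein data. The plan is to compute $\Ext_{(2^n)}^{\leq 3}[\tau^{-2^n}]$ directly for each $n$, observe that these stabilize as $n\to\infty$ in each fixed tridegree, and identify the limit. Concretely, I would run the following steps.

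First, I would invoke \cref{rmk:modrho} to write down $\Ext_{(m)}^f$ for $f\leq 3$ in terms of the ``tags'' $x_b\rightarrow \rho^{r(b)}x_{t(b)}$ produced in \cref{prop:b0}, \cref{prop:b1}, and \cref{prop:b2}, together with the torsion-free classes coming from $\thetatilde\colon\Ext_{\dcl}[\rho]\hookrightarrow\Ext_\r$. For each $m$, the module $\Ext_{(m)}$ splits as: (i) a copy of $\Ext_{\dcl}/(\rho^m)$ from the image of the $\dcl$-summand, (ii) for each tag $x_b\rightarrow\rho^{r(b)}x_{t(b)}$, a cyclic summand $\f_2[\rho]/(\rho^{\min(m,r(b))})$ generated by $z_b$, and (iii) for each tag, a cyclic summand $\f_2[\rho]/(\rho^{m-\max(0,m-r(b))})$ generated by $\rho^{\max(0,m-r(b))}(x_b+u_b)$. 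Second, I would invert $\tau$ in each piece. Since $\tau$ acts invertibly, the $\dcl$-summand of (i) becomes $\Ext_{\dcl}[\tau^{\pm1}]/(\rho^m)$; inverting $\tau$ in the summands from (ii)–(iii) has the effect of removing the constraint that the exponent of $\tau$ in the various generators $\tau^{\floor{2^{a-1}(4n+1)}}h_a$, etc., be nonnegative — this is precisely the statement that $n$ is now allowed to be negative. Third, I would take the limit over $n$ along multiplication by $\rho^{2^n}$: in a fixed tridegree, for $m = 2^n$ large enough the torsion exponents $\min(m,r(b))$ in (ii) stabilize to $r(b)$ (since each $r(b)$ is a fixed finite integer bounded by the explicit constants $2^a\cdot 13$ etc. appearing in \cref{thm:extadd}), while the summands in (iii), whose generators are $\rho$-divisible by an exponent tending to infinity, are killed in the inverse limit. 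Hence the limit in each tridegree agrees with the torsion-free-plus-finite-torsion description of \cref{thm:extadd}, but with $n\in\z$. Finally, I would identify the comparison map: the map $\Ext_{BC_2}\to\Ext_\c[\tau^{-1}]\cong\Ext_{\cl}[\tau^{\pm1}]$ is induced on the level of $\Lambda$-complexes by the quotient $\Lambda^\r/(\rho)[\tau^{-1}]\cong\Lambda^{\cl}[\tau^{\pm1}]$ used throughout \cref{Sec:ExtLow}, compatibly with the maps $\Ext_{(2^n)}\to\Ext_{(1)}$, so passing to the limit gives the asserted map.

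The main obstacle is bookkeeping: one must check that the convergence of the inverse system $\{\Ext_{(2^n)}[\tau^{-2^n}]\}$ is well-behaved (no $\lim^1$ issues) in the tridegrees of interest, and that ``inverting $\tau^{2^n}$'' commutes appropriately with the long exact sequences defining $\Ext_{(2^n)}$. This is genuinely routine here because, in filtrations $f\leq 3$, \cref{thm:extadd} exhibits $\Ext_\r^{\leq 3}$ as a sum of a free $\f_2[\rho]$-module and finitely-generated $\rho$-torsion modules with \emph{uniformly bounded} torsion exponents in each tridegree; the inverse system is therefore eventually constant termwise, so the limit is computed termwise and the $\lim^1$ term vanishes. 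Thus the only real work is transcribing the table of \cref{thm:extadd} and the tags of \cref{prop:b0,prop:b1,prop:b2} into the $\tau$-inverted setting, which amounts to replacing every occurrence of ``$n\geq 0$'' by ``$n\in\z$'' and reinterpreting the quotient map. I would present this as the one-line deduction it essentially is, pointing to \cref{rmk:modrho} and \cref{thm:extadd} for the substance.
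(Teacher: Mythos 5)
Your proposal is correct and takes the same route the paper does: the paper treats the lemma as immediate from the identification $\Ext_{BC_2}=\lim_n\Ext_{(2^n)}[\tau^{-2^n}]$ together with \cref{thm:extadd} and \cref{rmk:modrho}, and your write-up simply makes explicit the bookkeeping (the stabilization of type (ii) summands, the vanishing of type (iii) summands in the inverse system, and the tridegree-wise eventual constancy that kills $\lim^1$) that the paper leaves to the reader.
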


In particular,
\[
\Ext_{BC_2}^1 = \f_2[\rho]\{h_a:a\geq 1\}\oplus\bigoplus_{a\geq 0}\f_2[\rho]/(\rho^{2^a})\{\tau^{\floor{2^{a-1}(4n+1)}}h_a:n\in\z\}.
\]
We have introduced $\Ext_{BC_2}$ in order to make the following reduction.

\begin{lemma}\label{lem:pconly}
Write $h\colon \Ext_\r\rightarrow \Ext_{BC_2}$ for the canonical map of spectral sequences. Fix a $\rho$-torsion class $x\in\Ext_\r^1$. Then $x$ is a permanent cycle if and only if $h(x)$ is a permanent cycle.
\end{lemma}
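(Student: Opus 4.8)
The forward implication is formal: $h$ is a map of spectral sequences, so if $x$ survives to every page then so does $h(x)$, and $d_r(h(x)) = h(d_r(x)) = 0$ for every $r$, so $h(x)$ is a permanent cycle. For the reverse implication I would argue contrapositively: assuming $x$ supports a nonzero Adams differential, show that $h(x)$ does as well. Let $r\geq 2$ be minimal with $d_r(x)\neq 0$. Since nothing enters the $1$-line under a differential, $x$ sits literally in $\Ext_\r^1$ on every page through $E_r$, and likewise $h(x)$ sits in $\Ext_{BC_2}^1$; moreover, writing $y = d_r(x)$, the facts that $\rho\in\Ext_\r^0$ is a permanent cycle and $d_r$ is a derivation give, for $\rho^k x = 0$, that $\rho^k y = d_r(\rho^k x) = 0$. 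So $y$ is a nonzero $\rho$-torsion class of exponent at most $k$, and since $h$ is a map of spectral sequences we have $d_r(h(x)) = h(y)$ on $E_r^{BC_2}$; it therefore suffices to show $h(y)\neq 0$ there.

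The two inputs I would use are: (1) $h$ is injective in filtrations $f\leq 3$, since by \cref{lem:extbc2} the group $\Ext_{BC_2}^{\leq 3}$ is the module of \cref{thm:extadd} with the index $n$ allowed to be negative and $h$ is the split inclusion of the $n\geq 0$ summand; and (2) in each fixed tridegree $h$ restricts to an injection on the $\rho$-torsion submodule of $\Ext_\r$. For (2) one observes that the $\rho$-torsion submodule of $\Ext_\r$ in the relevant (finitely many) tridegrees is a finite $\f_2[\rho]$-module annihilated by some $\rho^{2^m}$; the kernel of the reduction $\Ext_\r\to\Ext_{(2^m)}$ is $\rho^{2^m}\Ext_\r$, which meets the $\rho$-torsion trivially, and the surviving class is not $\tau^{2^m}$-power-torsion, so it remains nonzero in $\Ext_{(2^m)}[\tau^{-2^m}]$ and hence in $\Ext_{BC_2}=\lim_m \Ext_{(2^m)}[\tau^{-2^m}]$. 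With these in hand the case $r=2$ is immediate: $y=d_2(x)\in\Ext_\r^3$ (indeed computed explicitly in \cref{prop:tauhopfdiff}), so $h(y)\neq 0$ by (1), and $h(x)$ is not a permanent cycle.

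The case $r\geq 3$ is where the real work lies: now $y$ sits in filtration $1+r\geq 4$, outside the range of \cref{Sec:ExtLow}, and I must (i) promote (2) from the $E_2$-page to the $E_r$-page — i.e.\ rule out that $h(y)$ has become a boundary in the $BC_2$ spectral sequence under a differential supported by a negative-$\tau$-weight class, which has no $\r$-motivic analogue — and (ii) check the $\tau$-inversion does not annihilate the image of $y$. My plan for (i) is to use the simple structure of the lines together with stem/weight constraints: a differential landing in the filtration-$(1+r)$ group carrying $h(y)$ can only originate from a class whose image under $h$ already lies in the image of $h$, at which point the $E_2$-page injectivity (2) forces compatibility and the target survives on $E_r^{BC_2}$. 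An alternative, probably cleaner, route is to bypass $E_r$-pages entirely via the realization interpretation of \cref{prop:einv}: a $\rho$-torsion $x\in\Ext_\r^1$ is a permanent cycle exactly when the $\ca^\r$-module extension it classifies is realized by a cellular $\r$-motivic spectrum, and since that extension is $\rho$-power-torsion one uses the Behrens--Shah equivalence $(\Sp_\r^{\cell})_{(2,\rho)}^\wedge[\tau^{-1}]\simeq\mathrm{Fun}(BC_2,\Sp_2^\wedge)$ \cite{BS20} to transport a Borel $C_2$-equivariant realization back to an $\r$-motivic one in the correct Adams filtration. Either way, $h(y)\neq 0$ on $E_r^{BC_2}$, so $h(x)$ supports a nonzero differential; this is the step I expect to be the main obstacle, and I would develop whichever of the two approaches above turns out to require the least additional structural input.
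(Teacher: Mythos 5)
You have the easy direction right, and you have correctly identified the core difficulty in the hard direction: showing $d_r(h(x))$ is nonzero on $E_r^{BC_2}$. But neither of your two sketches closes this gap when $r\geq 3$. For the direct route, the target $y$ sits in filtration $1+r\geq 4$, outside the range where \cref{lem:extbc2} identifies $\Ext_{BC_2}$ and supplies injectivity of $h$; and even if $h$ were known to be injective on $E_2$ there, you would still need to rule out $h(y)$ becoming a boundary on some page $E_i^{BC_2}$ with $2\leq i<r$ under a differential sourced in the negative-$n$ part of $\Ext_{BC_2}$, which has no $\r$-motivic counterpart. Your claim that such a differential ``can only originate from a class whose image under $h$ already lies in the image of $h$'' is simply not true and is essentially the problem restated. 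The realization-based alternative is closer in spirit, but the essential step is not supplied: the Behrens--Shah equivalence concerns the $\tau$-\emph{inverted} $(2,\rho)$-complete cellular $\r$-motivic category, so a Borel $C_2$-equivariant $2$-cell complex realizing the extension does not on its own give an honest $\r$-motivic $2$-cell complex of the correct Adams filtration, and no argument for descending along $\tau$-inversion is given.

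The paper's proof takes a genuinely different route, avoiding pages beyond $E_2$ entirely by inserting the \emph{genuine} $C_2$-equivariant Adams spectral sequence between $\Ext_\r$ and $\Ext_{BC_2}$. The key inputs are: Lin's Segal conjecture, so that the genuine and Borel $C_2$-Adams spectral sequences converge to the same groups $\pi_{*,*}^{C_2}$ and the genuine Adams filtration is bounded above by the Borel Adams filtration; the additive splitting $\Ext_{C_2}=\Ext_\r\oplus\Ext_{NC}$, so that $h$ factors as $\Ext_\r\hookrightarrow\Ext_{C_2}\to\Ext_{BC_2}$ with the first map \emph{split}; and the fact from \cite{BGI21} that this inclusion is an isomorphism in the degrees under consideration. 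Given these, a (necessarily nontrivial) permanent cycle $h(x)$ detects some $\alpha\in\pi_{*,*}^{C_2}$ of Borel filtration $1$, hence of genuine filtration $\leq 1$; sparseness of $\rho$-torsion in these degrees forces $\alpha$ to be detected in $\Ext_{C_2}^1$ by the image of $x$, and since $\Ext_\r$ is a direct summand of $\Ext_{C_2}$, $x$ is itself a permanent cycle. The whole argument takes place at the level of detecting elements in the abutment, which is exactly what the genuine $C_2$-theory buys you and the Borel $E_2$-page alone cannot.
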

\begin{proof}
Clearly, if $x$ is a permanent cycle, then the same must be true of $h(x)$. Conversely, suppose that $h(x)$ is a nontrivial permanent cycle; we claim that $x$ is a permanent cycle.

Write $\Ext_{C_2}$ for the $E_2$-page of the $C_2$-equivariant Adams spectral sequence \cite[Section 6]{HK01}, converging to the same target as $\Ext_{BC_2}$. This splits additively as $\Ext_{C_2}  = \Ext_\r\oplus \Ext_{NC}$ for a certain summand $\Ext_{NC}$ (see \cite[Section 2]{GHIR19}), and $h$ factors as $h = g\circ f\colon \Ext_\r\rightarrow\Ext_{C_2}\rightarrow\Ext_{BC_2}$, the first map being the obvious inclusion and the second map killing the summand $\Ext_{NC}$.

As $h(x)$ is a nontrivial permanent cycle, it detects a class $\alpha$ in Borel Adams filtration $1$. The class $\alpha$ must then be detected in $\Ext_{C_2}^{\leq 1}$. By \cite{BGI21}, the map $\Ext_{\r}\rightarrow\Ext_{C_2}$ is an isomorphism in the degrees under consideration, so the same must be true for $\Ext_{C_2}\rightarrow\Ext_{BC_2}$. As there is at most one nonzero $\rho$-torsion class in these degrees, the only possibility is that $\alpha$ is detected by $f(x)$ in $\Ext_{C_2}^1$, implying that $f(x)$ is a permanent cycle. As $\Ext_\r\rightarrow\Ext_{C_2}$ is the inclusion of a summand, this implies that $x$ is a permanent cycle, as claimed.
\end{proof}

Thus it suffices to understand permanent cycles in $\Ext_{BC_2}^1$. The main point is the following.

\begin{lemma}\label{lem:split}
There exists a nonzero $\rho$-torsion class $\alpha\in\pi_{s,w}^{C_2}$ detected in Borel Adams filtration $1$ if and only if the inclusion of the bottom cell of $P^{w-1}_{w-s-1}$ is split, where $P^n_k$ is the Thom spectrum of the $k$-fold Whitney sum of the tautological line bundle over the real projective space $\r P^n$. 
\end{lemma}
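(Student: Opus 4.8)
The statement asserts an equivalence between the existence of a $\rho$-torsion class in $\pi_{s,w}^{C_2}$ detected in Borel Adams filtration $1$ and the splitting of the bottom cell inclusion $S^{w-s-1} \to P^{w-1}_{w-s-1}$. The plan is to unwind the Borel $C_2$-equivariant Adams spectral sequence $\Ext_{BC_2} \Rightarrow \pi_{\ast,\ast}^{C_2}$ and recognize that, because $\Ext_{BC_2}^{s,f,w} = \Ext_{\ca^{\cl}}^{s-w,f}(\f_2, H^\ast P^\infty_w)$, a filtration $1$ element in the appropriate tridegree is precisely a class in $\Ext^1_{\ca^{\cl}}(\f_2, H^\ast P^\infty_w)$ in the stem corresponding to $(s,w)$. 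Such an $\Ext^1$ class classifies an extension of $\ca^{\cl}$-modules, and — by the standard dictionary recalled in \cref{prop:einv} applied to stunted projective spectra — being a permanent cycle detecting a nonzero homotopy class is equivalent to the existence of a map $S^{w-s-1} \to P^\infty_w$ realizing that extension on mod $2$ cohomology. The $\rho$-torsion condition is what pins down which map: $\rho$ acts by the canonical inclusion $P^\infty_{w+1} \hookrightarrow P^\infty_w$ (geometrically, restriction along a stunted projective inclusion), so killing a power of $\rho$ corresponds to the relevant map factoring through a finite skeleton, and the \emph{indecomposable} $\rho$-torsion classes in $\Ext_{BC_2}^1$ (which, by \cref{lem:extbc2}, are the $\tau^{\floor{2^{a-1}(4n+1)}}h_a$) are exactly the ones whose associated extension lives on a $2$-cell stunted projective spectrum.

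\textbf{Key steps, in order.} First I would identify, for a given $(s,w)$, the unique (up to $\rho$-multiple) $\rho$-torsion class in $\Ext_{BC_2}^{1,s,w}$ using the explicit description $\Ext_{BC_2}^1 = \f_2[\rho]\{h_a : a \geq 1\} \oplus \bigoplus_{a\geq 0} \f_2[\rho]/(\rho^{2^a})\{\tau^{\floor{2^{a-1}(4n+1)}}h_a : n \in \z\}$ from \cref{lem:extbc2}, and translate its tridegree into the statement that the relevant extension of $\ca^{\cl}$-modules is the one computing $\widetilde{H}^\ast P^{w-1}_{w-s-1}$: the two-cell complex with cells in the dimensions dictated by $s$ and $w$. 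Second, I would invoke \cref{prop:einv} (adapted to the module $H^\ast P^\infty_w$, as in \cref{Rmk:GeneralCobar}) to say that this $\Ext^1$ class is a permanent cycle detecting a nonzero element of $\pi_{\ast,\ast}^{C_2}$ if and only if the corresponding $\ca^{\cl}$-module extension is realized by an actual stable map, i.e.\ if and only if the cofiber sequence $S^{w-s-1} \to P^{w-1}_{w-s-1} \to S^{w-1}$ splits on the left — equivalently the bottom cell inclusion of $P^{w-1}_{w-s-1}$ is split. Third, I would check the $\rho$-torsion bookkeeping: a $\rho$-torsion class has $\rho^r \alpha = 0$ for some $r$, which forces the witnessing map to land in a finite stunted projective spectrum $P^{w-1}_{w-s-1}$ rather than the infinite $P^\infty_{w-s-1}$, and conversely a splitting of the finite stunted projective spectrum provides the permanent cycle; this is where the finite truncation in the statement ($P^{w-1}_{w-s-1}$, not $P^\infty$) gets matched with the torsion hypothesis. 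Finally I would note that the identification of $P^n_k$ as the Thom spectrum of $k$ copies of the tautological line bundle over $\r P^{n-k}$ is standard (James periodicity / the standard model of stunted projective spectra), so the statement as phrased is exactly this realization condition.

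\textbf{Main obstacle.} The delicate point will be the precise matching of gradings: making sure that the tridegree $(s,f,w)$ of the chosen $\rho$-torsion class in $\Ext_{BC_2}^1$, under the identification $\Ext_{BC_2}^{s,f,w} = \Ext^{s-w,f}_{\ca^{\cl}}(\f_2, H^\ast P^\infty_w)$, corresponds to the extension realized by $P^{w-1}_{w-s-1}$ with its bottom cell in dimension $w-s-1$ and top cell in dimension $w-1$, and that the direction of "bottom cell splits" (rather than "top cell splits") is the correct one. This requires carefully tracking which cell of the stunted projective spectrum corresponds to the submodule versus the quotient in the extension, and reconciling the homological (negative) grading conventions of the paper with the geometric (positive) dimensions of the cells. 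A secondary subtlety is ensuring that "detected in Borel Adams filtration $1$" genuinely means the class is \emph{not} detected in filtration $0$ — i.e.\ it is not in the image of the unit — which on the $\Ext$ side is automatic since there are no $\rho$-torsion classes in $\Ext_{BC_2}^0 = \f_2[\rho,\tau^{\pm}]$, but should be remarked; and one must confirm via \cref{lem:pconly}-style reasoning that no hidden filtration jumps interfere. Once the grading dictionary is nailed down, the argument is a direct application of the Ext-realization correspondence.
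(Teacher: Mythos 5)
Your plan gets the topological shape right — the $\rho$-torsion condition does force a factorization through a finite stunted projective spectrum, and that factorization is where the bottom cell splitting lives — but the central bridge you propose, the $\Ext^1$-classifies-extensions dictionary of \cref{prop:einv}, does not apply as you use it, because you misidentify the geometry. You describe $P^{w-1}_{w-s-1}$ as "the two-cell complex with cells in the dimensions dictated by $s$ and $w$," and later write a cofiber sequence $S^{w-s-1}\to P^{w-1}_{w-s-1}\to S^{w-1}$. But $P^{w-1}_{w-s-1}$ has cells in every dimension from $w-s-1$ through $w-1$, i.e.\ $s+1$ cells, and the cofiber of its bottom cell inclusion is $P^{w-1}_{w-s}$, not $S^{w-1}$ (unless $s=1$). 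The indecomposable $\rho$-torsion generators $\tau^{\floor{2^{a-1}(4n+1)}}h_a$ sit in stem $s=2^a-1$ with $\rho$-torsion exponent $2^a=s+1$, so the relevant stunted projective spectrum has $2^a$ cells — far from a $2$-cell complex for $a\geq 2$. Likewise the extension a class in $\Ext^1_{\ca^\cl}(\f_2,H^\ast P^\infty_w)$ classifies is $0\to H^\ast\Sigma^{s-w+1}P^\infty_w\to H^\ast C(\alpha)\to\f_2\to 0$, where $C(\alpha)$ is the (infinite) cofiber of $\alpha\colon\Sigma^{s-w}P^\infty_w\to S^0$; both outer terms are infinite $\ca^\cl$-modules, not the spheres of \cref{prop:einv}, and $C(\alpha)$ is not the finite stunted projective spectrum. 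So "the corresponding $\ca^\cl$-module extension is realized iff the bottom cell inclusion splits" is not a consequence of \cref{prop:einv}, and the equivalence does not follow from that dictionary.

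What the paper actually does is a geometric filtration argument along the factorization you sketch in your "third" step, bypassing the extension dictionary entirely. The $\rho$-torsion exponent of $\alpha$ is forced (by the shape of $\Ext_{BC_2}^1$) to be exactly $s+1$, so $\alpha\colon\Sigma^{s-w}P^\infty_w\to S^0$ vanishes on $\Sigma^{s-w}P^\infty_{w-s-1}$ and therefore factors as $\alphabar\circ\partial$ through the connecting map $\partial\colon\Sigma^{s-w}P^\infty_w\to\Sigma^{s-w+1}P^{w-1}_{w-s-1}$ of the cofibering $P^{w-1}_{w-s-1}\to P^\infty_{w-s-1}\to P^\infty_w$. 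Since both $\alpha$ and $\partial$ have Adams filtration $1$, $\alphabar$ must have Adams filtration $0$, i.e.\ be nonzero on $H^0$; restricting to the bottom cell $S^0\to\Sigma^{s-w+1}P^{w-1}_{w-s-1}$ then gives a self-map of $S^0$ nonzero on cohomology, hence an equivalence, so $\alphabar$ splits off the bottom cell. The converse direction uses the $\Ext$ long exact sequence of the cofibering to check that composing a splitting with $\partial$ produces a nonzero class in filtration exactly $1$. If you want to salvage your draft, drop the extension/realization framing and instead argue directly with Adams filtrations along this factorization; the $\rho$-torsion bookkeeping you have in the third step is the sound part of the plan.
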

\begin{proof}
First, suppose given such a map $\alpha$. The structure of $\Ext_{BC_2}^1$ implies that $\alpha$ must have $\rho$-torsion exponent $s+1$, and so there is a lift $\alphabar$ in the diagram
\begin{center}\begin{tikzcd}
\Sigma^{s-w+1}P^{w-1}_{w-s-1}\ar[dr,dashed,"\alphabar"]\\
\Sigma^{s-w}P^\infty_w\ar[u,"\partial"]\ar[r,"\alpha"]&S^0\\
\Sigma^{s-w}P^\infty_{w-s-1}\ar[ur,"\rho^{s+1}\alpha=0"']\ar[u]
\end{tikzcd}.\end{center}
As $\alpha$ and $\partial$ have Adams filtration $1$, necessarily $\alphabar$ has Adams filtration $0$. It follows that precomposing $\alphabar$ with the inclusion of the bottom cell $S^0\rightarrow\Sigma^{s-w+1}P^{w-1}_{w-s-1}$ gives a map $S^0\rightarrow S^0$ which is nonzero in mod $2$ cohomology, and must therefore be an equivalence. In other words, $\alphabar$ splits off the bottom cell of $P^{w-1}_{w-s-1}$.

Conversely, if the inclusion of the bottom cell of $P^{w-1}_{w-s-1}$ is split, then its splitting gives a nonzero map $\alphabar$ as above in Adams filtration $0$. Let $\alpha = \alphabar\circ\partial$; we claim that $\alpha$ is a nonzero class detected in Adams filtration $1$. Indeed, the cofibering $P^{w-1}_{w-s-1}\rightarrow P^\infty_{w-s-1}\rightarrow P^\infty_w$ gives an exact sequence
\begin{center}\begin{tikzcd}[column sep=small]
\Ext^0(\f_2,H^\ast P^\infty_w)\ar[r]&\Ext^0(\f_2,H^\ast P^\infty_{w-s-1})\ar[r]&\Ext^0(\f_2,H^\ast P^{w-1}_{w-s-1})\ar[r,"\partial'"]&\Ext^1(\f_2,H^\ast P^\infty_w)
\end{tikzcd},\end{center}
where $\partial'$ models restriction along $\partial$ in the previous diagram. The first map is exactly
\[
\rho^{s+1}\colon \Ext^{\ast,0,w}_{BC_2}\rightarrow\Ext^{\ast,0,w-s-1}_{BC_2}.
\]
As $\Ext^0_{BC_2} = \f_2[\rho]$, we find that the kernel of $\partial'$ consists of only that class represented by the inclusion $\f_2\rightarrow H^0 P^{w-1}_{w-s-1}$. So $\partial'$ is injective in the relevant degrees, implying that $\alpha$ is nonzero and of Adams filtration $1$ as claimed.
\end{proof}

We may now give the following.

\begin{proof}[Proof of \cref{thm:rpc}]
By \cref{lem:pconly}, it suffices to show that a class $\rho^r \tau^{\floor{2^{a-1}(4n+1)}}h_a\in\Ext_{BC_2}^1$ is a permanent cycle if and only if $r\geq 2^a-\psi(a)$. By sparseness of $\Ext_{BC_2}^1$, the class $\rho^r\tau^{\floor{2^{a-1}(4n+1)}}h_a$ is a permanent cycle if and only if there is some $\rho$-torsion class $\alpha\in\pi_{2^a-r-1,-2^{a+1}n-r}^{C_2}$ detected in Borel Adams filtration $1$. By \cref{lem:split}, this holds if and only if inclusion of the bottom cell of $P^{-2^{a+1}n-r-1}_{-2^{a+1}n-2^a}$ is split. By James periodicity \cite{Jam58, Jam59}, this holds if and only if the inclusion of the bottom cell of $P^{2^N-2^{a+1}n-r-1}_{2^N-2^{a+1}-2^a}$ is split for some sufficiently large $N\gg 0$; that is, we may assume ourselves to be working with suspension spectra of honest real projective spaces. When this happens was resolved by Adams' solution of the vector fields on spheres problem \cite[Theorem 1.2]{Ada62}, yielding the condition claimed.
\end{proof}

\begin{corollary}\label{cor:tauhopfexist}
The classes $\tau^{\floor{2^{a-1}(4n+1)}}h_a$ are permanent cycles for $a\leq 3$.
\qed
\end{corollary}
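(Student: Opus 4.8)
The plan is to read this off directly from \cref{thm:rpc}. Setting $r = 0$ in that proposition, the class $\tau^{\floor{2^{a-1}(4n+1)}}h_a \in \Ext_\r^1$ is a permanent cycle if and only if $0 \geq 2^a - \psi(a)$, i.e.\ if and only if $\psi(a) \geq 2^a$. So the entire content of the corollary is the elementary observation that $\psi(a) \geq 2^a$ precisely in the range $a \leq 3$. Writing $a = c + 4d$ with $0 \leq c \leq 3$, for $a \in \{0,1,2,3\}$ we are forced to have $d = 0$ and $c = a$, whence $\psi(a) = 2^c + 8d = 2^a$; thus $\psi(a) = 2^a \geq 2^a$ holds (with equality) for all $a \leq 3$, and the claim follows. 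For completeness one should note the contrast: already $\psi(4) = 2^0 + 8 = 9 < 16 = 2^4$, so $\tau^{\floor{2^3(4n+1)}}h_4$ is \emph{not} a permanent cycle, which is exactly why the statement is restricted to $a \leq 3$; the first nonzero differential on these classes for $a \geq 4$ reflects the failure of $\r^{\psi(a)}$ to act freely on $\r^{2^a}$, via the vector fields on spheres problem invoked in the proof of \cref{thm:rpc} (through \cref{lem:split} and Adams' theorem).

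I do not expect any genuine obstacle here: once \cref{thm:rpc} is in hand, the corollary is purely a matter of evaluating the Radon--Hurwitz numbers at four small integers. It may be worth remarking, as a sanity check consistent with the general philosophy, that for $a \leq 3$ this is compatible with the fact that $h_0, h_1, h_2, h_3$ are themselves permanent cycles (\cref{lem:lowhopf}), detecting $1-\epsilon$, $\eta$, $\nu$, and $\sigma$ respectively; however, since $\tau$ is not a cycle in $\Lambda^\r$, the $\tau$-multiples $\tau^{\floor{2^{a-1}(4n+1)}}h_a$ are genuinely indecomposable classes of $\Ext_\r$ (cf.\ \cref{thm:extadd}) and are not simply products of $h_a$ with a power of $\tau$, so a direct argument avoiding \cref{thm:rpc} would still require input of comparable depth. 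Accordingly, the route through Borel $C_2$-equivariant homotopy theory and the vector fields on spheres problem is the natural one, and the corollary is its immediate low-dimensional consequence.
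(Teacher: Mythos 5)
Correct, and this is exactly the intended reading: the paper gives only \qed for this corollary because it is the $r=0$ specialization of \cref{thm:rpc}, which reduces to checking that $\psi(a)=2^a$ for $a\in\{0,1,2,3\}$, as you have done. One small inaccuracy in your closing remark: the paper notes immediately after the corollary that a more direct argument \emph{is} available, applying the $\Sq^0$ and cofiber-of-$\rho^k$ technique used in \cref{thm:nohopf} and \cref{prop:tauhopfdiff} to reduce to the low-stem region already computed by Belmont--Isaksen; so it is not quite right to say that every direct proof must reroute through Borel $C_2$-equivariant homotopy theory and the vector fields on spheres problem.
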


\cref{cor:tauhopfexist} could also be proved more directly, applying the technique used in the proof of \cref{thm:nohopf} or \cref{prop:tauhopfdiff} below to reduce to the region considered by Belmont--Isaksen.

It is worth summarizing what we have learned from the proof of \cref{thm:rpc} about the stable cohomotopy groups of projective spaces.

\begin{thm}\label{thm:rpcohomotopy}
The subgroup of permanent cycles in $\Ext_{BC_2}^1$ is given by
\[
\f_2[\rho]\{h_1,h_2,h_3,\rho h_4\}\oplus\bigoplus_{a\geq 0}\f_2[\rho]/(\rho^{\psi(a)})\{\rho^{2^a-\psi(a)}\tau^{\floor{2^{a-1}(4n+1)}}h_a:n\in\z\}.
\]
A choice of maps $\Sigma^c P^\infty_w\rightarrow S^0$ detected by these permanent cycles is given by the following.
\begin{enumerate}
\item For all $r\geq 0$, there are maps
\begin{center}
\begin{tikzcd}[row sep=tiny]
P^\infty_{1-r}\ar[r]&P^\infty_1\ar[r,"\eta"]&S^0 \\
\Sigma P^\infty_{2-r}\ar[r]&\Sigma P^\infty_2\ar[r,"\nu"]&S^0 \\
\Sigma^3 P^\infty_{4-r}\ar[r]&\Sigma^3 P^\infty_3\ar[r,"\sigma"]&S^0
\end{tikzcd}.
\end{center}
Here, $\eta$, $\nu$, and $\sigma$ are equivariant refinements of the Hopf maps with the same names. The above composites are detected by by $\rho^r h_1$, $\rho^r h_2$, and $\rho^r h_3$, respectively.
\item For all $r\geq 0$, there is a map
\begin{center}\begin{tikzcd}
\Sigma^7P^\infty_{7-r}\ar[r]&\Sigma^7 P^\infty_7\ar[r,"\Sq(\sigma)"]&S^0
\end{tikzcd},\end{center}
where $\Sq(\sigma)$ is the symmetric square of $\sigma\colon S^7\rightarrow S^0$. The above composite is detected by $\rho^{1+r} h_4$.
\item For all $a\geq 0$, $n\in\z$, and $1\leq r \leq \psi(a)$, there are maps
\begin{center}\begin{tikzcd}
\Sigma^{2^a(2n+1)-1}P^\infty_{-2^a(2n+1)+r}\ar[r,"\partial"]&\Sigma^{2^a(2n+1)}P^{-2^a(2n+1)+r-1}_{-2^a(2n+1)}\ar[r,"s"]&S^0
\end{tikzcd}.\end{center}
Here, $\partial$ is the cofiber of $\Sigma^{2^a(2n+1)-1}P^\infty_{-2^a(2n+1)}\rightarrow\Sigma^{2^a(2n+1)-1}P^\infty_{-2^a(2n+1)+r}$, and $s$ is any map splitting off the bottom cell of $P^{-2^a(2n+1)+r-1}_{-2^a(2n+1)}$. The above composite is detected by $\rho^{2^a-r}\tau^{\floor{2^{a-1}(4n+1)}}h_a$.
\end{enumerate}
\end{thm}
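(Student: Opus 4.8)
The strategy is to read off the answer from the structure of $\Ext_{BC_2}^{\leq 3}$ recorded in \cref{lem:extbc2}, treating the $\rho$-torsion-free summand $\f_2[\rho]\{h_a:a\geq 1\}$ separately from the $\rho$-torsion summand. Since, by \cref{prop:retract} (transported along Betti realization and Borel completion as in the discussion preceding \cref{lem:extbc2}), $\Ext_{BC_2}$ splits as a direct sum of the $\tilde\theta$-image $\Ext_{\dcl}[\rho]$ and the $\rho$-torsion subgroup in a way compatible with all Adams differentials, the set of permanent cycles in $\Ext_{BC_2}^1$ is the direct sum of the permanent cycles in each summand; so it suffices to analyze the two pieces independently.

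\textbf{Step 1: the $\rho$-torsion classes.} This is already done inside the proof of \cref{thm:rpc}: via \cref{lem:pconly}, \cref{lem:split}, James periodicity, and Adams' solution of the vector fields on spheres problem, the class $\rho^r\tau^{\floor{2^{a-1}(4n+1)}}h_a$ is a permanent cycle if and only if $r\geq 2^a-\psi(a)$, which yields exactly the summand $\bigoplus_{a\geq 0}\f_2[\rho]/(\rho^{\psi(a)})\{\rho^{2^a-\psi(a)}\tau^{\floor{2^{a-1}(4n+1)}}h_a:n\in\z\}$. The maps in part (3) are precisely the composites $s\circ\partial$ constructed in the proof of \cref{lem:split}: $\partial$ is the cofiber of the collapse map $\Sigma^{2^a(2n+1)-1}P^\infty_{-2^a(2n+1)}\to\Sigma^{2^a(2n+1)-1}P^\infty_{-2^a(2n+1)+r}$, and $s$ splits off the bottom cell of the relevant finite stunted projective space, which is possible precisely when $r\leq\psi(a)$ by the vector fields theorem; a degree count shows these composites are detected by $\rho^{2^a-r}\tau^{\floor{2^{a-1}(4n+1)}}h_a$.

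\textbf{Step 2: the $\rho$-torsion-free classes with $a\leq 4$.} The classes $h_1,h_2,h_3$ are permanent cycles, detected by the Borel completions of the $C_2$-equivariant Hopf maps $\eta,\nu,\sigma$ (\cref{lem:lowhopf}); under the identification of the weight-$w$ slice of $\Ext_{BC_2}$ with the Adams $E_2$-term for $[P^\infty_w,S^0]$ these correspond to $P^\infty_1\xrightarrow{\eta}S^0$, $\Sigma P^\infty_2\xrightarrow{\nu}S^0$, $\Sigma^3P^\infty_3\xrightarrow{\sigma}S^0$, and precomposing with the bottom-cell collapse maps $P^\infty_{w-r}\to P^\infty_w$ produces maps detected by $\rho^r h_1,\rho^r h_2,\rho^r h_3$, giving the families in part (1). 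For $a=4$: by naturality, \cref{thm:nohopf} gives $d_2(h_4)=(h_0+\rho h_1)h_3^2$ in $\Ext_{BC_2}$, and this is nonzero since it maps to $h_0h_3^2\neq 0$ in $\Ext_\cl[\tau^{\pm 1}]$; hence $h_4$ is not a permanent cycle. On the other hand $\rho h_4$ is a $d_2$-cycle: using $\rho h_0=[\delta(\tau)]=0$ one has $\rho(h_0+\rho h_1)h_3^2=\rho^2 h_1h_3^2$, and $h_1h_3^2=\tilde\theta(h_0h_2^2)=0$ because $h_2^2h_0=0$ in $\Ext_\cl$ and $\tilde\theta$ is injective on cohomology (\cref{prop:retract}). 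To see $\rho h_4$ is actually a permanent cycle, one exhibits the symmetric-square map $\Sq(\sigma)\colon D_2S^7\simeq\Sigma^7P^\infty_7\to S^0$ obtained by applying the quadratic construction to $\sigma\colon S^7\to S^0$ and composing with the multiplication $D_2S^0\to S^0$; its restriction to the bottom cell $S^{14}$ is $\sigma^2$, and by Bruner's theory of power operations in the Adams spectral sequence \cite{BMMS86} the map $\Sq(\sigma)$ is detected by the image of $h_3$ under the relevant operation, which is the unique filtration-$1$ class in that tridegree, namely $\rho h_4$. Precomposition with collapse maps gives the family $\rho^{1+r}h_4$ of part (2).

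\textbf{Step 3: the $\rho$-torsion-free classes with $a\geq 5$.} Here none of the $\rho^r h_a$ survive: by \cref{thm:nohopf} and $\rho h_0=0$ we get $d_2(\rho^r h_a)=\rho^{r+1}h_1h_{a-1}^2$, and $h_1h_{a-1}^2=\tilde\theta(h_0h_{a-2}^2)\neq 0$ for $a-2\geq 3$ (by Wang's nonvanishing result \cite[Proposition 3.4]{Wan67} and the injectivity of \cref{prop:retract}), hence $\rho^{r+1}h_1h_{a-1}^2\neq 0$ since it lies in the $\rho$-torsion-free $\tilde\theta$-summand. Combining Steps 1--3 with the splitting above gives both the asserted description of the subgroup of permanent cycles in $\Ext_{BC_2}^1$ and the displayed detecting maps.

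\textbf{Main obstacle.} The delicate part is the $a=4$ case of Step 2: promoting the $d_2$-cycle $\rho h_4$ to a genuine permanent cycle and identifying it with $\Sq(\sigma)$. This needs the quadratic-construction/power-operation input of \cite{BMMS86} in the stunted-projective-space setting, correctly identifying the bottom-cell restriction as $\sigma^2$ and relating it to Borel Adams filtration, together with enough bookkeeping in $\Ext_{BC_2}^{\leq 3}$ to rule out the competing $\rho$-torsion family and pin the detecting class down to $\rho h_4$.
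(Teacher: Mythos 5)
The overall shape of your argument matches the paper's: treat the $\rho$-torsion classes via \cref{thm:rpc} (permanent cycle iff $r\geq 2^a-\psi(a)$), treat $h_1,h_2,h_3$ via \cref{lem:lowhopf}, kill $\rho^r h_a$ for $a\geq 5$ via \cref{thm:nohopf}, and then identify $\rho h_4$ as a permanent cycle detecting $\Sq(\sigma)$. Two points deserve comment.

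First, your opening reduction is justified by a false premise. You assert that the splitting $\Ext_{BC_2}\cong\Ext_\dcl[\rho]\oplus(\rho\text{-torsion})$ is ``compatible with all Adams differentials,'' but it is not: already $d_2(h_4)=(h_0+\rho h_1)h_3^2$ has source in the $\rho$-torsion-free summand and target in the $\rho$-torsion summand (note $h_0$ is killed by $\rho$). The analysis can still be carried out bidegree-by-bidegree, which is what both you and the paper actually do, but the ``compatible splitting'' framing would need to be replaced by a degree count showing no hidden cancellations across summands in the relevant degrees.

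Second, your treatment of the $a=4$ case is genuinely different from the paper's and is, as you acknowledge, not completed. You invoke Bruner's power operations in \cite{BMMS86} to assert that $\Sq(\sigma)$ has Borel Adams filtration exactly $1$ and is detected by ``the image of $h_3$ under the relevant operation.'' This requires a power-operations theory for the Adams spectral sequences of stunted projective spectra that pins down the filtration of the extended-power map and identifies the detecting class; you do not spell this out, and the remark after \cref{thmx:nohopf} notes that related power-operation methods (Tilson's) have so far proved insufficient in exactly this range. The paper instead uses geometric fixed points: the isomorphism $\pi_{\ast,\ast}^{C_2}[\rho^{-1}]\cong\pi_\ast^{\cl}[\rho^{\pm 1}]$ sends $\Sq(\alpha)\mapsto\alpha$ and is modeled on Adams spectral sequences by $\Ext_{C_2}[\rho^{-1}]\cong\Ext_\r[\rho^{-1}]\cong\Ext_\cl[\rho^{\pm 1}]$; since $\rho h_4$ is the unique class in its degree lifting $h_3\in\Ext_\cl^1$, it must detect $\Sq(\sigma)$. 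This avoids any power-operation input entirely and requires no new foundational work. If you wish to keep your route, you would need to supply the stunted-projective power-operation argument; otherwise, replacing it by the $\rho$-inversion comparison closes the gap.
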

\begin{proof}
Recall that
\[
\Ext_{BC_2}^1 = \f_2[\rho]\{h_a:a\geq 1\}\oplus\bigoplus_{a\geq 0}\f_2[\rho]/(\rho^{2^a})\{\tau^{\floor{2^{a-1}(4n+1)}} h_a:n\in\z\}.
\]
We have just analyzed which classes in the $\rho$-torsion summand are permanent cycles, leading to exactly the claimed $\rho$-torsion permanent cycles with representatives as described in (3). \cref{lem:lowhopf} implies that $h_1$, $h_2$, and $h_3$ are permanent cycles, and these detect the maps described in (1). \cref{thm:nohopf} shows that $\rho^n h_a$ supports a $d_2$-differential for $a\geq 5$ and $n\geq 0$, and that $h_4$ supports a $d_2$-differential but $\rho h_4$ does not. We are left with verifying that $\rho h_4$ is a permanent cycle detecting the map $\Sq(\sigma)$. Indeed, taking geometric fixed points yields an isomorphism $\pi_{\ast,\ast}^{C_2}[\rho^{-1}]\cong \pi_\ast^{\cl}[\rho^{\pm 1}]$ which sends $\Sq(\alpha)$ to $\alpha$ for any $\alpha\in \pi_\ast^{\cl}$. This isomorphism is modeled on Adams spectral sequences by $\Ext_{C_2}[\rho^{-1}] \cong \Ext_\r[\rho^{-1}]\cong \Ext_\cl[\rho^{\pm 1}]$. As $\rho h_4$ is the only class in its degree lifting $h_3\in \Ext_\cl^1$, it must be that $\rho h_4$ detects $\Sq(\sigma)$.
\end{proof}

\cref{thm:rpc} implies that the classes $\tau^{2^{a-1}(4n+1)}h_a$ must support Adams differentials for $a\geq 4$. Although we do not compute all these differentials, we do give the following.

\begin{proposition}\label{prop:tauhopfdiff}
For all $n\geq 0$ and $a\geq 3$, there is a differential
\[
d_2(\tau^{2^a(4n+1)}h_{a+1}) = (h_0+\rho h_1)(\tau^{2^{a-1}(4n+1)}h_a)^2.
\]
\end{proposition}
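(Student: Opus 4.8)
The plan is to prove \cref{prop:tauhopfdiff} by induction on $a\ge 3$ (for all $n\ge 0$ at once), adapting the method used for \cref{thm:nohopf}. By naturality it suffices to treat $F=\r$, and when convenient one may replace the $\r$-motivic Adams spectral sequence by that of the cofiber of $\rho^r$ for a suitable $r$, which is multiplicative by \cite[Lemma 7.8]{BS20}; the structure of $\Ext_\r^{\le 3}$ from \cref{Sec:ExtLow} then furnishes the algebraic bookkeeping.

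For the base case $a=3$, I would argue as in the case $a=3$ of \cref{thm:nohopf}. By \cref{cor:tauhopfexist} the class $\tau^{4(4n+1)}h_3\in\Ext_\r^1$ is a permanent cycle, detecting some $\sigma_n\in\pi_{7,\ast}^\r$; since $7$ is odd, \cref{lem:gradedcomm} gives $2\sigma_n^2=0$, with $2$ detected by $h_0+\rho h_1$. Using \cref{lem:unambiguousproduct} one identifies $(\tau^{4(4n+1)}h_3)^2$ with a nonzero $\f_2[\rho]$-module generator from \cref{thm:extadd}, so it detects $\sigma_n^2$ and $(h_0+\rho h_1)(\tau^{4(4n+1)}h_3)^2$ cannot be a permanent cycle. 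As a class in filtration $3$ can only be hit by a $d_2$ out of filtration $1$, inspecting $\Ext_\r^1$ in the relevant tridegree via \cref{thm:extadd} (where one checks that the only other filtration-$1$ class in that tridegree, a power of $\rho$ times $h_4$, supports the trivial $d_2$) leaves $d_2(\tau^{8(4n+1)}h_4)=(h_0+\rho h_1)(\tau^{4(4n+1)}h_3)^2$ as the only possibility.

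The inductive step requires, exactly as in \cref{thm:nohopf}, treating $a=4$ separately: the error terms one would otherwise have to exclude cannot be ruled out by nonvanishing results like \cref{lem:h1hn3}, since $h_0h_3^3=0$ classically. For $n=0$ one runs the Kervaire-class argument of \cref{lem:d2h5f}: the Hurewicz image $c(\theta_4)$ of the Kervaire class $\theta_4$ is detected by $(\Sq^0)^4(h_0^2)=\tau^{16}h_4^2$, which \cref{lem:unambiguousproduct} identifies with $(\tau^8h_4)^2$, and $2c(\theta_4)=0$ forces $d_2(\tau^{16}h_5)=(h_0+\rho h_1)(\tau^8h_4)^2$; to reach general $n$ one feeds this, together with $d_2(h_5)=(h_0+\rho h_1)h_4^2$ from \cref{thm:nohopf} and the $a=3$ case already proved, into the Leibniz rule applied to the relation \cref{prop:2linerelations}(1), which determines $d_2(\tau^{16(4n+1)}h_5)$ up to the annihilator of $\tau^{16}h_5$ in the relevant group. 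Then for $a\ge 5$, assuming the result for $a-1$: comparison with $\Ext_\c$ (where $d_2$ is $\tau$-linear and $d_2(h_{a+1})=h_0h_a^2$) shows $d_2(\tau^{2^a(4n+1)}h_{a+1})$ and $(h_0+\rho h_1)(\tau^{2^{a-1}(4n+1)}h_a)^2$ differ by a $\rho$-torsion class $x_n\in\Ext_\r^3$, and the Leibniz rule applied to the relation $\tau^{2^a(4n+1)}h_{a+1}\cdot h_a=\rho^{2^a}\tau^{2^{a-1}(8n+1)}h_{a+1}^2$ (which is \cref{prop:2linerelations}(3) after reindexing), using $d_2(h_a)=(h_0+\rho h_1)h_{a-1}^2$ and $h_{a+1}h_{a-1}^2=h_a^3$, forces $x_n\cdot h_a=0$; a nonvanishing statement of the type of \cref{lem:h1hn3}, obtained by pulling Wang's computation \cite[Proposition 3.4]{Wan67} back along $\thetatilde$, then gives $x_n=0$.

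The main obstacle will be controlling the $\rho$-torsion error $x_n$ in the inductive step. Unlike in \cref{thm:nohopf}, where the candidate errors had the simple shape $\omega h_1h_a^2$, here $x_n$ ranges over a more intricate part of the $\rho$-torsion subgroup of $\Ext_\r^3$, and excluding it requires tracking precisely which $\rho$-torsion classes in filtration $3$ are annihilated by $h_a$ --- this is where the detailed multiplicative structure of $\Ext_\r^{\le 3}$ recorded in \cref{thm:extmult} (via \cref{prop:2linerelations}, \cref{prop:3linerelations}, and \cref{prop:specialproducts}) must be used, and where, as in \cref{lem:d2h5f}, one may need to pass to a truncation $\Lambda^\r/(\rho^r)$ to make the relevant classes permanent cycles, checking that $r$ can be chosen large enough not to obscure the differential being computed.
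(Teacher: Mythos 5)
Your base case $a=3$ follows the paper's argument correctly (graded commutativity applied to $\sigma_n$ detected by the permanent cycle $\tau^{4(4n+1)}h_3$), though you misidentify the other filtration-$1$ class in the relevant tridegree: the alternative source is $\tau^{8(4n+1)}h_4 + \rho^{16}h_5$, i.e.\ a $\rho$-power times $h_5$, not $h_4$, and the paper rules it out by a short contradiction using \cref{thm:nohopf} rather than by checking it supports the trivial $d_2$.

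The substantive divergence is in the case $a>3$. You attempt an induction modeled on \cref{thm:nohopf} --- separate Kervaire-class argument at $a=4$, then an inductive step for $a\geq 5$ using the Leibniz rule on relations from \cref{prop:2linerelations} together with comparison to $\Ext_\c$ --- and you correctly flag in your last paragraph that controlling the $\rho$-torsion error $x_n$ is where this gets hard. That flag is well placed, and is the genuine gap: it is not at all clear that the nonvanishing facts available (such as your proposed analogue of \cref{lem:h1hn3}) are enough to exclude all $\rho$-torsion candidates in filtration $3$, especially at $a=4$ and $a=5$ where the group of possible errors is richest. You leave this unresolved, so the proposal does not close.

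The paper's argument for $a>3$ is a single short observation that sidesteps the induction entirely, and it uses exactly the device you mention only in passing at the end. The $\rho$-torsion subgroup of $\Ext_\r$ in the target degree is $\f_2\{h_0,\rho h_1\}\otimes\f_2\{(\tau^{2^{a-1}(4n+1)}h_a)^2\}$, and none of these classes is divisible by $\rho^2$, so it suffices to verify the differential after mapping to the Adams spectral sequence for the cofiber of $\rho^2$. That spectral sequence is multiplicative by \cite[Lemma 7.8]{BS20}; there $\tau^2$ is a cycle on the $E_2$-page, hence $\tau^4=(\tau^2)^2$ is a $d_2$-cycle, hence $\tau^{2^a(4n+1)}$ is a $d_2$-cycle for $a\geq 2$ (it is a power of $\tau^4$). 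The Leibniz rule then reduces the claim directly to $d_2(h_{a+1})=(h_0+\rho h_1)h_a^2$, which is \cref{thm:nohopf}. No induction, no error-term analysis, and no Kervaire-class input beyond what already went into \cref{thm:nohopf}. The lesson is that the mod-$\rho^2$ reduction is not merely a technical crutch for the base cases (as in \cref{lem:d2h5f}) but, because the target here has $\rho$-torsion exponent $\leq 2$, the reduction trivializes the whole range $a>3$.
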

\begin{proof}
We give separate arguments for the case $a = 3$ and $a > 3$. First consider the case $a = 3$. The class $\tau^{4(4n+1)}h_3$ is a permanent cycle by \cref{cor:tauhopfexist}, detecting a class which we might call $\tau^{4(4n+1)}\sigma$. By \cref{lem:gradedcomm}, $2\cdot (\tau^{4(4n+1)}\sigma)^2 = 0$, and so $(h_0 + \rho h_1)\cdot (\tau^{4(4n+1)}h_3)^2$ must die. This class is not divisible by $\rho$, and the only non-$\rho$-divisible classes that may hit it are $\tau^8 h_4$ and $\tau^8 h_4 + \rho^{16}h_5$. By \cref{thm:nohopf}, if $d_2(\tau^8 h_4 + \rho^{16}h_5) = (h_0 + \rho h_1)\cdot (\tau^{4(4n+1)}h_3)^2$, then $d_2(\tau^8 h_4) = (h_0 + \rho h_1)\cdot (\tau^{4(4n+1)}h_3 + h_4)^2$. This is not possible, as $\tau^8 h_4$ is $\rho$-torsion and this target is not. Thus in fact $d_2(\tau^8 h_4) = (h_0 + \rho h_1)\cdot (\tau^{4(4n+1)}h_3)^2$ as claimed.

Next consider the case $a > 3$. The $\rho$-torsion subgroup of $\Ext_\r$ in the degree of $d_2(\tau^{2^a(4n+1)}h_{a+1})$ is given by $\f_2\{h_0,\rho h_1\}\otimes\f_2\{(\tau^{2^{a-1}(4n+1)}h_a)^2\}$. These classes are not divisible by $\rho^2$, and so it suffices to verify the differential in the Adams spectral sequence for the cofiber of $\rho^2$. By \cite[Lemma 7.8]{BS20}, this cofiber is a ring spectrum, so its Adams spectral sequence is multiplicative. As $\tau^2$ is a cycle, $\tau^4$ is a $d_2$-cycle, so we reduce to showing $d_2(h_{a+1}) = (h_0+\rho h_1) h_a^2$. This was shown in \cref{thm:nohopf}.
\end{proof}

We may summarize what we have learned as follows.

\begin{thm}\label{thm:r}
The nontrivial $d_2$-differentials out of the $1$-line of the $\r$-motivic Adams spectral sequence are exactly those given in the following table.
\begin{longtable}{llll}
\toprule
Source & Target & Constraints\\
\midrule \endhead
\bottomrule \endfoot
$h_4$ & $h_0h_3^2$ \\
$\rho^r h_a$ & $\rho^r(h_0+\rho h_1)h_{a-1}^2$ & $a\geq 5$, $r\geq 0$ \\
$\rho^r \tau^{2^{a-1}(4n+1)}h_a$ & $\rho^r(h_0+\rho h_1) (\tau^{2^{a-2}(4n+1)}h_{a-1})^2$ & $n\geq 0$, $a\geq 4$, $0\leq r \leq 2^{a-1}-1$
\end{longtable}
The $1$-line of the $E_3$-page of the $\r$-motivic Adams spectral sequence has a basis given by the elements in the following table.
\begin{longtable}{lll}
\toprule
$\f_2[\rho]$-module generator & Constraints & $\rho$-torsion exponent\\
\midrule \endhead
\bottomrule \endfoot
$h_a$ & $a\in\{1,2,3\}$ & $\infty$ \\
$\rho h_4$ & & $\infty$ \\
$\tau^{\floor{2^{a-1}(4n+1)}}h_a$ & $n\geq 0$ and $a\in\{0,1,2,3\}$ & $2^a$ \\
$\rho^{2^{a-1}-1}\tau^{2^{a-1}(4n+1)}h_a$ & $n\geq 0$ and $a\geq 4$ & $2^{a-1}+1$
\end{longtable}
Those classes in $\Ext_\r^1$ which are permanent cycles are given in the following table.
\begin{longtable}{llll}
\toprule
$\f_2[\rho]$-module generator & Constraints & $\rho$-torsion exponent & Stem\\
\midrule \endhead
\bottomrule \endfoot
$h_a$ & $a\in\{1,2,3\}$ & $\infty$ & $2^a-1$\\
$\rho h_4$ & & $\infty$ & $14$ \\
$\tau^{\floor{2^{a-1}(4n+1)}}h_a$ & $n\geq 0$ and $a\in\{0,1,2,3\}$ & $2^a$ & $2^a-1$ \\
$\rho^{2^a-\psi(a)}\tau^{2^{a-1}(4n+1)}h_a$ & $n\geq 0$, $a\geq 4$ & $\psi(a)$ & $\psi(a)-1$
\end{longtable}
\end{thm}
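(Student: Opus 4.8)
The plan is to deduce \cref{thm:r} by assembling results already in hand: the $\f_2[\rho]$-module structure of $\Ext_\r^1$ from \cref{thm:extadd}, the Hopf differentials of \cref{thm:nohopf} and \cref{prop:tauhopfdiff}, the existence statements \cref{lem:lowhopf} and \cref{cor:tauhopfexist}, and the permanent-cycle classification \cref{thm:rpc}. Recall from \cref{thm:extadd} that
\[
\Ext_\r^1 = \f_2[\rho]\{h_a : a \geq 1\} \oplus \bigoplus_{a \geq 0} \f_2[\rho]/(\rho^{2^a})\{\tau^{\floor{2^{a-1}(4n+1)}}h_a : n \geq 0\},
\]
and since $d_2$ is $\f_2[\rho]$-linear it suffices to compute it on the displayed $\f_2[\rho]$-module generators; completeness of the differential table then follows by checking that each generator not listed as a source has vanishing $d_2$-target for $\rho$-divisibility reasons.

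On the $\rho$-torsion-free summand, the classes $h_1, h_2, h_3$ are permanent cycles by \cref{lem:lowhopf}. For $h_4$, \cref{thm:nohopf} gives $d_2(h_4) = (h_0 + \rho h_1)h_3^2$; the term $\rho h_1 h_3^2$ vanishes, as one sees from the classical relation $h_0 h_2^2 = 0$ transported along the split inclusion $\thetatilde$ of \cref{prop:retract}, so $d_2(h_4) = h_0 h_3^2$. Since $\rho h_0 = 0$, every $\rho$-multiple $\rho^r h_4$ with $r \geq 1$ is a $d_2$-cycle; in fact $\rho h_4$ is a permanent cycle, by the real-realization (i.e.\ $\rho$-inversion) argument used in the proof of \cref{thm:rpcohomotopy}, identifying $\rho h_4$ as the unique class in its degree lifting the classical permanent cycle $h_3^\cl$ under $\Ext_\r[\rho^{-1}] \cong \Ext_{\dcl}[\rho^{\pm 1}]$. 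For $a \geq 5$, \cref{thm:nohopf} gives $d_2(h_a) = (h_0 + \rho h_1)h_{a-1}^2$, and one reads off \cref{thm:extadd} that $\rho^r(h_0 + \rho h_1)h_{a-1}^2$ is nonzero for every $r \geq 0$, so these differentials persist after multiplying by any power of $\rho$.

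On the $\rho$-torsion summand, for $a \leq 3$ the generators $\tau^{\floor{2^{a-1}(4n+1)}}h_a$ are permanent cycles by \cref{cor:tauhopfexist}. For $a \geq 4$, \cref{prop:tauhopfdiff} (after shifting the index) gives $d_2(\tau^{2^{a-1}(4n+1)}h_a) = (h_0 + \rho h_1)(\tau^{2^{a-2}(4n+1)}h_{a-1})^2$, and \cref{thm:extadd} together with \cref{rmk:modrho} determines the $\rho$-torsion exponent of this target, hence exactly the range of $r$ for which $\rho^r$ times the differential remains nonzero. The $\rho$-multiples outside this range are $d_2$-cycles, and \cref{thm:rpc} identifies precisely which of them, and of the resulting $E_3$-classes, are permanent cycles; the intermediate classes must therefore support higher differentials, whose targets we do not name. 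The $E_3$-page table is obtained as $\ker(d_2 \colon \Ext_\r^1 \to \Ext_\r^3)$, assembled from the generator-by-generator computation above, and the permanent-cycle table combines \cref{thm:rpc} with the $\rho$-torsion-free information ($h_1, h_2, h_3$ and $\rho h_4$ permanent, all higher $h_a$ not).

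The main obstacle is bookkeeping rather than any new idea: one must track carefully, for each family of $d_2$-differentials, the $\rho$-torsion exponent of the target — equivalently, the largest power of $\rho$ by which one may multiply the differential and still have it nonzero — and dually verify that the non-source generators have vanishing targets, so that the differential table is complete. All of this is read off the module and multiplicative structure recorded in \cref{thm:extadd}, \cref{thm:extmult}, and \cref{rmk:modrho}, but it requires care, especially around the interaction of the relation $\rho h_0 = 0$ with the $h_0$-term in $(h_0 + \rho h_1)$. The one input that is not purely formal, given the cited results, is the permanence of $\rho h_4$, which rests on the comparison with $\rho$-inverted (real-\'etale, or $C_2$-Borel) homotopy theory already exploited in \cref{thm:rpc} and \cref{thm:rpcohomotopy}.
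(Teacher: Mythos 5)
Your proposal is correct and follows essentially the same route as the paper: assemble the module structure of $\Ext_\r^{\leq 3}$ from \cref{thm:extadd}, propagate the differentials of \cref{thm:nohopf} and \cref{prop:tauhopfdiff} linearly over $\f_2[\rho]$, and invoke \cref{thm:rpc} and \cref{thm:rpcohomotopy} for the permanent-cycle classification. Two small remarks: the paper explicitly flags that for $a=4$ the $\rho$-torsion exponent of $d_2(\tau^{8(4n+1)}h_4)=(h_0+\rho h_1)(\tau^{4(4n+1)}h_3)^2$ requires the particular hidden product \cref{prop:3linerelations}(4) — you subsume this under ``track carefully'' and a citation of \cref{thm:extmult}, which is adequate, but it is the one place where the bookkeeping actually needs a nontrivial multiplicative input rather than just \cref{thm:extadd}; and for the permanence of $\rho h_4$, your description as a ``$\rho$-inversion argument'' is a little loose — the paper routes through geometric fixed points of genuine $C_2$-equivariant homotopy (detecting $\Sq(\sigma)$) and the splitting $\Ext_{C_2}\cong\Ext_\r\oplus\Ext_{NC}$, since the mere algebraic isomorphism $\Ext_\r[\rho^{-1}]\cong\Ext_{\dcl}[\rho^{\pm 1}]$ does not by itself rule out $\rho$-torsion values for $d_r(\rho h_4)$.
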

\begin{proof}
All of this is immediate from \cref{thm:nohopf}, \cref{thm:rpc}, \cref{prop:tauhopfdiff}, \cref{thm:rpcohomotopy}, and the $\rho$-torsion exponent of generators of $\Ext_\r^3$ given in \cref{thm:extadd}, with the following exception: \cref{prop:tauhopfdiff} produces differentials $d_2(\tau^{8(4n+1)}h_4) = (h_0 + \rho h_1)(\tau^{4(4n+1)}h_3)^2$, and one must use \cref{prop:3linerelations}(4) to check that this target has $\rho$-torsion exponent $7$.
\end{proof}

\subsection{Finite fields}

We now study the case where $F$ is a finite field. For the most part, this case follows by combining \cref{thm:nohopf} with differentials out of $\Ext_F^0$ that may be deduced from work of Kylling \cite{Kyl15}. By naturality, our discussion in this subsection gives information for $F$ an arbitrary field of odd characteristic.

We will need the following definition.

\begin{definition}\label{def:al}
For an integer $q$, let $\nu_2(q)$ denote the $2$-adic valuation of $q$, i.e.\
\[
q = 2^{\nu_2(q)}(2n+1)
\]
for some integer $n$, and let 
\[
\varepsilon(q) = \nu_2(q-1),\qquad\lambda(q) = \nu_2(q^2-1).
\]
\tqed
\end{definition}

We now split into cases based on congruence of the order of the field mod $4$.

\subsubsection{$q\equiv 1 \pmod{4}$}

Fix a prime-power $q$ such that $q\equiv 1 \pmod{4}$. We work over $F = \f_q$. Recall that $\Ext_{\f_q} = \Ext_{(1)}\{1,u\}$. In particular,
\[
\Ext_{\f_q}^1 = \f_2[\tau]\{1,u\}\otimes\f_2\{h_a:a\geq 0\}.
\]
The class $u$ is a permanent cycle for degree reasons, and we have already computed the differential on all the classes $h_a$. However the story does not stop there; instead, we have the following.

\begin{lemma}\label{lem:dtau1}
There are differentials
\[
d_{\varepsilon(q)+s}(\tau^{2^s}) = u  \tau^{2^s-1}h_0^{\varepsilon(q)+s}
\]
for all $s\geq 0$.
\end{lemma}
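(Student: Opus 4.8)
\textbf{Proof plan for \cref{lem:dtau1}.} The idea is to deduce these differentials from the analogous differentials in the $\f_q$-motivic Adams spectral sequence for \emph{integral} motivic cohomology, following Kylling's computation \cite{Kyl15}. First I would recall that there is a cofiber sequence $H\z/2 \xrightarrow{\beta} \Sigma H\z \xrightarrow{2} \Sigma H\z$ of motivic spectra, which induces a comparison between the mod $2$ and integral motivic Adams spectral sequences; equivalently, one may use the $\rho$-Bockstein-type long exact sequence relating $\Ext$ over the mod $2$ and integral motivic Steenrod algebras. The key input is that $\m^{\f_q}_0 = \f_q[u]/u^2$ has a ``Bockstein'' structure governed by $\varepsilon(q) = \nu_2(q-1)$: the integral motivic cohomology of $\Spec\f_q$ sees that $u$ is (up to units) the reduction of a class of order $2^{\varepsilon(q)}$, which encodes the fact that $\f_q^\times$ has a cyclic $2$-part of order $2^{\varepsilon(q)}$. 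Concretely, in the integral motivic Adams spectral sequence over $\f_q$ there is a differential $d_{\varepsilon(q)}$ detecting multiplication by $q-1$ (or the relevant power of $2$ dividing it) on $\tau$, and reducing mod $2$ this becomes $d_{\varepsilon(q)}(\tau) = u h_0^{\varepsilon(q)}$ in $\Ext_{\f_q}$, where $h_0^{\varepsilon(q)}$ records the Adams filtration jump coming from the $2$-adic valuation. This is the case $s=0$.

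The cases $s \geq 1$ then follow by applying the squaring operation. I would use the endomorphism $\theta\colon \Lambda^{\f_q}\to\Lambda^{\f_q}$ lifting $\Sq^0$ from \cref{thmx:lambda}(2), together with the behavior of $\Sq^0$ on Adams differentials: since $\Sq^0$ doubles stems and preserves Adams filtration, it carries a $d_r$-differential to a $d_r$-differential, but in our setting the relevant source $\tau^{2^s}$ is obtained by iterating $\Sq^0$ (which sends $\tau^n$ to $\tau^{2n}$, so $\tau \mapsto \tau^2 \mapsto \tau^4 \mapsto \cdots$). The subtlety is that applying $\Sq^0$ once to the $d_{\varepsilon(q)}$-differential on $\tau$ does not immediately give a $d_{\varepsilon(q)}$-differential on $\tau^2$: rather, because $\tau$ itself only becomes a permanent cycle modulo higher powers of $h_0$ after one stage, the differential on $\tau^2$ gets \emph{lengthened} to $d_{\varepsilon(q)+1}$, and more generally on $\tau^{2^s}$ to $d_{\varepsilon(q)+s}$. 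This lengthening is exactly the pattern seen in the classical Adams spectral sequence for the $2$-complete sphere, where $d_{\varepsilon}(\tau^{?})$-type behavior on powers is governed by the $2$-adic filtration; here it is cleanest to argue by induction on $s$: assuming $d_{\varepsilon(q)+s-1}(\tau^{2^{s-1}}) = u\tau^{2^{s-1}-1}h_0^{\varepsilon(q)+s-1}$, the class $\tau^{2^{s-1}}$ survives to $E_{\varepsilon(q)+s-1}$ but not $E_{\varepsilon(q)+s}$, hence $\tau^{2^s} = (\tau^{2^{s-1}})^2$ is a $d_r$-cycle for $r < \varepsilon(q)+s$ and the Leibniz rule forces $d_{\varepsilon(q)+s}(\tau^{2^s}) = u\tau^{2^s-1}h_0^{\varepsilon(q)+s}$, using that the square of the target's detecting class dies and that $2\cdot(\text{anything detected by }\tau^{2^s})$ must then be killed. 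One must also check, using \cref{lem:lowstem} or direct inspection of $\Ext_{\f_q}$, that there are no competing differentials or earlier differentials that could hit or emanate from these classes, and that the relevant targets $u\tau^{2^s-1}h_0^{\varepsilon(q)+s}$ are nonzero in $\Ext_{\f_q}$ (which follows from $\Ext_{(1)} \supseteq \f_2[\tau]\{h_0^n\}$ and the nonvanishing of $u h_0^n$).

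The main obstacle I anticipate is pinning down the base case $d_{\varepsilon(q)}(\tau) = u h_0^{\varepsilon(q)}$ precisely — in particular getting the exact power $\varepsilon(q)$ of $h_0$ and the coefficient $u$ right. This requires either a careful comparison with Kylling's integral computation \cite{Kyl15} (identifying how the $2$-adic valuation $\nu_2(q-1)$ enters the integral Adams $E_2$-page and its differentials) or a direct argument using the motivic Bockstein and the known structure of $K^M_1(\f_q)/2 = \f_q^\times/2$ together with the higher $2$-divisibility of $\tau$-multiples. A secondary obstacle is the bookkeeping in the inductive lengthening argument: one must be careful that no \emph{shorter} differential on $\tau^{2^s}$ exists, which amounts to checking that $\tau^{2^s}$ genuinely survives to the $E_{\varepsilon(q)+s}$-page, and this is where knowing the full $1$-line structure of $\Ext_{\f_q}$ (all of $\Ext_{(1)}^{\leq 1}$ tensored with $\{1,u\}$) and the already-established $d_2(h_{a+1})$ differentials from \cref{thm:nohopf} is essential to rule out interference.
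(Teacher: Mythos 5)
Your proposal takes a genuinely different route from the paper's, and the route has a gap.

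The paper's proof is a one-liner: Kylling's Lemma 4.2.1 establishes exactly these differentials in the $\f_q$-motivic Adams spectral sequence for $H\z$ (where the abutment is known integral motivic cohomology, so the differentials can be read off directly), and the lemma follows by naturality along the unit map $S^{0,0}\to H\z$. That single naturality argument handles all $s\geq 0$ at once; there is no induction on $s$ in the paper.

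Your proposal instead tries to establish the base case $s=0$ via a Bockstein/Milnor-$K$-theory argument and then run an induction on $s$ using $\Sq^0$/$\theta$ and Leibniz. The inductive step does not work as written. Once $\tau^{2^{s-1}}$ supports the nonzero differential $d_{\varepsilon(q)+s-1}$, it is no longer a class on the $E_{\varepsilon(q)+s}$-page, so $\tau^{2^s}$ is not the square of any surviving class there and the Leibniz rule says nothing about $d_{\varepsilon(q)+s}(\tau^{2^s})$. Leibniz only gives you the vanishing of $d_r(\tau^{2^s})$ for $r\leq\varepsilon(q)+s-1$ (which you correctly observe), not the value of the next differential. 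Similarly, the endomorphism $\theta$ lifting $\Sq^0$ is an $E_2$-level chain map and does not by itself control higher Adams differentials; the ``lengthening'' of a $d_r$ to a $d_{r+1}$ under $\Sq^0$ is not a general phenomenon and would need an argument (e.g. via Bruner-type power operations in the Adams spectral sequence, which the paper explicitly declines to pursue in this generality, citing \cite{Til17}). Finally, the fallback ``this class must die because $2\cdot(\text{something}) = 0$'' reasoning cannot close the gap either, because the abutment $\pi_{*,*}^{\f_q}$ of the sphere's Adams spectral sequence is precisely what is being computed; one only has independent knowledge of the abutment for $H\z$, which is exactly why the paper works there and transports the answer back by naturality. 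In short, the inductive machinery you set up is unnecessary once naturality is invoked, and without that invocation the induction is not actually forced.
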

\begin{proof}
In \cite[Lemma 4.2.1]{Kyl15}, Kylling produces identical differentials in the $\f_q$-motivic Adams spectral sequence for $H\z$. The claimed differentials follow by naturality.
\end{proof}

This may be combined with \cref{thm:nohopf} to easily compute all differentials out of the $1$-line. 

\begin{thm}\label{thm:enf1}
For $q\equiv 1 \pmod{4}$, the $1$-line of the $\f_q$-motivic Adams spectral sequence supports only the nontrivial differentials given in the following table.
\begin{longtable}{llll}
\toprule
Source & $d_r$ & Target & Constraints\\
\midrule \endhead
\bottomrule \endfoot
$\tau^n h_0$ & $d_{\varepsilon(q)+\nu_2(n)}$ & $\tau^{n-1}h_0^{\varepsilon(q)+\nu_2(n)+1}$ & $n\geq 1$ \\
$\tau^{2n+1}h_2$ & $d_2$ & $u \tau^{2n}h_2 h_0^2$ & $n\geq 0$, $\varepsilon(q) = 2$ \\
$\tau^{2n+1}h_3$ & $d_2$ & $u\tau^{2n}h_3h_0^2$ & $n\geq 0$, $\varepsilon(q) = 2$ \\
$\tau^{2n+1}h_3$ & $d_3$ & $u\tau^{4n+1}h_3h_0^3$ & $n\geq 0$, $\varepsilon(q) = 3$
\\
$\tau^{4n+2}h_3$ & $d_3$ & $u\tau^{4n+1}h_3h_0^3$ & $n\geq 0$, $\varepsilon(q) = 2$ \\
$\tau^n h_b$ & $d_2$ & $\tau^nh_0 h_{b-1}^2 + d_2(\tau^n)h_b$ & $n\geq 0$, $b\geq 4$ \\
$u \tau^n h_b$ & $d_2$ & $u \tau^n h_0 h_{b-1}^2$ & $n\geq 0$, $b\geq 4$
\end{longtable}
After these have been run, the $1$-line of the $E_\infty$-page of the $\f_q$-motivic Adams spectral sequence has a basis given by the elements in the following table.
\begin{longtable}{lll}
\toprule
Class & Constraints &\\
\midrule \endhead
\bottomrule \endfoot
$h_0$ \\
$\tau^n h_1$ & $n\geq 0$\\
$\tau^n h_2$ & $n\geq 0$, where if $\varepsilon(q) = 2$ then $n \equiv 0 \pmod{2}$ \\
$\tau^n h_3$ & $n\geq 0$, where if $\varepsilon(q) = 2$ then $n \equiv 0 \pmod{4}$, \\
&\hfill and if $\varepsilon(q)=3$ then $n\equiv 0\pmod{2}$\hphantom{,} \\
$u \tau^n h_b$ & $n\geq 0$, $b\in\{0,1,2,3\}$
\end{longtable}
\end{thm}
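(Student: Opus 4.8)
The plan is to assemble \cref{thm:enf1} entirely from differentials already in hand, namely the uniform Hopf differentials $d_2(h_{a+1}) = (h_0+\rho h_1)h_a^2$ of \cref{thm:nohopf} (which over $\f_q$ with $q\equiv 1\pmod 4$ reads $d_2(h_{a+1}) = uh_1h_a^2 + h_0h_a^2$, since $\rho = u\cdot u^{-1}$... wait, more precisely $\rho$ maps to $u$ in $\m^{\f_q}_0$ when $q\equiv 1\pmod 4$? — actually one must check the image of $\rho$: over $\f_q$ with $q\equiv 1\pmod 4$, $\rho = 0$ in $\m^{\f_q}_0$, so the differential is simply $d_2(h_{a+1}) = h_0 h_a^2$, and the $u$-summand picks it up as $d_2(u\tau^n h_b) = u\tau^n h_0 h_{b-1}^2$), together with the Kylling differentials $d_{\varepsilon(q)+s}(\tau^{2^s}) = u\tau^{2^s-1}h_0^{\varepsilon(q)+s}$ of \cref{lem:dtau1}. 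First I would record the additive structure of $\Ext_{\f_q}^{\leq 3}$ from \cref{lem:mmodule} (namely $\Ext_{\f_q} = \Ext_{(1)}\{1,u\}$) and \cref{thm:extadd}, specializing to $\rho = 0$; in particular $\Ext_{(1)}$ in low stems is given by \cref{lem:lowstem}, and the full $\Ext_{(1)}^{\leq 3}$ follows from \cref{prop:extc}. The class $u$ is a permanent cycle since $\Ext_{\f_q}^{-1,f,-1} = 0$ for $f\geq 1$ (there is nothing in negative stem), so there is no room for a differential on it.

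The computational core is then a bookkeeping exercise in the Leibniz rule: every class on the $1$-line is $\tau^n h_b$ or $u\tau^n h_b$, and I would split into the ranges $b = 0$, $b\in\{1,2,3\}$, and $b\geq 4$. For $b\geq 4$, \cref{thm:nohopf} gives $d_2(h_b) = h_0 h_{b-1}^2$ (recall $\rho = 0$ here), and multiplying by $\tau^n$ (a $d_2$-cycle only modulo the Kylling differential, so strictly one writes $d_2(\tau^n h_b) = \tau^n h_0 h_{b-1}^2 + d_2(\tau^n)h_b$, and separately on the $u$-summand $d_2(u\tau^n h_b) = u\tau^n h_0 h_{b-1}^2$ since $u\tau^n$ is a genuine $d_2$-cycle, as $d_2(u\tau^n) = u\, d_2(\tau^n)$ and $d_2(\tau^n) = 0$ unless $\varepsilon(q)+\nu_2(n) = 2$ — here is where the case analysis on $\varepsilon(q)$ enters). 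For $b = 0$, the only differentials are the Kylling ones, reindexed: $\tau^n h_0$ supports $d_{\varepsilon(q)+\nu_2(n)}(\tau^n h_0) = \tau^{n-1}h_0^{\varepsilon(q)+\nu_2(n)+1}$, obtained by writing $\tau^n = \tau^{2^{\nu_2(n)}}\cdot\tau^{n-2^{\nu_2(n)}}$, noting the second factor is a $d_r$-cycle for $r\leq\varepsilon(q)+\nu_2(n)$ by minimality of the $2$-adic valuation, and applying Leibniz. For $b\in\{1,2,3\}$, the classes $\tau^n h_1$ are permanent cycles (they detect powers of $\tau$ times $\eta$, or one appeals to \cref{lem:lowhopf} and sparseness), while $\tau^n h_2$ and $\tau^n h_3$ can only die by receiving a Kylling-type differential through the product $d_r(\tau^{2^s}\cdot\tau^{n-2^s}h_b) = u\tau^{2^s-1}h_0^{\varepsilon(q)+s}\cdot\tau^{n-2^s}h_b + \cdots$, and one checks using the relations $h_0 h_2 = 0$... no — using the products $h_0^2 h_2 = \tau h_1^3 \ne 0$ and the multiplicative structure that this product is nonzero exactly in the tabulated cases ($\varepsilon(q) = 2$ for $h_2$, etc.), and that there are no other classes in the relevant degrees by \cref{lem:lowstem} and \cref{prop:extc}.

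The main obstacle I anticipate is not any single differential but the verification that the tabulated list is \emph{complete} — that no further nontrivial differentials out of the $1$-line exist and that everything listed as surviving to $E_\infty$ genuinely does. This requires: (i) confirming the stated targets are nonzero in $\Ext_{\f_q}^{\leq 3}$ (using the explicit basis and product relations from \cref{thm:extadd}, \cref{thm:extmult}, in particular $h_0 h_{b-1}^2 \ne 0$, $h_0^{\varepsilon(q)+s+1}\tau^{\ast}\ne 0$, $uh_0 h_{b-1}^2\ne 0$); (ii) ruling out differentials \emph{into} the surviving classes from the $0$-line, which again reduces to the Kylling differentials being the only ones out of $\Ext_{\f_q}^0 = \f_2[\tau]\{1,u\}$ (cite \cref{lem:dtau1} and note $u\tau^n$ has no differential by the same degree argument as for $u$); and (iii) ensuring that once a class like $\tau^{2n+1}h_3$ is killed, its Kylling differential and its Hopf differential are consistent — i.e. that we are not double-counting, which is handled by noting the sources live in distinct tridegrees. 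I would organize this as a single lemma stating the additive structure of $E_2^{s,1,w}$ and $E_2^{s,3,w}$ for small $s$, then walk through the finitely many families, and finally read off the $E_\infty$ table by subtracting sources and targets. The argument is elementary given everything cited; the risk is purely clerical.
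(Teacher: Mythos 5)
Your proposal is essentially the paper's argument: assemble the table by combining Kylling's differentials (\cref{lem:dtau1}) with the Hopf differentials of \cref{thm:nohopf} (observing $\rho = 0$ over $\f_q$ when $q\equiv 1\pmod 4$, so these read $d_2(h_b) = h_0 h_{b-1}^2$), and then verify completeness by enumerating the finitely many potential differentials that degree considerations permit and ruling them out or accounting for them. This is exactly what the paper does, and your points (i)--(iii) match the paper's concluding verification.

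Two local slips worth flagging. First, your argument that $\tau^n h_1$ is a permanent cycle because "it detects powers of $\tau$ times $\eta$" does not work: $\tau$ itself supports the Kylling differential $d_{\varepsilon(q)}(\tau) = u h_0^{\varepsilon(q)}$, so $\tau^n$ is not a permanent cycle for most $n$ and the class "$\tau^n\eta$" is not defined. Your fallback ("sparseness") is also not immediate, since the weight-matching targets $\tau^{n-1}h_0^{r+1}$ do exist in the relevant tridegree. The paper's actual argument is that each such target is either a power of $h_0$ detecting a power of $2$ (when $n = 1$) or must itself support a Kylling differential (when $n\geq 2$), hence cannot be a boundary; alternatively, a Leibniz argument using $h_0 h_1 = 0$ shows $d_2(\tau^n)h_1 = 0$, but one must be careful about which cycle representatives survive to later pages. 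Second, your concern (ii) about differentials from the $0$-line into the $1$-line is vacuous: $d_r$ raises filtration by $r\geq 2$, so nothing in filtration $1$ can be a boundary, and the $E_\infty$ $1$-line consists exactly of the permanent cycles — which is precisely what the paper verifies.
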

\begin{proof}
The first four families of differentials follow immediately from \cref{lem:dtau1} and \cref{lem:lowhopf}, and the remaining two by combining \cref{lem:dtau1} with \cref{thm:nohopf}. Note in particular that $d_2(\tau^n) \equiv 0 \pmod{u}$, and thus $d_2(\tau^n h_b) \neq 0$ for $b\geq 4$. The second table may be easily read off the first, provided we verify that we have not missed any differentials, i.e.\ that the classes listed in the second table are indeed permanent cycles. For degree reasons, the only possible nontrivial differentials on the classes $\tau^n h_b$ with $b\in\{1,2,3\}$ would be of the form
\begin{enumerate}
\item $d_r(\tau^n h_1)\overset{?}{=} \tau^{n-1}h_0^{r-1}$;
\item $d_2(\tau^n h_2) \overset{?}{=} u\tau^{n-1}h_0^2 h_2$; 
\item $d_2(\tau^n h_3) \overset{?}{=} u \tau^{n-1}h_0^2 h_3$; 
\item $d_3(\tau^n h_3) \overset{?}{=} u \tau^{n-1}h_0^3 h_3$;
\end{enumerate}
with $n\geq 1$. The first is impossible for $n=1$ as $h_0$ detects $2$ and thus no power of $h_0$ may be killed, and is impossible for $n\geq 2$ as the class $\tau^{n-1}h_0^{r-1}$ must support the differential given the first row of the first table.  The remaining three differentials may occur, and when they occur is accounted for in the given tables.
\end{proof}

\subsubsection{$q\equiv 3\pmod{4}$}

Now fix a prime-power $q$ such that $q\equiv 3 \pmod{4}$. We work over $F = \f_q$. Recall that $\Ext_{\f_q} = \Ext_{(2)}$.

\begin{lemma}\label{Lem:ExtFq01}
We may identify
\[
\Ext_{\f_q}^0 = \f_2[\tau^2,\rho,\tau\rho]/(\rho^2 = \rho\cdot (\tau\rho) = (\tau \rho)^2=0),
\]
and $\Ext_{\f_q}^1$ is the tensor product of $\f_2[\tau^2]$ with the following:
\[
\f_2\{h_0, \rho \tau \cdot h_0\}\oplus\f_2\{h_1,\rho\cdot h_1, \rho\tau\cdot h_1,\tau h_1\}\oplus\f_2\{h_b,\rho \cdot h_b, \rho\tau\cdot h_b : b\geq 2\}.
\]
\end{lemma}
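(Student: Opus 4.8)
The claim is a purely algebraic identification of $\Ext_{\f_q}^0$ and $\Ext_{\f_q}^1$ for $q \equiv 3 \pmod 4$, and by \cref{lem:mmodule} we have $\Ext_{\f_q} = \Ext_{(2)} = H_\ast(\Lambda^\r/(\rho^2))$. So the plan is to read everything off the computation of $\Ext_\r$ in \cref{thm:extadd}, using the long exact sequence (or equivalently \cref{rmk:modrho}) relating $\Ext_\r$ to $\Ext_{(2)}$. First I would handle $\Ext_{\f_q}^0$: since $\Ext_\r^0 = \f_2[\rho]$ with $\Sq^0$-action and $\m^\r = \f_2[\tau,\rho]$, we have $\Ext_{(2)}^0 = H_0(\Lambda^\r/(\rho^2))$, which by \cref{prop:retract} and the $\f_2[\rho]$-module structure is $\f_2[\tau,\rho]/(\rho^2)$. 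One then needs the differential $\delta(\tau) = \lambda_0\rho$ of \cref{Prop:DiffLambda}: working mod $\rho^2$, the element $\tau$ is \emph{not} a cycle (its differential $\lambda_0\rho$ is nonzero mod $\rho^2$), but $\tau^2$ is a cycle since $\delta(\tau^2) = 0$. More precisely, \cref{prop:difftau} gives $\delta(\tau^n) \equiv \lambda_0 \binom{n}{1}\tau^{n-1}\rho \pmod{\rho^2}$, so $\delta(\tau^n) \equiv n\lambda_0\tau^{n-1}\rho$, which vanishes mod $2$ iff $n$ is even; hence in $\Lambda^\r/(\rho^2)$ the cycles in $\Lambda[0]$ are spanned by $\tau^{2k}$, $\tau^{2k}\rho$, and $\tau^{2k+1}\rho$. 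This yields $\Ext_{\f_q}^0 = \f_2[\tau^2,\rho,\tau\rho]/(\rho^2, \rho\cdot\tau\rho, (\tau\rho)^2)$ as claimed.

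For $\Ext_{\f_q}^1 = \Ext_{(2)}^1$, I would use the long exact sequence
$\cdots \to \Ext_\r^0 \xrightarrow{\rho^2} \Ext_\r^0 \to \Ext_{(2)}^1 \to \Ext_\r^1 \xrightarrow{\rho^2} \Ext_\r^1 \to \cdots$
(shift induced by the cofiber sequence $\Lambda^\r \xrightarrow{\rho^2} \Lambda^\r \to \Lambda^\r/(\rho^2)$), together with \cref{rmk:modrho} which directly describes $H_\ast(\Lambda/(\rho^m))$ in terms of the "tags" $x_b \to \rho^{r(b)}x_{t(b)}$ found in the computation. From \cref{eq:extr1} (equivalently part (4) of \cref{thm:extadd}), $\Ext_\r^1 = \f_2[\rho]\{h_a : a\geq 1\} \oplus \bigoplus_{a\geq 0}\f_2[\rho]/(\rho^{2^a})\{\tau^{\floor{2^{a-1}(4n+1)}}h_a\}$. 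Reducing mod $\rho^2$: the $\rho$-torsion-free classes $h_a$ contribute $\f_2[\rho]/(\rho^2)\{h_a\}$ for $a\geq 1$, i.e.\ $\{h_a, \rho h_a\}$; the $\rho$-torsion classes $\tau^{\floor{2^{a-1}(4n+1)}}h_a$ contribute $\f_2[\rho]/(\rho^{\min(2,2^a)})$ for each $a,n$. For $a=0$ this is $\f_2\{\tau^{2n}h_0\}$, for $a\geq 1$ it is $\f_2[\rho]/(\rho^2)$; combined with the boundary contribution $\coker(\rho^2 : \Ext_\r^0 \to \Ext_\r^0)$ which by the $\Sq^0$-compatible structure gives the $\tau\rho$-multiples appearing in the statement. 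One then repackages the result by stem/$\tau$-parity: the $h_0$-column gives $\f_2[\tau^2]\{h_0, \rho\tau h_0\}$ (note $\tau\rho\cdot h_0$ appears from the boundary, and $\rho h_0$ itself does \emph{not} — this asymmetry between $h_0$ and $h_b$, $b\geq 1$, is exactly the subtlety the statement records); the $h_1$-column gives $\f_2[\tau^2]$ tensored with $\f_2\{h_1, \rho h_1, \rho\tau h_1, \tau h_1\}$; and for $b\geq 2$ the $h_b$-column gives $\f_2[\tau^2]\otimes\f_2\{h_b,\rho h_b, \rho\tau h_b\}$.

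The main obstacle I anticipate is \emph{bookkeeping}, not conceptual difficulty: one must carefully track which $\tau$-powers survive mod $\rho^2$ versus mod $\rho$, and correctly match the $\rho$-Bockstein filtration jumps coming from the tags in \cref{prop:b0} (the relation $\tau^{2^a(2m+1)} \to \lambda_{2^a-1}\tau^{\floor{2^{a-1}(4m+1)}}\rho^{2^a}$) against the mod-$\rho^2$ truncation. In particular the distinction between the $h_0$-column (where $\tau h_0$ is killed because $\delta(\tau^2\cdot(\text{something})) $ produces it, but $\rho h_0$ survives only as $\rho\tau h_0$ after multiplying by $\tau$) and the $h_b$-columns for $b\geq 1$ must be verified against \cref{prop:b1} rather than guessed. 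A clean way to organize this is to first write down $\Ext_{(1)} = \Ext_\c$ from \cref{prop:extc}, restricted to filtration $\leq 1$, then run the two-step $\rho$-Bockstein from $\Ext_\c[\rho]$ to $\Ext_{(2)}$, using only the filtration-$0$ and filtration-$1$ differentials from \cref{prop:b0,prop:b1}; the identification then follows by inspection. Since everything needed is already tabulated in \cref{thm:extadd} and \cref{rmk:modrho}, the proof should be short, amounting to "combine \cref{thm:extadd} with \cref{rmk:modrho} and \cref{lem:mmodule}(4), then reindex by $\tau$-powers."
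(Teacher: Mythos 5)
Your overall approach matches the paper's: the published proof is two lines, saying exactly that one either follows \cref{rmk:modrho} or runs the $\rho$-Bockstein spectral sequence $\Ext_{(1)}[\rho]/(\rho^2)\Rightarrow\Ext_{(2)}$ directly, noting that the only relevant differential is $d_1(\tau)=\rho h_0$. Your final paragraph, and your computation of $\Ext_{\f_q}^0$ via the cycles of $\delta$ on $\Lambda[0]/(\rho^2)$, reproduce this cleanly and reach the right answer.

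There is, however, a concrete error in your treatment of $\Ext_{\f_q}^1$ that you should fix. The long exact sequence associated to $0\rightarrow \Lambda^\r\xrightarrow{\rho^2}\Lambda^\r\rightarrow\Lambda^\r/(\rho^2)\rightarrow 0$ reads
\[
\cdots\rightarrow \Ext_\r^f\xrightarrow{\rho^2}\Ext_\r^f\rightarrow\Ext_{(2)}^f\xrightarrow{\partial}\Ext_\r^{f+1}\xrightarrow{\rho^2}\Ext_\r^{f+1}\rightarrow\cdots,
\]
so for $f=1$ the sequence is $\Ext_\r^1\rightarrow\Ext_\r^1\rightarrow\Ext_{(2)}^1\rightarrow\Ext_\r^2\rightarrow\Ext_\r^2$, \emph{not} $\Ext_\r^0\rightarrow\Ext_\r^0\rightarrow\Ext_{(2)}^1\rightarrow\Ext_\r^1\rightarrow\Ext_\r^1$ as you wrote. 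In particular, the ``boundary contribution'' is not $\coker(\rho^2\colon\Ext_\r^0\rightarrow\Ext_\r^0)$ but rather $\ker(\rho^2\colon\Ext_\r^2\rightarrow\Ext_\r^2)$, the $\rho^2$-torsion in $\Ext_\r^2$. This is where the $\rho\tau\cdot h_b$ classes actually come from: for instance the connecting map sends $\rho\tau h_2$ (represented by $\rho\lambda_3\tau$, with $\delta(\rho\lambda_3\tau)=\rho^2\lambda_3\lambda_0$) to $h_0h_2\in\Ext_\r^2$, which is $\rho$-torsion. Your subsequent analysis, which reduces $\Ext_\r^1$ modulo $\rho^2$ to produce the coker piece and then reindexes, is in fact using the \emph{correct} LES implicitly; the sentence about $\coker(\rho^2\colon\Ext_\r^0\rightarrow\Ext_\r^0)$ is the one false step and should simply be replaced by the identification of the ker piece with the $\rho^2$-torsion of $\Ext_\r^2$. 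Once that is done, your last paragraph already contains the paper's proof: $\rho$-Bockstein from $\Ext_\c[\rho]/(\rho^2)$ with the single differential $d_1(\tau)=\rho h_0$, which after $d_1$ produces exactly the asymmetry between the $h_0$ column, the $h_1$ column, and the $h_b$ columns ($b\geq 2$) because $h_0\cdot h_1=0$ while $h_0^2\neq 0$ and $h_0h_b\neq 0$ for $b\geq 2$.
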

\begin{proof}
This follows quickly from our computation of $\Ext_\r$, following the recipe of \cref{rmk:modrho}. Alternately, one may compute the $\rho$-Bockstein spectral sequence
\[
\Ext_{(1)}[\rho]/(\rho^2)\Rightarrow\Ext_{(2)}
\]
directly (cf.\ \cite{WO17}); the only relevant differential is $d_1(\tau) = \rho h_0$. 
\end{proof}

As in the previous case, powers of $\tau$ support arbitrarily long differentials.

\begin{lemma}\label{lem:dtau3}
There are differentials
\[
d_{\lambda(q)+s}(\tau^{2^{s+1}}) = \rho\tau^{2^{s+1}-1}h_0^{\lambda(q)+s}
\]
for all $s\geq 0$. On the other hand, $\rho\tau$ is a permanent cycle.
\end{lemma}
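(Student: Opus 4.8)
The plan is to obtain these differentials by naturality along the Hurewicz map $S^{0,0}\to H\z$, in exact parallel with the proof of \cref{lem:dtau1}. Kylling computes the $\f_q$-motivic Adams spectral sequence for $H\z$ in \cite{Kyl15}, and the analogue of \cite[Lemma 4.2.1]{Kyl15} for $q\equiv 3\pmod 4$ records exactly the differentials
\[
d_{\lambda(q)+s}(\tau^{2^{s+1}}) = \rho\tau^{2^{s+1}-1}h_0^{\lambda(q)+s}
\]
in that spectral sequence. Since the unit map induces a map of Adams spectral sequences carrying $\tau$ to $\tau$ and $h_0$ to $h_0$, it will suffice to transport each such differential back to the sphere.

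The first step is to see that $\tau^{2^{s+1}}$ is a $d_r$-cycle in the $\f_q$-motivic sphere spectral sequence for every $r<\lambda(q)+s$. It can never be hit, since a class $z$ with $d_r(z)=\tau^{2^{s+1}}$ would lie in stem $1$ and filtration $-r<0$. That $\tau^{2^{s+1}}$ supports no shorter differential will follow from the comparison itself: by the tridegree count of the next paragraph, a nonzero $d_r(\tau^{2^{s+1}})$ with $r<\lambda(q)+s$ would have nonzero image in $E_r^{H\z}$, contradicting Kylling's computation that $\tau^{2^{s+1}}$ first supports a differential on page $\lambda(q)+s$.

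The second step is the bookkeeping that makes the comparison tight. By \cref{lem:mmodule} we have $\Ext_{\f_q}=\Ext_{(2)}$, and from \cref{Lem:ExtFq01} (or the recipe of \cref{rmk:modrho}) one checks that in stem $-1$, weight $-2^{s+1}$, and filtration $r\geq 2$, the group $\Ext_{(2)}$ is one-dimensional over $\f_2$, spanned by $\rho\tau^{2^{s+1}-1}h_0^r$, which maps to the nonzero class of the same name in $E_r^{H\z}$. Hence, once $\tau^{2^{s+1}}$ is known to survive to page $\lambda(q)+s$, the element $d_{\lambda(q)+s}(\tau^{2^{s+1}})$ lies in a one-dimensional group and has nonzero image downstairs, so it equals $\rho\tau^{2^{s+1}-1}h_0^{\lambda(q)+s}$ on the nose.

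For the assertion that $\rho\tau$ is a permanent cycle, no comparison is needed: a differential $d_r(\rho\tau)$ would land in stem $-2$, whereas $\Ext_{\f_q}=\Ext_{(2)}$ is concentrated in stems $\geq -1$, being built from $\Ext_\c$ — which vanishes in negative stems — by a single $\rho$-Bockstein, so that at most one factor of $\rho$ (of stem $-1$) ever appears. Thus $\rho\tau$ supports no differential, and, not being hit for the same filtration reason as above, survives to $E_\infty$. The step I expect to require the most care is the tridegree count of the third paragraph: the whole argument hinges on the relevant $\Ext_{\f_q}$-groups in stem $-1$ being one-dimensional, so that naturality pins down the target of the differential exactly rather than merely up to other elements of the same tridegree.
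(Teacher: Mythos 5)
Your proposal is correct and takes the same route as the paper: both deduce the differentials from Kylling's computation of the $\f_q$-motivic Adams spectral sequence for $H\z$ via naturality along the unit map, and dispose of $\rho\tau$ by a degree argument (the paper cites \cite[Lemma 4.2.2]{Kyl15} as the precise analogue of Lemma 4.2.1 you were looking for). The paper's proof is a one-liner, so your tridegree bookkeeping showing that the relevant target groups in $\Ext_{(2)}$ are one-dimensional and detected in $E_*^{H\z}$ is exactly the content being compressed into ``follow by naturality.''
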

\begin{proof}
The class $\rho\tau$ is a permanent cycle for degree reasons. In \cite[Lemma 4.2.2]{Kyl15}, Kylling produces identical differentials in the $\f_q$-motivic Adams spectral sequence for $\f_q$-motivic $H\z$. The claimed differentials follow by naturality.
\end{proof}

\begin{thm}\label{thm:enf3}
For $q\equiv 3 \pmod{4}$, the $1$-line of the $\f_q$-motivic Adams spectral sequence supports the differentials given in the following table.
\begin{longtable}{llll}
\toprule
Source & $d_r$ & Target & Constraints\\
\midrule \endhead
\bottomrule \endfoot
$\tau^{2n}h_0$ & $d_{\lambda(q)+\nu_2(n)}$ & $\rho\tau^{2n-1}h_0^{\lambda(q)+\nu_2(n)+1}$ & $n\geq 1$ \\
$\tau^{4n+2}h_3$ & $d_3$ & $\rho\tau^{4n+1}h_0^3 h_3$ & $n\geq 0$, $\lambda(q) = 3$ \\
$\tau^{2n}h_b$ & $d_2$ & $\tau^{2n}(h_0+\rho h_1)h_{b-1}^2$ & $n\geq 1$, $b \geq 4$ \\
$\rho\tau^{2n+1}h_b$ & $d_2$ & $\rho\tau^{2n+1}h_0 h_{b-1}^2$ & $n\geq 0$, $b \geq 4$
\end{longtable}
After the $d_2$-differentials have been run, the $1$-line of the $E_3$-page of the $\f_q$-motivic Adams spectral sequence has a basis given by the classes in the following table.
\begin{longtable}{ll}
\toprule
Class & Constraints \\
\midrule \endhead
\bottomrule \endfoot
$h_0$ \\
$\rho^\epsilon\cdot\tau^{2n}h_b$ & $n\geq 0$, $\epsilon\in\{0,1\}$, $b\in\{1,2,3\}$\\
$\rho\tau^{2n+1}h_b$ & $n\geq 0$, $b\in\{0,1,2,3\}$ \\
$\rho^\epsilon\tau^{4n+1}h_1$ & $n\geq 0$, $\epsilon\in\{0,1\}$ \\
\hline
$\tau^{2n}h_0$ & $n \geq 1$ \\
\hline
$\rho\tau^{2n} h_b$ & $n\geq 0$, $b \geq 4$
\end{longtable}
Of these, all the classes in the first region are permanent cycles, with the exception that $\tau^{4n+2}h_3$ supports a $d_3$-differential if $\lambda(q) = 3$. The classes $\tau^{2n}h_0$ for $n \geq 1$ are not permanent cycles, and we leave open the fate of the classes $\rho\tau^{2n}h_b$ for $n\geq 1$ and $b\geq 4$.
\end{thm}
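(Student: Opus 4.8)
The plan is to run the same style of argument as in the proof of \cref{thm:enf1}, only with the coefficient ring $\Ext_{(1)}\{1,u\}$ replaced by $\Ext_{(2)}$. The inputs will be: the explicit description of $\Ext_{\f_q}^{\leq 1}$ in \cref{Lem:ExtFq01}; Kylling's arithmetic differentials on powers of $\tau$ recorded in \cref{lem:dtau3}; the uniform Hopf differentials $d_2(h_{a+1})=(h_0+\rho h_1)h_a^2$ of \cref{thm:nohopf}; the permanent cycles $h_0,h_1,h_2,h_3$ of \cref{lem:lowhopf}; and multiplicativity of the Adams spectral sequence. Throughout I would use the arithmetic fact that $q\equiv 3\pmod 4$ forces $\lambda(q)=\nu_2(q^2-1)\geq 3$, since $\nu_2(q-1)=1$ and $\nu_2(q+1)\geq 2$; this is what keeps the $d_2$-analysis clean.

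First I would produce the four families of differentials. Writing $2n=2^{s+1}(2m+1)$ with $s=\nu_2(n)$, the Leibniz rule applied to \cref{lem:dtau3} gives $d_{\lambda(q)+s}(\tau^{2n})=\rho\tau^{2n-1}h_0^{\lambda(q)+s}$ (using that $\tau^{2^{s+2}}$ is a $d_r$-cycle for $r\leq\lambda(q)+s$); multiplying by the permanent cycle $h_0$ yields the first family. Multiplying the $s=0$ case instead by $h_3$ produces $\rho\tau^{4n+1}h_0^{\lambda(q)}h_3$, and $h_0^{\lambda(q)}h_3$ is nonzero exactly when $\lambda(q)=3$ — the $h_0$-tower on $h_3$ has length $4$, equivalently $16\sigma=0$ — which isolates the $d_3$ on $\tau^{4n+2}h_3$ of the second family, and shows $\tau^{4n+2}h_3$ carries no such differential when $\lambda(q)\geq 4$. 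For the last two families I would use that, since $\lambda(q)\geq 3$, every $\tau^{2n}$ is a $d_2$-cycle and $\rho\tau$ is a permanent cycle by \cref{lem:dtau3}, so $\tau^{2n}$ and $\rho\tau^{2n+1}$ are $d_2$-cycles; combining this with $d_2(h_b)=(h_0+\rho h_1)h_{b-1}^2$ for $b\geq 4$ and the vanishing $\rho^2=0$ in $\Ext_{(2)}$, which kills every $\rho h_1$-contribution, gives $d_2(\tau^{2n}h_b)=\tau^{2n}(h_0+\rho h_1)h_{b-1}^2$ and $d_2(\rho\tau^{2n+1}h_b)=\rho\tau^{2n+1}h_0h_{b-1}^2$.

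Next I would read off the $E_3$-page and the permanent cycles. The differentials above should be the only $d_2$-differentials out of the $1$-line: by \cref{Lem:ExtFq01} any target sits in filtration $3$ and one stem lower, and sparseness of $\Ext_{\f_q}$ together with weight bookkeeping (recalling $|h_b|=(2^b-1,1,2^{b-1})$, $|\rho|=(-1,0,-1)$, $|\tau|=(0,0,-1)$, and $\rho^2=0$, so e.g. $\rho h_0=0$) rules out anything further — in particular $h_0$-power classes can never be hit, since $1-\epsilon$, detected by $h_0$, has infinite order in $\pi_{0,0}^{\f_q}$. That produces the $E_3$ table. For the permanent-cycle statements, the first-region classes built from $h_1,h_2,h_3$ are permanent cycles, which for stems $s\leq 7$ I would verify by comparison with the classical Adams spectral sequence through Betti realization over $\overline{\f_q}$ (\cref{cor:betti}, \cref{prop:lefschetz}), under which they map to $\tau$-power multiples of $\eta,\nu,\sigma$; combined with \cref{lem:lowstem} and the arithmetic differentials this leaves no room for an unexpected differential, except precisely the $d_3$ on $\tau^{4n+2}h_3$ when $\lambda(q)=3$ found above. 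The classes $\tau^{2n}h_0$ with $n\geq 1$ support the long differentials of the first family and so are not permanent cycles, while the classes $\rho\tau^{2n}h_b$ with $n\geq 1$ and $b\geq 4$ are $d_2$-cycles (all $\rho h_1$-terms vanish) whose fate would require higher differentials analogous to those in the real case \cref{thm:r}, and these I would leave open.

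The hard part will be this last step: confirming that the $E_3$-page is exactly as stated — that no $d_2$-differential out of the $1$-line has been overlooked — and pinning down precisely which first-region classes survive to $E_\infty$. This forces one to juggle, simultaneously, sparseness and weight constraints in $\Ext_{(2)}$, the known structure of the classical Adams spectral sequence imported via Betti realization over $\overline{\f_q}$, and Kylling's arithmetic differentials; the classes $\rho\tau^{2n}h_b$ with $b\geq 4$ are exactly the family this combination does not determine, and they are what gets left open.
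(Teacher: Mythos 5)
Your strategy matches the paper's closely — the four families of differentials come out of the Leibniz rule applied to Kylling's differentials (\cref{lem:dtau3}) and the uniform Hopf differentials (\cref{thm:nohopf}), and the identification of the $E_3$-page is then just bookkeeping in $\Ext_{(2)}$. Your observations that $\lambda(q)\geq 3$, that $\tau^2$ and $\rho\tau$ are therefore $d_2$-cycles, and that $\rho^2=0$ kills all $\rho h_1$-contributions for $b\geq 4$ are exactly the right inputs for the first table.

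However, the last step — verifying that the first-region classes are permanent cycles — has a genuine gap as you've written it. You propose to verify this ``by comparison with the classical Adams spectral sequence through Betti realization over $\overline{\f_q}$.'' But that comparison factors through base change $\Ext_{\f_q}\to\Ext_{\overline{\f_q}}=\Ext_{(1)}$, which sends $\rho$ to zero. The dangerous potential differentials on $\tau^{2n}h_b$ ($n\geq 1$, $b\in\{2,3\}$) are exactly
\[
d_2(\tau^{2n}h_b)\overset{?}{=}\rho\tau^{2n-1}h_0^2h_b\qquad\text{and}\qquad d_3(\tau^{2n}h_3)\overset{?}{=}\rho\tau^{2n-1}h_0^3h_3,
\]
both of whose targets are $\rho$-multiples and hence invisible after base change to $\overline{\f_q}$; combining with \cref{lem:lowstem} (which describes $\Ext_{(1)}$, not $\Ext_{(2)}$) does not repair this. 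So Betti realization cannot rule them out, and the phrase ``leaves no room for an unexpected differential'' is not justified by the tools you cite. What actually rules out the first family is the Leibniz argument you already deployed for $b\geq 4$: since $\lambda(q)\geq 3$, $\tau^{2n}$ is a $d_2$-cycle, $h_b$ is a permanent cycle, so $\tau^{2n}h_b$ is a $d_2$-cycle. For the second, $d_3(\tau^{2n}h_3)=d_3(\tau^{2n})h_3$, which is nonzero only when $\lambda(q)+\nu_2(n)=3$, i.e.\ $\lambda(q)=3$ and $n$ odd — reproducing exactly the exception in the theorem, together with your correct observation that $h_0^4h_3=0$ kills the target when $\lambda(q)\geq 4$. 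Replace the Betti realization appeal with this direct Leibniz/sparseness analysis in $\Ext_{(2)}$ and the proof is complete.
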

\begin{proof}
The given differentials follow quickly by combining \cref{thm:nohopf} with \cref{lem:dtau3}, and this accounts for all $d_2$-differentials. Note in particular that $\tau^2$ is a $d_2$-cycle as $\lambda(q)\geq 3$ whenever $q\equiv 3 \pmod{4}$. Thus the given $E_3$-page may be produced by linearly propagating the differentials of \cref{thm:nohopf}. Note also that $d_2(\rho\tau^{2n}h_b) =  \rho\tau^{2n}(h_0+\rho h_1)h_{b-1}^2 = 0$ for all $n\geq 0$ and $b\geq 4$, yielding the clases in final row of the second table.

It remains only to verify that the permanent cycles provided are indeed permanent cycles. As $\rho$ and $\rho\tau$ are permanent cycles for degree reasons, we may reduce to considering only the classes $\tau^{2n}h_b$, $\rho\tau^{2n+1}h_0$, and $\tau^{4n+1}h_1$ for $b\in\{1,2,3\}$ and $n\geq 0$. For degree reasons, the only possible nontrivial differentials supported by these classes would be of the form
\begin{enumerate}
\item $d_2(\tau^{2n}h_b) \overset{?}{=} \rho\tau^{2n-1}h_0^2 h_b$ for $b\in\{2,3\}$;
\item $d_3(\tau^{2n}h_3) \overset{?}{=} \rho\tau^{2n-1}h_0^3 h_3$;
\end{enumerate}
with $n\geq 1$. The first does not hold, as $\tau^2$ and $h_b$ are $d_2$-cycles. The second holds only when $\lambda(q) = 3$, and this is accounted for in the theorem statement.
\end{proof}

\subsection{The \texorpdfstring{$p$}{p}-adic rationals}

We now work over $F = \q_p$, the $p$-adic rationals. This is very similar to the case where $F = \f_q$, only where the additional input necessary to understand differentials out of $\Ext_{\q_p}^0$ comes from work of Ormsby \cite{Orm11} for $p$ odd and Ormsby--{\O}stv{\ae}r \cite{OO13} for $p=2$. The case where $p$ is odd turns out to entirely reduce to what we have already done.

\begin{lemma}\label{lem:qpdiff}
There are the following differentials in the $\q_p$-motivic Adams spectral sequence.
\begin{enumerate}
\item If $p\equiv 1 \pmod{4}$, then $d_{a(q)+s}(\tau^{2^s}) = u \tau^{2^s-1}h_0^{a(q)+s}$;
\item If $p\equiv 3 \pmod{4}$, then $d_{\lambda(q)+s}(\tau^{2^{s+1}}) = \rho\tau^{2^{s+1}-1}h_0^{\lambda(q)+s}.$
\end{enumerate}
\end{lemma}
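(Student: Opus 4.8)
The statement to prove is \cref{lem:qpdiff}, producing differentials $d_{\varepsilon(q)+s}(\tau^{2^s}) = u\,\tau^{2^s-1}h_0^{\varepsilon(q)+s}$ when $p\equiv 1\pmod 4$, and $d_{\lambda(q)+s}(\tau^{2^{s+1}}) = \rho\,\tau^{2^{s+1}-1}h_0^{\lambda(q)+s}$ when $p\equiv 3\pmod 4$. The plan is to deduce these exactly as in the finite field case (\cref{lem:dtau1} and \cref{lem:dtau3}): namely, by pulling the differentials back from the $\q_p$-motivic Adams spectral sequence computing the homotopy of $\q_p$-motivic $H\mathbb{Z}$, which was computed by Ormsby \cite{Orm11} when $p$ is odd and by Ormsby--{\O}stv{\ae}r \cite{OO13} when $p=2$.

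First I would recall the naturality setup. The unit map $S^{0,0}\rightarrow H\mathbb{Z}^{\q_p}$ (the $(2,\eta)$-completed $\q_p$-motivic $H\mathbb{Z}$, with its mod $2$ Adams spectral sequence) induces a map of motivic Adams spectral sequences. On $E_2$-pages this is the map $\Ext_{\q_p}\rightarrow \Ext_{\ca^{\q_p}/(\ca^{\q_p}\beta)}(\m^{\q_p},\m^{\q_p})$ or, more concretely, the map induced by the quotient of the Steenrod algebra by the ideal generated by the Bockstein; on the relevant low-degree classes $\tau$, $\rho$, $u$, $h_0$ this map is injective (it is an isomorphism in internal degrees low enough to see the classes in question). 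Thus a differential in the source spectral sequence with target a class detected injectively forces the same differential in the $\q_p$-motivic sphere's Adams spectral sequence, provided the source class $\tau^{2^s}$ (resp.\ $\tau^{2^{s+1}}$) is a permanent cycle up to the relevant page in the sphere's spectral sequence—which it is, since the only possible earlier differentials on $\tau^{2^s}$ have targets of the form $\tau^{2^s-1}h_0^r$ (resp.\ $\rho\tau^{\dots}h_0^r$) with $r<\varepsilon(q)+s$ (resp.\ $r<\lambda(q)+s$), and these are ruled out by comparison with the $H\mathbb{Z}$-computation, which shows $\tau^{2^s}$ survives to exactly the page $\varepsilon(q)+s$.

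Second, I would cite the relevant lemmas of Ormsby and Ormsby--{\O}stv{\ae}r. For $p$ odd, Ormsby's computation of the $\q_p$-motivic Adams spectral sequence for $H\mathbb{Z}$ \cite{Orm11} contains differentials of precisely the stated form; for $p\equiv 1\pmod 4$ the relevant valuation invariant $\varepsilon(q) = \nu_2(p-1)$ governs when $\tau$ first supports a differential hitting a $u h_0$-multiple, and for $p\equiv 3\pmod 4$ one uses $\lambda(q)=\nu_2(p^2-1)$ and the $\rho$-multiple targets, exactly as in \cref{lem:dtau3}. For $p=2$, Ormsby--{\O}stv{\ae}r's computation \cite{OO13} of the $\q_2$-motivic $H\mathbb{Z}$ Adams spectral sequence provides the same family (here $2\equiv 2\pmod 4$, so one is in the ``$\equiv 3$'' branch of the statement with $\lambda(2) = \nu_2(3) = \ldots$; I would double-check the precise numerology against \cite{OO13} but the structural form is identical). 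The needed differentials are read off directly from those references and transported by the unit map.

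The main obstacle—really the only nontrivial point—is the bookkeeping in matching the arithmetic invariants $\varepsilon(q)$, $\lambda(q)$ to the page numbers appearing in Ormsby's and Ormsby--{\O}stv{\ae}r's tables, and confirming that the relevant source classes $\tau^{2^s}$ and $\tau^{2^{s+1}}$ are not the targets of any hidden earlier differential in the $\q_p$-motivic sphere spectral sequence (as opposed to the $H\mathbb{Z}$ one). This is handled by the same degree/filtration sparseness argument used in the proofs of \cref{lem:dtau1} and \cref{lem:dtau3}: in the relevant internal degrees, $\Ext_{\q_p}$ is small enough (it is a sum of copies of $\Ext_{(1)}$ and $\Ext_{(2)}$, described via \cref{lem:mmodule}, and one uses \cref{lem:lowstem} for the low-stem part) that the only classes available to hit $\tau^{2^s}$ below page $\varepsilon(q)+s$ would have to be $h_0$-power multiples of lower powers of $\tau$, and comparison with $H\mathbb{Z}$ shows these do not occur. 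Once that is in place the differentials follow immediately by naturality, completing the proof.
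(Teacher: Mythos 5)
Your plan matches the paper's proof: the paper cites Ormsby's Theorem~5.2 in \cite{Orm11}, which produces the identical differentials in the $\q_p$-motivic Adams spectral sequence for $BP\langle 0\rangle$ (the mod-$2$ reduction of which is what you call $H\mathbb{Z}$), and then invokes naturality along the unit map $S^{0,0}\to BP\langle 0\rangle$. Your elaboration---checking injectivity on the relevant part of the $E_2$-page and ruling out earlier differentials on $\tau^{2^s}$ by sparseness---is a reasonable fleshing-out of what the paper compresses into ``follows by naturality.''

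One scope correction, though: both clauses of the lemma ($p\equiv 1 \pmod 4$ and $p\equiv 3 \pmod 4$) concern only odd primes $p$, and the paper accordingly cites only \cite{Orm11}. Your paragraph on $p=2$ via \cite{OO13}, and the attempt to force $2$ into the ``$\equiv 3$'' branch (you yourself note $2\not\equiv 3 \pmod 4$), is out of scope for this lemma; the $\q_2$ case is handled by a separate lemma in the paper, where the $\m^{\q_2}$-module structure ($\m_{(3)}\{1\}\oplus\m_{(1)}\{u,\pi\}$) and the resulting differentials are genuinely different from the odd-prime pattern, so it cannot simply be absorbed into one of the two stated cases. For the odd primes the argument is correct as you have it.
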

\begin{proof}
In \cite[Theorem 5.2]{Orm11}, Ormsby produces identical differentials in the $\q_p$-motivic Adams spectral sequence for $BP\langle 0 \rangle$. The claimed differentials follow by naturality.
\end{proof}

We may summarize the situation as follows.

\begin{thm}\label{thm:qpf}
Fix an odd prime $p$, and consider the facts outlined about the $\f_p$-motivic Adams spectral sequence in \cref{thm:enf1} and \cref{thm:enf3}. The same facts hold for the $\q_p$-motivic Adams spectral sequence upon tensoring with $\f_p\{1,\pi\}$.
\end{thm}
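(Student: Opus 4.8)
The plan is to exhibit the $\q_p$-motivic Adams spectral sequence as the $\f_p$-motivic one ``tensored with $\f_2\{1,\pi\}$'' at the level of $E_2$-pages, and then to observe that the two families of input differentials driving the $\f_p$-motivic computation have literal counterparts over $\q_p$, so that the bookkeeping in the proofs of \cref{thm:enf1} and \cref{thm:enf3} may be repeated verbatim.

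\textbf{Step 1: the structural isomorphism.} For $p$ an odd prime, \cref{lem:mmodule} (together with \cref{Exm:M2F}) shows $\m^{\q_p} = \m^{\f_p}\oplus\pi\cdot\m^{\f_p}$ as $\m^{\f_p}$-modules, with $|\pi|=(-1,-1)$, where $\m^{\f_p}$ includes into $\m^{\q_p}$ as a subalgebra; for $p\equiv 3\pmod 4$ one uses the relation $\pi^2=\rho\pi$ to see that $\pi\cdot\m^{\f_p}$ is a complement. Since $\ca^\r$ acts on $\m^{\q_p}$ only through multiplication by $\tau$ and $\rho$ and through the Steenrod squares, and $\Sq^{\geq 1}\pi=0$ because $\pi\in\m^{\q_p}_0$, each of these summands is an $\ca^\r$-submodule and multiplication by $\pi$ is an $\ca^\r$-linear isomorphism $\m^{\f_p}\xrightarrow{\sim}\pi\cdot\m^{\f_p}$; hence $\m^{\q_p}\cong\m^{\f_p}\otimes_{\f_2}\f_2\{1,\pi\}$ as $\ca^\r$-modules. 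Applying $\Ext_{\ca^\r}(\m^\r,-)$ and using the identification $\Ext_F\cong\Ext_{\ca^\r}(\m^\r,\m^F)$ of \cref{rmk:runiv} gives
\[
\Ext_{\q_p}\cong\Ext_{\f_p}\otimes_{\f_2}\f_2\{1,\pi\}
\]
as $\Ext_{\f_p}$-algebras, the $1$-summand being the image of $\Ext_{\f_p}$; at the chain level this is just $\Lambda^{\q_p}\cong\Lambda^{\f_p}\otimes_{\f_2}\f_2\{1,\pi\}$ with $\delta(\pi)=0$.

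\textbf{Step 2: matching the differentials.} The proofs of \cref{thm:enf1} and \cref{thm:enf3} deduce the full list of differentials out of the $1$-line, the resulting $E_3$- and $E_\infty$-pages, and the permanent cycles from: (i) the Hopf differentials of \cref{thm:nohopf}; (ii) Kylling's differentials on powers of $\tau$ from \cref{lem:dtau1} (when $p\equiv 1\pmod 4$) or \cref{lem:dtau3} (when $p\equiv 3\pmod 4$); and (iii) the permanence of the low Hopf classes from \cref{lem:lowhopf}; together with the Leibniz rule, the structure of $\Ext^{\leq 1}_{\f_p}$, and sparseness. All of this transfers to $\q_p$ with no change: \cref{thm:nohopf} and \cref{lem:lowhopf} hold over any field; \cref{lem:qpdiff} supplies exactly the same $\tau$-power differentials, with the same constants $\varepsilon(p)=\nu_2(p-1)$ and $\lambda(p)=\nu_2(p^2-1)$, which depend only on the prime $p$; and the classes $u$, $\pi$ (and, for $p\equiv 3$, $\tau h_1$) are permanent cycles for degree reasons, since $\Ext_{\q_p}$ vanishes in the bidegrees that a differential out of them could hit. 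Because every target appearing in (i)--(iii) lies in the $1$-summand, the Leibniz rule forces $d_r$ to carry $\Ext_{\f_p}\{1\}$ into itself and $\pi\cdot\Ext_{\f_p}$ into $\pi$ times the $1$-summand; thus the $1$-summand is a sub-spectral-sequence canonically isomorphic to the $\f_p$-motivic Adams spectral sequence, and the entire spectral sequence is its tensor product with $\f_2\{1,\pi\}$. Running the arguments of the proofs of \cref{thm:enf1} and \cref{thm:enf3} in this tensored setting — the sparseness estimates being insensitive to the degree shift by $|\pi|=(-1,-1)$ — then yields \cref{thm:qpf}.

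\textbf{The main obstacle.} The real work is keeping the reduction honest: one must rule out any ``exotic'' differential linking the $1$- and $\pi$-summands, which amounts to checking that $\Ext^{\leq 1}_{\q_p}$ is generated over $\Ext^0_{\q_p}$ by the $h_a$ together with the finitely many extra classes already handled in \cref{thm:enf1}/\cref{thm:enf3} (namely $\tau h_1$ when $p\equiv 3$), so that the derivation property of $d_r$ alone pins it down; and to confirming that these extra classes together with $\pi$ are genuinely permanent cycles, and that the constants in \cref{lem:qpdiff} agree with those in \cref{lem:dtau1} and \cref{lem:dtau3}. None of this is deep, but it is the part that needs care.
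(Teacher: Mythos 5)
Your proposal is correct and takes the same route as the paper's (very terse) proof: identify $\pi$ as a permanent cycle for degree reasons, observe that the $\tau$-power differentials of \cref{lem:qpdiff} are literally those of \cref{lem:dtau1} and \cref{lem:dtau3}, and then run the arguments of \cref{thm:enf1}, \cref{thm:enf3} verbatim in the $\pi$-twinned copy. You supply substantially more detail than the published proof, in particular the $\ca^\r$-linearity of multiplication by $\pi$ (via $\Sq^{\geq 1}\pi=0$) and the explicit ruling out of cross-differentials between the two summands; the only minor imprecision is calling $\Ext_{\q_p}\cong\Ext_{\f_p}\otimes_{\f_2}\f_2\{1,\pi\}$ an isomorphism ``as $\Ext_{\f_p}$-algebras'' --- for $p\equiv 3\pmod 4$ the relation $\pi^2=\rho\pi$ means the algebra structure is a twisted extension rather than a trivial tensor product, though this plays no role since the differential analysis is a module-level argument and the $1$-summand is a genuine sub-spectral-sequence either way.
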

\begin{proof}
The class $\pi$ is a permanent cycle for degree reasons, and the differentials given in \cref{lem:qpdiff} agree with those given in \cref{lem:dtau1} and \cref{lem:dtau3}. All of the work carried out over $\f_p$ then goes through verbatim, only where everything in sight has a twin copy indexed by $\pi$.
\end{proof}

\begin{remark}
The somewhat awkward phrasing of \cref{thm:qpf} is necessary as we did not wish to repeat two verbatim copies of both \cref{thm:enf1} and \cref{thm:enf3}, but we have not shown that the $1$-line of the $\q_p$-motivic Adams spectral sequence is a direct sum of two copies of the $1$-line of the $\f_p$-motivic Adams spectral sequence. The possible failure of this arises from the fact that when $p \equiv 3 \pmod{4}$, the classes $\rho\tau^{2n}h_b$ for $b\geq 4$ could support different higher differentials over $\f_p$ and $\q_p$. 
\tqed
\end{remark}

The case where $p = 2$ requires a separate analysis. Recall that
\[
\Ext_{\q_2} = \Ext_{(3)}\{1\}\oplus\Ext_{(1)}\{u,\pi\}.
\]

\begin{lemma}
We may identify
\[
\Ext_{(3)}^0 = \f_2(\tau^4,\rho\tau^2,\rho^2\tau,\rho^2\tau^3,\rho)\subset\f_2[\tau,\rho]/(\rho^3),
\]
and $\Ext_{(3)}^1$ is the tensor product of $\f_2[\tau^4]$ with the direct sum of the following modules:
\begin{gather*}
\f_2\{h_0,\tau^2 h_0,\rho^2\tau h_0,\rho^2\tau^3 h_0\}\\
\f_2\{1,\rho\}\otimes\f_2\{\tau h_1\}\oplus\f_2\{\rho\tau^3 h_1\}\oplus\f_2\{1,\rho,\rho^2,\rho\tau^2,\rho^2\tau^2,\rho^2\tau^3\}\otimes\f_2\{h_1\}  \\
\f_2\{1,\rho,\rho^2,\rho^2\tau,\rho^2\tau^3,\rho\tau^2,\rho^2\tau^2\}\otimes\f_2\{h_b:b\geq 2\}.
\end{gather*}
\end{lemma}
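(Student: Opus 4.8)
The statement to be proved is the identification of $\Ext_{(3)}^0$ and $\Ext_{(3)}^1$ as $\f_2[\tau^4]$-modules. Since $\Ext_{(3)} = H_\ast(\Lambda^\r/(\rho^3))$, this is a computation that should be read off the $\rho$-Bockstein machinery developed in \cref{ssec:extpreliminaries}, specifically \cref{rmk:modrho}, which already explains how to extract $H_\ast(\Lambda^\r/(\rho^m))$ from the relation ``$\rightarrow$'' computed in \cref{prop:b0}, \cref{prop:b1}, and (for filtration $3$ input) \cref{prop:b2}. The plan is to specialize $m = 3$ in \cref{rmk:modrho} and carefully enumerate the three types of contributions: (i) for each multiplicative generator $\alpha_s$ of $\Ext_\c$ lifting a class in $\Ext_\dcl$ (here $s \in S_0$), a copy of $\f_2[\rho]/(\rho^3)$; (ii) for each arrow $x_b \to \rho^{r(b)} x_{t(b)}$ with $b$ landing in filtration $1$, a summand $\f_2[\rho]/(\rho^{\min(3, r(b))})$; and (iii) for each such arrow, a summand $\f_2[\rho]/(\rho^{3 - \max(0, 3 - r(b))}) = \f_2[\rho]/(\rho^{\min(3,r(b))})$ generated by $\rho^{\max(0,3-r(b))}(x_b + u_b)$.

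For $\Ext_{(3)}^0$, only filtration-zero input is relevant: the generators are $1$ and the powers of $\tau$ that are cycles in $\Lambda^\r/(\rho^3)$, together with the $\rho$- and $\rho^2$-multiples coming from arrows out of $B_0[0]$. By \cref{prop:b0}, $\delta(\tau^{2^a(2m+1)})$ first hits a term $\rho^{2^a}$-divisible, so $\tau^{2^a(2m+1)}$ survives mod $\rho^3$ only when $2^a \geq 3$, i.e. $a \geq 2$; thus $\tau^4$ generates the surviving polynomial part. The classes $\rho, \rho\tau^2, \rho^2\tau, \rho^2\tau^3$ arise as the ``$\rho^{\max(0,3-r)}(x_b + u_b)$'' generators from the arrows $\tau \to \lambda_0 \rho$ (giving $r = 1$, contributing $\rho^2 \cdot \tau = \rho^2\tau$ and $\rho^2\tau^3$ from $\tau^3$) and $\tau^2 \to \lambda_1 \rho^2$ (giving $r = 2$, contributing $\rho \cdot \tau^2 = \rho\tau^2$), plus $\rho$ itself surviving since it is a cycle killed by $\rho^2$ in the quotient. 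Reconciling this bookkeeping with the claimed description $\f_2(\tau^4, \rho\tau^2, \rho^2\tau, \rho^2\tau^3, \rho) \subset \f_2[\tau,\rho]/(\rho^3)$ — verifying that these are exactly the monomials $\rho^i \tau^j$ with $i < 3$ that are nonzero $d_2, d_3$-targets-or-survivors in the $\rho$-Bockstein — is the first substantive step.

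For $\Ext_{(3)}^1$, I would run the same extraction using \cref{rmk:modrho} with the filtration-$0$ arrows of \cref{prop:b0} and the filtration-$1$ arrows of \cref{prop:b1} feeding filtration $2$; concretely, $\Ext_{(3)}^1$ receives: (a) the $\f_2[\rho]/(\rho^3)$-summands on the doubled-classical generators $h_a$ for $a \geq 1$ (note $h_0$ is not in the image of $\thetatilde$, so it contributes via a different mechanism — as a $d$-source generator); (b) the ``$z_b$'' generators $\rho^{-r(b)}\delta(x_b + u_b)$ with cycle representatives, restricted to $r(b)$ and $\min(3, r(b))$; and (c) the ``$\rho^{\max(0,3-r(b))}(x_b+u_b)$'' generators. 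Matching the torsion exponents from \cref{prop:b0} ($r = 2^a$ for the arrow out of $\tau^{2^a(2m+1)}$) against $\min(3, 2^a)$ explains why $h_0$ acquires the coefficients $\{1, \tau^2, \rho^2\tau, \rho^2\tau^3\}$ (exponent-$1$ arrow truncated: the $\tau$-powers $\tau^{4n+2}$ etc. with $r=1$ survive as $\rho^2$-multiples; $\tau^2$ with $r = 2$ survives as a cycle of $\rho^3$-torsion), $h_1 = \thetatilde(\lambda_0)$ gets a full $\f_2[\rho]/(\rho^3)$ plus extra low-$\rho$ torsion from the arrows into its degree, and $h_b$ for $b \geq 2$ gets the seven-element coefficient module listed. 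The main obstacle will be the precise accounting for $h_1$ and $h_0$: these sit in the lowest stems where several $\rho$-Bockstein differentials of lengths $1, 2, 3$ interact, and where the doubling splitting of \cref{prop:retract} must be combined by hand with the truncated torsion from \cref{prop:b0}; I would cross-check the final answer against the $\rho$-Bockstein spectral sequence $\Ext_{(1)}[\rho]/(\rho^3) \Rightarrow \Ext_{(3)}$ directly (as suggested in the proof of \cref{Lem:ExtFq01}), whose only input differential in this range is $d_1(\tau) = \rho h_0$ together with its $\theta$-iterates, and against \cref{lem:lowstem} for $\Ext_{(1)}$ in low stems, to confirm no summand has been miscounted.
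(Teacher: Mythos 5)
Your proposal follows the paper's proof essentially verbatim: both extract $\Ext_{(3)}$ from the recipe of \cref{rmk:modrho} (equivalently, run the truncated $\rho$-Bockstein spectral sequence $\Ext_{(1)}[\rho]/(\rho^3)\Rightarrow\Ext_{(3)}$), and you correctly identify the only relevant input differentials as $d_1(\tau)=\rho h_0$ and its $\theta$-iterate, which is precisely $d_2(\tau^2)=\rho^2\tau h_1$ as the paper states. One small slip: the arrow out of $\tau^2$ from \cref{prop:b0} (with $a=1$, $m=0$) is $\tau^2\to\lambda_1\tau\rho^2$, not $\lambda_1\rho^2$, though this does not affect the $r=2$ bookkeeping you do with it.
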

\begin{proof}
As with \cref{Lem:ExtFq01}, this follows from our computation of $\Ext_\r$ via the recipe in \cref{rmk:modrho}, or via the $\rho$-Bockstein spectral sequence; here, the relevant $\rho$-Bockstein differentials are $d_1(\tau) = \rho h_0$ and $d_2(\tau^2) = \rho^2 \tau h_1$.
\end{proof}

\begin{lemma}\label{lem:dq2}
The classes
\[
\tau^{4n+1}\rho^2,\qquad\tau^{2n}\rho,\qquad\tau^{4n+3}\rho^2,\qquad \pi\tau^n,\qquad u,\qquad u\tau^{2n+1}
\]
are permanent cycles. There are differentials
\[
d_{4+r}(\tau^{2^{r+2}}) = \pi\tau^{2^{r+2}-1}h_0^{4+r},\quad d_{3+r}(u\tau^{2^{r+1}}) = \rho^2\tau^{2^{r+1}-1}h_0^{3+r},\quad d_{3+r}(\tau^{2^{r+1}}h_0) = \pi\tau^{2^{r+1}-1}h_0^{4+r}
\]
for all $r\geq 0$.
\end{lemma}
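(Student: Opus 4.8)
The statement breaks into two parts: identifying certain classes as permanent cycles, and producing the three infinite families of differentials. For the permanent cycles, each of $\tau^{4n+1}\rho^2$, $\tau^{2n}\rho$, $\tau^{4n+3}\rho^2$, $\pi\tau^n$, $u$, and $u\tau^{2n+1}$ lives in $\Ext_{\q_2}^0$, and the plan is simply to observe that $\Ext_{\q_2}^0$ is concentrated in a region where no differential can enter or leave for degree reasons: a nonzero $d_r$ out of filtration $0$ would land in $\Ext_{\q_2}^r$ at the same stem, and the structure of $\Ext_{\q_2}$ we just recorded (together with \cref{lem:mmodule} writing $\Ext_{\q_2} = \Ext_{(3)}\{1\}\oplus\Ext_{(1)}\{u,\pi\}$ and the description of $\Ext_{(3)}^0$ above) shows these targets vanish. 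So this half is a direct inspection argument, entirely parallel to the observation that $\rho\tau$ is a permanent cycle over $\f_q$ in \cref{lem:dtau3}, or that $\pi$ is one over $\q_p$ in \cref{thm:qpf}.

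For the differentials, the strategy is comparison with the $\q_2$-motivic Adams spectral sequence computing the homotopy of truncated Brown--Peterson spectra, following exactly the pattern used for \cref{lem:dtau1}, \cref{lem:dtau3}, and \cref{lem:qpdiff}. Concretely, the relevant computations over $\q_2$ were carried out by Ormsby--{\O}stv{\ae}r in \cite{OO13}: they determine the $\q_2$-motivic Adams spectral sequences for $BP\langle 0\rangle = H\z$, and one reads off differentials of the shape $d_{4+r}(\tau^{2^{r+2}}) = \pi\tau^{2^{r+2}-1}h_0^{4+r}$, $d_{3+r}(u\tau^{2^{r+1}}) = \rho^2\tau^{2^{r+1}-1}h_0^{3+r}$, and $d_{3+r}(\tau^{2^{r+1}}h_0) = \pi\tau^{2^{r+1}-1}h_0^{4+r}$ from their tables. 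The map from the motivic sphere to $H\z$ (or $BP\langle 0\rangle$) induces a map of Adams spectral sequences which on $E_2$-pages is the evident map $\Ext_{\q_2}\to\Ext_{H\z}$; naturality of differentials then transports the $H\z$-differentials to the sphere. One has to check that the relevant classes $\tau^{2^{r+2}}$, $u\tau^{2^{r+1}}$, $\tau^{2^{r+1}}h_0$ map to nonzero classes over $H\z$ detecting the stated targets, which is immediate from the low-filtration structure of $\Ext_{H\z}$ and the behavior of $\m^{\q_2}\to H^{\ast,\ast}(H\z)$; this is the same bookkeeping as in Kylling's and Ormsby's arguments cited above.

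The main obstacle, such as it is, is purely organizational rather than conceptual: one must correctly match the naming of the classes $\pi$, $u$, $\rho$ in our conventions (via \cref{lem:mmodule}, with $u_p$-type conventions as in \cref{lem:hassecoeff}) with the generators appearing in Ormsby--{\O}stv{\ae}r's tables for $\q_2$, and verify the indexing of the differential lengths $4+r$ and $3+r$ is consistent with their $\lambda$ and $\varepsilon$ conventions (compare \cref{def:al}, noting $\lambda(2)$, $\varepsilon(2)$ behave specially at the prime $2$). Once the dictionary is fixed, each differential family is a one-line naturality argument. I would present the proof as: first dispose of the permanent cycles by a degree count in $\Ext_{\q_2}$, then invoke \cite{OO13} together with naturality along the unit map $S^{0,0}\to H\z$ for the three differential families, remarking that the pattern and its verification are identical to those of \cref{lem:dtau1}, \cref{lem:dtau3}, and \cref{lem:qpdiff}.
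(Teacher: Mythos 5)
Your treatment of the three families of differentials matches the paper exactly: cite Ormsby--{\O}stv{\ae}r's computation of the $\q_2$-motivic Adams spectral sequence for $BP\langle 0\rangle$ (\cite[Lemma~5.7]{OO13}) and transport the differentials to the sphere by naturality along the unit map, just as in \cref{lem:dtau1}, \cref{lem:dtau3}, and \cref{lem:qpdiff}.

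Your handling of the permanent cycles, however, has a genuine gap. You claim these classes are permanent cycles by a direct degree count, ``entirely parallel'' to the observation that $\rho\tau$ is a permanent cycle over $\f_q$ with $q\equiv 3\pmod 4$. But that $\f_q$ argument relies on $\rho^2 = 0$ in $\m^{\f_q}$, which kills all potential targets in stem $-2$. Over $\q_2$ the situation is different: $\rho^2\neq 0$ (only $\rho^3 = 0$, $\rho u = 0$, $\rho\pi = 0$, $\rho^2 = u\pi$), and the classes $\rho^2\tau^{2k+1}h_0^r$ are nonzero in $\Ext_{(3)}$ for all $k,r\geq 0$ — indeed $\rho^2\tau h_0$ and $\rho^2\tau^3 h_0$ appear explicitly in the description of $\Ext^1_{(3)}$ just above the lemma, and a $\rho$-Bockstein check shows these persist under $h_0$-multiplication. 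So, for example, $d_r(\tau^{2n}\rho)\overset{?}{=}\rho^2\tau^{2n-1}h_0^r$ is not ruled out by any degree or weight count when $n\geq 1$, and likewise $d_r(\pi\tau^n)\overset{?}{=}\rho^2\tau^{n-1}h_0^r$ has a nonzero potential target when $n$ is even and $\geq 2$. (This is consistent with the fact that the proof of \cref{thm:q2} later explicitly confronts the possibility $d_r(\rho\tau^{4n+2}h_1)\overset{?}{=}\rho^2\tau^{4n+1}h_0^{r+1}$ and rules it out by comparison, not by degree count.) The paper's one-line proof handles the permanent-cycle claims in the same breath as the differentials, precisely by appealing to the $BP\langle 0\rangle$ computation: the potential targets are visible and not boundaries over $BP\langle 0\rangle$, so by naturality they cannot be boundaries over the sphere. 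You should replace the degree-count argument for the permanent cycles with this comparison argument, or at least note that a Leibniz-rule reduction (e.g.\ $d_r(\rho\tau^{2n}) = \rho\cdot d_r(\tau^{2n})$ and $\rho$ annihilates the relevant targets via $\rho u = \rho\pi = \rho^3 = 0$) is needed where the naive target count fails.
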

\begin{proof}
In \cite[Lemma 5.7]{OO13}, Ormsby--{\O}stv{\ae}r compute differentials in the $\q_2$-motivic Adams spectral sequence for $BP\langle 0\rangle$.The claimed facts follow by comparison.
\end{proof}

\begin{thm}\label{thm:q2}
The $1$-line of the $\q_2$-motivic Adams spectral sequence supports the following nontrivial differentials.
\begin{longtable}{llll}
\toprule
Source & $d_r$ & Target & Constraints\\
\midrule \endhead
\bottomrule \endfoot
$\tau^{2n}h_0$ & $d_{3+\nu_2(n)}$ & $\pi\tau^{2n-1}h_0^{4+\nu_2(n)}$ & $n\geq 1$\\
$\tau^{4n}h_b$ & $d_2$ & $\tau^{4n}(h_0+\rho h_1)h_{b-1}^2$ & $n\geq 0$, $b\geq 4$ \\
$\rho \tau^{2n}h_b$ & $d_2$ & $\rho^2\tau^{2n}h_1h_{b-1}^2$ & $n\geq 0$, $b\geq 5$ \\
$u\tau^n h_b$ & $d_2$ & $u\tau^n h_0 h_{b-1}^2$ & $n\geq 0$, $b\geq 4$ \\
$\pi \tau^n h_b$ & $d_2$ & $\pi \tau^n h_0 h_{b-1}^2$ & $n\geq 0$, $b\geq 4$ \\
$u \tau^{4n+2}h_3$ & $d_3$ & $\rho^2\tau^{4n+1}h_0^3 h_3$ & $n\geq 0$
\end{longtable}
After all the $d_2$-differentials have been run, the $1$-line of the $E_3$-page of the $\q_2$-motivic Adams spectral sequence has a basis given by the classes in the following table.
\begin{longtable}{lll}
\toprule
Class & Constraints &\\
\midrule \endhead
\bottomrule \endfoot
$h_0$ \\
$\rho^\delta\tau^{4n}h_b$ & $n\geq 0$, $\delta\in\{0,1,2\}$, $b\in\{1,2,3\}$\\
$\rho^2\tau^{2n+1} h_0$ & $n \geq 0$ \\
$\rho^\epsilon \tau^{4n+1}h_1$ & $n\geq 0$, $\epsilon\in\{0,1\}$ \\
$\rho^{1+\epsilon}\tau^{4n+3}h_1$ & $n\geq 0$, $\epsilon\in\{0,1\}$ \\
$\rho^{1+\epsilon}\tau^{4n+2}h_1$ & $n\geq 0$, $\epsilon\in\{0,1\}$ \\
$u h_0$ \\
$u\tau^{2n+1}h_0$ & $n\geq 0$\\
$u\tau^n h_b$ & $n\geq 0$, $b\in\{1,2\}$ \\
$u\tau^{2n+1}h_3$ & $n\geq 0$ \\
$u\tau^{4n}h_3$ & $n\geq 0$ \\
$\pi \tau^n h_b$ & $n\geq 0$, $b\in\{0,1,2,3\}$ \\
\hline
$u^\epsilon\tau^{2n}h_0$ & $n\geq 1$, $\epsilon\in\{0,1\}$ \\
$u\tau^{4n+2}h_3$ & $n\geq 0$ \\
\hline
$\rho^{1+\epsilon}\tau^{4n}h_4$ & $n\geq 0$, $\epsilon\in\{0,1\}$ \\
$\rho^2\tau^{4n}h_b$ & $n\geq 0$, $b\geq 5$
\end{longtable}
Of these, the classes in the first region are permanent cycles, the classes $u^\epsilon\tau^{2n}h_0$ with $n\geq 1$ and $\epsilon\in\{0,1\}$, as well as $u\tau^{4n+2}h_3$ with $n\geq 0$, support higher differentials, and we leave open the fate of the classes $\rho^{1+\epsilon}\tau^{4n}h_4$ and $\rho^2\tau^{4n}h_b$ for $n\geq 0$, $\epsilon\in\{0,1\}$, and $b\geq 5$.
\end{thm}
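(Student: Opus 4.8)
The plan is to assemble \cref{thm:q2} in exactly the same way as the finite-field theorems \cref{thm:enf1} and \cref{thm:enf3}: combine the uniform Hopf differentials of \cref{thm:nohopf} with the arithmetic differentials out of $\Ext_{\q_2}^0$ and $\Ext_{\q_2}^1$ that are imported by naturality from the $H\z$-computation, then bookkeep the resulting $E_3$-page and check that the remaining classes are permanent cycles. The algebraic input is the description of $\Ext_{\q_2} = \Ext_{(3)}\{1\}\oplus\Ext_{(1)}\{u,\pi\}$ from \cref{lem:mmodule}, together with the explicit presentation of $\Ext_{(3)}^0$ and $\Ext_{(3)}^1$ from the preceding lemma and the structure of $\Ext_{(1)}^{\leq 1}$ read off \cref{thm:extadd}; the homotopical input is \cref{lem:dq2}, which lists the permanent cycles $\tau^{4n+1}\rho^2$, $\tau^{2n}\rho$, $\tau^{4n+3}\rho^2$, $\pi\tau^n$, $u$, $u\tau^{2n+1}$ and the three families of long differentials on $\tau^{2^{r+2}}$, $u\tau^{2^{r+1}}$, and $\tau^{2^{r+1}}h_0$.

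First I would record the $d_2$-differentials. Since $\tau^2$ is a permanent cycle modulo $\rho^3$ times lower data — more precisely, the shortest differential in \cref{lem:dq2} on a power of $\tau$ starts with $\tau^4$ (the $r=0$ case of $d_4(\tau^4)=\pi\tau^3 h_0^4$) and on $u\tau^2$ and $\tau^2 h_0$ they are $d_3$'s — the class $\tau^4$ is a $d_2$-cycle, and likewise $u$ and $\pi$ are $d_2$-cycles. Multiplying the universal differential $d_2(h_{b+1}) = (h_0+\rho h_1)h_b^2$ of \cref{thm:nohopf} by $\tau^{4n}$, by $\rho\tau^{2n}$, by $u\tau^n$, and by $\pi\tau^n$ produces the four $d_2$ rows in the table; for the $\rho\tau^{2n}$ row one uses $\rho\cdot(h_0+\rho h_1) = \rho h_0 + \rho^2 h_1$ together with the relation $\rho^2 h_0 = 0$ in $\Ext_{(3)}$, which forces the target to be $\rho^2\tau^{2n}h_1 h_{b-1}^2$ and requires $b\ge 5$ (not $b\ge 4$) since $\rho h_0 h_3^2 \ne 0$ but we need the surviving-to-$E_3$ statement; I would double-check this boundary case against the module structure. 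Taking the $d_2$-homology of the $1$-line, propagated $\f_2[\tau^4]$-linearly across the three summands, yields the stated $E_3$-basis; the $\rho^2$-classes $\rho^2\tau^{4n}h_b$ and $\rho^{1+\epsilon}\tau^{4n}h_4$ survive precisely because their putative $d_2$-targets vanish, exactly as in the $\f_q$, $q\equiv 3$ case.

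Next I would run the higher differentials and identify permanent cycles. The $d_{3+\nu_2(n)}$ differential on $\tau^{2n}h_0$ and the $d_3$ on $u\tau^{4n+2}h_3$ follow from \cref{lem:dq2} (the latter by multiplying $d_3(u\tau^2) = \rho^2\tau h_0^3$ — wait, more carefully: $u\tau^{4n+2}h_3$ should be compared with the $\lambda(q)=3$-style $d_3$ on $h_3$, analogous to \cref{thm:enf3}, giving target $\rho^2\tau^{4n+1}h_0^3 h_3$). Then, as in the proofs of \cref{thm:enf1} and \cref{thm:enf3}, I would enumerate for degree reasons the only possible nontrivial differentials on the claimed permanent cycles $\rho^\delta\tau^{4n}h_b$ ($b\le 3$), $u\tau^n h_b$, $\pi\tau^n h_b$, $\rho^2\tau^{2n+1}h_0$, $\rho^\epsilon\tau^{4n+1}h_1$, etc., and rule each out: the $h_0$-towers cannot be killed because $h_0$, $uh_0$, $\pi h_0$ detect genuine elements (units times $2$ or $\pi$ or $u$), $\tau^4$ and $h_b$ are $d_2$-cycles so no $d_2$ can hit the relevant targets, and the possible $d_3$ on $u\tau^{4n+2}h_3$ is precisely the one we have recorded rather than ruled out. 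Throughout, the relations $\rho^3 = 0$ on the unit summand, $\rho = 0$ on the $u$- and $\pi$-summands, and the mixed relations $\rho^2 + u\pi$-type identities from $\m^{\q_2}_0$ in \cref{Exm:M2F} must be invoked to see which products vanish.

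The main obstacle is the bookkeeping across the direct-sum decomposition: unlike the $\f_q$ case, $\Ext_{\q_2}$ is a sum of three pieces with different $\rho$-nilpotence, and differentials can mix them (e.g. $d$ on a unit-summand class landing in the $u$-summand, as in $d(\tau^{2n}h_0) = \pi\tau^{2n-1}h_0^{\bullet}$, or $d(u\tau^{2^{r+1}}) = \rho^2\tau^{\bullet}h_0^{\bullet}$ landing back in the unit summand). Keeping the $\rho$-power indexing straight — in particular confirming the $\rho^\delta$ with $\delta\in\{0,1,2\}$ ranges and the $\rho^{1+\epsilon}$ shifts on the odd-$\tau$-power $h_1$-classes — and verifying that nothing is double-counted or missed requires a careful case split rather than a clever idea; but it is entirely mechanical given the two input lemmas, and the statement explicitly leaves the fate of the $\rho^{1+\epsilon}\tau^{4n}h_4$ and $\rho^2\tau^{4n}h_b$ ($b\ge 5$) classes open precisely where higher differentials would need genuinely new homotopical input.
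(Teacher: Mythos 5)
Your proposal follows the same broad outline as the paper's proof — combine the uniform Hopf differentials of \cref{thm:nohopf} with the arithmetic differentials imported in \cref{lem:dq2}, compute $d_2$'s, read off the $E_3$-page, and then check that the claimed permanent cycles really are permanent cycles. The first two steps are essentially right (up to one algebraic slip noted below), but the final step as written is a genuine gap.

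On the $\rho\tau^{2n}h_b$ row: you invoke ``$\rho^2 h_0 = 0$'' to simplify $\rho\tau^{2n}(h_0+\rho h_1)h_{b-1}^2$. That relation is true but not what does the work. The product $\rho\tau^{2n}h_0 h_{b-1}^2$ involves $\rho^1\cdot h_0$, so $\rho^2 h_0 = 0$ does not kill it. What you need is the sharper relation $\rho h_0 = 0$ in filtration $1$, which holds because $h_0 = \tau^0 h_0$ has $\rho$-torsion exponent $2^0=1$ (see \cref{thm:extadd}). With $\rho h_0 = 0$, the product collapses to $\rho^2\tau^{2n}h_1 h_{b-1}^2$ as claimed, and the vanishing at $b=4$ then comes from $h_1 h_3^2 = 0$.

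The more substantial issue is your argument that the remaining classes in the first region are permanent cycles. You write that degree considerations leave only a few candidate differentials and then rule them out on the grounds that ``the $h_0$-towers cannot be killed because $h_0$, $uh_0$, $\pi h_0$ detect genuine elements.'' This argument is only valid in weight zero. The potential targets that actually need ruling out are classes such as $\tau^{4n}h_0^{r+1}$, $\f_2\{u,\pi\}\otimes\tau^{4n+2}h_0^{r+1}$, $\rho^2\tau^{4n+1}h_0^{r+1}$, and $\rho^2\tau^{2n-1}h_0^{r+1}$, which for $n\geq 1$ sit in nonzero weight, where the ``$h_0$ detects $2$'' heuristic does not immediately apply — indeed, $\tau^{4n}$ itself supports a $d_{4+\nu_2(n)}$ differential, so one cannot simply appeal to multiplicativity to conclude these classes survive. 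The paper disposes of these candidates by a naturality argument against the $\q_2$-motivic Adams spectral sequence for $BP\langle 0\rangle$: the four families of potential targets are present and non-bounding in Ormsby--{\O}stv{\ae}r's $BP\langle 0 \rangle$ computation, hence cannot be boundaries for the sphere. Your proposal needs to supply this specific comparison (or some equivalent replacement) rather than the heuristic; without it the permanent-cycle claim is unjustified.

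Your identification of the $d_3$ on $u\tau^{4n+2}h_3$ with target $\rho^2\tau^{4n+1}h_0^3 h_3$ is correct, and your caution about the three-summand bookkeeping is warranted but, as you say, mechanical.
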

\begin{proof}
The given differentials follow by combining \cref{thm:nohopf} with \cref{lem:dq2}. For example, 
\[
d_2(\rho\tau^{2n}h_b) = \rho\tau^{2n}\cdot d_2(h_b) = \rho\tau^{2n}\cdot (h_0+\rho h_1)h_{b-1}^2 = \rho^2 \tau^{2n}h_1 h_{b-1}^2
\]
for $b\geq 4$, which is nonzero precisely when $b\geq 5$; as another example,
\[
d_3(u\tau^{4n+2}h_3) = d_3(u\tau^2)\cdot \tau^{4n}h_3 = \rho^2\tau\cdot\tau^{4n}h_3 = \rho^2\tau^{4n+1}h_3.
\]
We must verify that all $d_2$-differentials are accounted for in this table; the claimed description of the $E_3$-page follows quickly. We must also verify that the classes we give as permanent cycles are indeed permanent cycles. It suffices to verify the latter.

We may cut down the number of classes to consider by taking into account the classes which are products of the permanent cycles given in \cref{lem:dq2} with some other class. After this reduction, degree considerations rule out all differentials except for possibly
\begin{enumerate}
\item $d_r(\tau^{4n+1}h_1) \overset{?}{=}\tau^{4n}h_0^{r+1}$;
\item $d_r(\rho\tau^{4n+3}h_1) \in\f_2\{u,\pi\}\otimes\f_2\{\tau^{4n+2}h_0^{r+1}\}$;
\item $d_r(\rho\tau^{4n+2}h_1) \overset{?}{=} \rho^2\tau^{4n+1}h_0^{r+1}$;
\item $d_r(u \tau^{2n} h_1) \overset{?}{=} \rho^2 \tau^{2n-1}h_0^{r+1}$;
\end{enumerate}
with $n\geq 0$, and in the fourth case $n\geq 1$. In all cases, the possible nonzero targets are present and not boundaries in Ormsby--{\O}stv{\ae}r's computation of the Adams spectral sequence for $\q_2$-motivic $BP\langle 0 \rangle$ \cite{OO13}, so by naturality they cannot be boundaries in the Adams spectral sequence for the sphere. Thus these possible nonzero differentials are in fact not possible, yielding the theorem.
\end{proof}

\subsection{The rational numbers}

We end by considering the case $F = \q$. By naturality, this gives information over arbitrary fields of characteristic zero. Recall the functions $\varepsilon$ and $\lambda$ defined in \cref{def:al}.

\begin{thm}\label{thm:q}
The $1$-line of the $E_3$-page of the $\q$-motivic Adams spectral sequence is given by a direct sum of that for the $\r$-motivic Adams spectral sequence with the classes in the following table, where $p$ ranges through all primes.
\begin{longtable}{ll}
\toprule
Class & Constraints \\
\midrule \endhead
\bottomrule \endfoot
$\tau^n h_b[2]$ & $n\geq 0$, $b\in\{0,1,2,3\}$ \\
$h_0[p]$ & $p\equiv 1\,(4)$ \\
$\tau^n h_1[p]$ & $p\equiv 1\,(4)$, $n\geq 0$ \\
$\tau^{2n}h_2[p]$ & $p\equiv 1\,(4)$, $n\geq 0$ \\
$\tau^{2n+1}h_2[p]$ & $p\equiv 1\,(4)$, $n\geq 0$, $\varepsilon(p)\geq 3$ \\
$\tau^{4n}h_3[p]$ & $p\equiv 1\,(4)$, $n\geq 0$ \\
$\tau^{4n+2}h_3[p]$ & $p\equiv 1\,(4)$, $n\geq 0$, $\varepsilon(p)\geq 3$ \\
$\tau^{2n+1}h_3[p]$ & $p\equiv 1\,(4)$, $n\geq 0$, $\varepsilon(p)\geq 4$ \\
$\tau^n h_b a_p$ & $p\equiv 1\,(4)$, $n\geq 0$, $b\in\{0,1,2,3\}$ \\

$h_0 u_p$ & $p\equiv 3\,(4)$ \\
$\tau^{2n}h_bu_p$ & $p\equiv 3\,(4)$, $n\geq 0$, $b\in\{1,2\}$ \\
$\tau^{4n}h_3u_p$ & $p\equiv 3\,(4)$, $n\geq 0$ \\
$\tau^{4n+2}h_3u_p$ & $p\equiv 3\,(4)$, $n\geq 0$, $\lambda(p)\geq 4$ \\
$\rho\tau^{2n}h_bu_p$ & $p\equiv 3\,(4)$, $n\geq 0$, $b\in\{1,2,3\}$ \\
$\rho\tau^{2n+1}h_b u_p $ & $p\equiv 3\,(4)$, $n\geq 0$, $b\in\{1,2,3,4\}$ \\
$\rho^\epsilon \tau^{4n+1}h_1 u_p$ & $p\equiv 3\,(4)$, $n\geq 0$, $\epsilon\in\{0,1\}$ \\

\hline
$\tau^{2n}h_0[p]$ & $p\equiv 1\,(4)$, $n\geq 1$ \\
$\tau^{2n+1}h_0[p]$ & $p\equiv 1\,(4)$, $n\geq 1$, $\varepsilon(p)\geq 3$ \\

$\tau^{4n+2}h_3[p]$ & $p\equiv 1\,(4)$, $n\geq 0$, $\varepsilon(p) = 2$ \\
$\tau^{2n+1}h_3[p]$ & $p\equiv 1\,(4)$, $n\geq 0$, $\varepsilon(p)=3$ \\

$\tau^{4n+2}h_3 u_p$ & $p\equiv 3\,(4)$, $n\geq 0$, $\lambda(p) = 3$ \\
$\tau^{2n}h_0 u_p$ & $p\equiv 3\,(4)$, $n\geq 1$ \\
\hline
$\rho\tau^{2n}h_b u_p$ & $p\equiv 3\,(4)$, $n\geq 0$, $b\geq 4$ 
\end{longtable}
Moreover, we have the following information about higher differentials. The classes in the first region of this table are permanent cycles, as are the classes $h_a$ and $\tau^{\floor{2^{a-1}(4n+1)}}h_a$ for $a\leq 3$. The classes in the second region of this table support higher differentials, as do the classes in $\Ext_\r^1$ which must support higher differentials by \cref{thm:r}. We leave open the fate of the classes in the third region of this table, as well as the possibility of exotic higher differentials on the classes $\rho h_4$ and $\rho^{2^a-\psi(a)}\tau^{2^{a-1}(4n+1)}h_a$ for $a\geq 4$.
\end{thm}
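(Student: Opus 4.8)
\textbf{Proof strategy for \cref{thm:q}.}

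The plan is to assemble the computation over $\q$ from two ingredients: the Hasse principle \cref{prop:hasse}, which says the map $\Ext_\q\to\Ext_\r\times\prod_p\Ext_{\q_p}$ is injective, together with the decomposition of $\Ext_\q$ as an $\ca^\r$-module recorded in \cref{lem:mmodule}(8), namely
\[
\Ext_\q = \Ext_\r\oplus\Ext_{(1)}\{[2]\}\oplus\Ext_{(1)}\{[p],a_p:p\equiv 1\,(4)\}\oplus\Ext_{(2)}\{u_p:p\equiv 3\,(4)\}.
\]
First I would observe that the direct summand $\Ext_\r$ contributes exactly the $\r$-motivic part of the answer, which is \cref{thm:r}; this is where the $\r$-factor of the Hasse map sees everything, and all differentials on this summand are determined by \cref{thm:r}. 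The remaining summands are copies of $\Ext_{(1)}$ and $\Ext_{(2)}$, and the Hasse map embeds each into the appropriate $\Ext_{\q_p}$ by \cref{lem:hassecoeff}: $[2]\mapsto\pi$ into $\Ext_{\q_2}$, $[p],a_p\mapsto\pi,u\pi$ into $\Ext_{\q_p}$ for $p\equiv 1\,(4)$, and $u_p\mapsto\pi+\rho$ into $\Ext_{\q_p}$ for $p\equiv 3\,(4)$. Since the $\q$-motivic Adams spectral sequence maps to each $\q_p$-motivic Adams spectral sequence, the differentials on these summands can be read off of \cref{thm:qpf} (for $p$ odd) and \cref{thm:q2} (for $p=2$), using that the Hasse map is split injective and identifying which classes there are hit by the images of our summands.

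The key steps, in order, are: (1) identify the $E_3$-page additively by running the $d_2$-differentials, which on each summand are pushed forward from $\cref{thm:nohopf}$ and the arithmetic $d_2$-differentials of \cref{thm:qpf}, \cref{thm:q2}; this produces the displayed table, where the $[2]$-indexed classes come from $\Ext_{(1)}\{[2]\}\hookrightarrow\Ext_{\q_2}$, the $[p],a_p$-indexed classes from $\Ext_{(1)}\{[p],a_p\}\hookrightarrow\Ext_{\q_p}$ for $p\equiv 1\,(4)$ (noting the $[p]$-copy tracks $\pi$ and the $a_p$-copy tracks $u\pi$, which is why there are both $h_b[p]$ and $h_b a_p$ families with different survival patterns coming from $\varepsilon(p)$), and the $u_p$-indexed classes from $\Ext_{(2)}\{u_p\}\hookrightarrow\Ext_{\q_p}$ for $p\equiv 3\,(4)$ (so these inherit the $\rho$-Bockstein structure of $\Ext_{(2)}$ and the differentials of \cref{thm:enf3}/\cref{thm:qpf}). (2) Verify the permanent cycles: a class in a given summand is a permanent cycle over $\q$ provided its image under the Hasse map is a permanent cycle in every target; since each summand embeds into a single $\Ext_{\q_p}$ (plus the trivial projection to the others), it suffices to check survival in that one $\q_p$, which is exactly what \cref{thm:qpf} and \cref{thm:q2} tell us, and for the $\Ext_\r$-summand we cite \cref{thm:r} and \cref{thm:rpc}. (3) Verify the claimed higher differentials (second region of the table) by the same naturality argument, pushing the higher $d_r$'s of \cref{thm:qpf}, \cref{thm:q2}, and \cref{thm:r} forward along the Hasse inclusion, and checking the targets are nonzero using the additive structure computed in step (1). (4) Observe that the fate of the third-region classes $\rho\tau^{2n}h_b u_p$ for $b\ge 4$ is left open precisely because it is open over $\q_p$ for $p\equiv 3\,(4)$ (the remark after \cref{thm:qpf}), and similarly the $\rho h_4$ and $\rho^{2^a-\psi(a)}\tau^{2^{a-1}(4n+1)}h_a$ ambiguity is inherited from \cref{thm:r}.

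The main obstacle I expect is bookkeeping rather than conceptual depth: one must carefully match up each of the many families of classes on the $\q$-side with its image on the $\q_p$-side, keeping track of which power of $\rho$, $\pi$, $u$ it carries, and confirm that no two distinct $\q$-classes collapse under the Hasse map (so that survival over $\q$ really is detected $p$-locally). The subtlety is that a permanent cycle over $\q$ could in principle support a differential with target in a \emph{different} summand than the one it lives in — an ``exotic'' cross-summand differential invisible to any single $\q_p$; ruling this out requires knowing that the Hasse map is injective on the relevant bidegrees (which \cref{prop:hasse} gives) and that the differential, being a sum of its images in $\Ext_\r$ and the $\Ext_{\q_p}$, is determined by those images. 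This is why the theorem only claims the open classes are genuinely open: the cross-summand differentials and the $\r$-inherited ambiguities are exactly the places where $p$-local information is insufficient. Once the dictionary between $\q$-classes and their arithmetic avatars is set up precisely, every assertion in the theorem reduces to a citation of \cref{thm:r}, \cref{thm:qpf}, \cref{thm:q2}, \cref{thm:nohopf}, and \cref{prop:hasse}.
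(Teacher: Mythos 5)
Your strategy is essentially the paper's: decompose $\Ext_\q$ via \cref{lem:mmodule}, feed each summand through the Hasse map, and cite \cref{thm:r}, \cref{thm:qpf}, \cref{thm:q2} for the local information. But there is a genuine gap at exactly the point you flag as the ``subtlety'' and then wave away: injectivity of the Hasse map on the $E_2$-page (which is what \cref{prop:hasse} gives) does \emph{not} imply that higher differentials over $\q$ are detected locally. A map of spectral sequences that is injective on $E_2$ need not be injective on $E_3$, so knowing that each component of $h(\alpha)$ is a $d_r$-cycle does not force $\alpha$ to be a $d_r$-cycle when $r\ge 3$: the class $d_r(\alpha)\in E_r^\q$ could be a nonzero class whose image in $E_r^\r\times\prod_p E_r^{\q_p}$ happens to be a boundary, hence zero. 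Your appeal to ``split injective'' does not help either, since the putative splitting is only of $\f_2$-modules and need not commute with differentials; cross-summand effects are not ruled out by the $E_2$-injectivity alone.

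The paper closes this gap differently. Rather than pushing injectivity up the pages of the spectral sequence, it reduces the question to the $\q$-motivic Adams spectral sequence for $BP\langle 0\rangle$ (integral motivic cohomology). Since $h_0,\dots,h_3$ are permanent cycles (\cref{lem:lowhopf}), any differential out of a class $\alpha[p]$ or $\alpha u_p$ in stem $s\le 6$ is forced, via the Leibniz rule, to be controlled by the differentials on the coefficient classes $[p]\tau^{2^i}$ and $u_p\tau^{2^i}$, and these are visible already in the much simpler $BP\langle 0\rangle$ spectral sequence computed by Ormsby--{\O}stv{\ae}r. That computation shows those differentials are entirely detected over $\q_p$, which is the precise statement needed to legitimize the local-to-global transfer of $d_r$-cycle information. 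Similarly, to handle the $\Ext_\r$-summand, the paper notes that $h_a$ for $a\le 3$ are permanent cycles for geometric reasons, and the remaining $\tau^{\floor{2^{a-1}(4n+1)}}h_a$ with $a\le 3$ are permanent cycles by a degree count that rules out differentials into other summands. Your proposal cites \cref{thm:r} for this summand, but that theorem only controls differentials within the $\r$-motivic spectral sequence and cannot by itself exclude differentials landing in the arithmetic summands of $\Ext_\q$; the paper's separate degree-count argument is needed. In short, your bookkeeping and decomposition are correct, but the step ``local $d_r$-cycle $\Rightarrow$ global $d_r$-cycle'' needs the $BP\langle 0\rangle$ comparison (or an equivalent page-by-page argument), not just \cref{prop:hasse}.
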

\begin{proof}
Recall the splitting
\[
\Ext_\q = \Ext_\r \oplus\Ext_{(1)}\{[2]\}\oplus\Ext_{(1)}\{[p],a_p:p\equiv 1 \pmod{4}\}\oplus \Ext_{(2)}\{u_p:p\equiv 3 \pmod{4}\}
\]
implied by \cref{lem:mmodule}. As in the proof of \cref{prop:hasse}, each of these summands is itself either $\Ext_\r$ or an identifiable summand of some corresponding $\Ext_{\q_p}$; for $p$ odd, this summand looks like $\Ext_{\f_p}$. We may thus read the given table off the information given in \cref{thm:r}, \cref{thm:qpf} (with \cref{thm:enf1} and \cref{thm:enf3}), and \cref{thm:q2}, provided we verify the following claim: if $\alpha[p]\in\Ext_\q^1$ is a class in stem $s\leq 6$, then $\alpha[p]$ or $\alpha u_p$ is a $d_r$-cycle if and only if it projects to a $d_r$-cycle in the $\q_p$-motivic Adams spectral sequence; and likewise if $\alpha\in\Ext_\r^1$ is a class in stem $s\leq 7$, then $\alpha$ is a $d_r$-cycle in the $\q$-motivic Adams spectral sequence if and only if it projects to a $d_r$-cycle in the $\r$-motivic Adams spectral sequence. 

As in the proofs of \cref{thm:enf1}, \cref{thm:enf3}, and \cref{thm:q2}, differentials on the classes $\alpha[p]$ and $\alpha u_p$ in stems $s\leq 6$ are completely determined by the structure of differentials on the classes $[p]\tau^{2^i}$ and $u_p\tau^{2^i}$ in the $\q$-motivic Adams spectral sequence for $BP\langle 0 \rangle$, together with the fact that $h_0$, $h_1$, $h_2$, and $h_3$ are permanent cycles. The $\q$-motivic Adams spectral sequence for $BP\langle 0 \rangle$ was computed by Ormsby--{\O}stv{\ae}r in \cite[Theorem 5.8]{OO13}.  We find that differentials on the classes $[p]\tau^{2^i}$ and $u_p\tau^{2^i}$ in the $\q$-motivic Adams spectral sequence for $BP\langle 0 \rangle$ are entirely detected over $\q_p$, and our first claim follows. That the classes $h_a\in\Ext_\r^1$ for $a\leq 3$ are permanent cycles was seen in \cref{lem:lowhopf}, and the classes $\tau^{\floor{2^{a-1}(4n+1)}}h_a\in\Ext_\r^1$ must be permanent cycles for $a\leq 3$ as there is no room for exotic higher differentials.
\end{proof}


\newcommand{\etalchar}[1]{$^{#1}$}

\end{document}